\newcommand{\into}{\hookrightarrow}
\newcommand{\ds}{\mathrm{d}s}
\newcommand{\dt}{\mathrm{d}t}
\newcommand\absB[1]{\Big|#1\Big|}
\newcommand\norm[1]{\left\|#1\right\|}
\DeclareMathOperator{\support}{supp}
\DeclareMathOperator{\dive}{div}
\newcommand{\EE}{\mathbb{E}}
\newcommand{\II}{\mathbb{I}}
\newcommand{\NN}{\mathbb{N}}
\newcommand{\RR}{\mathbb{R}}
\renewcommand{\SS}{\mathbb{S}}
\newcommand{\ZZ}{\mathbb{Z}}
\newcommand{\cA}{\mathcal{A}}
\newcommand{\cB}{\mathcal{B}}
\newcommand{\cD}{\mathcal{D}}
\newcommand{\cE}{\mathcal{E}}
\newcommand{\cF}{\mathcal{F}}
\newcommand{\cG}{\mathcal{G}}
\newcommand{\cH}{\mathcal{H}}
\newcommand{\cJ}{\mathcal{J}}
\newcommand{\cK}{\mathcal{K}}
\newcommand{\cO}{\mathcal{O}}
\newcommand{\cT}{\mathcal{T}}
\newcommand{\cV}{\mathcal{V}}
\newcommand{\cX}{\mathcal{X}}
\newcommand{\cY}{\mathcal{Y}}
\newcommand{\wU}{\widehat{U}}
\newcommand{\weta}{\widehat{\eta}}
\newtheorem{Definition}{Definition}
\newtheorem{Theorem}{Theorem}[section]
\newtheorem{Lemma}{Lemma}[section]
\newtheorem{Proposition}{Proposition}
\newtheorem{Remark}{Remark}
\numberwithin{Remark}{section}
\numberwithin{Proposition}{section}
\numberwithin{Definition}{section}
\numberwithin{Lemma}{section}
\numberwithin{equation}{section}
\numberwithin{Theorem}{section}
\pgfplotsset{compat=1.18}
\begin{document}
\title[degenerate compressible Navier-Stokes equation]{Global Well-Posedness of the Vacuum Free Boundary Problem for the Degenerate Compressible Navier-Stokes Equations With Large Data Of Spherical Symmetry}
\date{\today}

\author{Gui-Qiang G. Chen}
\address[Gui-Qiang G. Chen]{Mathematical Institute, University of Oxford, Oxford, OX2 6GG, UK.} \email{\tt gui-qiang.chen@maths.ox.ac.uk}

\author{Jiawen Zhang}
\address[Jiawen Zhang]{School of Mathematical Sciences, Shanghai Jiao Tong University, Shanghai 200240, P. R. China} \email{\tt zhangjiawen317@sjtu.edu.cn}

\author{Shengguo Zhu }
\address[Shengguo Zhu]{School of Mathematical Sciences, CMA-Shanghai, and MOE-LSC, Shanghai Jiao Tong University, Shanghai 200240, P. R. China.} \email{\tt  zhushengguo@sjtu.edu.cn}

\begin{abstract}
The study of global-in-time dynamics of vacuum is crucial for understanding viscous flows. In particular, physical vacuum, characterized by a moving boundary with nontrivial finite normal acceleration, naturally arises in the motion of shallow water. The corresponding large-data problems for multidimensional spherically symmetric flows remain open, due to the combined difficulties of coordinate singularity at the origin and degeneracy on the moving boundary. In this paper, we analyze the free boundary problem for the barotropic compressible Navier-Stokes equations with density-dependent viscosity coefficients (as in the shallow water equations) in two and three spatial dimensions. For a general class of spherically symmetric initial densities:  $\rho_0^{\beta}\in H^3$ with $\beta\in (\frac{1}{3},\gamma-1]$ ($\gamma$: adiabatic exponent), vanishing on the moving  boundary in the form of a distance function, 
we establish the global well-posedness of classical solutions with large initial data. We note that, when $\beta=\gamma-1$, $\rho_0$ contains a physical vacuum, but fails to satisfy the condition required for the Bresch-Desjardins (BD) entropy estimate when $\gamma\ge 2$,  precluding the use of the BD entropy estimate to handle the degeneracy of the shallow water equations ({\it i.e.}, the case $\gamma=2$) on the physical vacuum boundary. Our analysis relies on a region-segmentation method: near the origin, we develop an interior BD entropy estimate, leading to flow-map-weighted estimates for the density;  near the boundary, to handle the physical vacuum singularity, we introduce novel $\rho_0$-weighted estimates for the effective velocity, which are fundamentally different from the classical BD entropy estimate. Together, these estimates yield the desired global regularities.  
\end{abstract}

\date{\today}
\subjclass[2020]{35A01, 35A09, 35R35, 35B65, 35Q30, 76N06, 76N10.}
\keywords{Degenerate compressible Navier-Stokes equations,  Multi-dimensions, Vacuum free boundary problem, Classical solutions, Large data, Global well-posedness, Shallow water equations}

\maketitle

\tableofcontents

\section{Introduction}\label{section1}
The global existence of solutions with large initial data to the vacuum free boundary problem (\textbf{VFBP}) for the multi-dimensional (M-D) compressible viscous flow has remained one of the most challenging open problems in the field to date, even when the initial data possess certain forms of symmetry, due to the well-known strong degeneracy on the moving boundary. In this paper, we establish the global well-posedness of classical solutions with large initial data of spherical symmetry to \textbf{VFBP} of the following compressible Navier-Stokes equations (\textbf{CNS}) with degenerate density-dependent viscosity coefficients (as in the shallow water equations, \textit{i.e.}, the viscous Saint-Venant system) in two and three spatial dimensions:
\begin{equation}\label{eq:1.1-vfbp}
\begin{cases}
\rho_t+\dive(\rho \boldsymbol{u})=0 &\text{in }\Omega(t),\\[4pt]
(\rho \boldsymbol{u})_t+\dive(\rho \boldsymbol{u}\otimes \boldsymbol{u})+\nabla P =2\mu\dive(\rho D(\boldsymbol{u}))&\text{in }\Omega(t),\\[4pt]
\rho>0&\text{in }\Omega(t),\\[4pt]
\rho=0&\text{on }\partial\Omega(t),\\[4pt]
\cV(\partial\Omega(t))=\boldsymbol{u}\cdot\boldsymbol{n}(t)&\text{on }\partial\Omega(t),\\[4pt]
(\rho,\boldsymbol{u})|_{t=0}=(\rho_0,\boldsymbol{u}_0) &\text{in }\Omega:=\Omega(0).
\end{cases}
\end{equation}
Here, $t\geq 0$ is the time, $\boldsymbol{x}=(x_1,\cdots\!,x_n)^{\top}\in \mathbb{R}^n$  is the Eulerian spatial coordinate, the open and bounded subset  $\Omega(t)\subset \mathbb{R}^n$ denotes the changing volume occupied by the fluid and 
\begin{equation*}
\Omega(0)=B_1=\{\boldsymbol{x}: \,|\boldsymbol{x}| <1\},
\end{equation*}
$\partial \Omega(t)$ denotes the moving vacuum boundary,  $\cV(\partial \Omega(t))$ denotes the normal velocity of $\partial \Omega(t)$, and $ \boldsymbol{n}(t) $ denotes the exterior unit normal vector to $\partial \Omega(t)$. 
Moreover, $\rho\geq 0$ denotes the mass density of the fluid, $\boldsymbol{u}=(u_1,\cdots\!, u_n)^\top$ $\in \mathbb{R}^n$  denotes the Eulerian velocity field,
\begin{equation*}
D(\boldsymbol{u})=\frac{1}{2}(\nabla \boldsymbol{u}+(\nabla \boldsymbol{u})^\top)
\end{equation*}
is the strain tensor, and $P$ denotes the pressure function. 
For polytropic fluids,  the constitutive relation is given by 
\begin{equation*}
P=A\rho^{\gamma},
\end{equation*}
where $A>0$ is  the entropy  constant and  $\gamma>1$ is the adiabatic exponent. Equation $\eqref{eq:1.1-vfbp}_3$ asserts that there is no vacuum inside the fluid, $\eqref{eq:1.1-vfbp}_4$ is the vacuum boundary condition stating that the density vanishes along the moving vacuum boundary $\partial \Omega(t)$, $\eqref{eq:1.1-vfbp}_5$ is the kinetic boundary condition that requires that the boundary movement is tangential to the fluid particles, and $\eqref{eq:1.1-vfbp}_6$ provides the initial conditions for the density, velocity, and domain.  It is worth emphasizing that no restriction is imposed on the size of the initial data in our results, and the solutions we obtained to \eqref{eq:1.1-vfbp}  are smooth all the way up to and including the moving boundary. Moreover, the physical vacuum is allowed for the data we considered here.

When $\gamma=n=2$, system $\eqref{eq:1.1-vfbp}_1$--$\eqref{eq:1.1-vfbp}_2$ corresponds to the well-known two-dimensional (2-D) viscous shallow water equations ({\it i.e.}, the viscous Saint-Venant system): 
\begin{equation}\label{shallow1}
\begin{cases}
h_t+\dive(h \boldsymbol{u})=0,\\[3pt]
(h\boldsymbol{u})_t+\dive(h \boldsymbol{u}\otimes \boldsymbol{u})+A\nabla (h^2)={V}(h, \boldsymbol{u}),
\end{cases}
\end{equation}
where $h$ is  the height of fluid surface,  $\boldsymbol{u}\in \mathbb{R}^2$ is the  horizontal velocity, and $V(h, \boldsymbol{u})$ is the viscosity term. For the spherically symmetric flow,  since $D(\boldsymbol{u})=\nabla \boldsymbol{u}$, then, in \eqref{shallow1},
\begin{equation}\label{shallownian}
{V}(h, \boldsymbol{u})=2\mu\dive(h D(\boldsymbol{u}))
= 2\mu\dive(h\nabla \boldsymbol{u}).
\end{equation}
We refer to \cites{BP,BN2,Gent,gpm,lions,Mar,oran} and the references therein for more details on the viscous shallow water system. Thus, the results established in this paper not only are of fundamental importance for the mathematical analysis of \textbf{CNS}, but also are directly related to an important physical situation, the 2-D shallow water equations with vacuum, as arise, for example, in dam-break problems. Since the vacuum region moves with the flow, the underlying problem is naturally formulated as a \textbf{VFBP}. 

\subsection{Background of the Problem}
The study of vacuum is crucial for understanding compressible viscous flows. In fact, a vacuum inevitably appears in the far field under natural physical requirements, such as finite total mass and total energy in the whole space $\mathbb{R}^n$. The presence of vacuum leads to a degeneracy in the time evolution of \textbf{CNS}, which poses substantial challenges to the analysis of global well-posedness for large solutions; see, for example, \cites{BJ,chz,J,JZ,MPI,fu3,lions, fu1, hisen, jensen1}. When the viscosity coefficients are taken to be constants, several singular and counterintuitive behaviors of solutions with vacuum have been observed for \textbf{CNS}. These include the non-conservation of momentum \cites{DXZ1, zz2}, the failure of continuous dependence on the initial data \cite{hoffserre}, and even finite-time blow-up for nontrivial compactly supported initial densities \cite{zx}. Such pathological phenomena may be traced back to the unphysical assumption of constant viscosity coefficients when modeling viscous fluids in the presence of vacuum, under which the vacuum exerts an artificial force on the fluid across the fluid--vacuum interface. From a physical standpoint, compressible viscous flows near vacuum are therefore more appropriately modeled by the degenerate \textbf{CNS}, which can be derived from the Boltzmann equation via the Chapman-Enskog expansion; see Chapman-Cowling \cite{chap}. In this framework, the viscosity coefficients depend on the temperature and, for isentropic flows, this dependence translates into a density dependence through the laws of Boyle and Gay-Lussac, as discussed in Liu-Xin-Yang \cite{taiping}. Furthermore, beyond the degenerate \textbf{CNS} and the shallow water equations \eqref{shallow1}, several other physically relevant fluid models incorporate density-dependent viscosities, including the Korteweg system, the lake equations, and the quantum Navier-Stokes equations; see \cites{bd6, bd2, BN2, Gent, jun, Mar} and the references therein.

Recently, the vacuum problem for the degenerate \textbf{CNS} has attracted significant attention. Owing to the strong degeneracy in both the time evolution and spatial dissipation near vacuum, establishing the global existence of M-D solutions with large initial data remains highly challenging. To date, based on the derivation of the system, there exist several approaches to studying this class of problems. One approach consists of solving the degenerate \textbf{CNS} in the whole space and requiring that the equations hold in the sense of distributions on $[0,T]\times \mathbb{R}^n$ for arbitrarily large time $T$. Along this way, a remarkable analytical framework was initiated by Bresch-Desjardins in a series of papers \cites{bd6, bd8} for barotropic flows (started in 2003 with Lin \cite{bd2} in the context of the Navier-Stokes-Korteweg system with linear shear viscosity). This framework yields key information on the gradient of a function of the density when the viscosity coefficients satisfy the so-called Bresch-Desjardins (BD) constraint. The resulting estimate is now referred to as the BD entropy estimate which, for  degenerate \textbf{CNS} $\eqref{eq:1.1-vfbp}_1$--$\eqref{eq:1.1-vfbp}_2$ in $\mathbb{R}^n$, can be given as 
\begin{equation}\label{bdshallow}
\|\nabla\sqrt{\rho}(t)\|_{L^2(\mathbb{R}^n)}<\infty \qquad \text{for any $t>0$},
\end{equation}
provided that $\nabla\sqrt{\rho_0}\in L^2(\mathbb{R}^n)$ for any $n\geq 1$.
This observation has played a fundamental role in the development of global existence theories 
for M-D weak solutions with finite energy of the Cauchy problem for the degenerate \textbf{CNS}; see Mellet-Vasseur \cite{vassu} for the compactness theory, Guo-Jiu-Xin \cite{zhenhua} for the existence of M-D spherically symmetric flow, Bresch-Vasseur-Yu \cite{bvy}, Li-Xin \cite{lz}, Vasseur-Yu \cite{vayu} for the existence of the general M-D flows, and the reference therein. 
However, due to the low regularity of these solutions, the uniqueness problem for M-D weak solutions to the degenerate \textbf{CNS} remains widely open.

A second approach is to consider solving the degenerate \textbf{CNS} in the whole space, with the aim of establishing that the system holds pointwise. However, within this framework for the vacuum problem, one encounters a fundamental and intricate difficulty. Mathematically, the degeneracy induced by vacuum creates a severe obstruction to defining the velocity field, since it is highly nontrivial to extend the notion of velocity into regions where the density vanishes. From a physical standpoint, the concept of fluid velocity itself loses meaning in the absence of fluid. Indeed, in the derivation of hydrodynamic equations from first principles, a key underlying assumption is that the fluid is non-dilute and can be described as a continuum. This assumption breaks down in vacuum regions, rendering hydrodynamic equations inapplicable for describing the time evolution of thermodynamic states there. As a result, one cannot meaningfully study the dynamics of vacuum regions by using classical hydrodynamic models. These considerations have naturally led to investigations of classical solutions of the Cauchy problem with far-field vacuum, in which the initial density remains positive for all $\boldsymbol{x}\in \mathbb{R}^n$ but decays to zero in the far field. For such initial density profiles, the degeneracy arising from the far-field vacuum still poses significant analytical challenges. To address this difficulty, an enlarged reformulation of the degenerate \textbf{CNS} was introduced by Li-Pan-Zhu \cite{sz3} for  the  following variables:
\begin{equation}\label{bianhuan}
\phi=\frac{A\gamma}{\gamma-1}\rho^{\gamma-1},\qquad \boldsymbol{\psi} =\frac{1}{\gamma-1}\nabla\log\phi=\nabla\log\rho=(\psi_{1},\cdots\!,\psi_{n})^\top.
\end{equation}
Then  $\eqref{eq:1.1-vfbp}_1$--$\eqref{eq:1.1-vfbp}_2$  can be rewritten as the following enlarged system: 
\begin{equation}\label{eqn1}
\begin{cases}
\phi_t+\boldsymbol{u}\cdot \nabla\phi+(\gamma-1)\phi \dive\boldsymbol{u}=0,\\[4pt]
\boldsymbol{u}_t+\boldsymbol{u}\cdot \nabla \boldsymbol{u}+\nabla \phi+L\boldsymbol{u}=\boldsymbol{\psi}\cdot Q(\boldsymbol{u}),\\[2pt]
\displaystyle
\boldsymbol{\psi}_t+\sum\limits_{l=1}^nA_l(\boldsymbol{u})\partial_l\boldsymbol{\psi}+B(\boldsymbol{u}) \boldsymbol{\psi}+\nabla\dive\boldsymbol{u}=\boldsymbol{0},
\end{cases}
\end{equation}
where matrices $A_l(\boldsymbol{u})=(a_{ij}^{(l)})_{n\times n}(i,j,l=1,\cdots\!,n)$ are symmetric with $a_{ij}^{(l)}=u_l$ for $i=j$ and otherwise  $a_{ij}^{(l)}=0$,  
\begin{equation*}
B(\boldsymbol{u})=(\nabla \boldsymbol{u})^{\top}, \qquad  L\boldsymbol{u}=-\mu\Delta \boldsymbol{u}-\mu\nabla \dive\boldsymbol{u}, \qquad Q(\boldsymbol{u})=2\mu D(\boldsymbol{u}).
\end{equation*}
System \eqref{eqn1} transfers the degeneracies both in the time evolution and spatial dissipation to the possible singularity of $\boldsymbol{\psi}$, which enables us to establish the local well-posedness of regular solutions with far field vacuum to $\eqref{eq:1.1-vfbp}_1$--$\eqref{eq:1.1-vfbp}_2$. We also refer the reader to \cites{sz333, zz2, zhuthesis} for additional related developments. More recently, through an elaborate analysis of the intrinsic degenerate\,--\,singular structures of the degenerate \textbf{CNS}, a series of advances has been made on the global well-posedness of classical solutions with far-field vacuum;
see Cao-Li-Zhu \cite{clz1} and Chen-Zhang-Zhu \cite{GJG} for the existence of M-D spherically symmetric flows with large initial data, and see Xin-Zhu \cite{zz} for the existence of general M-D flows with small initial data, as well as the references therein.

A third approach is designed to address the case in which vacuum appears in an open set, 
a situation arising in many important physical contexts, such as astrophysics and shallow water waves. As formulated in problem \eqref{eq:1.1-vfbp}, this requires that the degenerate \textbf{CNS} holds only on the set $\{(t,\boldsymbol{x}):\rho(t,\boldsymbol{x})>0\}$, together with an evolution equation for the boundary $\partial \Omega(t)$ of the time-dependent domain $\Omega(t)$ occupied by the fluid. Here, $\partial \Omega(t)$ is itself part of the unknown. This formulation of the vacuum problem is referred to as \textbf{VFBP}, and in this setting, an appropriate boundary condition on the vacuum interface is required. A key feature of \textbf{VFBP} is that the boundary $\partial \Omega(t)$ propagates with finite speed when the initial density is of compact support.
For the physical relevance of this phenomenon, we refer the reader to the survey by Nishida \cite{nishida}. Along this way, physical vacuum has emerged as a particularly important class of vacuum states and has been extensively studied in the context of \textbf{VFBP} for compressible flows. The physical vacuum is characterized by the property that the interface separating the fluid and vacuum propagates with a nontrivial finite normal acceleration:
\begin{equation}\label{PVcondition}
-\infty<\frac{\partial c^2}{\partial\boldsymbol{n}}<0 \qquad \text{on $\partial \Omega(t)$},
\end{equation}
where $c=\sqrt{P'(\rho)}$ denotes the speed of sound. Condition \eqref{PVcondition} was first proposed by Liu \cite{LiuTP} in the study of the self-similar solutions to the compressible Euler equations with damping. Moreover, this notion of physical vacuum can also be realized by some self-similar solutions and stationary solutions for other physical systems, such as the Euler-Poisson systems or the Navier-Stokes-Poisson systems for gaseous stars. For {\rm\textbf{VFBP}} of the isentropic compressible Euler equations $\eqref{eq:1.1-vfbp}_1$--$\eqref{eq:1.1-vfbp}_2$  with $\mu=0$, substantial progress has been made on the well-posedness of smooth solutions satisfying condition \eqref{PVcondition}. The local well-posedness theory was developed by Coutand-Shkoller \cites{coutand1,coutand3}, Coutand-Lindblad-Shkoller \cite{coutand2}, and Jang-Masmoudi \cites{Jang-M1,Jang-M2}, respectively. More recently, Jang-Had\v{z}i\'{c} \cite{Jang-Hadzic} constructed global unique solutions for adiabatic exponents $\gamma \in (1,\frac{5}{3}]$, provided that the initial data are sufficiently close to the expanding compactly supported affine motions constructed by Sideris \cite{sideris} and satisfy \eqref{PVcondition} (see also Shkoller-Sideris \cite{sideris2} for $\gamma>\frac{5}{3}$). 

The corresponding  {\rm\textbf{VFBP}} for degenerate viscous flows is subtle and fundamentally different from that of the inviscid flow. In particular, the presence of degenerate dissipation causes the classical div-curl-type arguments\,---\,which are crucial in the inviscid setting for establishing normal estimates (see \cites{coutand1, coutand2, coutand3, Jang-M1, Jang-M2, Jang-Hadzic, sideris2})\,---\,to break down in the viscous case. To date, only a limited number of works have addressed the global well-posedness of strong/classical solutions to \textbf{VFBP} for the degenerate isentropic \textbf{CNS}.  When gravitational effects are taken into account and the physical vacuum boundary condition \eqref{PVcondition} holds, Ou-Zeng \cite{OZ} established the global existence of the one-dimensional (\text{1-D}) strong solutions with small initial data. Under a proper smallness assumption, Luo-Xin-Zeng \cite{LXZ3} proved the global existence of strong solutions satisfying \eqref{PVcondition} for the three-dimensional (\text{3-D}) spherically symmetric compressible Navier-Stokes-Poisson system with degenerate viscosities. More recently, Li-Wang-Xin \cites{LWX,LWX2} established the local well-posedness of classical solutions satisfying \eqref{PVcondition} for the  viscous Saint-Venant system  $\eqref{eq:1.1-vfbp}_1$--$\eqref{eq:1.1-vfbp}_2$ with   $\gamma=2$ in one and two spatial dimensions. Moreover, for a class of admissible initial depth profiles:  $\rho_0^\beta$ $(\frac{1}{3}<\beta\leq 1)$ vanishing on the moving boundary in the form of a distance function, Xin-Zhang-Zhu \cites{ZJW} established the global existence of classical solutions with large initial data for the 1-D viscous Saint-Venant system.

It is worth pointing out that the solutions obtained in \cites{LWX,LWX2,ZJW} are smooth (in Sobolev spaces) up to the moving boundary, while the strong ones established in \cites{LXZ3,OZ} do not enjoy this level of regularity at the boundary. Additional related developments can be found in \cites{Jang, Yang-Zhu} and the references therein.

Despite the significant progress on \textbf{VFBP} for viscous compressible fluids, the understanding of the global well-posedness of classical solutions with large initial data 
in the M-D spatial settings remains very limited. Nevertheless, in order to gain deeper insight into the behavior of solutions near the moving vacuum boundary, it is highly desirable to establish global regularity of the corresponding solutions and, in particular, to prove the smoothness of solutions all the way up to the boundary.

\subsection{Lagrangian reformulation of \textbf{VFBP} in spherical coordinates}\label{sec-1.2}
In this paper, we establish the global well-posedness of both 2-D and 3-D classical solutions, taking the form 
\begin{equation}\label{ss-ass}
(\rho,\boldsymbol{u})(t,\boldsymbol{x}) = (\rho(t,|\boldsymbol{x}|), u(t,|\boldsymbol{x}|)\frac{\boldsymbol{x}}{|\boldsymbol{x}|}),
\end{equation}
of \textbf{VFBP} \eqref{eq:1.1-vfbp} with the initial data:
\begin{equation}\label{eq:IC}
(\rho,\boldsymbol{u})(0,\boldsymbol{x}) =(\rho_0,\boldsymbol{u}_0)(\boldsymbol{x})= (\rho_0(|\boldsymbol{x}|), u_0(|\boldsymbol{x}|)\frac{\boldsymbol{x}}{|\boldsymbol{x}|}).
\end{equation}
Our results hold for physical adiabatic exponents $\gamma\in (\frac{4}{3},\infty)$ in two dimensions, and for $\gamma\in (\frac{4}{3},3)$ in three dimensions, without any restriction on the size of the initial data. The  initial density $\rho_0$ we considered  satisfies the following condition:
\begin{equation}\label{distanceeuler}
\rho_0^\beta(\boldsymbol{x})\in H^3(\Omega), \quad\,\, \cK_1(1-|\boldsymbol{x}|)^\frac{1}{\beta}\leq \rho_0(\boldsymbol{x})\leq \cK_2(1-|\boldsymbol{x}|)^\frac{1}{\beta} \qquad\,\, \text{for all $\boldsymbol{x}\in \overline\Omega$},
\end{equation}
for some  constants $\cK_2>\cK_1>0$ and $\beta\in (\frac{1}{3},\gamma-1]$. It is worth noting that the set of $\rho_0$ defined by \eqref{distanceeuler} contains three typical types of initial density profiles.

First, for $\beta\in (0,1)$, \eqref{distanceeuler} implies that  $\rho_0$  satisfies the initial condition of the BD entropy estimate in $\Omega$ (see \eqref{bdshallow} at $t=0$ shown in the whole space $\mathbb{R}^n$, or \cites{bd6,bd8,bd2}):
\begin{equation}\label{BDconditionr}
\|\nabla\sqrt{\rho_0}\|_{L^2(\Omega)}<\infty.
\end{equation}
Second, when $\beta=\gamma-1$, \eqref{distanceeuler} implies that $\rho_0$  satisfies the well-known physical vacuum boundary condition for spherical symmetric flow, {\it i.e.},
\begin{equation}\label{PVconditionr}
\rho^{\gamma-1}_0\sim 1-|\boldsymbol{x}| \qquad \text{as $|\boldsymbol{x}|$ close to the vacuum boundary $|\boldsymbol{x}|=1$}.
\end{equation}
Furthermore, it is direct to check that these two types of initial conditions on the density shown in \eqref{BDconditionr}--\eqref{PVconditionr} are not compatible in \eqref{distanceeuler} when $\gamma\geq 2$. However, the case $\gamma=2$ in system $\eqref{eq:1.1-vfbp}_1$--$\eqref{eq:1.1-vfbp}_2$  corresponds to the important physical model: the shallow water equations \eqref{shallow1} ({\it i.e.}, the viscous Saint-Venant system), which means that we can not use the classical BD entropy estimate to deal with the degeneracy of equations \eqref{shallow1} on the moving physical vacuum boundary. Finally, when $\gamma> 2$ and $\beta\in (1,\gamma-1)$, it is direct to check that both \eqref{BDconditionr} and \eqref{PVconditionr} fail here. In this sense, the well-posedness theory established here represents a substantial step toward global regularity of classical solutions with general large initial data for \textbf{VFBP} of the M-D degenerate \textbf{CNS}, a problem that is closely related to the dam-break phenomenon in the shallow water equations when $\gamma=2$.

Since we focus on the  spherically symmetric flow, we first reformulate problem \eqref{eq:1.1-vfbp} into the following form in $I(t)=[0,R(t))$ as the radial projection of the moving domain $\Omega(t)$ 
with $R(0)=1$ and $I=[0,1)$:
\begin{equation}\label{shallow-SSR-euler}
\begin{cases}
\displaystyle 
\rho_t+u\rho_x+ \rho \big(u_x+\frac{m u}{x}\big)=0&\text{in $I(t)$},\\[4pt]
\displaystyle
\rho u_t+\rho u u_x+P_x=2\mu\Big(\rho \big(u_x+\frac{m u}{x}\big)\Big)_x-2\mu m \frac{\rho_x u}{x} &\text{in $I(t)$},\\[4pt]
\rho>0&\text{in $I(t)$},\\[4pt]
\rho=0&\text{on $\partial I(t)$},\\[4pt]
R'(t)=u &\text{on $\partial I(t)$},\\[4pt]
(\rho,u)|_{t=0}=(\rho_0,u_0)&\text{in $I(0):=I$}, 
\end{cases}
\end{equation}
where $x=|\boldsymbol{x}|$ and $m=n-1$.

Second, problem \eqref{shallow-SSR-euler}, formulated in Eulerian coordinates on the moving interval $I(t)$, can be transformed to a problem on the fixed interval $I$ by introducing the Lagrangian coordinates. To this end, denote by $x=\eta(t,r)$ the position of the fluid particle $x\in I(t)$ at $t\geq 0$ so that
\begin{equation}\label{flowmap-r-la}
\eta_t(t,r)=u(t,\eta(t,r)),\qquad \eta(0,r)=r,
\end{equation}
and $(t,r)$ are the Lagrangian coordinates. 
Then, by introducing the Lagrangian density and velocity 
\begin{equation}\label{varrho-U}
\varrho(t,r)= \rho(t, \eta(t,r)),\qquad U(t,r)= u(t, \eta(t,r)),
\end{equation}
problem \eqref{shallow-SSR-euler} can be written in the following initial-boundary values problem ({\rm\bf IBVP}) in the fixed domain $I$ in Lagrangian coordinates $(t,r)$:
\begin{equation}\label{eq:VFBP-La}
\begin{cases}
\displaystyle \varrho_t + \varrho\big(\frac{U_r}{\eta_r}+\frac{mU}{\eta}\big)=0 & \text{in } (0, T] \times I,\\[4pt]
\displaystyle \eta_r \varrho U_t +A  (\varrho^{\gamma})_r=2\mu  \Big(\varrho\big(\frac{U_r}{\eta_r}+ \frac{mU}{\eta}\big)\Big)_r - 2\mu m\frac{\varrho_r U}{\eta}& \text{in }(0, T] \times I,\\[4pt]
\eta_t = U & \text{in } (0, T] \times I,\\[4pt]
\varrho>0 & \text{in } (0, T] \times I,\\[4pt]
\varrho|_{r=1}=0 & \text{on } (0, T],\\[4pt] 
(\varrho, U, \eta)(0,r)= (\rho_0(r), u_0(r),r) & \text{for $r\in I$}.
\end{cases}
\end{equation}
Moreover, it follows from Lemma \ref{lemma-initial} in Appendix \ref{appb} that condition \eqref{distanceeuler}, which is initially satisfied by $\rho_0(\boldsymbol{x})$ in M-D Eulerian coordinates, can be rewritten in spherical coordinates as a condition satisfied by $\rho_0(r)$: for some  constants $\cK_2>\cK_1>0$ and $\beta\in (\frac{1}{3},\gamma-1]$,
\begin{equation}\label{distance-la}
\begin{aligned}
&r^\frac{m}{2}\big(\rho_0^\beta,(\rho_0^\beta)_r,(\rho_0^\beta)_{rr},\frac{(\rho_0^\beta)_r}{r},(\rho_0^\beta)_{rrr},(\frac{(\rho_0^\beta)_r}{r})_r\big)\in L^2(I),\\
&\cK_1(1-r)^\frac{1}{\beta}\leq \rho_0(r)\leq \cK_2(1-r)^\frac{1}{\beta} \qquad \text{for all $r\in I$}.
\end{aligned}
\end{equation}

In fact, the corresponding study of \eqref{eq:VFBP-La} is extremely difficult, since the structure of momentum equation $\eqref{eq:VFBP-La}_2$ is full of degeneracy and singularity. This equation can be written in the following form:
\begin{equation}\label{cosingu}
\underbrace{\eta_r\varrho U_t}_{\circledast} +A (\varrho^{\gamma})_r=\underbrace{2\mu \big(\varrho \frac{U_r}{\eta_r}\big)_r }_{\Diamond} + \underbrace{2\mu m\varrho\big(\frac{ U}{\eta}\big)_r}_{\blacktriangle},
\end{equation}
where $\blacktriangle$ denotes the coordinate singularity, $\circledast$ denotes the degenerate time evolution, and $\Diamond$ denotes the degenerate spatial dissipation. 

Some favorable regularity properties may be anticipated, since equation $\eqref{cosingu}$ exhibits largely 1-D behavior away from the origin. Nevertheless, due to the compressibility of fluids and the introduction of the Lagrangian coordinates to handle the moving boundary, our analysis encounters two  major obstacles:
\begin{itemize}
\item \textit{Possible cavitation or implosion within the fluids {\rm (see Figure \ref{fig:1})}};
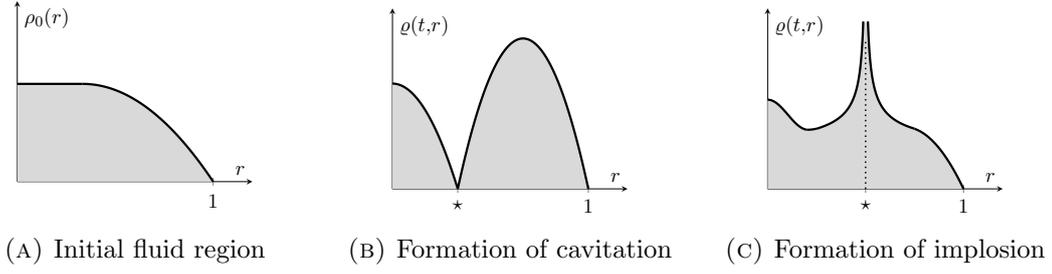
\begin{figure}[t]
\centering
\begin{subfigure}[t]{0.3\textwidth}
\centering
\begin{tikzpicture}[scale=0.7]
\begin{axis}[
    width=6cm,
    height=5cm,
    xmin=0, xmax=1.2,
    ymin=0, ymax=1.2,
    axis lines = middle,
    xlabel = $r$,
    ylabel = $\rho_0(r)$,
    xtick = {0, 1},
    xticklabels = {0, $\textcolor{white}{(}1\textcolor{white}{)}$},
    ytick = {0},
    samples = 200,
    domain = 0:1]
\fill[gray!30] (axis cs:0,0) 
    -- plot[domain=0:1/3] (axis cs:\x,0.65) 
    -- plot[domain=1/3:1] (axis cs:\x,{0.65*(1-2.25*(\x-1/3)^2)}) 
    -- (axis cs:1,0) 
    -- cycle;
\addplot[black, very thick, domain=0:1/3]{0.65};
\addplot[black, very thick, domain=1/3:1, smooth]{0.65*(1-2.25*(x-1/3)^2)};
\end{axis}
\end{tikzpicture}
\caption{Initial fluid region}
\end{subfigure}
\hspace{0cm}
\begin{subfigure}[t]{0.3\textwidth}
\centering
\begin{tikzpicture}[scale=0.7]
\begin{axis}[
    width=6cm,
    height=5cm,
    xmin=0, xmax=1.2,
    ymin=0, ymax=1.2,
    axis lines = middle,
    xlabel = $r$,
    ylabel = $\varrho(t\text{,}r)$,
    xtick = {0,1/3, 1},
    xticklabels = {0,$\star$,$1$},
    ytick = {0},
    samples = 200,
    domain = 0:1]
\fill[gray!30] (axis cs:0,0) 
    -- plot[domain=0:1/3] (axis cs:\x,{0.7-6.3*\x^2}) 
    -- plot[domain=1/3:1] (axis cs:\x,{9*(1-\x)*(\x-1/3)}) 
    -- (axis cs:1,0) 
    -- cycle;
\addplot[black, very thick, domain=0:1/3, smooth]{0.7-6.3*x^2};
\addplot[black, very thick, domain=1/3:1, smooth]{9*(1-x)*(x-1/3)};
\end{axis}
\end{tikzpicture}
\caption{Formation of cavitation}
\end{subfigure}
\hspace{0cm}
\begin{subfigure}[t]{0.3\textwidth}
\centering
\begin{tikzpicture}[scale=0.7]
\begin{axis}[
    width=6cm,
    height=5cm,
    xmin=0, xmax=1.2,
    ymin=0, ymax=1.2,
    axis lines = middle,
    xlabel = $r$,
    ylabel = $\varrho(t\text{,}r)$,
    xtick = {0, 1/2, 1},
    xticklabels = {$0$, $\star$, 1},
    ytick = {0},
    samples = 200,
    domain = 0:1]
\fill[gray!30] (axis cs:0,0) 
    -- plot[domain=0:0.2] (axis cs:\x,{0.1*cos(900*\x)+0.493}) 
    -- plot[domain=0.25:0.488] (axis cs:\x,{0.2+0.1/(0.5-\x)^0.5}) 
    -- plot[domain=0.512:0.75] (axis cs:\x,{0.2+0.1/(\x-0.5)^0.5})
    -- plot[domain=0.75:1] (axis cs:\x,{4.55*(\x-0.4)*(1-\x)})
    -- (axis cs:1,0) 
    -- cycle;
\addplot[black, very thick, domain=0.2:0.25, smooth]{3*x*(x-0.4)+0.513};
\addplot[black, very thick, domain=0:0.2, smooth]{0.1*cos(870*x)+0.493};
\addplot[black, very thick, domain=0.25:0.488, smooth]{0.2+0.1/(0.5-x)^0.5};
\addplot[black, very thick, domain=0.512:0.75, smooth]{0.2+0.1/(x-0.5)^0.5};
\addplot[black, very thick, domain=0.75:1, smooth]{4.565*(x-0.4)*(1-x)};
\addplot[black, dotted, thick] coordinates {(1/2,0) (1/2,1)};
\end{axis}
\end{tikzpicture}
\caption{Formation of implosion}
\end{subfigure}
\caption{Possible cavitation or implosion inside the fluids.}
\label{fig:1}
\end{figure}
\vspace{4pt}
\item \textit{Possible degeneracy of the coordinate transformation between the Eulerian and Lagrangian coordinates.} 
\end{itemize}
Dealing with these two obstacles is particularly challenging due to several inherent and intertwined issues:
\begin{itemize}
\vspace{2pt}
\item [\rm (i)] Coordinate singularity $\blacktriangle$ at the origin, manifested by the singular factor $\tfrac{1}{\eta}$ in \eqref{cosingu};
\vspace{5pt}
\item [\rm (ii)] Strong degeneracy in both time  evolution $\circledast$ and spatial dissipation $\Diamond$ on the vacuum boundary, which makes it formidable to identify suitable propagation and mollification mechanisms for the regularity of $U$, particularly in smooth function spaces;
\vspace{5pt}
\item [\rm (iii)] Incompatibility between the initial condition \eqref{BDconditionr} 
required for the BD entropy estimate and the physical vacuum boundary condition \eqref{PVcondition} or \eqref{PVconditionr} in \eqref{distanceeuler} when $\gamma=2$ and $\beta=\gamma-1$.  As a consequence, the BD entropy estimate cannot be employed directly to handle the degeneracy at the moving boundary for \textbf{VFBP} for the 2-D shallow water equations;
\vspace{5pt}
\item [\rm (iv)] Failure of both \eqref{BDconditionr} and \eqref{PVconditionr} when $\gamma>2$ and $\beta \in (1,\gamma-1)$;
\vspace{5pt}
\item [\rm (v)] Lack of uniform positive lower and upper bounds of $(\eta_r,\frac{\eta}{r})$, which is closely tied to the high-order regularity of $(U_r,\frac{U}{r})$.
\end{itemize}

\smallskip
In the study of the well-posedness of regular solutions with far-field vacuum of 
the Cauchy problem or \textbf{IBVP} for system $\eqref{eq:1.1-vfbp}_1$--$\eqref{eq:1.1-vfbp}_2$ in exterior domains to a ball in $\mathbb{R}^n$ (\textit{e.g.}, \cites{clz1,sz3, GJG}), the enlarged reformulation \eqref{bianhuan} is introduced to handle the degeneracy near the vacuum. This reformulation, together with the classical BD entropy estimate, allows us to obtain the uniform upper bound estimates of the density in the domains under consideration. However, if we divide the density $\varrho$ on both sides of equation \eqref{cosingu}, it becomes apparent that the key quantity $\psi=\frac{1}{\gamma-1}(\log \varrho)_r$ in the enlarged reformation \eqref{bianhuan} exhibits highly singular behavior, \textit{i.e.},
\begin{equation*}
\psi=\frac{1}{\gamma-1}(\log \varrho)_r\sim \frac{1}{1-r} \quad \text{near\ the\ vacuum\ boundary\ $r=1$},
\end{equation*}
which does not belong to $L^p(I)$ for any $p\geq 1$. Moreover, for the Cauchy problem, or \textbf{IBVP} in the exterior domain to a ball in $\mathbb{R}^n$ with initial data allowing far field vacuum, \textit{e.g.},
\begin{equation*}
\rho_0(\boldsymbol{x})\sim \frac{1}{(1+|\boldsymbol{x}|)^k} \quad \text{for\ some\ constant\ $k>0$\ and\   sufficiently\ large\ $|\boldsymbol{x}|$},
\end{equation*}
it is observed that the higher the order of the density's derivatives, the faster the  decay rate in the far field.  
In contrast, for the corresponding \textbf{VFBP} with initial density shown in \eqref{distanceeuler},
\begin{equation*}
\rho_0(x)\sim (1-|\boldsymbol{x}|)^\frac{1}{\beta} \qquad  \text{near\ the\ vacuum\ boundary\ $|\boldsymbol{x}|=1$},
\end{equation*}
as the order of the density's derivatives  increases, the decay rate on the moving boundary decreases. As a result, the corresponding sufficiently high-order derivatives may cause the singularity of the density near the moving boundary, which poses serious difficulties in the analysis of high-order regularity of both the pressure and the density-dependent viscosity coefficients. These features highlight fundamental distinctions between the present setting and the far-field vacuum problems. In particular, they imply that the existing analytical framework\,---\,especially the enlarged reformulation developed for the Cauchy problem or \textbf{IBVP} in \cites{clz1, sz3, GJG}\,---\,cannot be directly applied to \textbf{VFBP} considered in this paper.

On the other hand, when $\gamma\geq 2$, the incompatibility between the initial condition \eqref{BDconditionr} required by the BD entropy estimate and the physical vacuum boundary conditions \eqref{PVcondition} or \eqref{PVconditionr} in \eqref{distanceeuler} renders \textbf{VFBP} \eqref{eq:1.1-vfbp} with physical vacuum seemingly intractable. In particular, this difficulty encompasses the important case of the shallow water equations (\textit{i.e.}, the case $\gamma=2$). 
Moreover,  when $\gamma>2$ and $\beta \in (1,\gamma-1)$, the simultaneous failure of both \eqref{BDconditionr} and \eqref{PVconditionr} further exacerbates the analytical challenges.
Indeed, almost all existing results on either the local well-posedness of classical solutions with large data or global well-posedness of strong solutions for perturbed data of \textbf{VFBP} \eqref{eq:1.1-vfbp} 
or related fluid models ({\it cf}. \cites{LWX2, LXZ3}) in M-D have been obtained under the assumption 
of the physical vacuum condition \eqref{PVcondition} or \eqref{PVconditionr}.
This assumption plays a crucial role in excluding the formation of singularities near the moving vacuum boundary. However, it appears difficult to extend the techniques developed in \cites{LWX2, LXZ3} to the regime $\beta\in (\frac{1}{3},\gamma-1)$ in \eqref{distanceeuler} or \eqref{distance-la}.
Furthermore, due to the double degeneracy present in \eqref{cosingu} in the presence of vacuum, 
establishing global uniform estimates for the higher-order derivatives for general smooth initial data 
is highly challenging unless additional structural constraints, such as the initial condition \eqref{BDconditionr} of the  BD entropy estimate, are imposed. 
In fact, to the best of our knowledge, all existing global well-posedness theories for strong or classical solutions to the degenerate \textbf{CNS}, whether for general Cauchy problems or for initial-boundary value problems on fixed domains, require that \eqref{BDconditionr} holds in the corresponding domains; see \cites{clz1, GJG, cons, BH, vassu2}.
These observations might suggest that the global well-posedness of classical solutions of \textbf{VFBP} \eqref{eq:1.1-vfbp} for general large initial data could be achieved by exploiting the combined effects of physical vacuum and BD entropy estimate. Unfortunately, this strategy does not apply in the present setting. As discussed earlier, for general initial density profiles satisfying \eqref{distanceeuler} with 
$\beta=\gamma-1$, these two conditions are mutually incompatible in the case of the 2-D shallow water equations, thereby necessitating a fundamentally different analytical approach.

Thus, new ideas and techniques are required to establish the global well-posedness of classical solutions 
of \textbf{VFBP} \eqref{eq:1.1-vfbp} under either the physical vacuum condition 
or the initial assumption of the BD entropy estimate, particularly in the important case of the 2-D shallow water equations. 
Fortunately, by exploiting the intrinsic degenerate-singular structure of system \eqref{eq:1.1-vfbp} 
in the Lagrangian coordinates $(t,r)$, {\it i.e.}, $\eqref{eq:VFBP-La}$, and assuming that
\eqref{distance-la} holds, we are able to prove the global well-posedness of classical solutions 
of \textbf{VFBP} \eqref{eq:1.1-vfbp} with large initial data of spherical symmetry 
in two and three spatial dimensions. 
The most crucial step in our analysis is to obtain uniform upper and lower bounds 
for $(\eta_r,\frac{\eta}{r})$ on $I=[0,1)$, 
a task that is highly nontrivial due to the aforementioned difficulties. 
To achieve this, we decompose the interval $I$ 
into two subintervals: the interior interval $I_\flat=[0,\frac{1}{2})$ and the 
exterior interval $I_\sharp=[\frac{1}{2},1)$.
To derive the lower bounds for $(\eta_r,\frac{\eta}{r})$ on $I$, 
we first obtain the lower bounds in $I_\sharp$ through the $L^\infty$-estimate of $\eta^m\varrho$ on $I$. 
Then we control $\varrho$ from above on $I_\flat$ by establishing an ``interior BD entropy estimates'' 
and some novel $(\eta,\eta_r)$-weighted estimates for $\varrho$ near the origin.
This allows us to deduce uniform lower bounds for $(\eta_r,\frac{\eta}{r})$ in $I_\flat$. 
Finally, to obtain the upper bounds of $(\eta_r,\frac{\eta}{r})$ on $I$, we establish some $L^\infty$-estimates concerning $\log\varrho$. 
By fully exploiting the radial projection of effective velocity:
\begin{equation*}
V=U+2\mu\eta_r^{-1} (\log\varrho)_r 
\end{equation*}
and its damped transport equation, on the interior interval $I_\flat$, 
we obtain the $L^\infty$-estimate for $V$, 
which subsequently yields the boundedness of $\log\varrho$. 
On the exterior interval $I_\sharp$, we establish the $\rho_0$-weighted estimates for $V$, which in turn allow us to derive some special $(\rho_0,\eta_r)$-weighted estimates for $(U,V)$. 
These estimates then yield the $\rho_0$-weighted $L^\infty$-estimates for $\log\varrho$. Finally,
by combining this control with the corresponding $L^\infty$-estimates for $\log\varrho$ on the interior interval, we obtain the uniform upper bounds for the quantities
$(\eta_r,\frac{\eta}{r})$ on $I$.

\subsection{Outline of the paper}
The rest of the paper is organized as follows: 
In \S\ref{Section-maintheorem}, 
we state the main theorems. 
In \S \ref{Section-notation}, we outline the main strategy for establishing the global 
well-posedness theory stated in \S \ref{Section-maintheorem}.
Sections \S\ref{Section-densityupper}--\S\ref{Section-global} contain the detailed proof for
the global well-posedness theory of the classical solutions 
for general smooth, spherically symmetric initial data.
In \S\ref{Section-densityupper}--\S\ref{Section-globalestimates}, 
we first derive global, uniform estimates for the velocity in the purpose-built function spaces; this is achieved in six steps:
\begin{enumerate}
\item[{\rm (i)}] derive the global-in-time {\it a priori} upper bound of density $\varrho$ (\S \ref{Section-densityupper});
\vspace{3pt}
\item[{\rm (ii)}] derive the global-in-time {\it a priori} lower bounds of $(\eta_r,\frac{\eta}{r})$ (\S \ref{Section-etarlower});
\vspace{3pt}
\item[{\rm (iii)}] establish the global uniform $L^\infty(I)$-estimate for the effective velocity (\S \ref{Section-effectivevelocity});
\vspace{3pt}
\item[{\rm (iv)}] derive  the global-in-time {\it a priori} upper bounds of  $(\eta_r,\frac{\eta}{r})$ (\S \ref{Section-etarupper});
\vspace{3pt}
\item[{\rm (v)}] show that vacuum does not form inside the fluids  in  finite time (\S \ref{Section-densitylower});
\vspace{3pt}
\item[{\rm (vi)}] establish the global uniform estimates for the velocity (\S \ref{Section-globalestimates}).
\end{enumerate}  
With these estimates in hand, we obtain the desired global well-posedness of classical solutions in \S\ref{Section-global} by continuation arguments. 
Moreover, owing to the double degeneracy exhibited in \eqref{cosingu} in the presence of vacuum, the corresponding local well-posedness theory for classical solutions of \textbf{IBVP} \eqref{eq:VFBP-La} or \eqref{eq:VFBP-La-eta} is highly nontrivial; this issue is addressed in detail in \S\ref{Section-local}.
Finally, several auxiliary lemmas and useful coordinate transformations for spherically symmetric functions are collected in Appendices \ref{appendix A}--\ref{subsection2.2}.

\section{Main Theorems}\label{Section-maintheorem}

This section is devoted to stating our main theorems on the global well-posedness of classical solutions of \textbf{VFBP} \eqref{eq:1.1-vfbp} 
with large spherically symmetric initial data in two and three spatial dimensions. For simplicity, throughout this paper, for any function space 
$X$, positive integer $k$ and functions $(\varphi,g_1,\cdots\!,g_k)$,
the following convention is used: 
\begin{equation*}
\|\varphi(g_1,\cdots\!,g_k)\|_{X}:=\sum_{i=1}^k\|\varphi g_i\|_X.
\end{equation*}

\subsection{Main results in Lagrangian coordinates}

First, $\eqref{eq:VFBP-La}_1$ and $\eqref{eq:VFBP-La}_3$ imply that
\begin{equation}\label{eq:eta}
\varrho(t,r) = \frac{r^m\rho_0(r)}{\eta^m\eta_r}.
\end{equation}
Then, based on the definition of the Eulerian derivative $D_\eta$:
\begin{equation*}
D_\eta f=\frac{f_r}{\eta_r} \qquad\text{for some function $f=f(r)$},
\end{equation*}
problem \eqref{eq:VFBP-La}, combined with \eqref{eq:eta}, can be written as the following  \textbf{IBVP} for $(U, \eta)$:
\begin{equation}\label{eq:VFBP-La-eta}
\begin{cases}
\displaystyle \varrho U_t +A D_\eta(\varrho^{\gamma})=2\mu D_\eta\Big(\varrho\big(D_\eta U+ \frac{mU}{\eta}\big)\Big) - 2\mu m \frac{ U D_\eta\varrho }{\eta}& \text{in $(0, T]\times I$},\\[5pt]
\eta_t = U & \text{in $(0, T] \times I$},\\[5pt]
(U, \eta)(0,r)= (u_0(r), r) & \text{for $r\in I$},
\end{cases}
\end{equation}
where density $\varrho$ is given  by \eqref{eq:eta}. In the rest of this paper,  we denote $D^k_\eta f=D_\eta ( D^{k-1}_\eta f)$ for any positive integer $k$ and  spherical symmetric function $f=f(r)$.

Second, we define classical solutions of problem \eqref{eq:VFBP-La-eta} as follows:
\begin{Definition}\label{definition-lag}
Let $T>0$. A vector function $(U,\eta)(t,r)$ is called 
a classical solution of {\rm\bf IBVP} \eqref{eq:VFBP-La-eta} 
in $[0,T]\times \bar I$ if the following properties hold{\rm:}
\begin{enumerate}
\item[{\rm (i)}] $(U,\eta)(t,r)$ satisfies equations 
$\eqref{eq:VFBP-La-eta}_1${\rm--}$\eqref{eq:VFBP-La-eta}_2$ pointwise in $(0,T]\times \bar I$ and take the initial data $\eqref{eq:VFBP-La-eta}_3$ continuously{\rm ;}
\item[{\rm (ii)}] $\eta_r(t,r)$ and $\frac{\eta}{r}(t,r)$ are strictly positive in $[0,T]\times \bar I${\rm:}
\begin{equation*}
\inf_{[0,T]\times \bar I} \ \eta_r >0, \qquad \inf_{[0,T]\times \bar I} \ \frac{\eta}{r}>0;
\end{equation*}
\item[{\rm (iii)}] $(U,\eta)(t,r)$ satisfies the following regularity properties{\rm:}
\begin{equation*}
\begin{aligned}
&\big(U,U_r,\frac{U}{r}\big)\in C([0,T];C(\bar I)),\qquad \big(U_{rr},(\frac{U}{r})_r,U_t\big)\in C((0,T];C(\bar I)),\\
&\big(\eta,\eta_r,\frac{\eta}{r}\big)\in C^1([0,T];C(\bar I)),\qquad \,\big(\eta_{rr},(\frac{\eta}{r})_r\big)\in
C^1((0,T];C(\bar I)).
\end{aligned}
\end{equation*}
\end{enumerate}
\end{Definition}

Next, in order to clearly state our main results, we need to define the following nonlinear weighted energy functional and related parameters:
\begin{itemize}
\item A universal parameter $\varepsilon_0>0$ throughout the paper, which satisfies
\begin{equation}\label{varepsilon0}
\begin{cases}
\displaystyle 0<\varepsilon_0 < \min\big\{\frac{3}{2}-\frac{1}{2\beta},\frac{\gamma-1}{\beta}-1,\frac{1}{2}\big\}& \displaystyle\quad\text{for }\beta\in \big(\frac{1}{3},\gamma-1\big),\\[6pt]
\displaystyle 0<\varepsilon_0< \min\big\{\frac{3}{2}-\frac{1}{2\beta},\frac{1}{2}\big\}& \quad\text{for }\beta=\gamma-1.
\end{cases}
\end{equation}

\smallskip
\item The total energy:
\begin{equation}\label{E-1}
\cE(t,f)=\cE_{\mathrm{in}}(t,f)+\cE_{\mathrm{ex}}(t,f),
\end{equation}
where 
\begin{equation}\label{E-2}
\begin{aligned}
\qquad \, \mathcal{E}_{\mathrm{in}}(t,f)&:=\Big\|\zeta r^\frac{m}{2}\big(f,D_\eta f,\frac{f}{\eta},f_t,D_\eta f_t,\frac{f_t}{\eta}\big)(t)\Big\|_{L^2(I)}^2\\
&\quad +\Big\|\zeta r^\frac{m}{2}\Big(D_\eta^2 f, D_\eta\big(\frac{f}{\eta}\big),D_\eta^3 f, D_\eta^2\big(\frac{f}{\eta}\big),\frac{1}{\eta}D_\eta\big(\frac{f}{\eta}\big) \Big)(t)\Big\|_{L^2(I)}^2,\\
\qquad  \, \mathcal{E}_{\mathrm{ex}}(t,f)&:=\big\|\rho_0^\frac{1}{2}(f,D_\eta f,f_t,D_\eta f_t)(t)\big\|_{L^2(\frac{1}{2},1)}^2+\big\|\rho_0^{(\frac{3}{2}-\varepsilon_0)\beta}(D_\eta^2 f,D_\eta^3 f)(t)\big\|_{L^2(\frac{1}{2},1)}^2,
\end{aligned}
\end{equation}
and $\zeta=\zeta(r)\in C^\infty[0,1]$ denotes a decreasing cut-off function satisfying
\begin{equation}\label{zeta}
\zeta\in [0,1],\qquad \zeta(r)=1 \ \ \text{for  $r\in \big[0,\frac{1}{2}\big]$},\qquad \zeta(r)=0 \ \ \text{for $r\in \big[\frac{5}{8},1\big]$}.
\end{equation}

\smallskip
\item The total dissipation:
\begin{equation}\label{D-1}
\cD(t,f)=\cD_{\mathrm{in}}(t,f)+\cD_{\mathrm{ex}}(t,f),
\end{equation}
where 
\begin{equation}\label{D-2}
\begin{aligned}
\qquad\cD_{\mathrm{in}}(t,f)&:=\Big\|\zeta r^\frac{m}{2}\Big(f_{tt},D_\eta^2 f_{t},D_\eta\big(\frac{f_{t}}{\eta}\big),D_\eta^4 f, D_\eta^3\big(\frac{f}{\eta}\big),D_\eta\big(\frac{1}{\eta}D_\eta(\frac{f}{\eta})\big) \Big)(t)\Big\|_{L^2(I)}^2,\\
\qquad\cD_{\mathrm{ex}}(t,f)&:=\big\|\rho_0^\frac{1}{2}f_{tt}(t)\big\|_{L^2(\frac{1}{2},1)}^2+\big\|\rho_0^{(\frac{3}{2}-\varepsilon_0)\beta}(D_\eta^2 f_t,D_\eta^4 f)(t)\big\|_{L^2(\frac{1}{2},1)}^2.
\end{aligned}
\end{equation}
\end{itemize}

We are now ready to state the main results in Lagrangian coordinates.  
\begin{Theorem}\label{Theorem1.1} 
Let $n=2$ or $3$ and 
\begin{equation}\label{gamma-hold}
\gamma\in (\frac{4}{3},\infty) \ \ \text{if $n=2$}, \qquad\quad \gamma\in (\frac{4}{3},3) \ \ \text{if $n=3$}.
\end{equation}
If $\rho_0(r)$ satisfies \eqref{distance-la} for some
\begin{equation}\label{beta}
\beta\in \big(\frac{1}{3},\gamma-1\big],
\end{equation}
and $u_0(r)$ satisfies
\begin{equation}\label{a2}
\cE(0,U)<\infty,
\end{equation}
then, for any $T>0$, {\rm\bf IBVP} \eqref{eq:VFBP-La-eta} admits a unique classical solution $(U,\eta)(t,r)$ in $[0,T]\times \bar I$ such that
\begin{equation}\label{b1}
\begin{aligned}
&\sup_{t\in[0,T]}\big(\cE(t,U)+t\cD(t,U)\big)+\int_0^T\cD (s,U)\,\mathrm{d}s\leq C(T),\\
&(\eta_r,\frac{\eta}{r})(t,r)\in [C^{-1}(T),C(T)] \qquad \text{for all $(t,r)\in [0,T]\times \bar I$},
\end{aligned}
\end{equation}
where $C(T)>1$ is a constant  depending  only on  $(n,\mu,A,\gamma,\beta,\varepsilon_0,\rho_0,u_0,\cK_1, \cK_2,T)$. Moreover, such a classical solution admits the following boundary conditions{\rm:}
\begin{equation}\label{N111}
U|_{r=0}=U_r|_{r=1}=0\qquad \text{on $(0,T]$},
\end{equation}
and the asymptotic behavior{\rm:}
\begin{equation}\label{AN111}
|U_r(t,r)|\leq C(T)(1-r)\qquad \text{for  $(t,r)\in(0,T]\times \bar I$}.
\end{equation}
\end{Theorem}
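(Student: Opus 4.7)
My plan is to combine a local well-posedness theorem for \textbf{IBVP} \eqref{eq:VFBP-La-eta} (addressed in \S\ref{Section-local}) with a continuation argument driven by global-in-time a priori bounds; the continuation closes as soon as one controls $\cE(t,U)$, $\int_0^T\cD(s,U)\,\mathrm{d}s$, and strictly positive uniform bounds on $(\eta_r,\frac{\eta}{r})$. Throughout I work in Lagrangian coordinates $(t,r)$ and, as indicated at the end of \S\ref{sec-1.2}, split $I=I_\flat\cup I_\sharp$ with $I_\flat=[0,\frac{1}{2})$ and $I_\sharp=[\frac{1}{2},1)$ to treat the coordinate singularity at $r=0$ and the vacuum degeneracy at $r=1$ separately. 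The weights $\zeta r^{m/2}$ on $I_\flat$ and $\rho_0^{(\,\cdot\,)}$ on $I_\sharp$ built into \eqref{E-2}--\eqref{D-2} are precisely tuned to this segmentation.

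\textbf{Chain of a priori estimates for $(\eta_r,\frac{\eta}{r})$.} First, the basic energy identity combined with the mass identity \eqref{eq:eta} yields an $L^\infty$ upper bound for $\eta^m\varrho$, which directly provides lower bounds for $(\eta_r,\frac{\eta}{r})$ on $I_\sharp$. To extend these lower bounds into $I_\flat$, I develop an \emph{interior} BD entropy estimate: testing $\eqref{eq:VFBP-La-eta}_1$ with cut-off variants of the classical BD multiplier produces the standard BD identity inside $I_\flat$ modulo commutator terms supported in $[\frac{1}{2},\frac{5}{8}]$ that are absorbed by the basic energy bound; this yields $(\eta,\eta_r)$-weighted control of $\varrho$ near the origin and, through $\varrho^{-1}\sim\eta^m\eta_r$, a uniform lower bound on all of $I$. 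For the upper bound on $(\eta_r,\frac{\eta}{r})$, I pass to the effective velocity $V=U+2\mu\eta_r^{-1}(\log\varrho)_r$, whose damped transport equation replaces the singular dissipation by lower-order forcing: on $I_\flat$ a maximum-principle-type argument yields $\|V\|_{L^\infty(I_\flat)}\le C$ and hence an upper $L^\infty$-bound on $\log\varrho$, while on $I_\sharp$ I derive $\rho_0$-weighted estimates for $V$ and $(U,V)$ and convert them into $\rho_0$-weighted $L^\infty$-bounds on $\log\varrho$. Together these two regional estimates furnish the upper bound on $(\eta_r,\frac{\eta}{r})$ and, combined with the lower bound, rule out interior cavitation or implosion.

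\textbf{Propagating $\cE$ and closing.} Once $(\eta_r,\frac{\eta}{r})$ is trapped in $[C^{-1}(T),C(T)]$, the high-order bounds follow by differentiating $\eqref{eq:VFBP-La-eta}_1$ in $t$ and in $D_\eta$ and testing against the matching weighted quantities in \eqref{E-2}--\eqref{D-2}. On $I_\flat$ the $\zeta r^{m/2}$-weights swallow the $1/\eta$ coordinate singularity and Hardy-type inequalities near the origin close the integrations by parts; on $I_\sharp$ the $\rho_0^{(3/2-\varepsilon_0)\beta}$-weights swallow the vacuum degeneracy while the $\varepsilon_0$-budget in \eqref{varepsilon0} leaves room for the Hardy losses incurred when derivatives are transferred across the degenerate boundary. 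The boundary conditions \eqref{N111} then follow from $\tfrac{U}{r}\in C(\bar I)$ (giving $U|_{r=0}=0$) and from evaluating the momentum equation at $r=1$ against $\varrho|_{r=1}=0$ (giving $U_r|_{r=1}=0$), while \eqref{AN111} comes from integrating the bound on $U_{rr}$ inward from the boundary. Uniqueness is then standard: taking the difference of two solutions in the same class and using the already-obtained lower bounds on $\varrho,\eta_r,\eta/r$ to restore coercivity.

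\textbf{Main obstacle.} The hardest step is the upper $L^\infty$-control of $\log\varrho$ on $I_\sharp$ in the incompatible regime $\gamma\ge 2$, $\beta=\gamma-1$ (which includes the shallow-water case $\gamma=2$): the classical BD entropy estimate is unavailable there, so the upper bound on $\eta_r$ rests entirely on the $\rho_0$-weighted effective-velocity machinery and its interaction with the damped transport equation for $V$. Calibrating the exponents of the $\rho_0$-weights against the budget $\varepsilon_0$ in \eqref{varepsilon0}\,---\,so that the Hardy losses from integration by parts across the degenerate boundary remain integrable and the pressure/viscosity nonlinearities can be absorbed\,---\,is the central technical subtlety that distinguishes this analysis from the far-field-vacuum treatments in \cites{clz1,GJG,sz3}.
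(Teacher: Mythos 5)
Your proposal matches the paper's actual proof architecture essentially step for step: local well-posedness from \S\ref{Section-local} plus continuation driven by the a~priori bounds of \S\ref{Section-densityupper}--\S\ref{Section-globalestimates}, with the interior BD entropy estimate and flow-map-weighted density control near the origin, the effective velocity $V$ with its damped transport equation and $\rho_0$-weighted estimates near the boundary, the two-stage lower/upper bounds on $(\eta_r,\tfrac{\eta}{r})$ (using $\eta_r|_{r=1}=1$ via a contradiction argument together with the density bounds from \eqref{eq:eta}), and finally the tangential/elliptic weighted energy estimates that propagate $\cE$ and $\cD$. The identification of the incompatible regime $\beta=\gamma-1$, $\gamma\ge 2$ as the crux, and of the $\varepsilon_0$-budget in \eqref{varepsilon0} as calibrating the Hardy losses, is exactly the point emphasized in Remark~\ref{remark-energy function} and \S\ref{subsub323}--\S\ref{subsub324}.
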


We make some remarks on the results in Theorem \ref{Theorem1.1}.

\begin{Remark}\label{remark-energy function}
We first give some explanations for the constraint of $(\gamma,\beta)$ shown in \eqref{gamma-hold}--\eqref{beta} and the form of energy functionals given in \eqref{E-1}{\rm--}\eqref{D-2}.
Due to the coordinate singularity at the origin and the strong degeneracy on the boundary, we need to define the energy functionals separately in their respective neighborhoods and then combine them appropriately.

First, in the region including the origin, if we consider the corresponding {\rm\bf{IBVP}} in M-D Lagrangian coordinates, the analysis will be very clear. To this end, denote by $\boldsymbol{x}=\boldsymbol{\eta}(t,\boldsymbol{y})$ the position of the fluid particle $\boldsymbol{x} \in \Omega(t)$  so that
\begin{equation*}
\boldsymbol{\eta}_t(t,\boldsymbol{y})=\boldsymbol{u}(t,\boldsymbol{\eta}(t,\boldsymbol{y})) \ \ \text{for $t>0$},\qquad \text{with $\boldsymbol{\eta}(0,\boldsymbol{y})=\boldsymbol{y}$},
\end{equation*}
where  $(t,\boldsymbol{y})$ is the M-D Lagrangian coordinates. Then, by introducing the M-D Lagrangian density and velocity 
\begin{equation*}
\varrho(t,\boldsymbol{y})=\rho(t,\boldsymbol{\eta}(t,\boldsymbol{y})),\qquad \boldsymbol{U}(t,\boldsymbol{y})=\boldsymbol{u}(t,\boldsymbol{\eta}(t,\boldsymbol{y})),
\end{equation*}
one can obtain {\rm\bf{IBVP}} \eqref{eq:1.1-la-vfbp}  in  M-D Lagrangian coordinates from {\rm\bf{VFBP}} \eqref{eq:1.1-vfbp} in M-D Eulerian coordinates {\rm(}see {\rm Appendix \ref{appb})}.  Starting from {\rm\bf{IBVP}} \eqref{eq:1.1-la-vfbp}, we consider the region $B:=\{\boldsymbol{y}:\,|\boldsymbol{y}|<\frac{1}{2}\}$, which includes the origin. To ensure that $\boldsymbol{U}\in C^2(\overline B)$ and $\boldsymbol{U}_t\in C(\overline B)$ in positive time, by the classical Sobolev embedding theorem, it suffices to require $\boldsymbol{U}$ to have the following regularity in $H^k(B)${\rm :}  
\begin{equation}\label{regularity-La-M}
\begin{aligned}
\|\boldsymbol{U}(t)\|_{H^3(B)}^2+t\|\boldsymbol{U}(t)\|_{H^4(B)}^2+\|\boldsymbol{U}_t(t)\|_{H^1(B)}^2+t\|\boldsymbol{U}_t(t)\|_{H^2(B)}^2 \in L^\infty(0,T).
\end{aligned}
\end{equation}
The detailed proof can be found in  {\rm \S \ref{Section-global}}. According to {\rm Lemma \ref{lemma-initial}} in {\rm Appendix \ref{appb}} and {\rm Lemma \ref{lemma-gaowei}} in {\rm Appendix \ref{AppB}}, for spherical symmetric solutions, the regularity requirement \eqref{regularity-La-M} in the radial coordinate can be read as 
\begin{equation*}
\cE_{\mathrm{in}}(t,U)+t\cD_{\mathrm{in}}(t,U)\in L^\infty(0,T),
\end{equation*}
except for some additional information on $U_{tt}$.  The introduction of the decreasing cut-off function $\zeta$ in $(\cE_{\mathrm{in}},\cD_{\mathrm{in}})(t,U)$ is to ensure that the interior and exterior energy functionals can be effectively spliced into an energy functional on $I$.

Second, to ensure that the solution is classical in the region  $B^\sharp:=\{\boldsymbol{y}:\,\frac{1}{2}\leq |\boldsymbol{y}|<1\}$ and smooth up to the vacuum boundary at least when $t>0$, we establish some weighted $H^4(\frac{1}{2},1)$-estimates on $U$ inspired by the following embedding relation, due to the Hardy and Sobolev inequalities {\rm(}see {\rm Lemmas \ref{sobolev-embedding}--\ref{hardy-inequality}} in {\rm Appendix \ref{appendix A}}{\rm)}{\rm:}
\begin{equation*}
H^4_{\rho_0^{2\alpha}}\big(\frac{1}{2},1\big)\hookrightarrow W^{3,1}\big(\frac{1}{2},1\big)\hookrightarrow C^2\big[\frac{1}{2},1\big] \qquad\text{for some $\alpha<\frac{3}{2}\beta$ and $\big|\alpha-\frac{3}{2}\beta\big|\ll 1$},
\end{equation*}
where $|a_1-a_2|\ll 1$ denotes that $a_1$ is sufficiently close to $a_2$, and the weighted Sobolev space $H^4_{\rho_0^{2\alpha}}(\frac{1}{2},1)$ is defined by 
\begin{equation*}
H^4_{\rho_0^{2\alpha}}\big(\frac{1}{2},1\big)
:=\big\{f\in L^1_{\mathrm{loc}}\big(\frac{1}{2},1\big): \,\rho_0^{\alpha}\partial_r^j f\in L^2\big(\frac{1}{2},1\big) 
\ \text{for $0\leq j\leq 4$}\}.
\end{equation*}
Then the natural exterior energy and dissipation take the form{\rm:}
\begin{equation*}
\begin{aligned}
\cE^*_{\mathrm{ex}}(t,U)&=\big\|\rho_0^\frac{1}{2}(U,D_\eta U,U_t,D_\eta U_t)(t)\big\|_{L^2(\frac{1}{2},1)}^2+
\big\|\rho_0^{\alpha}(D_\eta^2 U,D_\eta^3 U)(t)\big\|_{L^2(\frac{1}{2},1)}^2,\\
\cD^*_{\mathrm{ex}}(t,U)&=\big\|\rho_0^\frac{1}{2}U_{tt}(t)\big\|_{L^2(\frac{1}{2},1)}^2+\big\|\rho_0^{\alpha}(D_\eta^2 U_t,D_\eta^4 U)(t)\big\|_{L^2(\frac{1}{2},1)}^2,
\end{aligned}
\end{equation*}
where $\alpha$ can be determined via the balance between the pressure and spatial dissipation. In fact, to derive the highest-order elliptic estimates, we reformulate $\eqref{eq:VFBP-La-eta}_1$ by multiplying $\varrho^{\beta-1}${\rm:}
\begin{equation}\label{reform}
2\mu \varrho^\beta D_\eta^2 U =\varrho^{\beta}U_t+\frac{A\gamma}{\beta}\varrho^{\gamma-1}D_\eta(\varrho^\beta)-\frac{2\mu}{\beta} D_\eta(\varrho^\beta) D_\eta U-2m\mu \varrho^\beta D_\eta\big(\frac{U}{\eta}\big).
\end{equation}
Then we formally obtain from the above that
\begin{equation*}
\begin{aligned}
\varrho^\alpha D_\eta^4 U &= \frac{A\gamma(\gamma-1)(\gamma-1-\beta)}{2\mu\beta^3}\varrho^{\alpha+\gamma-1-3\beta}|D_\eta(\varrho^\beta)|^3+\frac{1}{2\mu}\varrho^\alpha D_\eta^2 U_{t}+\mathrm{R}_{(\alpha)}\\
&= \underline{\frac{A\gamma(\gamma-1)(\gamma-1-\beta)}{2\mu\beta^3}\varrho^{\alpha+\gamma-1-3\beta}|D_\eta(\varrho^\beta)|^3}_{:=\clubsuit_1} +\underline{\frac{1}{4\mu^2}\varrho^\alpha U_{tt}}_{:=\clubsuit_2}+\widetilde{\mathrm{R}}_{(\alpha)},
\end{aligned}
\end{equation*}
where $(\mathrm{R}_{(\alpha)},\widetilde{\mathrm{R}}_{(\alpha)})$ denote some harmless terms which possess either higher-order $\varrho$-weights or lower-order derivatives of $(\varrho,U)$.  As can be checked, $\clubsuit_1$ is the most singular part of the pressure's derivatives, and $\clubsuit_2$ is the highest-order tangential derivatives. These two terms present the main obstacles in controlling the $L^2(\frac{1}{2},1)$-norm of $\varrho^\alpha D_\eta^4 U$. 
Thus, \eqref{distance-la} leads to 
\begin{equation*}
\begin{aligned}
\clubsuit_1\in L^2\big(\frac{1}{2},1\big) &\iff \alpha> \frac{5\beta}{2}-\gamma+1 \ \text{ or } \ \beta=\gamma-1\implies \beta\leq \gamma-1;\\
\clubsuit_2\in L^2\big(\frac{1}{2},1\big) &\iff \alpha\geq \frac{1}{2}\implies \beta>\frac{1}{3}.
\end{aligned}
\end{equation*}
Finally, by setting $\alpha:=(\frac{3}{2}-\varepsilon_0)\beta<\frac{3}{2}\beta$ with some suitable small $\varepsilon_0$ defined in \eqref{varepsilon0}, we recover the desired exterior energy and dissipation $(\cE_{\mathrm{ex}},\cD_{\mathrm{ex}})(t,U)$ in \eqref{E-2} and \eqref{D-2}.
\end{Remark}

\begin{Remark}\label{moregamma}
The constraint $\beta\in (\frac{1}{3},\gamma-1]$ in \eqref{beta} not only plays a key role in the choice of the energy functionals given in \eqref{E-1}{\rm--}\eqref{D-2} {\rm (}see {\rm Remark \ref{remark-energy function})}, but is also crucial for obtaining the desired global-in-time uniform energy estimates {\rm (}see {\rm \S \ref{Section-global})}. Whether the methodology developed in this paper can be adapted to the case $0<\beta\leq \min\{\frac{1}{3},\gamma-1\}$ and $\gamma>1$ remains unclear.
We leave this as an open problem for future investigation.
\end{Remark}

\begin{Remark}\label{initialexample}
We give some examples of the initial data required in {\rm Theorem \ref{Theorem1.1}}.
In {\rm Appendix \ref{AppB}}, we present several equivalent formulations of the energy functionals defined 
in \eqref{E-1}{\rm--}\eqref{D-2}, which will be used frequently in the subsequent analysis. 
Moreover, by {\rm Lemmas \ref{reduce-ex}--\ref{reduce-in}} in {\rm Appendix \ref{AppB}},
we find that {\rm Theorem \ref{Theorem1.1}}
can be established, if  
the initial data
belong to 
the following class{\rm :}
\begin{equation}\label{initial-ex1}
\rho_0(\boldsymbol{y})=\rho_0(r)\qquad \text{with} \ \ \rho_0(r)=(1-r^{2k})^\frac{1}{\beta} \ \ \text{for $k\in \NN^*$},
\end{equation}
and $\boldsymbol{u}_0(\boldsymbol{y})=u_0(r)\frac{\boldsymbol{y}}{r}$ with 
\begin{equation}\label{initial-ex2}
\begin{aligned}
u_0(r)&= \tilde{u}_0(r) &&\quad \text{if $\beta\in \big(\frac{1}{3},\frac{2\gamma-1}{5}\big)$ or $\beta=\gamma-1$},\\
u_0(r)&= -\zeta^\sharp_\frac{1}{3} \frac{A}{2\mu}\int_r^1 \rho_0^{\gamma-1}\,\mathrm{d}\tilde{r}+\tilde{u}_0(r) &&\quad \text{if $\beta\in \big[\frac{2\gamma-1}{5},\gamma-1\big)$},    
\end{aligned}
\end{equation}
where $\zeta^\sharp_{\frac{1}{3}}$ is a smooth cut-off function such that $\zeta=0$ on $[0,\frac{1}{3}]$ and $\zeta=1$ on $[\frac{1}{2},1]$, and  $\tilde{u}_0(r)$ is a function such that $\tilde{\boldsymbol{u}}_0(\boldsymbol{y})=\tilde{u}_0(r)\frac{\boldsymbol{y}}{r}\in C_\mathrm{c}^\infty(\Omega)$. See {\rm Remark \ref{initalexample3}} for the detailed proof.
\end{Remark}

\begin{Remark}\label{rmk1.4}
We briefly explain how the boundary condition{\rm :} 
$U_r|_{r=1}=0$ in \eqref{N111} can be derived. First, it follows from \eqref{flowmap-r-la}, \eqref{distance-la}, $\eqref{b1}_2$, and {\rm Definition \ref{definition-lag}} that 
\begin{equation}\label{eq117}
\begin{gathered}
(\varrho^\beta, \, D_\eta(\varrho^\beta),\, U,\ D_{\eta}U, \, D_{\eta}^2U, \, D_{\eta}\big(\frac{U}{\eta}\big), \, U_t)\in  C((0,T]\times [\frac{1}{2},1]).
\end{gathered}
\end{equation}
Next, taking the limit $r\to 1$ in \eqref{reform} 
and using \eqref{eq117} together with the established lower bounds of  $(\eta,\eta_r)$ near the boundary, we obtain 
\begin{equation*}
D_\eta(\varrho^\beta) D_\eta U|_{r=1}=0.
\end{equation*}
Since $\rho_0^\beta \sim 1-r$ and, by \eqref{eq:eta}, $D_\eta(\varrho^\beta)|_{r=1}\neq 0$, 
it follows immediately that $U_r|_{r=1}=0$. 
We emphasize that the boundary condition \eqref{N111} plays a crucial role in establishing the uniform lower and upper bounds of $(\eta_r,\frac{\eta}{r})$ in {\rm \S \ref{Section-etarlower}} and {\rm \S \ref{Section-etarupper}}. 
\end{Remark}

\medskip
\subsection{Applications to the 2-D shallow water equations (the viscous Saint-Venant system)}
Another aim of this paper is to establish 
the global well-posedness of spherically symmetric classical solutions of \textbf{VFBP} with large initial data for the shallow water system \eqref{shallow1}:
\begin{equation}\label{eq:1.1-vfbp-shallow}
\begin{cases}
h_t+\dive(h \boldsymbol{u})=0 &\text{in }\Omega(t),\\[4pt]
(h \boldsymbol{u})_t+\dive(h \boldsymbol{u}\otimes \boldsymbol{u})+A\nabla h^2 =V(h,\boldsymbol{u})&\text{in }\Omega(t),\\[4pt]
h>0&\text{in }\Omega(t),\\[4pt]
h=0&\text{on }\partial\Omega(t),\\[4pt]
\cV(\partial\Omega(t))=\boldsymbol{u}\cdot\boldsymbol{n}(t)&\text{on }\partial\Omega(t),\\[4pt]
(h,\boldsymbol{u})|_{t=0}=(h_0,\boldsymbol{u}_0) &\text{in }\Omega:=\Omega(0).
\end{cases}
\end{equation}
Problem \eqref{eq:1.1-vfbp-shallow} is a special case of \textbf{VFBP} \eqref{eq:1.1-vfbp} with $\gamma=n=2$. 
For the spherically symmetric flow, since $D(\boldsymbol{u})=\nabla \boldsymbol{u}$, the viscosity term $V(h,\boldsymbol{u})$ in $\eqref{eq:1.1-vfbp-shallow}_2$ satisfies
\begin{equation*}
{V}(h, \boldsymbol{u})=2\mu\dive(h D(\boldsymbol{u}))
= 2\mu\dive(h\nabla \boldsymbol{u}).
\end{equation*}

We establish the global well-posedness of classical solutions, taking the form: 
\begin{equation}\label{ss-ass-shallow}
(h,\boldsymbol{u})(t,\boldsymbol{x}) = (h(t,|\boldsymbol{x}|), u(t,|\boldsymbol{x}|)\frac{\boldsymbol{x}}{|\boldsymbol{x}|}),
\end{equation}
of \textbf{VFBP} \eqref{eq:1.1-vfbp-shallow} with the initial data:
\begin{equation}\label{eq:IC-shallow}
(h,\boldsymbol{u})(0,\boldsymbol{x}) =(h_0,\boldsymbol{u}_0)(\boldsymbol{x})= (h_0(|\boldsymbol{x}|), u_0(|\boldsymbol{x}|)\frac{\boldsymbol{x}}{|\boldsymbol{x}|}).
\end{equation}
The  initial depth  $h_0$ we consider satisfies the following condition:
\begin{equation}\label{distanceeuler-shallow}
h_0^\beta(\boldsymbol{x})\in H^3(\Omega), \quad\, \cK_1(1-|\boldsymbol{x}|)^\frac{1}{\beta}\leq h_0(\boldsymbol{x})\leq \cK_2(1-|\boldsymbol{x}|)^\frac{1}{\beta} \qquad\, 
\text{for all $\boldsymbol{x}\in \overline\Omega$},
\end{equation}
for some  constants $\cK_2>\cK_1>0$ and $\beta\in (\frac{1}{3},1]$. 
It is worth emphasizing that no restriction is imposed on the size of the initial data in our result, and the solutions obtained for \eqref{eq:1.1-vfbp-shallow} remain smooth all the way up to the moving boundary.

Next, following the reformulation in \S\ref{sec-1.2}, we rewrite  \eqref{eq:1.1-vfbp-shallow} into a problem on $I=[0,1)$, that is, problem \eqref{eq:VFBP-La} with $(m,\gamma)=(1,2)$ and $\varrho$ replaced by $\hat h$. In this case, $(\hat h,U)$ denote the Lagrangian depth and horizontal velocity, respectively, which are defined by
\begin{equation*}
\hat h (t,r)=h(t,\eta(t,r)),\qquad U(t,r)=u(t,\eta(t,r)). 
\end{equation*}
Then, following a discussion similar to that in 
\eqref{eq:eta}--\eqref{eq:VFBP-La-eta}, we arrive at the following \textbf{IBVP} for $(U, \eta)$:
\begin{equation}\label{eq:shallow}
\begin{cases}
\displaystyle \hat h U_t +A D_\eta(\hat h^{2})=2\mu D_\eta\Big(\hat h\big(D_\eta U+ \frac{U}{\eta}\big)\Big) - 2\mu \frac{ D_\eta\hat h U}{\eta}& \text{in $(0, T]\times I$},\\[5pt]
\eta_t = U & \text{in $(0, T] \times I$},\\[5pt]
(U, \eta)(0,r)= (u_0(r), r) & \text{for $r\in I$}, 
\end{cases}
\end{equation}
where $\hat h$ is given  by
\begin{equation*}
\hat h(t,r)=\frac{rh_0(r)}{\eta\eta_r}.
\end{equation*}
Moreover, according to  Lemma \ref{lemma-initial} and \eqref{distanceeuler-shallow},  $h_0(r)$ satisfies 
\begin{equation}\label{distance-las}
\begin{aligned}
&r^\frac{1}{2}\big(h_0^\beta,(h_0^\beta)_r,(h_0^\beta)_{rr},\frac{(h_0^\beta)_r}{r},(h_0^\beta)_{rrr},(\frac{(h_0^\beta)_r}{r})_r\big)\in L^2(I),\\
&\cK_1(1-r)^\frac{1}{\beta}\leq h_0(r)\leq \cK_2(1-r)^\frac{1}{\beta} \qquad \text{for all $r\in I$}.
\end{aligned}
\end{equation}

In order to construct smooth solutions to \textbf{IBVP} \eqref{eq:shallow}, we similarly define:
\begin{itemize}
\item The total energy:
\begin{equation*} 
\begin{aligned}
\qquad \cE_{\mathrm{sw}}(t,f)&:=\Big\|\zeta r^\frac{1}{2}\big(f,D_\eta f,\frac{f}{\eta},f_t,D_\eta f_t,\frac{f_t}{\eta}\big)(t)\Big\|_{L^2(I)}^2\\
&\quad +\Big\|\zeta r^\frac{1}{2}\Big(D_\eta^2 f, D_\eta\big(\frac{f}{\eta}\big),D_\eta^3 f, D_\eta^2\big(\frac{f}{\eta}\big),\frac{1}{\eta}D_\eta\big(\frac{f}{\eta}\big) \Big)(t)\Big\|_{L^2(I)}^2\\
&\quad +\big\|h_0^\frac{1}{2}(f,D_\eta f,f_t,D_\eta f_t)(t)\big\|_{L^2(\frac{1}{2},1)}^2+\big\|h_0^{(\frac{3}{2}-\varepsilon_0)\beta}(D_\eta^2 f,D_\eta^3 f)(t)\big\|_{L^2(\frac{1}{2},1)}^2,
\end{aligned}
\end{equation*}
where $\varepsilon_0$ and $\zeta$ are defined in \eqref{varepsilon0} and \eqref{zeta}, respectively.

\smallskip
\item The total dissipation:
\begin{equation*} 
\begin{aligned}
\cD_{\mathrm{sw}}(t,f)&:=\Big\|\zeta r^\frac{1}{2}\Big(f_{tt},D_\eta^2 f_{t},D_\eta\big(\frac{f_{t}}{\eta}\big),D_\eta^4 f, D_\eta^3\big(\frac{f}{\eta}\big),D_\eta\big(\frac{1}{\eta}D_\eta(\frac{f}{\eta})\big) \Big)(t)\Big\|_{L^2(I)}^2\\
&\quad +\big\|h_0^\frac{1}{2}f_{tt}(t)\big\|_{L^2(\frac{1}{2},1)}^2+\big\|h_0^{(\frac{3}{2}-\varepsilon_0)\beta}(D_\eta^2 f_t,D_\eta^4 f)(t)\big\|_{L^2(\frac{1}{2},1)}^2.
\end{aligned}
\end{equation*}
\end{itemize}

Besides, the classical solutions of \textbf{IBVP} \eqref{eq:shallow} can
be defined analogously to  Definition \ref{definition-lag} with  $\gamma=n=2$, $(\varrho,U,\eta)$ replaced by $(\hat h,U,\eta)$, and \eqref{eq:VFBP-La-eta} replaced by \eqref{eq:shallow}. Then, from Theorem \ref{Theorem1.1}, the following conclusion  holds:
\begin{Theorem}\label{Theorem1.2} 
If $h_0(r)$ satisfies \eqref{distance-las} for some
\begin{equation}
\beta\in \big(\frac{1}{3},1\big],
\end{equation}
and $u_0(r)$ satisfies
\begin{equation}\label{a2-shallow}
\cE_{\mathrm{sw}}(0,U)<\infty,
\end{equation}
then, for any $T>0$, {\rm\bf IBVP} \eqref{eq:shallow} admits a unique classical solution $(U,\eta)(t,r)$ in $[0,T]\times \bar I$ satisfying \eqref{N111}{\rm--}\eqref{AN111} and 
\begin{equation}\label{b1s}
\begin{gathered}
\sup_{t\in[0,T]}\big(\cE_{\mathrm{sw}}(t,U)+t\cD_{\mathrm{sw}}(t,U)\big)+\int_0^T\cD_{\mathrm{sw}} (s,U)\,\mathrm{d}s\leq C(T),\\
\quad (\eta_r,\frac{\eta}{r})(t,r)\in [C^{-1}(T),C(T)] \qquad\, \text{for all $(t,r)\in [0,T]\times \bar I$},
\end{gathered}
\end{equation}
where $C(T)>1$ is a constant  depending  only on  $(\mu,A,\beta,\varepsilon_0,h_0,u_0,\cK_1,\cK_2,T)$. 
\end{Theorem}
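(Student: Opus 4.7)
The plan is to deduce Theorem \ref{Theorem1.2} directly from Theorem \ref{Theorem1.1} by identifying \textbf{IBVP} \eqref{eq:shallow} with the case $(n,\gamma,m)=(2,2,1)$ of \textbf{IBVP} \eqref{eq:VFBP-La-eta}. First, I would observe that under this specialization, the pressure law $P=Ah^2=A\hat h^{\,\gamma}$ and the viscosity term \eqref{shallownian} coincide with those in $\eqref{eq:VFBP-La-eta}_1$, and that the substitution $(\varrho,\rho_0)\mapsto(\hat h,h_0)$ transforms equation \eqref{eq:VFBP-La-eta} line-by-line into \eqref{eq:shallow}. In particular, since $\gamma-1=1$, the admissible range $\beta\in(\tfrac13,\gamma-1]$ in \eqref{beta} becomes exactly $\beta\in(\tfrac13,1]$, which is the hypothesis of Theorem \ref{Theorem1.2}.

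Next, I would check the hypotheses of Theorem \ref{Theorem1.1} one-by-one. The condition \eqref{gamma-hold} holds trivially since $\gamma=2\in(\tfrac43,\infty)$ for $n=2$. The initial density condition \eqref{distance-las} is the $m=1$ instance of \eqref{distance-la}, with $h_0$ replacing $\rho_0$. For the initial velocity, a direct comparison of the defining expressions shows that $\cE_{\mathrm{sw}}(t,f)$ coincides with the total energy $\cE(t,f)=\cE_{\mathrm{in}}(t,f)+\cE_{\mathrm{ex}}(t,f)$ from \eqref{E-1}--\eqref{E-2} after setting $m=1$ and replacing $\rho_0$ by $h_0$, and similarly $\cD_{\mathrm{sw}}(t,f)=\cD(t,f)$. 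Therefore \eqref{a2-shallow} is precisely \eqref{a2} in the present setting.

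With these identifications in place, Theorem \ref{Theorem1.1} yields, for any $T>0$, a unique classical solution $(U,\eta)$ of \eqref{eq:shallow} on $[0,T]\times\bar I$, together with the uniform bounds for $(\eta_r,\tfrac{\eta}{r})$ and the energy-dissipation estimates in \eqref{b1}. Rewriting these in terms of $(\cE_{\mathrm{sw}},\cD_{\mathrm{sw}})$ gives \eqref{b1s}, and the boundary behavior \eqref{N111}--\eqref{AN111} transfers verbatim since its derivation (as sketched in Remark \ref{rmk1.4}) uses only the reformulation \eqref{reform}, the regularity \eqref{eq117}, and the boundary decay $h_0^\beta\sim 1-r$ supplied by \eqref{distance-las}.

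The proof is therefore essentially a translation argument and contains no new analytical obstacle; the only point requiring care is the verification that the energy and dissipation functionals match exactly under the specialization $(n,\gamma,m)=(2,2,1)$, so that the constant $C(T)$ produced by Theorem \ref{Theorem1.1} depends only on $(\mu,A,\beta,\varepsilon_0,h_0,u_0,\cK_1,\cK_2,T)$ as claimed (the parameters $(n,\gamma)$ now being fixed numerical constants).
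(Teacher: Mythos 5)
Your proposal is correct and follows exactly the route the paper takes: Theorem \ref{Theorem1.2} is obtained as the specialization $(n,\gamma,m)=(2,2,1)$ of Theorem \ref{Theorem1.1}, with the identifications $\varrho\mapsto\hat h$, $\rho_0\mapsto h_0$, under which \eqref{distance-la} becomes \eqref{distance-las}, the admissible range $\beta\in(\frac13,\gamma-1]$ becomes $\beta\in(\frac13,1]$, and the functionals $(\cE,\cD)$ reduce verbatim to $(\cE_{\mathrm{sw}},\cD_{\mathrm{sw}})$. Your check that $\gamma=2$ satisfies \eqref{gamma-hold}, that the boundary conditions \eqref{N111}--\eqref{AN111} transfer unchanged, and that the constant's dependence simplifies since $(n,\gamma)$ are now fixed, covers all the points that need verification.
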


\medskip
\subsection{Main results in Eulerian coordinates}

Denote 
\begin{equation*}
\mathbb{E}(T)=\{(t,\boldsymbol{x}) :\,  t\in (0,T], \, \boldsymbol{x}\in \overline\Omega(t)\}.
\end{equation*}
The classical solutions of {\rm\textbf{VFBP}} \eqref{eq:1.1-vfbp} in $\mathbb{E}(T)$ can be defined as follows:

\begin{Definition}\label{definition-M-euler}
Let $T>0$. A triple $(\rho(t,\boldsymbol{x}),\boldsymbol{u}(t,\boldsymbol{x}),\partial \Omega(t))$ is said to be a classical solution of {\rm\textbf{VFBP}} \eqref{eq:1.1-vfbp} in $\overline{\mathbb{E}(T)}$ if  
\begin{enumerate}
\item[{\rm (i)}] $(\rho,\boldsymbol{u},\partial \Omega(t))$ satisfies the equations in  $\eqref{eq:1.1-vfbp}_1${\rm--}$\eqref{eq:1.1-vfbp}_3$ pointwise in $\mathbb{E}(T)$, takes the initial data $\eqref{eq:1.1-vfbp}_6$, and satisfies the boundary conditions $\eqref{eq:1.1-vfbp}_4${\rm--}$\eqref{eq:1.1-vfbp}_5$ continuously{\rm;}
\vspace{3pt}
\item[{\rm (ii)}]  the moving boundary $\partial \Omega(t)\in C^2((0,T])${\rm;}
\vspace{3pt}
\item[{\rm (iii)}] all the terms in equations $\eqref{eq:1.1-vfbp}_1${\rm--}$\eqref{eq:1.1-vfbp}_3$  are continuous in $\mathbb{E}(T)${\rm :}
\begin{equation*}
\begin{split}
(\rho, \,\rho_t, \,\nabla \rho,\,\boldsymbol{u}, \,\nabla \boldsymbol{u}, \,\nabla^2 \boldsymbol{u}, \,\boldsymbol{u}_t)
\in C(\mathbb{E}(T)).
\end{split}
\end{equation*}
\end{enumerate}
\end{Definition}

Now, our main result on the global well-posedness of \eqref{eq:1.1-vfbp} with large initial data of spherical symmetry can be stated in M-D Eulerian coordinates, which can be derived from Theorem \ref{Theorem1.1} and Lemma \ref{lemma-B3}, as follows:
\begin{Theorem}\label{theorem1.3}
Let $n=2$ or $3$ and \eqref{gamma-hold} hold. Let $(\rho_0,\boldsymbol{u}_0)(\boldsymbol{x})$ be spherically symmetric, take form \eqref{eq:IC}, and satisfy \eqref{distanceeuler} for some $\beta\in (\frac{1}{3},\gamma-1]$ and \eqref{a2}. Then 
\begin{enumerate}
\item[{\rm(i)}] If $\beta\leq 1$, for any $T>0$, there exists a unique classical solution $(\rho(t,\boldsymbol{x}),u(t,\boldsymbol{x}),\partial \Omega(t))$ in $\overline{\mathbb{E}(T)}$ of  {\rm\textbf{VFBP}} \eqref{eq:1.1-vfbp}{\rm;}  
\smallskip
\item[{\rm(ii)}] If $\beta>1$, for any $T>0$, there exists a unique solution $(\rho(t,\boldsymbol{x}),\boldsymbol{u}(t,\boldsymbol{x}),\partial \Omega(t))$ in $\overline{\mathbb{E}(T)}$ of  {\rm\textbf{VFBP}} \eqref{eq:1.1-vfbp}, which satisfies {\rm(i)--(ii)} of {\rm Definition \ref{definition-M-euler}} and
\begin{equation*}
(\rho, \,\rho_t+\boldsymbol{u}\cdot\nabla \rho, 
\,\boldsymbol{u}, \,\nabla \boldsymbol{u}, 
\,\nabla^2 \boldsymbol{u}, \,\boldsymbol{u}_t) \in C(\mathbb{E}(T)).
\end{equation*} 
\end{enumerate}
Moreover, $(\rho,\boldsymbol{u})(t,\boldsymbol{x})$ is spherically symmetric with form \eqref{ss-ass}, and  $\boldsymbol{u}$ satisfies 
\begin{equation}\label{N111euler}
\boldsymbol{u}|_{\boldsymbol{x}=\boldsymbol{0}}=\nabla \boldsymbol{u}\cdot \boldsymbol{n}|_{\boldsymbol{x}\in \partial\Omega(t)}=\boldsymbol{0} \qquad \text{for all }t\in (0,T].
\end{equation}
\end{Theorem}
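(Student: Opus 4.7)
The plan is to obtain the Eulerian solution by pulling back the Lagrangian classical solution from Theorem \ref{Theorem1.1} through the flow map and then verifying the required continuity conditions case by case. First, I would apply Theorem \ref{Theorem1.1} (if $\gamma>2$) or Theorem \ref{Theorem1.2} (in the shallow water range) to produce a unique classical solution $(U,\eta)(t,r)$ of \textbf{IBVP} \eqref{eq:VFBP-La-eta} in $[0,T]\times\bar I$ with the quantitative bounds in \eqref{b1}. Define $R(t):=\eta(t,1)$, so that $\Omega(t):=\{\boldsymbol{x}\in\RR^n:|\boldsymbol{x}|<R(t)\}$, and set, for $\boldsymbol{x}\in\overline{\Omega(t)}$ with unique preimage $r=\eta^{-1}(t,|\boldsymbol{x}|)$,
\begin{equation*}
\rho(t,\boldsymbol{x}):=\varrho(t,r),\qquad \boldsymbol{u}(t,\boldsymbol{x}):=U(t,r)\frac{\boldsymbol{x}}{|\boldsymbol{x}|},
\end{equation*}
where $\varrho$ is given by \eqref{eq:eta}. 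The positive, uniform two-sided bounds on $(\eta_r,\eta/r)$ in \eqref{b1} ensure that $\boldsymbol{x}\mapsto\eta(t,\cdot)$ is a bi-Lipschitz, orientation-preserving $C^1$-diffeomorphism between $\overline{B_1}$ and $\overline{\Omega(t)}$, uniformly in $t\in[0,T]$, and that $R\in C^2((0,T])$ with $R'(t)=U(t,1)=\boldsymbol{u}(t,\boldsymbol{x})\cdot\boldsymbol{n}$ on $\partial\Omega(t)$; this verifies items (i) and (ii) of Definition \ref{definition-M-euler} and the kinematic boundary conditions $\eqref{eq:1.1-vfbp}_4$--$\eqref{eq:1.1-vfbp}_5$.

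Next, I would invoke the Eulerian/Lagrangian and spherical/Cartesian translation identities in Appendix \ref{subsection2.2} (Lemma \ref{lemma-B3}) to convert the regularity statements in $(t,r)$ furnished by Definition \ref{definition-lag} and the energy control \eqref{b1} into Cartesian-coordinate statements for $(\rho,\boldsymbol{u})$. Concretely, the continuity of $(U,U_r,U/r)$ on $[0,T]\times\bar I$, together with the diffeomorphism property, yields $\boldsymbol{u}\in C(\mathbb{E}(T))$ and $\nabla\boldsymbol{u}\in C(\mathbb{E}(T))$; the positive-time continuity of $(U_{rr},(U/r)_r,U_t)$ gives $\nabla^2\boldsymbol{u},\boldsymbol{u}_t\in C(\mathbb{E}(T))$. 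The pointwise validity of $\eqref{eq:1.1-vfbp}_1$--$\eqref{eq:1.1-vfbp}_2$ in $\mathbb{E}(T)$ is immediate from $\eqref{eq:VFBP-La-eta}_1$ and the chain rule relating $D_\eta$ to the Cartesian gradient. Uniqueness in the class of spherically symmetric classical solutions follows from the uniqueness statement of Theorem \ref{Theorem1.1} after reducing an Eulerian classical solution to a Lagrangian one via the flow map generated by $\boldsymbol{u}$. Finally, the boundary conditions \eqref{N111euler} are a direct translation of \eqref{N111}: $U|_{r=0}=0$ is equivalent to $\boldsymbol{u}(t,\boldsymbol{0})=\boldsymbol{0}$, and $U_r|_{r=1}=0$, combined with the spherical symmetry, forces $(\nabla\boldsymbol{u}\cdot\boldsymbol{n})|_{\partial\Omega(t)}=\boldsymbol{0}$.

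The main obstacle is the continuity of $\rho$ and its derivatives at the moving boundary, and this is exactly where the dichotomy $\beta\le 1$ versus $\beta>1$ enters. From \eqref{eq:eta} and \eqref{distance-la}, $\rho(t,\boldsymbol{x})\sim(1-r)^{1/\beta}$ near $|\boldsymbol{x}|=R(t)$, so $\rho\in C(\mathbb{E}(T))$ unconditionally. However, differentiation introduces a loss of one power, giving the heuristic $|\nabla\rho|\sim(1-r)^{1/\beta-1}$: this is bounded, and indeed continuous up to $\partial\Omega(t)$, only when $1/\beta\ge 1$, i.e.\ $\beta\le 1$. For $\beta\le 1$, I would therefore control $\nabla\rho$ and $\rho_t$ separately in $\mathbb{E}(T)$ using the weighted energy bounds on $D_\eta(\varrho^\beta)$ in \eqref{b1} together with $\nabla\rho=\beta^{-1}\varrho^{1-\beta}\nabla(\varrho^\beta)$, which is bounded since $1-\beta\ge 0$; this produces a classical solution in the full sense of Definition \ref{definition-M-euler}. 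For $\beta>1$, the quantity $\nabla\rho$ genuinely blows up at $\partial\Omega(t)$, but the material derivative $\rho_t+\boldsymbol{u}\cdot\nabla\rho=-\rho\,\dive\boldsymbol{u}$ is continuous since $\rho,\dive\boldsymbol{u}\in C(\mathbb{E}(T))$, which is precisely the weaker conclusion recorded in (ii) of Theorem \ref{theorem1.3}. In this weaker case one uses equation $\eqref{eq:1.1-vfbp}_1$ (the continuity equation) as an identity between continuous functions rather than the individual terms $\rho_t$ and $\nabla\rho$, and the remaining verification is identical to case (i).
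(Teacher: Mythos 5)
Your proposal is correct and follows essentially the same route as the paper: pull back the Lagrangian solution from Theorem~\ref{Theorem1.1} through the flow map, use the uniform two-sided bounds on $(\eta_r,\eta/r)$ to obtain a $C^1$ diffeomorphism, translate the Lagrangian regularity into Cartesian continuity, and resolve the $\beta\le 1$ versus $\beta>1$ dichotomy via the identity $\nabla\rho=\beta^{-1}\varrho^{1-\beta}\nabla(\varrho^\beta)$. The paper's proof of Lemma~\ref{lemma-B3} is just a more explicit rendering of this plan, including two small points your sketch glosses over: it shows the spherical symmetry of $\boldsymbol{\eta}_*$ by a rotation argument, and it verifies continuity of $\nabla\rho$ at the origin by checking $(\varrho^\beta)_r|_{r=0}=0$, which is needed because of the $\boldsymbol{x}/|\boldsymbol{x}|$ factor.
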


Second,  the  classical solutions of \textbf{VFBP} \eqref{eq:1.1-vfbp-shallow} can
be defined analogously to  Definition \ref{definition-M-euler} with  $\gamma=n=2$, $(\rho,\boldsymbol{u},\partial \Omega(t))$ replaced by $(h,\boldsymbol{u},\partial \Omega(t))$, and \eqref{eq:1.1-vfbp} replaced by \eqref{eq:1.1-vfbp-shallow}. Then the global well-posedness of classical solutions of \eqref{eq:1.1-vfbp-shallow} with large data of spherical symmetry can be given in  M-D Eulerian coordinates as follows:

\begin{Theorem}\label{theorem1.3-shallow}
Let $(h_0,\boldsymbol{u}_0)(\boldsymbol{x})$ be spherically symmetric, take form \eqref{eq:IC-shallow}, and satisfy \eqref{distanceeuler-shallow} for some  $\beta\in (\frac{1}{3},1]$ and \eqref{a2-shallow}. Then, for any $T>0$, there exists a unique classical solution $(h(t,\boldsymbol{x}),\boldsymbol{u}(t,\boldsymbol{x}),\partial \Omega(t))$ in $\overline{\mathbb{E}(T)}$ of {\rm\textbf{VFBP}} \eqref{eq:1.1-vfbp-shallow}. Moreover, $(h,\boldsymbol{u})(t,\boldsymbol{x})$ is spherically symmetric taking form \eqref{ss-ass-shallow}, and  $\boldsymbol{u}$ satisfies  \eqref{N111euler}.
\end{Theorem}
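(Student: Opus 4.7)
The plan is to obtain Theorem \ref{theorem1.3-shallow} as a direct specialization of Theorem \ref{theorem1.3} to the parameters $\gamma = n = 2$. I first check that the hypothesis packages coincide under this reduction. When $\gamma = n = 2$, one has $m = n-1 = 1$, the constitutive law $P = A\rho^\gamma$ becomes $P = Ah^2$, and by \eqref{shallownian} the viscosity term $V(h,\boldsymbol{u})$ for spherically symmetric flows equals $2\mu\dive(hD(\boldsymbol{u}))$; hence \eqref{eq:1.1-vfbp-shallow} is identical to \eqref{eq:1.1-vfbp} modulo the relabeling $\rho \leftrightarrow h$. The initial-data conditions \eqref{distanceeuler-shallow} and \eqref{distance-las} coincide with \eqref{distanceeuler} and \eqref{distance-la} under this relabeling, and the range $\beta\in(\tfrac{1}{3},1]$ is precisely $(\tfrac{1}{3},\gamma-1]$ with $\gamma=2$. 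A termwise comparison of the energy and dissipation functionals shows that $\cE_{\mathrm{sw}}$ and $\cD_{\mathrm{sw}}$ are nothing but $\cE$ and $\cD$ from \eqref{E-1}--\eqref{D-2} with $m=1$ and $\rho_0 = h_0$, so \eqref{a2-shallow} is equivalent to \eqref{a2}.

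With the hypothesis match secured, I would then invoke Theorem \ref{theorem1.3}(i). Since $\beta\in(\tfrac{1}{3},1]$ falls entirely into the subcase $\beta\leq 1$, that branch applies and yields a unique classical solution $(h,\boldsymbol{u},\partial\Omega(t))$ in $\overline{\mathbb{E}(T)}$ of \eqref{eq:1.1-vfbp-shallow} satisfying the shallow-water analogue of Definition \ref{definition-M-euler}; in particular, this includes the full continuity statement
\begin{equation*}
(h, \, h_t, \,\nabla h, \,\boldsymbol{u}, \,\nabla \boldsymbol{u}, \,\nabla^2 \boldsymbol{u}, \,\boldsymbol{u}_t)\in C(\mathbb{E}(T)).
\end{equation*}
The spherical symmetry of $(h,\boldsymbol{u})$ in the form \eqref{ss-ass-shallow} and the boundary condition \eqref{N111euler} are inherited directly from the conclusions of Theorem \ref{theorem1.3}.

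The main subtlety, rather than a genuine obstacle, is that the case $\gamma=2$ with $\beta=\gamma-1=1$ is precisely the physical-vacuum regime for the Saint-Venant system, in which \eqref{BDconditionr} and \eqref{PVconditionr} are simultaneously demanded yet incompatible. However, the admissible range $\gamma\in(\tfrac{4}{3},\infty)$, $\beta\in(\tfrac{1}{3},\gamma-1]$ of Theorem \ref{Theorem1.1} (and hence Theorem \ref{theorem1.3}) already contains the borderline pair $(\gamma,\beta)=(2,1)$; the region-segmentation strategy described in \S\ref{sec-1.2} (interior BD-type estimate near the origin together with $\rho_0$-weighted estimates for the effective velocity near the boundary) has already been designed precisely to accommodate this incompatibility. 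Consequently Theorem \ref{theorem1.3-shallow} follows at once, with no further work required beyond the straightforward relabelling above.
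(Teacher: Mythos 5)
Your proposal is correct and follows essentially the same route as the paper: the paper likewise obtains Theorem \ref{theorem1.3-shallow} as the specialization $\gamma=n=2$ (so $m=1$) of Theorem \ref{theorem1.3}, noting that for spherically symmetric flows $D(\boldsymbol{u})=\nabla\boldsymbol{u}$ so \eqref{eq:1.1-vfbp-shallow} coincides with \eqref{eq:1.1-vfbp}, that \eqref{distanceeuler-shallow} and \eqref{a2-shallow} become \eqref{distanceeuler} and \eqref{a2} under $\rho_0\leftrightarrow h_0$, and that $\beta\in(\tfrac{1}{3},1]$ lands in case (i) of Theorem \ref{theorem1.3} since $\beta\leq 1$. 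Your remark that $(\gamma,\beta)=(2,1)$ is the borderline physical-vacuum case, already covered by the admissible range of Theorem \ref{Theorem1.1}, correctly explains why no extra argument is needed.
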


\smallskip
\begin{Remark}
For  {\rm\textbf{VFBP}} \eqref{eq:1.1-vfbp}, it follows from \eqref{distanceeuler} and {\rm Theorem \ref{theorem1.3}}  that  the usual
stress-free boundary condition holds automatically{\rm:}
\begin{equation*}
(\mathbb{T}-P\II_n)\cdot \boldsymbol{n} = (2\mu\rho D(\boldsymbol{u})-A\rho^\gamma\II_n)\cdot \boldsymbol{n}=\boldsymbol{0} \qquad \text{for $t\in (0,T]$ and $\boldsymbol{x}\in \partial\Omega(t)$},
\end{equation*}
where $\mathbb{T}$ is the viscous stress tensor and  $\II_n$ denotes the $n\times n$ unit matrix.
\end{Remark}

\begin{Remark}\label{generalcase}
Under proper modifications, the methodology developed in this paper can be applied to establishing the global well-posedness  of classical solutions with general smooth, spherically symmetric initial data  of the corresponding {\rm \textbf{VFBP}} of the barotropic {\rm\textbf{CNS}} 
with nonlinear density-dependent viscosity coefficients in two and three spatial dimensions, which is addressed in \cite{CZZ1}.
\end{Remark}

\section{Notations and Main Strategies}\label{Section-notation}

In this section, we first present some notations in \S\ref{section-notaions}, which will be frequently used throughout this paper.  In \S\ref{subsection-strategy}, we show the main strategies and new ideas in our analysis.

\subsection{Notations}\label{section-notaions}
The following notations will be frequently used in this paper.
\subsubsection{Notations on coordinates and operators}
\begin{itemize}
\item We always let $n=2$ or $3$ be the dimension number, and denote $m:=n-1$.
\smallskip
\item $\boldsymbol{x}\in \RR^n$ denotes the M-D Eulerian spatial coordinates. $\boldsymbol{y}\in \Omega:=\{\boldsymbol{y}:\,|\boldsymbol{y}|<1\}$ denotes the M-D Lagrangian spatial coordinates.
\smallskip
\item $I:=[0,1)$, $r=|\boldsymbol{y}|\in I$ denotes the radial coordinate.
\smallskip
\item For any function $f$ defined on a measurable subset of $\mathbb{R}^l$ ($l\geq 1$), if the independent variables of $f$ are $\boldsymbol{z}=(z_1,\cdots\!,z_l)^\top$, then 
\begin{equation*}
\begin{aligned}
&\qquad \partial_{\boldsymbol{z}}^{\boldsymbol{\varsigma}} f=\partial_{z_1}^{\varsigma_1}\cdots\partial_{z_l}^{\varsigma_l} f=f_{\underbrace{\text{\tiny$z_1\cdots z_1$}}_{\text{$\varsigma_1$-times}}\cdots\underbrace{\text{\tiny$z_l\cdots z_l$}}_{\text{$\varsigma_l$-times}}}= \frac{\partial^{\varsigma_1+\cdots+ \varsigma_l}}{\partial z_1^{\varsigma_1}\cdots \partial z_l^{\varsigma_l}}f \qquad \text{for }{\boldsymbol{\varsigma}}=(\varsigma_1,\cdots\!,\varsigma_l)\in \mathbb{N}^l,\\
&\qquad \nabla_{\boldsymbol{z}} f=(\partial_{z_1} f,\cdots\!,\partial_{z_l} f)^\top,\qquad \Delta_{\boldsymbol{z}} f=\sum_{i=1}^l \partial_{z_i}^2 f,\\
&\qquad \nabla_{\boldsymbol{z}}^k f \text{ denotes one generic } \partial_{\boldsymbol{z}}^{\boldsymbol{\varsigma}} f \text{ with }|\boldsymbol{\varsigma}|=\sum_{i=1}^l \varsigma_i=k \text{ for integer }k\geq 2,\\
&\qquad |\nabla_{\boldsymbol{z}}^k f|=\Big(\sum_{|\boldsymbol{\varsigma}|=k}|\partial_{z_1}^{\varsigma_1} \cdots\partial^{\varsigma_l}_{z_l}f|^2\Big)^\frac{1}{2} \qquad\text{ for } k\in \mathbb{N}^*.
\end{aligned}
\end{equation*}
In particular, for the derivatives with respect to the
variable $\boldsymbol{x}=(x_1,\cdots\!,x_n)^\top\in \mathbb{R}^n$, 
we use the notation: 
$(\partial_i^{\varsigma_i},\partial^{\boldsymbol{\varsigma}},\nabla,\Delta,\nabla^k)=(\partial_{x_i}^{\varsigma_i}, \partial_{\boldsymbol{x}}^{\boldsymbol{\varsigma}},\nabla_{\boldsymbol{x}},\Delta_{\boldsymbol{x}},\nabla_{\boldsymbol{x}}^k)$.

\smallskip
\item If $\boldsymbol{f}: E\subset \mathbb{R}^l \to \mathbb{R}^q$ ($l,q\geq 2$, $E$ is a measurable set) is a vector function with the independent variables $\boldsymbol{z}=(z_1,\cdots\!,z_l)^\top$ and $X \in \{\partial_{z_i}^{\varsigma_i},\partial_{\boldsymbol{z}}^{\boldsymbol{\varsigma}},\Delta_{\boldsymbol{z}},\nabla_{\boldsymbol{z}}^k\}$, then 
\begin{equation*}
\begin{aligned}
&X\boldsymbol{f}=\big(Xf_1,\cdots\!,Xf_q\big)^\top, \ \quad \nabla_{\boldsymbol{z}}\boldsymbol{f}=\begin{pmatrix} 
\partial_{z_1} f_1 & \partial_{z_2} f_1 & \cdots & \partial_{z_l} f_1\\[2mm]
\partial_{z_1} f_2 & \partial_{z_2} f_2 & \cdots & \partial_{z_l} f_2 \\[2mm]
\vdots & \vdots & \ddots & \vdots \\[2mm]
\partial_{z_1} f_q & \partial_{z_2} f_q & \cdots & \partial_{z_l} f_q
\end{pmatrix}_{q\times l},\\
&|\nabla_{\boldsymbol{z}}^k \boldsymbol{f}|=\Big(\sum_{i=1}^q\sum_{|\boldsymbol{\varsigma}|=k}\big|\partial_{z_1}^{\varsigma_1} \cdots\partial^{\varsigma_l}_{z_l}f_{i}\big|^2\Big)^\frac{1}{2} \qquad \text{ for } k\in \mathbb{N}^*.
\end{aligned}
\end{equation*}
Moreover, if $l=j+k$ with $j\geq 0$ and the independent variables $\boldsymbol{z}$ take the form $\boldsymbol{z}=(\boldsymbol{s},\boldsymbol{\hat z})^\top$ with $\boldsymbol{s}=(s_1,\cdots\!,s_j)^\top$ and $\boldsymbol{\hat z}=(\hat z_1,\cdots\!,\hat z_k)^\top$, then
\begin{equation*}
\mathrm{div}_{\boldsymbol{\hat z}} \boldsymbol{f}=\sum_{i=1}^k \partial_{\hat z_i}f_i.  
\end{equation*}
In particular, if $j=0$ or $1$, $\,k=n$, and $\boldsymbol{\hat z}=\boldsymbol{x}=(x_1,\cdots\!,x_n)^\top\in \mathbb{R}^n$, then $\mathrm{div}\,=\mathrm{div}_{\boldsymbol{x}}$.
\smallskip
\item For any function $f=f(r)$ defined on $I$,
\begin{equation*}
D_\eta f=\frac{f_r}{\eta_r},\qquad\,\, D_\eta^k f=D_\eta(D_\eta^{k-1} f) \quad\text{for $k\in \NN^*$ and $k\geq 2$.}
\end{equation*}
\end{itemize}

\subsubsection{Notations on function spaces}
\begin{itemize}
\item For any function space $X(I)$ appearing  in this paper, unless otherwise specified, the following conventions are used:
\begin{equation*}
\begin{aligned}
&X=X(I),\qquad X^*\text{ --- the dual space of $X$},\\[4pt]
&X^*([0,T];Y^*)\text{ --- the dual space of $X([0,T];Y)$},\\[4pt]
&|f|_p=\|f\|_{L^p},\quad \|f\|_{k,p}=\|f\|_{W^{k,p}},\quad \|f\|_k=\|f\|_{H^k},\\[4pt]
&L^p_{\mathrm{loc}}:=\big\{f:\, f\in L^p(K) \,\, \text{for any open interval $K$ such that $\bar K\subset I\backslash \{0\}$}\big\},\\[4pt]
&H^k_{\mathrm{loc}}:=\big\{f: \, \partial_r^jf\in L^1_{\mathrm{loc}} \,\, \text{for any $0\leq j\leq k$}\big\},\qquad \|f\|_{X_t(Y)}=\|f\|_{X([0,T];Y(I))}.
\end{aligned}
\end{equation*}
\item Unless otherwise specified, the following definitions of weighted function space are used: let $J\subset I$ and let $0\leq \mathrm{w}=\mathrm{w}(r)$ be some function on $J$,
\begin{equation*}
\begin{aligned}
&H^{k}_\mathrm{w}(J):=\big\{f:\, \sqrt{\mathrm{w}}\partial_r^j f\in L^2(J) \,\,\text{for $0\leq j\leq k$}\big\}, \qquad H^{-k}_{\mathrm{w}}(J):=(H^{k}_{\mathrm{w}}(J))^*,\\[-2pt]
&L^2_\mathrm{w}(J)=H^{0}_\mathrm{w}(J),\quad \|f\|_{L^2_\mathrm{w}(J)}=\|\sqrt{\mathrm{w}} f\|_{L^2(J)},\quad \|f\|_{H^k_\mathrm{w}(J)}=\sum_{j=0}^k \|\partial_r^j f\|_{L^2_\mathrm{w}(J)}.
\end{aligned}
\end{equation*}
In particular, if $J=I$, then 
\begin{equation*}
\begin{aligned}
&H^{k}_\mathrm{w}=H^{k}_\mathrm{w}(I), \qquad H^{-k}_\mathrm{w}=H^{-k}_\mathrm{w}(I), \qquad L^2_\mathrm{w}=L^2_\mathrm{w}(I),\\[4pt]
&|f|_{2,\mathrm{w}}=\|f\|_{L^2_\mathrm{w}},\qquad \|f\|_{k,\mathrm{w}}=\|f\|_{H^k_\mathrm{w}}.
\end{aligned}
\end{equation*}
\smallskip
\item For any open set $\mathrm{Q}\subset \mathbb{R}^q$ ($q\in \NN^*$), $C^\ell(\overline{\mathrm{Q}})$ $(C(\overline{\mathrm{Q}})=C^0(\overline{\mathrm{Q}}))$ denotes the space of all functions $f(\boldsymbol{z})\in C^\ell(\mathrm{Q})$ such that $\nabla_{\boldsymbol{z}}^j f$ $(0\leq j\leq \ell)$ admits a unique continuous extension to $\overline{\mathrm{Q}}$, which is equipped with the norm:
\begin{equation*}
\|f\|_{C^\ell(\overline{\mathrm{Q}})}:= \max_{0\leq j\leq \ell}\|\nabla_{\boldsymbol{z}}^j f\|_{L^\infty(\mathrm{Q})}.
\end{equation*} 
Denote $C^\infty(\overline{Q}):=\bigcap_{\ell\geq 0}C^\ell(\overline{Q})$. 
In particular, if $\mathrm{Q}$ is an open interval $(a,b)$, then we simply write $C^\ell[a,b]=C^\ell ([a,b])$.
\smallskip
\item $X_\mathrm{c}(\mathrm{Q})=\big\{f \in X(\mathrm{Q}): \text{$f$ has compact support in $\mathrm{Q}$}\big\}$, for any set $\mathrm{Q}\subset \mathbb{R}^q$ ($q\in \NN^*$).
\smallskip
\item For any function space $X$ and functions $(\varphi,g_1,\cdots\!,g_k)$,
\begin{equation*}
\|\varphi(g_1,\cdots\!,g_k)\|_{X}:=\sum_{i=1}^k\|\varphi g_i\|_X,\qquad|\varphi(g_1,\cdots\!,g_k)|:=\sum_{i=1}^k|\varphi g_i|.
\end{equation*}
\item Denote by $\langle\cdot,\cdot\rangle_{X^*\times X}$ the  pairing between the space  $X$ and its dual space $X^*$, and $\langle\cdot,\cdot\rangle$ the inner product of $L^2$, \textit{i.e.},
\begin{equation*}
\qquad \left<F,f\right>_{X^*\times X}:=F(f) \,\,\, \text{for } F\in X^*,f\in X,
\qquad\,\,    \langle f,g\rangle:=\int_0^1 fg\,\mathrm{d}r \quad \text{for }f,g\in L^2.    
\end{equation*}
$\left< F,f \right>_{X_t^*(Y^*)\times X_t(Y)}$ denotes the pairing between $X([0,T];Y)$ and $X^*([0,T];Y^*)$. 
\end{itemize}

\subsubsection{Other notations}\label{othernotation}
\begin{itemize}
\item $\delta_{ij}$ denotes the Kronecker symbol with indices $(i,j)$: $\delta_{ij}= 1$ if $i=j$, $\delta_{ij}=0$ if $i\neq j$.
\smallskip
\item For any $n\times n$ real matrix $\mathcal{M}$, $\mathcal{M}_{ij}$ denotes its $(i,j)$-th entry. Moreover, $\mathrm{SO}(n)$  denotes the set of all $n\times n$ real orthogonal matrices $\mathcal{O}$ such that $\det \mathcal{O}=1$, where $\det \mathcal{O}$ is the determinant of $\mathcal{O}$.
\smallskip
\item $E\sim F$ denotes $C^{-1}_*E\leq F\leq C_*E$ for some constant $C_*\geq 1$, where the form of $C_*$ may be different at each occurrence.

\smallskip
\item $\zeta_{a}=\zeta_{a}(r)\in C^\infty[0,1]$ ($a\in (0,1)$) denotes a cut-off function satisfying
\begin{equation*}
\zeta_{a}\in [0,1],\qquad (\zeta_{a})_r\leq 0, \qquad  \zeta_{a}=1 \ \ \text{on $[0,a]$},\qquad \zeta_{a}=0 \ \ \text{on $\big[\frac{1+3a}{4},1\big]$},
\end{equation*}
and $\zeta_{a}^\sharp=\zeta_{a}^\sharp(r):=1-\zeta_{a}(r)$. Certainly, it follows that 
\begin{equation*}
\begin{aligned}
&\,\zeta_{a}\leq \zeta_{\tilde a}, \quad  \zeta_{a}^\sharp\geq \zeta^\sharp_{\tilde a} \qquad\,\, \text{for $0<a<\tilde a<1$},\\
&\support\, (\zeta_{a})_r\cup \support\, (\zeta_{a}^\sharp)_r\subset \big[a,\frac{1+3a}{4}\big],\qquad |(\zeta_{a})_r|+|(\zeta_{a}^\sharp)_r|\leq C(a),
\end{aligned}
\end{equation*}
where $C(a)>0$ is a constant depending only on $a$. In particular, if $a=\frac{1}{2}$, define \begin{equation*}
\zeta=\zeta(r):=\zeta_{\frac{1}{2}}(r),\qquad \zeta^\sharp=\zeta^\sharp(r):=1-\zeta(r).
\end{equation*}
\item $\chi_{a}=\chi_{a}(r)$ denotes the characteristic function on $[0,a]$ $(a\in (0,1))$, {\it i.e.}, $\chi_{a}=1$ on $[0,a]$ and $\chi_{a}=0$ on $(a,1]$, and $\chi_{a}^\sharp=1-\chi_{a}$. 
Then
\begin{equation*}
\begin{aligned}
&\chi_{a}\leq \zeta_{a} \quad \text{for $a\in (0,1)$},\qquad \chi_{a} \geq  \zeta_{\frac{4a-1}{3}} \quad \text{for $a\in \big(\frac{1}{4},1\big)$};\\
&\chi_{a}^\sharp\geq  \zeta_{a}^\sharp \quad \text{for $a\in (0,1)$},\qquad \chi_{a}^\sharp\leq \zeta_{\frac{4a-1}{3}}^\sharp \quad \text{for $a\in \big(\frac{1}{4},1\big)$}.
\end{aligned}
\end{equation*}
In particular, if $a=\frac{1}{2}$, define 
\begin{equation*}
\chi=\chi(r):=\chi_{\frac{1}{2}}(r),\qquad \chi^\sharp=\chi^\sharp(r):=1-\chi(r).
\end{equation*}
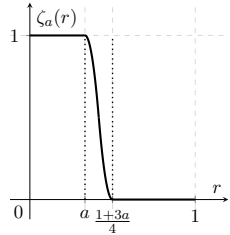
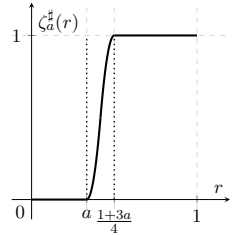
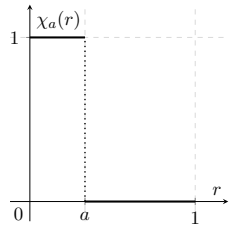
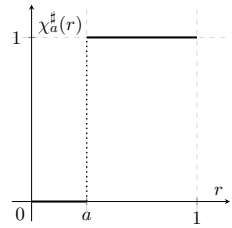
\begin{figure}[ht]
\centering
\begin{subfigure}[t]{0.4\textwidth}
\centering
\begin{tikzpicture}[scale=0.65]
\begin{axis}[
    width=6cm,
    height=6cm,
    xmin=-0.12, xmax=1.2,
    ymin=-0.12, ymax=1.2,
    axis lines = middle,
    xlabel = $r$,
    ylabel = $\zeta_{a}(r)$,
    xtick = {0, 1/3,1/2, 1},
    xticklabels = {$0$, $a$, $\frac{1+3a}{4}$, 1},
    ytick = {0, 1},
    grid = major,
    grid style = {dashed, gray!30},
    samples = 200,
    domain = 0:1]
\addplot[black, very thick, domain=0:1/3]{1};
\addplot[black, very thick, domain=1/2:1]{0};
\addplot[black, very thick, domain=1/3:5/12, smooth]{1-72*(x-1/3)^2};
\addplot[black, very thick, domain=5/12:1/2, smooth]{72*(x-1/2)^2};
\addplot[black, dotted, thick] coordinates {(1/3,0) (1/3,1)};
\addplot[black, dotted, thick] coordinates {(1/2,0) (1/2,1)};
\node[anchor=north east] at (axis cs:0,0) {$0$};
\end{axis}
\end{tikzpicture}
\caption{Function $\zeta_{a}$}
\end{subfigure}
\hspace{0cm}
\begin{subfigure}[t]{0.4\textwidth}
\centering
\begin{tikzpicture}[scale=0.65]
\begin{axis}[
    width=6cm,
    height=6cm,
    xmin=-0.12, xmax=1.2,
    ymin=-0.12, ymax=1.2,
    axis lines = middle,
    xlabel = $r$,
    ylabel = $\zeta_{a}^\sharp(r)$,
    xtick = {0, 1/3,1/2, 1},
    xticklabels = {0,$a$, $\frac{1+3a}{4}$, 1},
    ytick = {0, 1},
    grid = major,
    grid style = {dashed, gray!30},
    samples = 200,
    domain = 0:1]
\addplot[black, very thick, domain=1/2:1]{1};
\addplot[black, very thick, domain=0:1/3]{0};
\addplot[black, very thick, domain=1/3:5/12, smooth]{72*(x-1/3)^2};
\addplot[black, very thick, domain=5/12:1/2, smooth]{1-72*(1-x-1/2)^2};
\addplot[black, dotted, thick] coordinates {(1/3,0) (1/3,1)};
\addplot[black, dotted, thick] coordinates {(1/2,0) (1/2,1)};
\node[anchor=north east] at (axis cs:0,0) {$0$};
\end{axis}
\end{tikzpicture}
\caption{Function $\zeta^\sharp_{a}$}
\end{subfigure}

\vspace{0.5cm}

\begin{subfigure}[t]{0.4\textwidth}
\centering
\begin{tikzpicture}[scale=0.65]
\begin{axis}[
    width=6cm,
    height=6cm,
    xmin=-0.12, xmax=1.2,
    ymin=-0.12, ymax=1.2,
    axis lines = middle,
    xlabel = $r$,
    ylabel = $\chi_{a}(r)$,
    xtick = {0,1/3, 1},
    xticklabels = {0, $a$, 1},
    ytick = {0, 1},
    grid = major,
    grid style = {dashed, gray!30},
    samples = 200,
    domain = 0:1]
\addplot[black, very thick, domain=0:1/3] {1};
\addplot[black, very thick, domain=1/3:1] {0};
\addplot[black, dotted, thick] coordinates {(1/3,0) (1/3,1)};
\node[anchor=north east] at (axis cs:0,0) {$0$};
\end{axis}
\end{tikzpicture}
\caption{Function $\chi_{a}$}
\end{subfigure}
\hspace{0cm}
\begin{subfigure}[t]{0.4\textwidth}
\centering
\begin{tikzpicture}[scale=0.65]
\begin{axis}[
    width=6cm,
    height=6cm,
    xmin=-0.12, xmax=1.2,
    ymin=-0.12, ymax=1.2,
    axis lines = middle,
    xlabel = $r$,
    ylabel = $\chi^\sharp_{a}(r)$,
    xtick = {0, 1/3, 1},
    xticklabels = {0, $a$, 1},
    ytick = {0, 1},
    grid = major,
    grid style = {dashed, gray!30},
    samples = 200,
    domain = 0:1]
\addplot[black, very thick, domain=0:1/3] {0};
\addplot[black, very thick, domain=1/3:1] {1};
\addplot[black, dotted, thick] coordinates {(1/3,0) (1/3,1)};
\node[anchor=north east] at (axis cs:0,0) {$0$};
\end{axis}
\end{tikzpicture}
\caption{Function $\chi_{a}^\sharp$}
\end{subfigure}
\caption{Four types of the cut-off functions $(\zeta_{a},\zeta_{a}^\sharp,\chi_{a},\chi_{a}^\sharp)$.}
\end{figure}
\end{itemize}

\subsection{Main strategies}\label{subsection-strategy}
In this subsection, we present our main strategies and new ideas
to establish the main theorems. 
To overcome the difficulties arising from the coordinate singularity at the origin and the strong degeneracy on the moving vacuum boundary for large-data problems, our analysis relies on the following key ingredients:
\begin{itemize}
\item[\S \ref{subsec-2.0}] introduction of new weighted energy functionals (see \S \ref{Section-maintheorem});
\smallskip
\item[\S \ref{subsub322}] establishment of the interior BD entropy estimates and new $(\eta,\eta_r)$-weighted estimates for density
$\varrho$ 
near the origin, leading to the global lower bounds for $(\eta_r,\frac{\eta}{r})$ in $[0,T]\times \bar I$ (see \S\ref{Section-densityupper}--\S\ref{Section-etarlower}); 
\smallskip
\item[\S \ref{subsub323}] derivation of new global estimates for the effective velocity, especially its $\rho_0$-weighted $L^p$-estimates for $p\in [2,\infty]$, which are crucial for the analysis when $\gamma\geq 2$ in \eqref{distance-la} since the initial condition of BD entropy estimate fails (see \S\ref{Section-effectivevelocity});
\smallskip
\item[\S \ref{subsub324}]  establishment of the global uniform upper bounds for $(\eta_r,\frac{\eta}{r})$ in $[0,T]\times \bar I$, and thereby of the lower bound for $\varrho$ inside the fluids, via some well-designed $(\rho_0,\eta_r)$-weighted estimates for $(U,V)$ (see \S\ref{Section-etarupper}--\S\ref{Section-densitylower});
\smallskip
\item[\S \ref{subsub325}]  establishment of the global uniform estimates for $U$ in $[0,T]\times \bar I$ under the well-designed energy functionals (see \S\ref{Section-globalestimates}).
\end{itemize}

Throughout the rest of \S \ref{subsection-strategy}, $ C_0\in (1,\infty)$ denotes a generic constant depending only on $(n,\mu,A,\gamma,\beta,\varepsilon_0,\rho_0,u_0,\cK_1,\cK_2)$,
and $C(l_1,\cdots\!,l_k)\in (1,\infty)$ a generic  constant depending on $C_0$
and parameters $(l_1,\cdots\!,l_k)$, which may be different at each occurrence.

\subsubsection{Some new weighted energy functionals} \label{subsec-2.0}

The strong degeneracy of the momentum equation $\eqref{eq:VFBP-La-eta}_1$ makes it intricate to provide an effective propagation mechanism for the regularity of $U$ near the vacuum in general Sobolev spaces. Then the first key point on the well-posedness is to introduce some proper weighted energy functionals. By considering the balance of the pressure and the spatial dissipation near the vacuum, we introduce $(\cE,\cD)(t,U)$ for $\beta\in (\frac{1}{3},\gamma-1]$, which turns out to be reasonable later. The details on how to construct these energy functionals can be found in Remark \ref{remark-energy function}.

Based on the choice of $(\cE,\cD)(t,U)$, the desired local-in-time well-posedness of classical solutions of {\rm\textbf{IBVP}} \eqref{eq:VFBP-La-eta}
can be stated as follows:
\begin{Theorem}\label{local-Theorem1.1} 
Let $n=2$ or $3$ and $\gamma\in (\frac{4}{3},\infty)$. Assume that $\rho_0(r)$ satisfies \eqref{distance-la} for some $\beta\in (\frac{1}{3},\gamma-1]$, and $u_0(r)$ satisfies
\begin{equation}\label{a2-lo}
\cE(0,U)<\infty.
\end{equation}
Then there exists $T_*>0$, which depends only on $(n,\mu,A,\gamma,\beta,\varepsilon_0,\rho_0,u_0,\cK_1,\cK_2)$, such that {\rm\textbf{IBVP}} \eqref{eq:VFBP-La-eta} admits a unique classical solution $(U,\eta)(t,r)$ in $[0,T_*]\times \bar I $ satisfying  
\begin{equation}\label{b1-lo}
\begin{aligned}
&\cE(t,U)+t \cD(t,U)\in L^\infty(0,T_*),\qquad  \cD (t,U)\in L^1(0,T_*),\\
& (\eta_r,\frac{\eta}{r})(t,r)\in \big[\frac{1}{2},\frac{3}{2}\big] \qquad \quad \ \ \,\text{for $(t,r)\in [0,T_*]\times \bar I$},\\[6pt]
& U|_{r=0}=U_r|_{r=1}=0\qquad\qquad \,\text{on $(0,T_*]$},\\[8pt]
& |U_r(t,r)|\leq C(T_*)(1-r)\qquad \text{for  $(t,r)\in(0,T_*]\times \bar I$}.
\end{aligned}
\end{equation}
\end{Theorem}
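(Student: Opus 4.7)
The plan is to establish local existence via approximation in which the degeneracies are regularized, combined with uniform a priori estimates in the purpose-built weighted spaces $(\cE,\cD)$ defined in \eqref{E-1}--\eqref{D-2}. First I would smooth and truncate the initial data so that $\rho_0^\epsilon\geq \epsilon>0$ on $\bar I$ (and is smoothly regularized near $r=0$), and set up a fixed-point iteration: given $\eta^{(k)}$, define $\varrho^{(k)}$ by \eqref{eq:eta}, solve the resulting linear -- but still (regularized-)degenerate parabolic -- equation for $U^{(k+1)}$, and update $\eta^{(k+1)}$ by time integration of $U^{(k+1)}$. For the regularized iteration, standard parabolic theory (or a Galerkin construction in the weighted spaces) produces a smooth iterate, with the positivity $(\eta_r,\eta/r)\in[1/2,3/2]$ preserved on some $[0,T^\epsilon_*]$ by continuity from $\eta(0,r)=r$ together with the $L^\infty$-control of $(U_r,U/r)$ inherited from $\cE$ via the embeddings in Remark \ref{remark-energy function}.

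Next I would derive uniform-in-$(k,\epsilon)$ estimates in $(\cE,\cD)$ by a layered energy argument mirroring the structure of these functionals. The base energy estimate comes from multiplying $\eqref{eq:VFBP-La-eta}_1$ by $U$ against the weights $\zeta^2 r^m$ (interior) and $\rho_0$ (exterior) and integrating by parts, using the algebraic cancellation $2\mu D_\eta(\varrho(D_\eta U+mU/\eta)) - 2\mu m\eta^{-1}U D_\eta\varrho = 2\mu D_\eta(\varrho D_\eta U)+2\mu m\varrho D_\eta(U/\eta)$, which absorbs the coordinate-singular term into a clean dissipation. Time-differentiated estimates are then obtained by applying $\partial_t$ and $\partial_{tt}$ to the equation and testing against $U_t$ and $U_{tt}$ with the same weights (which commute with $\partial_t$), producing the $\rho_0^{1/2}$-weighted $U_{tt}$ bound in $\cD_{\mathrm{ex}}$. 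The spatial $D_\eta^k U$ estimates are recovered from the elliptic reformulation \eqref{reform} viewed as $2\mu\varrho^\beta D_\eta^2 U = (\text{lower-order})$, by iteratively applying $D_\eta$ and testing against the appropriate weights; the weight $\rho_0^{(\frac{3}{2}-\varepsilon_0)\beta}$ is calibrated exactly so that the borderline terms $\clubsuit_1$ (pressure singularity) and $\clubsuit_2$ (inertia) identified in Remark \ref{remark-energy function} lie in $L^2(\tfrac{1}{2},1)$. The interior and exterior estimates are glued on the overlap $[\tfrac{1}{2},\tfrac{5}{8}]$, where $\rho_0\sim 1$, yielding uniform control of $\cE(t,U^{(k+1)})+t\,\cD(t,U^{(k+1)})$ on a common $T_*>0$ independent of $(k,\epsilon)$.

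Once the uniform bounds are in hand, I would show that the iteration is contractive in a weaker weighted norm such as $\sup_{t\le T_*}\|\rho_0^{1/2}(U^{(k+1)}-U^{(k)})\|_{L^2}^2 + \|\eta^{(k+1)}-\eta^{(k)}\|_{L^2}^2$, yielding convergence of $(U^{(k)},\eta^{(k)})$; passing $\epsilon\to 0^+$ via weak-$*$ compactness in the weighted spaces and Aubin--Lions strong compactness then produces a classical solution of \eqref{eq:VFBP-La-eta} on $[0,T_*]\times\bar I$ satisfying $(\eta_r,\eta/r)\in[1/2,3/2]$. The boundary condition $U_r|_{r=1}=0$ in $\eqref{b1-lo}_3$ follows from the argument in Remark \ref{rmk1.4}: letting $r\to 1$ in \eqref{reform} and using $D_\eta(\varrho^\beta)|_{r=1}\neq 0$ (thanks to $\rho_0^\beta\sim 1-r$ and \eqref{eq:eta}) forces $D_\eta U|_{r=1}=0$. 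The asymptotic bound $|U_r(t,r)|\leq C(T_*)(1-r)$ is a consequence of the weighted embedding $H^4_{\rho_0^{2\alpha}}(\tfrac{1}{2},1)\hookrightarrow C^2[\tfrac{1}{2},1]$ from Remark \ref{remark-energy function} together with $U_r|_{r=1}=0$. Uniqueness follows from the same contraction estimate applied directly to the difference of two classical solutions.

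The main obstacle is the highest-order elliptic estimate: controlling $\|\rho_0^{(\frac{3}{2}-\varepsilon_0)\beta}(D_\eta^2 U_t, D_\eta^4 U)\|_{L^2(\frac{1}{2},1)}$ requires applying $D_\eta$ to the already-reformulated equation \eqref{reform}, and every term produced by the Leibniz rule must carry exactly the right $\rho_0$-weight for its derivative count. The two critical terms are the triply-differentiated pressure contribution $\clubsuit_1\sim\varrho^{\alpha+\gamma-1-3\beta}|D_\eta(\varrho^\beta)|^3$, whose $L^2$-integrability near $r=1$ forces $\beta\le\gamma-1$, and the top-order inertial contribution $\clubsuit_2\sim\varrho^\alpha U_{tt}$, whose integrability forces $\alpha\ge 1/2$, i.e.\ $\beta>1/3$. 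Closing the estimate thus relies precisely on the choice of $\varepsilon_0$ in \eqref{varepsilon0} and the range \eqref{beta}, and this delicate bookkeeping -- combined with the fact that even the linearized problem is degenerate parabolic, so the construction of $U^{(k+1)}$ already requires these same weighted estimates rather than generic Schauder theory -- is what makes the local well-posedness a substantial undertaking in its own right (see \S\ref{Section-local}).
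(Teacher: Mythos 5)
Your plan departs from the paper in one structural choice that, as sketched, leaves a genuine gap: you regularize the density to $\rho_0^\epsilon\geq\epsilon>0$ and then pass $\epsilon\to 0^+$, whereas the paper never regularizes the density at all. Instead, the paper constructs solutions directly in the degenerate weighted spaces. Concretely, in \S\ref{Subsection9.1} it builds a modified Galerkin scheme using the Sturm--Liouville eigenbasis of $-(r^m\xi_r)_r+mr^{m-2}\xi=\lambda r^m\xi$ with $\xi|_{r=0}=\xi_r|_{r=1}=0$ (Lemma \ref{hilbert}), proves density of $C^\infty(\bar I)\cap\cH^1_{r^m}$ in $\cH^1_{r^m\rho_0}$ (Proposition \ref{prop-bijin}), establishes weak and then strong solvability of a generic linear problem (Propositions \ref{prop1}--\ref{prop-strong}), and bootstraps to classical regularity (Lemma \ref{Lemma-point} and Steps 4--8 of \S\ref{subsection3.3}). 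Only then does it set up the Picard iteration on the genuinely degenerate linearized problem, with uniform bounds over a short time furnished by \S\ref{subsection9.2}. You correctly identify the iteration structure, the contraction in a low-order weighted $L^2$ norm, and the mechanism behind the Neumann condition and the bound $|U_r|\leq C(1-r)$; your reading of the borderline terms $\clubsuit_1,\clubsuit_2$ and the role of $\varepsilon_0,\beta$ is also accurate.

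The gap is that the $\epsilon$-regularization does not come for free, and your sketch does not address the points where it is actually delicate. First, with $\rho_0^\epsilon\geq\epsilon$ the equation is uniformly parabolic up to $r=1$, so the endpoint $r=1$ becomes a genuine boundary and a boundary condition must be imposed there by hand; the vacuum boundary condition $\varrho|_{r=1}=0$ and the derived Neumann condition $U_r|_{r=1}=0$ no longer appear automatically from the degeneracy, and one must justify that the choice made for the regularized problem converges to the correct free-boundary structure as $\epsilon\to 0$. Second, the uniform-in-$\epsilon$ energy estimates must carry the \emph{degenerate} weights $\rho_0^{1/2}$ and $\rho_0^{(\frac{3}{2}-\varepsilon_0)\beta}$ so that they survive the limit, while the PDE itself carries $\rho_0^\epsilon$; the algebraic cancellations you invoke (e.g.\ your rewriting of $2\mu D_\eta(\varrho(D_\eta U+mU/\eta))-2\mu m\eta^{-1}U D_\eta\varrho$) rely on the density in the equation matching the density in the weight, and with the mismatch $\rho_0^\epsilon$ versus $\rho_0$ the identities degenerate into estimates with error terms whose uniform control is not addressed. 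Third, ``standard parabolic theory'' for the regularized linear step is still not standard because of the coordinate singularity at $r=0$ in multi-$D$; the paper's Sturm--Liouville basis is precisely calibrated to absorb the $mU/\eta$ terms and the $r=0$ singularity into a coercive bilinear form (see the proof of Lemma \ref{hilbert}), and no analogue is offered. Finally, Aubin--Lions gives strong $L^p$ convergence, not classical regularity; the passage from a weak limit to a solution satisfying Definition \ref{fed-cl} is itself a substantial step (cf.\ Lemma \ref{Lemma-point} and Steps 6--8 of \S\ref{subsection3.3}), and is not obtained from compactness alone. Until these are filled, the proposal does not establish the theorem, even though its energy-estimate scaffolding mirrors the paper's \S\ref{subsection9.2} closely.
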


The proof for Theorem \ref{local-Theorem1.1}  will be given in \S \ref{Section-local}.

\begin{Remark}\label{remk31}
In fact, {\rm Theorem \ref{local-Theorem1.1}} can be extended to a more general case.
Specifically, consider $\eta(0,r)=\eta_0(r)$ satisfying
\begin{equation*}
((\eta_0)_r,\frac{\eta_0}{r})(r)\in [\delta_*,\delta^*] \quad \text{for $r\in \bar I$},
\qquad\,\,\, \mathring\cE(0,\eta)<\infty,
\end{equation*}
where $\delta^*>\delta_*>0$ are any given constants and $\mathring\cE(t,f)$ is defined in the same way as $\cE(t,f)$ in \eqref{E-1}, except with $\eta(r)$ in place of $r$ {\rm(}see also \eqref{E-1a} of {\rm Appendix \ref{AppB})}. Then we can show that {\rm Theorem \ref{local-Theorem1.1}} still holds. In this case, \eqref{b1-lo} is replaced by
\begin{equation*}
(\eta_r,\frac{\eta}{r})(t,r)\in \big[\frac{\delta_*}{2},\frac{3\delta_*}{2}\big] \qquad \quad \ \ \,\text{for $(t,r)\in [0,T_*]\times \bar I$},
\end{equation*}
and $T_*>0$ depends only on $(\delta_*,\delta^*,n,\mu,A,\gamma,\beta,\varepsilon_0,\rho_0,u_0,\eta_0,\cK_1,\cK_2)$. This can be proved by following a similar methodology developed in {\rm\S \ref{Section-local}}, we omit the details for brevity. 
\end{Remark}

\subsubsection{Global uniform upper bound of the density and lower bounds of $(\eta_r,\frac{\eta}{r})$}\label{subsub322}

For simplicity, in what follows, we focus on the 3-D case, 
since the 2-D case can be treated in a similar manner. 

The first challenge is to derive the uniform lower bounds of $(\eta_r,\frac{\eta}{r})$ in $[0,T]\times \bar I$. We first obtain the uniform upper bound for $\eta^2\varrho$ from the fundamental energy estimates (see Lemmas \ref{lemma-basic energy}--\ref{lemma-far depth}) which, combined with the formula \eqref{eq:eta} for $\varrho$, implies that $\eta_r$ does not vanish inside the interval $(0,1)$. Moreover, the Neumann boundary condition \eqref{N111} gives $\eta_r|_{r=1}=1$  on $[0,T]$. Combining these two factors and arguing by contradiction, we obtain the global lower bounds of $(\eta_r,\frac{\eta}{r})$ away from the origin in Lemma \ref{lemma-low jacobi 1}: For any $a\in (0,1)$,
\begin{equation}\label{2,,5}
\eta_r \geq C(a,T)^{-1},\quad \frac{\eta}{r}\geq C(a,T)^{-1} \qquad\,\, \text{for all $(t,r)\in [0,T]\times [a,1]$}.
\end{equation}

To obtain the uniform lower bounds of $(\eta_r,\frac{\eta}{r})$ near the origin, in view of \eqref{eq:eta}, it suffices to bound $\varrho$ from above:
\begin{equation}\label{near-d}
|\zeta \varrho(t)|_\infty\leq C(T) \qquad\text{for all $t\in [0,T]$},
\end{equation}
where $\zeta$ is a smooth cut-off function given in \S\ref{othernotation}. After that, the desired lower bounds of $(\eta_r,\frac{\eta}{r})$ are obtained in Lemma \ref{lemma-lower bound jacobi}, following from the above and the proof by contradiction.

To prove \eqref{near-d}, our approach is based on the effective velocity $\boldsymbol{v}=\boldsymbol{u}+2\mu \nabla\log\rho$ and the classical Sobolev embedding $W^{1,p}(\Omega(t))\into L^\infty (\Omega(t))$ ($p>3$) for each $t>0$:
\begin{equation*}
\|\rho^{\frac{1}{p}}\|_{L^{\infty}(\Omega(t))} \leq C(p)\|(\rho^{\frac{1}{p}}, \nabla(\rho^{\frac{1}{p}}))\|_{L^p(\Omega(t))} \leq C(p)\big(\|\rho\|_{L^1(\Omega(t))}^{\frac{1}{p}}+\|\rho^{\frac{1}{p}}(\boldsymbol{u}, \boldsymbol{v})\|_{L^p(\Omega(t))}\big).
\end{equation*}
Returning to the Lagrangian coordinates $(t,r)$, 
we see that it suffices to establish 
\begin{equation}\label{3.6}
E_{\star}(t):=\big|\big((r^2\rho_0)^\frac{1}{p}U,\,(\zeta r^2\rho_0)^\frac{1}{p} V\big)(t)\big|_p^p\leq C(p,T) \qquad\text{for all $t\in [0,T]$ and some $p>3$}.
\end{equation}
Here, $V$ is the Lagrangian radial projection of $\boldsymbol{v}$ defined by $V=U+2\mu D_\eta \log\varrho$. Due to the behavior of $D_\eta\log\varrho$ near the vacuum boundary, the cut-off function $\zeta$ in \eqref{3.6} is indispensable.

The proof of \eqref{3.6} relies on two key observations. First, to circumvent the physical vacuum singularity, we develop an ``interior BD entropy estimate'' near the origin. Recall that the initial condition of BD entropy estimate \eqref{BDconditionr} fails to hold when $\rho_0$ satisfies \eqref{distance-la} with $\beta>1$. Hence, while deriving the BD entropy estimate, we introduce a smooth cut-off function $\zeta_a$, thereby obtaining the following estimates (see Lemma \ref{near-BD}):
\begin{equation}\label{inter-bd}
\big|(\zeta_a \eta^2\eta_r)^\frac{1}{2}D_\eta\sqrt{\varrho}(t)\big|_2\leq C(a,T)\qquad\text{for any $a\in (0,1)$ and $t\in [0,T]$}.
\end{equation}
This interior estimate still captures some crucial information about the first derivative of $\varrho$ near the origin.  

Building on this, we are led to the second key ingredient: the flow map weighted estimates for $\varrho$. Heuristically, the flow map $\eta$ acts as a radial weighting near the origin, so that one can expect to obtain some favorable weighted estimates from \eqref{inter-bd} and the Hardy inequality. However, the Hardy inequality is not directly applicable here, due to the lack of \textit{a priori} upper bounds for $(\eta,\eta_r)$ near the origin. Fortunately, by carefully utilizing \eqref{inter-bd} alongside the fundamental theorem of calculus, we establish the following $L^1$- and $L^\infty$-estimates for $\varrho$ weighted by $(\eta,\eta_r)$ in Lemma \ref{lemma-near depth}: For $q_1\in [1,3]$ and $q_2\in [1,2]$,
\begin{equation}\label{3.8}
|\zeta_a\eta^{q_1-1}\eta_r\varrho(t)|_1\leq C(a,q_1,T), \quad |\zeta_a \eta^{q_2}\varrho(t)|_\infty\leq C(a,q_2,T) \qquad\,\,\text{for any $t\in[0,T]$}.
\end{equation}

Now we outline how \eqref{3.6} is derived from \eqref{inter-bd}--\eqref{3.8}. First, multiply $\eqref{eq:VFBP-La-eta}_1$ and \eqref{eq:v} by $\eta^2\eta_r|U|^{p-2}U$ and $\zeta r^2\rho_0|V|^{p-2}V$ with $p>3$, respectively, and integrate over $I$. Then, based on the region segmentation and \eqref{2,,5}, we derive the following inequalities: 
\begin{align}
&\frac{\mathrm{d}}{\mathrm{d}t}\big|(r^2\rho_0)^{\frac{1}{p}}U\big|_{p}^{p} + \cD_U(t) \leq C(p) \Big(1+E_\star(t)+ \underline{\int_0^\frac{1}{2} \eta^{2+\vartheta_1(p-2)}\eta_r\varrho^{p\gamma-p+1}\,\mathrm{d}r}_{:=\mathrm{A}_{(\vartheta_1)}} \Big),\label{04}\\
&\frac{\mathrm{d}}{\mathrm{d}t}\big|(\zeta r^2\rho_0)^\frac{1}{p}V\big|_p^p+ \cD_V(t) \leq C(p)\Big(E_\star(t)+ \cD_U(t)\underline{\Big|\zeta_{\frac{5}{8}}\eta^\frac{2}{(\gamma-1)(\vartheta_2 p+1-\vartheta_2)}\varrho\Big|_\infty}^{\!\!\!\!\!\!\!(\gamma-1)(\vartheta_2 p+1-\vartheta_2)}_{:=\mathrm{B}_{(\vartheta_2)}}\Big),\label{04*}
\end{align}
where $\vartheta_1,\vartheta_2\in [0,1]$ are any fixed constants and $(\cD_U,\cD_V)(t)$ are the dissipation terms:
\begin{equation*}
\cD_U(t):=\Big|\big(\frac{r^2\rho_0}{\eta^2}\big)^\frac{1}{p}U\Big|_p^p,\qquad \cD_V(t):=\big|(\zeta \eta^2\eta_r\varrho^{\gamma})^\frac{1}{p}  V\big|_p^p.
\end{equation*}

For $(\mathrm{A}_{(\vartheta_1)},\mathrm{B}_{(\vartheta_2)})$  when $\gamma\in (1,2]$, we can choose suitable $(\vartheta_1,\vartheta_2)\in [0,1]$ such that \eqref{3.8} can be applied to $(\mathrm{A}_{(\vartheta_1)},\mathrm{B}_{(\vartheta_2)})$ directly, leading to that $\mathrm{A}_{(\vartheta_1)}+\mathrm{B}_{(\vartheta_2)}\leq C(p,T)$. 

To handle $(\mathrm{A}_{(\vartheta_1)},\mathrm{B}_{(\vartheta_2)})$ when $\gamma  \in (2,3)$, we can first choose $\vartheta_2=0$ and derive from \eqref{3.8} that  $\mathrm{B}_{(0)}\leq C(p)$. For $\mathrm{A}_{(\vartheta_1)}$, since the power of $\eta$ is too low for \eqref{3.8} to apply, we set $\vartheta_1=1$ and develop an iterative scheme to increase the power of $\eta$ in $\mathrm{A}_{(1)}$. Employing integration by parts multiple times and $2\mu\varrho_r=\eta_r\varrho(V-U)$, we see that, for $\varepsilon\in (0,1)$ and $j\in \mathbb{N}^*$,
\begin{equation}\label{055}
\begin{aligned}
\mathrm{A}_{(1)} &\leq  C(\varepsilon,p,j,T)\Big(1+\underline{\int_0^1 \zeta \eta^{a_j} \eta_r \varrho^{b_j}\,\mathrm{d}r}_{:=\mathrm{I}_{(a_j,b_j)}}\Big)  +\varepsilon\big(\mathrm{B}_{(0)}^{\gamma-1} \cD_U(t)+ \cD_V(t)\big),
\end{aligned}   
\end{equation}
where $(a_j,b_j)$ are two strictly increasing sequences satisfying $(a_0,b_0)=(p,p\gamma-p+1)$ and 
\begin{equation*}
a_{j}=2(p-1)\big(\frac{p}{p-1}\big)^j-(p-2),\qquad b_{j}=(\gamma-1)(p-1)\big(\frac{p}{p-1}\big)^j+\gamma.
\end{equation*}
Therefore, $\mathrm{I}_{(a_{j_0},b_{j_0})}$ can be bound by \eqref{3.8} for sufficiently large $j=j_0$. Finally, collecting \eqref{04}--\eqref{055} and the estimates of $(\mathrm{B}_{(0)},\mathrm{I}_{(a_{j_0},b_{j_0})})$, we can choose $\varepsilon$ sufficiently small to obtain a Gr\"onwall-type inequality for $E_\star(t)$, hence deriving the desired estimate \eqref{3.6}. For the overall details on establishing \eqref{3.6} in the 2-D and 3-D cases; 
see Lemmas \ref{lp-uv}--\ref{lemma-v-lp}.

\subsubsection{New global weighted estimates for the effective velocity that differ from the BD entropy estimate}\label{subsub323}
In order to establish the global upper bounds for $(\eta_r,\frac{\eta}{r})$, we develop some new estimates for $V$ both near and away from the origin, especially the $\rho_0$-weighted estimates of $V$ away from the origin, which are distinct from the classical BD entropy estimate.
 
On one hand, we can solve for $V$ from its damped transport equation \eqref{eq:v}:
\begin{equation}\label{odesudu}
\begin{aligned}
V(t,r)&=v_0(r)\exp\Big(-\frac{A\gamma}{2\mu}\int_0^t \varrho^{\gamma-1}(s,r)\,\mathrm{d}s\Big)\\
&\quad+\frac{A\gamma}{2\mu}\int_0^t (\varrho^{\gamma-1} U)(\tau,r)\exp\Big(-\frac{A\gamma}{2\mu}\int_\tau^t \varrho^{\gamma-1}(s,r)\,\mathrm{d}s\Big) \,\mathrm{d}\tau,
\end{aligned}
\end{equation}
where $v_0=V|_{t=0}$. Due to $\eqref{distance-la}_2$ and $u_0\in L^\infty$,  $\rho_0^Kv_0\in L^p$ for $p\in [2,\infty)$ whenever $K>\frac{p-1}{p}\beta$. Hence, \eqref{odesudu}, combined with \eqref{2,,5}, \eqref{3.6}, $\varrho\geq 0$, and the Minkowski integral inequality, gives the following weighted $L^p$-estimates for $V$ away from the origin (see Lemma \ref{lemma-v Lp ex}):
\begin{equation}\label{38'}
|\chi^\sharp\rho_0^{\iota\beta}V(t)|_p \leq C(p,\iota,T) \qquad\text{for any $p\in[2,\infty)$, $\iota>\frac{p-1}{p}$, and $t\in[0,T]$}.
\end{equation}
Moreover, since $\rho_0^\beta v_0\in L^\infty$, by employing an approach based on the Sobolev embedding $W^{1,1}\into L^\infty$ and the formula: $2\mu\varrho_r=\eta_r\varrho(V-U)$, we obtain the $L^1([0,T];L^\infty)$-estimate for $\chi^\sharp\rho_0^\beta\varrho^{\gamma-1} U$ from \eqref{3.6}, \eqref{38'}, and the uniform lower bounds of $(\eta_r,\frac{\eta}{r})$ in $[0,T]\times \bar I$. This, along with \eqref{odesudu}, leads to the  weighted $L^\infty$-estimate for $V$ in Lemma \ref{lemma-v Linfty ex}:
\begin{equation}\label{VVVddd}
|\chi^\sharp\rho_0^{\beta}V(t)|_\infty \leq C(T) \qquad\text{for any $t\in[0,T]$}.
\end{equation}

Near the origin, we first establish some special weighted $L^p$-estimates for $U$. Comparing with the calculation in \eqref{04} in the proof of \eqref{3.6} (the classical $L^p$-energy estimates for $U$), we instead multiply $\eqref{eq:VFBP-La-eta}_1$ by $\zeta^2\eta_r|U|^{p-2}U$ for $p\geq 2$. Then, utilizing the equation: $2\mu \varrho_r=\varrho(V-U)$ and the $L^\infty$-norm of $\zeta V$, together with the uniform lower bounds of $(\eta_r,\frac{\eta}{r})$ in $[0,T]\times \bar I$, we obtain in Lemma \ref{new-u-lp} that, for any $p\in [2,\infty)$ and $t\in [0,T]$,
\begin{equation*}
\big|(\zeta^2\eta_r\varrho)^\frac{1}{p}U(t)\big|_p^p+\int_0^t \Big|(\zeta^2\eta_r\varrho)^\frac{1}{2}|U|^\frac{p-2}{2}\big(D_\eta U,\frac{U}{\eta}\big)\Big|_2^2\,\mathrm{d}s \leq C(p,T)\big(\sup_{s\in[0,t]}|\zeta V|_\infty^2+1\big).
\end{equation*}
Subsequently, using a similar argument in deriving the $L^1([0,T];L^\infty)$-estimate for $\chi^\sharp\rho_0^\beta\varrho^{\gamma-1} U$, we can obtain from the above with suitable fixed $k>3$ that, for any $\varepsilon\in (0,1)$,
\begin{equation*}
\begin{aligned}
\int_0^t |\zeta \varrho^{\gamma-1}U|_\infty\mathrm{d}s
&\leq C(T)\Big(1+\!\!\sup_{s\in[0,t]}\!|\zeta V|_\infty^\frac{3}{k}\!+\big(1+\!\!\sup_{s\in[0,t]}\!|\zeta V|_\infty^\frac{1}{k}\big)\!\int_0^t\!\big|(\zeta^2\eta_r\varrho)^\frac{1}{2}|U|^\frac{k-2}{2}\!D_\eta U\big|_2^\frac{1}{k}\mathrm{d}s\Big)\\
&\leq C(\varepsilon,T) +\varepsilon\!\sup_{s\in[0,t]}|\zeta V|_\infty,
\end{aligned}
\end{equation*}
which, along with \eqref{odesudu}, leads to the $L^\infty$-estimate for $\zeta V$ in Lemma \ref{lemma-v Linfty in}:
\begin{equation}\label{38'''}
|\zeta V(t)|_\infty\leq C(T) \qquad\text{for all $t\in [0,T]$}.
\end{equation}

\subsubsection{Global uniform upper bounds of $(\eta_r,\frac{\eta}{r})$ and lower bound of the density inside the fluids}\label{subsub324}
To obtain the global uniform upper bounds for $(\eta_r,\frac{\eta}{r})$, our main objective is to establish the following two types of estimates for $\log\varrho$: for any $t\in [0,T]$,
\begin{equation}\label{39'}
|(\zeta^\sharp)^2\rho_0^K\log\varrho(t)|_\infty\leq C(K,T) \ \ \text{for some large $K>0$},\qquad |\zeta_\frac{5}{8}\log\varrho(t)|_\infty\leq C(T).
\end{equation}
Formally, together with \eqref{eq:eta}, the above estimates imply that $(\eta_r,\frac{\eta}{r})$ do not develop singularities inside the interval $(0,1)$. Since $\eta_r|_{r=1}$ on $[0,T]$, we can thus derive the uniform upper bounds for $(\eta_r,\frac{\eta}{r})$ near and away from the origin in Lemmas \ref{lemma-upper jacobi-ex} and \ref{lemma-upper jacobi near}, respectively. Clearly, the lower bound of $\varrho$ inside the fluids then follows directly from expression \eqref{eq:eta} for $\varrho$. 

The proofs of  $\eqref{39'}_1$ and $\eqref{39'}_2$ are essentially the same; we take $\eqref{39'}_1$ as an example. Generally, establishing $\eqref{39'}_1$ relies on the Sobolev embedding $W^{1,1}\hookrightarrow L^\infty$ and estimates of $V$. A direct calculation yields the inequality:
\begin{equation*} 
|\zeta^\sharp\rho_0^K\log\varrho|_\infty^2\leq C(T)+C_0\underline{\big|(\zeta^\sharp)^4\rho_0^{2K}\eta_r\log \varrho (V,U)\big|_1}_{:=\mathrm{I}_\star}. 
\end{equation*}
Here, handling the factor $\eta_r$ in $\mathrm{I}_\star$ is particularly intricate, since we do not have the \textit{a priori} upper bound for $\eta_r$. In fact, we find that $\mathrm{I}_\star$ can be treated effectively by distributing $\eta_r$, \textit{i.e.},
\begin{equation*}
\mathrm{I}_\star\leq C_0\big|(\zeta^\sharp)^2\rho_0^{K}\sqrt{\eta_r}\log \varrho \big|_2\big|\zeta^\sharp\rho_0^{K}\sqrt{\eta_r} (V,U)\big|_2,   
\end{equation*}
which involves some unconventional $\sqrt{\eta_r}$-weighted estimates for $(\log\varrho,U,V)$.

To this end, we begin by estimating $(U,V)$. Let $(M,N)$ denote generic positive constants. Multiplying $\eqref{eq:VFBP-La-eta}_1$ and \eqref{eq:v} by $(\zeta^\sharp)^2\eta_r\rho_0^{2M}\varrho^{-1} U$ and $(\zeta^\sharp)^2\rho_0^{2N}\eta_r V$, respectively, and integrating over $I$, then we derive estimates of the form:
\begin{align}
&\begin{aligned}
&\,\frac{\mathrm{d}}{\dt}\big|\zeta^\sharp \rho_0^M\sqrt{\eta_r}U\big|_2^2+\mu\Big|\zeta^\sharp \rho_0^{M} \sqrt{\eta_r}\big(D_\eta U,\frac{U}{\eta}\big)\Big|_2^2\label{edt}\\
&\quad \leq C(M,T)\big(1+\big|(\zeta^\sharp)^2\rho_0^{2M}V_r\big|_{q_*}+\big|\chi^\sharp\rho_0^{2M-\beta}\sqrt{\eta_r}V\big|_2^2+\big|\zeta^\sharp \rho_0^{M}\sqrt{\eta_r}U\big|_2^2\big),
\end{aligned}\\[6pt]
&\frac{\mathrm{d}}{\dt}\big|\zeta^\sharp \rho_0^{N}\sqrt{\eta_r}V\big|_2 \leq C(N,T)\big|\zeta^\sharp \rho_0^\frac{N+\beta}{2} \sqrt{\eta_r}(U,D_\eta U)\big|_2,
\end{align}
where $q_*\in (1,2]$ is a special parameter defined in \eqref{q*}. To close these estimates, we further derive the weighted $L^{q_*}$-estimate for $V_r$ from \eqref{eq:v}, which gives
\begin{equation}\label{Vrq}
\begin{aligned}
\frac{\mathrm{d}}{\dt}|(\zeta^\sharp)^2\rho_0^{N+\beta} V_r|_{q_*}&\leq C(N,T)\big(\big|\zeta^\sharp \rho_0^{N}\sqrt{\eta_r}V\big|_2+\big|\zeta^\sharp \rho_0^\frac{N+\beta}{2} \sqrt{\eta_r}(U,D_\eta U)\big|_2\big)\\
&\quad + C(N,T) \big|\zeta^\sharp \rho_0^\frac{N+\beta}{2}\sqrt{\eta_r} U\big|_2^\frac{3}{2}\big|\zeta^\sharp \rho_0^\frac{N+\beta}{2}\sqrt{\eta_r}D_\eta U\big|_2^\frac{1}{2} . 
\end{aligned}
\end{equation}
Thus, collecting \eqref{edt}--\eqref{Vrq} by letting $N=2M-\beta$ and choosing a suitable $M$, we arrive at the desired weighted estimates for $(U,V)$:
\begin{equation}\label{322}
\big|\zeta^\sharp \rho_0^M\sqrt{\eta_r}U(t)\big|_2+\big|\zeta^\sharp \rho_0^{2M-\beta}\sqrt{\eta_r}V(t)\big|_2\leq C(M,T) \qquad\text{for some $M>0$}.
\end{equation}
The detailed calculations of \eqref{322} can be found in Lemmas \ref{lemma-V-Vr}--\ref{lemma-v-vr-u-ur}.

Finally, multiplying $\eqref{eq:VFBP-La}_1$ by $(\zeta^\sharp)^4 \rho_0^{2K}\varrho^{-1}\log\varrho$ and integrating over $I$, we obtain the $L^2$-estimate for $(\zeta^\sharp)^2\rho_0^K\sqrt{\eta_r}\log\varrho$ from \eqref{322} with sufficiently large $K$, thereby obtaining $\eqref{39'}_1$.

\subsubsection{Global weighted energy estimates for the velocity}\label{subsub325}
For establishing the global weighted energy estimates, we first establish in \S \ref{subsub911}--\S\ref{subsub914} all tangential estimates: 
\begin{equation}\label{2211}
\sup_{t\in[0,T]}\Big|(r^m \rho_0)^\frac{1}{2}\big(\partial_t^kU,D_\eta \partial_t^kU, \frac{\partial_t^kU}{\eta}\big) \Big|_2+\int_0^T \big|(r^m\rho_0)^\frac{1}{2}U_{tt}\big|_2^2\mathrm{d}s\leq C(T) \qquad \text{for $k=0,1$},
\end{equation}
alongside the second- and third-order interior elliptic estimates and $W^{1,\infty}$-estimates for $U$:
\begin{equation}\label{2211*}
\sup_{t\in[0,T]}\Big(\Big|\zeta r^\frac{m}{2}\big(D_\eta^2 U,D_\eta(\frac{U}{\eta}),D_\eta^3 U,D_\eta^2(\frac{U}{\eta}),\frac{1}{\eta}D_\eta (\frac{U}{\eta})\big)\Big|_2+\Big|\big(U,D_\eta U,\frac{U}{\eta}\big)\Big|_\infty\Big)\leq C(T).
\end{equation}
Note that the calculations in \eqref{2211}--\eqref{2211*} rely on the global uniform upper and lower bounds of $(\eta_r,\frac{\eta}{r})$ and the estimates on $V$ given in \eqref{38'}--\eqref{38'''}.

Afterward, in \S \ref{sub92}--\S\ref{sub94}, we derive the second- and third-order exterior elliptic estimates for $U$ and the $L^1(0,T)$-estimate and time-weighted estimate for $\cD(t,U)$. Here, we take the $L^2([0,T];L^2)$-estimates for $\chi^\sharp\rho_0^{(\frac{3}{2}-\varepsilon_0)\beta} D_\eta^4 U$ as an example, and let $\cE_{\mathrm{ex}}(t,U)\in L^\infty(0,T)$, $\cD_{\mathrm{in}}(t,U)\in L^1(0,T)$, and $\chi^\sharp\rho_0^{(\frac{3}{2}-\varepsilon_0)\beta} D_\eta^2 U_t\in L^2([0,T];L^2)$. From \eqref{2211}--\eqref{2211*} and these assumptions, we can show that the crossing term $\mathcal{T}_{\mathrm{cross}}:=(D_\eta^3 U)_r+(\beta^{-1}+2) \rho_0^{-\beta}(\rho_0^\beta)_r D_\eta^3 U$ satisfies a Gr\"onwall-type inequality:
\begin{equation*}
\big|\zeta^\sharp \rho_0^{(\frac{3}{2}-\varepsilon_0)\beta}\mathcal{T}_{\mathrm{cross}}\big|_2\leq C(T)\Big(\int_0^t \big|\chi^\sharp\rho_0^{(\frac{3}{2}-\varepsilon_0)\beta} D_\eta^4 U\big|_2\,\mathrm{d}s+\big|\chi^\sharp\rho_0^{(\frac{3}{2}-\varepsilon_0)\beta} D_\eta^2 U_t\big|_2+1\Big).
\end{equation*}
Then Proposition \ref{prop2.1} in Appendix \ref{subsection2.2} and $\cE_{\mathrm{ex}}(t,U)\in L^\infty(0,T)$ give 
\begin{equation*}
\begin{aligned}
\big|\chi^\sharp \rho_0^{(\frac{3}{2}-\varepsilon_0)\beta} D_\eta^4 U\big|_2&\leq C(T)\big(\cD_{\mathrm{in}}(t,U)+ \big|\chi^\sharp \rho_0^{(\frac{3}{2}-\varepsilon_0)\beta} D_\eta^3 U\big|_2 + \big|\zeta^\sharp \rho_0^{(\frac{3}{2}-\varepsilon_0)\beta}\mathcal{T}_{\mathrm{cross}}\big|_2\big)\\
&\leq C(T)\Big(\int_0^t\big|\chi^\sharp \rho_0^{(\frac{3}{2}-\varepsilon_0)\beta} D_\eta^4 U\big|_2\,\mathrm{d}s+\big|\chi^\sharp \rho_0^{(\frac{3}{2}-\varepsilon_0)\beta} D_\eta^2 U_t\big|_2+\cD_{\mathrm{in}}(t,U)\Big),
\end{aligned}
\end{equation*}
which, along with the Gr\"onwall inequality, leads to the desired estimate.

\section{Global-In-Time Uniform Upper Bound of the Density}{}\label{Section-densityupper}

The purpose of this section is to establish the global-in-time upper bound of density $\varrho$.  
For simplicity,  we first  define  a solution class $ D(T)$  as follows:
\begin{Definition}\label{d2}
Let $T>0$, $\gamma\in (1,\infty)$ if $n=2$, and $\gamma\in (1,3)$ if  $n=3$. For  {\rm \textbf{IBVP}} \eqref{eq:VFBP-La-eta},  a  solution $(U,\eta)(t,r)$ is said to be in the class $ D(T)$ if  the following conditions hold{\rm:}
\begin{itemize}
\item $(U,\eta)(t,r)$ is a classical solution of  {\rm\bf  IBVP} \eqref{eq:VFBP-La-eta} in $[0,T]\times \bar I$ defined by {\rm Definition \ref{definition-lag}}{\rm;}
\item the boundary conditions in  \eqref{N111} hold{\rm:}
\begin{equation*}
U|_{r=0}=U_r|_{r=1}=0 \qquad\text{on $(0,T]$};
\end{equation*}
\item the initial data $(\rho_0,u_0)$ satisfy
\begin{equation*}
u_0\in C^1(\bar I), \qquad (1-r)\sim \rho_0^\beta\in C^1(\bar I)\cap C^2((0,1]) \quad \text{for some $\beta\in (0,\gamma-1]$}.
\end{equation*}
\end{itemize}
\end{Definition}
Such local well-posedness in the class $D(T)$  has been established in Theorem \ref{local-Theorem1.1} when $\beta\in (\frac{1}{3},\gamma-1]$ and $\gamma \in (\frac{4}{3},\infty)$ in two and three spatial dimensions.
Nevertheless, it is important to observe that the key \emph{a priori} estimates developed in
{\rm\S \ref{Section-densityupper}--\S \ref{Section-densitylower}}\,---\,namely, the uniform upper bound of the density, the uniform lower and upper bounds of  $(\eta_r,\frac{\eta}{r})$, 
the uniform estimates for the effective velocity, 
and the lower bound of the density near the origin\,---\,in fact hold for general classical solutions in the class 
$D(T)$ as defined in {\rm Definition \ref{d2}}. 
In a word, the regularity requirement $\cE(0,U)<\infty$ imposed in \eqref{a2} is not required for the analysis carried out in {\rm\S \ref{Section-densityupper}--\S \ref{Section-densitylower}}.

Therefore, throughout \S\ref{Section-densityupper}--\S\ref{Section-densitylower}, we always assume the following:
\begin{itemize}
\item $(\beta,\gamma)$ satisfy
\begin{equation*}
\beta\in (0,\gamma-1],\qquad \gamma\in (1,\infty) \ \ \text{if }n=2, \qquad  \gamma\in (1,3) \ \ \text{if } n=3;
\end{equation*}
\item $(U,\eta)(t,r)$ is a classical solution in the class $D(T)$ of {\rm\textbf{IBVP}} \eqref{eq:VFBP-La-eta} in $[0,T]\times \bar I $ for some $T>0$, as defined in {\rm Definition \ref{d2}};
\smallskip
\item $C_0\in (1,\infty)$ is a generic constant depending only on $(n,\mu,A,\gamma,\beta,\varepsilon_0,\rho_0,u_0,\cK_1,\cK_2)$, and $C(l_1,\cdots\!,l_k)\in (1,\infty)$ is a generic constant depending on $C_0$ and parameters $(l_1,\cdots\!,l_k)$, 
which may be different at each occurrence, \textit{i.e.},
\begin{equation*}
C_0=C_0(n,\mu,A,\gamma,\beta,\varepsilon_0,\rho_0,u_0,\cK_1,\cK_2),\qquad C(l_1,\cdots\!,l_k)=C(l_1,\cdots\!,l_k,C_0).
\end{equation*}
\end{itemize}
Notice that, for this solution $(U,\eta)$,   density $\varrho$ can be given by \eqref{eq:eta}, and $(\varrho,U,\eta)(t,r)$ still solves  problem \eqref{eq:VFBP-La} in $[0,T]\times \bar I $.

Now we are ready to introduce the so-called effective velocity.
\begin{Definition}\label{def-v}
We say that $V$ is the effective velocity if 
\begin{equation}\label{v-expression}
V=U+2\mu\frac{\eta^m \varrho_r}{r^m\rho_0}=U+2\mu D_\eta\log \varrho.
\end{equation}
Besides, we define the initial value of $V$ as $v_0=V|_{t=0}=u_0+2\mu(\log\rho_0)_r$.
\end{Definition}

Then we have
\begin{Proposition}\label{lemma-V expression}
The effective velocity $V$ satisfies the equation{\rm :}
\begin{equation}\label{eq:v}
V_t+ A\gamma \varrho^{\gamma-2} D_\eta\varrho=V_t+\frac{A\gamma}{2\mu}\varrho^{\gamma-1}(V-U)=0,
\end{equation}
and takes the form{\rm:}
\begin{equation}\label{V-solution}
\begin{aligned}
V(t,r)&=v_0(r)\exp\Big(-\frac{A\gamma}{2\mu}\int_0^t \varrho^{\gamma-1}(s,r)\,\mathrm{d}s\Big)\\
&\quad+\frac{A\gamma}{2\mu}\int_0^t (\varrho^{\gamma-1} U)(\tau,r)\exp\Big(-\frac{A\gamma}{2\mu}\int_\tau^t \varrho^{\gamma-1}(s,r)\,\mathrm{d}s\Big) \,\mathrm{d}\tau.
\end{aligned}
\end{equation}
\end{Proposition}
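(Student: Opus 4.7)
The plan is to derive the transport--reaction equation \eqref{eq:v} by differentiating $V=U+2\mu D_\eta\log\varrho$ in time, using $\eqref{eq:VFBP-La-eta}_1$ together with the continuity identity implicit in $\eqref{eq:VFBP-La}_1$--$\eqref{eq:VFBP-La}_3$, and then to obtain \eqref{V-solution} by solving the resulting scalar linear ODE in $t$ at each fixed particle label $r\in\bar I$.

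\textbf{Step 1: a commutator identity and mass conservation.} The first ingredient is the pointwise commutator $[\partial_t,D_\eta]f=-D_\eta U\cdot D_\eta f$, which follows from $D_\eta f=f_r/\eta_r$ together with $\eta_{rt}=U_r$ (a consequence of $\eta_t=U$ and $\eta_r>0$ on $[0,T]\times\bar I$ by Definition \ref{definition-lag}). The second ingredient is mass conservation in logarithmic form: dividing $\eqref{eq:VFBP-La}_1$ by $\eta_r\varrho$ gives $(\log\varrho)_t=-(D_\eta U+mU/\eta)$. Applying the commutator to $f=\log\varrho$ and substituting yields $(D_\eta\log\varrho)_t=-D_\eta^2 U-mD_\eta(U/\eta)-D_\eta U\cdot D_\eta\log\varrho$.

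\textbf{Step 2: momentum equation and cancellation of the singular $m/\eta$ contributions.} Dividing $\eqref{eq:VFBP-La-eta}_1$ by $\varrho$ and expanding the divergence by the product rule produces $U_t=-A\gamma\varrho^{\gamma-2}D_\eta\varrho+2\mu\bigl(D_\eta^2 U+mD_\eta(U/\eta)\bigr)+2\mu D_\eta\log\varrho\cdot D_\eta U+2\mu m(U/\eta)D_\eta\log\varrho-2\mu m(U/\eta)D_\eta\varrho/\varrho$. The last two terms, both carrying the coordinate singularity $1/\eta$, cancel identically because $D_\eta\varrho/\varrho=D_\eta\log\varrho$; this is the one place in the derivation where the precise form of the singular $m$-contributions in $\eqref{eq:VFBP-La-eta}_1$ is used, and is the main obstacle to get right. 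Adding $2\mu$ times the identity from Step 1 to this expression for $U_t$, the remaining $U$-dependent contributions cancel in pairs, leaving $V_t=U_t+2\mu(D_\eta\log\varrho)_t=-A\gamma\varrho^{\gamma-2}D_\eta\varrho$, which is the first equality in \eqref{eq:v}. The second equality is immediate from the definition \eqref{v-expression}: since $V-U=2\mu D_\eta\log\varrho=2\mu\varrho^{-1}D_\eta\varrho$, one has $A\gamma\varrho^{\gamma-2}D_\eta\varrho=\tfrac{A\gamma}{2\mu}\varrho^{\gamma-1}(V-U)$.

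\textbf{Step 3: integral representation.} Fixing $r\in\bar I$ and setting $a(t,r):=\tfrac{A\gamma}{2\mu}\varrho^{\gamma-1}(t,r)$, the equation \eqref{eq:v} reads as the scalar linear ODE $V_t+a(t,r)V=a(t,r)U(t,r)$. Multiplying by the integrating factor $\exp\!\bigl(\int_0^t a(s,r)\,\mathrm{d}s\bigr)$, which is well-defined because $\varrho\in C([0,T]\times\bar I)$ by \eqref{eq:eta} and the regularity built into Definition \ref{definition-lag}, then integrating from $0$ to $t$ and using $V(0,r)=v_0(r)$, yields \eqref{V-solution}. The only genuinely delicate point in the whole argument is the cancellation of the two $m/\eta$ terms in Step 2; once that is verified, everything else is algebraic bookkeeping followed by a one-line ODE solve.
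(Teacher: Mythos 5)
Your proposal is correct and follows essentially the same route as the paper: compute $V_t$ via the commutator $(D_\eta f)_t = D_\eta f_t - (D_\eta U)D_\eta f$, use the continuity equation in logarithmic form $(\log\varrho)_t = -(D_\eta U + mU/\eta)$, substitute the momentum equation for $U_t$, and verify the cancellations before solving the resulting linear ODE in $t$. The only cosmetic difference is ordering: the paper rewrites the $V_t$-expression to match the momentum equation's right-hand side and substitutes in a single step, whereas you expand the momentum equation into an explicit formula for $U_t$ and exhibit the cancellation of the two singular $m/\eta$ terms individually — a slightly more verbose but equivalent bookkeeping.
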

\begin{proof}
To derive \eqref{eq:v}, it follows from \eqref{eq:VFBP-La} and Definition \ref{def-v} that
\begin{equation*}
\begin{aligned}
V_t&=U_t+2\mu(D_\eta \log\varrho)_t=U_t+2\mu D_\eta (\log\varrho)_t-2\mu(D_\eta U) (D_\eta\log\varrho) \\
&=U_t-2\mu D_\eta\big(D_\eta U+\frac{mU}{\eta}\big)-2\mu D_\eta U\frac{D_\eta \varrho}{\varrho} \\
&=U_t-\frac{2\mu}{\varrho} D_\eta\Big(\varrho\big(D_\eta U+\frac{mU}{\eta}\big)\Big)+\frac{2\mu}{\varrho}\frac{m D_\eta\varrho U }{\eta}\\
&=-A\gamma \varrho^{\gamma-2}D_\eta\varrho=-\frac{A\gamma}{2\mu}\varrho^{\gamma-1}(V-U).
\end{aligned}
\end{equation*}
Then \eqref{V-solution} can be directly obtained by solving ODE \eqref{eq:v}. 
\end{proof}

\subsection{Some basic estimates}
First, we have the fundamental energy estimate:
\begin{Lemma}\label{lemma-basic energy}
There exists a constant $C_0>0$ such that, for any $t\in [0,T]$, 
\begin{equation*}
\big|(r^m\rho_0)^\frac{1}{2} U(t)\big|_2^2+\big|\eta^m\eta_r\varrho^\gamma(t)\big|_1+\int_0^t \Big|(r^m\rho_0)^\frac{1}{2}\big(D_\eta U,\frac{U}{\eta}\big)\Big|_2^2\,\ds\leq C_0.
\end{equation*}
\end{Lemma}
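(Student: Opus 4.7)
\medskip
\noindent\textbf{Proof plan.} The plan is to derive a Lagrangian energy identity by testing the momentum equation $\eqref{eq:VFBP-La}_2$ against $\eta^m U$ and integrating over $I$. The single algebraic identity that will drive everything is the time-independent relation $\eta^m\eta_r\varrho = r^m\rho_0$ coming from \eqref{eq:eta}, which converts moving-density weights into the conservative weight $r^m\rho_0$. Boundary contributions at each integration by parts vanish for two distinct reasons: at $r=1$ by the vacuum condition $\varrho|_{r=1}=0$, and at $r=0$ by $U|_{r=0}=0$ together with $\eta(t,0)=0$ (which follows from the continuity of $\tfrac{\eta}{r}$ on $\bar I$ and $\eta = r\cdot\tfrac{\eta}{r}$).

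For the inertial term, multiplying $\eta_r\varrho U_t$ by $\eta^m U$ and invoking the identity produces $\tfrac{1}{2}\tfrac{d}{dt}\int r^m\rho_0 U^2\,dr$ directly. For the pressure term, one integration by parts gives $-A\int \eta^m\eta_r(D_\eta U+\tfrac{mU}{\eta})\varrho^\gamma\,dr$; inserting the Lagrangian continuity equation in the form $D_\eta U+\tfrac{mU}{\eta} = -\varrho_t/\varrho$ and using the time-independence of $r^m\rho_0 = \eta^m\eta_r\varrho$ to rewrite $r^m\rho_0\varrho^{\gamma-2}\varrho_t = \tfrac{1}{\gamma-1}\partial_t(\eta^m\eta_r\varrho^\gamma)$ yields $\tfrac{A}{\gamma-1}\tfrac{d}{dt}\int \eta^m\eta_r\varrho^\gamma\,dr$.

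The main technical step concerns the two viscous contributions. Integration by parts on the divergence-form term $2\mu(\varrho(D_\eta U+\tfrac{mU}{\eta}))_r$ produces the non-clean dissipation $-2\mu\int r^m\rho_0(D_\eta U+\tfrac{mU}{\eta})^2\,dr$, while the coordinate-singularity term $-2\mu m \varrho_r U/\eta$ contributes $-2\mu m\int\eta^{m-1}\varrho_r U^2\,dr$ after multiplication by $\eta^m U$. The hard part is the algebraic bookkeeping that combines these two. Expanding the square, then integrating the $\varrho_r$-piece by parts via $(\eta^{m-1}\varrho)_r = (m-1)\eta^{m-2}\eta_r\varrho + \eta^{m-1}\varrho_r$, the cross term $-4\mu m \int \eta^{m-1}\varrho U U_r\,dr$ is reabsorbed; the residual $2\mu m(m-1)\int\eta^{m-2}\eta_r\varrho U^2\,dr$ collapses via $\eta^{m-2}\eta_r\varrho U^2 = r^m\rho_0 (U/\eta)^2$, and after telescoping one is left with the clean, positive-definite dissipation
\begin{equation*}
-2\mu\int r^m\rho_0\bigl((D_\eta U)^2+m(U/\eta)^2\bigr)\,\mathrm{d}r,
\end{equation*}
which is exactly the Lagrangian pullback of the Eulerian identity $|D(\boldsymbol{u})|^2=(u')^2+m(u/x)^2$ for spherically symmetric fields and serves as a consistency check.

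Finally, integrating the resulting identity in time from $0$ to $t$ and using that the class $D(T)$ provides $u_0\in C^1(\bar I)$ and $\rho_0^\beta\sim 1-r\in C^1(\bar I)$ (hence both $|(r^m\rho_0)^{1/2}u_0|_2^2$ and $|r^m\rho_0^\gamma|_1$ are controlled by $C_0$) yields the claimed bound. The only nontrivial point is the viscous cancellation described above; everything else is bookkeeping with the master identity $\eta^m\eta_r\varrho = r^m\rho_0$.
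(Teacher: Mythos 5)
Your proof is correct and takes essentially the same route as the paper: test the momentum equation against $\eta^m U$ (the paper uses $2\eta^m\eta_r U$ against the $D_\eta$-form $\eqref{eq:VFBP-La-eta}_1$, which is identical up to a harmless factor), use the identity $\eta^m\eta_r\varrho = r^m\rho_0$, and integrate. The paper records the intermediate step as a pointwise divergence identity and omits the viscous bookkeeping you spell out, but your cancellation — expanding $(D_\eta U + mU/\eta)^2$, integrating the cross term by parts, and collapsing the residual via $(\eta^{m-1}\varrho)_r = (m-1)\eta^{m-2}\eta_r\varrho + \eta^{m-1}\varrho_r$ — is exactly what is hidden behind that identity, and you execute it correctly.
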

\begin{proof}
Multiplying $\eqref{eq:VFBP-La-eta}_1$ by $2\eta^m\eta_rU$, together with $\eqref{eq:VFBP-La}_1$ and \eqref{eq:eta}, gives
\begin{equation*}
\big(r^m\rho_0U^2+\frac{2A}{\gamma-1}\eta^m\eta_r\varrho^\gamma\big)_t+4\mu r^m\rho_0\big(|D_\eta U|^2+ \frac{mU^2}{\eta^2}\big)=\big(4\mu r^m\rho_0\frac{U D_\eta U}{\eta_r}-2A\varrho^{\gamma}\eta^m U\big)_r.
\end{equation*}
Then integrating the above over $[0,t]\times I$ with $t\in [0,T]$ leads to the desired result. 
\end{proof}

Next, the following $\eta$-weighted estimates hold for $\varrho$\,:
\begin{Lemma}\label{lemma-far depth}
There exists a constant $C_0>0$ such that, for any $t\in [0,T]$,
\begin{equation}
|\eta^{m-1}\eta_r\varrho(t)|_1+|(\eta^m \varrho) (t)|_\infty\leq C_0.
\end{equation}
\end{Lemma}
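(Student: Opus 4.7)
The plan is to combine the Lagrangian mass identity $\eta^m\eta_r\varrho = r^m\rho_0$ (which follows from $\eqref{eq:VFBP-La}_1$ and $\eqref{eq:VFBP-La}_3$ by time-differentiation) with the integral bounds of Lemma~\ref{lemma-basic energy}. A key observation is that $\eta^m\varrho$ vanishes at both endpoints of $I$: at $r=0$ because $\eta(t,0)=0$ (from $U|_{r=0}=0$) and $m\ge 1$, and at $r=1$ because $\varrho(t,1)=0$. Passing to Eulerian coordinates $x=\eta(t,r)$, the two target quantities become, respectively, $\int_0^{R(t)} x^{m-1}\rho\,\mathrm{d}x$ and $\|x^m\rho\|_{L^\infty[0,R(t)]}$, while Lemma~\ref{lemma-basic energy} together with mass conservation furnishes $\int_0^{R(t)} x^m\rho\,\mathrm{d}x = \int_0^1 r^m\rho_0\,\mathrm{d}r =: M_0$ and $\int_0^{R(t)} x^m\rho^\gamma\,\mathrm{d}x \le C_0$.

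For the $L^\infty$ bound, the vanishing of $\eta^m\varrho$ at both endpoints gives, via the fundamental theorem of calculus,
\[
|\eta^m\varrho|_\infty \le \int_0^1 |(\eta^m\varrho)_r|\,\mathrm{d}r
\le m\,|\eta^{m-1}\eta_r\varrho|_1 + \frac{1}{2\mu}\int_0^1 r^m\rho_0\,|V-U|\,\mathrm{d}r,
\]
where I used $(\eta^m\varrho)_r = m\eta^{m-1}\eta_r\varrho + \eta^m\varrho_r$ together with the identity $2\mu\varrho_r=\eta_r\varrho(V-U)$ obtained from Definition~\ref{def-v}, and then substituted $\eta^m\eta_r\varrho = r^m\rho_0$ in the second term. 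Thus the $L^\infty$ bound reduces to the $L^1$ bound plus control on the effective-velocity integral.

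For the $L^1$ estimate, I would bound $\int_0^{R(t)} x^{m-1}\rho\,\mathrm{d}x$ by H\"older interpolation between the conserved mass $M_0$ and the pressure bound $C_0$. This is straightforward when $\gamma > (m+1)/m$; in the residual range $\gamma\in(\tfrac{4}{3},(m+1)/m]$ one splits $[0,R(t)]$ at $x=1$ (so that $x^{m-1}\le x^m$ on the outer piece, directly giving $M_0$), and on the inner piece $[0,1]$ one combines H\"older with the pointwise $L^\infty$ control of $x^m\rho$ produced in the previous paragraph, thus closing the two estimates simultaneously.

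The main obstacle is the term $\int r^m\rho_0|V-U|\,\mathrm{d}r$: a direct Cauchy--Schwarz would demand the BD-type estimate $\int r^m\rho_0(V-U)^2\,\mathrm{d}r\le C_0$, which is unavailable here and in fact fails in the physical vacuum regime $\gamma\ge 2$, $\beta=\gamma-1$, where the initial condition $\nabla\sqrt{\rho_0}\in L^2$ is violated. To bypass this, I would use the explicit representation \eqref{V-solution}, which expresses $V$ as an exponentially weighted average of $v_0$ and $\varrho^{\gamma-1}U$ with kernel $\exp\bigl(-\tfrac{A\gamma}{2\mu}\int \varrho^{\gamma-1}\,\mathrm{d}s\bigr)$. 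Plugging this into the integral and using $r^m\rho_0 = \eta^m\eta_r\varrho$ produces precisely the combinations $\eta^m\eta_r\varrho^\gamma$ already controlled by Lemma~\ref{lemma-basic energy}, together with an initial contribution $\int r^m\rho_0|v_0|\,\mathrm{d}r$ that is finite under the hypothesis on $(\rho_0,u_0)$. This structural decoupling of $V$ from any BD-type estimate is the most delicate point of the argument.
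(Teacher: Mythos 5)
Your fundamental-theorem-of-calculus reduction is pointed in the right direction: using $2\mu\varrho_r=\eta_r\varrho(V-U)$ and $\eta^m\eta_r\varrho=r^m\rho_0$ you arrive at
\begin{equation*}
2\mu(\eta^m\varrho)(t,\tilde r) + 2\mu m\int_{\tilde r}^1 \eta^{m-1}\eta_r\varrho\,\mathrm{d}r = -\int_{\tilde r}^1 r^m\rho_0(V-U)\,\mathrm{d}r,
\end{equation*}
so the left side is precisely $\int_{\tilde r}^1 r^m\rho_0 U\,\mathrm{d}r-\int_{\tilde r}^1 r^m\rho_0 V\,\mathrm{d}r$, which is exactly the quantity the paper works with. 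However, your method of controlling $\int r^m\rho_0 V$ has a genuine gap. Plugging \eqref{V-solution} into $\int_0^1 r^m\rho_0|V|\,\mathrm{d}r$ and dropping the exponential kernels does \emph{not} produce $\int\eta^m\eta_r\varrho^\gamma\,\mathrm{d}r$ (which Lemma \ref{lemma-basic energy} controls); it produces $\int_0^t\int_0^1 \eta^m\eta_r\varrho^\gamma|U|\,\mathrm{d}r\,\mathrm{d}\tau$, and the extra factor $|U|$ is fatal. Any attempt to remove it by Cauchy--Schwarz, e.g.\ $\int\eta^m\eta_r\varrho^\gamma|U|\,\mathrm{d}r\le (\int\eta^m\eta_r\varrho^{2\gamma-1}\,\mathrm{d}r)^{1/2}(\int r^m\rho_0 U^2\,\mathrm{d}r)^{1/2}$, reintroduces an uncontrolled high power of $\varrho$ unless $|\varrho|_\infty$ is already known --- which is the very thing being proved.

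Your $L^1$ argument has an independent defect: the H\"older interpolation of $\int x^{m-1}\rho\,\mathrm{d}x$ between the mass $\int x^m\rho\,\mathrm{d}x$ and the pressure $\int x^m\rho^\gamma\,\mathrm{d}x$ only yields an integrable weight on the inner piece $[0,1]$ when $\gamma>(m+1)/m$, i.e.\ $\gamma>\tfrac32$ for $n=3$, whereas the lemma is supposed to hold on $\gamma\in(1,3)$. Your fallback for $\gamma\le\tfrac32$ is to invoke the $L^\infty$ bound of $x^m\rho$, but that $L^\infty$ bound was itself made to depend on the $L^1$ bound, so you are trying to close a genuinely coupled pair of inequalities without an absorption or smallness mechanism; as stated this is circular.

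What you are missing is the sign structure that makes the estimate close without any BD-type control or any estimate of $\int r^m\rho_0|V|$. The paper multiplies $\eqref{eq:VFBP-La-eta}_1$ by $\eta^m\eta_r$ and rewrites the equation as a perfect time derivative,
\begin{equation*}
\big(2\mu(\eta^m\varrho)_r + r^m\rho_0 U - 2\mu m\eta^{m-1}\eta_r\varrho\big)_t = Am\,\eta^{m-1}\eta_r\varrho^\gamma - A(\eta^m\varrho^\gamma)_r.
\end{equation*}
Integrating in $r$ over $[\tilde r,1]$ and using $\varrho|_{r=1}=0$, the right side becomes $-Am\int_{\tilde r}^1\eta^{m-1}\eta_r\varrho^\gamma\,\mathrm{d}r - A(\eta^m\varrho^\gamma)(t,\tilde r)$, which is \emph{nonpositive} because $\eta_r\ge 0$. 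Thus $2\mu(\eta^m\varrho)(t,\tilde r)+2\mu m\int_{\tilde r}^1\eta^{m-1}\eta_r\varrho\,\mathrm{d}r$ increases no faster than $\int_{\tilde r}^1 r^m\rho_0 U\,\mathrm{d}r$, and after integrating in time one controls both target quantities simultaneously, uniformly in $\tilde r$, by $\sup_t|(r^m\rho_0)^{1/2}U|_2$ and moments of $\rho_0$. No estimate of $|V|$ and no H\"older interpolation is ever needed.
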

\begin{proof}
Multiplying $\eqref{eq:VFBP-La-eta}_1$ by $\eta^m\eta_r$, along with  $\eqref{eq:VFBP-La}_1$, yields
\begin{equation*}
(2\mu(\eta^m \varrho)_r+r^m\rho_0U-2\mu m\eta^{m-1}\eta_r\varrho)_t= Am\eta^{m-1}\eta_r\varrho^{\gamma}-A(\eta^m\varrho^{\gamma})_r.
\end{equation*}
Then, for arbitrary $\tilde{r}\in [0,1)$, integrating the above with respect to $r$ over $[\tilde{r},1]$, together with $\varrho|_{r=1}=0$ and \eqref{eq:eta}, gives
\begin{equation*}
\begin{aligned}
&\frac{\mathrm{d}}{\mathrm{d}t}\Big(2\mu(\eta^m \varrho)(t,\tilde{r})-\int_{\tilde{r}}^1 r^m\rho_0 U \,\mathrm{d}r+2\mu m\int_{\tilde{r}}^1\eta^{m-1}\eta_r\varrho\,\mathrm{d}r\Big)\\
&=-A m\int_{\tilde{r}}^1 \eta^{m-1}\eta_r \varrho^\gamma \,\mathrm{d}r-A(\eta^m \varrho^\gamma)(t,\tilde{r}),
\end{aligned}    
\end{equation*}
which, along with the fact that $\eta_r\geq 0$, implies that 
\begin{equation*}
\frac{\mathrm{d}}{\mathrm{d}t}\Big(2\mu(\eta^m \varrho)(t,\tilde{r})+2\mu m\int_{\tilde{r}}^1\eta^{m-1}\eta_r\varrho\,\mathrm{d}r\Big)\leq \frac{\mathrm{d}}{\dt}\int_{\tilde{r}}^1 r^m\rho_0 U \,\mathrm{d}r.
\end{equation*}

Integrating the above over $[0,t]$, we obtain from Lemma \ref{lemma-basic energy} and the H\"older inequality that
\begin{equation*}
\begin{aligned}
\sup_{t\in[0,T]}\Big(\eta^m \varrho (\cdot,\tilde{r})+\int_{\tilde{r}}^1\eta^{m-1}\eta_r\varrho\,\mathrm{d}r\Big)&\leq C_0\big(\sup_{t\in[0,T]}|r^m\rho_0 U|_1+ |r^{m-1}\rho_0|_1+ |r^m \rho_0|_\infty\big)\\
&\leq C_0\big(\sup_{t\in[0,T]}\big|(r^m\rho_0)^\frac{1}{2}U\big|_2|r^m\rho_0|_1^\frac{1}{2}+1\big)\leq C_0.
\end{aligned}
\end{equation*}
Since $C_0$ is independent of the choice of $\tilde{r}$, we derive the desired conclusion. 
\end{proof}

Based on Lemma \ref{lemma-far depth}, we derive the uniform lower bounds of $(\eta_r,\frac{\eta}{r})$ away from the origin.
\begin{Lemma}\label{lemma-low jacobi 1}
There exists a constant $C(T)>1$ such that
\begin{equation*}
\frac{\eta(t,r)}{r}\geq \frac{r^m}{C(T)},\quad \eta_r(t,r)\geq \frac{r^m}{C(T)}\qquad\,\, \text{for all $(t,r)\in [0,T]\times \bar I$}.
\end{equation*}
In particular, for any $a\in (0,1)$, there exists a constant $C(a,T)>1$ such that 
\begin{equation*}
\frac{\eta(t,r)}{r}\geq C(a,T)^{-1},\quad \eta_r(t,r)\geq C(a,T)^{-1}\qquad\,\, \text{for all $(t,r)\in [0,T]\times [a,1]$}.
\end{equation*}
\end{Lemma}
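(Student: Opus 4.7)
The plan is to combine the upper bound $|\eta^m\varrho|_\infty\leq C_0$ from Lemma \ref{lemma-far depth} with the kinematic consequence of the Neumann boundary condition \eqref{N111}, treating separately the interior of $(0,1)$ and a neighborhood of the moving boundary $r=1$.

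First, rewriting \eqref{eq:eta} as
\begin{equation*}
\eta_r(t,r)=\frac{r^m\rho_0(r)}{\eta^m(t,r)\,\varrho(t,r)},
\end{equation*}
and applying Lemma \ref{lemma-far depth} together with the lower bound $\rho_0(r)\geq \cK_1(1-r)^{1/\beta}$ from \eqref{distance-la}, I obtain
\begin{equation*}
\eta_r(t,r)\geq \frac{\cK_1}{C_0}\,r^m(1-r)^{1/\beta}\qquad\text{for all }(t,r)\in[0,T]\times\bar I.
\end{equation*}
This already yields $\eta_r(t,r)\geq C^{-1}r^m$ on $[0,T]\times[0,\tfrac{1}{2}]$, but it degenerates as $r\to 1$, so a different mechanism is needed near the moving boundary.

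Next, from $\eta_t=U$ and the Neumann condition $U_r|_{r=1}=0$ in \eqref{N111} together with $\eta_r(0,1)=1$, integrating in $t$ gives $\eta_r(t,1)=1$ for all $t\in[0,T]$. I then argue by contradiction: suppose $\inf_{[0,T]\times[\frac{1}{2},1]}\eta_r=0$. Because $\eta_r\in C^1([0,T];C(\bar I))$ by Definition \ref{definition-lag}(iii) and the set $[0,T]\times[\tfrac{1}{2},1]$ is compact, the infimum is attained at some $(t_*,r_*)$ with $\eta_r(t_*,r_*)=0$. If $r_*<1$, the pointwise identity above forces $\eta_r(t_*,r_*)\geq C_0^{-1}r_*^m\rho_0(r_*)>0$, a contradiction; if $r_*=1$, this contradicts $\eta_r(t_*,1)=1$. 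Hence there exists $C(T)>0$ with $\eta_r\geq C(T)^{-1}\geq C(T)^{-1}r^m$ on $[0,T]\times[\tfrac{1}{2},1]$, and splicing with the interior bound yields $\eta_r(t,r)\geq r^m/C(T)$ on all of $[0,T]\times\bar I$. For the $\eta/r$ estimate, the boundary condition $U|_{r=0}=0$ and the initial value $\eta(0,0)=0$ together imply $\eta(t,0)=0$ for all $t$, so
\begin{equation*}
\frac{\eta(t,r)}{r}=\frac{1}{r}\int_0^r \eta_r(t,s)\,\mathrm{d}s\geq \frac{1}{r}\int_0^r\frac{s^m}{C(T)}\,\mathrm{d}s=\frac{r^m}{(m+1)C(T)}.
\end{equation*}
The stronger bounds on $[a,1]$ follow at once by using $r\geq a$ in the two global estimates.

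I expect the main obstacle to be the contradiction argument in the second step. The naive identity from \eqref{eq:eta} loses strength precisely where we need it\,---\,at $r=1$, where $\rho_0$ vanishes\,---\,and the only rescue is the kinematic identity $\eta_r(t,1)=1$ extracted from the Neumann boundary condition. The continuity of $\eta_r$ on $[0,T]\times\bar I$, guaranteed by the classical regularity in Definition \ref{definition-lag}, is what prevents $\eta_r$ from dipping to zero in the interior and then recovering to $1$ at the boundary, and is therefore essential to closing the argument.
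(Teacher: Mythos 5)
Your proof is correct and follows essentially the same route as the paper: the pointwise lower bound $\eta_r\geq r^m\rho_0/C_0$ from Lemma \ref{lemma-far depth}, the kinematic fact $\eta_r|_{r=1}=1$ extracted from the Neumann condition \eqref{N111}, and the continuity of $\eta_r$ on $[0,T]\times\bar I$ are exactly the three ingredients the paper combines, and the $\eta/r$ bound is obtained by the same integration from $r=0$. The only cosmetic difference is that you phrase the contradiction as attainment of the infimum on a compact set, whereas the paper extracts a convergent subsequence $(t_{k_\ell},r_{k_\ell})\to(t_0,1)$; the two formulations are equivalent here.
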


\begin{proof}
First, it follows from Lemma \ref{lemma-far depth} that, for all $t\in [0,T]$,
\begin{equation*}
|(\eta^m\varrho)(t)|_\infty=\Big|\frac{r^m\rho_0}{\eta_r}(t)\Big|_\infty\leq C_0,
\end{equation*}
which implies 
\begin{equation}\label{1''}
\eta_r(t,r)\geq \frac{r^m\rho_0(r)}{C_0}\qquad \text{for all $(t,r)\in [0,T]\times \bar I$}.
\end{equation}

We now claim that, for any $T>0$, there exists a constant $C(T)$ such that 
\begin{equation}\label{claim-1}
\eta_r(t,r)\geq \frac{r^m}{C(T)}\qquad \text{for all $(t,r)\in [0,T]\times \bar I$}.
\end{equation}
Otherwise, there exist both $T>0$ and a sequence of 
$\{(t_k,r_k)\}_{k=1}^\infty\subset[0,T]\times \bar I$ such that 
\begin{equation}\label{406}
\lim_{k\to \infty}\frac{\eta_r(t_k,r_k)}{r_k^m}=0.
\end{equation}
This, together with \eqref{1''}, yields
\begin{equation*}
\lim_{k\to \infty}\frac{\rho_0(r_k)}{C_0}\leq \lim_{k\to \infty}\frac{\eta_r(t_k,r_k)}{r_k^m}= 0,
\end{equation*}
which implies that $r_k\to 1$ due to $\rho_0^\beta\sim 1-r$. Moreover, since $\{t_k\}_{k=1}^\infty\subset [0,T]$, we can extract a subsequence $\{t_{k_\ell}\}_{\ell=1}^\infty\subset[0,T]$ such that $t_{k_\ell}\to t_0$ for some $t_0\in [0,T]$. Then
\begin{equation*}
(t_{k_\ell},r_{k_\ell})\to(t_0,1) \qquad \text{as} \ \ \ell\to\infty.   
\end{equation*}
This, along with \eqref{406}, leads to
\begin{equation*}
\eta_r(t_0,1)=\lim_{\ell\to\infty}\frac{\eta_r(t_{k_\ell},r_{k_\ell})}{r_{k_\ell}^m}=0.
\end{equation*}

However, this contradicts to the fact that $\eta_r|_{r=1}=1$ for all $t\in[0,T]$ since $U_r|_{r=1}=0$. Therefore, 
we conclude claim \eqref{claim-1}.

Finally, thanks to $\eta|_{r=0}=0$, we obtain from \eqref{claim-1} that
\begin{equation*}
\frac{\eta(t,r)}{r}=\frac{1}{r}\int_0^r \eta_r(t,\tilde r)\,\mathrm{d}\tilde r\geq \frac{1}{C(T)r}\int_0^r \tilde r^m\,\mathrm{d}\tilde r\geq \frac{r^{m}}{C(T)}.
\end{equation*}
This completes the proof.
\end{proof}

\subsection{Interior BD entropy estimates}
\begin{Lemma}\label{near-BD}
For any $a\in (0,1)$, there exists a constant $C(a,T)>0$ such that
\begin{equation*}
\big|(\zeta_{a}r^m\rho_0)^\frac{1}{2}V(t)\big|_2+\big|(\zeta_{a}\eta^m\eta_r)^\frac{1}{2}D_\eta \sqrt{\varrho}(t)\big|_2\leq C(a,T) \qquad \text{for all $t\in [0,T]$},
\end{equation*}
where the cut-off function $\zeta_a$ is defined in {\rm\S \ref{othernotation}}.
\end{Lemma}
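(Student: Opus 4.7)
The plan is to reduce the second summand in the lemma to the first by a pointwise identity, and then to establish the first summand as an interior version of the classical Bresch--Desjardins (BD) entropy estimate, with the cutoff $\zeta_a$ serving to keep the initial data finite in the regime $\beta>1$ (where the full BD initial condition \eqref{BDconditionr} fails). Indeed, since $V-U=2\mu D_\eta\log\varrho=(4\mu/\sqrt\varrho)\,D_\eta\sqrt\varrho$, one has
\begin{equation*}
\zeta_a\eta^m\eta_r|D_\eta\sqrt\varrho|^2 \;=\;\tfrac{1}{16\mu^2}\,\zeta_a\eta^m\eta_r\varrho(V-U)^2\;=\;\tfrac{1}{16\mu^2}\,\zeta_a r^m\rho_0(V-U)^2,
\end{equation*}
so that $|(\zeta_a\eta^m\eta_r)^{1/2}D_\eta\sqrt\varrho|_2\le\tfrac{1}{4\mu}\bigl(|(\zeta_a r^m\rho_0)^{1/2}V|_2+|(r^m\rho_0)^{1/2}U|_2\bigr)$, and the last summand is controlled by Lemma~\ref{lemma-basic energy}. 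The task therefore reduces to bounding $|(\zeta_a r^m\rho_0)^{1/2}V(t)|_2$.

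To derive the first bound, I would multiply $\eqref{eq:VFBP-La-eta}_1$ by $\zeta_a^2\eta^m\eta_r V$ and integrate over $I$. Using $\eta^m\eta_r\varrho=r^m\rho_0$, the mass equation in the form $(\log\varrho)_t=-(D_\eta U+mU/\eta)$, and the definition $V=U+2\mu D_\eta\log\varrho$, the inertial contribution rearranges as $\tfrac12\tfrac{d}{dt}\int \zeta_a^2 r^m\rho_0 V^2\,dr$ plus interior BD correction terms. Integration by parts in the pressure and viscous terms (boundary terms vanish because $\eta|_{r=0}=0$ and $\zeta_a|_{r=1}=0$) produces, after the standard BD cancellations, an identity of the schematic form
\begin{equation*}
\tfrac{d}{dt}|(\zeta_a r^m\rho_0)^{1/2}V|_2^2+\text{(BD dissipation)}\;\le\;\text{(commutators from }(\zeta_a)_r\text{)}+\text{(interior controlled terms)}.
\end{equation*}
The BD dissipation is nonnegative and contains the pressure piece $16\mu A\gamma\!\int \zeta_a^2\eta^m\eta_r\varrho^{\gamma-1}|D_\eta\sqrt\varrho|^2\,dr$; the viscous part vanishes in the radially symmetric setting since $D(\boldsymbol u)=\nabla\boldsymbol u$ has zero antisymmetric part.

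The commutator terms, carrying a factor $(\zeta_a)_r$, are supported in the annulus $[a,(1+3a)/4]$, where Lemma~\ref{lemma-low jacobi 1} provides $\eta,\eta_r\ge C(a,T)^{-1}$ and hence, by \eqref{eq:eta}, $\varrho\le C(a,T)$. On this set every $\varrho$-weighted quantity is uniformly bounded and the commutators reduce to the fundamental energy of Lemma~\ref{lemma-basic energy}. The interior controlled terms are handled using $|\eta^m\varrho|_\infty\le C_0$ (Lemma~\ref{lemma-far depth}) together with $\int \eta^m\eta_r\varrho^\gamma\,dr\le C_0$ (Lemma~\ref{lemma-basic energy}), which together bound the $\varrho^\gamma$-weighted spatial integrals without requiring a pointwise upper bound for $\varrho$ near the origin. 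Finiteness of the initial data $|(\zeta_a r^m\rho_0)^{1/2}v_0|_2$ follows because $\zeta_a$ is compactly supported in $[0,1)$: on its support, $\rho_0\ge\cK_1\bigl(\tfrac{3-3a}{4}\bigr)^{1/\beta}>0$ so $(\log\rho_0)_r$ is bounded, and hence $v_0=u_0+2\mu(\log\rho_0)_r\in L^\infty_{\mathrm{loc}}$. A Grönwall argument then yields $|(\zeta_a r^m\rho_0)^{1/2}V(t)|_2\le C(a,T)$.

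The main obstacle lies in controlling the $UV$-type cross terms in the interior BD identity, which carry a $\varrho^\gamma$ weight that cannot be pulled out in $L^\infty$ near the origin, since we have no pointwise upper bound on $\varrho$ there. The key mechanism is the algebraic identity $\eta^m\eta_r\varrho(V-U)=2\mu\eta^m\varrho_r=(2\mu/\gamma)\eta^m(\varrho^\gamma)_r/\varrho^{\gamma-1}$, which, combined with integration by parts, allows the cross term to be rewritten as $\int\partial_r(\zeta_a^2\eta^m U)\varrho^\gamma\,dr$ plus a term involving the derivative of the bounded quantity $\eta^m\varrho$; each resulting piece then pairs either against $\int\eta^m\eta_r\varrho^\gamma\,dr\le C_0$ or against the basic energy, bypassing the need for pointwise upper bounds on $\varrho$ near the origin.
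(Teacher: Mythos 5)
Your overall route is the same as the paper's: reduce the $D_\eta\sqrt\varrho$ piece to the $V$ piece via $V-U=2\mu D_\eta\log\varrho$ and $\eta^m\eta_r\varrho=r^m\rho_0$, then run an interior BD-entropy energy estimate for $V$ with the cutoff $\zeta_a$ to keep the initial data finite in the regime $\beta>1$, using Lemma~\ref{lemma-low jacobi 1} on the annulus $\mathrm{supp}\,(\zeta_a)_r$ and Lemma~\ref{lemma-basic energy} elsewhere. (The paper shortcuts the ``BD cancellations'' you describe by multiplying the already-derived equation \eqref{eq:v} for $V$ directly by $\zeta_a r^m\rho_0 V$ rather than the $U$-equation by $\zeta_a^2\eta^m\eta_r V$; this is cosmetically simpler but yields the same identity.)

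The one place your sketch is too vague is the cross term, and this is the crux of the estimate. After integration by parts, the cross term becomes $A\int(\zeta_a^2\eta^m U)_r\,\varrho^\gamma\,\mathrm{d}r$. The piece carrying $(\zeta_a^2)_r$ lives on the annulus and is handled as you say. But the remaining piece, $A\int\zeta_a^2(\eta^m U)_r\,\varrho^\gamma\,\mathrm{d}r$, does \emph{not} directly ``pair against'' $\int\eta^m\eta_r\varrho^\gamma\,\mathrm{d}r\le C_0$: there is no pointwise bound on $(\eta^m U)_r/(\eta^m\eta_r)$, so a direct comparison fails. What actually makes it work is the identity $(\eta^m U)_r=(\eta^m\eta_r)_t$ (from $\eta_t=U$), combined with $\varrho^\gamma=(r^m\rho_0)^\gamma/(\eta^m\eta_r)^\gamma$, which turns this integral into the total time derivative
\begin{equation*}
A\int\zeta_a^2\frac{(r^m\rho_0)^\gamma}{(\eta^m\eta_r)^\gamma}(\eta^m\eta_r)_t\,\mathrm{d}r
=-\frac{A}{\gamma-1}\,\frac{\mathrm{d}}{\mathrm{d}t}\int\zeta_a^2\,\eta^m\eta_r\varrho^\gamma\,\mathrm{d}r,
\end{equation*}
and the integrand on the right is nonnegative and uniformly bounded by Lemma~\ref{lemma-basic energy}. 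Only then does the time integral close by Gr\"onwall. Your cited algebraic identity $\eta^m\eta_r\varrho(V-U)=2\mu\eta^m\varrho_r$ is what produces the signed BD dissipation, not what tames the cross term; if you attempt to manipulate the cross term via $(\eta^m\varrho)_r$ as you suggest, you reintroduce an uncontrolled $\varrho_r$ and the argument does not close. Spell out the total-time-derivative step and your proof is complete.
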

\begin{proof}
Multiplying \eqref{eq:v} by $\zeta_{a}r^m \rho_0V$ and integrating over $I$ give that
\begin{equation}\label{dt-G1}
\begin{aligned}
&\,\frac{1}{2}\frac{\mathrm{d}}{\dt}\int_0^1 \zeta_{a} r^m\rho_0V^2\,\mathrm{d}r+2\mu A\gamma \int_0^1 \zeta_{a}\eta^{m}\eta_r\varrho^{\gamma-2}|D_\eta\varrho|^2 \,\mathrm{d}r\\
&=-A \int_0^1 \zeta_{a}\eta^m\eta_r D_\eta(\varrho^\gamma) U\,\mathrm{d}r=
-A \int_0^1 \zeta_{a}\eta^m(\varrho^\gamma)_r U\,\mathrm{d}r
\\
&=A \int_0^1 \zeta_{a} \varrho^\gamma \eta_r D_\eta(\eta^m U)\,\mathrm{d}r+A\int_0^1 (D_\eta \zeta_{a})  \eta^m \eta_r \varrho^\gamma U \,\mathrm{d}r:=\sum_{i=1}^2\mathrm{G}_{i}. 
\end{aligned}
\end{equation}

Then, for $\mathrm{G}_{1}$, it follows from \eqref{eq:eta} that
\begin{equation}\label{G1,1}
\begin{aligned}
\mathrm{G}_{1}&=A \int_0^1 \zeta_{a}\frac{(r^{m}\rho_0)^{\gamma}}{(\eta^{m}\eta_r)^\gamma} (\eta^{m} \eta_r)_t\,\mathrm{d}r=-\frac{A}{\gamma-1} \frac{\mathrm{d}}{\mathrm{d}t}\int_0^1 \zeta_{a}\frac{(r^{ m}\rho_0)^{\gamma}}{(\eta^{m}\eta_r)^{\gamma-1}} \,\mathrm{d}r\\
&=-\frac{A}{\gamma-1} \frac{\mathrm{d}}{\mathrm{d}t}\int_0^1\zeta_{a}\eta^m \eta_r \varrho^\gamma \,\mathrm{d}r.
\end{aligned}
\end{equation}

For $\mathrm{G}_{2}$, we obtain from \eqref{eq:eta}, the fact that $\rho_0^\beta\sim 1-r$, Lemmas \ref{lemma-basic energy} and \ref{lemma-low jacobi 1}, and the H\"older inequality that
\begin{equation}\label{G1,2}
\begin{aligned}
\mathrm{G}_{2}&= A\int_{0}^{1}  (\zeta_{a})_r\frac{(r^{m}\rho_0)^{\gamma}U}{\eta^{(\gamma-1)m}\eta_r^\gamma} \,\mathrm{d}r \leq C(a,T) \int_{a}^{\frac{1+3a}{4}} (r^{m}\rho_0)^{\gamma}|U|\,\mathrm{d}r\\
& \leq C(a,T) \int_{a}^{\frac{1+3a}{4}} (r^m\rho_0)^\frac{1}{2} |U| \,\mathrm{d}r\leq C(a,T) \Big(\int_0^1 r^{m} \rho_0 U^2 \,\mathrm{d}r\Big)^\frac{1}{2}\leq C(a,T).  
\end{aligned}
\end{equation}

Consequently, substituting \eqref{G1,1}--\eqref{G1,2} into \eqref{dt-G1} and integrating the resulting inequality over $[0,t]$ yield
\begin{equation}\label{L2-V}
\begin{aligned}
&\,\big|(\zeta_{a} r^m\rho_0)^\frac{1}{2}V(t)\big|_2^2+\big|\zeta_{a}\eta^m\eta_r \varrho^\gamma(t)\big|_1+\int_0^t \big|(\zeta_{a}\eta^m\eta_r)^\frac{1}{2}  D_\eta (\varrho^\frac{\gamma}{2})\big|_2^2\,\mathrm{d}s\\
&\leq C_0(\big|(\zeta_{a} r^m\rho_0)^\frac{1}{2}v_0\big|_2^2+\big|\zeta_{a}r^m \rho_0^\gamma\big|_1)+C(a,T)\leq C(a,T).
\end{aligned}
\end{equation}
Here, to obtain the boundedness of the initial data, it suffices to note that
$\rho_0^\beta\sim 1-r$ and
\begin{equation*}
|(\zeta_{a} r^m\rho_0)^\frac{1}{2}(\log\rho_0)_r|_2\leq C(a)|r^\frac{m}{2}(\rho_0^\beta)_r|_2\leq C(a).
\end{equation*}

Thanks to \eqref{v-expression}, we thus obtain from Lemma \ref{lemma-basic energy} and \eqref{L2-V} that
\begin{equation}
\big|(\zeta_{a}\eta^m\eta_r)^\frac{1}{2}  D_\eta\sqrt{\varrho}(t)\big|_2 \leq C(a,T),
\end{equation}
which leads to the desired estimates.
\end{proof}

\subsection{Some new global $\eta$-weighted estimates for the density}
\begin{Lemma}\label{lemma-near depth}
Let parameters $(q_1,q_2)$ satisfy
\begin{equation*}
\begin{aligned}
q_1&\in (0,2], \quad  q_2\in (0,1] && \ \qquad \text{if $n=2$};\\
q_1&\in [1,3], \quad \,q_2\in [1,2] && \ \qquad \text{if $n=3$}.
\end{aligned}
\end{equation*}
Then, for any $a\in (0,1)$ and $(q_1,q_2)$ defined above, there exist positive constants $C(a,q_1,T)$ and $C(a,q_2,T)$ such that, for all $t\in [0,T]$,
\begin{equation}\label{fff}
|\zeta_{a} \eta^{q_1-1}\eta_r \varrho(t)|_1\leq C(a,q_1,T),\qquad 
|\zeta_{a}\eta^{q_2} \varrho(t)|_\infty\leq C(a,q_2,T).
\end{equation}
\end{Lemma}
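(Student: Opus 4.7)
The plan is to derive both estimates by exploiting the mass identity \eqref{eq:eta}, Lemma \ref{lemma-far depth}, and the interior BD estimate of Lemma \ref{near-BD}, in two stages: first prove the $L^1$-bound at suitable anchor exponents and extend to the full range by interpolation/iteration, then bootstrap the $L^\infty$-bound via the fundamental theorem of calculus. The constraint driving the organisation of the argument is that no \emph{a priori} upper bound on $\eta$ is yet available, so every estimate must be arranged so that positive powers of $\eta$ either appear paired with $\eta_r\,\mathrm{d}r$ (which mimics the Eulerian volume element and is controllable via \eqref{eq:eta}) or multiply $\varrho$ in the combination $\eta^m\varrho$, whose pointwise bound is Lemma \ref{lemma-far depth}.

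For the $L^1$-estimate, two anchors are essentially free: $q_1=m+1$ is immediate from \eqref{eq:eta}, since $\zeta_{a}\eta^m\eta_r\varrho = \zeta_{a}r^m\rho_0$ gives $|\zeta_{a}\eta^m\eta_r\varrho|_1\le|r^m\rho_0|_1$; and $q_1=m$ is exactly Lemma \ref{lemma-far depth}. The remaining values, in particular the low endpoint $q_1=1$ in three dimensions (and small $q_1$ in two dimensions), are handled by integrating by parts,
\begin{equation*}
\int_0^1 \zeta_{a}\eta^{q_1-1}\eta_r\varrho\,\mathrm{d}r = -\frac{1}{q_1}\int_0^1 (\zeta_{a})_r\eta^{q_1}\varrho\,\mathrm{d}r - \frac{1}{q_1}\int_0^1 \zeta_{a}\eta^{q_1}\varrho_r\,\mathrm{d}r,
\end{equation*}
where the boundary terms vanish (at $r=0$ because $\eta^{q_1}\to 0$ for $q_1>0$; at $r=1$ because $\zeta_{a}=0$). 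On $\mathrm{supp}\,(\zeta_{a})_r\subset[a,(1+3a)/4]$ Lemma \ref{lemma-low jacobi 1} gives $\eta\ge c(a,T)>0$, which together with $\eta^m\varrho\le C_0$ keeps the first integral uniformly bounded. In the second I write $\varrho_r = 2\sqrt{\varrho}\,(\sqrt{\varrho})_r$ and apply Cauchy--Schwarz with the splitting $\eta^{q_1}=\eta^{q_1-m/2}\cdot\eta^{m/2}$, producing
\begin{equation*}
\Big|\int_0^1 \zeta_{a}\eta^{q_1}\varrho_r\,\mathrm{d}r\Big|\le C\Big(\int_0^1 \zeta_{a}\eta^{2q_1-m}\eta_r\varrho\,\mathrm{d}r\Big)^{1/2}\Big(\int_0^1 \zeta_{a}\eta^m\eta_r^{-1}|(\sqrt{\varrho})_r|^2\,\mathrm{d}r\Big)^{1/2}.
\end{equation*}
The second factor is bounded by Lemma \ref{near-BD}, while the first factor is the same quantity being estimated, but at the shifted exponent $q_1'=2q_1-m+1$. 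In three dimensions with $q_1=1$ one has $q_1'=q_1$, so Young's inequality closes the estimate by absorption; otherwise a short doubling iteration, combined with H\"older interpolation in the measure $\zeta_{a}\eta_r\varrho\,\mathrm{d}r$ between the already-established anchor values, reduces matters to the handled cases and covers the full range $q_1\in[1,m+1]$ in three dimensions, $q_1\in(0,2]$ in two dimensions.

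For the $L^\infty$-estimate, since $\zeta_{a}(1)=0$ and $\varrho|_{r=1}=0$, the fundamental theorem of calculus yields, for every $r_0\in\bar I$,
\begin{equation*}
|\zeta_{a}\eta^{q_2}\varrho(t,r_0)|\le \int_0^1\big(|(\zeta_{a})_r|\eta^{q_2}\varrho + q_2\zeta_{a}\eta^{q_2-1}\eta_r\varrho + \zeta_{a}\eta^{q_2}|\varrho_r|\big)\,\mathrm{d}r.
\end{equation*}
The first summand is controlled pointwise on $\mathrm{supp}\,(\zeta_{a})_r$ exactly as above. The second summand is the $L^1$-estimate at $q_1=q_2$, which lies in the range just established. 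The third summand, after the same Cauchy--Schwarz/BD split, reduces to the $L^1$-estimate at $q_1=2q_2-m+1$; the hypotheses $q_2\in[1,2]$ in three dimensions and $q_2\in(0,1]$ in two dimensions precisely guarantee that this shifted exponent lies within the $L^1$-range, so each of the three summands is bounded by $C(a,q_2,T)$ uniformly in $r_0$.

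The main obstacle is the self-referential nature of the $L^1$-estimate at the endpoint: the Cauchy--Schwarz bound feeds the target quantity back into itself, and it is the precise algebra of exponents together with the specific weight $\eta^m\eta_r^{-1}$ in Lemma \ref{near-BD} (matching the radial Eulerian volume element) that makes the Young-type absorption close. Once this endpoint is controlled, the remainder of the argument, including the full $L^\infty$-bound, is essentially a bookkeeping exercise on the interpolation and doubling recurrence.
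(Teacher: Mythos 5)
Your proof is correct and follows essentially the same route as the paper's: integration by parts against $d(\eta^{q_1})/q_1$, Cauchy--Schwarz against the interior BD weight from Lemma \ref{near-BD}, the anchor values coming from $\eta^m\eta_r\varrho = r^m\rho_0$ and Lemma \ref{lemma-far depth}, a doubling/interpolation step to cover the full exponent range, and the $W^{1,1}\hookrightarrow L^\infty$ embedding (your fundamental-theorem-of-calculus argument is the same thing) for the pointwise bound. The only cosmetic difference is that the paper proves the $n=2$ range $q_1\in[1,2]$ by direct H\"older interpolation before running the doubling recurrence for $q_1\in(0,1)$, and in $n=3$ first establishes the $q_1=1$ endpoint by the self-referential absorption you describe, then interpolates up to $q_1=3$, but these are the same two mechanisms you identify.
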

\begin{proof}
We divide the proof into two steps.

\smallskip
\textbf{1. Case $n=2$ ($m=1$).}
First, for the first estimate in \eqref{fff}, if $q_1\in [1,2]$, it follows from Lemma \ref{lemma-far depth}, \eqref{eq:eta}, and the H\"older inequality that, for all $a\in (0,1)$ and $t\in [0,T]$,
\begin{equation}\label{claim0}
|\zeta_{a} \eta^{q_1-1}\eta_r \varrho|_1\leq |\eta^{q_1-1}\eta_r\varrho|_1\leq |\eta_r\varrho|_1^{2-q_1}|\eta\eta_r\varrho|_1^{q_1-1}=|\eta_r\varrho|_1^{2-q_1}|r\rho_0|_1^{q_1-1}\leq  C_0.
\end{equation}

If $q_1\in (0,1)$,  we claim that, for all $a\in (0,1)$ and $t\in [0,T]$,
\begin{equation}\label{413}
|\zeta_{a}\eta^{q_1-1}\eta_r\varrho|_1\leq |\zeta_{a}\eta^{2q_1-1}\eta_r\varrho|_1+C(a,q_1,T).
\end{equation}
Indeed, since $\eta|_{r=0}=\varrho|_{r=1}=0$ and $\eta,\eta_r,\varrho\geq 0$, it follows from integration by parts, \eqref{eq:eta}, the fact that $\rho_0^\beta \sim 1-r$, Lemmas \ref{lemma-low jacobi 1}--\ref{near-BD}, and the H\"older and Young inequalities that
\begin{equation*}
\begin{aligned}
|\zeta_{a}\eta^{q_1-1}\eta_r\varrho|_1 &=\frac{1}{q_1}\int_0^1 \zeta_{a} \varrho\,\mathrm{d}(\eta^{q_1})=-\frac{2}{q_1}\int_0^1 \zeta_{a}\eta^{q_1}\sqrt{\varrho}(\sqrt{\varrho})_r\,\mathrm{d}r-\frac{1}{q_1}\int_0^1 (\zeta_{a})_r\eta^{q_1} \varrho\,\mathrm{d}r\\
&\leq \frac{2}{q_1}\big|(\zeta_{a}\eta\eta_r)^\frac{1}{2} D_\eta\sqrt{\varrho} \big|_2|\zeta_{a}\eta^{2q_1-1}\eta_r\varrho|_1^\frac{1}{2}+C(a,q_1)\int_{a}^{\frac{1+3a}{4}} \frac{r\rho_0}{\eta^{1-q_1}\eta_r}\,\mathrm{d}r\\
&\leq C(a,q_1,T) \Big(|\zeta_{a}\eta^{2q_1-1}\eta_r\varrho|_1^\frac{1}{2}+ \int_{a}^{\frac{1+3a}{4}} r(1-r)^\frac{1}{\beta}\,\mathrm{d}r\Big)\\
&\leq |\zeta_{a}\eta^{2q_1-1}\eta_r\varrho|_1+C(a,q_1,T),
\end{aligned}
\end{equation*}
which implies claim \eqref{413}.

Now, for each $q_1\in (0,1)$, there exists a fixed $\ell\in \NN^*$ depending only on $q_1$ such that 
\begin{equation*}
0<2^{\ell-1}q_1<1< 2^\ell q_1<2.
\end{equation*}
We can iteratively use \eqref{413} and then obtain from \eqref{claim0} that, for any $t\in [0,T]$,
\begin{equation}\label{417}
\begin{aligned}
|\zeta_{a}\eta^{q_1-1}\eta_r\varrho|_1&\leq \big|\zeta_{a}\eta^{2^{\ell-1}q_1-1}\eta_r\varrho\big|_1+C(a,q_1,T)\\
&\leq\big|\zeta_{a}\eta^{2^\ell q_1-1}\eta_r \varrho\big|_1+C(a,q_1,T)\leq C(a,q_1,T).
\end{aligned}    
\end{equation}
This completes the proof of $\eqref{fff}_1$.

Finally, we show $\eqref{fff}_2$. For any $q_2\in (0,1]$ and $a\in (0,1)$, it follows from $\eqref{fff}_1$, Lemmas \ref{lemma-low jacobi 1}--\ref{near-BD}, and \ref{sobolev-embedding}, and the H\"older inequality that
\begin{equation*}
\begin{aligned}
|\zeta_{a}\eta^{q_2} \varrho|_\infty &\leq C_0|(\zeta_{a} \eta^{q_2}\varrho)_r|_1\leq C_0\big|\big((\zeta_{a})_r \eta^{q_2}\varrho,q_2 \zeta_{a}\eta^{q_2-1}\eta_r\varrho,2 \zeta_{a}\eta^{q_2}\sqrt{\varrho}(\sqrt{\varrho})_r\big)\big|_1 \\
&\leq C(a)\int_a^\frac{1+3a}{4} \!\!\eta^{q_2}\varrho\,\mathrm{d}r+C(q_2)|\zeta_{a}\eta^{q_2-1}\eta_r\varrho|_1+ C_0\big|(\zeta_{a}\eta \eta_r)^\frac{1}{2} D_\eta\sqrt{\varrho}\big|_2\big|\zeta_{a} \eta^{2q_2-1}\eta_r \varrho\big|_1^\frac{1}{2}\\
&\leq C(a,T)\big|\zeta_{\frac{1+3a}{4}} \eta^{q_2}\eta_r\varrho\big|_1+C(a,q_2,T)\leq C(a,q_2,T).
\end{aligned}    
\end{equation*}

This completes the proof of Lemma \ref{lemma-near depth} when $n=2$ $(m=1)$.

\smallskip
\textbf{2. Case $n=3$ ($m=2$).} 
First, we can obtain from integration by parts, Lemmas \ref{lemma-far depth}--\ref{near-BD}, and the H\"older inequality that, for all $t\in [0,T]$ and $a\in (0,1)$,
\begin{equation*}
\begin{aligned}
|\zeta_{a}\eta_r \varrho|_1 &=\int_0^1 \zeta_{a} \varrho\,\mathrm{d}\eta=-\int_0^1 (\zeta_{a})_r\eta \varrho\,\mathrm{d}r-2\int_0^1\zeta_{a}\eta \sqrt{\varrho}(\sqrt{\varrho})_r\,\mathrm{d}r\\
&\leq C(a)\int_a^\frac{1+3a}{4} \eta \varrho\,\mathrm{d}r+2\big|(\zeta_{a}\eta^2\eta_r)^\frac{1}{2} D_\eta\sqrt{\varrho}\big|_2|\zeta_{a}\eta_r \varrho|_1^\frac{1}{2}\\
&\leq C(a,T)\big(|\eta\eta_r\varrho|_1+ |\zeta_{a}\eta_r \varrho|_1^\frac{1}{2}\big) 
\leq C(a,T)\big(1+ |\zeta_{a}\eta_r \varrho|_1^\frac{1}{2}\big),
\end{aligned}
\end{equation*}
which, along with the Young inequality,  yields
\begin{equation*} 
|\zeta_{a}\eta_r \varrho(t)|_1\leq C(a,T)\qquad \text{for all $t\in[0,T]$}.
\end{equation*}
This, together with \eqref{eq:eta} and the H\"older inequality, yields that, for all $q_1\in [1,3]$, 
\begin{equation*}
|\zeta_{a}\eta^{q_1-1}\eta_r \varrho|_1\leq|\zeta_{a}\eta_r \varrho|_1^\frac{3-q_1}{2} |\zeta_{a}\eta^2\eta_r \varrho|_1^\frac{q_1-1}{2}\leq |\zeta_{a}\eta_r \varrho|_1^\frac{3-q_1}{2} |r^2\rho_0|_1^\frac{q_1-1}{2}\leq C(a,T), 
\end{equation*}
which implies $\eqref{fff}_1$.

Next, it follows from $\eqref{fff}_1$, Lemmas \ref{lemma-far depth}--\ref{near-BD}, and \ref{sobolev-embedding} that
\begin{equation*}
\begin{aligned}
|\zeta_{a}\eta \varrho|_\infty &\leq C_0|(\zeta_{a}\eta \varrho)_r|_1 \leq C_0\big|((\zeta_{a})_r\eta \varrho,\,\zeta_{a} \eta_r\varrho,\,\zeta_{a}\eta \sqrt{\varrho}(\sqrt{\varrho})_r)\big|_1\\
&\leq  C(a)\int_a^\frac{1+3a}{4}  \eta \varrho\,\mathrm{d}r+C_0|\zeta_{a} \eta_r \varrho|_1  + C_0\big|(\zeta_{a}\eta^2\eta_r)^\frac{1}{2} D_\eta\sqrt{\varrho}\big|_2 | \zeta_{a}\eta_r \varrho|_1^\frac{1}{2} \\
&\leq C(a,T) \big(|\eta\eta_r \varrho|_1+| \zeta_{a}\eta_r \varrho|_1+| \zeta_{a}\eta_r \varrho|_1^\frac{1}{2}\big)\leq C(a,T),    
\end{aligned}
\end{equation*}
which, along with $|\eta^2 \varrho|_\infty\leq C_0$ in Lemma \ref{lemma-far depth}, leads to $\eqref{fff}_2$. 

This completes proof of Lemma \ref{lemma-near depth} when $n=3$ ($m=2$).
\end{proof}

\subsection{\texorpdfstring{$L^p$}{}-estimates for \texorpdfstring{$(r^m\rho_0)^\frac{1}{p}(U,V)$}{}} 
Our goal of this subsection is to establish the $L^p$-energy estimates ($p\in[2,\infty)$) for $(U,V)$, which can be stated as follows:
\begin{Lemma}\label{lp-uv}  
Let $\gamma\in (1,\infty)$ if $n=2$, and $\gamma\in (1,3)$ if $n=3$. Then, for any $p\in [2,\infty)$, there exists a constant $C(p,T)>0$ such that, for all $t\in[0,T]$,
\begin{equation}\label{zong-jiehe}
\big|(r^m\rho_0)^\frac{1}{p}U(t)\big|_p^p+\big|(\zeta r^m\rho_0)^\frac{1}{p}V(t)\big|_p^p+\int_0^t \Big|(r^m\rho_0)^\frac{1}{2}|U|^\frac{p-2}{2}\big(D_\eta U,\frac{U}{\eta}\big)\Big|_2^2\,\mathrm{d}s\leq C(p,T),
\end{equation}
where the cut-off function $\zeta$ is defined in {\rm\S \ref{othernotation}}.
\end{Lemma}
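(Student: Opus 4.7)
The plan is to derive two coupled $L^p$-type energy inequalities — one for $U$ and one for $V$ — and close them together via a Gr\"onwall argument, with the $\eta$-weighted density bounds from Lemma~\ref{lemma-near depth} controlling the nonlinear pressure and coupling contributions. Concretely, I would first test $\eqref{eq:VFBP-La-eta}_1$ against $\eta^m\eta_r|U|^{p-2}U$ and integrate over $I$. Using $\eqref{eq:eta}$ in the form $\eta^m\eta_r\varrho=r^m\rho_0$, the inertial term yields $\frac{1}{p}\frac{\mathrm{d}}{\mathrm{d}t}\bigl|(r^m\rho_0)^{1/p}U\bigr|_p^p$; integration by parts on the viscous term (with no boundary contribution thanks to $U|_{r=0}=0$, $U_r|_{r=1}=0$, and $\varrho|_{r=1}=0$) produces exactly the dissipation $2\mu(p-1)\int r^m\rho_0|U|^{p-2}\bigl(|D_\eta U|^2+m\,U^2/\eta^2\bigr)\mathrm{d}r$, i.e.\ the third quantity in \eqref{zong-jiehe}. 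The pressure term, after one further integration by parts and splitting the spatial integral by $\chi+\chi^\sharp=1$, reduces (modulo pieces controlled by Lemma~\ref{lemma-low jacobi 1} and $|\eta^m\varrho|_\infty\le C_0$ of Lemma~\ref{lemma-far depth}) to the interior quantity $\mathrm{A}_{(\vartheta_1)}=\int_0^{1/2}\eta^{2+\vartheta_1(p-2)}\eta_r\varrho^{p\gamma-p+1}\,\mathrm{d}r$ for some parameter $\vartheta_1\in[0,1]$ to be chosen. Analogously, I would test \eqref{eq:v} against $\zeta r^m\rho_0|V|^{p-2}V$; the damping term $\varrho^{\gamma-1}V$ delivers the dissipation $\cD_V:=\bigl|(\zeta\eta^m\eta_r\varrho^\gamma)^{1/p}V\bigr|_p^p$, while the $\varrho^{\gamma-1}U$ cross term is absorbed by H\"older into $\varepsilon\,\cD_V$ plus $C(\varepsilon)\,\cD_U\,\mathrm{B}_{(\vartheta_2)}^{(\gamma-1)(\vartheta_2p+1-\vartheta_2)}$, with $\mathrm{B}_{(\vartheta_2)}:=\bigl|\zeta_{5/8}\eta^{2/((\gamma-1)(\vartheta_2p+1-\vartheta_2))}\varrho\bigr|_\infty$. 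The cut-off $\zeta$ in the $V$-estimate is indispensable because $V-U=2\mu D_\eta\log\varrho$ blows up on the free boundary, so no uniform $L^p$-control of $V$ up to $r=1$ is available.

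The heart of the argument is to bound $\mathrm{A}_{(\vartheta_1)}$ and $\mathrm{B}_{(\vartheta_2)}$ by Lemma~\ref{lemma-near depth}. In the range $\gamma\in(1,2]$, the relevant $\eta$-exponents are mild enough that a suitable choice of $(\vartheta_1,\vartheta_2)\in[0,1]$ places the integrands within the admissible ranges ($q_1\in(0,2]$ and $q_2\in(0,1]$ for $n=2$, or $q_1\in[1,3]$ and $q_2\in[1,2]$ for $n=3$) of Lemma~\ref{lemma-near depth}, and the desired bound $\mathrm{A}_{(\vartheta_1)}+\mathrm{B}_{(\vartheta_2)}\le C(p,T)$ is immediate. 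The main obstacle is the three-dimensional regime $\gamma\in(2,3)$: even with $\vartheta_1=1$, the $\eta$-exponent $2+(p-2)=p$ in $\mathrm{A}_{(1)}$ exceeds the ceiling $q_1\le 3$ as soon as $p>3$, and Lemma~\ref{lemma-near depth} no longer applies directly. To overcome this, I would fix $\vartheta_2=0$, so that $\mathrm{B}_{(0)}=|\zeta_{5/8}\varrho|_\infty\le C_0$ follows from $|\eta^2\varrho|_\infty\le C_0$ and the lower bound on $\eta$ in Lemma~\ref{lemma-low jacobi 1}, and then develop an iterative integration-by-parts scheme on $\mathrm{A}_{(1)}$: using the identity $2\mu\varrho_r=\eta_r\varrho(V-U)$, one rewrites a factor of $\varrho^{\gamma-1}$ as a spatial derivative and integrates by parts, exchanging $\varrho$-weight for additional $\eta$-weight at the price of one extra $U$- or $V$-factor. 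Repeating this substitution $j$ times produces integrals $\mathrm{I}_{(a_j,b_j)}=\int_0^1\zeta\eta^{a_j}\eta_r\varrho^{b_j}\,\mathrm{d}r$ with strictly increasing sequences $a_j=2(p-1)\bigl(p/(p-1)\bigr)^j-(p-2)$, $b_j=(\gamma-1)(p-1)\bigl(p/(p-1)\bigr)^j+\gamma$, plus residual terms bounded by $\varepsilon(\mathrm{B}_{(0)}^{\gamma-1}\cD_U+\cD_V)$ which are absorbable into the dissipations for small $\varepsilon$. Since $a_j\uparrow\infty$, after finitely many steps $j_0$ the integral $\mathrm{I}_{(a_{j_0},b_{j_0})}$ falls within the admissible range of Lemma~\ref{lemma-near depth}, closing the estimate.

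Adding the two resulting inequalities and choosing $\varepsilon$ small enough to absorb the $\cD_U,\cD_V$ contributions arising from the H\"older splittings, I obtain a Gr\"onwall-type inequality of the form
\[
\tfrac{\mathrm{d}}{\mathrm{d}t}E_\star(t)+\cD_U(t)+\cD_V(t)\le C(p,T)\bigl(1+E_\star(t)\bigr)
\]
for $E_\star(t):=\bigl|(r^m\rho_0)^{1/p}U(t)\bigr|_p^p+\bigl|(\zeta r^m\rho_0)^{1/p}V(t)\bigr|_p^p$. The initial value $E_\star(0)$ is finite because $u_0\in C^1(\bar I)$ and $\rho_0^\beta\sim1-r$ ensure that $\zeta v_0=\zeta(u_0+2\mu(\log\rho_0)_r)$ lies in $L^\infty$ (the logarithmic derivative, although singular at $r=1$, is tamed by the cut-off $\zeta$, which vanishes on $[5/8,1]$). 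Gr\"onwall's lemma then yields the first two quantities in \eqref{zong-jiehe}, while integrating the resulting dissipation in time delivers the third quantity. The argument works uniformly in $p\in[2,\infty)$, and the 2-D case is handled in an entirely parallel way with the recursion producing an $a_j$ that reaches the enlarged range $(0,2]$ even faster.
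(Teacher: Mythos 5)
Your proposal matches the paper's proof in essentially every respect: the $L^p$-testing of the momentum equation against $\eta^m\eta_r|U|^{p-2}U$ and of \eqref{eq:v} against $\zeta r^m\rho_0|V|^{p-2}V$, the reduction to the interior integrals $\mathrm{A}_{(\vartheta_1)}$ and $\mathrm{B}_{(\vartheta_2)}$, the choice $\vartheta_2=0$ for $\gamma\in(2,3)$, the iterative integration-by-parts scheme producing exactly the sequences $a_j=2(p-1)(p/(p-1))^j-(p-2)$, $b_j=(\gamma-1)(p-1)(p/(p-1))^j+\gamma$, and the Gr\"onwall closure all appear in \S\ref{subsub322} and Lemmas~\ref{lemma-u Lp-n2}--\ref{lemma-v-lp}. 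The architecture and the remedy are right, but two of your diagnostic remarks are wrong in direction or in fact.

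When you say that for $n=3$, $\gamma\in(2,3)$ the $\eta$-exponent $p$ in $\mathrm{A}_{(1)}$ ``exceeds the ceiling $q_1\le3$,'' the sign is backwards. To estimate $\mathrm{A}_{(1)}=\int_0^{1/2}\eta^p\eta_r\varrho^{p\gamma-p+1}\,\mathrm{d}r$ by Lemma~\ref{lemma-near depth}, one peels off $\varrho^{p\gamma-p}$ through $|\zeta\eta^{q_2}\varrho|_\infty^{p\gamma-p}$ and is left with $|\zeta\eta^{q_1-1}\eta_r\varrho|_1$, where $q_1=p+1-q_2\,p(\gamma-1)$; the $n=3$ constraint $q_2\ge1$ then forces $q_1\le p(2-\gamma)+1<1$ whenever $\gamma>2$, i.e.\ the exponent drops below the floor $q_1\ge1$, it does not overshoot the ceiling. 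That is precisely why the recursion must \emph{increase} $a_j-b_j$ (as your sequences indeed do), so your iteration is correct even though the stated obstruction is reversed. Relatedly, the claim that the $n=2$ case ``is handled in an entirely parallel way with the recursion'' is not accurate: for $n=2$ the lemma admits $q_2\in(0,1]$, so one can take $q_2$ arbitrarily small and land $q_1\in(0,2]$ directly for every $\gamma\in(1,\infty)$ (this is what Lemma~\ref{lemma-u Lp-n2} does — it contains no recursion at all). The iteration is needed only because the $n=3$ version requires $q_2\ge1$.
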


The proof of Lemma \ref{lp-uv} will be divided into the following three cases:
\begin{itemize}
\item[\S\ref{subsub-1}:] Case when $n=2$ and $\gamma\in (1,\infty)$.
\item[\S\ref{subsub-2}:] Case when $n=3$ and $\gamma\in (1,2]$.
\item[\S\ref{subsub-3}:] Case when $n=3$ and $\gamma\in (2,3)$.  
\end{itemize}

\subsubsection{Case when $n=2$ and $\gamma\in (1,\infty)$}\label{subsub-1}
We first establish the $L^p$-energy estimates for $U$.
\begin{Lemma}\label{lemma-u Lp-n2}
Let $n=2$ and $\gamma\in (1,\infty)$. Then, for any $p\in[2,\infty)$, there exists a constant $C(p,T)>0$ such that
\begin{equation*}
\big|(r\rho_0)^\frac{1}{p}U(t)\big|_p^p+\int_0^t\Big|(r\rho_0)^\frac{1}{2}|U|^\frac{p-2}{2}\big(D_\eta U,\frac{U}{\eta}\big)\Big|_2^2\,\ds\leq C(p,T)
\qquad\text{for all $t\in [0,T]$}.
\end{equation*}
\end{Lemma}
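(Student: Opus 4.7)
The plan is a standard $L^p$-energy estimate on $U$, with the coordinate-singular and pressure terms controlled via the new $\eta$-weighted density bounds in Lemma \ref{lemma-near depth} together with Lemmas \ref{lemma-basic energy}--\ref{lemma-low jacobi 1}. First I rewrite the momentum equation $\eqref{eq:VFBP-La-eta}_1$ in the symmetric form
\begin{equation*}
\varrho U_t + A D_\eta(\varrho^\gamma) = 2\mu D_\eta(\varrho D_\eta U) + 2\mu m\varrho D_\eta(U/\eta),
\end{equation*}
multiply by $\eta\eta_r|U|^{p-2}U$ (here $m=1$), and integrate over $I$. Using $\eta\eta_r\varrho = r\rho_0$ from \eqref{eq:eta}, the time term gives $\tfrac{1}{p}\tfrac{d}{dt}|(r\rho_0)^{1/p}U|_p^p$, while integrating by parts in the two viscous terms and recombining their cross-terms delivers exactly the dissipation $2\mu(p-1)\int r\rho_0|U|^{p-2}(D_\eta U)^2\,\mathrm{d}r + 2\mu\int r\rho_0 U^2|U|^{p-2}/\eta^2\,\mathrm{d}r$ on the left. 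The pressure, after one integration by parts (the boundary contribution vanishes since $\eta|_{r=0}=0$ and $\varrho|_{r=1}=0$), is transferred to the right as $A\int \eta_r\varrho^\gamma|U|^{p-2}U\,\mathrm{d}r + A(p-1)\int \eta\eta_r\varrho^\gamma|U|^{p-2}D_\eta U\,\mathrm{d}r$.

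Both pressure contributions are then tamed by Cauchy--Schwarz / Young, relying on the key algebraic identity $\eta^2\eta_r^2\varrho^{2\gamma}/(r\rho_0) = \eta\eta_r\varrho^{2\gamma-1} = r\rho_0\varrho^{2\gamma-2}$ (an immediate consequence of $\eta\eta_r\varrho = r\rho_0$). This absorbs half of each dissipation into the left-hand side and produces a single residual term $C(p)\int r\rho_0\varrho^{2\gamma-2}|U|^{p-2}\,\mathrm{d}r$. A further Young's inequality with exponents $p/(p-2)$ and $p/2$ splits it as $C(p)|(r\rho_0)^{1/p}U|_p^p + C(p)\int r\rho_0\varrho^{p(\gamma-1)}\,\mathrm{d}r$: the first piece is absorbed into a Gr\"onwall structure, and the second reduces, via $r\rho_0 = \eta\eta_r\varrho$, to controlling $\int \eta\eta_r\varrho^{s+1}\,\mathrm{d}r$ with $s = p(\gamma-1)$.

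The main technical step is showing $\int \eta\eta_r\varrho^{s+1}\,\mathrm{d}r \leq C(p,T)$ for \emph{arbitrary} $s > 0$. On $[1/2,1]$, Lemma \ref{lemma-low jacobi 1} yields $\eta,\eta_r \geq C(T)^{-1}$, so \eqref{eq:eta} and the boundedness of $\rho_0$ there force $\varrho \leq C(T)$, and the contribution is at most $C(T)\int_{1/2}^1 r\rho_0\,\mathrm{d}r$. On the interior $[0,1/2]$, I exploit the full flexibility of Lemma \ref{lemma-near depth} for $n=2$: fix any $q_2 \in (0,1]$ with $sq_2 < 2$ (for instance $q_2 = 1/(s+1)$) and set $q_1 = 2 - sq_2 \in (0,2]$; with a cutoff $\zeta_a$, $a\in (1/2,1)$, equal to $1$ on $[0,1/2]$, the factorisation
\begin{equation*}
\eta\eta_r\varrho^{s+1} = (\eta^{q_1-1}\eta_r\varrho)\cdot(\eta^{q_2}\varrho)^{s}
\end{equation*}
combined with $|\zeta_a\eta^{q_2}\varrho|_\infty \leq C(a,q_2,T)$ and $|\zeta_a\eta^{q_1-1}\eta_r\varrho|_1 \leq C(a,q_1,T)$ from Lemma \ref{lemma-near depth} yields the required bound. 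This splitting is the conceptual heart of the argument: because the $L^\infty$ bound in Lemma \ref{lemma-near depth} is available for \emph{every} $q_2 \in (0,1]$ in two dimensions, $q_2$ can always be chosen small enough to keep $q_1 > 0$ no matter how large $s$ (hence $p$ or $\gamma$) is, thereby avoiding the iterative bootstrap needed in the 3-D regime $\gamma\in(2,3)$ (cf.\ \S\ref{subsub-3}).

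Assembling the above yields a differential inequality of the form
\begin{equation*}
\frac{\mathrm{d}}{\mathrm{d}t}\big|(r\rho_0)^{1/p}U\big|_p^p + c_p\Big(\int r\rho_0|U|^{p-2}(D_\eta U)^2\,\mathrm{d}r + \int r\rho_0 U^2|U|^{p-2}/\eta^2\,\mathrm{d}r\Big) \leq C(p,T)\big(1+\big|(r\rho_0)^{1/p}U\big|_p^p\big),
\end{equation*}
from which Gr\"onwall gives the pointwise-in-time $L^p$ bound and integration in $t$ gives the dissipation bound, completing the proof.
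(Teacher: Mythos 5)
Your proof is correct and follows essentially the same route as the paper's argument for Lemma \ref{lemma-u Lp-n2}: the same test function $\eta\eta_r|U|^{p-2}U$, the same reduction of the pressure residual to $\int r\rho_0\varrho^{2\gamma-2}|U|^{p-2}\,\mathrm{d}r$ via the identity $\eta\eta_r\varrho = r\rho_0$, and the same interior/exterior decomposition of $\int\eta\eta_r\varrho^{p(\gamma-1)+1}\,\mathrm{d}r$ with the flow-map-weighted density bounds of Lemma \ref{lemma-near depth} applied on $[0,1/2]$. The only difference is cosmetic (you use two Young inequalities where the paper uses Hölder followed by Young, and you parametrize by $q_2$ where the paper uses $\iota_0 = sq_2$), and your remark about why the 2-D case avoids the $\gamma>2$ bootstrap is the same observation the paper exploits.
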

\begin{proof}
Let $n=2$, $\gamma\in(1,\infty)$, and $p\in [2,\infty)$. Multiplying $\eqref{eq:VFBP-La-eta}_1$ by $\eta\eta_r |U|^{p-2}U$ gives
\begin{equation*}
\begin{aligned}
&\,\frac{1}{p}(r\rho_0|U|^p)_t+2\mu(p-1) r\rho_0  |U|^{p-2}|D_\eta U|^2 +2\mu  r\rho_0 \frac{|U|^p}{\eta^2} \\
&= \Big(\frac{2\mu r\rho_0}{\eta_r} |U|^{p-2}U D_\eta U-\frac{Ar^{\gamma }\rho_0^\gamma}{\eta^{\gamma-1}\eta_r^\gamma} |U|^{p-2}U\Big)_r+\frac{A(r\rho_0)^\gamma}{(\eta\eta_r)^{\gamma-1}}|U|^{p-2}\big((p-1)D_\eta U+\frac{U}{\eta}\big).
\end{aligned}
\end{equation*}
Then integrating the above over $I$, along with the H\"older and Young inequalities, yields
\begin{equation}\label{dt-G2n=2}
\begin{aligned}
&\,\frac{1}{p}\frac{\mathrm{d}}{\mathrm{d}t}\big|(r\rho_0)^\frac{1}{p}U\big|_p^p+2\mu\Big|(r\rho_0)^\frac{1}{2}|U|^\frac{p-2}{2}\big(D_\eta U,\frac{U}{\eta}\big)\Big|_2^2\\
&\leq  \int_0^1 \frac{A(r\rho_0)^\gamma}{(\eta\eta_r)^{\gamma-1}}|U|^{p-2}\big((p-1)D_\eta U+\frac{U}{\eta}\big)\,\mathrm{d}r\\
&\leq C(p)\Big|\frac{(r\rho_0)^{\gamma-\frac{1}{2}}}{(\eta\eta_r)^{\gamma-1}}|U|^\frac{p-2}{2}\Big|_2\Big|(r\rho_0)^\frac{1}{2}|U|^\frac{p-2}{2}\big(D_\eta U,\frac{U}{\eta}\big)\Big|_2\\
&\leq C(p)\underline{\Big|\frac{(r\rho_0)^{\gamma-\frac{1}{2}}}{(\eta\eta_r)^{\gamma-1}}|U|^\frac{p-2}{2}\Big|_2^2}_{:=\mathrm{G}_{3}}+\mu\Big|(r\rho_0)^\frac{1}{2}|U|^\frac{p-2}{2}\big(D_\eta U,\frac{U}{\eta}\big)\Big|_2^2.
\end{aligned}    
\end{equation}

Now, we estimate $\mathrm{G}_{3}$. Let $\iota_0$ be a fixed constant such that 
\begin{equation*}
0<\iota_0<\min\{2,p\gamma-p\}.
\end{equation*}
Then it follows from \eqref{eq:eta}, the fact that $\rho_0^\beta\sim 1-r$, Lemmas \ref{lemma-low jacobi 1} and \ref{lemma-near depth}, and the H\"older and Young inequalities that 
\begin{equation}\label{G2:n=2}
\begin{aligned}
\mathrm{G}_{3}&=\int_0^1 \frac{(r\rho_0)^{2\gamma-1}}{(\eta\eta_r)^{2\gamma-2}}|U|^{p-2}\,\mathrm{d}r\leq \big|(r\rho_0)^\frac{1}{p}U\big|_p^{p-2}\Big(\int_0^1 \frac{(r\rho_0)^{p\gamma-p+1}}{(\eta\eta_r)^{p\gamma-p}}\,\mathrm{d}r\Big)^\frac{2}{p}\\
&=\big|(r\rho_0)^\frac{1}{p}U\big|_p^{p-2}\Big(\int_0^\frac{1}{2}\eta\eta_r\varrho^{p\gamma-p+1}\,\mathrm{d}r+\int_\frac{1}{2}^1 \frac{(r\rho_0)^{p\gamma-p+1}}{(\eta\eta_r)^{p\gamma-p}}\,\mathrm{d}r\Big)^\frac{2}{p}\\
&\leq \big|(r\rho_0)^\frac{1}{p}U\big|_p^p+C(p)\big|\zeta \eta^\frac{\iota_0}{p\gamma-p}\varrho\big|_\infty^{p\gamma-p}|\zeta \eta^{1-\iota_0}\eta_r\varrho|_1 +C(p,T)\leq \big|(r\rho_0)^\frac{1}{p}U\big|_p^p+C(p,T), 
\end{aligned}
\end{equation}
where the cut-off function $\zeta$ is defined in \S \ref{othernotation}.

Substituting \eqref{G2:n=2} into \eqref{dt-G2n=2} and then applying the Gr\"onwall inequality to the resulting inequality yield the desired result.
\end{proof}

Based on Lemma \ref{lemma-u Lp-n2}, we obtain the following interior $L^p$-energy estimates for $V$:
\begin{Lemma}\label{lemma-v-n=2}
Let $n=2$ and $\gamma\in (1,\infty)$. Then, for any $p\in [2,\infty)$, there exists a constant $C(p,T)>0$ such that, for all $t\in[0,T]$,
\begin{equation*}
\big|(\zeta r\rho_0)^\frac{1}{p}V(t)\big|_p^p\leq C(p,T).
\end{equation*}
\end{Lemma}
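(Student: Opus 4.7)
The plan is a direct $L^p$-energy estimate on the damped transport equation \eqref{eq:v}. Multiplying the second form $V_t+\tfrac{A\gamma}{2\mu}\varrho^{\gamma-1}(V-U)=0$ by the test function $p\zeta r\rho_0|V|^{p-2}V$ and integrating over $I$ (no spatial integration by parts is needed, since $V$ solves a pointwise ODE in $t$, and no boundary contributions appear), one application of Young's inequality $|V|^{p-1}|U|\leq\tfrac{p-1}{p}|V|^p+\tfrac{1}{p}|U|^p$ absorbs the $|V|^p$-contribution of the source into the natural dissipative term and yields
\begin{equation*}
\frac{\mathrm{d}}{\mathrm{d}t}\big|(\zeta r\rho_0)^{1/p}V\big|_p^p+\frac{A\gamma}{2\mu}\int_0^1\zeta r\rho_0\varrho^{\gamma-1}|V|^p\,\mathrm{d}r\leq\frac{A\gamma}{2\mu}\int_0^1\zeta r\rho_0\varrho^{\gamma-1}|U|^p\,\mathrm{d}r.
\end{equation*}

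The central task, and the expected main obstacle, is to bound the source on the right uniformly in $t\in[0,T]$. The tempting estimate $\int\zeta r\rho_0\varrho^{\gamma-1}|U|^p\,\mathrm{d}r\leq|\zeta\varrho^{\gamma-1}|_\infty\,|(r\rho_0)^{1/p}U|_p^p$ is blocked because Lemma \ref{lemma-near depth} supplies $|\zeta\eta^{q_2}\varrho|_\infty\leq C$ only with a strictly positive weight $q_2\in(0,1]$, so no direct $L^\infty$-bound on $\zeta\varrho$ is available. To circumvent this, I invoke the identity $r\rho_0=\eta\eta_r\varrho$ from \eqref{eq:eta} to rewrite the source as $\int\zeta\eta\eta_r\varrho^{\gamma}|U|^p\,\mathrm{d}r$, then apply H\"older's inequality with conjugate exponents $q=q'=2$ against the measure $r\rho_0\,\mathrm{d}r$:
\begin{equation*}
\int_0^1 \zeta\eta\eta_r\varrho^{\gamma}|U|^p\,\mathrm{d}r\leq\Big(\int_0^1 r\rho_0|U|^{2p}\,\mathrm{d}r\Big)^{1/2}\Big(\int_0^1\zeta^2\eta\eta_r\varrho^{2\gamma-1}\,\mathrm{d}r\Big)^{1/2}.
\end{equation*}
Lemma \ref{lemma-u Lp-n2} applied with exponent $2p$ bounds the first factor uniformly on $[0,T]$. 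For the second factor, fix any $\theta\in\big(0,\min\{1,(\gamma-1)^{-1}\}\big)$ and decompose
\begin{equation*}
\zeta^2\eta\eta_r\varrho^{2\gamma-1}=\big[\zeta\eta^{1-2\theta(\gamma-1)}\eta_r\varrho\big]\cdot\big[\zeta\eta^{2\theta(\gamma-1)}\varrho^{2\gamma-2}\big].
\end{equation*}
The first bracket has the form $\zeta\eta^{q_1-1}\eta_r\varrho$ with $q_1=2-2\theta(\gamma-1)\in(0,2]$, so its $L^1$-norm is controlled by $\eqref{fff}_1$ of Lemma \ref{lemma-near depth}; the second bracket equals $\zeta(\eta^\theta\varrho)^{2(\gamma-1)}$, whose $L^\infty$-norm is obtained by raising the bound $|\zeta\eta^\theta\varrho|_\infty\leq C(\theta,T)$ from $\eqref{fff}_2$ of Lemma \ref{lemma-near depth} (applicable since $q_2=\theta\in(0,1]$) to the power $2(\gamma-1)$.

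Integrating the differential inequality over $[0,t]$ and dropping the non-negative dissipation term gives $|(\zeta r\rho_0)^{1/p}V(t)|_p^p\leq|(\zeta r\rho_0)^{1/p}v_0|_p^p+C(p,T)\,t$. The initial contribution is finite: since $\support\zeta\subset[0,\tfrac{5}{8}]$, we have $\rho_0\geq\cK_1(3/8)^{1/\beta}$ on $\support\zeta$, and combined with $\rho_0^\beta,u_0\in C^1(\bar I)$ from the class $D(T)$ in Definition \ref{d2}, the initial datum $v_0=u_0+2\mu(\log\rho_0)_r=u_0+2\mu(\rho_0^\beta)_r/(\beta\rho_0^\beta)$ is bounded on $\support\zeta$. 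This closes the estimate and produces $|(\zeta r\rho_0)^{1/p}V(t)|_p^p\leq C(p,T)$ for all $t\in[0,T]$, as claimed.
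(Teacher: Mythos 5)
Your proposal is correct in substance and takes a genuinely different route from the paper. Both start from the same $L^p$-energy identity for the damped transport equation \eqref{eq:v} and arrive at the same differential inequality after Young's inequality, but you then diverge. The paper first restricts to $p\geq\max\{2,\frac{2}{\gamma-1}\}$, peels off the factor $\big|\zeta_{\frac{5}{8}}\eta^{\frac{2}{p\gamma-p}}\varrho\big|_\infty^{p\gamma-p}$ (requiring $\frac{2}{p\gamma-p}\leq 1$, hence the restriction), absorbs the remaining $\big|(\tfrac{r\rho_0}{\eta^2})^{1/p}U\big|_p^p$ using the time-integrated dissipation provided by Lemma~\ref{lemma-u Lp-n2}, closes via Gr\"onwall, and finally recovers $p\in[2,\infty)$ by interpolation against the $L^2$ bound of Lemma~\ref{near-BD}. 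You instead Cauchy--Schwarz the source against the measure $r\rho_0\,\mathrm{d}r$, bound the velocity factor by Lemma~\ref{lemma-u Lp-n2} at the doubled exponent $2p$, and control the density factor $\int\zeta^2\eta\eta_r\varrho^{2\gamma-1}$ by a clean split invoking both halves of Lemma~\ref{lemma-near depth}; this works for every $p\geq 2$ in one stroke, with no interpolation and no $p$-restriction. Your route costs the heavier input $(r\rho_0)^{1/(2p)}U\in L^{2p}$ (still free from Lemma~\ref{lemma-u Lp-n2}), while the paper's route stays at exponent $p$ but needs the extra interpolation step; both are legitimate.

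One technical slip to patch: you write that the $L^\infty$-norm of $\zeta(\eta^\theta\varrho)^{2(\gamma-1)}$ ``is obtained by raising the bound $|\zeta\eta^\theta\varrho|_\infty\leq C(\theta,T)$ to the power $2(\gamma-1)$,'' but $|\zeta f^k|_\infty\neq|\zeta f|_\infty^k$ when $k>1$ and $\zeta$ is not $0$--$1$ valued (e.g.\ at a point where $\zeta=\tfrac12$). The fix is immediate and already implicit in the paper's choice of cut-off: since $\zeta\leq 1$ and $\zeta_{\frac58}\equiv 1$ on $\operatorname{supp}\zeta$, one has $\zeta(\eta^\theta\varrho)^{2(\gamma-1)}\leq(\zeta_{\frac58}\eta^\theta\varrho)^{2(\gamma-1)}$, and $|\zeta_{\frac58}\eta^\theta\varrho|_\infty\leq C(\theta,T)$ is exactly what $\eqref{fff}_2$ of Lemma~\ref{lemma-near depth} supplies with $a=\tfrac58$, $q_2=\theta$. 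With that substitution the argument closes.
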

\begin{proof}
Let $n=2$ and $\gamma\in (1,\infty)$. We first restrict parameter $p$ such that
\begin{equation}\label{p-par0}
\max\big\{2,\frac{2}{\gamma-1}\big\}\leq p<\infty.
\end{equation}
Multiplying \eqref{eq:v} by $\zeta r\rho_0 |V|^{p-2}V$, together with \eqref{eq:eta}, gives
\begin{equation*}
\frac{1}{p}(\zeta r\rho_0|V|^p)_t+\frac{A\gamma}{2\mu}\zeta \eta\eta_r \varrho^{\gamma}|V|^p=\frac{A\gamma}{2\mu}\zeta \eta \eta_r\varrho^{\gamma}|V|^{p-2}VU.
\end{equation*}
Then, integrating the above over $I$, we obtain from the fact that
\begin{equation*}
\zeta^k(r)\leq \zeta_\frac{5}{8}(r)\qquad\text{for all $r\in \bar I$ and $k>0$},
\end{equation*}
Lemma \ref{lemma-near depth}, and the Young inequality that
\begin{equation*}
\begin{aligned}
&\,\frac{1}{p}\frac{\mathrm{d}}{\mathrm{d}t}\big|(\zeta r\rho_0)^\frac{1}{p}V\big|_p^p+\frac{A\gamma}{2\mu}\big|(\zeta \eta\eta_r\varrho^{\gamma})^\frac{1}{p}  V\big|_p^p=\frac{A\gamma}{2\mu} \int_0^1 \zeta \eta \eta_r\varrho^{\gamma}|V|^{p-2}VU\,\mathrm{d}r\\
&\leq C(p) \big|(\zeta \eta\eta_r\varrho^{p\gamma-p+1})^\frac{1}{p}  U\big|_p^p+C(p)\big|(\zeta r\rho_0)^\frac{1}{p}V\big|_p^p \\
&\leq C(p)\big|\zeta_{\frac{5}{8}}\eta^\frac{2}{p\gamma-p}\varrho\big|_\infty^{p\gamma-p} \Big|\big(\frac{r\rho_0}{\eta^2}\big)^\frac{1}{p}U\Big|_p^p+C(p)\big|(\zeta r\rho_0)^\frac{1}{p}  V\big|_p^p\\
&\leq C(p,T)\Big|\big(\frac{r\rho_0}{\eta^2}\big)^\frac{1}{p}U\Big|_p^p+C(p)\big|(\zeta r\rho_0)^\frac{1}{p}  V\big|_p^p,
\end{aligned}
\end{equation*}
which, along with Lemma \ref{lemma-u Lp-n2} and the Gr\"onwall inequality, yields that, for any $p$ satisfying \eqref{p-par0},
\begin{equation}\label{vabove}
\big|(\zeta r\rho_0)^\frac{1}{p}V(t)\big|_p\leq C(p,T)\qquad \text{for all $t\in[0,T]$}.
\end{equation}
Finally, we obtain from Lemma \ref{near-BD} and interpolation that \eqref{vabove} holds for all $p\in[2,\infty)$. 
\end{proof}

Now we can prove Lemma \ref{lp-uv} for the case when $n=2$ and $\gamma\in(1,\infty)$.
\begin{proof}[Proof of Lemma \ref{lp-uv}]
Combining Lemmas \ref{lemma-u Lp-n2}--\ref{lemma-v-n=2}, we derive the desired estimates of Lemma \ref{lp-uv} when $n=2$ and $\gamma\in (1,\infty)$.
\end{proof}

\subsubsection{Case when $n=3$ and $\gamma\in(1,2]$}\label{subsub-2}
First, we establish the $L^p$-energy estimates for $U$.
\begin{Lemma}\label{lemma-u Lp-n3}
Let $n=3$ and $\gamma\in (1,2]$. Then, for any $p\in[2,\infty)$, there exists a constant $C(p,T)>0$ such that, for all $t\in [0,T]$, 
\begin{equation*}
\big|(r^2\rho_0)^\frac{1}{p}U(t)\big|_p^p+\int_0^t\Big|(r^2\rho_0)^\frac{1}{2}|U|^\frac{p-2}{2}\big(D_\eta U,\frac{U}{\eta}\big)\Big|_2^2\,\ds\leq C(p,T).
\end{equation*}
\end{Lemma}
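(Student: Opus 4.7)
My plan is to mirror the 2D proof (Lemma \ref{lemma-u Lp-n2}) with the $\eta$-weights adjusted for $m=2$. First I would multiply $\eqref{eq:VFBP-La-eta}_1$ by $\eta^2\eta_r|U|^{p-2}U$ and integrate over $I$. The conservation identity $\eta^2\eta_r\varrho = r^2\rho_0$ from \eqref{eq:eta} collapses the time derivative of the kinetic energy cleanly, and integration by parts on the pressure and viscous terms has vanishing boundary contributions ($\eta|_{r=0}=0$ at the origin and $\varrho|_{r=1}=0$ at the moving boundary), producing
\begin{equation*}
\frac{1}{p}\frac{\mathrm{d}}{\mathrm{d}t}\big|(r^2\rho_0)^{\frac{1}{p}}U\big|_p^p + c_p\mu\Big|(r^2\rho_0)^{\frac{1}{2}}|U|^{\frac{p-2}{2}}\big(D_\eta U,\tfrac{U}{\eta}\big)\Big|_2^2 = A\!\int_0^1\!\eta^2\eta_r\varrho^\gamma\big[(p-1)D_\eta U+\tfrac{2U}{\eta}\big]|U|^{p-2}\,\mathrm{d}r.
\end{equation*}

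I would then estimate the pressure right-hand side via Cauchy--Schwarz paired with the dissipation. Using the factorization $\eta^2\eta_r\varrho^\gamma = \eta\eta_r^{1/2}\varrho^{\gamma-1/2}\cdot(r^2\rho_0)^{1/2}$, both the $D_\eta U$-direction and the $U/\eta$-direction produce the same Cauchy partner, and Young's inequality absorbs a small fraction of the dissipation, leaving the residual $\mathrm{G}_3 := \int_0^1 \eta^2\eta_r\varrho^{2\gamma-1}|U|^{p-2}\,\mathrm{d}r$. The weighted H\"older inequality with conjugate exponents $(p/2, p/(p-2))$, together with $r^2\rho_0=\eta^2\eta_r\varrho$, gives $\mathrm{G}_3 \leq |(r^2\rho_0)^{1/p}U|_p^{p-2}\cdot \mathrm{M}^{2/p}$, where $\mathrm{M} := \int_0^1\eta^2\eta_r\varrho^{p\gamma-p+1}\,\mathrm{d}r$. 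A further Young's inequality reduces the task to showing $\mathrm{M} \leq C(p,T)$, after which Gr\"onwall closes the estimate.

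The bulk of the work is to bound $\mathrm{M} = \mathrm{M}_{\mathrm{ex}} + \mathrm{M}_{\mathrm{in}}$ across $[\tfrac{1}{2},1] \cup [0,\tfrac{1}{2}]$. The exterior piece is immediate: Lemma \ref{lemma-low jacobi 1} furnishes $\eta_r, \eta/r \geq C(T)^{-1}$ on $[\tfrac{1}{2}, 1]$, and rewriting $\eta^2\eta_r\varrho^{p\gamma-p+1} = (r^2\rho_0)^{p\gamma-p+1}/(\eta^2\eta_r)^{p\gamma-p}$ with $\rho_0$ bounded gives $\mathrm{M}_{\mathrm{ex}} \leq C(p, T)$. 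For the interior, I factor $\varrho^{p(\gamma-1)} = (\eta^{q_2}\varrho)^{p(\gamma-1)}\eta^{-q_2 p(\gamma-1)}$ with $q_2 \in [1, 2]$ and invoke Lemma \ref{lemma-near depth}: the $L^\infty$ bound $|\zeta\eta^{q_2}\varrho|_\infty \leq C$ combined with the $L^1$ bound on $\zeta\eta^{q_1-1}\eta_r\varrho$ for $q_1 := 3 - q_2 p(\gamma-1) \in [1, 3]$ controls $\mathrm{M}_{\mathrm{in}}$.

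The hard part will be the regime $p(\gamma-1) > 2$ (which occurs whenever $\gamma$ is close to 2), where no $q_2 \in [1, 2]$ alone satisfies $q_2 p(\gamma-1) \in [0, 2]$ directly; this is in sharp contrast to the 2D version of Lemma \ref{lemma-near depth}, whose admissible range $q_2 \in (0, 1]$ allows arbitrarily small values. To resolve this I plan to introduce an interpolation parameter $\vartheta_1 \in [0, 1]$ into the Cauchy--Schwarz step, redistributing the $|U|^{p-2}$ weight between the $D_\eta U$-weighted and $U/\eta$-weighted dissipation components. This yields the parameterized residual
\begin{equation*}
\mathrm{A}_{(\vartheta_1)} := \int_0^{1/2}\eta^{2+\vartheta_1(p-2)}\eta_r\varrho^{p\gamma-p+1}\,\mathrm{d}r,
\end{equation*}
whose extra $\eta^{\vartheta_1(p-2)}$ weight relaxes the admissibility constraint to $q_2 p(\gamma-1) - \vartheta_1(p-2) \in [0, 2]$. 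For any $\gamma \in (1, 2]$ and $p \in [2, \infty)$, the choice $q_2 = 1$ with $\vartheta_1 = \max\{0,\,(p(\gamma-1)-2)/(p-2)\}$ lies in $[1, 2] \times [0, 1]$ (thanks to the ceiling $\gamma \leq 2$), so $\mathrm{A}_{(\vartheta_1)} \leq C(p, T)$, and the Gr\"onwall argument applied to $\tfrac{\mathrm{d}}{\mathrm{d}t}|(r^2\rho_0)^{1/p}U|_p^p + \mu(\text{dissipation}) \leq C(p)|(r^2\rho_0)^{1/p}U|_p^p + C(p, T)$ delivers the desired estimate.
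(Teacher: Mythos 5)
Your proposal is correct and uses essentially the same mechanism as the paper: multiply by $\eta^2\eta_r|U|^{p-2}U$ and integrate, Cauchy--Schwarz and Young to reduce the pressure term to the residual $\mathrm{G}_3$, split it into exterior and interior parts, bound the exterior part with the lower bounds of Lemma~\ref{lemma-low jacobi 1}, handle the interior part via a $\vartheta$-interpolated weighted H\"older together with Lemma~\ref{lemma-near depth}, and close with Gr\"onwall. The one genuine difference is your parameter choice. You fix $q_2=1$ and take a $p$-dependent interpolation parameter $\vartheta_1=\max\{0,(p(\gamma-1)-2)/(p-2)\}$, which makes $(q_1,q_2,\vartheta_1)$ lie in $[1,3]\times[1,2]\times[0,1]$ directly for every $p\in[2,\infty)$ and $\gamma\in(1,2]$. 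The paper instead fixes $\vartheta=\gamma-1$ (independent of $p$) and requires $q_1=1$, so the constraint $q_2=\tfrac{2+\vartheta(p-2)}{p\gamma-p}\in[1,2]$ only holds for $p\ge p_0(\gamma):=\max\{2,\tfrac{4-2\gamma}{\gamma-1}\}$; it then covers $p\in[2,p_0(\gamma))$ by interpolation against the base case of Lemma~\ref{lemma-basic energy}. Your adaptive $\vartheta_1$ sidesteps that interpolation step, which is a small but real streamlining. Two descriptive imprecisions worth fixing in a final write-up: the $\vartheta_1$-interpolation lives in the weighted H\"older inequality applied to $\mathrm{G}_3$ (not ``in the Cauchy--Schwarz step''), and it redistributes the $|U|^{p-2}$ factor between the $U/\eta$-dissipation $\big|\big(\tfrac{r^2\rho_0}{\eta^2}\big)^{1/p}U\big|_p$ and the energy $\big|(r^2\rho_0)^{1/p}U\big|_p$ --- not between the $D_\eta U$- and $U/\eta$-weighted dissipation components, since $D_\eta U$ does not appear in $\mathrm{G}_3$ at all.
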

\begin{proof}
We divide the proof into three steps.

\smallskip
\textbf{1.} Let $n=3$, $\gamma\in (1,2]$, and $p\in [2,\infty)$. Multiplying $\eqref{eq:VFBP-La-eta}_1$ by $\eta^2\eta_r |U|^{p-2}U$ gives
\begin{equation*}
\begin{aligned}
&\,\frac{1}{p}(r^2\rho_0|U|^p)_t+2\mu(p-1) r^2\rho_0|U|^{p-2}|D_\eta U|^2 +4\mu  r^2\rho_0 \frac{|U|^p}{\eta^2} \\
&= \Big(2\mu\frac{r^2\rho_0}{\eta_r} |U|^{p-2}U D_\eta U-\frac{Ar^{2\gamma}\rho_0^\gamma}{\eta^{2(\gamma-1)}\eta_r^\gamma} |U|^{p-2}U\Big)_r + \frac{A(r^{2}\rho_0)^\gamma}{(\eta^{2}\eta_r)^{\gamma-1}}|U|^{p-2}\big((p-1)D_\eta U+\frac{2U}{\eta}\big).
\end{aligned}
\end{equation*}
Then integrating the above over $I$, along with the H\"older and Young inequalities, yields
\begin{equation}\label{dt-G2,n3}
\begin{aligned}
&\,\frac{1}{p}\frac{\mathrm{d}}{\mathrm{d}t}\big|(r^2\rho_0)^\frac{1}{p}U\big|_p^p+2\mu\Big|(r^2\rho_0)^\frac{1}{2}|U|^\frac{p-2}{2}\big(D_\eta U,\frac{U}{\eta}\big)\Big|_2^2\\
&\leq \int_0^1 \frac{A(r^{2}\rho_0)^\gamma}{(\eta^{2}\eta_r)^{\gamma-1}}|U|^{p-2}\big((p-1)D_\eta U+\frac{2U}{\eta}\big)\,\mathrm{d}r\\
&\leq C(p)\Big|\frac{(r^2\rho_0)^{\gamma-\frac{1}{2}}}{(\eta^2\eta_r)^{\gamma-1}}|U|^\frac{p-2}{2}\Big|_2\Big|(r^2\rho_0)^\frac{1}{2}|U|^\frac{p-2}{2}\big(D_\eta U,\frac{U}{\eta}\big)\Big|_2\\
&\leq C(p)\underline{\Big|\frac{(r^2\rho_0)^{\gamma-\frac{1}{2}}}{(\eta^2\eta_r)^{\gamma-1}}|U|^\frac{p-2}{2}\Big|_2^2}_{:=\mathrm{G}_{4}}+\mu\Big|(r^2\rho_0)^\frac{1}{2}|U|^\frac{p-2}{2}\big(D_\eta U,\frac{U}{\eta}\big)\Big|_2^2.
\end{aligned}   
\end{equation}

\smallskip
\textbf{2.} To estimate $\mathrm{G}_4$, we can first obtain from \eqref{eq:eta}, the fact that $\rho_0^\beta\sim 1-r$, Lemma \ref{lemma-low jacobi 1}, and the H\"older and Young inequalities that 
\begin{equation}\label{G2:n=3}
\begin{aligned}
\mathrm{G}_{4} &\leq \underline{\int_0^\frac{1}{2} \frac{(r^2\rho_0)^{2\gamma-1}}{(\eta^2\eta_r)^{2\gamma-2}}|U|^{p-2}\,\mathrm{d}r}_{:=\mathrm{G}_{4,1}} +\Big(\int_\frac{1}{2}^1\frac{(r^2\rho_0)^{p\gamma-p+1}}{(\eta^2\eta_r)^{p\gamma-p}}\,\mathrm{d}r\Big)^\frac{2}{p}\big|(r^2\rho_0)^\frac{1}{p}U\big|_p^{p-2}\\
&\leq \mathrm{G}_{4,1} +C(p,T)\Big(\int_\frac{1}{2}^1(r^2\rho_0)^{p\gamma-p+1}\,\mathrm{d}r\Big)^\frac{2}{p}\big|(r^2\rho_0)^\frac{1}{p}U\big|_p^{p-2}\\
&\leq \mathrm{G}_{4,1}+\big|(r^2\rho_0)^\frac{1}{p}U\big|_p^{p} +C(p,T).
\end{aligned}
\end{equation}

For $\mathrm{G}_{4,1}$, it follows from \eqref{eq:eta}, Lemma \ref{lemma-near depth}, and the H\"older and Young inequalities that, for any $\vartheta\in [0,1]$ and $\varepsilon\in (0,1)$,
\begin{equation}\label{g2,1-xiao}
\begin{aligned}
\mathrm{G}_{4,1}&=\int_0^\frac{1}{2} \frac{(r^2\rho_0)^{2\gamma-1-\frac{p-2}{p}}}{\eta^{4\gamma-4-\frac{\vartheta(2p-4)}{p}}\eta_r^{2\gamma-2}}\Big(\big(\frac{r^2\rho_0}{\eta^2}\big)^{\frac{p-2}{p}}|U|^{p-2}\Big)^{\vartheta}\Big((r^2\rho_0)^{\frac{p-2}{p}}|U|^{p-2}\Big)^{1-\vartheta}\,\mathrm{d}r\\
&\leq \Big(\int_0^\frac{1}{2} \eta^{2+\vartheta(p-2)}\eta_r\varrho^{p\gamma-p+1}\,\mathrm{d}r\Big)^\frac{2}{p}\Big|\big(\frac{r^2\rho_0}{\eta^2}\big)^\frac{1}{p}U\Big|_p^{\vartheta(p-2)}\big|(r^2\rho_0)^\frac{1}{p}U\big|_p^{(1-\vartheta)(p-2)}\\
&\leq \big|\zeta \eta^\frac{2+\vartheta(p-2)}{p\gamma-p} \varrho\big|_\infty^{2\gamma-2} \big|\zeta \eta_r\varrho\big|_1^\frac{2}{p}\Big|\big(\frac{r^2\rho_0}{\eta^2}\big)^\frac{1}{p}U\Big|_p^{\vartheta(p-2)}\big|(r^2\rho_0)^\frac{1}{p}U\big|_p^{(1-\vartheta)(p-2)}\\
&\leq C(\varepsilon,p,T)\big|\zeta \eta^\frac{2+\vartheta(p-2)}{p\gamma-p} \varrho\big|_\infty^{p\gamma-p}+ \varepsilon\Big|\big(\frac{r^2\rho_0}{\eta^2}\big)^\frac{1}{p}U\Big|_p^{\vartheta p}\big|(r^2\rho_0)^\frac{1}{p}U\big|_p^{(1-\vartheta)p}. 
\end{aligned}
\end{equation}

In order to apply Lemma \ref{lemma-near depth} to the first term on the right-hand side of \eqref{g2,1-xiao}, for each $\gamma\in (1,2]$, we need to choose suitable $(p,\vartheta)$, which depend only on $\gamma$, such that 
\begin{equation}\label{dengjia0}
1\leq \frac{2+\vartheta(p-2)}{p\gamma-p}\leq 2.
\end{equation}
Setting $\vartheta=\gamma-1\in (0,1]$ above, then 
\begin{equation*}
\eqref{dengjia0}\iff 0\leq \frac{4-2\gamma}{p\gamma-p}\leq 1,
\end{equation*}
which holds for all 
\begin{equation}\label{p-fanwei}
p\in [p_0(\gamma),\infty) \qquad \text{with $p_0(\gamma):=\max\big\{2,\frac{4-2\gamma}{\gamma-1}\big\}$}.
\end{equation}
This implies that \eqref{dengjia0} holds for $\vartheta=\gamma-1$, any $\gamma\in (1,2]$,
and any $p\in [p_0(\gamma),\infty)$. Consequently, based on \eqref{g2,1-xiao}, the above discussion, Lemma \ref{lemma-near depth}, and the Young inequality, for any $\varepsilon\in (0,1)$, $\gamma\in (1,2]$, and $p\in [p_0(\gamma),\infty)$ with $p_0(\gamma)$ defined in \eqref{p-fanwei}, we have
\begin{equation}\label{g2,1-xiao-de}
\mathrm{G}_{4,1}
\leq C_0\big|(r^2\rho_0)^\frac{1}{p}U\big|_p^{p}+\varepsilon\Big|\big(\frac{r^2\rho_0}{\eta^2}\big)^\frac{1}{p}U\Big|_p^{p}+C(\varepsilon,p,T).
\end{equation}

Substituting \eqref{g2,1-xiao-de} into \eqref{G2:n=3} gives that, for all $\varepsilon\in (0,1)$,
\begin{equation}\label{g3above}
\mathrm{G}_{4} \leq C_0\big|(r^2\rho_0)^\frac{1}{p}U\big|_p^{p}+\varepsilon\Big|\big(\frac{r^2\rho_0}{\eta^2}\big)^\frac{1}{p}U\Big|_p^{p}+C(\varepsilon,p,T). 
\end{equation}

\smallskip
\textbf{3.} Combining \eqref{dt-G2,n3} and \eqref{g3above}, then choosing a suitable small $\varepsilon\in(0,1)$ and applying the Gr\"onwall inequality to the resulting inequality, we have  
\begin{equation*}
\big|(r^2\rho_0)^\frac{1}{p}U(t)\big|_p^p+\int_0^t\Big|(r^2\rho_0)^\frac{1}{2}|U|^\frac{p-2}{2}\big(D_\eta U,\frac{U}{\eta}\big)\Big|_2^2\,\mathrm{d}s\leq C(p,T)
\end{equation*}
for all $p\in [p_0(\gamma),\infty)$ with $p_0(\gamma)$ defined in \eqref{p-fanwei}. Finally, we derive from Lemma \ref{lemma-basic energy} and the interpolation that the above inequality holds for all $p\in[2,\infty)$.
\end{proof}

Based on Lemma \ref{lemma-u Lp-n3}, we obtain the following interior $L^p$-energy estimates for $V$.
\begin{Lemma}\label{lemma-v-n=3}
Let $n=3$ and $\gamma\in (1,2]$. Then, for any $p\in [2,\infty)$, there exists a constant $C(p,T)>0$ such that, for all $t\in[0,T]$,
\begin{equation*}
\big|(\zeta r^2\rho_0)^\frac{1}{p}V(t)\big|_p^p\leq C(p,T).
\end{equation*}
\end{Lemma}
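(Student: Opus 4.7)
The plan is to adapt the proof of Lemma \ref{lemma-v-n=2} to three dimensions, with the weight $r\rho_0$ replaced by $r^2\rho_0$ and $\eta\eta_r\varrho$ by $\eta^2\eta_r\varrho$ throughout. Concretely, I would multiply the transport equation \eqref{eq:v} by $\zeta r^2\rho_0|V|^{p-2}V$ and integrate over $I$; invoking the identity $\eta^2\eta_r\varrho = r^2\rho_0$ from \eqref{eq:eta}, this yields
\begin{equation*}
\frac{1}{p}\frac{\mathrm{d}}{\mathrm{d}t}\big|(\zeta r^2\rho_0)^{\frac{1}{p}}V\big|_p^p + \frac{A\gamma}{2\mu}\big|(\zeta\eta^2\eta_r\varrho^\gamma)^{\frac{1}{p}}V\big|_p^p = \frac{A\gamma}{2\mu}\int_0^1 \zeta\eta^2\eta_r\varrho^\gamma |V|^{p-2}VU\,\mathrm{d}r.
\end{equation*}

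The next step is to bound the cross term on the right using Young's inequality applied to the factorization $\varrho^\gamma V^{p-1}U = \varrho\cdot V^{p-1}\cdot\varrho^{\gamma-1}U$ with conjugate exponents $\tfrac{p}{p-1}$ and $p$. This produces a $V^p$-contribution proportional to $|(\zeta r^2\rho_0)^{1/p}V|_p^p$ (to be absorbed via Gr\"onwall) together with the $U^p$-contribution $C(p)\int \zeta r^2\rho_0\varrho^{p\gamma-p}|U|^p\,\mathrm{d}r$, after using $\eta^2\eta_r\varrho=r^2\rho_0$ once more. Exploiting that $\zeta_{5/8}\equiv 1$ on $\mathrm{supp}(\zeta)$ and $\zeta\leq 1$, I extract a weighted $L^\infty$-factor to obtain
\begin{equation*}
\int_0^1 \zeta r^2\rho_0\varrho^{p\gamma-p}|U|^p\,\mathrm{d}r \leq \big|\zeta_{5/8}\eta^{\frac{2}{p\gamma-p}}\varrho\big|_\infty^{p\gamma-p}\cdot \big|(r^2\rho_0/\eta^2)^{\frac{1}{p}}U\big|_p^p.
\end{equation*}
For $p$ such that $2/(p\gamma-p)$ lies in the admissible range $[1,2]$ of Lemma \ref{lemma-near depth} in three dimensions (i.e.\ $p\in[\max\{2,1/(\gamma-1)\},\,2/(\gamma-1)]$), this $L^\infty$-factor is uniformly bounded in $t$, while Lemma \ref{lemma-u Lp-n3} supplies an $L^1_t$-bound on $|(r^2\rho_0/\eta^2)^{1/p}U|_p^p$ via the dissipation estimate.

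Applying Gr\"onwall's inequality then yields the claim in this restricted $p$-range; the remaining values of $p\in[2,\infty)$ are recovered by combining with the $L^2$-bound of Lemma \ref{near-BD} through interpolation, exactly as in the concluding step of Lemma \ref{lemma-v-n=2}. I expect the main technical subtlety, compared with the two-dimensional case, to be that the admissible exponent window of Lemma \ref{lemma-near depth} for $n=3$ is bilaterally bounded (namely $q_2\in[1,2]$ rather than $q_2\in(0,1]$), so the direct energy estimate alone only covers a bounded $p$-interval; reaching all $p\in[2,\infty)$ therefore relies on the interpolation with Lemma \ref{near-BD}, together, when necessary for very large $p$, with the ODE representation \eqref{V-solution} of $V$ driven by the already-established $L^p$-bounds of $U$ from Lemma \ref{lemma-u Lp-n3}.
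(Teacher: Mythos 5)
You have correctly set up the energy identity and correctly anticipated that the direct Young inequality with conjugate exponents $\frac{p}{p-1}$ and $p$ breaks down for large $p$: after extracting $\big|\zeta_{\frac{5}{8}}\eta^{2/(p(\gamma-1))}\varrho\big|_\infty$, Lemma~\ref{lemma-near depth} (with $n=3$) needs $2/(p(\gamma-1))\in[1,2]$, i.e.\ $p\leq 2/(\gamma-1)$. However, neither of your two proposed remedies closes this gap.

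Interpolation with the $L^2$-bound from Lemma~\ref{near-BD} only gives $L^p$ for $p$ \emph{between} $2$ and $2/(\gamma-1)$; it cannot push the estimate above the upper endpoint of the window the direct argument already covers. And the fallback via the ODE representation~\eqref{V-solution} runs into exactly the same obstruction: the Minkowski integral inequality reduces the task to bounding $\big|(\zeta r^2\rho_0)^{1/p}\varrho^{\gamma-1}U\big|_p$, and rewriting $(\zeta r^2\rho_0\varrho^{p(\gamma-1)})^{1/p}=(\zeta\eta^{2/(p(\gamma-1))}\varrho)^{\gamma-1}\big(\frac{r^2\rho_0}{\eta^2}\big)^{1/p}$ (up to cutoff bookkeeping) again requires $2/(p(\gamma-1))\in[1,2]$ to invoke Lemma~\ref{lemma-near depth}. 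An unweighted $L^\infty$-bound on $\varrho$ near the origin is not available at this stage — Lemma~\ref{lemma-bound depth} is proved \emph{after} the present lemma and in fact relies on it — so pulling $\varrho^{\gamma-1}$ out of the $L^p$-norm directly is not licensed.

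The missing ingredient is the $\vartheta$-parameter splitting used in the paper: instead of the two-factor Young inequality, one factors the cross term as
\begin{equation*}
\zeta\eta^2\eta_r\varrho^{\gamma}|V|^{p-1}|U|
=\big(\zeta\eta^2\eta_r\varrho^{(\gamma-1)(\vartheta p+1-\vartheta)+1}\big)^{1/p}|U|
\cdot\big((\zeta\eta^2\eta_r\varrho)^{1/p}|V|\big)^{\vartheta(p-1)}
\cdot\big((\zeta\eta^2\eta_r\varrho^\gamma)^{1/p}|V|\big)^{(1-\vartheta)(p-1)}
\end{equation*}
for a free parameter $\vartheta\in(0,1)$, and then applies a three-factor Young inequality with exponents $p$, $\frac{p}{\vartheta(p-1)}$, $\frac{p}{(1-\vartheta)(p-1)}$. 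The second factor produces the Gr\"onwall term $\big|(\zeta r^2\rho_0)^{1/p}V\big|_p^p$ and the third is absorbed into the dissipation $\big|(\zeta\eta^2\eta_r\varrho^\gamma)^{1/p}V\big|_p^p$; what remains is the $U$-term, now weighted by $\varrho^{(\gamma-1)(\vartheta p+1-\vartheta)+1}$. The crucial gain is that $(\gamma-1)(\vartheta p+1-\vartheta)$ can be kept in $[1,2]$ for \emph{any} $p\geq\max\{2,1/(\gamma-1)\}$ by choosing $\vartheta=\vartheta_0(\gamma,p)$ appropriately (in particular $\vartheta\to 0$ as $p\to\infty$), so that Lemma~\ref{lemma-near depth} applies across the full range. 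Your $\vartheta=1$ (two-factor Young) specialization forfeits this freedom, and it is precisely this freedom — not interpolation or the ODE formula — that makes the full range $p\in[2,\infty)$ reachable.
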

\begin{proof}
We divide the proof into three steps.

\smallskip
\textbf{1.} Let $n=3$ and $\gamma\in (1,2]$. We first restrict the parameter $p$ such that
\begin{equation}\label{p-par}
\max\big\{2,\frac{1}{\gamma-1}\big\}\leq p<\infty.
\end{equation}
Multiplying \eqref{eq:v} by $\zeta r^2\rho_0 |V|^{p-2}V$ gives
\begin{equation*}
\frac{1}{p}(\zeta r^2\rho_0|V|^p)_t+\frac{A\gamma}{2\mu}\zeta \eta^2\eta_r \varrho^{\gamma}|V|^p=\frac{A\gamma}{2\mu}\zeta \eta^2 \eta_r\varrho^{\gamma}|V|^{p-2}VU.
\end{equation*}
Then integrating the above over $I$, together with the Young inequality, gives that, for all $\vartheta\in (0,1)$,
\begin{equation}\label{dt-v}
\begin{aligned}
&\,\frac{1}{p}\frac{\mathrm{d}}{\mathrm{d}t}\big|(\zeta r^2\rho_0)^\frac{1}{p}V\big|_p^p+\frac{A\gamma}{2\mu}\big|(\zeta \eta^2\eta_r\varrho^{\gamma})^\frac{1}{p}  V\big|_p^p\\
&\leq\frac{A\gamma}{2\mu} \int_0^1  \big(\zeta \eta^2\eta_r\varrho^{(\gamma-1)(\vartheta p+1-\vartheta)+1}\big)^\frac{1}{p} |U| \\
&\qquad\quad \qquad \times \big((\zeta \eta^2 \eta_r\varrho)^\frac{1}{p}|V|\big)^{\vartheta(p-1)}\big((\zeta \eta^2 \eta_r\varrho^\gamma)^\frac{1}{p}|V|\big)^{(1-\vartheta)(p-1)}\,\mathrm{d}r\\[4pt]
&\leq C(p)\underline{\Big|\zeta_{\frac{5}{8}}\eta^\frac{2}{(\gamma-1)(\vartheta p+1-\vartheta)}\varrho\Big|_\infty}^{\!\!\!\!\!\!(\gamma-1)(\vartheta p+1-\vartheta)}_{:=\mathrm{G}_5} \Big|\big(\frac{r^2\rho_0}{\eta^2}\big)^\frac{1}{p}U\Big|_p^p\\
&\quad +C(p)\big|(\zeta r^2\rho_0)^\frac{1}{p}V\big|_p^{p}+\frac{A\gamma}{4\mu}\big|(\zeta \eta^2\eta_r\varrho^{\gamma})^\frac{1}{p}  V\big|_p^{p}.
\end{aligned}
\end{equation}

\smallskip
\textbf{2.} Now we estimate $\mathrm{G}_5$. In order to apply Lemma \ref{lemma-near depth} to $\mathrm{G}_5$, for each $\gamma\in (1,2]$ and $p$ satisfying \eqref{p-par}, we need to choose suitable $\vartheta$ depending only on $(\gamma,p)$ such that
\begin{equation*}
1\leq \frac{2}{(\gamma-1)(\vartheta p+1-\vartheta)}\leq 2,
\end{equation*}
which is equivalent to showing that there exists $\vartheta=\vartheta(\gamma,p)$ satisfying
\begin{equation}\label{equiv1}
p\in [p_*(\vartheta;\gamma),p^*(\vartheta;\gamma)] \quad\, \text{with } p_*(\vartheta;\gamma):=1+\frac{2-\gamma}{\vartheta(\gamma-1)} \text{ and } p^*(\vartheta;\gamma):=1+\frac{3-\gamma}{\vartheta(\gamma-1)}.
\end{equation}
A direct calculation shows that $p_*(\vartheta;\gamma)$ and $p^*(\vartheta;\gamma)$ are both strictly decreasing with respect to $\vartheta$ for each $\gamma\in (1,2]$, and
\begin{equation*}
\lim_{\vartheta\to 0}p^*(\vartheta;\gamma)=\lim_{\vartheta\to 0}p_*(\vartheta;\gamma) =\infty,\quad  \lim_{\vartheta\to 1}p_*(\vartheta;\gamma)=\frac{1}{\gamma-1},\quad \lim_{\vartheta\to 1}p^*(\vartheta;\gamma)=\frac{2}{\gamma-1}.
\end{equation*}
Since $\gamma\in (1,2]$ and $p$ satisfying \eqref{p-par}, for each $(\gamma,p)$, we can fix 
\begin{equation*}
\vartheta=\vartheta_0=\vartheta_0(\gamma,p)\in (0,1)
\end{equation*}
such that \eqref{equiv1} holds, and then we can apply Lemma \ref{lemma-near depth} to $\mathrm{G}_5$ to obtain
\begin{equation}\label{g4}
\mathrm{G}_5\leq C(p,T) \qquad \text{for all $t\in[0,T]$}.
\end{equation}

\smallskip
\textbf{3.} Substituting \eqref{g4} into \eqref{dt-v} and applying the Gr\"onwall inequality to the resulting inequality, along with Lemma \ref{lemma-u Lp-n3}, yield that, for all $t\in[0,T]$ and $p$ satisfying \eqref{p-par},
\begin{equation}\label{vabove3}
\big|(\zeta r^2\rho_0)^\frac{1}{p}V(t)\big|_p\leq C(p,T)\big|(\zeta r^2\rho_0)^\frac{1}{p}v_0\big|_p\leq C(p,T).
\end{equation}
Finally, it follows from Lemma \ref{near-BD} and the interpolation that \eqref{vabove3} holds for all $p\in[2,\infty)$.
\end{proof}

Now we can prove Lemma \ref{lp-uv} for the case when $n=3$ and $\gamma\in(1,2]$.
\begin{proof}[Proof of Lemma \ref{lp-uv}]
Combining Lemmas \ref{lemma-u Lp-n3}--\ref{lemma-v-n=3}, we obtain the desired estimates of Lemma \ref{lp-uv} when $n=3$ and $\gamma\in (1,2]$.
\end{proof}

\subsubsection{Case when $n=3$ and $\gamma\in(2,3)$}\label{subsub-3}
We first consider the $L^p$-energy estimates for $U$.
\begin{Lemma}\label{lemma-u Lp}
Let $n=3$ and $\gamma\in (2,3)$. Then, for any $p\in[2,\infty)$ and $\varepsilon\in (0,1)$, there exists a constant $C(\varepsilon,p,T)>0$ such that, for all $t\in [0,T]$,  
\begin{equation}\label{eq,u-lp}
\begin{aligned}
&\frac{\mathrm{d}}{\mathrm{d}t}\big|(r^2\rho_0)^\frac{1}{p}U\big|_p^p+\mu \Big|(r^2\rho_0)^\frac{1}{2}|U|^\frac{p-2}{2}\big(D_\eta U,\frac{U}{\eta}\big)\Big|_2^2 \\
&\leq C(p)\big|(r^2\rho_0)^\frac{1}{p}U\big|_p^p+\varepsilon\big|(\zeta\eta^2\eta_r\varrho^\gamma)^\frac{1}{p}V\big|_p^p+C(\varepsilon,p,T).
\end{aligned}
\end{equation}
\end{Lemma}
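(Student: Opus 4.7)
The plan is to run the argument of Lemma~\ref{lemma-u Lp-n3} up to the splitting of the pressure integral and then develop an iterative integration-by-parts scheme to handle the interior piece, since when $\gamma>2$ the estimates of Lemma~\ref{lemma-near depth} no longer apply directly to that integral for general $p$.

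First, I would multiply $\eqref{eq:VFBP-La-eta}_1$ by $\eta^2\eta_r|U|^{p-2}U$ and integrate over $I$, which reproduces \eqref{dt-G2,n3} verbatim; it then suffices to bound
$$\mathrm{G}_{4}=\Big|\frac{(r^2\rho_0)^{\gamma-1/2}}{(\eta^2\eta_r)^{\gamma-1}}|U|^{(p-2)/2}\Big|_2^2.$$
The exterior contribution of $\mathrm{G}_4$ on $[\tfrac12,1]$ is treated exactly as in \eqref{G2:n=3} via Lemma~\ref{lemma-low jacobi 1} and $\rho_0\leq\cK_2$, yielding at most $|(r^2\rho_0)^{1/p}U|_p^p+C(p,T)$. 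For the interior part, the H\"older splitting \eqref{g2,1-xiao} with $\vartheta=1$ reduces matters to controlling
$$\mathrm{A}_{(1)}:=\int_0^{1/2}\eta^p\,\eta_r\,\varrho^{p\gamma-p+1}\,\mathrm{d}r.$$

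The principal obstacle is that, for $\gamma>2$, the power of $\eta$ in $\mathrm{A}_{(1)}$ relative to that of $\varrho$ falls outside the admissible ranges $q_1\in[1,3]$, $q_2\in[1,2]$ of Lemma~\ref{lemma-near depth}, so no direct bound is available. To circumvent this, I would exploit the identity $2\mu\varrho_r=\eta_r\varrho(V-U)$ that follows from \eqref{v-expression}, together with $\eta^{a}\eta_r=\frac{1}{a+1}(\eta^{a+1})_r$, and integrate by parts; each step shifts a derivative from $\varrho$ onto $\eta^{a+1}$, trading one power of $\varrho$ for a higher power of $\eta$, with vanishing boundary terms (thanks to $\zeta(r)=0$ near $r=1$ and $\eta|_{r=0}=0$ for $a+1>0$) and residual terms linear in $V-U$, which are split by H\"older and Young. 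Iterating this once gives, for every $\varepsilon\in(0,1)$ and $j\in\mathbb{N}^*$, a schematic estimate of the form
$$\mathrm{A}_{(1)}\leq C(\varepsilon,p,j,T)\bigl(1+\mathrm{I}_{(a_j,b_j)}\bigr)+\varepsilon\,\bigl|\zeta_{\frac{5}{8}}\eta^{2/(\gamma-1)}\varrho\bigr|_\infty^{(\gamma-1)^2}\Big|\bigl(\tfrac{r^2\rho_0}{\eta^2}\bigr)^{1/p}U\Big|_p^p+\varepsilon\,\bigl|(\zeta\eta^2\eta_r\varrho^\gamma)^{1/p}V\bigr|_p^p,$$
where $\mathrm{I}_{(a,b)}:=\int_0^1\zeta\,\eta^{a}\,\eta_r\,\varrho^{b}\,\mathrm{d}r$ and $(a_j,b_j)$ are the strictly increasing sequences $a_j=2(p-1)(p/(p-1))^j-(p-2)$ and $b_j=(\gamma-1)(p-1)(p/(p-1))^j+\gamma$, starting from $(a_0,b_0)=(p,p\gamma-p+1)$.

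The crucial observation is that $a_j/(b_j-1)\to 2/(\gamma-1)\in(1,2)$ since $\gamma\in(2,3)$, so for a sufficiently large $j=j_0$ the exponents $(a_{j_0},b_{j_0})$ lie in the admissible range of Lemma~\ref{lemma-near depth}, giving $\mathrm{I}_{(a_{j_0},b_{j_0})}\leq C(p,T)$. Likewise, because $2/(\gamma-1)\in(1,2)$ for $\gamma\in(2,3)$, the $L^\infty$ prefactor $|\zeta_{\frac{5}{8}}\eta^{2/(\gamma-1)}\varrho|_\infty$ is uniformly bounded in time by the second estimate in Lemma~\ref{lemma-near depth}. Inserting these bounds back through the reduction of $\mathrm{G}_4$ and choosing $\varepsilon$ small enough to absorb the $|(r^2\rho_0/\eta^2)^{1/p}U|_p^p$ factor into the dissipation term $\mu|(r^2\rho_0)^{1/2}|U|^{(p-2)/2}(D_\eta U,U/\eta)|_2^2$ on the LHS produces \eqref{eq,u-lp}. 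The delicate points of the argument are the precise book-keeping of the iteration and the verification that every boundary contribution arising from the integration by parts vanishes, both of which are straightforward once the sequences $(a_j,b_j)$ are fixed.
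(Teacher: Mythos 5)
Your overall strategy mirrors the paper's proof of this lemma: the same energy inequality \eqref{dt-G2,n3}, the same interior/exterior splitting of $\mathrm{G}_4$ with $\vartheta=1$, the same integration-by-parts iteration producing the sequences $(a_j,b_j)$, and the same use of Lemma~\ref{lemma-near depth} to close. One remark on the final substep, which the two arguments settle differently, and where your write-up as it stands has a subtle gap.

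The paper's verification that $\mathrm{I}_{(a_{j_0},b_{j_0})}\leq C(p,T)$ for some $j_0$ rests on the factorization $\eta^{a}\varrho^{b}\eta_r=(\eta\varrho)^{b-1}\eta^{a-b+1}\eta_r\varrho$ with $q_2=1$, so it needs $a_{j_0+1}-b_{j_0+1}+1\in[0,2]$; the covering-intervals argument \eqref{baohan}--\eqref{baohan2} shows such a \emph{specific} $j_0$ exists. You instead claim that ``for sufficiently large $j$'' the pair $(a_j,b_j)$ is admissible, based on $a_j/(b_j-1)\to 2/(\gamma-1)\in(1,2)$. That asymptotic limit is correct, but with the $q_2=1$ factorization the conclusion is \emph{false} for large $j$: one has $a_j-b_j+1=(3-\gamma)(p-1)(\tfrac{p}{p-1})^j-(p-2)-\gamma\to+\infty$, so the exponent $q_1=a_j-b_j+2$ eventually leaves $[1,3]$. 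Your observation can be salvaged, but only by choosing the $L^\infty$ weight depending on $j$: write $\eta^{a_j}\varrho^{b_j}\eta_r=(\eta^{q_2}\varrho)^{b_j-1}\eta^{a_j-q_2(b_j-1)}\eta_r\varrho$ with $q_2=a_j/(b_j-1)$, which lies in $(1,2)$ for $j$ large (since the ratio increases to $2/(\gamma-1)$ from below), and makes the remaining $\eta$-exponent exactly $0\in[0,2]$. With that factorization spelled out, your ``sufficiently large $j$'' argument is a genuine alternative to the paper's covering lemma and is arguably cleaner. Two smaller points: the $L^\infty$ prefactor you wrote, $|\zeta_{\frac{5}{8}}\eta^{2/(\gamma-1)}\varrho|_\infty^{(\gamma-1)^2}$, appears to have inherited a slip from the paper's overview section; the actual exponent coming out of \eqref{aj''} is $\gamma-1$ (either way it is bounded). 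And the iteration does not produce a $V-U$ term at each step — it produces the dissipation factor $|(\zeta\eta^2\eta_r)^{1/p}D_\eta(\varrho^{\gamma/p})|_p^p$ which is only recast in terms of $V-U$ once, at the end.
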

\begin{proof}
We divide the proof into three steps.

\smallskip
\textbf{1.} Let $n=3$, $\gamma\in (2,3)$, and $p\in [2,\infty)$. First, repeating the same calculations as in Step 1 of the proof for Lemma \ref{lemma-u Lp-n3} gives
\begin{equation}\label{dt-G2}
\frac{1}{p}\frac{\mathrm{d}}{\mathrm{d}t}\big|(r^2\rho_0)^\frac{1}{p}U\big|_p^p+\mu\Big|(r^2\rho_0)^\frac{1}{2}|U|^\frac{p-2}{2}\big(D_\eta U,\frac{U}{\eta}\big)\Big|_2^2\leq C(p)\mathrm{G}_{4},  
\end{equation}
where $\mathrm{G}_4$ is defined in \eqref{dt-G2,n3} of the proof for Lemma \ref{lemma-u Lp-n3}.

\smallskip
\textbf{2.} Now, we estimate $\mathrm{G}_{4}$. Repeating  calculations \eqref{G2:n=3} and $\eqref{g2,1-xiao}_1$--$\eqref{g2,1-xiao}_2$ in Step 2 of the proof for Lemma \ref{lemma-u Lp-n3} and taking $\vartheta=1$ in $\eqref{g2,1-xiao}_2$, we have
\begin{equation}\label{g2,1-da0}
\mathrm{G}_{4}\leq \mathrm{G}_{4,1}+\big|(r^2\rho_0)^\frac{1}{p}U\big|_p^p+C(p,T),
\end{equation}
and, for any $\varepsilon\in(0,1)$,
\begin{equation}\label{g2,1-da}
\begin{aligned}
\mathrm{G}_{4,1}&\leq \Big(\int_0^\frac{1}{2} \eta^{p}\eta_r\varrho^{p\gamma-p+1}\,\mathrm{d}r\Big)^\frac{2}{p}\Big|\big(\frac{r^2\rho_0}{\eta^2}\big)^\frac{1}{p}U\Big|_p^{p-2}\\
&= C(\varepsilon,p)\int_0^\frac{1}{2} \zeta\eta^{p}\eta_r\varrho^{p\gamma-p+1}\,\mathrm{d}r+  \varepsilon\Big|\big(\frac{r^2\rho_0}{\eta^2}\big)^\frac{1}{p}U\Big|_p^{p}.   
\end{aligned}
\end{equation}
Then we choose 
\begin{equation*}
\begin{aligned}
(a_0,b_0)&=(p,p\gamma-p+1),\\
(a_1,b_1)&=(\frac{p}{p-1}a_0+\frac{p-2}{p-1},\frac{p}{p-1}b_0-\frac{\gamma}{p-1}), 
\end{aligned}
\end{equation*}
and denote
\begin{equation*}
\mathrm{I}_{(k,\ell)}:=\int_0^1\zeta \eta^{k}\eta_r\varrho^{\ell}\,\mathrm{d}r.
\end{equation*}
Since $2b_0\geq a_0+1$, it follows from \eqref{eq:eta}, the fact that $\rho_0^\beta\sim 1-r$, Lemma \ref{lemma-low jacobi 1}, integration by parts, and the Young inequality that, for all $\varepsilon_0\in(0,1)$,
\begin{equation}\label{a0}
\begin{aligned}
\mathrm{I}_{(a_0,b_0)}&= -\frac{1}{a_0+1}\int_0^1 \zeta_r \eta^{a_0+1} \varrho^{b_0}\,\mathrm{d}r -\frac{b_0p}{\gamma(a_0+1)}\int_0^1 \zeta\eta^{a_0+1} \varrho^{b_0-\frac{\gamma}{p}} (\varrho^\frac{\gamma}{p})_r\,\mathrm{d}r\\
&\leq C(a_0)\int_\frac{1}{2}^\frac{5}{8} \frac{r^{2b_0}\rho_0^{b_0}}{\eta^{2b_0-a_0-1}\eta_r^{b_0}}\,\mathrm{d}r +\varepsilon_0 \big|(\zeta\eta^2\eta_r)^\frac{1}{p}D_\eta(\varrho^\frac{\gamma}{p})\big|_p^p+C(p,a_0,b_0) \varepsilon_0^{-\frac{1}{p-1}}\mathrm{I}_{(a_1,b_1)} \\
&\leq  C(a_0,b_0,T)+\varepsilon_0\big|(\zeta\eta^2\eta_r)^\frac{1}{p}D_\eta(\varrho^\frac{\gamma}{p})\big|_p^p+ C(p,a_0,b_0) \varepsilon_0^{-\frac{1}{p-1}}\mathrm{I}_{(a_1,b_1)} .
\end{aligned}    
\end{equation}

Next, define two sequences $(\{a_j\}_{j\in\NN},\{b_j\}_{j\in\NN})$ as follows:
\begin{equation}\label{ditui}
\begin{aligned}
a_{j+1}&=\frac{p}{p-1}a_j+\frac{p-2}{p-1} \qquad \text{with } a_0=p,\\
b_{j+1}&=\frac{p}{p-1}b_j-\frac{\gamma}{p-1} \qquad  \text{with } b_0=p\gamma-p+1.
\end{aligned}
\end{equation}
Clearly, we can solve for $(a_j,b_j)$ from \eqref{ditui} that, for $j\in\mathbb{N}$,
\begin{equation}
a_{j}=2(p-1)\big(\frac{p}{p-1}\big)^j-(p-2),\quad\,\, b_{j}=(\gamma-1)(p-1)\big(\frac{p}{p-1}\big)^j+\gamma,
\end{equation}
and check that $2b_j\geq a_j+1$ for $j\in \NN$, $\gamma\in (2,3)$, and $p\in[2,\infty)$. Following the same argument as in \eqref{a0} thus implies that, for all $\varepsilon_j\in (0,1)$ and $j\in\NN$,
\begin{equation*}
\mathrm{I}_{(a_j,b_j)} \leq  C(a_j,b_j,T)+\varepsilon_j\big|(\zeta\eta^2\eta_r)^\frac{1}{p}D_\eta(\varrho^\frac{\gamma}{p})\big|_p^p+ C(p,a_j,b_j) \varepsilon_j^{-\frac{1}{p-1}}\mathrm{I}_{(a_{j+1},b_{j+1})},
\end{equation*}
which, along with \eqref{a0}, yields   
\begin{equation} \label{aj}
\begin{aligned}
\mathrm{I}_{(a_0,b_0)}&\leq C(a_0,b_0,T)+\sum_{k=1}^j C(a_k,b_k,T)\prod_{\ell=0}^{k-1}C(p,a_\ell,b_\ell) \varepsilon_\ell^{-\frac{1}{p-1}} \\
&\quad +\Big(\varepsilon_0+\sum_{k=1}^j \varepsilon_k\prod_{\ell=0}^{k-1}C(p,a_\ell,b_\ell) \varepsilon_\ell^{-\frac{1}{p-1}}\Big)\big|(\zeta\eta^2\eta_r)^\frac{1}{p}D_\eta(\varrho^\frac{\gamma}{p})\big|_p^p  \\
&\quad+\Big(\prod_{\ell=0}^{j}C(p,a_\ell,b_\ell) \varepsilon_\ell^{-\frac{1}{p-1}}\Big)\mathrm{I}_{(a_{j+1},b_{j+1})} \\
&\leq  C(a_0,b_0,T)+\sum_{k=1}^j C(a_k,b_k,T)\prod_{\ell=0}^{k-1}C(p,a_\ell,b_\ell) \varepsilon_\ell^{-\frac{1}{p-1}}\\
&\quad +\Big(\varepsilon_0+\sum_{k=1}^j \varepsilon_k\prod_{\ell=0}^{k-1}C(p,a_\ell,b_\ell) \varepsilon_\ell^{-\frac{1}{p-1}}\Big)\big|(\zeta\eta^2\eta_r)^\frac{1}{p}D_\eta(\varrho^\frac{\gamma}{p})\big|_p^p\\
&\quad +\Big(\prod_{\ell=0}^{j}C(p,a_\ell,b_\ell) \varepsilon_\ell^{-\frac{1}{p-1}}\Big)\big|\zeta\eta\varrho\big|_\infty^{b_{j+1}-1}\big|\zeta\eta^{a_{j+1}-b_{j+1}+1} \eta_r \varrho\big|_1.
\end{aligned}
\end{equation}
Here, we need to check that Lemma \ref{lemma-near depth} is applicable to the last term of the right-hand side of the above. To this end, for each $\gamma\in (2,3)$ and $p\in[2,\infty)$, we need to show that
\begin{equation*}
\exists\, j\in \mathbb{N}, \text{ which depends only on $(p,\gamma)$, such that $a_{j+1}-b_{j+1}+1\in [0,2]$}.
\end{equation*}
After a direct calculation, this is equivalent to showing that, for each $\gamma\in (2,3)$ and $p\in[2,\infty)$,
\begin{equation}\label{baohan}
\gamma\in (2,3)\subset \bigcup_{j\in \mathbb{N}}\big[f_*(j;p),f^*(j;p)\big],
\end{equation}
where
\begin{equation*}
\begin{aligned}
f_*(j;p)&:=3-\frac{p+2}{(p-1)(\frac{p}{p-1})^{j+1}+1},\\
f^*(j;p)&:=3-\frac{p}{(p-1)(\frac{p}{p-1})^{j+1}+1}.
\end{aligned}    
\end{equation*}
Clearly, both $f_*(j;p)$ and $f^*(j;p)$ are increasing with respect to $j$ as $j\to\infty$, and 
\begin{equation*}
f_*(0;p)<2<f^*(0;p),\qquad  \lim_{j\to\infty}f_*(j;p)=\lim_{j\to\infty}f^*(j;p)=3.
\end{equation*}
Consequently, to obtain \eqref{baohan}, it suffices to show that, for any $j\in \mathbb{N}$,
\begin{equation*}
[f_*(j;p),f^*(j;p)]\cap [f_*(j+1;p),f^*(j+1;p)]\neq\varnothing,
\end{equation*}
or, equivalently, to show that $f_*(j+1;p)\leq f^*(j;p)$ for any $j\in\mathbb{N}$, {\it i.e.},
\begin{equation}\label{baohan2}
3-\frac{p+2}{(p-1)(\frac{p}{p-1})^{j+2}+1}\leq 3-\frac{p}{(p-1)(\frac{p}{p-1})^{j+1}+1} \qquad \text{for any $j\in\mathbb{N}$}.   
\end{equation}
Indeed, for $\gamma\in (2,3)$ and $p\in [2,\infty)$, a direct calculation gives 
\begin{equation*}
\begin{aligned}
\eqref{baohan2}&\iff (p+2)(p-1)\big(\frac{p}{p-1}\big)^{j+1}+p+2 \geq p^2\big(\frac{p}{p-1}\big)^{j+1} +p\\
&\iff  (p-2) \big(\frac{p}{p-1}\big)^{j+1}+2\geq 0,  
\end{aligned}
\end{equation*}
which thus implies claim \eqref{baohan2}. Therefore, for each $\gamma\in (2,3)$ and $p\in[2,\infty)$, we can set $j=j_0$ depending only on $(p,\gamma)$ in \eqref{aj} such that $a_{j_0+1}-b_{j_0+1}+1\in [0,2]$, and then obtain from Lemma \ref{lemma-near depth} that 
\begin{equation}\label{aj'}
\begin{aligned}
\mathrm{I}_{(a_0,b_0)}&\leq \Big(C(a_0,b_0,T)+\sum_{k=1}^{j_0} C(a_k,b_k,T)\prod_{\ell=0}^{k-1}C(p,a_\ell,b_\ell) \varepsilon_\ell^{-\frac{1}{p-1}} \Big)\\
&\quad +\Big(\varepsilon_0+\sum_{k=1}^{j_0} \varepsilon_k\prod_{\ell=0}^{k-1}C(p,a_\ell,b_\ell) \varepsilon_\ell^{-\frac{1}{p-1}}\Big)\big|(\zeta\eta^2\eta_r)^\frac{1}{p}D_\eta(\varrho^\frac{\gamma}{p})\big|_p^p\\
&\quad + \Big(\prod_{\ell=0}^{j_0} C(p,a_\ell,b_\ell) \varepsilon_\ell^{-\frac{1}{p-1}}\Big)C(a_{j_0+1},b_{j_0+1},T).  
\end{aligned}    
\end{equation}

Now, let $\tilde\varepsilon\in (0,1)$, and set 
\begin{equation*}
\varepsilon_0=\tilde\varepsilon,\qquad\,\,\,\, \varepsilon_k=\frac{\tilde\varepsilon}{j_0}\prod_{\ell=0}^{k-1}\frac{\varepsilon_\ell^\frac{1}{p-1}}{C(p,a_\ell,b_\ell)} \quad\, \text{for } 1\leq k\leq j_0.
\end{equation*}
Then we obtain from \eqref{eq:eta}, \eqref{v-expression}, \eqref{aj'}, $\gamma\in(2,3)$, and Lemma \ref{lemma-near depth} that
\begin{equation}\label{aj''}
\begin{aligned}
\mathrm{I}_{(a_0,b_0)}&\leq C(\tilde\varepsilon,p,T)  +\Big(\tilde\varepsilon+\sum_{k=1}^{j_0}\frac{\tilde\varepsilon}{j_0}\Big)\big|(\zeta\eta^2\eta_r)^\frac{1}{p}D_\eta(\varrho^\frac{\gamma}{p})\big|_p^p\\[-2pt]
&= C(\tilde\varepsilon,p,T)  +2\tilde\varepsilon \big|(\zeta\eta^2\eta_r)^\frac{1}{p}D_\eta(\varrho^\frac{\gamma}{p})\big|_p^p\\
&\leq C(\tilde\varepsilon,p,T)  +C(p)\tilde\varepsilon \big|(\zeta\eta^2\eta_r)^\frac{1}{p}\varrho^\frac{\gamma}{p}(U,V)\big|_p^p\\
&\leq C(\tilde\varepsilon,p,T)  +C(p)\tilde\varepsilon  \big|\zeta\eta^\frac{2}{\gamma-1}\varrho\big|_\infty^{\gamma-1} \big|(\eta_r\varrho)^\frac{1}{p}U\big|_p^p  +C(p)\tilde\varepsilon \big|(\zeta\eta^2\eta_r\varrho^\gamma)^\frac{1}{p}V\big|_p^p\\
&\leq C(\tilde\varepsilon,p,T)  +C(p,T)\tilde\varepsilon \Big|\big(\frac{r^2\rho_0}{\eta^2}\big)^\frac{1}{p}U\Big|_p^p+C(p)\tilde\varepsilon\big|(\zeta\eta^2\eta_r\varrho^\gamma)^\frac{1}{p}V\big|_p^p.    
\end{aligned}
\end{equation}
Substituting \eqref{aj''} into \eqref{g2,1-da} yields that, for all $\varepsilon,\tilde\varepsilon \in (0,1)$,
\begin{equation}\label{g2,1-da'''}
\mathrm{G}_{4,1}\leq \big(C(\varepsilon,p,T)\tilde\varepsilon +\varepsilon\big)\Big|\big(\frac{r^2\rho_0}{\eta^2}\big)^\frac{1}{p}U\Big|_p^p + C(\varepsilon,p)\tilde\varepsilon \big|(\zeta\eta^2\eta_r\varrho^\gamma)^\frac{1}{p}V\big|_p^p+C(\tilde\varepsilon,\varepsilon,p,T).           
\end{equation}
Then, for any $\varepsilon\in (0,1)$, setting $\tilde\varepsilon$ such that
\begin{equation*}
0<\tilde\varepsilon<\min\big\{\varepsilon,\frac{\varepsilon}{C(\varepsilon,p)},\frac{\varepsilon}{C(\varepsilon,p,T)}\big\}<1,
\end{equation*}
we thus obtain from the above and \eqref{g2,1-da'''} that, for all $\varepsilon\in (0,1)$,
\begin{equation}\label{g2,1-da'}
\mathrm{G}_{4,1} \leq 2\varepsilon\Big|\big(\frac{r^2\rho_0}{\eta^2}\big)^\frac{1}{p}U\Big|_p^{p}+\varepsilon \big|(\zeta\eta^2\eta_r\varrho^\gamma)^\frac{1}{p}V\big|_p^p+C(\varepsilon,p,T).
\end{equation}

Collecting \eqref{g2,1-da0} and \eqref{g2,1-da'} leads to the estimate of $\mathrm{G}_4$:
\begin{equation}\label{G2:n=3-2}
\mathrm{G}_4\leq \big|(r^2\rho_0)^\frac{1}{p}U\big|_p^p+2\varepsilon\Big|\big(\frac{r^2\rho_0}{\eta^2}\big)^\frac{1}{p}U\Big|_p^{p}+\varepsilon \big|(\zeta\eta^2\eta_r\varrho^\gamma)^\frac{1}{p}V\big|_p^p+C(\varepsilon,p,T).
\end{equation}

\smallskip
\textbf{3.} Collecting \eqref{dt-G2} and \eqref{G2:n=3-2}, and then setting $\varepsilon$ sufficiently small, we eventually obtain the desired estimates.
\end{proof}

Based on Lemma \ref{lemma-u Lp}, we obtain the following interior $L^p$-energy estimates for $V$.

\begin{Lemma}\label{lemma-v-lp}
Let $n=3$ and $\gamma\in (2,3)$. Then, for any $p\in[2,\infty)$, there exists a constant $C(p,T)>0$ such that, for all $t\in[0,T]$,
\begin{equation}\label{eq,v-lp}
\frac{\mathrm{d}}{\mathrm{d}t}\big|(\zeta r^2\rho_0)^\frac{1}{p}V\big|_p^p+\frac{pA\gamma}{4\mu}\big|(\zeta \eta^2\eta_r\varrho^{\gamma})^\frac{1}{p}  V\big|_p^p\leq C(p,T)\Big|\big(\frac{r^2\rho_0}{\eta^2}\big)^\frac{1}{p}U\Big|_p^p.
\end{equation}
\end{Lemma}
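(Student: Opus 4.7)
The plan is to follow the same testing strategy used for Lemma \ref{lemma-v-n=3}, but with a sharper Young split and a more direct handling of the weight $\zeta\eta^2\eta_r\varrho^\gamma$, so that the pointwise density bounds of Lemma \ref{lemma-near depth} can be applied cleanly in the regime $\gamma\in(2,3)$. First, I would multiply equation \eqref{eq:v} by $p\,\zeta r^2\rho_0|V|^{p-2}V$ and integrate over $I$. Using the mass identity $r^2\rho_0=\eta^2\eta_r\varrho$ from \eqref{eq:eta} (with $m=2$), this produces the balance
\begin{equation*}
\frac{\mathrm{d}}{\dt}\big|(\zeta r^2\rho_0)^{\frac{1}{p}}V\big|_p^p+\frac{pA\gamma}{2\mu}\big|(\zeta\eta^2\eta_r\varrho^{\gamma})^{\frac{1}{p}}V\big|_p^p=\frac{pA\gamma}{2\mu}\int_0^1\zeta\eta^2\eta_r\varrho^{\gamma}|V|^{p-2}VU\,\mathrm{d}r.
\end{equation*}

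Next, I would split the cross term on the right by the sharp Young inequality
\begin{equation*}
\zeta\eta^2\eta_r\varrho^{\gamma}|V|^{p-1}|U|\le\tfrac{1}{2}\,\zeta\eta^2\eta_r\varrho^{\gamma}|V|^p+C(p)\,\zeta\eta^2\eta_r\varrho^{\gamma}|U|^p,
\end{equation*}
so that exactly half of the dissipation term can be absorbed into the left-hand side, leaving the advertised coefficient $\tfrac{pA\gamma}{4\mu}$. The remaining task is to dominate $\big|(\zeta\eta^2\eta_r\varrho^{\gamma})^{\frac{1}{p}}U\big|_p^p$ by $\big|(r^2\rho_0/\eta^2)^{\frac{1}{p}}U\big|_p^p$. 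For this, I would factor the weight as
\begin{equation*}
\zeta\eta^2\eta_r\varrho^{\gamma}=\zeta\big(\eta^{\frac{2}{\gamma-1}}\varrho\big)^{\gamma-1}\cdot(\eta_r\varrho)\le\big|\zeta_{5/8}\,\eta^{\frac{2}{\gamma-1}}\varrho\big|_\infty^{\gamma-1}\cdot\frac{r^2\rho_0}{\eta^2},
\end{equation*}
where $\zeta\le\zeta_{5/8}$ and $\eta_r\varrho=r^2\rho_0/\eta^2$ have been used. Since $\gamma\in(2,3)$ yields $\tfrac{2}{\gamma-1}\in(1,2]$, which lies in the admissible range of Lemma \ref{lemma-near depth} for $n=3$, the $L^\infty$-norm on the right-hand side is bounded by $C(T)$, and \eqref{eq,v-lp} follows.

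The subtle point, and the reason this direct argument succeeds precisely for $\gamma\in(2,3)$, is the match between the exponent $\tfrac{2}{\gamma-1}$ and the window $[1,2]$ allowed by Lemma \ref{lemma-near depth}. A naive imitation of the $\vartheta$-split used in Lemma \ref{lemma-v-n=3} would force a pointwise bound at the exponent $\tfrac{2}{(\gamma-1)(\vartheta p+1-\vartheta)}$, which falls below $1$ for large $p$ as soon as $\gamma>2$, rendering Lemma \ref{lemma-near depth} inapplicable. By keeping the $V$-dissipation term on the left-hand side with a positive coefficient rather than moving it entirely to the right, one sidesteps this $p$-dependent splitting and needs only the single, $p$-independent pointwise bound $|\zeta_{5/8}\eta^{\frac{2}{\gamma-1}}\varrho|_\infty\le C(T)$. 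No iterative scheme of the kind used in Lemma \ref{lemma-u Lp} is required, which is consistent with the fact that \eqref{eq,v-lp} is an inequality rather than a closed $L^p$-estimate.
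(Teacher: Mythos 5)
Your proof is correct and follows essentially the same route as the paper: multiply \eqref{eq:v} by $\zeta r^2\rho_0|V|^{p-2}V$, integrate, split the cross term by Young's inequality so half the dissipation is absorbed, and then factor the remaining weight as $\zeta\eta^2\eta_r\varrho^\gamma=\zeta\big(\eta^{2/(\gamma-1)}\varrho\big)^{\gamma-1}(\eta_r\varrho)$ so that Lemma \ref{lemma-near depth} (with $q_2=\tfrac{2}{\gamma-1}\in(1,2]$ when $\gamma\in(2,3)$) supplies the $L^\infty$-bound. This matches the paper's argument step by step.

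One minor remark on your commentary (not on the proof itself): the $\vartheta$-split in Lemma \ref{lemma-v-n=3} does not actually break down for $\gamma>2$; rather, $\vartheta=0$ becomes admissible there (the constraint $(\gamma-1)(\vartheta(p-1)+1)\ge 1$ is automatic once $\gamma\ge 2$), while for $\gamma\in(1,2]$ one must take a strictly positive, $p$-dependent $\vartheta$ to keep the exponent $\le 2$. So the switch from Lemma \ref{lemma-v-n=3} to this lemma is not that a naive split would fail, but that the $\vartheta=0$ degeneration is precisely what works here, and the resulting bound is stated as a differential inequality because the closure (via combination with Lemma \ref{lemma-u Lp} and Gr\"onwall) happens afterward in the proof of Lemma \ref{lp-uv}.
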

\begin{proof}
Let $n=3$, $\gamma\in (2,3)$, and $p\in[2,\infty)$. Multiplying \eqref{eq:v} by $\zeta r^2\rho_0 |V|^{p-2}V$ leads to
\begin{equation*}
\frac{1}{p}(\zeta r^2\rho_0|V|^p)_t+\frac{A\gamma}{2\mu}\zeta \eta^2\eta_r \varrho^{\gamma}|V|^p=\frac{A\gamma}{2\mu}\zeta \eta^2 \eta_r\varrho^{\gamma}|V|^{p-2}VU.
\end{equation*}
Then, integrating the above over $I$, we obtain from Lemma \ref{lemma-near depth} and the Young inequality that
\begin{equation*}
\begin{aligned}
&\,\frac{1}{p}\frac{\mathrm{d}}{\mathrm{d}t}\big|(\zeta r^2\rho_0)^\frac{1}{p}V\big|_p^p+\frac{A\gamma}{2\mu}\big|(\zeta \eta^2\eta_r\varrho^{\gamma})^\frac{1}{p}  V\big|_p^p\leq \frac{A\gamma}{2\mu} \int_0^1 \zeta \eta^2 \eta_r\varrho^{\gamma}|V|^{p-1}|U|\,\mathrm{d}r\\
&\leq C(p)\big|(\zeta \eta^2\eta_r\varrho^{\gamma})^\frac{1}{p}  U\big|_p^p+\frac{A\gamma}{4\mu}\big|(\zeta \eta^2\eta_r\varrho^{\gamma})^\frac{1}{p}  V\big|_p^p\\
&\leq C(p)\big|\zeta_{\frac{5}{8}}\eta^\frac{2}{\gamma-1}\varrho\big|_\infty^{\gamma-1} \Big|\big(\frac{r^2\rho_0}{\eta^2}\big)^\frac{1}{p}U\Big|_p^p+\frac{A\gamma}{4\mu}\big|(\zeta \eta^2\eta_r\varrho^{\gamma})^\frac{1}{p}  V\big|_p^p\\
&\leq C(p,T)\Big|\big(\frac{r^2\rho_0}{\eta^2}\big)^\frac{1}{p}U\Big|_p^p+\frac{A\gamma}{4\mu}\big|(\zeta \eta^2\eta_r\varrho^{\gamma})^\frac{1}{p}  V\big|_p^p,
\end{aligned}
\end{equation*}
which yields the desired result.
\end{proof}

Now, we prove Lemma \ref{lp-uv} for the case when $n=3$ and $\gamma\in (2,3)$.
\begin{proof}[Proof of Lemma \ref{lp-uv}]
Let $n=3$, $\gamma\in (2,3)$, and $p\in[2,\infty)$. Multiplying \eqref{eq,v-lp} in Lemma \ref{lemma-v-lp} by $\frac{8\mu \varepsilon}{pA\gamma}$, combined with \eqref{eq,u-lp} in Lemma \ref{lemma-u Lp}, gives that, for all $\varepsilon\in (0,1)$,
\begin{equation}\label{jiehe}
\begin{aligned}
&\frac{\mathrm{d}}{\mathrm{d}t}\Big(\big|(r^2\rho_0)^\frac{1}{p}U\big|_p^p+\frac{8\mu \varepsilon}{pA\gamma}\big|(\zeta r^2\rho_0)^\frac{1}{p}V\big|_p^p\Big)+\mu \Big|(r^2\rho_0)^\frac{1}{2}|U|^\frac{p-2}{2}\big(D_\eta U,\frac{U}{\eta}\big)\Big|_2^2 \\
&\leq C(p)\big|(r^2\rho_0)^\frac{1}{p}U\big|_p^p+C(p,T)\varepsilon\Big|\big(\frac{r^2\rho_0}{\eta^2}\big)^\frac{1}{p}U\Big|_p^p+C(\varepsilon,p,T).
\end{aligned}
\end{equation}

Thus, we can choose $\varepsilon$ in \eqref{jiehe} sufficiently small such that 
\begin{equation*}
0<\varepsilon<\min\big\{1,\frac{\mu}{2C(p,T)}\big\},
\end{equation*}
and obtain from the Gr\"onwall inequality that \eqref{zong-jiehe} in Lemma \ref{lp-uv} holds.

This completes the proof of Lemma \ref{lp-uv} for the case when $n=3$ and $\gamma\in(2,3)$.
\end{proof}

\subsection{Global uniform upper bound of the density}

With the help of Lemmas \ref{lemma-near depth}--\ref{lp-uv}, we now ready to establish the global uniform upper bound for $\varrho$ in $[0,T]\times \bar I$. 
\begin{Lemma}\label{lemma-bound depth}
Let $\gamma\in(1,\infty)$ if $n=2$, and $\gamma\in (1,3)$ if $n=3$. Then there exists a constant $C(T)>0$ such that
\begin{equation*}
|\varrho(t)|_\infty \leq C(T) \qquad  \text{for all } t\in [0,T].
\end{equation*}
\end{Lemma}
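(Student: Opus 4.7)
The plan is to split $\bar I$ into the exterior piece $[\tfrac{1}{2},1]$ and the interior piece $[0,\tfrac{1}{2}]$, and bound $\varrho$ separately on each. On the exterior piece the bound is essentially free: Lemma \ref{lemma-far depth} gives $|\eta^m\varrho|_\infty \leq C_0$, while Lemma \ref{lemma-low jacobi 1} provides $\eta(t,r) \geq C(T)^{-1}$ for $(t,r) \in [0,T] \times [\tfrac{1}{2},1]$, so dividing immediately yields $\varrho(t,r) \leq C(T)$ there. The entire work therefore lies on $[0,\tfrac{1}{2}]$, which I will treat via a Morrey embedding argument in Eulerian coordinates, following the outline of \S \ref{subsub322}.

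I would introduce the time-dependent Eulerian cut-off
\[
\widetilde\xi(t,\boldsymbol{x}) := \zeta\bigl(\eta^{-1}(t,|\boldsymbol{x}|)\bigr),
\]
which is well defined and $C^1$ in $\boldsymbol{x}$ because $\eta^{-1}(t,\cdot)$ is $C^1$ on $(0,R(t))$ by Lemma \ref{lemma-low jacobi 1} and $\zeta$ is constantly $1$ on a neighbourhood of $0$. By construction $\widetilde\xi \equiv 1$ on $\{|\boldsymbol{x}| \leq \eta(t,\tfrac{1}{2})\}$ and $\widetilde\xi \equiv 0$ on $\{|\boldsymbol{x}| \geq \eta(t,\tfrac{5}{8})\}$. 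Applying the Morrey embedding $W^{1,p}(\mathbb{R}^n) \hookrightarrow L^\infty(\mathbb{R}^n)$ with $p>n$ to $\widetilde\xi\widetilde\rho^{1/p}$ (extended by zero) yields
\[
\sup_{|\boldsymbol{x}| \leq \eta(t,\frac{1}{2})} \widetilde\rho^{1/p}
\leq C(n,p)\Bigl(\|\widetilde\xi\widetilde\rho^{1/p}\|_{L^p(\mathbb{R}^n)} + \|\nabla(\widetilde\xi\widetilde\rho^{1/p})\|_{L^p(\mathbb{R}^n)}\Bigr),
\]
so $\sup_{r \leq 1/2}\varrho(t,r) \leq C(p,T)$ will follow once the right-hand side is bounded. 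Using $\boldsymbol{v} = \boldsymbol{u} + 2\mu\nabla\log\rho$ one obtains
\[
\nabla(\widetilde\xi\widetilde\rho^{1/p})
= (\nabla\widetilde\xi)\,\widetilde\rho^{1/p} + \tfrac{1}{2\mu p}\widetilde\xi\,\widetilde\rho^{1/p}(\boldsymbol{v}-\boldsymbol{u}),
\qquad
|\nabla\widetilde\xi| = |\zeta'(r)|/\eta_r.
\]
Transferring the three resulting Eulerian integrals to Lagrangian via $d\boldsymbol{x} = (\eta^m\eta_r/r^m)\,d\boldsymbol{y}$ and the mass identity $\varrho\eta^m\eta_r = r^m\rho_0$, they become the integrals of
\[
\zeta^p r^m\rho_0,\qquad
\frac{|\zeta'(r)|^p r^m\rho_0}{\eta_r^p},\qquad
\zeta^p r^m\rho_0\bigl(|U|^p+|V|^p\bigr)
\]
over $(0,1)$. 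The first is trivially bounded; the second is finite because $\mathrm{supp}\,\zeta' \subset [\tfrac{1}{2},\tfrac{5}{8}]$, on which Lemma \ref{lemma-low jacobi 1} gives $\eta_r \geq C(T)^{-1}$; and, using $\zeta^p \leq \zeta \leq 1$, the third is controlled by $|(r^m\rho_0)^{1/p}U|_p^p + |(\zeta r^m\rho_0)^{1/p}V|_p^p \leq C(p,T)$, thanks to Lemma \ref{lp-uv}.

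The substantive part of the proof has really been carried out in Lemmas \ref{lemma-far depth}--\ref{lp-uv} (the far-field density bound, the interior BD-entropy estimate, the flow-map weighted density estimates, and the iterative $L^p$-bounds for $(U,V)$). What is left for the present lemma is essentially bookkeeping; the one mild obstacle will be the coordinate-transformation step\,---\,in particular, verifying that $\widetilde\xi$ is admissible for Morrey (which is ensured by $\zeta$ being constant near $r=0$) and handling the $1/\eta_r^p$ factor coming from $\nabla\widetilde\xi$ (confined to $r \in [\tfrac{1}{2},\tfrac{5}{8}]$, where $\eta_r$ has a uniform positive lower bound). Combining the two regions yields $|\varrho(t)|_\infty \leq C(T)$ for all $t \in [0,T]$, which is the claim.
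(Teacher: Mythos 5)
Your proposal is correct, and it takes a genuinely different route from the paper's own proof of Lemma~\ref{lemma-bound depth}. The paper stays entirely in one-dimensional Lagrangian coordinates: it applies $W^{1,1}(I)\hookrightarrow L^\infty(I)$ to $\zeta\varrho=\zeta(\varrho^{1/5})^5$, rewrites $(\varrho^{1/5})_r$ via $2\mu\varrho_r=\eta_r\varrho(V-U)$, and splits by H\"older into $|\zeta\eta^{-m/4}\eta_r\varrho|_1^{4/5}|(\zeta r^m\rho_0)^{1/5}(V,U)|_5$. The artificial weight $\eta^{-m/4}$ created by that splitting forces a direct appeal to Lemma~\ref{lemma-near depth} and an $n$-dependent case split: $-m/4=-1/4$ sits inside the admissible range $q_1-1\in(-1,1]$ when $n=2$, but $-m/4=-1/2$ falls outside $q_1-1\in[0,2]$ when $n=3$, so the paper needs an extra integration-by-parts step to raise the power of $\eta$ before Lemma~\ref{lemma-near depth} applies. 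Your Eulerian Morrey argument never manufactures such a negative $\eta$-weight: after the change of variables the three surviving integrals are $\zeta^p r^m\rho_0$, $|\zeta'|^p r^m\rho_0/\eta_r^p$ (supported in $[\tfrac12,\tfrac58]$, where $\eta_r$ is bounded below by Lemma~\ref{lemma-low jacobi 1}), and $\zeta^p r^m\rho_0(|U|^p+|V|^p)$, and the last is controlled directly by Lemma~\ref{lp-uv} via $\zeta^p\leq\zeta\leq1$. You therefore bypass Lemma~\ref{lemma-near depth} at this step (it still enters upstream through Lemma~\ref{lp-uv}) and the dimensional case split disappears; in effect you have carried out rigorously the Eulerian heuristic the paper sketches in \S\ref{subsub322}, whereas the paper's actual proof opts for a purely Lagrangian implementation. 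The only point worth spelling out is that $\widetilde\xi\widetilde\rho^{1/p}$, extended by zero, genuinely lies in $W^{1,p}(\mathbb{R}^n)$: one must verify that the weak-gradient identity $\nabla(\widetilde\rho^{1/p})=\tfrac{1}{2\mu p}\widetilde\rho^{1/p}(\boldsymbol{v}-\boldsymbol{u})$ holds across the origin, which follows from the Lagrangian formula $\varrho=r^m\rho_0/(\eta^m\eta_r)$ and the $D(T)$-regularity of $(U,\eta)$ giving $\nabla\widetilde\rho(t,\boldsymbol{0})=\boldsymbol{0}$ for $t>0$, with the estimate extending to $t=0$ by continuity.
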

\begin{proof}
It follows from \eqref{v-expression}, Lemmas \ref{lemma-low jacobi 1}, \ref{lemma-near depth}--\ref{lp-uv}, and \ref{sobolev-embedding}, and the H\"older and Young inequalities that
\begin{equation}\label{shangjie}
\begin{aligned}
|\zeta \varrho|_\infty&= |\zeta (\varrho^\frac{1}{5})^5|_\infty\leq C_0\big|(\zeta_r \varrho,\zeta \varrho^\frac{4}{5}(\varrho^\frac{1}{5})_r)\big|_1\\
&\leq  C_0\big(|\zeta_r\eta_r^{-1}|_\infty\big|\zeta_{\frac{5}{8}}\eta_r\varrho\big|_1+\big|\zeta\eta^{-\frac{m}{4}} \eta_r \varrho \big|_1^\frac{4}{5} \big|(\zeta r^m\rho_0)^\frac{1}{5}(V,U)\big|_5\big)\\
&\leq C(T)+\underline{\big|\zeta\eta^{-\frac{m}{4}} \eta_r \varrho \big|_1}_{:=\mathrm{G}_*}.
\end{aligned}    
\end{equation}

To estimate $\mathrm{G}_*$, if $n=2$ ($m=1$), 
\begin{equation*}
-\frac{m}{4}=-\frac{1}{4}\in (-1,0),
\end{equation*}
we can apply Lemma \ref{lemma-near depth} to obtain 
\begin{equation}\label{456}
\mathrm{G}_*\leq C(T) \qquad \text{for all $t\in[0,T]$};
\end{equation}
if $n=3$ ($m=2$), it follows from integration by parts, \eqref{eq:eta}, \eqref{v-expression}, Lemmas \ref{lemma-low jacobi 1} and \ref{lemma-near depth}--\ref{lp-uv}, and the H\"older inequality that, for all $t\in[0,T]$,
\begin{equation}\label{458}
\begin{aligned}
\mathrm{G}_{*}&=\int_0^1 \zeta \eta^{-\frac{1}{2}}\eta_r\varrho\,\mathrm{d}r=-2\Big(\int_0^1 \zeta_r \eta^{\frac{1}{2}}\varrho\,\mathrm{d}r+\frac{1}{2\mu}\int_0^1 \zeta \eta^{\frac{1}{2}}\eta_r\varrho (V-U)\,\mathrm{d}r\Big)\\
&\leq C_0\int_\frac{1}{2}^\frac{5}{8} \frac{r^2\rho_0}{\eta^{\frac{3}{2}}\eta_r} \mathrm{d}r  + C_0\Big( \int_0^1 \zeta  \eta^{\frac{1}{8}}\eta_r\varrho\, \mathrm{d}r\Big)^\frac{4}{5}\big|(\zeta r^2\rho_0)^\frac{1}{5}(V,U)\big|_5\leq C(T).
\end{aligned}    
\end{equation}

Thus, collecting \eqref{shangjie}--\eqref{458} gives that, for all $t\in[0,T]$,
\begin{equation*} 
|\zeta\varrho|_\infty\leq C(T)+\mathrm{G}_*\leq C(T),
\end{equation*}
which together with Lemmas \ref{lemma-far depth}--\ref{lemma-low jacobi 1}, yields that, for all $t\in[0,T]$,
\begin{equation*}
|\varrho|_\infty\leq |\zeta\varrho|_\infty+|\zeta^\sharp\varrho|_\infty\leq C(T)+|\zeta^\sharp\eta^{-m}|_\infty|\eta^m\varrho|_\infty\leq C(T),
\end{equation*}
where the cut-off function $\zeta^\sharp$ is defined in \S \ref{othernotation}.

This completes the proof of Lemma \ref{lemma-bound depth}.
\end{proof}

\section{Global-In-Time Uniform Lower Bounds of \texorpdfstring{$(\eta_r,\frac{\eta}{r})$}{}}\label{Section-etarlower}

The purpose of this section is to establish the global-in-time uniform lower bounds for $(\eta_r,\frac{\eta}{r})$. 
The conclusion of this section can be stated as follows:
\begin{Lemma}\label{lemma-lower bound jacobi}
There exists a constant $C(T)>1$ such that 
\begin{equation*}
\frac{\eta(t,r)}{r}\geq C(T)^{-1},\quad \eta_r(t,r) \geq C(T)^{-1} 
\qquad\,\, \text{for all $(t,r)\in [0,T]\times\bar I$}.
\end{equation*}
\end{Lemma}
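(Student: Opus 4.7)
The plan is to upgrade the partial lower bounds from Lemma \ref{lemma-low jacobi 1} (which degenerate as $r^m$ at the origin) by injecting the uniform upper bound $|\varrho(t)|_\infty\leq C(T)$ just established in Lemma \ref{lemma-bound depth}. First I would substitute this bound into the flow-map identity \eqref{eq:eta} to obtain $\eta^m\eta_r\geq r^m\rho_0/C(T)$, and, using $\eta|_{r=0}=0$ (which follows from $U|_{r=0}=0$ and $\eta(0,r)=r$), integrate in $r$ to get
\begin{equation*}
\eta^{m+1}(t,r)\geq \frac{m+1}{C(T)}\int_0^r \tilde r^m\rho_0(\tilde r)\,\mathrm{d}\tilde r.
\end{equation*}
On $[0,\tfrac{3}{4}]$ the hypothesis $\rho_0\geq \cK_1(1-r)^{1/\beta}$ gives $\rho_0\geq \cK_1 4^{-1/\beta}$, so the right-hand side is bounded below by $C(T)^{-1}r^{m+1}$, whence $\eta/r\geq C(T)^{-1}$ on this region. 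Combined with Lemma \ref{lemma-low jacobi 1} applied with $a=\tfrac{3}{4}$, this yields $\eta/r\geq C(T)^{-1}$ on all of $[0,T]\times\bar I$.

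For the lower bound of $\eta_r$, I would argue by contradiction. Assuming the claim fails, one can extract a sequence $(t_k,r_k)\to(t_*,r_*)\in[0,T]\times\bar I$ with $\eta_r(t_k,r_k)\to 0$, and by the regularity $\eta_r\in C([0,T]\times\bar I)$ from Definition \ref{definition-lag} we have $\eta_r(t_*,r_*)=0$. The case $r_*=1$ is ruled out by the boundary relation $\eta_r|_{r=1}=1$ derived from $U_r|_{r=1}=0$. For $r_*\in(0,1)$, since $\rho_0(r_*)>0$ and $\varrho(t_*,r_*)\leq C(T)$, identity \eqref{eq:eta} forces $\eta^m(t_*,r_*)\eta_r(t_*,r_*)=r_*^m\rho_0(r_*)/\varrho(t_*,r_*)>0$, contradicting $\eta_r(t_*,r_*)=0$.

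The delicate case is $r_*=0$, where both sides of $\eta^m\eta_r=r^m\rho_0/\varrho$ vanish and no direct contradiction is available. The key observation is that $\eta(t,0)=0$ combined with the continuity of $\eta_r$ implies the identity
\begin{equation*}
\eta_r(t,0)=\lim_{r\to 0^+}\frac{1}{r}\int_0^r\eta_{\tilde r}(t,\tilde r)\,\mathrm{d}\tilde r=\lim_{r\to 0^+}\frac{\eta(t,r)}{r}\geq C(T)^{-1},
\end{equation*}
using the first-paragraph bound. Continuity of $\eta_r$ on the compact set $[0,T]\times\bar I$ then gives $\eta_r(t_k,r_k)\to\eta_r(t_*,0)\geq C(T)^{-1}>0$, contradicting $\eta_r(t_k,r_k)\to 0$ and completing the argument.

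I expect the origin to be the principal obstacle: the bound $\eta^m\eta_r\geq r^m\rho_0/C(T)$ degenerates as $r\to 0$, and a direct pointwise lower bound on $\eta_r$ would require an upper bound on $\eta^m$ that is not yet available at this stage (the upper bounds of $(\eta_r,\eta/r)$ are only derived later in \S\ref{Section-etarupper}). The trick that circumvents this is the coincidence $\eta_r(t,0)=(\eta/r)(t,0)$ at the origin, which transfers the lower bound on $\eta/r$ to $\eta_r$ at $r=0$, after which uniform continuity extends it to a neighborhood and Lemma \ref{lemma-low jacobi 1} handles the complement.
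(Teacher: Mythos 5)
Your proof is correct, and your argument for the lower bound of $\eta/r$ is a genuine and arguably cleaner variant of the paper's. The paper first upgrades the inequality $\eta^m\eta_r\geq r^m\rho_0/C(T)$ to $\eta^m\eta_r\geq r^m/C(T)$ by a separate contradiction argument exploiting $\eta_r|_{r=1}=1$ (this is their claim \eqref{claim-1'}), and only then integrates. You skip this upgrade entirely: since $\rho_0\geq \cK_1 4^{-1/\beta}$ on $[0,\tfrac34]$, direct integration of $\eta^m\eta_r\geq r^m\rho_0/C(T)$ with $\eta|_{r=0}=0$ already yields $\eta/r\geq C(T)^{-1}$ on $[0,\tfrac34]$, and Lemma~\ref{lemma-low jacobi 1} with $a=\tfrac34$ covers the rest of $\bar I$. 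This saves one contradiction argument and makes clearer that the only delicate region is near the origin. For the lower bound of $\eta_r$, the paper again leans on Lemma~\ref{lemma-low jacobi 1} ($\eta_r\geq r^m/C(T)$) to force any bad sequence to accumulate at $r=0$, whereas you split into the three cases $r_*=1$, $r_*\in(0,1)$, $r_*=0$ and handle $r_*\in(0,1)$ directly from the identity \eqref{eq:eta} and the upper bound on $\varrho$ (together with continuity and finiteness of $\eta$ on $[0,T]\times\bar I$). Both routes succeed; the paper's is shorter, yours is more transparent about where each piece of information is used. The key observation you rightly single out --- that $\eta_r(t,0)=\lim_{r\to 0^+}\eta(t,r)/r$, so the bound on $\eta/r$ transfers to $\eta_r$ at the origin --- is exactly the paper's \eqref{=0} and is the essential step in both.
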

\begin{proof}
We divide the proof into two steps.

\smallskip
\textbf{1. Uniform lower bound for $\frac{\eta}{r}$.} It follows from Lemma \ref{lemma-bound depth} and \eqref{eq:eta} that
\begin{equation*}
\Big|\frac{r^m\rho_0}{\eta^m\eta_r}(t)\Big|_\infty=|\varrho(t)|_\infty\leq C(T) \qquad \text{for all $t\in [0,T]$},
\end{equation*}
which implies
\begin{equation}\label{1'}
(\eta^m\eta_r)(t,r)\geq \frac{r^m\rho_0(r)}{C(T)} \qquad \text{for all } (t,r)\in [0,T]\times \bar I.
\end{equation}

Now, we claim that, for any $T>0$, there exists a constant $C(T)>0$ such that 
\begin{equation}\label{claim-1'}
(\eta^m\eta_r)(t,r)\geq \frac{r^m}{C(T)} \qquad \text{for all } (t,r)\in [0,T]\times \bar I.
\end{equation}
Otherwise, there exist both $T>0$ and a sequence of $\{(t_k,r_k)\}_{k=1}^\infty\subset[0,T]\times \bar I$ such that 
\begin{equation}\label{406'}
\lim_{k\to\infty} \frac{(\eta^m\eta_r)(t_k,r_k)}{r^m_k}=0.
\end{equation}
This, together with \eqref{1'}, yields
\begin{equation*}
\lim_{k\to\infty}\frac{\rho_0(r_k)}{C(T)}\leq \lim_{k\to\infty}\frac{(\eta^m\eta_r)(t_k,r_k)}{r_k^m}= 0,
\end{equation*}
which, along with the fact that $\rho_0^\beta\sim (1-r)$, implies $r_k\to 1$ as $k\to\infty$. Moreover, since $\{t_k\}_{k=1}^\infty\subset [0,T]$, we can extract a subsequence $\{t_{k_\ell}\}_{\ell=1}^\infty\subset[0,T]$ such that $t_{k_\ell}\to t_0$ for some $t_0\in [0,T]$, and hence 
\begin{equation}\label{zilie}
(t_{k_\ell},r_{k_\ell})\to(t_0,1) \qquad \text{as } \ell\to\infty.   
\end{equation}
On the other hand, thanks to Lemma \ref{lemma-low jacobi 1}, we have
\begin{equation*}
\eta(t,r)\geq C(T)^{-1}r^{m+1} \qquad\text{for all $(t,r)\in[0,T]\times \bar I$},
\end{equation*}
and we can obtain
\begin{equation*}
0\leq \eta_r(t,r)=\frac{(\eta^m\eta_r)(t,r)}{\eta^m(t,r)}\leq C(T)\frac{(\eta^m\eta_r)(t,r)}{r^{m^2+m}}.  
\end{equation*}
This, along with \eqref{406'}--\eqref{zilie}, yields
\begin{equation}\label{555}
0\leq \eta_r(t_0,1)=\lim_{\ell\to\infty}\eta_r(t_{k_\ell},r_{k_\ell})\leq \lim_{\ell\to\infty}C(T)\frac{(\eta^m\eta_r)(t_{k_\ell},r_{k_\ell})}{r_{k_\ell}^{m^2+m}} =0.    
\end{equation}

However, \eqref{555} contradicts to the fact that $\eta_r|_{r=1}=1$ for all $t\in[0,T]$ since $U_r|_{r=1}=0$. Therefore, claim \eqref{claim-1'} holds.

Finally, it follows from the fact that $\eta|_{r=0}=0$ and \eqref{claim-1'} that
\begin{equation}\label{lower-eta/r}
\frac{\eta^{m+1}(t,r)}{r^{m+1}}=\frac{m+1}{r^{m+1}}\int_0^r \eta^m\eta_r(t,\tilde r)\,\mathrm{d}\tilde r\geq C(T)^{-1}\frac{m+1}{r^{m+1}}\int_0^r \tilde r^m\,\mathrm{d}\tilde r\geq C(T)^{-1}.
\end{equation}

\smallskip
\textbf{2. Uniform lower bound for $\eta_r$.} First, letting $r\to 0$ in \eqref{lower-eta/r} gives 
\begin{equation}\label{=0}
\eta_r(t,0)\geq C(T)^{-1} \qquad \text{for all $t\in[0,T]$}.
\end{equation}

Next, assume contrarily that there exist both $T>0$ and a sequence of $\{(t_k,r_k)\}_{k=1}^\infty\subset[0,T]\times \bar I$ such that 
\begin{equation}\label{406''}
\lim_{k\to\infty}\eta_r(t_k,r_k)=0.
\end{equation}
Then it follows from Lemma \ref{lemma-low jacobi 1} that 
\begin{equation*} 
\lim_{k\to\infty}\frac{r_k^m}{C(T)}\leq \lim_{k\to\infty}\eta_r(t_k,r_k)=0,
\end{equation*}
which yields that $r_k\to 0$. This implies that there exists a subsequence $\{(t_{k_\ell},r_{k_\ell})\}_{\ell=1}^\infty$ such that $(t_{k_\ell},r_{k_\ell})\to(t_0,0)$ for some $t_0\in [0,T]$ as $\ell\to\infty$, and hence we obtain $\eta_r(t_0,0)=0$. This contradicts to \eqref{=0}, and thus yields that $\eta_r$ admits a uniform lower bound in $[0,T]\times \bar I$. 
\end{proof}

\section{Global-In-Time Uniform Estimates of the Effective Velocity}\label{Section-effectivevelocity}

The purpose of this section is to establish the global-in-time uniform  estimates of the effective velocity.  

\subsection{Boundedness of the effective velocity away from the origin}
\begin{Lemma}\label{lemma-refine u Lp}
For any $p\in (0,\infty)$, $\iota\in (-\frac{1}{p},\infty)$, and $a\in (0,1)$, there exists a positive constant $C(p,\iota,a,T)$ such that
\begin{equation*}
\big|\chi_{a}^\sharp \rho_0^{\iota\beta} U(t)\big|_p\leq C(p,\iota,a,T) \qquad \text{for all $t\in [0,T]$},
\end{equation*}
where  $\chi_{a}^\sharp$ denotes the characteristic function on $(a,1]$ for $a\in (0,1)$
{\rm  (}see {\rm \S \ref{othernotation})}.

\end{Lemma}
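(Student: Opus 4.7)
The plan is to deduce Lemma \ref{lemma-refine u Lp} from the global $L^q$-estimate $\big|(r^m\rho_0)^{1/q}U(t)\big|_q\leq C(q,T)$ of Lemma \ref{lp-uv} (valid for all $q\in[2,\infty)$) by a single H\"older interpolation, using the parameter $q$ to trade integrability of $U$ against integrability of a purely deterministic $\rho_0$-weight on $[a,1]$. The role of the hypothesis $\iota>-1/p$ should be precisely to ensure that $q$ can be chosen large enough for such an interpolation to converge.

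First, I fix a constant $q=q(p,\iota,\beta)$ with $q\geq\max\{2,p+1\}$, to be specified below. Splitting $\rho_0^{\iota\beta}|U|=\rho_0^{\iota\beta-1/q}\cdot\rho_0^{1/q}|U|$ and applying H\"older's inequality on $[a,1]$ with conjugate exponents $(q/p,\,q/(q-p))$ gives
\begin{equation*}
\int_a^1\rho_0^{\iota\beta p}|U|^p\,\mathrm{d}r\leq\Big(\int_a^1\rho_0^{(\iota\beta q-1)p/(q-p)}\,\mathrm{d}r\Big)^{(q-p)/q}\Big(\int_a^1\rho_0|U|^q\,\mathrm{d}r\Big)^{p/q}.
\end{equation*}
Since $r\geq a$ on $[a,1]$, one has $\rho_0\leq a^{-m}\,r^m\rho_0$, so the second factor is controlled via Lemma \ref{lp-uv} by $a^{-mp/q}\big|(r^m\rho_0)^{1/q}U(t)\big|_q^{p}\leq C(p,a,q,T)$, where we have used $q\geq 2$.

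For the first factor, recall that $\rho_0^k\sim(1-r)^{k/\beta}$ on $[a,1]$ by \eqref{distance-la}. Hence the integral $\int_a^1\rho_0^{(\iota\beta q-1)p/(q-p)}\,\mathrm{d}r$ is finite if and only if its exponent divided by $\beta$ exceeds $-1$, i.e.,
\begin{equation*}
(\iota\beta q-1)p>-\beta(q-p)\quad\Longleftrightarrow\quad q>\frac{p(1+\beta)}{\beta(\iota p+1)}.
\end{equation*}
The hypothesis $\iota>-1/p$ is exactly what renders the right-hand side finite, so such a $q$ can indeed be chosen, yielding a bound depending only on $(p,\iota,a,\beta)$. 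Combining the two factors furnishes the desired estimate with constant $C(p,\iota,a,T)$.

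Effectively there is no genuine obstacle here: the analytic heart of the argument (the $L^q$-control of $U$ weighted by $r^m\rho_0$) is already supplied by Lemma \ref{lp-uv}, and the remainder is purely bookkeeping of exponents. The only subtlety is recognizing the sharp role of the condition $\iota>-1/p$, which corresponds precisely to integrability of the limiting weight $(1-r)^{\iota p}$ at the free boundary and thus cannot be relaxed within this interpolation scheme.
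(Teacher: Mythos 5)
Your proof is correct and follows essentially the same route as the paper: both hinge on a single H\"older interpolation on $[a,1]$ splitting $\rho_0^{\iota\beta p}|U|^p$ into a purely deterministic $\rho_0$-weight (integrable near $r=1$ precisely because $\iota>-1/p$ permits the intermediate exponent to be chosen large enough) times $\rho_0|U|^q$, which is then controlled by Lemma~\ref{lp-uv} after absorbing the harmless factor $\rho_0\leq a^{-m}r^m\rho_0$. The paper parametrizes the interpolation by a slack variable $\varepsilon$ rather than by the intermediate exponent $q$, but the two are equivalent via $q=p/((\iota p+1)\beta-\varepsilon)$, and the constraints imposed on $\varepsilon$ translate exactly into your constraints on $q$.
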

\begin{proof}
Let $p\in (0,\infty)$, $\iota\in (-\frac{1}{p},\infty)$, and $a\in (0,1)$. Let $\varepsilon>0$ be a fixed constant such that
\begin{equation*}
\max\Big\{0,(\iota p+1)\beta-1,(\iota p+1)\beta-\frac{p}{2},\frac{(\iota p+1)\beta^2}{\beta+1}\Big\}<\varepsilon<(\iota p+1)\beta.
\end{equation*}
Note that $\varepsilon$ is well-defined under the above constraint and satisfies
\begin{equation*}
0<(\iota p+1)\beta-\varepsilon<1,\qquad \frac{p}{(\iota p+1)\beta-\varepsilon}>2, \qquad \text{and} \ \ \frac{-\beta+\varepsilon}{1+\varepsilon-(\iota p+1)\beta}>-\beta.
\end{equation*}
Hence, it follows from Lemma \ref{lp-uv}, the fact that $\rho_0^\beta\sim 1-r$, and the H\"older inequality that
\begin{equation*}
\begin{aligned}
|\chi_{a}^\sharp \rho_0^{\iota\beta} U|_p^p&=\int_a^1 \rho_0^{\iota\beta p} |U|^p\,\mathrm{d}r=\int_a^1 \rho_0^{-\beta+\varepsilon} (\rho_0^{(\iota p+1)\beta-\varepsilon} |U|^p)\,\mathrm{d}r\\
&\leq \Big(\int_a^1 \rho_0^\frac{-\beta+\varepsilon}{1+\varepsilon-(\iota p+1)\beta}\,\mathrm{d}r\Big)^{1+\varepsilon-(\iota p+1)\beta}\Big(\int_a^1 \rho_0 |U|^\frac{p}{(\iota p+1)\beta-\epsilon}\,\mathrm{d}r\Big)^{(\iota p+1)\beta-\varepsilon}\\
&\leq C(p,\iota,a)\Big(\int_0^1 r^m\rho_0 |U|^\frac{p}{(\iota p+1)\beta-\epsilon}\,\mathrm{d}r\Big)^{(\iota p+1)\beta-\varepsilon}\leq C(p,\iota,a,T).
\end{aligned}
\end{equation*}

This completes the proof.
\end{proof}

\begin{Lemma}\label{lemma-v Lp ex}
For any $p\in [2,\infty)$, $\iota\in (\frac{p-1}{p},\infty)$, and $a\in (0,1)$, there exists a positive constant $C(p,\iota,a,T)$ such that 
\begin{equation*}
\big|\chi_{a}^\sharp\rho_0^{\iota\beta}V(t)\big|_p\leq C(p,\iota,a,T) \qquad \text{for all }t\in[0,T].
\end{equation*}
\end{Lemma}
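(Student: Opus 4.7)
The natural strategy is to exploit the explicit representation formula \eqref{V-solution} for the effective velocity, splitting $V$ into the free (initial-data) piece and the forced piece driven by $U$. Since $\varrho\geq 0$, both exponential factors in \eqref{V-solution} are bounded by $1$, so pointwise
\[
|V(t,r)|\leq |v_0(r)| + \frac{A\gamma}{2\mu}\int_0^t \varrho^{\gamma-1}(\tau,r)\,|U(\tau,r)|\,\mathrm{d}\tau.
\]
Multiplying by $\chi_{a}^\sharp \rho_0^{\iota\beta}$, taking the $L^p(I)$-norm, and applying Minkowski's integral inequality to the time integral, I obtain
\[
\bigl|\chi_{a}^\sharp \rho_0^{\iota\beta} V(t)\bigr|_p \leq \bigl|\chi_{a}^\sharp \rho_0^{\iota\beta} v_0\bigr|_p + \frac{A\gamma}{2\mu}\int_0^t \bigl|\chi_{a}^\sharp \rho_0^{\iota\beta} \varrho^{\gamma-1}(\tau) U(\tau)\bigr|_p\,\mathrm{d}\tau.
\]

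For the forced term, Lemma \ref{lemma-bound depth} gives $|\varrho(\tau)|_\infty \leq C(T)$, so $\varrho^{\gamma-1}(\tau) \leq C(T)$ (using $\gamma>1$), reducing the integrand to $C(T)\bigl|\chi_{a}^\sharp \rho_0^{\iota\beta} U(\tau)\bigr|_p$. Because $\iota > (p-1)/p > -1/p$, Lemma \ref{lemma-refine u Lp} applies and controls this by $C(p,\iota,a,T)$ uniformly in $\tau\in[0,t]$, so the forced contribution is bounded by $C(p,\iota,a,T)\,t\leq C(p,\iota,a,T)$.

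It then remains to verify $\bigl|\chi_{a}^\sharp \rho_0^{\iota\beta} v_0\bigr|_p \leq C(p,\iota,a)$. Since $\eta(0,r)=r$, the definition $v_0 = u_0 + 2\mu(\log\rho_0)_r = u_0 + \frac{2\mu}{\beta}\rho_0^{-\beta}(\rho_0^\beta)_r$ is in force. The contribution of $u_0$ is harmless because $u_0\in C^1(\bar I)$. For the logarithmic derivative, $\rho_0^\beta\in H^3$ together with $\rho_0^\beta \sim 1-r$ gives $|(\rho_0^\beta)_r|\leq C$ on $[a,1]$ (by Sobolev embedding in 1D away from the origin), hence
\[
\chi_{a}^\sharp \rho_0^{\iota\beta}\bigl|(\log\rho_0)_r\bigr| \leq C(a)\,\chi_{a}^\sharp \rho_0^{(\iota-1)\beta} \leq C(a)\,\chi_{a}^\sharp (1-r)^{\iota-1},
\]
which lies in $L^p(I)$ precisely when $(\iota-1)p>-1$, i.e.\ under the standing hypothesis $\iota>(p-1)/p$. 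This closes the argument. There is no serious obstacle: the sharp condition $\iota>(p-1)/p$ is dictated exactly by the boundary singularity $(\log\rho_0)_r\sim (1-r)^{-1}$ in $v_0$, and everything else follows from the already-established bounds in Lemmas \ref{lemma-bound depth} and \ref{lemma-refine u Lp}.
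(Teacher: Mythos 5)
Your proposal is correct and takes essentially the same route as the paper: split $V$ via the explicit formula \eqref{V-solution}, use nonnegativity of $\varrho^{\gamma-1}$ to drop the exponentials, apply Minkowski's integral inequality, bound the forced term through Lemma \ref{lemma-refine u Lp}, and check $\chi_a^\sharp\rho_0^{\iota\beta}v_0\in L^p$ using $(\log\rho_0)_r\sim(1-r)^{-1}$ and $\iota>(p-1)/p$. The only superficial difference is a bookkeeping choice: you bound $\varrho^{\gamma-1}\le C(T)$ directly via Lemma \ref{lemma-bound depth} and then apply Lemma \ref{lemma-refine u Lp} with exponent $\iota\beta$, whereas the paper rewrites $\varrho^{\gamma-1}$ via \eqref{eq:eta}, controls the Jacobian factor with Lemma \ref{lemma-lower bound jacobi}, and applies Lemma \ref{lemma-refine u Lp} with the heavier weight $\rho_0^{\iota\beta+\gamma-1}$; both are valid and yield the same conclusion.
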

\begin{proof}
Let $p\in [2,\infty)$, $\iota\in (\frac{p-1}{p},\infty)$ and $a\in (0,1)$. First, multiplying \eqref{V-solution} by $\chi_{a}^\sharp\rho_0^{\iota\beta}$ and taking the $L^p$-norm of the resulting equality, we obtain from \eqref{eq:eta} and the Minkowski integral inequality that, for all $t\in[0,T]$,
\begin{equation*}
|\chi_{a}^\sharp\rho_0^{\iota\beta}V|_p\leq |\chi_{a}^\sharp\rho_0^{\iota\beta}v_0|_p+\frac{A\gamma}{2\mu}\int_0^t \Big|\chi_{a}^\sharp\frac{r^{(\gamma-1)m}\rho_0^{\iota\beta+\gamma-1}U}{(\eta^m\eta_r)^{\gamma-1}}\Big|_p\,\mathrm{d}s,
\end{equation*}
which, along with Lemmas \ref{lemma-lower bound jacobi} and \ref{lemma-refine u Lp}, 
implies 
\begin{equation*}
|\chi_{a}^\sharp\rho_0^{\iota\beta}V|_p\leq |\chi_{a}^\sharp\rho_0^{\iota\beta}v_0|_p+C(a,p,T)\int_0^t \big|\chi_a^\sharp\rho_0^{\iota\beta+\gamma-1}U\big|_p\,\mathrm{d}s\leq C(p,\iota,a,T).
\end{equation*}
Here, for the initial data, since $\rho_0^\beta\sim 1-r$ and $\iota-1>-\frac{1}{p}$, we have
\begin{equation*}
\begin{aligned}
\big|\chi_{a}^\sharp\rho_0^{\iota\beta}v_0\big|_p&\leq C(p)\big(\big|\chi_{a}^\sharp\rho_0^{\iota\beta}u_0\big|_p+\big|\chi_{a}^\sharp\rho_0^{(\iota-1)\beta}(\rho_0^\beta)_r\big|_p\big)\\
&\leq C(p)\big(|\rho_0^{\beta}|_\infty^\iota |u_0|_\infty+|(1-r)^{\iota-1}|_p|(\rho_0^\beta)_r|_\infty\big)\leq C(\iota,p).
\end{aligned}    
\end{equation*}

This completes the proof.
\end{proof}

\begin{Lemma}\label{lemma-v Linfty ex}
For any $a\in (0,1)$, there exists a constant $C(a,T)>0$ such that 
\begin{equation*}
\big|\chi_{a}^\sharp\rho_0^{\beta}V(t)\big|_\infty\leq C(a,T) \qquad \text{for all $t\in [0,T]$}.
\end{equation*}
\end{Lemma}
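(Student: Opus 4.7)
The plan is to reduce the desired $L^\infty$-bound on $\chi_a^\sharp\rho_0^\beta V$ to an $L^1_tL^\infty_r$-bound on $\chi_a^\sharp\rho_0^\beta\varrho^{\gamma-1}U$ via the explicit representation \eqref{V-solution} of $V$, and then to establish this $L^1_tL^\infty_r$-bound by combining the Sobolev embedding $W^{1,1}([a,1])\hookrightarrow L^\infty([a,1])$ with the identity $2\mu\varrho_r=\eta_r\varrho(V-U)$ of Proposition~\ref{lemma-V expression} and the weighted $L^p$-estimates of Lemmas~\ref{lemma-refine u Lp}--\ref{lemma-v Lp ex}. The basic dissipation bound of Lemma~\ref{lemma-basic energy}, the uniform upper bound $|\varrho|_\infty\leq C(T)$ of Lemma~\ref{lemma-bound depth}, and the lower bounds on $(\eta_r,\eta/r)$ of Lemma~\ref{lemma-lower bound jacobi} will play key roles.

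More concretely, multiplying \eqref{V-solution} by $\chi_a^\sharp(r)\rho_0^\beta(r)$ and using $G(t,r):=\tfrac{A\gamma}{2\mu}\int_0^t\varrho^{\gamma-1}\,\mathrm{d}s\geq 0$, so that both exponential factors lie in $[0,1]$, I will first obtain
\[
|\chi_a^\sharp \rho_0^\beta V(t)|_\infty \,\leq\, |\chi_a^\sharp \rho_0^\beta v_0|_\infty \,+\, \frac{A\gamma}{2\mu}\int_0^t |\chi_a^\sharp \rho_0^\beta \varrho^{\gamma-1}U(\tau)|_\infty\,\mathrm{d}\tau \qquad \text{for all } t\in[0,T].
\]
The initial-data term is controlled since $\rho_0^\beta v_0 = \rho_0^\beta u_0 + (2\mu/\beta)(\rho_0^\beta)_r$, and both $\rho_0^\beta u_0$ and $(\rho_0^\beta)_r$ lie in $L^\infty$ (using $\rho_0^\beta\in H^3\hookrightarrow W^{1,\infty}$). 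Hence the proof reduces to the $L^1_tL^\infty_r$-estimate
\[
\int_0^T |\chi_a^\sharp \rho_0^\beta \varrho^{\gamma-1}U|_\infty\,\mathrm{d}t \,\leq\, C(a,T). \qquad (\ast)
\]

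To establish $(\ast)$, since $\rho_0^\beta\varrho^{\gamma-1}U$ vanishes at $r=1$, the fundamental theorem of calculus yields $|\chi_a^\sharp\rho_0^\beta\varrho^{\gamma-1}U|_\infty\leq \int_a^1 |(\rho_0^\beta\varrho^{\gamma-1}U)_r|\,\mathrm{d}r$, and the identity $2\mu\varrho_r = \eta_r\varrho(V-U)$ decomposes the integrand into three contributions: $(\rho_0^\beta)_r\varrho^{\gamma-1}U$, $\tfrac{\gamma-1}{2\mu}\rho_0^\beta\varrho^{\gamma-1}\eta_r(V-U)U$, and $\rho_0^\beta\varrho^{\gamma-1}U_r$. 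Each is handled by combining (i) the upper bound $|\varrho|_\infty\leq C(T)$; (ii) the crucial pointwise bound $\varrho\eta_r = r^m\rho_0/\eta^m\leq C(a,T)\rho_0$ on $[a,1]\times[0,T]$ (which follows from $\eta\geq aC(T)^{-1}$ on $[a,1]$, a direct consequence of Lemma~\ref{lemma-lower bound jacobi}), and hence also $\varrho\leq C(a,T)\rho_0$ on the same set; (iii) the weighted $L^p$-bounds of Lemmas~\ref{lemma-refine u Lp} and \ref{lemma-v Lp ex} applied to $U$ and $V$; and (iv) the basic dissipation $\int_0^T\int_0^1 r^m\rho_0|D_\eta U|^2\,\mathrm{d}r\,\mathrm{d}s\leq C_0$. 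After H\"older/Cauchy--Schwarz in $r$ and a further Cauchy--Schwarz in $t$, each term will contribute at most $C(a,T)$ to $(\ast)$.

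The main obstacle is the third term, $\rho_0^\beta\varrho^{\gamma-1}U_r=\rho_0^\beta\varrho^{\gamma-1}\eta_rD_\eta U$: there is no a priori upper bound on $\eta_r$ at this stage, since the upper bounds on $(\eta_r,\eta/r)$ are established only later in Section~\ref{Section-etarupper}. This will be overcome by repeatedly using the identity $\varrho\eta_r\leq C(a,T)\rho_0$ to absorb powers of $\eta_r$ into powers of $\rho_0$ of the form $\rho_0^{\beta+\gamma-1}$ and $\rho_0^{2\beta+2\gamma-3}$, whose finiteness and integrability over $[a,1]$ are guaranteed by the constraints $\beta>\tfrac{1}{3}$ and $\gamma>\tfrac{4}{3}$. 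Inserting $(\ast)$ into the inequality from the first step will then complete the proof.
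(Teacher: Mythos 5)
Your overall strategy is the same as the paper's: use the representation \eqref{V-solution} to reduce to an $L^1_t L^\infty_r$-bound on $\chi_a^\sharp\rho_0^\beta\varrho^{\gamma-1}U$, then obtain that bound via a Sobolev/FTC estimate and the identity $2\mu\varrho_r=\eta_r\varrho(V-U)$, using the lower bounds on $(\eta_r,\eta/r)$, the weighted $L^p$-bounds for $(U,V)$, and the basic dissipation. However, you work directly with the quantity $\rho_0^\beta\varrho^{\gamma-1}U$ (i.e., with ``$k=1$''), and this creates a genuine gap in the crucial $U_r$-term (and, in fact, also in the $\varrho_r$-term).

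Concretely, after applying the fundamental theorem of calculus and the identity, the $U_r$-contribution is $\rho_0^\beta\varrho^{\gamma-1}\eta_rD_\eta U$. To Cauchy--Schwarz this against $(r^m\rho_0)^{1/2}D_\eta U\in L^2$, you must control $\chi_a^\sharp\rho_0^{\beta-\frac12}r^{-m/2}\varrho^{\gamma-1}\eta_r$ in $L^2$. The only available way to kill the $\eta_r$-factor is to extract exactly one power of $\varrho\eta_r=r^m\rho_0/\eta^m\leq C(a,T)\rho_0$, which leaves $\varrho^{\gamma-2}$. When $\gamma<2$ this exponent is negative, $\varrho$ has no lower bound at this stage, and the term blows up; the alternative split $(\varrho\eta_r)^{\gamma-1}\eta_r^{2-\gamma}$ leaves an uncontrolled positive power $\eta_r^{2-\gamma}$. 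Your claim that the leftover can be absorbed into $\rho_0^{2\beta+2\gamma-3}$ tacitly discards this stray factor of $\eta_r^{4-2\gamma}$; no integrability constraint on $(\beta,\gamma)$ repairs that. Since the paper's regime is $\gamma\in(1,\infty)$ (for $n=2$) or $\gamma\in(1,3)$ (for $n=3$) at the level of the class $D(T)$, with $\beta\in(0,\gamma-1]$, $\gamma-1$ can be arbitrarily close to $0$, so the case $\gamma<2$ is essential. The paper's remedy is precisely to raise to the $k$-th power with $k\geq\max\{2,\tfrac{1}{\gamma-1}\}$ before taking the $W^{1,1}\hookrightarrow L^\infty$ step: the key factor becomes $\varrho^{(\gamma-1)k}\eta_r=\varrho^{(\gamma-1)k-1}(\varrho\eta_r)$ with nonnegative exponent $(\gamma-1)k-1\geq 0$, so $\varrho^{(\gamma-1)k-1}\leq|\varrho|_\infty^{(\gamma-1)k-1}\leq C(T)$. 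This trick is the missing ingredient in your argument; without it, the estimate fails in the parameter range the lemma must cover. (A secondary benefit of the exponent $k$ is that the final $t$-integral of $|(r^m\rho_0)^{1/2}D_\eta U|_2^{1/k}$ is absorbed by Young's inequality, though this part you could also handle with a Cauchy--Schwarz in $t$.)
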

\begin{proof}
We divide the proof into the following two steps.

\smallskip
\textbf{1.} Let $k\in \NN^*$ be a fixed constant such that 
\begin{equation*}
k\geq \max\big\{2,\frac{1}{\gamma-1}\big\}.
\end{equation*}
It follows from  \eqref{v-expression} and Lemma \ref{sobolev-embedding} that
\begin{equation}\label{pre-g4-g7}
\begin{aligned}
\big|\chi_{a}^\sharp \rho_0^{\beta}\varrho^{\gamma-1}U\big|_\infty^k&\leq C(a)\big|\chi_a^\sharp(\rho_0^{k\beta}\varrho^{(\gamma-1)k}|U|^k)_r\big|_1\\
&\leq C(a)\big|\chi_a^\sharp\rho_0^{(k-1)\beta}(\rho_0^\beta)_r\varrho^{(\gamma-1)k}U^k\big|_1+C(a)\big|\chi_a^\sharp\rho_0^{k\beta}\varrho^{(\gamma-1)k}\eta_rU^k(V,U)\big|_1\\
&\quad +C(a)\big|\chi_a^\sharp\rho_0^{k\beta}\varrho^{(\gamma-1)k}\eta_rU^{k-1}D_\eta U\big|_1:=\sum_{i=6}^8 \mathrm{G}_i.
\end{aligned}    
\end{equation}

For $\mathrm{G}_6$--$\mathrm{G}_8$, it follows from \eqref{eq:eta}, Lemmas \ref{lemma-bound depth} and \ref{lemma-refine u Lp}--\ref{lemma-v Lp ex}, and the H\"older inequality that
\begin{equation}\label{G4-G7}
\begin{aligned}
\mathrm{G}_6&\leq C_0|\varrho|_\infty^{(\gamma-1)k}|(\rho_0^\beta)_r|_\infty\big|\chi_{a}^\sharp\rho_0^{(1-\frac{1}{k})\beta}U\big|_k^k\leq C(a,T),\\
\mathrm{G}_7&\leq C_0\Big|\frac{r^m}{\eta^m}\Big|_\infty|\varrho|_\infty^{(\gamma-1)k-1} \big(|\chi_{a}^\sharp \rho_0^{\beta}V|_2\big|\chi_{a}^\sharp\rho_0^\frac{(k-1)\beta+1}{k}U\big|_{2k}^k+\big|\chi_{a}^\sharp \rho_0^\frac{k\beta+1}{k+1}U\big|_{k+1}^{k+1}\big)\leq C(a,T),\\
\mathrm{G}_8&\leq C(a)\Big|\frac{r^m}{\eta^m}\Big|_\infty|\varrho|_\infty^{(\gamma-1)k-1}\big|\chi_{a}^\sharp \rho_0^{k\beta+\frac{1}{2}} U^{k-1} \big|_2\big|(r^m\rho_0)^\frac{1}{2}D_\eta U\big|_2\\
&\leq C(a,T)\big|\chi_{a}^\sharp\rho_0^{\frac{2k\beta+1}{2k-2}}U \big|_{k-1}^{2k-2}\big|(r^m\rho_0)^\frac{1}{2}D_\eta U\big|_2 \leq C(a,T)\big|(r^m\rho_0)^\frac{1}{2}D_\eta U\big|_2.
\end{aligned}    
\end{equation}
Substituting \eqref{G4-G7} into \eqref{pre-g4-g7} leads to
\begin{equation}\label{abb}
\big|\chi_{a}^\sharp \rho_0^{\beta}\varrho^{\gamma-1}U\big|_\infty^k\leq C(a,T)\big(\big|(r^m\rho_0)^\frac{1}{2}D_\eta U\big|_2+1\big). 
\end{equation}

\textbf{2.} Next, multiplying \eqref{V-solution} by $\chi_{a}^\sharp\rho_0^{\beta}$ and taking the $L^\infty$-norm of the resulting equality, we obtain from \eqref{abb}, Lemma \ref{lemma-basic energy}, and the Young inequality that
\begin{equation}\label{v-infty-integral}
\begin{aligned}
|\chi_{a}^\sharp \rho_0^{\beta}V|_\infty&\leq |\chi_{a}^\sharp \rho_0^{\beta}v_0|_\infty+\frac{A\gamma}{2\mu}\int_0^t \big|\chi_{a}^\sharp \rho_0^{\beta}\varrho^{\gamma-1}U\big|_\infty\,\mathrm{d}s\\
&\leq |\chi_{a}^\sharp \rho_0^{\beta}v_0|_\infty+C(a,T)\int_0^t \big(\big|(r^m\rho_0)^\frac{1}{2}D_\eta U\big|_2^\frac{1}{k}+1\big)\,\mathrm{d}s\\
&\leq |\chi_{a}^\sharp \rho_0^{\beta}v_0|_\infty+C(a,T)\int_0^t \big(\big|(r^m\rho_0)^\frac{1}{2}D_\eta U\big|_2^2+1\big)\,\mathrm{d}s \leq C(a,T),
\end{aligned}
\end{equation}
where the initial data can be controlled by
\begin{equation*}
|\chi_{a}^\sharp \rho_0^{\beta}v_0|_\infty\leq C_0\big|(\rho_0^{\beta}u_0,(\rho_0^\beta)_r)\big|_\infty \leq C_0(|\rho_0^{\beta}|_\infty |u_0|_\infty+ |(\rho_0^\beta)_r|_\infty)\leq C_0.    
\end{equation*}
\end{proof}

\subsection{Boundedness of the effective velocity near the symmetric center}

\begin{Lemma}\label{new-u-lp}
For all $p\in [2,\infty)$, there exists a constant $C(p,T)>0$ such that, for all $t\in [0,T]$,
\begin{equation*}
\big|(\zeta^2\eta_r\varrho)^\frac{1}{p}U(t)\big|_p^p+\int_0^t \Big|(\zeta^2\eta_r\varrho)^\frac{1}{2}|U|^\frac{p-2}{2}\big(D_\eta U,\frac{U}{\eta}\big)\Big|_2^2\,\mathrm{d}s \leq C(p,T)\big(\sup_{s\in[0,t]}|\zeta V|_\infty^2+1\big).
\end{equation*}
\end{Lemma}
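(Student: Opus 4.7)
The plan is to multiply the momentum equation $\eqref{eq:VFBP-La-eta}_1$ by the test function $\zeta^2\eta_r|U|^{p-2}U$ and integrate over $I$. The central algebraic identity throughout is $(\eta_r\varrho)_t = -m\eta_r\varrho\,U/\eta$, a direct consequence of the time-independence $\eta^m\eta_r\varrho = r^m\rho_0$ in \eqref{eq:eta}. This identity recasts the time-derivative contribution as
\[
\tfrac{1}{p}\tfrac{\mathrm{d}}{\mathrm{d}t}\big|(\zeta^2\eta_r\varrho)^{1/p}U\big|_p^p \; + \; \tfrac{m}{p}\int \zeta^2\eta_r\varrho\,U|U|^p/\eta\,\mathrm{d}r,
\]
the second summand being absorbable into the $(U/\eta)$-part of the dissipation by Cauchy--Schwarz. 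Integration by parts on the viscous term yields the main dissipation $\big|(\zeta^2\eta_r\varrho)^{1/2}|U|^{(p-2)/2}(D_\eta U, U/\eta)\big|_2^2$ together with cut-off error terms supported on $\mathrm{supp}\,\zeta_r\subset[\tfrac12,\tfrac58]$, which are uniformly bounded via Lemma \ref{lemma-refine u Lp} and Lemma \ref{lp-uv}.

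Next I will treat the pressure term $\int A\,D_\eta(\varrho^\gamma)\,\zeta^2\eta_r|U|^{p-2}U\,\mathrm{d}r$. After integration by parts, the uniform upper bound $|\varrho|_\infty\le C(T)$ of Lemma \ref{lemma-bound depth}, combined with Cauchy--Schwarz and Young's inequality, absorbs part of it into the dissipation and the rest into $C(T)|(\zeta^2\eta_r\varrho)^{1/p}U|_p^p + C(p,T)$. The decisive step is the coordinate-singularity contribution $-2\mu m\int\zeta^2\varrho_r|U|^p/\eta\,\mathrm{d}r$. Invoking the identity $2\mu\varrho_r = \eta_r\varrho(V-U)$ from Definition \ref{def-v}, I split this into a $U$-part (absorbable into the $(U/\eta)$-dissipation together with $|(\zeta^2\eta_r\varrho)^{1/p}U|_p^p$) and a $V$-part $-m\int\zeta^2\eta_r\varrho V|U|^p/\eta\,\mathrm{d}r$, which is dominated by $|\zeta V|_\infty\cdot\int\zeta\eta_r\varrho|U|^p/\eta\,\mathrm{d}r$.

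This last integral will be controlled by Cauchy--Schwarz as the product of $\big|(\zeta^2\eta_r\varrho)^{1/2}|U|^{(p-2)/2}U/\eta\big|_2$ (a piece of the dissipation) with $\big|(\eta_r\varrho)^{1/2}|U|^{p/2}\big|_2$. The latter I will handle by region-segmentation: on $\mathrm{supp}\,\zeta$ it is dominated by $|(\zeta^2\eta_r\varrho)^{1/p}U|_p^{p/2}$, while on the complement the bound $\eta_r\varrho\le C(T)\rho_0$, which follows from the lower estimate $\eta/r\ge C(T)^{-1}$ of Lemma \ref{lemma-lower bound jacobi} together with $\varrho^{-1}\eta^m\eta_r = r^m\rho_0/\varrho$, combined with Lemma \ref{lp-uv}, yields a $C(p,T)$-bound. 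A careful Young split will then distribute the $V$-contribution into a small fraction of the dissipation, a term $C(p,T)|\zeta V|_\infty^2$ with \emph{bounded coefficient}, and admissible lower-order pieces.

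Collecting all contributions should produce a differential inequality of the form
\[
\tfrac{\mathrm{d}}{\mathrm{d}t}\big|(\zeta^2\eta_r\varrho)^{1/p}U\big|_p^p + \tfrac{1}{2}\Big|(\zeta^2\eta_r\varrho)^{1/2}|U|^{(p-2)/2}\big(D_\eta U, \tfrac{U}{\eta}\big)\Big|_2^2 \le C(p,T)\Big(\big|(\zeta^2\eta_r\varrho)^{1/p}U\big|_p^p + |\zeta V|_\infty^2 + 1\Big),
\]
from which Gronwall's inequality, integration in time, and control of the initial data via Definition \ref{d2} immediately yield the stated bound. The principal obstacle will be arranging the Young-inequality balancing so that the $V$-contributions enter only as $|\zeta V|_\infty^2$ and with a coefficient that does \emph{not} multiply $|(\zeta^2\eta_r\varrho)^{1/p}U|_p^p$; any such product would create an exponential factor in the Gronwall step and destroy the explicit quadratic dependence on $\sup_{s\in[0,t]}|\zeta V|_\infty$ that is essential for the subsequent bootstrap yielding the $L^\infty$-bound of $\zeta V$ described in \S\ref{subsub323}.
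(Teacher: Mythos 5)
Your overall strategy matches the paper's proof up to the treatment of the coordinate-singularity/effective-velocity term $\mathrm{G}_{10}=-\tfrac{m}{p}\int\zeta^2\eta_r\varrho V|U|^p/\eta\,\mathrm{d}r$, but your proposed Cauchy--Schwarz split of that term does not avoid the obstacle you yourself identify, so there is a genuine gap.

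Concretely, you factor $\zeta\eta_r\varrho=(\zeta^2\eta_r\varrho)^{1/2}(\eta_r\varrho)^{1/2}$, putting the $1/\eta$ entirely into the first (dissipation) factor. This leaves the second factor $\big|(\eta_r\varrho)^{1/2}|U|^{p/2}\big|_2$. Two problems arise. First, the claim that this is dominated by $|(\zeta^2\eta_r\varrho)^{1/p}U|_p^{p/2}$ on $\mathrm{supp}\,\zeta$ is false, because $\zeta<1$ on $(\tfrac12,\tfrac58)$, so $\eta_r\varrho\ge\zeta^2\eta_r\varrho$ and the inequality goes the wrong way; you would need to restrict to the set $\{\zeta=1\}=[0,\tfrac12]$ and treat $[\tfrac12,\tfrac58]$ separately. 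Second, and more importantly, even after patching that, the Young split gives
$\mathrm{G}_{10}\le\tfrac{\mu}{20}\big|(\zeta^2\eta_r\varrho)^{1/2}|U|^{(p-2)/2}\tfrac{U}{\eta}\big|_2^2+C|\zeta V|_\infty^2\big(\big|(\zeta^2\eta_r\varrho)^{1/p}U\big|_p^p+C(p,T)\big)$, and the product $|\zeta V|_\infty^2\big|(\zeta^2\eta_r\varrho)^{1/p}U\big|_p^p$ is exactly the term you flag as fatal for Gr\"onwall. You do not supply the ``careful Young split'' that would remove it, and your factorization cannot produce one.

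The paper's resolution is a different geometric-mean split that uses the identity $\eta^m\eta_r\varrho=r^m\rho_0$ to move the $\eta$-weight. For $n=2$, $m=1$, one has $\tfrac{\eta_r\varrho}{\eta}=\tfrac{r\rho_0}{\eta^2}$ exactly, so $\mathrm{G}_{10}\le C|\zeta V|_\infty\big|\big(\tfrac{r\rho_0}{\eta^2}\big)^{1/p}U\big|_p^p$ with no Cauchy--Schwarz needed. For $n=3$, one writes $\zeta\tfrac{\eta_r\varrho}{\eta}=\big(\tfrac{r^2\rho_0}{\eta^2}\big)^{1/2}\big(\tfrac{\zeta^2\eta_r\varrho}{\eta^2}\big)^{1/2}$ (which is an identity because $\eta_r\varrho=r^2\rho_0/\eta^2$), so after Young the $|\zeta V|_\infty^2$ multiplies $\big|\big(\tfrac{r^m\rho_0}{\eta^2}\big)^{1/p}U\big|_p^p=\big|(r^m\rho_0)^{1/2}|U|^{(p-2)/2}\tfrac{U}{\eta}\big|_2^2$, which is the time-integrable dissipation of Lemma \ref{lp-uv}, while the complementary factor $\big|\big(\tfrac{\zeta^2\eta_r\varrho}{\eta^2}\big)^{1/p}U\big|_p^p$ is absorbed into the dissipation of the present lemma. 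Thus $|\zeta V|_\infty^2$ never multiplies $\big|(\zeta^2\eta_r\varrho)^{1/p}U\big|_p^p$, and Gr\"onwall delivers the stated quadratic dependence on $\sup_s|\zeta V|_\infty$. You should replace your $\zeta$-weight split by this $\eta$-weight split. The remainder of your outline (the time-derivative identity, the pressure term via $|\varrho|_\infty$ and $|\zeta\eta_r\varrho|_1$ from Lemmas \ref{lemma-bound depth} and \ref{lemma-near depth}, and the cut-off errors via Lemma \ref{lp-uv} and the uniform lower bound of $(\eta_r,\eta/r)$ on $\mathrm{supp}\,\zeta_r$) is consistent with the paper.
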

\begin{proof}
Multiplying  $\eqref{eq:VFBP-La-eta}_1$ by $\zeta^2\eta_r|U|^{p-2}U$ gives
\begin{equation*}
\begin{aligned}
&\,\frac{1}{p}(\zeta^2\eta_r \varrho|U|^p)_t  +2\mu(p-1) \zeta^2\eta_r\varrho|U|^{p-2}\Big(|D_\eta U|^2+\frac{m}{p}\frac{U^2}{\eta^2}\Big)\\
&=A(p-1)\zeta^2\eta_r\varrho^\gamma|U|^{p-2}D_\eta U- \frac{m}{p}\frac{\zeta^2\eta_r \varrho V|U|^p}{\eta}\\
&\quad -2\zeta \zeta_r\Big( 2\mu \varrho|U|^{p-2}U D_\eta U+\frac{2\mu m}{p} \frac{\varrho|U|^p}{\eta}-A\varrho^\gamma|U|^{p-2}U\Big)\\
&\quad+\Big(2\mu \zeta^2\varrho|U|^{p-2}U D_\eta U +\frac{2\mu m}{p} \frac{\zeta^2\varrho|U|^p}{\eta}- A\zeta^2\varrho^\gamma|U|^{p-2}U \Big)_r.
\end{aligned}
\end{equation*}

Then integrating the above over $I$ leads to
\begin{equation}\label{G8-G10}
\begin{aligned}
&\,\frac{1}{p}\frac{\mathrm{d}}{\mathrm{d}t}\big|(\zeta^2\eta_r \varrho)^\frac{1}{p} U\big|_p^p  +\mu \Big|(\zeta^2\eta_r\varrho)^\frac{1}{2}|U|^\frac{p-2}{2}\big(D_\eta U,\frac{U}{\eta}\big)\Big|_2^2\\
&\leq A(p-1) \int_0^1\zeta^2\varrho^\gamma|U|^{p-2}U_r\,\mathrm{d}r- \frac{m}{p} \int_0^1 \frac{\zeta^2\eta_r \varrho V|U|^p}{\eta}\,\mathrm{d}r\\
&\quad- 2 \int_0^1 \zeta\zeta_r \Big( 2\mu \varrho|U|^{p-2}U D_\eta U+\frac{2\mu m}{p} \frac{\varrho|U|^p}{\eta}-A\varrho^\gamma|U|^{p-2}U\Big)\,\mathrm{d}r:=\sum_{i=9}^{11} \mathrm{G}_i.
\end{aligned}
\end{equation}

For $\mathrm{G}_9$, it follows from Lemmas \ref{lemma-near depth} and \ref{lemma-bound depth}, and the H\"older and Young inequalities that
\begin{equation}\label{G8}
\begin{aligned}
\mathrm{G}_9&\leq C(p)|\varrho|_\infty^{\gamma-1}|\zeta\eta_r\varrho|_1^\frac{1}{p}\big|(\zeta^2\eta_r\varrho)^\frac{1}{p} U\big|_p^\frac{p-2}{2}\big|(\zeta^2\eta_r\varrho)^\frac{1}{2}|U|^\frac{p-2}{2}D_\eta U\big|_2\\
&\leq C(p,T)+\big|(\zeta^2\eta_r\varrho)^\frac{1}{p} U\big|_{p}^{p}+\frac{\mu}{20}\big|(\zeta^2\eta_r\varrho)^\frac{1}{2}|U|^\frac{p-2}{2}D_\eta U\big|_2^2.
\end{aligned}
\end{equation}

For $\mathrm{G}_{10}$, it follows from \eqref{eq:eta}, and the H\"older and Young inequalities that
\begin{align*}
&\begin{aligned}
\text{if $n=2$: } \quad \mathrm{G}_{10}&=-\frac{1}{p}\int_0^1 \frac{\zeta^2 \eta_r \varrho V|U|^p}{\eta}\,\mathrm{d}r\leq C(p)|\zeta V|_\infty \Big|\big(\frac{r\rho_0}{\eta^2}\big)^\frac{1}{p}U\Big|_p^p \\
&\leq C(p)(|\zeta V|_\infty^2+1) \Big|\big(\frac{r^m\rho_0}{\eta^2}\big)^\frac{1}{p}U\Big|_p^p;
\end{aligned}\\
&\begin{aligned}
\text{if $n=3$: } \quad \mathrm{G}_{10}&=-\frac{2}{p}\int_0^1 \frac{\zeta^2 \eta_r \varrho V|U|^p}{\eta}\,\mathrm{d}r \leq C(p)|\zeta V|_\infty \Big|\big(\frac{r^2\rho_0}{\eta^2}\big)^\frac{1}{p}U\Big|_p^\frac{p}{2} \Big|\big(\frac{\zeta^2\eta_r\varrho}{\eta^2}\big)^\frac{1}{p}U\Big|_p^\frac{p}{2}\\
&\leq C(p)|\zeta V|_\infty^2 \Big|\big(\frac{r^m\rho_0}{\eta^2}\big)^\frac{1}{p}U\Big|_p^p+\frac{\mu}{20} \Big|\big(\frac{\zeta^2\eta_r\varrho}{\eta^2}\big)^\frac{1}{p}U\Big|_p^p.    
\end{aligned}
\end{align*}
To sum up, in both cases, we have
\begin{equation}
\mathrm{G}_{10}\leq C(p)(|\zeta V|_\infty^2+1) \Big|\big(\frac{r^m\rho_0}{\eta^2}\big)^\frac{1}{p}U\Big|_p^p+\frac{\mu}{20} \Big|\big(\frac{\zeta^2\eta_r\varrho}{\eta^2}\big)^\frac{1}{p}U\Big|_p^p.
\end{equation}

For $\mathrm{G}_{11}$, it follows from \eqref{eq:eta}, Lemmas \ref{lp-uv}, \ref{lemma-bound depth}, and \ref{lemma-lower bound jacobi}, and the H\"older and Young inequalities that
\begin{equation}\label{G10}
\begin{aligned}
\mathrm{G}_{11}&\leq C(p)\int_\frac{1}{2}^\frac{5}{8} \Big(\frac{r^m\rho_0|U|^{p-1} |D_\eta U|}{\eta^m\eta_r}+ \frac{r^m\rho_0|U|^p}{\eta^{m+1}\eta_r}+\varrho^{\gamma-1}\big(\frac{r^m\rho_0}{\eta^m\eta_r}\big)|U|^{p-1}\Big)\,\mathrm{d}r\\
&\leq C(p,T)\big(\big|(r^m\rho_0)^\frac{1}{2}|U|^{p-1}D_\eta U\big|_2+ \big|(r^m\rho_0)^\frac{1}{p}U\big|_p^p +|\varrho|_\infty^{\gamma-1}\big|(r^m\rho_0)^\frac{1}{p}U\big|_{p}^{p-1}\big)\\
&\leq C(p,T)+\big|(r^m\rho_0)^\frac{1}{2}|U|^{p-1}D_\eta U\big|_2^2.
\end{aligned}    
\end{equation}

Consequently, collecting \eqref{G8-G10}--\eqref{G10}, we have
\begin{equation*}
\begin{aligned}
&\,\frac{1}{p}\frac{\mathrm{d}}{\mathrm{d}t}\big|(\zeta^2\eta_r \varrho)^\frac{1}{p} U\big|_p^p  +\frac{\mu}{2}\Big|(\zeta^2\eta_r\varrho)^\frac{1}{2}|U|^\frac{p-2}{2}\big(D_\eta U,\frac{U}{\eta}\big)\Big|_2^2\notag\\
&\leq \big|(\zeta^2\eta_r\varrho)^\frac{1}{p} U\big|_{p}^{p}+(|\zeta V|_\infty^2+1) \Big|\big(\frac{r^m\rho_0}{\eta^2}\big)^\frac{1}{p}U\Big|_p^p+\big|(r^m\rho_0)^\frac{1}{2}|U|^{p-1}D_\eta U\big|_2^2+C(p,T),
\end{aligned}
\end{equation*}
which, along with Lemma \ref{lp-uv} and the Gr\"onwall inequality, leads to the desired estimates.
\end{proof}

Next, we can derive the $L^1([0,T];L^\infty)$-estimate of $\zeta\varrho^{\gamma-1} U$.
\begin{Lemma}\label{cru3}
For any $\varepsilon\in(0,1)$, there exists a constant $C(\varepsilon,T)>0$ such that 
\begin{equation*}
\int_0^t |\zeta\varrho^{\gamma-1}U|_{\infty}\, \mathrm{d}s\leq C(\varepsilon,T) +\varepsilon\sup_{s\in[0,t]}|\zeta V|_\infty \qquad \text{for all $t\in [0,T]$}.
\end{equation*}
\end{Lemma}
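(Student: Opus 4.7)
The plan is to adapt the strategy used in Lemma \ref{lemma-v Linfty ex} to the interior region, replacing the exterior weighted $L^p$-estimates of Lemma \ref{lemma-refine u Lp} by the interior analogues from Lemma \ref{new-u-lp}. Fix an integer $k>3$. Since $\zeta\varrho^{\gamma-1}U$ vanishes at $r=\tfrac58$ (where $\zeta\equiv 0$), the Sobolev embedding $W^{1,1}\hookrightarrow L^\infty$ applied to $\zeta^k\varrho^{(\gamma-1)k}|U|^{k-1}U$ gives
\[
|\zeta\varrho^{\gamma-1}U|_\infty^k \leq C\bigl|(\zeta^k\varrho^{(\gamma-1)k}|U|^{k-1}U)_r\bigr|_1.
\]
Expanding the derivative produces three contributions: a cutoff term with $\zeta_r$ supported on $[\tfrac12,\tfrac58]$, controlled by Lemma \ref{lp-uv} since $r^m\rho_0\sim 1$ on this interval; a density-derivative term in which I substitute $\varrho_r$ by $\tfrac{1}{2\mu}\eta_r\varrho(V-U)$ (obtained from \eqref{v-expression}); and a velocity-derivative term where $U_r=\eta_r D_\eta U$.

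The density-derivative term splits into a piece containing $V$, estimated by $|\zeta V|_\infty\cdot\!\int\zeta^2\eta_r\varrho|U|^k\,\mathrm{d}r$, and a piece containing an extra factor of $U$, controlled by $\int\zeta^2\eta_r\varrho|U|^{k+1}\,\mathrm{d}r$. Both integrals are bounded through Lemma \ref{new-u-lp} (applied at $p=k$ and $p=k+1$) by $C(k,T)(\sup_{s\in[0,t]}|\zeta V|_\infty^2+1)$, so this term is $\leq C(k,T)(\sup_{s\in[0,t]}|\zeta V|_\infty^3+1)$. For the velocity-derivative term, Cauchy--Schwarz isolates the dissipation factor $\bigl|(\zeta^2\eta_r\varrho)^{1/2}|U|^{(k-2)/2}D_\eta U\bigr|_2$, while the remainder $\bigl|\zeta^{k-1}\eta_r^{1/2}\varrho^{(\gamma-1)k-1/2}|U|^{k/2}\bigr|_2$ is bounded, using Lemma \ref{lemma-bound depth} and Lemma \ref{new-u-lp}, by $C(k,T)(|\zeta V|_\infty+1)$. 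Collecting these pieces and taking $k$-th roots yields the pointwise-in-time inequality
\[
|\zeta\varrho^{\gamma-1}U|_\infty \leq C(k,T)\Bigl(1+|\zeta V|_\infty^{3/k} + (1+|\zeta V|_\infty^{1/k})\bigl|(\zeta^2\eta_r\varrho)^{1/2}|U|^{(k-2)/2}D_\eta U\bigr|_2^{1/k}\Bigr).
\]

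Integrating in $s\in[0,t]$ and applying H\"older with exponents $2k$ and $2k/(2k-1)$ to the $|\cdot|_2^{1/k}$ factor, together with the $L^2$-in-time dissipation bound $\int_0^t\bigl|(\zeta^2\eta_r\varrho)^{1/2}|U|^{(k-2)/2}D_\eta U\bigr|_2^2\,\mathrm{d}s\leq C(k,T)(\sup_{s\in[0,t]}|\zeta V|_\infty^2+1)$ from Lemma \ref{new-u-lp}, I obtain
\[
\int_0^t|\zeta\varrho^{\gamma-1}U|_\infty\,\mathrm{d}s \leq C(k,T)\Bigl(1+\sup_{s\in[0,t]}|\zeta V|_\infty^{3/k}+\sup_{s\in[0,t]}|\zeta V|_\infty^{2/k}\Bigr).
\]
Since $k>3$, both exponents $3/k$ and $2/k$ lie strictly below $1$, and Young's inequality absorbs these sublinear powers into $\varepsilon\sup_{s\in[0,t]}|\zeta V|_\infty+C(\varepsilon,k,T)$, producing the stated bound once $k=k(\varepsilon)$ is fixed. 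The main obstacle is the density-derivative term: substituting $2\mu\varrho_r=\eta_r\varrho(V-U)$ simultaneously injects the unknown $V$ \emph{and} raises the power of $U$ by one, and in the absence of any direct $L^\infty$-bound on $U$ one must invoke Lemma \ref{new-u-lp} at the higher exponent $p=k+1$. The resulting cubic growth $\sup|\zeta V|_\infty^3$ is precisely what forces the constraint $k>3$ in order for the final Young-type absorption to close.
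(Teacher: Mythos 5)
Your proposal reproduces the paper's argument essentially step by step: the Sobolev embedding $W^{1,1}\hookrightarrow L^\infty$ applied to $\zeta^k\varrho^{(\gamma-1)k}|U|^k$, the splitting into cutoff, density-derivative, and velocity-derivative contributions, the substitution $2\mu\varrho_r=\eta_r\varrho(V-U)$, the invocation of Lemma \ref{new-u-lp} at exponents $k$ and $k+1$, the H\"older-in-time step for the dissipation factor, and the final sublinear Young absorption. Your shortcut for the cutoff term (using that $r^m\rho_0\sim 1$ on $\support \zeta_r$ and appealing to Lemma \ref{lp-uv} instead of Lemma \ref{new-u-lp}) is a harmless and slightly cleaner variant.

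There is one genuine gap: the constraint $k>3$ alone is not sufficient. After substituting $2\mu\varrho_r=\eta_r\varrho(V-U)$, both the density-derivative and velocity-derivative pieces produce the factor $\varrho^{(\gamma-1)k-1}$ (once you peel off a single copy of $\zeta^2\eta_r\varrho$ to recognize the $L^k$-weight of Lemma \ref{new-u-lp}). You control this factor by the uniform upper bound of Lemma \ref{lemma-bound depth}, which is valid only if the exponent $(\gamma-1)k-1$ is nonnegative, i.e.\ $k\geq 1/(\gamma-1)$. In Sections \ref{Section-densityupper}--\ref{Section-densitylower} the standing hypothesis is merely $\gamma>1$, so $\gamma\in(1,\tfrac43)$ is permitted and then $1/(\gamma-1)>3$. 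The paper therefore takes $k>\max\{3,\tfrac{1}{\gamma-1}\}$, and you need the same lower bound; with only $k>3$ the exponent on $\varrho$ can be negative and the bound $|\varrho|_\infty\leq C(T)$ gives nothing. The fix is trivial (enlarge $k$), but as written the claim ``the constraint $k>3$'' is misattributed: the cubic growth in $\sup|\zeta V|_\infty$ only explains one of the two lower bounds on $k$.
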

\begin{proof}
Let $k\in \NN^*$ be a fixed number such that 
\begin{equation*}
k>\max\big\{3,\frac{1}{\gamma-1}\big\}.
\end{equation*}

First, it follows from Lemma \ref{sobolev-embedding} that
\begin{equation}\label{6.10}
\begin{aligned}
&\,|\zeta \varrho^{\gamma-1}U|_\infty^{k}\leq C_0\big|(\zeta^k \varrho^{k\gamma-k} |U|^{k})_r\big|_1\\
&\leq C_0\big(\big|\zeta^{k-1}\zeta_r\varrho^{k\gamma-k} U^{k}\big|_1+ \big|\zeta^k\varrho^{k\gamma-k}\eta_rU^k(V,U)\big|_1+ \big|\zeta^k\varrho^{k\gamma-k}\eta_r U^{k-1}D_\eta U\big|_1\big):=\sum_{i=12}^{14}\mathrm{G}_i.
\end{aligned}    
\end{equation}

Then, using \eqref{v-expression}, Lemma \ref{lemma-bound depth}, and  the H\"older inequality, we have
\begin{equation}\label{wuqiong-G12-G14}
\begin{aligned}
\mathrm{G}_{12}&\leq C_0|\eta_r^{-1}|_\infty|\zeta|_\infty^{k-3}|\varrho|_\infty^{k(\gamma-1)-1}\big|(\zeta^2\varrho \eta_r)^\frac{1}{k}U\big|_k^{k}\leq C(T)\big|(\zeta^2\varrho \eta_r)^\frac{1}{k}U\big|_k^{k},\\
\mathrm{G}_{13}&\leq C_0|\varrho|_\infty^{k(\gamma-1)-1}|\zeta|_\infty^{k-3}\big(|\zeta V|_\infty\big|(\zeta^2\eta_r\varrho)^\frac{1}{k}U\big|_k^k+ |\zeta|_\infty\big|(\zeta^2\eta_r\varrho)^\frac{1}{k+1}U\big|_{k+1}^{k+1}\big)\\
&\leq C(T)\big(|\zeta V|_\infty\big|(\zeta^2\eta_r\varrho)^\frac{1}{k}U\big|_k^k+ \big|(\zeta^2\eta_r\varrho)^\frac{1}{k+1}U\big|_{k+1}^{k+1}\big),\\
\mathrm{G}_{14}&\leq C_0|\varrho|_\infty^{k(\gamma-1)-1}|\zeta|_\infty^{k-2}\big|(\zeta^2\eta_r\varrho)^{\frac{1}{k}}U\big|_{k}^\frac{k}{2}\big|(\zeta^2\eta_r\varrho)^\frac{1}{2}|U|^\frac{k-2}{2}D_\eta U\big|_2 \\
&\leq C(T)\big|(\zeta^2\eta_r\varrho)^{\frac{1}{k}}U\big|_{k}^\frac{k}{2}\big|(\zeta^2\eta_r\varrho)^\frac{1}{2}|U|^\frac{k-2}{2}D_\eta U\big|_2.  
\end{aligned}
\end{equation}

Combining \eqref{6.10}--\eqref{wuqiong-G12-G14} gives
\begin{equation*}
\begin{aligned}
|\zeta \varrho^{\gamma-1}U|_\infty&\leq C(T)(1+|\zeta V|_\infty^\frac{1}{k})\big|(\zeta^2\eta_r\varrho)^{\frac{1}{k}}U\big|_{k}+C(T)\big|(\zeta^2\eta_r\varrho)^{\frac{1}{k+1}}U\big|_{k+1}^\frac{k+1}{k}\\
&\quad+C(T)\big|(\zeta^2\eta_r\varrho)^{\frac{1}{k}}U\big|_{k}^\frac{1}{2}\big|(\zeta^2\eta_r\varrho)^\frac{1}{2}|U|^\frac{k-2}{2}D_\eta U\big|_2^\frac{1}{k},
\end{aligned}
\end{equation*}
which, along with Lemma \ref{new-u-lp} and the Young inequality, leads to
\begin{equation*}
|\zeta \varrho^{\gamma-1}U|_\infty\leq C(T)\Big(1+\sup_{s\in[0,t]}|\zeta V|_\infty^\frac{3}{k} +\big(1+\sup_{s\in[0,t]}|\zeta V|_\infty^\frac{1}{k}\big)\big|(\zeta^2\eta_r\varrho)^\frac{1}{2}|U|^\frac{k-2}{2}D_\eta U\big|_2^\frac{1}{k}\Big).  
\end{equation*}

Finally, integrating the above over $[0,t]$, then we obtain from the fact that $k>3$, Lemma \ref{new-u-lp}, and the H\"older and Young inequalities that, for all $\varepsilon\in (0,1)$,
\begin{equation*}
\begin{aligned}
&\int_0^t |\zeta \varrho^{\gamma-1}U|_\infty\,\mathrm{d}s\\
&\leq C(T)\Big(1+\sup_{s\in[0,t]}|\zeta V|_\infty^\frac{3}{k}+\big(1+\sup_{s\in[0,t]}|\zeta V|_\infty^\frac{1}{k}\big)\int_0^t\big|(\zeta^2\eta_r\varrho)^\frac{1}{2}|U|^\frac{k-2}{2}D_\eta U\big|_2^\frac{1}{k}\,\mathrm{d}s\Big)\\
&\leq C(T)\Big(1+\sup_{s\in[0,t]}|\zeta V|_\infty^\frac{3}{k}+\sup_{s\in[0,t]}|\zeta V|_\infty^\frac{1}{k}\Big) \leq C(\varepsilon,T) +\varepsilon\!\sup_{s\in[0,t]}|\zeta V|_\infty.
\end{aligned}
\end{equation*}

This completes the proof.
\end{proof}

\begin{Lemma}\label{lemma-v Linfty in}
For any $a\in(0,1)$, there exists a constant $C(a,T)>0$ such that
\begin{equation*}
|\zeta_{a} V(t)|_\infty\leq C(a,T) \qquad \text{for all $t\in [0,T]$}.
\end{equation*}
\end{Lemma}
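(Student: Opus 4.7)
The plan is to establish the conclusion first at the specific cut-off $a=\tfrac{1}{2}$, i.e., $|\zeta V(t)|_\infty\leq C(T)$, and then to upgrade to an arbitrary $a\in(0,1)$ by a region-decomposition argument combined with Lemma \ref{lemma-v Linfty ex}. The starting point is to multiply the representation formula \eqref{V-solution} by $\zeta$ and take the $L^\infty$-norm, which gives
\begin{equation*}
|\zeta V(t)|_\infty\leq |\zeta v_0|_\infty+\frac{A\gamma}{2\mu}\int_0^t|\zeta\varrho^{\gamma-1}U(s)|_\infty\,\mathrm{d}s.
\end{equation*}
Since $\support\zeta\subset[0,\tfrac{5}{8}]$ stays uniformly away from $r=1$, the relation $\rho_0^\beta\sim 1-r$ makes $(\log\rho_0)_r=\beta^{-1}(\rho_0^\beta)_r/\rho_0^\beta$ bounded on the support of $\zeta$, which together with $u_0\in C^1(\bar I)$ yields $|\zeta v_0|_\infty\leq C_0$.

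The heart of the argument is then to invoke Lemma \ref{cru3}: for any $\varepsilon\in(0,1)$,
\begin{equation*}
\int_0^t|\zeta\varrho^{\gamma-1}U|_\infty\,\mathrm{d}s\leq C(\varepsilon,T)+\varepsilon\sup_{s\in[0,t]}|\zeta V|_\infty.
\end{equation*}
Inserting this into the previous inequality and writing $M(t):=\sup_{s\in[0,t]}|\zeta V(s)|_\infty$, which is non-decreasing in $t$ and finite for each $t\in[0,T]$ by the classical regularity of $(U,\eta)$ in the solution class $D(T)$ together with the uniform lower bounds on $(\eta_r,\eta/r)$ from Lemma \ref{lemma-lower bound jacobi}, I would take the supremum in $s\in[0,t]$ on both sides to arrive at $M(t)\leq C_0+C(\varepsilon,T)+\varepsilon M(t)$. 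Choosing $\varepsilon=\tfrac{1}{2}$ absorbs the last term and yields $M(T)\leq C(T)$, which is exactly $|\zeta V(t)|_\infty\leq C(T)$ for all $t\in[0,T]$.

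Finally, to extend the bound to an arbitrary $a\in(0,1)$, I would split $\support\zeta_a=[0,\tfrac{1+3a}{4}]$ into $[0,\tfrac{1}{2}]$ and $[\tfrac{1}{2},\tfrac{1+3a}{4}]$. On the first piece $\zeta\equiv 1$, so $|V(t,r)|\leq|\zeta V(t)|_\infty\leq C(T)$ from the step above; on the second piece $\rho_0^\beta(r)\geq c(a)>0$ since the interval is uniformly away from $r=1$, so Lemma \ref{lemma-v Linfty ex} (applied with its cut-off parameter taken to be $\tfrac{1}{2}$) gives $|V(t,r)|\leq c(a)^{-1}|\chi_{1/2}^\sharp\rho_0^\beta V(t)|_\infty\leq C(a,T)$. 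Together with $\zeta_a=0$ on $[\tfrac{1+3a}{4},1]$, these two pieces combine to deliver the desired $|\zeta_a V(t)|_\infty\leq C(a,T)$. The main subtlety is the absorption step: one must verify \emph{a priori} that $M(t)$ is finite before moving $\varepsilon M(t)$ to the left-hand side, which is what forces the appeal to the regularity class $D(T)$ and to the uniform positivity of $\eta_r$ near the origin established earlier in the section.
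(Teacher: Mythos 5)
Your proposal follows the paper's proof essentially step for step: bound $|\zeta v_0|_\infty$ using $\rho_0^\beta\sim 1-r$, plug Lemma~\ref{cru3} into the representation formula \eqref{V-solution}, absorb the $\varepsilon\sup_{s\in[0,t]}|\zeta V|_\infty$ term, and then upgrade from $a=\tfrac{1}{2}$ to general $a$ by combining the near-origin bound with Lemma~\ref{lemma-v Linfty ex} on the annulus away from the origin. One computational slip: after inserting Lemma~\ref{cru3} the coefficient of $\sup_{s\in[0,t]}|\zeta V|_\infty$ is $\tfrac{A\gamma\varepsilon}{2\mu}$ (not $\varepsilon$), so taking $\varepsilon=\tfrac{1}{2}$ need not absorb the term when $\tfrac{A\gamma}{2\mu}>1$; you must instead take $\varepsilon\leq\min\{\tfrac{1}{2},\tfrac{\mu}{A\gamma}\}$, which is exactly what the paper does. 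Your explicit remark that $M(t)$ must be known finite a priori (from the classical regularity of solutions in the class $D(T)$) before the absorption is performed is a small point the paper leaves implicit, and it is correctly resolved.
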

\begin{proof}
First, it follows from \eqref{V-solution} and Lemma \ref{cru3} that, for any $\varepsilon\in(0,1)$,
\begin{equation}\label{daiding}
\sup_{s\in[0,t]}|\zeta V|_\infty \leq |\zeta v_0|_\infty +\frac{A\gamma}{2\mu}\int_0^t \big|\zeta \varrho^{\gamma-1}U\big|_\infty\,\mathrm{d}s \leq C(\varepsilon,T) +\frac{A\gamma \varepsilon}{2\mu}\sup_{s\in[0,t]}|\zeta V|_\infty.
\end{equation}
Here, the bound of $|\zeta v_0|_\infty$ follows from \eqref{v-expression} and the fact that $\rho_0^\beta\sim 1-r$, {\it i.e.},
\begin{equation}
|\zeta v_0|_\infty\leq |\zeta u_0|_\infty+2\mu |\zeta(\log\rho_0)_r|_\infty\leq |\zeta u_0|_\infty+C_0|\zeta\rho_0^{-\beta}|_\infty|(\rho_0^\beta)_r|_\infty\leq C_0.
\end{equation}

Then setting $\varepsilon$ in \eqref{daiding} such that
\begin{equation*}
\varepsilon=\min\big\{\frac{1}{2},\frac{\mu}{A\gamma}\big\},
\end{equation*}
we obtain that, for all $t\in [0,T]$,
\begin{equation*}
|\zeta V(t)|_\infty\leq C(T)\implies |\zeta_{a} V(t) |_\infty\leq C(T) \qquad \text{for any } a\in\big(0,\frac{1}{2}\big].
\end{equation*}

Finally, for $a\in (\frac{1}{2},1)$, it follows from the above, the fact that $\rho_0^\beta\sim 1-r$, and Lemma \ref{lemma-v Linfty ex} that
\begin{equation*}
|\zeta_{a} V(t)|_\infty\leq |\zeta V(t)|_\infty+\|V(t)\|_{L^\infty(\frac{1}{2},a)}\leq C(T)+C(a) |\chi^\sharp\rho_0^\beta V(t)|_\infty\leq C(a,T).
\end{equation*}

This completes the proof.
\end{proof}

\section{Global-In-Time Uniform Upper Bounds of \texorpdfstring{$(\eta_r,\frac{\eta}{r})$}{}}\label{Section-etarupper}

The purpose of this section is to establish the global-in-time uniform upper bounds of $(\eta_r,\frac{\eta}{r})$.  
 
\subsection{Uniform upper bounds of \texorpdfstring{$(\eta_r,\frac{\eta}{r})$}{} in the exterior domain}

\subsubsection{Some auxiliary estimates} We first give some auxiliary estimates in Lemmas \ref{Uinfty}--\ref{lemma-V-Vr}, which will be used in \S \ref{subsub-712}.
\begin{Lemma}\label{Uinfty}
For any $L>0$ and $a\in(0,1)$, there exists a constant $C(a,L,T)>0$ such that 
\begin{equation*}
\big|\zeta_{a}^\sharp \rho_0^L U(t)\big|_\infty \leq C(a,L,T)\big(1+\big|\zeta_{a}^\sharp \rho_0^L\sqrt{\eta_r} U(t)\big|_2^\frac{1}{2}\big|\zeta_{a}^\sharp \rho_0^L\sqrt{\eta_r}D_\eta U(t)\big|_2^\frac{1}{2}\big)\qquad \text{for all $t\in [0,T]$},
\end{equation*}
where the cut-off function $\zeta^{\sharp}_a$ is defined in {\rm \S \ref{othernotation}}.
\end{Lemma}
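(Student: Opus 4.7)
\emph{Proof proposal.} The estimate has the shape of a one-dimensional Gagliardo--Nirenberg inequality applied to the weighted quantity $g := \zeta_a^\sharp \rho_0^L U$. The crucial observation is that $\zeta_a^\sharp(a) = 0$, so $g$ vanishes at $r = a$, and the fundamental theorem of calculus yields
\begin{equation*}
g^2(r) = 2 \int_a^r g(\tilde r) \, g_{\tilde r}(\tilde r) \, d\tilde r, \qquad \text{whence} \qquad |g|_\infty^2 \leq 2 \int_a^1 |g| \, |g_r| \, d\tilde r.
\end{equation*}

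Next, expanding $g_r$ via the product rule and replacing $U_r = \eta_r D_\eta U$ produces
\begin{equation*}
g_r = (\zeta_a^\sharp)_r \rho_0^L U + L\,\zeta_a^\sharp \rho_0^{L-1}(\rho_0)_r\, U + \zeta_a^\sharp \rho_0^L \eta_r\, D_\eta U.
\end{equation*}
For the third (principal) contribution, I would symmetrically split $\eta_r = \sqrt{\eta_r}\cdot \sqrt{\eta_r}$ and apply Cauchy--Schwarz:
\begin{equation*}
2\int_a^1 \zeta_a^\sharp \rho_0^L |U| \cdot \zeta_a^\sharp \rho_0^L \eta_r |D_\eta U|\, d\tilde r \leq 2\, \bigl|\zeta_a^\sharp \rho_0^L \sqrt{\eta_r}\,U\bigr|_2 \, \bigl|\zeta_a^\sharp \rho_0^L \sqrt{\eta_r}\,D_\eta U\bigr|_2,
\end{equation*}
which is exactly the product appearing on the right-hand side of the claim.

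It remains to show that the two remaining contributions are bounded by $C(a, L, T)$. For the first, $(\zeta_a^\sharp)_r$ is smooth, bounded, and supported in $[a, (1+3a)/4]$, so the corresponding integral is majorised by $C(a, L)\,|\chi_a^\sharp \rho_0^L U|_2^2$; Lemma \ref{lemma-refine u Lp} applied with $p=2$ and $\iota = L/\beta > -\tfrac{1}{2}$ delivers a uniform bound. For the second, the asymptotics $\rho_0^\beta \sim 1-r$ yield $(\rho_0)_r \sim \rho_0^{1-\beta}$, so the integrand is of order $\rho_0^{2L-\beta}U^2$; the same lemma, now with $\iota = L/\beta - \tfrac{1}{2} > -\tfrac{1}{2}$ (legitimate precisely because $L > 0$), again produces a bound depending only on $(a, L, T)$. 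Combining these estimates and taking a square root then yields the inequality as stated.

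The main (and essentially only) obstacle is that no \emph{a priori} upper bound on $\eta_r$ is available at this stage of the analysis --- controlling $|\eta_r|_\infty$ is in fact the entire purpose of the subsequent subsection. Consequently, the naive absorption $|\eta_r D_\eta U|_2 \leq |\eta_r|_\infty^{1/2}|\sqrt{\eta_r} D_\eta U|_2$ is unavailable, which is precisely why one must distribute the weight $\sqrt{\eta_r}$ symmetrically between the two $L^2$ factors on the right-hand side via Cauchy--Schwarz rather than pulling it out as an $L^\infty$ bound.
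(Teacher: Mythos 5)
Your proposal is correct and follows essentially the same route as the paper's proof: the paper also starts from the $W^{1,1}\hookrightarrow L^\infty$ embedding applied to $(\zeta_a^\sharp)^2\rho_0^{2L}U^2$ (equivalent to your fundamental-theorem-of-calculus argument since $\zeta_a^\sharp$ vanishes at $r=a$), expands the derivative into the same three pieces, treats the two lower-order pieces with Lemma \ref{lemma-refine u Lp}, and handles the principal term by the identical symmetric split $\eta_r = \sqrt{\eta_r}\cdot\sqrt{\eta_r}$ with Cauchy--Schwarz. Your closing remark about why the weight $\sqrt{\eta_r}$ cannot be absorbed into an $L^\infty$ factor is an accurate explanation of the lemma's form, but it does not change the argument.
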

\begin{proof}
It follows from Lemmas \ref{lemma-refine u Lp} and \ref{sobolev-embedding} that
\begin{equation*}
\begin{aligned}
|\zeta_{a}^\sharp \rho_0^L U|_\infty^2&=\big|(\zeta_{a}^\sharp)^2 \rho_0^{2L} U^2\big|_\infty\leq C_0\big|((\zeta_{a}^\sharp)^2 \rho_0^{2L} U^2)_r\big|_1\\
&\leq C(L)\big(\big|\zeta_{a}^\sharp(\zeta_{a}^\sharp)_r \rho_0^{2L} U^2\big|_1+ \big|(\zeta_{a}^\sharp)^2 \rho_0^{2L-\beta}(\rho_0^\beta)_r U^2\big|_1+ \big|(\zeta_{a}^\sharp)^2 \rho_0^{2L}\eta_rUD_\eta U\big|_1\big)\\
&\leq C(a,L)\big(|\chi_{a}^\sharp \rho_0^{L} U|_2^2+ |(\rho_0^\beta)_r|_\infty \big|\zeta_{a}^\sharp\rho_0^{L-\frac{\beta}{2}}U\big|_2^2+ \big|\zeta_{a}^\sharp \rho_0^{L} \sqrt{\eta_r}U\big|_2\big|\zeta_{a}^\sharp \rho_0^L\sqrt{\eta_r}D_\eta U\big|_2\big)\\
&\leq C(a,L,T)\big(1+\big|\zeta_{a}^\sharp \rho_0^{L} \sqrt{\eta_r}U\big|_2\big|\zeta_{a}^\sharp \rho_0^L\sqrt{\eta_r}D_\eta U\big|_2\big).
\end{aligned}
\end{equation*}
This completes the proof.
\end{proof}

\begin{Lemma}\label{etar-L1}
For any $L>0$, there exists a constant $C(L,T)>0$ such that
\begin{equation*}
|\chi^\sharp \rho_0^L\eta_r(t)|_1\leq C(L,T)\qquad
\text{for all $t\in[0,T]$},
\end{equation*}
where  $\chi^\sharp$ denotes the characteristic function on $(\frac{1}{2},1]$ 
{\rm  (}see {\rm \S \ref{othernotation})}.
\end{Lemma}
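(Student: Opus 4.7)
The plan is to bound the stronger quantity $\int_0^1 \rho_0^L \eta_r\,dr$, noting that $\eta_r\geq 0$ forces $|\chi^\sharp \rho_0^L\eta_r|_1\leq \int_0^1\rho_0^L\eta_r\,dr$. The key move will be to track the time derivative of this integral. Since $\eta_t=U$ and $U|_{r=0}=0$ by Definition \ref{d2}, one integration by parts gives
\begin{equation*}
\frac{d}{dt}\int_0^1\rho_0^L\eta_r\,dr=\int_0^1\rho_0^L U_r\,dr=\big[\rho_0^L U\big]_0^1-\frac{L}{\beta}\int_0^1 \rho_0^{L-\beta}(\rho_0^\beta)_r\,U\,dr,
\end{equation*}
and the boundary contributions both vanish: at $r=0$ from $U(t,0)=0$, and at $r=1$ from $\rho_0^L(1)=0$ combined with $U(t,\cdot)\in C(\bar I)$. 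The residual task is therefore a uniform-in-$t$ bound for $\int_0^1\rho_0^{L-\beta}|U|\,dr$, noting that $(\rho_0^\beta)_r\in L^\infty$ by the assumed $H^3$-regularity.

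I plan to split this integral at $r=1/4$. On the outer piece $[1/4,1]$, Lemma \ref{lemma-refine u Lp} applied with $p=1$ and $\iota=\frac{L}{\beta}-1$ delivers $\int_{1/4}^1\rho_0^{L-\beta}|U|\,dr\leq C(L,T)$; the admissibility condition $\iota>-1$ is exactly $L>0$. On the inner piece $[0,1/4]$ the weight $\rho_0^{L-\beta}$ is bounded, since $\rho_0$ stays away from $0$ there, and it reduces to bounding $\int_0^{1/4}|U|\,dr$. For this I will H\"older against the weight $(r^m\rho_0)^{1/p}$ already controlled by Lemma \ref{lp-uv}:
\begin{equation*}
\int_0^{1/4}|U|\,dr\leq \big|(r^m\rho_0)^{\frac{1}{p}}U\big|_{L^p(0,\frac{1}{4})}\,\big|(r^m\rho_0)^{-\frac{1}{p}}\big|_{L^{p'}(0,\frac{1}{4})}.
\end{equation*}
The first factor is bounded by $C(p,T)$; the second reduces, since $\rho_0$ is bounded below on $[0,1/4]$, to $\int_0^{1/4} r^{-mp'/p}\,dr$, which is finite provided $p>m+1$. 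The choice $p=4$ works uniformly in $n\in\{2,3\}$.

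Combining the two contributions yields $\big|\tfrac{d}{dt}\int_0^1\rho_0^L\eta_r\,dr\big|\leq C(L,T)$. Integrating in time from $0$ to $t\leq T$ and using the trivial initial bound $\int_0^1\rho_0^L\,dr\leq C_0$ delivers $\int_0^1\rho_0^L\eta_r(t)\,dr\leq C(L,T)$, which in turn gives the stated conclusion. There is no serious obstacle here; the only points that need care are the vanishing of the boundary term at $r=1$ (which uses continuity of $U$ on $\bar I$ together with $\rho_0^L(1)=0$) and the matching of the weight $(r^m\rho_0)^{1/p}$ from Lemma \ref{lp-uv} against the $r^{-m/p}$ singularity at the origin, which is why $p$ must be chosen strictly above $m+1$.
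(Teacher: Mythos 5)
Your argument is correct, but it follows a genuinely different route from the paper. You differentiate $\int_0^1\rho_0^L\eta_r\,dr$ in time, integrate by parts in $r$ over all of $[0,1]$ so that both endpoint boundary conditions $U|_{r=0}=0$ and $\rho_0(1)=0$ kill the boundary terms, and then pay for the extension to $[0,1]$ with an extra interior estimate via Lemma~\ref{lp-uv} and a H\"older maneuver against the weight $r^{m/p}$. The paper never introduces a time derivative: it writes $\eta_r\,\mathrm{d}r=\mathrm{d}\eta$ and integrates by parts in $r$ directly over $[1/2,1]$, which produces exactly one boundary contribution, $-\rho_0^L(\tfrac{1}{2})\eta(t,\tfrac{1}{2})$, and since $\eta(t,\tfrac12)>0$ (Lemma~\ref{lemma-lower bound jacobi}) this term has a favorable sign and is simply discarded. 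The surviving integral $-\tfrac{L}{\beta}\int_{1/2}^1\rho_0^{L-\beta}(\rho_0^\beta)_r\,\eta\,\mathrm{d}r$ is then controlled by substituting $\eta=r+\int_0^t U\,\mathrm{d}s$ and invoking only Lemma~\ref{lemma-refine u Lp}. So the paper stays entirely on $[1/2,1]$ and uses one fewer lemma; your version is slightly more expensive but equally valid, and the small bit of extra care you flag (Fubini near the origin, the threshold $p>m+1$) is handled correctly.
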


\begin{proof}
Let $L>0$. From the fact that $\rho_0^\beta\sim 1-r$, the formula of flow map $\eta$, Lemma \ref{lemma-refine u Lp}, and integration by parts, we obtain 
\begin{equation*}
\begin{aligned}
|\chi^\sharp \rho_0^L\eta_r|_1&=\int_\frac{1}{2}^1 \rho_0^L\eta_r\,\mathrm{d}r=\int_\frac{1}{2}^1 \rho_0^L\,\mathrm{d}\eta =\underline{-\rho_0^L\big(\frac{1}{2}\big) \eta\big(t,\frac{1}{2}\big)}_{\,\leq 0}-\frac{L}{\beta}\int_\frac{1}{2}^1 \rho_0^{L-\beta}(\rho_0^\beta)_r\eta\,\mathrm{d}r\\
&\leq C(L)|(\rho_0^\beta)_r|_\infty\Big(|\chi^\sharp \rho_0^{L-\beta}r|_1+\int_0^t|\chi^\sharp \rho_0^{L-\beta}U|_1\,\mathrm{d}s\Big)\leq C(L,T).
\end{aligned}    
\end{equation*}

This completes the proof.
\end{proof}

\begin{Lemma}\label{lemma-V-Vr}
Let $q_*$ be a parameter such that
\begin{equation}\label{q*}
q_*:=\begin{cases}
\displaystyle (2-\gamma)^{-1}&\displaystyle\text{if $1<\gamma<\frac{3}{2}$},\\
2&\displaystyle\text{if $\gamma\geq \frac{3}{2}$}. 
\end{cases}    
\end{equation}
Then, for any $N\geq 3\beta$, there exists a constant $C(N,T)>0$ such that, for all $t\in [0,T]$,
\begin{equation}\label{dtvvr} 
\begin{aligned}
\frac{\mathrm{d}}{\dt}\big|\zeta^\sharp \rho_0^{N}\sqrt{\eta_r}V\big|_2&\leq C(N,T)\big|\zeta^\sharp \rho_0^\frac{N+\beta}{2} \sqrt{\eta_r}(U,D_\eta U)\big|_2,\\
\frac{\mathrm{d}}{\dt}|(\zeta^\sharp)^2\rho_0^{N+\beta} V_r|_{q_*}&\leq C(N,T)\big(\big|\zeta^\sharp \rho_0^{N}\sqrt{\eta_r}V\big|_2+\big|\zeta^\sharp \rho_0^\frac{N+\beta}{2} \sqrt{\eta_r}(U,D_\eta U)\big|_2\big)\\
&\quad + C(N,T) \big|\zeta^\sharp \rho_0^\frac{N+\beta}{2}\sqrt{\eta_r} U\big|_2^\frac{3}{2}\big|\zeta^\sharp \rho_0^\frac{N+\beta}{2}\sqrt{\eta_r}D_\eta U\big|_2^\frac{1}{2},
\end{aligned}
\end{equation}
where the cut-off function $\zeta^{\sharp}$ is defined in {\rm\S \ref{othernotation}}.
\end{Lemma}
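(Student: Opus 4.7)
The proof proceeds by direct energy methods on the effective-velocity equation \eqref{eq:v} and its spatial derivative, leveraging the favorable sign of the dissipation $-\frac{A\gamma}{2\mu}\varrho^{\gamma-1}V$ in \eqref{eq:v} together with the pointwise bound $|\chi^\sharp\rho_0^\beta V|_\infty\leq C(T)$ from Lemma \ref{lemma-v Linfty ex}, the uniform bound $|\varrho|_\infty\leq C(T)$ from Lemma \ref{lemma-bound depth}, and the weighted integrability $|\chi^\sharp\rho_0^L\eta_r|_1\leq C(L,T)$ from Lemma \ref{etar-L1}.

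For the first inequality, I would multiply \eqref{eq:v} by $2(\zeta^\sharp)^2\rho_0^{2N}\eta_r V$ and integrate over $I$. Using $(\eta_r)_t=U_r=\eta_r D_\eta U$, the chain rule produces the identity
\begin{equation*}
\tfrac{d}{dt}|w|_2^2 = \int (\zeta^\sharp)^2\rho_0^{2N}\eta_r V^2 D_\eta U\,dr - \tfrac{A\gamma}{\mu}\int (\zeta^\sharp)^2\rho_0^{2N}\eta_r\varrho^{\gamma-1}V^2\,dr + \tfrac{A\gamma}{\mu}\int (\zeta^\sharp)^2\rho_0^{2N}\eta_r\varrho^{\gamma-1}UV\,dr,
\end{equation*}
where $w:=\zeta^\sharp\rho_0^N\sqrt{\eta_r}V$. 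The middle term is non-positive and dropped. In the first, factor $V^2=V\cdot V$ and pull one factor out via $|\rho_0^\beta V|_\infty\leq C(T)$, producing the decomposition $(\zeta^\sharp\rho_0^N\sqrt{\eta_r}V)(\zeta^\sharp\rho_0^{N-\beta}\sqrt{\eta_r}D_\eta U)(\rho_0^\beta V)$; Cauchy--Schwarz yields $C(T)|w|_2\,|\zeta^\sharp\rho_0^{N-\beta}\sqrt{\eta_r}D_\eta U|_2$. For the third, Cauchy--Schwarz together with $\varrho^{\gamma-1}\leq C(T)$ gives $C(T)|w|_2\,|\zeta^\sharp\rho_0^N\sqrt{\eta_r}U|_2$. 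Since $N\geq 3\beta$ and $\rho_0\leq C_0$, one has $\rho_0^{N-\beta}\leq C(N)\rho_0^{(N+\beta)/2}$ and $\rho_0^N\leq C(N)\rho_0^{(N+\beta)/2}$, so dividing by $2|w|_2$ delivers the first inequality in \eqref{dtvvr}.

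For the second inequality, I would differentiate \eqref{eq:v} in $r$ and use the algebraic identity $2\mu\varrho_r=\eta_r\varrho(V-U)$ (from \eqref{v-expression}) to eliminate $\varrho_r$, producing
\begin{equation*}
V_{rt} = -\tfrac{A\gamma}{2\mu}\varrho^{\gamma-1}(V_r-U_r) - \tfrac{A\gamma(\gamma-1)}{4\mu^2}\varrho^{\gamma-1}\eta_r(V-U)^2.
\end{equation*}
With $g:=(\zeta^\sharp)^2\rho_0^{N+\beta}V_r$ and $h:=(\zeta^\sharp)^2\rho_0^{N+\beta}$, perform the $L^{q_*}$ energy estimate: multiply $g_t=hV_{rt}$ by $|g|^{q_*-2}g$ and integrate in $r$. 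The term $-\frac{A\gamma}{2\mu}\int\varrho^{\gamma-1}|g|^{q_*}\,dr$ has the right sign and is discarded; H\"older on the remaining two delivers
\begin{equation*}
\tfrac{d}{dt}|g|_{q_*}\leq C(T)\bigl(|hU_r|_{q_*}+|h\eta_r(V-U)^2|_{q_*}\bigr).
\end{equation*}
The linear piece is factored as $hU_r=\bigl(\zeta^\sharp\rho_0^{(N+\beta)/2}\sqrt{\eta_r}D_\eta U\bigr)\bigl(\zeta^\sharp\rho_0^{(N+\beta)/2}\sqrt{\eta_r}\bigr)$; H\"older with the dual pair $(2,q_*')$ combined with Lemma \ref{etar-L1}, whose scope is matched by the piecewise choice $q_*=(2-\gamma)^{-1}$ for $\gamma<\tfrac{3}{2}$ and $q_*=2$ for $\gamma\geq\tfrac{3}{2}$, bounds it by $C(N,T)|\zeta^\sharp\rho_0^{(N+\beta)/2}\sqrt{\eta_r}D_\eta U|_2$. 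The quadratic piece is split via $(V-U)^2\leq 2V^2+2U^2$: the $V^2$ contribution, using $|\rho_0^\beta V|_\infty\leq C(T)$, collapses to $C|\zeta^\sharp\rho_0^N\sqrt{\eta_r}V|_2$, while the $U^2$ contribution is handled by a one-dimensional Gagliardo--Nirenberg interpolation on $F:=\zeta^\sharp\rho_0^{(N+\beta)/2}\sqrt{\eta_r}U$ of the form $|F|_{L^{2q_*}}^2\lesssim |F|_{L^2}^{3/2}\|\partial_r F\|_{L^2}^{1/2}$ (after isolating the terms of $\partial_r F$ that are absorbed into $|\zeta^\sharp\rho_0^{(N+\beta)/2}\sqrt{\eta_r}D_\eta U|_2$ via $\rho_0^\beta\sim 1-r$ and the already established lower bounds on $\eta_r$). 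This produces exactly the characteristic $|\zeta^\sharp\rho_0^{(N+\beta)/2}\sqrt{\eta_r}U|_2^{3/2}|\zeta^\sharp\rho_0^{(N+\beta)/2}\sqrt{\eta_r}D_\eta U|_2^{1/2}$ term in \eqref{dtvvr}.

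The principal obstacle is the quadratic term $\eta_r(V-U)^2$ in $L^{q_*}$: no a priori $L^\infty$ control of $\eta_r$ is yet available, since establishing such control is precisely the objective of \S\ref{Section-etarupper}. Closure therefore hinges on the compatibility between the piecewise choice of $q_*$ and the Gagliardo--Nirenberg interpolation on the weighted $U$-quantity, which together obviate the need for $|\eta_r|_\infty$ and rely only on the weaker weighted integrability from Lemma \ref{etar-L1}; the resulting $|U|^{3/2}|D_\eta U|^{1/2}$ trade-off will later be absorbed by the dissipation in the companion estimate \eqref{edt}.
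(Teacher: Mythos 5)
Your proof of the first inequality in \eqref{dtvvr} is correct and matches the paper's argument essentially line by line: multiply \eqref{eq:v} by $2(\zeta^\sharp)^2\rho_0^{2N}\eta_r V$, discard the good-sign dissipation, pull $\rho_0^\beta V$ out in $L^\infty$ via Lemma \ref{lemma-v Linfty ex} for the $V^2 D_\eta U$ piece, and use $|\varrho|_\infty\leq C(T)$ for the $\varrho^{\gamma-1}UV$ piece; the exponent bookkeeping $N \geq N-\beta \geq \frac{N+\beta}{2}$ under $N\geq 3\beta$ is also correct.

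There is, however, a genuine gap in your treatment of the second inequality: you discard the factor $\varrho^{\gamma-1}$ from the right-hand side of the $V_r$ equation via $|\varrho|_\infty\leq C(T)$ \emph{before} splitting by H\"older, arriving at $C(T)\big(|hU_r|_{q_*}+|h\eta_r(V-U)^2|_{q_*}\big)$. This throws away the crucial $\eta_r^{-(\gamma-1)}$ hidden inside $\varrho^{\gamma-1}=\big(\frac{r^m\rho_0}{\eta^m\eta_r}\big)^{\gamma-1}$. Once that factor is gone, your decompositions leave a loose $\zeta^\sharp\rho_0^{(N+\beta)/2}\sqrt{\eta_r}$ (respectively $\zeta^\sharp\sqrt{\eta_r}$) to be placed in $L^{2q_*/(2-q_*)}$. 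For $\gamma\geq\frac32$ this is $L^\infty$ and needs $|\eta_r|_\infty$, which is unavailable here. For $1<\gamma<\frac32$ it gives $\big|\eta_r^{1/(3-2\gamma)}\big|_1$ (up to $\rho_0$ weights) with exponent $\frac{1}{3-2\gamma}>1$, which is outside the scope of Lemma \ref{etar-L1}. The correct manipulation is to keep $\varrho^{\gamma-1}\eta_r = \big(\frac{r^m\rho_0}{\eta^m}\big)^{\gamma-1}\eta_r^{2-\gamma}$, split $\eta_r^{2-\gamma}=\eta_r^{1/2}\cdot\eta_r^{3/2-\gamma}$, put $\rho_0^{N+\beta}\sqrt{\eta_r}F$ (with $F\in\{D_\eta U,V^2,U^2\}$) in $L^2$, and put $\rho_0^{\gamma-1}\eta_r^{3/2-\gamma}$ in $L^{2q_*/(2-q_*)}$: then for $\gamma\geq\frac32$ the power of $\eta_r$ is non-positive and the lower bound on $\eta_r$ suffices, while for $\gamma<\frac32$ one has $\big|\rho_0^{\gamma-1}\eta_r^{3/2-\gamma}\big|_{2/(3-2\gamma)}^{2/(3-2\gamma)}=\big|\rho_0^{2(\gamma-1)/(3-2\gamma)}\eta_r\big|_1$, which is precisely Lemma \ref{etar-L1}. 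The piecewise definition \eqref{q*} is engineered so that this residual is a genuine $L^1$ average of $\eta_r$ to the first power, not a higher power. Your instinct to invoke Lemma \ref{etar-L1} is right, and the remaining ingredients in your proposal (Lemma \ref{Uinfty} for the $U^2$ term, $|\rho_0^\beta V|_\infty$ for the $V^2$ term) are correct, but you must retain and expand $\varrho^{\gamma-1}$ rather than bound it by a constant.
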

\begin{proof}
Let $q_*$ be defined as \eqref{q*} and $N\geq 3\beta$. We divide the proof into two steps.

\smallskip
\textbf{1.}  Multiplying \eqref{eq:v} by $2(\zeta^\sharp)^2\rho_0^{2N}\eta_rV$ gives 
\begin{equation*}
((\zeta^\sharp)^2\rho_0^{2N}\eta_r V^2)_t +\frac{A\gamma}{\mu}(\zeta^\sharp)^2\rho_0^{2N}\varrho^{\gamma-1}\eta_rV(V-U)= (\zeta^\sharp)^2\rho_0^{2N}V^2U_r.
\end{equation*}
Then, integrating the above over $I$, we obtain from \eqref{eq:eta}, Lemmas \ref{lemma-bound depth}, \ref{lemma-lower bound jacobi}, and \ref{lemma-v Linfty ex}, and the H\"older and Young inequalities that 
\begin{equation*}
\begin{aligned}
&\,\frac{\mathrm{d}}{\dt}\big|\zeta^\sharp\rho_0^{N}\sqrt{\eta_r}V\big|_2^2+\frac{A\gamma}{\mu}\big|\zeta^\sharp \rho_0^{N} \varrho^\frac{\gamma-1}{2}\sqrt{\eta_r}V\big|_2^2 = \int_0^1 (\zeta^\sharp)^2\rho_0^{2N}\eta_rV\big(\frac{A\gamma}{\mu}\varrho^{\gamma-1} U+VD_\eta U\big)\,\mathrm{d}r\notag\\
&\leq  C_0\big(|\varrho|_\infty^{\gamma-1}\big|\zeta^\sharp \rho_0^{N}\sqrt{\eta_r} U\big|_2 + |\chi^\sharp\rho_0^\beta V|_\infty\big|\zeta^\sharp\rho_0^{N-\beta}\sqrt{\eta_r}D_\eta U\big|_2\big)\big|\zeta^\sharp\rho_0^{N}\sqrt{\eta_r}V\big|_2\\[1mm]
&\leq C(N,T)\big(\big|\zeta^\sharp \rho_0^{N}\sqrt{\eta_r} U\big|_2+\big|\zeta^\sharp\rho_0^{N-\beta}\sqrt{\eta_r}D_\eta U\big|_2\big)\big|\zeta^\sharp\rho_0^{N}\sqrt{\eta_r}V\big|_2,\notag
\end{aligned}
\end{equation*}
which, along with the fact that $N>N-\beta\geq \frac{N+\beta}{2}$, leads to
\begin{equation}\label{new-V L2}
\frac{\mathrm{d}}{\dt}\big|\zeta^\sharp\rho_0^{N}\sqrt{\eta_r}V\big|_2\leq C(N,T)\big|\zeta^\sharp\rho_0^{\frac{N+\beta}{2}}\sqrt{\eta_r}(U,D_\eta U)\big|_2.
\end{equation}
 
\smallskip
\textbf{2.} Applying $\partial_r$ to both sides of \eqref{eq:v}, together with \eqref{v-expression}, gives
\begin{equation*}
V_{tr}+\frac{A\gamma}{2\mu}\varrho^{\gamma-1}(V_r-U_r)+\frac{A\gamma(\gamma-1)}{4\mu^2}\varrho^{\gamma-1}\eta_r(V-U)^2=0.
\end{equation*}
Then multiplying the above by
\begin{equation*}
(\zeta^\sharp)^{2q_*}\rho_0^{(N+\beta)q_*}|V_r|^{q_*-1}\mathrm{sgn}\,(V_r)
\end{equation*}
and integrating the resulting equality over $I$, we obtain from Lemma \ref{lemma-lower bound jacobi} and the H\"older inequality that
\begin{equation*}
\begin{aligned}
&\,\frac{1}{q_*}\frac{\mathrm{d}}{\dt}\big|(\zeta^\sharp)^2\rho_0^{N+\beta} V_r\big|_{q_*}^{q_*}+\frac{A\gamma}{2\mu}\big|(\zeta^\sharp)^2\rho_0^{N+\beta}\varrho^\frac{\gamma-1}{q_*}V_r\big|_{q_*}^{q_*}\\
&=\int_0^1(\zeta^\sharp)^{2q_*}\rho_0^{(N+\beta)q_*}\varrho^{\gamma-1}\Big(\frac{A\gamma}{2\mu} D_\eta U-\frac{A\gamma(\gamma-1)}{4\mu^2}(V-U)^2\Big)\eta_r|V_r|^{q_*-1}\mathrm{sgn}\,(V_r)\,\mathrm{d}r\\
&\leq C_0\Big|\frac{r^{m}}{\eta^{m}}\Big|_\infty^{\gamma-1}\int_0^1 (\zeta^\sharp)^{2q_*}\rho_0^{(N+\beta)q_*+\gamma-1}\eta_r^{2-\gamma}(|D_\eta U|+|V|^2+|U|^2)|V_r|^{q_*-1}\,\mathrm{d}r\\
&\leq C(T)\big|(\zeta^\sharp)^2 \rho_0^{N+\beta+\gamma-1} \eta_r^{2-\gamma}(D_\eta U,V^2,U^2)\big|_{q_*}\big|(\zeta^\sharp)^2\rho_0^{N+\beta} V_r\big|_{q_*}^{q_*-1},
\end{aligned}
\end{equation*}
which implies
\begin{equation}\label{dtvr-p}
\frac{\mathrm{d}}{\dt}\big|(\zeta^\sharp)^2\rho_0^{N+\beta} V_r\big|_{q_*} \leq C(T)\big|(\zeta^\sharp)^2 \rho_0^{N+\beta+\gamma-1} \eta_r^{2-\gamma}(D_\eta U,V^2,U^2)\big|_{q_*}:=\mathrm{R}_{q_*}.
\end{equation}

Next, for $\mathrm{R}_{q_*}$, by $N-\beta\geq \frac{N+\beta}{2}$, Lemmas \ref{lemma-v Linfty ex} and \ref{Uinfty}, and the H\"older inequality, we have
\begin{equation}
\begin{aligned}
\mathrm{R}_{q_*}&\leq C(T)\big(|\rho_0^\beta|_\infty^\frac{N+\beta}{2\beta}\big|\zeta^\sharp\rho_0^{\frac{N+\beta}{2}} \sqrt{\eta_r} D_\eta U\big|_{2}+|\chi^\sharp \rho_0^\beta V|_\infty \big|\zeta^\sharp \rho_0^{N}\sqrt{\eta_r}V\big|_2\\
&\quad \qquad\quad+\big|\zeta^\sharp\rho_0^\frac{N+\beta}{2} U\big|_\infty \big|\zeta^\sharp\rho_0^{\frac{N+\beta}{2}} \sqrt{\eta_r}U\big|_2\big)\underline{\big|\chi^\sharp\rho_0^{\gamma-1} \eta_r^{\frac{3}{2}-\gamma}\big|_{\frac{2q_*}{2-q_*}}}_{:=\widetilde{\mathrm{R}}_{q_*}} \\[-6pt]
&\leq C(N,T)\big(\big|\zeta^\sharp \rho_0^{N}\sqrt{\eta_r}V\big|_2+\big|\zeta^\sharp \rho_0^\frac{N+\beta}{2} \sqrt{\eta_r}(U,D_\eta U)\big|_2 \\
&\qquad\qquad\quad \  +  \big|\zeta^\sharp \rho_0^\frac{N+\beta}{2}\sqrt{\eta_r} U\big|_2^\frac{3}{2}\big|\zeta^\sharp \rho_0^\frac{N+\beta}{2} \sqrt{\eta_r}D_\eta U\big|_2^\frac{1}{2}\big)\widetilde{\mathrm{R}}_{q_*},
\end{aligned}
\end{equation}
where, for $\widetilde{\mathrm{R}}_{q_*}$, we can use the definition of $q_*$ in \eqref{q*}, and Lemmas \ref{lemma-lower bound jacobi} and \ref{etar-L1} to derive
\begin{equation}\label{r2r2}
\begin{aligned}
&\text{if $\gamma\geq \frac{3}{2}$:}&&\qquad\widetilde{\mathrm{R}}_{q_*}=\big|\chi^\sharp\rho_0^{\gamma-1}\eta_r^{\frac{3}{2}-\gamma}\big|_\infty\leq C(T);\\
&\text{if $1<\gamma<\frac{3}{2}$:}&&\qquad\widetilde{\mathrm{R}}_{q_*}=\big|\chi^\sharp\rho_0^{\gamma-1}\eta_r^{\frac{3}{2}-\gamma}\big|_\frac{2}{3-2\gamma}=\big|\chi^\sharp\rho_0^\frac{2\gamma-2}{3-2\gamma}\eta_r\big|_1^{\frac{3}{2}-\gamma}\leq C(T).
\end{aligned}
\end{equation}

Therefore, collecting \eqref{dtvr-p}--\eqref{r2r2} yields
\begin{equation*}
\begin{aligned}
\frac{\mathrm{d}}{\dt}\big|(\zeta^\sharp)^2\rho_0^{N+\beta} V_r\big|_{q_*}&\leq C(N,T)\big(\big|\zeta^\sharp \rho_0^{N}\sqrt{\eta_r}V\big|_2+\big|\zeta^\sharp \rho_0^\frac{N+\beta}{2} \sqrt{\eta_r}(U,D_\eta U)\big|_2\big) \\
&\quad + C(N,T) \big|\zeta^\sharp \rho_0^\frac{N+\beta}{2}\sqrt{\eta_r} U\big|_2^\frac{3}{2}\big|\zeta^\sharp \rho_0^\frac{N+\beta}{2}\sqrt{\eta_r}D_\eta U\big|_2^\frac{1}{2} .    
\end{aligned}   
\end{equation*}
Finally, this, combined with \eqref{new-V L2}, yields the desired result.
\end{proof}

\subsubsection{Uniform upper bounds of \texorpdfstring{$(\eta_r,\frac{\eta}{r})$}{} in the exterior domain}\label{subsub-712}

With the help of Lemmas \ref{Uinfty}--\ref{lemma-V-Vr}, we can first derive 
new $(\rho_0,\eta_r)$-weighted estimates for $U$.
\begin{Lemma}\label{lemma-v-vr-u-ur}
Let $q_*$ be defined in \eqref{q*} of {\rm Lemma \ref{lemma-V-Vr}}. Then, for any $M\geq 2\beta$, there exists a constant $C(M,T)>0$ such that
\begin{equation*}\label{goal2}
\mathcal{E}_M^\sharp(t)+ \int_0^t\Big(\Big|\zeta^\sharp\rho_0^{M}\sqrt{\eta_r}\big(D_\eta U,\frac{U}{\eta}\big)\Big|_2^2+|\zeta^\sharp \rho_0^M U|_\infty^4\Big)\,\mathrm{d}s\leq C(M,T) \qquad\text{for all $t\in [0,T]$},
\end{equation*}
where $\mathcal{E}_M^\sharp(t)$ is defined by
\begin{equation}\label{EsharpM}
\mathcal{E}_M^\sharp(t):=\big|\zeta^\sharp \rho_0^{2M-\beta}\sqrt{\eta_r}V(t)\big|_2^2+\big|(\zeta^\sharp)^2\rho_0^{2M} V_r(t)\big|_{q_*}+\big|\zeta^\sharp \rho_0^M\sqrt{\eta_r}U(t)\big|_2^2.
\end{equation}
\end{Lemma}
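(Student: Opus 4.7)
The strategy is the one outlined in \S\ref{subsub324}: couple a new $(\rho_0,\eta_r)$-weighted $L^2$ estimate for $U$ with the $V$- and $V_r$-estimates already obtained in Lemma~\ref{lemma-V-Vr}, choosing the weight exponents so that the cross terms on the right-hand sides can be absorbed into dissipation, and close by Gr\"onwall.

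\textbf{Step 1 (weighted estimate for $U$).} I first test the momentum equation $\eqref{eq:VFBP-La-eta}_1$ against $(\zeta^\sharp)^2 \rho_0^{2M}\eta_r\varrho^{-1}U$ and integrate over $I$. After integration by parts, I expect to obtain, exactly as in inequality~\eqref{edt} of \S\ref{subsub324}, an estimate of the form
\begin{equation*}
\frac{\mathrm{d}}{\mathrm{d}t}\big|\zeta^\sharp \rho_0^M\sqrt{\eta_r}U\big|_2^2+\mu\Big|\zeta^\sharp \rho_0^M\sqrt{\eta_r}\big(D_\eta U,\tfrac{U}{\eta}\big)\Big|_2^2 \le C(M,T)\bigl(1+\mathcal{R}(t)\bigr),
\end{equation*}
where $\mathcal{R}(t)$ collects the problematic boundary/cut-off terms. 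The key identity to deal with the pressure and $\rho_0^{-1}$ weight is $2\mu \varrho_r = \eta_r\varrho(V-U)$, which turns the pressure term into something controllable by $V$ and $U$; the cut-off derivatives $(\zeta^\sharp)_r$ are supported in $[\tfrac12,\tfrac58]$ where Lemmas~\ref{lemma-bound depth}, \ref{lemma-lower bound jacobi}, \ref{lemma-v Linfty ex} give uniform control. The output is of the form
\begin{equation*}
\mathcal{R}(t) \le \bigl|(\zeta^\sharp)^2\rho_0^{2M}V_r\bigr|_{q_*}+\bigl|\chi^\sharp\rho_0^{2M-\beta}\sqrt{\eta_r}V\bigr|_2^2+\bigl|\zeta^\sharp \rho_0^M\sqrt{\eta_r}U\bigr|_2^2.
\end{equation*}

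\textbf{Step 2 (matching exponents and combining).} The hypothesis $M\ge 2\beta$ was chosen precisely so that $N:=2M-\beta\ge 3\beta$, which is the range required in Lemma~\ref{lemma-V-Vr}. Applying Lemma~\ref{lemma-V-Vr} with this $N$, the weights $\rho_0^{N}=\rho_0^{2M-\beta}$ and $\rho_0^{N+\beta}=\rho_0^{2M}$ appearing in \eqref{dtvvr} match exactly those in Step~1 and in the definition \eqref{EsharpM} of $\mathcal{E}_M^\sharp(t)$. Moreover $\tfrac{N+\beta}{2}=M$, so the right-hand sides of both inequalities in \eqref{dtvvr} are expressed in terms of $\bigl|\zeta^\sharp\rho_0^M\sqrt{\eta_r}(U,D_\eta U)\bigr|_2$, which aligns with the dissipation on the left of Step~1.

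\textbf{Step 3 (closing the estimate).} Adding the Step~1 inequality to the two inequalities from Lemma~\ref{lemma-V-Vr} (with a small balancing factor on the $V_r$-inequality so that its $|U|_{(U)}^{3/2}|D_\eta U|_{(U)}^{1/2}$ term can be absorbed via Young's inequality into the $|D_\eta U|$-dissipation), I obtain a differential inequality of the form
\begin{equation*}
\frac{\mathrm{d}}{\mathrm{d}t}\mathcal{E}_M^\sharp(t) + \tfrac{\mu}{2}\Big|\zeta^\sharp \rho_0^M\sqrt{\eta_r}\big(D_\eta U,\tfrac{U}{\eta}\big)\Big|_2^2 \le C(M,T)\bigl(1+\mathcal{E}_M^\sharp(t)\bigr).
\end{equation*}
The initial value $\mathcal{E}_M^\sharp(0)$ is finite by \eqref{distance-la}, the definition of $v_0$ in Definition~\ref{def-v}, and the hypothesis $u_0\in C^1(\bar I)$ from Definition~\ref{d2}. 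Gr\"onwall's inequality then yields the bound on $\mathcal{E}_M^\sharp(t)$ and the time integral of the dissipation. Finally, the $L^\infty$-in-time bound on $\int_0^t|\zeta^\sharp\rho_0^M U|_\infty^4\,\mathrm{d}s$ follows by invoking Lemma~\ref{Uinfty} with $L=M$ and $a=\tfrac12$, squaring, and integrating in time, using the already-controlled quantities $|\zeta^\sharp\rho_0^M\sqrt{\eta_r}U|_2^2\in L^\infty(0,T)$ and $|\zeta^\sharp\rho_0^M\sqrt{\eta_r}D_\eta U|_2^2\in L^1(0,T)$.

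\textbf{Main obstacle.} The delicate point is the $\bigl|\zeta^\sharp \rho_0^M\sqrt{\eta_r}U\bigr|_2^{3/2}\bigl|\zeta^\sharp \rho_0^M\sqrt{\eta_r}D_\eta U\bigr|_2^{1/2}$ contribution in $\eqref{dtvvr}_2$: it is superlinear in the $V_r$-variable when integrated in time, so the coupling coefficient between the three inequalities (and the Young split) must be chosen with the correct asymmetry so that this term is dominated by $\tfrac{\mu}{4}\bigl|\zeta^\sharp\rho_0^M\sqrt{\eta_r}D_\eta U\bigr|_2^2$ plus a multiple of $\mathcal{E}_M^\sharp(t)$. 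The exponent identity $N+\beta=2M$ together with $M\ge 2\beta$ is exactly what makes such a balancing possible.
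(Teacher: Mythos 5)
Your proof plan follows essentially the same route as the paper: test the momentum equation against $(\zeta^\sharp)^2\rho_0^{2M}\eta_r\varrho^{-1}U$ for a weighted $U$-estimate, set $N=2M-\beta$ in Lemma~\ref{lemma-V-Vr}, couple the three inequalities, close by Gr\"onwall, and then invoke Lemma~\ref{Uinfty} for the $L^\infty$-in-time integral. One genuine gap, however: the $U$-estimate you cite (of the form \eqref{edt}) is only valid under the additional restriction $M\le\beta+\gamma-1$. In bounding the pressure-type term (the paper's $\mathrm{G}_{16}$) one encounters the weight $\rho_0^{M+\gamma-1}$ on $V$ and must have $M+\gamma-1\ge 2M-\beta$, i.e. $M\le\beta+\gamma-1$, for it to be dominated by the $\rho_0^{2M-\beta}$-weighted norm of $V$ appearing in $\mathcal{E}_M^\sharp$. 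The paper first proves the lemma for $2\beta\le M\le\beta+\gamma-1$ and then handles $M>\beta+\gamma-1$ in a final step by the trivial monotonicity $\rho_0^{M}\le|\rho_0|_\infty^{\,M-\beta-\gamma+1}\rho_0^{\beta+\gamma-1}$; your plan omits this case.

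Two smaller points. The paragraph on the ``main obstacle'' mischaracterizes the difficulty: the term $\big|\zeta^\sharp\rho_0^M\sqrt{\eta_r}U\big|_2^{3/2}\big|\zeta^\sharp\rho_0^M\sqrt{\eta_r}D_\eta U\big|_2^{1/2}$ does not involve $V_r$ at all, so calling it ``superlinear in the $V_r$-variable'' is misleading; the actual concern is simply that both the squared $V$-estimate and the $V_r$-estimate produce $\big|D_\eta U\big|_2^2$-type contributions after Young that must be kept below $\mu\big|D_\eta U\big|_2^2$. The paper achieves this by placing the small factor $\varepsilon$ on the \emph{squared $V$-estimate} (and choosing $\varepsilon$-parametrized Young splits for the $V_r$-estimate's right-hand side), rather than on the $V_r$-inequality as you suggest; either placement can be made to work, but the accounting differs, so this is worth being precise about when writing out the step.
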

\begin{proof}
We divide the proof into three steps.

\smallskip
\textbf{1.} We first let $M$ satisfy 
\begin{equation}\label{mmm}
2\beta\leq M\leq \beta+\gamma-1.
\end{equation}
Then multiplying $\eqref{eq:VFBP-La-eta}_1$ by $2(\zeta^\sharp)^2\eta_r\varrho^{-1}\rho_0^{2M}U$, along with \eqref{eq:eta} and \eqref{v-expression}, yields
\begin{equation}\label{442}
\begin{aligned}
&\,((\zeta^\sharp)^2\rho_0^{2M}\eta_rU^2)_t\underline{-4\mu (\varrho D_\eta U)_r(\zeta^\sharp)^2\frac{\rho_0^{2M}}{\varrho}U}_{:=\mathrm{L}_1} \ \underline{- 4\mu m (\zeta^\sharp)^2 \rho_0^{2M} \big(\frac{U}{\eta}\big)_r U}_{:=\mathrm{L}_2}  \\
&=(\zeta^\sharp)^2\rho_0^{2M} U^2U_r-\frac{A\gamma}{\mu}(\zeta^\sharp)^2 \rho_0^{2M}\varrho^{\gamma-1} \eta_r (V-U) U.
\end{aligned}
\end{equation}
Here, we continue to calculate $\mathrm{L}_1$ and $\mathrm{L}_2$. For $\mathrm{L}_1$, thanks to \eqref{v-expression}, we obtain 
\begin{equation}\label{L1}
\begin{aligned}
\mathrm{L}_1&=4\mu \varrho D_\eta U\Big((\zeta^\sharp)^2\frac{\rho_0^{2M}}{\varrho}U\Big)_r-4\mu ((\zeta^\sharp)^2 \rho_0^{2M}U D_\eta U)_r\\
&=4\mu(\zeta^\sharp)^2 \rho_0^{2M} \eta_r |D_\eta U|^2 +\frac{8\mu M}{\beta} (\zeta^\sharp)^2 \rho_0^{2M-\beta}(\rho_0^{\beta})_rUD_\eta U \\
&\quad-2(\zeta^\sharp)^2\rho_0^{2M} (V-U) UU_r+8\mu \zeta^\sharp(\zeta^\sharp)_r \rho_0^{2M} UD_\eta U-4\mu ((\zeta^\sharp)^2 \rho_0^{2M}UD_\eta U)_r.
\end{aligned}
\end{equation}
Furthermore, $\mathrm{L}_2$ can be further simplified by
\begin{equation*} 
\begin{aligned}
\mathrm{L}_2&=- 4\mu m (\zeta^\sharp)^2\rho_0^{2M} \frac{UU_r}{\eta}+4\mu m (\zeta^\sharp)^2\rho_0^{2M} \frac{\eta_r U^2}{\eta^2}\\
&=- 2\mu m (\zeta^\sharp)^2\rho_0^{2M} \frac{(U^2)_r}{\eta}+4\mu m (\zeta^\sharp)^2\rho_0^{2M} \frac{\eta_r U^2}{\eta^2}\\
&=2\mu m (\zeta^\sharp)^2\rho_0^{2M} \frac{\eta_r U^2}{\eta^2}+4\mu m\Big(\frac{M}{\beta} (\zeta^\sharp)^2 \frac{(\rho_0^\beta)_r}{\rho_0^\beta} + \zeta^\sharp(\zeta^\sharp)_r\Big)\rho_0^{2M}\frac{U^2}{\eta}- 2\mu m \Big((\zeta^\sharp)^2\rho_0^{2M} \frac{U^2}{\eta}\Big)_r.
\end{aligned}    
\end{equation*}
Consequently, this, combined with \eqref{442}--\eqref{L1}, gives
\begin{equation*}
\begin{aligned}
&\,((\zeta^\sharp)^2\rho_0^{2M}\eta_rU^2)_t+2\mu(\zeta^\sharp)^2 \rho_0^{2M} \eta_r\Big(2|D_\eta U|^2+ m \frac{U^2}{\eta^2}\Big)\\
&=(\zeta^\sharp)^2\rho_0^{2M} (2V-U) UU_r-\frac{A\gamma}{\mu}(\zeta^\sharp)^2 \rho_0^{2M}\varrho^{\gamma-1} \eta_r (V-U) U\\
&\quad -4\mu\Big(\frac{M}{\beta} (\zeta^\sharp)^2 \frac{(\rho_0^{\beta})_r}{\rho_0^{\beta}}\!+\! \zeta^\sharp(\zeta^\sharp)_r\Big) \rho_0^{2M}U \Big(2D_\eta U+ \frac{mU}{\eta}\Big)\!+\! \Big(2\mu(\zeta^\sharp)^2 \rho_0^{2M}U\big(2D_\eta U+ \frac{mU}{\eta}\big)\Big)_r.
\end{aligned}
\end{equation*}
Integrating the resulting equality over $I$, we can eventually arrive at
\begin{equation}\label{G11-G13}
\begin{aligned}
&\,\frac{\mathrm{d}}{\dt}\big|\zeta^\sharp \rho_0^M\sqrt{\eta_r}U\big|_2^2+2\mu\Big|\zeta^\sharp \rho_0^{M} \sqrt{\eta_r}\big(D_\eta U,\frac{U}{\eta}\big)\Big|_2^2\\
&\leq \int_0^1(\zeta^\sharp)^2\rho_0^{2M} (2V-U) UU_r\,\mathrm{d}r-\frac{A\gamma}{\mu}\int_0^1(\zeta^\sharp)^2 \rho_0^{2M}\varrho^{\gamma-1} \eta_r (V-U) U\,\mathrm{d}r\\
&\quad -4\mu \int_0^1\Big(\frac{M}{\beta} (\zeta^\sharp)^2 \frac{(\rho_0^{\beta})_r}{\rho_0^{\beta}}+ \zeta^\sharp(\zeta^\sharp)_r\Big) \rho_0^{2M}U \Big(2D_\eta U+\frac{mU}{\eta}\Big)\,\mathrm{d}r:=\sum_{i=15}^{17}\mathrm{G}_i.
\end{aligned}
\end{equation}

Now, let $q_*$ be defined in \eqref{q*}. For $\mathrm{G}_{15}$, it follows from the facts that
\begin{equation*}
M-\beta>-\frac{\beta}{2},\qquad \frac{2M-\beta}{3}>-\frac{\beta}{3},
\end{equation*}
integration by parts, Lemmas \ref{lemma-refine u Lp} and \ref{lemma-v Linfty ex}, and the H\"older inequality that
\begin{equation}
\begin{aligned}
\mathrm{G}_{15}&= \int_0^1(\zeta^\sharp)^2\rho_0^{2M} V(U^2)_r\,\mathrm{d}r -\frac{1}{3}\int_0^1(\zeta^\sharp)^2\rho_0^{2M} (U^3)_r\,\mathrm{d}r\\
&= -\int_0^1(\zeta^\sharp)^2\rho_0^{2M} V_rU^2\,\mathrm{d}r +2\int_0^1 \Big(\frac{M}{\beta}(\zeta^\sharp)^2\frac{(\rho_0^\beta)_r}{\rho_0^{\beta}}+\zeta^\sharp(\zeta^\sharp)_r\Big)\rho_0^{2M}\big(\frac{1}{3}U-V\big) U^2\,\mathrm{d}r\\
&\leq \big|(\zeta^\sharp)^2\rho_0^{2M}V_r\big|_{q_*}|\chi^\sharp U|_{\frac{2q_*}{q_*-1}}^2+C(M)|((\rho_0^\beta)_r,\rho_0^\beta)|_\infty\big|\chi^\sharp\rho_0^\frac{2M-\beta}{3}U\big|_3^3 \\
&\quad +C(M)|((\rho_0^\beta)_r,\rho_0^\beta)|_\infty|\chi^\sharp \rho_0^\beta V|_\infty \big|\chi^\sharp\rho_0^{M-\beta}U\big|_2^2\leq C(M,T)\big(\big|(\zeta^\sharp)^2\rho_0^{2M}V_r\big|_{q_*}+1\big).
\end{aligned}
\end{equation}

For $\mathrm{G}_{16}$, it follows from \eqref{eq:eta}, the fact that 
\begin{equation*}
M+\gamma-1\geq 2M-\beta,
\end{equation*}
Lemmas \ref{lemma-bound depth} and  \ref{lemma-lower bound jacobi}, and the H\"older and Young inequalities, we have
\begin{equation}
\begin{aligned}
\mathrm{G}_{16}& \leq C_0\Big|\frac{r^{m}}{\eta^{m}\eta_r}  \Big|_\infty^{\gamma-1}\big|\chi^\sharp\rho_0^{M+\gamma-1}\sqrt{\eta_r}V\big|_2\big|\zeta^\sharp\rho_0^{M}\sqrt{\eta_r}U\big|_2+C_0|\varrho|_\infty^{\gamma-1}\big|\zeta^\sharp\rho_0^{M}\sqrt{\eta_r}U\big|_2^2\\
&\leq C(M,T)\big(\big|\chi^\sharp\rho_0^{2M-\beta}\sqrt{\eta_r}V\big|_2^2+\big|\zeta^\sharp \rho_0^{M}\sqrt{\eta_r}U\big|_2^2\big). 
\end{aligned}
\end{equation}

For $\mathrm{G}_{17}$, it follows from Lemma \ref{lemma-refine u Lp}, the fact that 
\begin{equation*}
M-\beta>-\frac{\beta}{2},
\end{equation*}
and the H\"older and Young inequalities that
\begin{equation}\label{G13}
\begin{aligned}
\mathrm{G}_{17}&\leq C(M)|\eta_r^{-\frac{1}{2}}|_\infty(|(\rho_0^\beta)_r|_\infty+|\rho_0^\beta|_\infty)\big|\chi^\sharp \rho_0^{M-\beta}U\big|_2 \Big|\zeta^\sharp\rho_0^M\sqrt{\eta_r}\big(D_\eta U,\frac{U}{\eta}\big)\Big|_2\\
&\leq C(M,T)+\frac{\mu}{8}\Big|\zeta^\sharp\rho_0^M\sqrt{\eta_r}\big(D_\eta U,\frac{U}{\eta}\big)\Big|_2^2.
\end{aligned}    
\end{equation}

Collecting \eqref{G11-G13}--\eqref{G13} yields
\begin{equation}\label{610}
\begin{aligned}
&\,\frac{\mathrm{d}}{\dt}\big|\zeta^\sharp \rho_0^M\sqrt{\eta_r}U\big|_2^2+\mu\Big|\zeta^\sharp \rho_0^{M} \sqrt{\eta_r}\big(D_\eta U,\frac{U}{\eta}\big)\Big|_2^2\\
&\leq C(M,T)\big(1+\big|(\zeta^\sharp)^2\rho_0^{2M}V_r\big|_{q_*}+\big|\chi^\sharp\rho_0^{2M -\beta}\sqrt{\eta_r}V\big|_2^2+\big|\zeta^\sharp \rho_0^{M}\sqrt{\eta_r}U\big|_2^2\big).
\end{aligned}
\end{equation}

\smallskip
\textbf{2. Closing estimates.} Now, setting 
\begin{equation*}
N=2M-\beta\ \quad\,\, (N \geq 3\beta\ \ \text{since}\, M\geq 2\beta)
\end{equation*}
in \eqref{dtvvr} of Lemma \ref{lemma-V-Vr}, we have
\begin{equation*}
\begin{aligned}
\frac{\mathrm{d}}{\dt}\big|\zeta^\sharp \rho_0^{2M-\beta}\sqrt{\eta_r}V\big|_2&\leq C(M,T)\big|\zeta^\sharp \rho_0^M \sqrt{\eta_r}(U,D_\eta U)\big|_2,\notag\\
\frac{\mathrm{d}}{\dt}|(\zeta^\sharp)^2\rho_0^{2M} V_r|_{q_*}&\leq C(M,T)\big(\big|\zeta^\sharp \rho_0^{2M-\beta}\sqrt{\eta_r}V\big|_2+\big|\zeta^\sharp \rho_0^M \sqrt{\eta_r}(U,D_\eta U)\big|_2\big) \\
&\quad + C(M,T) \big|\zeta^\sharp \rho_0^M\sqrt{\eta_r} U\big|_2^\frac{3}{2}\big|\zeta^\sharp \rho_0^M\sqrt{\eta_r}D_\eta U\big|_2^\frac{1}{2},\notag
\end{aligned}
\end{equation*}
which, combined with \eqref{610} and the Young inequality, gives that, for all $\varepsilon\in (0,1)$,
\begin{equation*}
\begin{aligned}
&\,\frac{\mathrm{d}}{\dt}\big(\varepsilon\big|\zeta^\sharp \rho_0^{2M-\beta}\sqrt{\eta_r}V\big|_2^2+|(\zeta^\sharp)^2\rho_0^{2M} V_r|_{q_*}+\big|\zeta^\sharp \rho_0^{M}\sqrt{\eta_r}U\big|_2^2\big)+\mu\Big|\zeta^\sharp \rho_0^{M} \sqrt{\eta_r}\big(D_\eta U,\frac{U}{\eta}\big)\Big|_2^2\\
&\leq C(M,T) \varepsilon\big|\zeta^\sharp \rho_0^M \sqrt{\eta_r}D_\eta U\big|_2^2+C(M,T)\big(\big|\zeta^\sharp \rho_0^{2M-\beta}\sqrt{\eta_r}V\big|_2^2 +\big|\zeta^\sharp \rho_0^{2M}\sqrt{\eta_r}V_r\big|_{q_*}\big)\\
&\quad +C(\varepsilon,M,T)\big(1+\big|\zeta^\sharp \rho_0^{M}\sqrt{\eta_r}U\big|_2^2\big).    
\end{aligned}
\end{equation*}
Hence, setting
\begin{equation*}
\varepsilon=\min\big\{\frac{\mu}{10C(M,T)},\frac{1}{2}\big\},
\end{equation*}
and then applying the Gr\"onwall inequality to the resulting inequality yield that, for all $M$ satisfying \eqref{mmm},
\begin{equation}\label{EM}
\mathcal{E}_M^\sharp(t)+\int_0^t\Big|\zeta^\sharp \rho_0^{M} \sqrt{\eta_r}\big(D_\eta U,\frac{U}{\eta}\big)\Big|_2^2\,\mathrm{d}s \leq C(M,T) \qquad\text{for all $t\in[0,T]$},
\end{equation}
where $\cE^\sharp_M$ is defined in \eqref{EsharpM}. 

Moreover, it follows from Lemma \ref{Uinfty} and \eqref{EM} that, for all $t\in[0,T]$,
\begin{equation*}
\begin{aligned}
\int_0^t|\zeta^\sharp \rho_0^M U|_\infty^4\,\mathrm{d}s&\leq C(M,T)\Big(1+ \int_0^t \big|\zeta^\sharp \rho_0^M\sqrt{\eta_r} U\big|_2^2\big|\zeta^\sharp \rho_0^M \sqrt{\eta_r} D_\eta U\big|_2^2\,\mathrm{d}s\Big)\\
&\leq C(M,T)\Big(1+ \sup_{t\in[0,T]}\mathcal{E}_M^\sharp(t) \cdot\int_0^t \big|\zeta^\sharp \rho_0^M \sqrt{\eta_r} D_\eta U\big|_2^2\,\mathrm{d}s\Big)\leq C(M,T).
\end{aligned}
\end{equation*}

\smallskip
\textbf{3.} Finally, for $M>\beta+\gamma-1$, we can simply use the fact that $\rho_0\in L^\infty$ to obtain the desired estimates.  This completes the proof of Lemma \ref{goal2}.
\end{proof}

Now, we can derive the uniform upper bounds of $(\eta_r,\frac{\eta}{r})$ in $[0,T]\times [\frac{5}{8},1]$. 
\begin{Lemma}\label{lemma-upper jacobi-ex}
There exists a constant $C(T)>0$ such that
\begin{equation*}
\chi_\frac{5}{8}^\sharp\frac{\eta(t,r)}{r}+\chi_\frac{5}{8}^\sharp\eta_r(t,r) \leq C(T) \qquad \text{for all $(t,r)\in [0,T]\times \bar I$}.
\end{equation*}
\end{Lemma}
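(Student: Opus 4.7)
The plan, following the outline of \S\ref{subsub324}, is first to establish the weighted $L^\infty$-estimate
\[
\bigl|(\zeta^\sharp)^2\rho_0^K\log\varrho(t)\bigr|_\infty\leq C(K,T)
\]
for sufficiently large $K$, and then to convert this into the desired upper bounds on $(\eta_r,\tfrac{\eta}{r})$ via \eqref{eq:eta}. The $L^\infty$-bound will itself rest on a preliminary $\sqrt{\eta_r}$-weighted $L^2$-bound. To produce the latter, I would multiply the continuity equation $\eqref{eq:VFBP-La}_1$ by $(\zeta^\sharp)^4\rho_0^{2K}\varrho^{-1}\log\varrho$, integrate over $I$, and use the identities $\eta_{rt}=U_r$ and $(\log\varrho)_r=\eta_r(V-U)/(2\mu)$ to rewrite terms containing $U_r$ in terms of $(U,V)$. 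The resulting differential inequality for $|(\zeta^\sharp)^2\rho_0^K\sqrt{\eta_r}\log\varrho|_2^2$ has a right-hand side that pairs naturally with the $\rho_0^{M}\sqrt{\eta_r}$-weighted $L^2$-controls on $(U,V)$ supplied by Lemma \ref{lemma-v-vr-u-ur}, together with the $L^\infty$-bound on $\chi^\sharp\rho_0^\beta V$ from Lemma \ref{lemma-v Linfty ex}; Gr\"onwall then closes the estimate.

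With the $L^2$-bound in hand, the $L^\infty$-bound will come from applying the Sobolev embedding $W^{1,1}\hookrightarrow L^\infty$ to $(\zeta^\sharp)^4\rho_0^{2K}(\log\varrho)^2$. Differentiating in $r$ yields three types of contributions: derivatives of $(\zeta^\sharp)^2$ (supported in $[\tfrac{1}{2},\tfrac{5}{8}]$) are harmless and bounded by Lemma \ref{lemma-v Linfty ex}; derivatives of $\rho_0^{2K}$ produce a factor $\rho_0^{2K-\beta}(\rho_0^\beta)_r$, controllable once $K$ is chosen large; the crucial cross term is
\[
\bigl|(\zeta^\sharp)^4\rho_0^{2K}\eta_r(V-U)\log\varrho\bigr|_1
\leq \bigl|(\zeta^\sharp)^2\rho_0^K\sqrt{\eta_r}\log\varrho\bigr|_2\,\bigl|\zeta^\sharp\rho_0^K\sqrt{\eta_r}(U,V)\bigr|_2,
\]
in which splitting $\eta_r=\sqrt{\eta_r}\cdot\sqrt{\eta_r}$ is essential: the first factor is bounded by the previous step, while the second is a special case of the bound on $\mathcal{E}_M^\sharp(t)$ in Lemma \ref{lemma-v-vr-u-ur} (with $M=K$ after re-indexing).

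To convert the resulting bound on $\log\varrho$ into a bound on $\eta_r$, I would use
\[
\log\eta_r = m\log(r/\eta)+\log\rho_0-\log\varrho,
\]
together with the lower bound of $\eta/r$ from Lemma \ref{lemma-low jacobi 1}. For $r$ in any compact sub-interval of $[\tfrac{5}{8},1)$, $\rho_0^K$ is bounded away from zero, so $|\log\varrho|\leq C$ and hence $\eta_r\leq C$ there. For $r$ approaching $1$, I would argue by contradiction: a sequence $(t_k,r_k)$ with $\eta_r(t_k,r_k)\to\infty$ would have, by the compact-interval bound just established, any limit point $(t_0,r_0)$ satisfying $r_0=1$; but the identity $\eta_r(t,1)\equiv 1$ (from $\eta_r(0,1)=1$ and $\eta_{rt}|_{r=1}=U_r|_{r=1}=0$), combined with the joint continuity of $\eta_r$ on $[0,T]\times\bar I$ guaranteed by Definition \ref{d2}, would force $\eta_r(t_k,r_k)\to 1$, a contradiction. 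The upper bound on $\eta/r$ then follows by integrating the upper bound on $\eta_r$, together with control of $\eta$ at an interior point inherited from the estimates of the next subsection.

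The hardest part will be the $L^2$/$L^\infty$ steps: $K$ must be taken large enough to absorb the singular factor $(\rho_0^\beta)_r$ near $r=1$ and to satisfy the constraint $M\geq 2\beta$ in Lemma \ref{lemma-v-vr-u-ur}, while the cut-off $(\zeta^\sharp)^2$ and the identity $(\log\varrho)_r=\eta_r(V-U)/(2\mu)$ (the only way to replace the singular quotient $\varrho_r/\varrho$ at the vacuum boundary by something we actually control) must be orchestrated so that each term produced by integration by parts sits in a space covered by the previously established weighted bounds on $(U,V)$.
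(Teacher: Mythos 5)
Your proposal follows essentially the same route as the paper: an $L^2$-estimate on $(\zeta^\sharp)^2\rho_0^K\sqrt{\eta_r}\log\varrho$ via the continuity equation and the effective-velocity identity, then the $W^{1,1}\hookrightarrow L^\infty$ step with the crucial $\eta_r=\sqrt{\eta_r}\cdot\sqrt{\eta_r}$ split, then conversion to an $\eta_r$-bound via the Jacobian identity and a compactness/contradiction argument against $\eta_r|_{r=1}=1$. Three small fixes are worth flagging. First, the test function must carry the $\eta_r$-weight, i.e.\ one multiplies $\eqref{eq:VFBP-La}_1$ by $2(\zeta^\sharp)^4\eta_r\rho_0^{2K}\varrho^{-1}\log\varrho$ rather than $(\zeta^\sharp)^4\rho_0^{2K}\varrho^{-1}\log\varrho$; the extra $\eta_r$ is exactly what produces $\frac{\mathrm{d}}{\mathrm{d}t}|(\zeta^\sharp)^2\rho_0^K\sqrt{\eta_r}\log\varrho|_2^2$ after pairing with $\eta_{rt}=U_r$. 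Second, setting $M=K$ in Lemma \ref{lemma-v-vr-u-ur} does \emph{not} directly give $|\zeta^\sharp\rho_0^K\sqrt{\eta_r}V|_2$: the lemma controls $|\zeta^\sharp\rho_0^{2M-\beta}\sqrt{\eta_r}V|_2$, and for $M=K$ one has $2K-\beta>K$, so one cannot pass to the smaller exponent without introducing the unbounded factor $\rho_0^{-(K-\beta)}$ near $r=1$. You need to take $M$ so that $2M-\beta\le K$, e.g.\ $M=(K+\beta)/2$; the constraint $M\ge 2\beta$ of the lemma is then met because $K=100(\beta+\gamma)$ is large. Third, invoking ``the next subsection'' for $\eta(t,\tfrac58)$ is structurally awkward; the paper avoids it by writing $\eta(t,r)=r+\int_0^tU\,\mathrm{d}s$ on $(\tfrac58,1)$ and invoking the $L^1$-control of $U$ from Lemma \ref{lemma-refine u Lp}, which keeps the dependency graph clean.
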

\begin{proof}
We divide the proof into three steps.

\smallskip
\textbf{1.} We can first obtain from $\eqref{eq:VFBP-La}_1$, the fact that $\rho_0^\beta\sim 1-r$, Lemma \ref{lemma-basic energy}, and the H\"older inequality that, for all $t\in [0,T]$,
\begin{equation}\label{log-}
\begin{aligned}
\big|\zeta^\sharp\rho_0^{\frac{1}{2}}\log\varrho\big|_2&\leq \big|\zeta^\sharp\rho_0^{\frac{1}{2}}\log\rho_0\big|_2+C_0\int_0^t \Big|\zeta^\sharp\rho_0^{\frac{1}{2}}\big(D_\eta U,\frac{U}{\eta}\big)\Big|_2\,\mathrm{d}s\\
&\leq C_0+C_0\sqrt{t}\Big(\int_0^t \Big|(r^m\rho_0)^{\frac{1}{2}}\big(D_\eta U,\frac{U}{\eta}\big)\Big|_2^2\,\mathrm{d}s\Big)^\frac{1}{2}\leq C(T).
\end{aligned}
\end{equation}

\smallskip
\textbf{2. Weighted $L^\infty$-estimate of $\log \varrho$ in the exterior domain.} Consider the quantity
\begin{equation}\label{1000K}
\rho_0^K\log\varrho=\rho_0^K\log\big(\frac{r^m\rho_0}{\eta^m\eta_r}\big)
\end{equation}
with a fixed number
\begin{equation}\label{100K}
K:= 100(\beta+\gamma).
\end{equation}

\smallskip
\textbf{2.1.} First, we obtain from \eqref{v-expression}, \eqref{log-}, Lemmas \ref{lemma-v-vr-u-ur} and \ref{sobolev-embedding}, and the H\"older and Young inequalities that, for all $t\in [0,T]$,
\begin{equation}\label{log-pre}
\begin{aligned}
&\,\big|(\zeta^\sharp)^2\rho_0^K \log \varrho\big|_\infty^2\leq C_0\big\|(\zeta^\sharp)^4\rho_0^{2K} \log^2 \varrho\big\|_{1,1}\\
&\leq C_0\big(\big|\zeta^\sharp\rho_0^{K}\log \varrho\big|_2^2+\big|(\zeta^\sharp)^4\rho_0^{2K-\beta}(\rho_0^\beta)_r \log^2 \varrho\big|_1+\big|(\zeta^\sharp)^4\rho_0^{2K}\eta_r \log\varrho (V,U)\big|_1\big) \\
&\leq C_0\big(|(\rho_0^\beta,(\rho_0^\beta)_r)|_\infty\big|\zeta^\sharp\rho_0^{K-\frac{\beta}{2}} \log \varrho\big|_2^2+\big|(\zeta^\sharp)^2\rho_0^{K}\sqrt{\eta_r}\log \varrho\big|_2\big|\zeta^\sharp\rho_0^{K} \sqrt{\eta_r} (V,U)\big|_2\big)\\
&\leq C(T)\big(\big|(\zeta^\sharp)^2\rho_0^{K} \sqrt{\eta_r} \log \varrho\big|_2+1\big).
\end{aligned}    
\end{equation}

\smallskip
\textbf{2.2.} Multiplying $\eqref{eq:VFBP-La}_1$ by $2(\zeta^\sharp)^4\eta_r\rho_0^{2K}\varrho^{-1}\log \varrho$, along with \eqref{v-expression}, gives
\begin{equation*}
\begin{aligned}
&\,\big((\zeta^\sharp)^4\rho_0^{2K}\eta_r\log^2\varrho\big)_t+2(\zeta^\sharp)^4\rho_0^{2K}\eta_r(\log\varrho)\big(D_\eta U+\frac{mU}{\eta}\big)\\
&=\big((\zeta^\sharp)^4\rho_0^{2K}(\log^2\varrho) U\big)_r-4(\zeta^\sharp)^3(\zeta^\sharp)_r \rho_0^{2K}(\log^2\varrho) U\\
&\quad -\frac{2K}{\beta}(\zeta^\sharp)^4\rho_0^{2K-\beta}(\rho_0^\beta)_r(\log^2\varrho) U - \frac{1}{\mu}(\zeta^\sharp)^4\rho_0^{2K}\eta_r\log \varrho (V-U) U.
\end{aligned}
\end{equation*}

Then, integrating the above equality over $I$, we have
\begin{equation}\label{618}
\begin{aligned}
&\,\frac{\mathrm{d}}{\dt}\big|(\zeta^\sharp)^2\rho_0^{K}\sqrt{\eta_r}\log\varrho\big|_2^2\\
&=-2\int_0^1 (\zeta^\sharp)^4\rho_0^{2K}\eta_r(\log\varrho)\big(D_\eta U + \frac{mU}{\eta}\big)\,\mathrm{d}r - \frac{1}{\mu}\int_0^1(\zeta^\sharp)^4\rho_0^{2K}\eta_r\log \varrho (V-U) U\,\mathrm{d}r\\
&\quad -\int_0^1 \Big(\frac{2K}{\beta}(\zeta^\sharp)^4(\rho_0^\beta)_r+4(\zeta^\sharp)^3(\zeta^\sharp)_r\rho_0^\beta \Big)\rho_0^{2K-\beta}(\log^2\varrho) U\,\mathrm{d}r :=\sum_{i=18}^{20} \mathrm{G}_i.
\end{aligned}
\end{equation}

For $\mathrm{G}_{18}$--$\mathrm{G}_{20}$, it follows from  \eqref{log-}, \eqref{log-pre}, Lemma \ref{lemma-v-vr-u-ur}, and the H\"older and Young inequalities that
\begin{equation}\label{619}
\begin{aligned}
\mathrm{G}_{18}&\leq C_0\big|(\zeta^\sharp)^2\rho_0^{K}\sqrt{\eta_r}\log\varrho\big|_2^2+C_0\Big|\zeta^\sharp\rho_0^{K}\sqrt{\eta_r}\big(D_\eta U,\frac{U}{\eta}\big)\Big|_2^2,\\
\mathrm{G}_{19}&\leq C_0\big|(\zeta^\sharp)^2\rho_0^{K}\log\varrho\big|_\infty \big|\zeta^\sharp \rho_0^{\frac{K}{2}}\sqrt{\eta_r}(V,U)\big|_2\big|\zeta^\sharp \rho_0^{\frac{K}{2}}\sqrt{\eta_r}U\big|_2\\
&\leq C(T)\big(\big|(\zeta^\sharp)^2\rho_0^{K} \sqrt{\eta_r} \log \varrho\big|_2^\frac{1}{2}+1\big)\leq \big|(\zeta^\sharp)^2\rho_0^{K} \sqrt{\eta_r} \log \varrho\big|_2^2+C(T),\\
\mathrm{G}_{20}& \leq C_0|(\rho_0^\beta,(\rho_0^\beta)_r)|_\infty \big|\zeta^\sharp\rho_0^{\frac{1}{2}}\log\varrho\big|_2^2\big|\zeta^\sharp\rho_0^{2K-\beta-1}U\big|_\infty \leq C(T)\big(\big|\zeta^\sharp\rho_0^{2K-\beta-1}U\big|_\infty^4+1\big).
\end{aligned}    
\end{equation}

Combining \eqref{618}--\eqref{619}, along with the Gr\"onwall inequality and Lemma \ref{lemma-v-vr-u-ur}, implies that, for any $t\in [0,T]$,
\begin{equation*} 
\begin{aligned}
\big|(\zeta^\sharp)^2\rho_0^{K} \sqrt{\eta_r} \log \varrho(t)\big|_2^2& \leq C(T) \big|(\zeta^\sharp)^2\rho_0^{K} \log\rho_0\big|_2^2+C(T)\int_0^t \Big|\zeta^\sharp\rho_0^{K}\sqrt{\eta_r}\big(D_\eta U,\frac{U}{\eta}\big)\Big|_2^2\,\mathrm{d}s \\
&\quad +C(T)\int_0^t\big|\zeta^\sharp\rho_0^{2K-\beta-1}U\big|_\infty^4 \,\mathrm{d}s+C(T)\leq C(T). 
\end{aligned}
\end{equation*}

Finally, this, together with \eqref{log-pre}, yields that, for all $t\in [0,T]$,
\begin{equation}\label{log-L2}
\big|(\zeta^\sharp)^2\rho_0^K \log \varrho(t)\big|_\infty\leq C(T)\big(\big|(\zeta^\sharp)^2\rho_0^{K} \sqrt{\eta_r} \log \varrho(t)\big|_2^\frac{1}{2}+1\big) \leq C(T). 
\end{equation}

\smallskip
\textbf{3. Uniform upper bounds of $(\eta_r,\frac{\eta}{r})$ in $[0,T]\times [\frac{5}{8},1]$.} Let $K$ be defined in \eqref{100K}. First, \eqref{log-L2}, along with \eqref{1000K} and Lemma \ref{lemma-lower bound jacobi}, implies that, for all $(t,r)\in [0,T]\times [\frac{5}{8},1]$,
\begin{equation}\label{exp}
\eta_r(t,r)\leq  \frac{r^m\rho_0(r)}{\eta^m(t,r)}\exp (C(T)\rho_0^{-K}(r))\leq   C(T)\rho_0(r)\exp(C(T)\rho_0^{-K}(r)). 
\end{equation}

Now we show that, for all $T>0$, there exists a finite constant $C(T)>1$ depending only on $(C_0,T)$ such that
\begin{equation}\label{626}
\eta_r(t,r)\leq C(T) \qquad \text{for all $(t,r)\in [0,T]\times\big[\frac{5}{8},1\big]$}.
\end{equation}
Indeed, assume contrarily that there exist a time $T>0$ and a sequence of $\{(t_k,r_k)\}_{k=1}^\infty\subset [0,T]\times [\frac{5}{8},1]$ so that 
\begin{equation}\label{ass-infty}
\eta_r(t_k,r_k)\to \infty \qquad \text{as $k\to\infty$}.
\end{equation}
Note that, for any fixed constants $q_0,q_1>0$ and $s\in[0,q_1]$,
\begin{equation*}
s\exp (q_0s^{-K})\to \infty \qquad \text{whenever $s\to 0$}.
\end{equation*}
Consequently, it follows from the above, the fact that $\rho_0^\beta\sim 1-r$, \eqref{exp}, and \eqref{ass-infty} that 
\begin{equation}
C(T) \rho_0(r_k)\exp (C(T)\rho_0^{-K}(r_k))\geq  \eta_r(t_k,r_k)\to\infty \qquad  \text{as $k\to\infty$},
\end{equation}
which implies  
\begin{equation*}
r_k\to 1 \qquad \text{as $k\to\infty$}.
\end{equation*}
However, this contradicts to the fact that $\eta_r|_{r=1}=1$ for each $t\in[0,T]$ due to $U_r|_{r=1}=0$. Hence \eqref{626} holds.

Finally, the upper bound of $\frac{\eta}{r}$ in $[0,T]\times [\frac{5}{8},1]$ follows from \eqref{626} and Lemmas \ref{lemma-refine u Lp} and \ref{sobolev-embedding}:
\begin{equation*}
\begin{aligned}
\chi_\frac{5}{8}^\sharp\frac{\eta(t,r)}{r}&\leq \frac{8}{5}\|\eta(t)\|_{L^\infty(\frac{5}{8},1)}\leq C_0\int_\frac{5}{8}^1 \eta(t,r)\,\mathrm{d}r+C_0\int_\frac{5}{8}^1 \eta_r(t,r)\,\mathrm{d}r\\
&\leq C_0\int_\frac{5}{8}^1\Big(r+\int_0^t U(s,r)\,\mathrm{d}s\Big)\,\mathrm{d}r+ C(T)\leq C_0\int_0^t |\chi_\frac{5}{8}^\sharp U|_1\,\mathrm{d}s + C(T) \leq C(T).
\end{aligned}
\end{equation*}

This completes the proof of Lemma \ref{lemma-upper jacobi-ex}.
\end{proof}

\subsection{Uniform upper bounds of \texorpdfstring{$(\eta_r,\frac{\eta}{r})$}{} near the symmetric center}

\begin{Lemma}\label{lemma-u infty}
For any $a\in(0,1)$, there exists a constant $C(a,T)$ such that, for all $t\in [0,T]$,
\begin{equation*}
|\zeta_{a}\sqrt{\eta_r}U(t)|_2^2+\int_0^t\Big(\Big|\zeta_{a}\sqrt{\eta_r}\big(D_\eta U,\frac{U}{\eta}\big)\Big|_2^2+|\zeta_{a} U|_\infty^2\Big)\,\mathrm{d}s\leq C(a,T).
 \end{equation*}
\end{Lemma}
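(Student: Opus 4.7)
The plan is to carry out a weighted $L^2$-energy estimate by multiplying $\eqref{eq:VFBP-La-eta}_1$ by the test function $2\zeta_a^2\eta_r\varrho^{-1}U$ and integrating over $I$. Dividing by $\varrho$ is legitimate for classical solutions (since $\varrho>0$ pointwise) and is chosen precisely so that the $\varrho U_t$ term yields the bare $\sqrt{\eta_r}$-weight appearing in the statement---rather than the $\sqrt{\eta_r\varrho}$-weight produced by the more direct choice $2\zeta_a^2\eta_rU$---thereby avoiding the need for a pointwise lower bound on $\varrho$ near the origin, which is not yet available at this stage. Using $(\eta_r)_t=U_r$, $\eta_rD_\eta f=f_r$, and the identity $(\log\varrho)_r=\eta_r(V-U)/(2\mu)$ from Proposition \ref{lemma-V expression}, which cancels the singular $\varrho^{-1}$ factors arising from both the pressure and the diffusion, followed by integration by parts in the diffusion term, I expect to arrive at an energy identity schematically of the form
\begin{equation*}
\frac{\mathrm{d}}{\mathrm{d}t}\big|\zeta_a\sqrt{\eta_r}U\big|_2^2+4\mu\Big|\zeta_a\sqrt{\eta_r}\big(D_\eta U,\tfrac{U}{\eta}\big)\Big|_2^2=\mathrm{R}_{\mathrm{conv}}+\mathrm{R}_{\mathrm{pres}}+\mathrm{R}_{\mathrm{cut}},
\end{equation*}
where $\mathrm{R}_{\mathrm{conv}}$ collects convection terms proportional to $\zeta_a^2\eta_r(V-U)\cdot U\cdot(D_\eta U,U/\eta)$, $\mathrm{R}_{\mathrm{pres}}$ is a pressure-type term involving $\zeta_a^2\eta_r\varrho^{\gamma-1}(V-U)U$, and $\mathrm{R}_{\mathrm{cut}}$ collects all contributions carrying a factor of $(\zeta_a)_r$, whose support lies in the compact subinterval $[a,(1+3a)/4]\subset(0,1)$.

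I will then control the right-hand side. For $\mathrm{R}_{\mathrm{conv}}$ and $\mathrm{R}_{\mathrm{pres}}$, I use Lemma \ref{lemma-v Linfty in} ($|\zeta_aV|_\infty\leq C(a,T)$) together with Lemma \ref{lemma-bound depth} ($|\varrho|_\infty\leq C(T)$); H\"older and Young then absorb a portion of these into the dissipation on the left-hand side and leave remainders bounded by $C(a,T)(1+|\zeta_a\sqrt{\eta_r}U|_2^2)$. The cut-off remainder $\mathrm{R}_{\mathrm{cut}}$ is handled by H\"older's inequality together with the fact that $r^m\rho_0$ is strictly positive on $[a,(1+3a)/4]$, combined with the $L^p$ bounds of Lemma \ref{lp-uv} and Lemma \ref{lemma-basic energy}. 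Gr\"onwall's inequality then delivers the first half of the claim,
\begin{equation*}
\big|\zeta_a\sqrt{\eta_r}U(t)\big|_2^2+\int_0^t\Big|\zeta_a\sqrt{\eta_r}\big(D_\eta U,\tfrac{U}{\eta}\big)\Big|_2^2\,\mathrm{d}s\leq C(a,T).
\end{equation*}

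For the $L^2_tL^\infty_r$-estimate of $\zeta_aU$, I will apply the embedding $W^{1,1}\hookrightarrow L^\infty$ from Lemma \ref{sobolev-embedding} to $\zeta_a^2U^2$, which gives
\begin{equation*}
|\zeta_aU|_\infty^2\leq C(a)\big(|\zeta_aU|_2^2+\big|\zeta_a\sqrt{\eta_r}U\big|_2\big|\zeta_a\sqrt{\eta_r}D_\eta U\big|_2+\|U\|_{L^2(a,(1+3a)/4)}^2\big),
\end{equation*}
after using $U_r=\eta_rD_\eta U$ and bounding the $(\zeta_a)_r$-support contribution by the $L^2$-norm of $U$ on the compact set $[a,(1+3a)/4]$. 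The uniform lower bound $\eta_r\geq C(T)^{-1}$ from Lemma \ref{lemma-lower bound jacobi} turns $|\zeta_aU|_2$ into a controlled quantity via the previous step; Lemma \ref{lemma-basic energy} controls the last term. Integrating in time and applying Cauchy--Schwarz to the mixed term then produces $\int_0^t|\zeta_aU|_\infty^2\,\mathrm{d}s\leq C(a,T)$, completing the proof.

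The principal obstacle is the absence of any \emph{a priori} upper bound on $\eta_r$ in the support of $\zeta_a$---this lemma is itself an ingredient in establishing such a bound in the subsequent Lemma \ref{lemma-upper jacobi near}. Every term produced by the integration by parts must therefore be expressible in quantities already at our disposal, and this constraint dictates the particular test function above: the factor $\varrho^{-1}$ exchanges the unavailable lower bound of $\varrho$ for the $L^\infty$-control of $V$ near the origin furnished by Lemma \ref{lemma-v Linfty in}, while the identity $(\log\varrho)_r=\eta_r(V-U)/(2\mu)$ ensures that every $\varrho^{-1}$ factor produced in the computation cancels, leaving a bona fide $\sqrt{\eta_r}$-weighted inequality with right-hand side under control.
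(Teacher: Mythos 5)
Your proposal is correct in structure and uses the paper's test function $2\zeta_a^2\eta_r\varrho^{-1}U$, but it deviates from the paper in how the pressure term $\mathrm{G}_{18}=-\frac{A\gamma}{\mu}\int_0^1\zeta_a^2\eta_r\varrho^{\gamma-1}(V-U)U\,\mathrm{d}r$ is closed, and the deviation introduces an implicit dependence you should make explicit. After Cauchy--Schwarz, controlling the $V$-part of $\mathrm{G}_{18}$ by $C(a,T)(1+|\zeta_a\sqrt{\eta_r}U|_2^2)$ requires a uniform bound on $|\zeta_a\sqrt{\eta_r}\varrho^{\gamma-1}V|_2$. Pulling out $|\zeta_aV|_\infty$ and $|\varrho|_\infty^{\gamma-1}$ leaves the integral $\int_0^1\eta_r\,\mathrm{d}r=\eta(t,1)$, and since $\eta_r$ has no a priori pointwise upper bound on the support of $\zeta_a$ (this is precisely what the subsequent Lemma \ref{lemma-upper jacobi near} is after), this quantity is not free; you are implicitly invoking the exterior upper bound $\eta(t,1)\leq C(T)$ from Lemma \ref{lemma-upper jacobi-ex}. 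Since $\gamma$ can be as small as $4/3$, the factoring $\eta_r\varrho^{2(\gamma-1)}=\frac{r^m\rho_0}{\eta^m}\varrho^{2\gamma-3}$ does not help when $2\gamma-3<0$, so the total-mass bound is genuinely needed. This is legitimate---Lemma \ref{lemma-upper jacobi-ex} is established in the preceding subsection---but it should be cited. The paper instead avoids any such dependence by coupling the $U$-estimate with an auxiliary evolution equation: multiplying \eqref{eq:v} by $2\zeta_a^2\eta_r\varrho^{2\gamma-2}V$ yields a transport inequality for $|\zeta_a\sqrt{\eta_r}\varrho^{\gamma-1}V|_2^2$ whose right-hand side is absorbed by the same dissipation, and a joint Gr\"onwall for the pair $(|\zeta_a\sqrt{\eta_r}U|_2^2,|\zeta_a\sqrt{\eta_r}\varrho^{\gamma-1}V|_2^2)$ closes the estimate using only Lemmas \ref{lemma-bound depth}, \ref{lemma-lower bound jacobi}, \ref{lemma-v Linfty in}, and \ref{lp-uv}. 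Your single-Gr\"onwall route is shorter once Lemma \ref{lemma-upper jacobi-ex} is granted; the paper's two-component Gr\"onwall keeps the interior argument independent of the exterior upper bounds. Your convection term and $L^\infty$-step are handled exactly as in the paper (note that the identity that actually appears after the algebra is $\mathrm{G}_{19}=2\int\zeta_a^2\eta_rVUD_\eta U$, with $V$ rather than $V-U$, but this does not affect your estimate).
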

\begin{proof}
We divide the proof into three steps.

\smallskip
\textbf{1.} First, multiplying $\eqref{eq:VFBP-La-eta}_1$ by $2\zeta_{a}^2\eta_r\varrho^{-1} U$ with $a\in(0,1)$, along with \eqref{v-expression}, gives
\begin{equation*}
\begin{aligned}
&\,(\zeta_{a}^2\eta_rU^2)_t+2\mu \zeta_{a}^2\eta_r\Big(2 |D_\eta U|^2 + m \frac{U^2}{\eta^2}\Big)\\
&=\Big(4\mu \zeta_{a}^2 UD_\eta U+ 2\mu m \zeta_{a}^2\frac{U^2}{\eta} -\zeta_{a}^2U^3\Big)_r-\frac{A\gamma}{\mu}\zeta_{a}^2\eta_r\varrho^{\gamma-1}(V-U)U+2\zeta_{a}^2\eta_r VUD_\eta U\\
&\quad- 2\zeta_{a}(\zeta_{a})_r\Big(4\mu UD_\eta U+2\mu m \frac{U^2}{\eta}-U^3\Big).
\end{aligned}
\end{equation*}
Integrating the above over $I$ leads to
\begin{equation}\label{dt-g17-g19}
\begin{aligned}
&\,\frac{\mathrm{d}}{\dt}|\zeta_{a}\sqrt{\eta_r} U|_2^2 + \mu\Big|\zeta_{a} \sqrt{\eta_r} \big(D_\eta U,\frac{U}{\eta}\big)\Big|_2^2 \\
&\leq -\frac{A\gamma}{\mu}\int_0^1\zeta_{a}^2\eta_r\varrho^{\gamma-1}(V-U)U\,\mathrm{d}r+2\int_0^1\zeta_{a}^2\eta_rVU D_\eta U\,\mathrm{d}r\\
&\quad- 2\int_0^1\zeta_{a}(\zeta_{a})_r\Big(4\mu UD_\eta U+2\mu m \frac{U^2}{\eta}-U^3\Big)\,\mathrm{d}r:=\sum_{i=18}^{20}\mathrm{G}_i. 
\end{aligned}
\end{equation}

For $\mathrm{G}_{18}$--$\mathrm{G}_{20}$, from Lemmas \ref{lp-uv}, \ref{lemma-bound depth}, \ref{lemma-lower bound jacobi}, and \ref{lemma-v Linfty in}, the fact that $\support(\zeta_a)_r\subset[a,\frac{1+3a}{4}]$, and the H\"older and Young inequalities, it follows that
\begin{equation}\label{g17-g19}
\begin{aligned}
\mathrm{G}_{18}&\leq C_0|\zeta_{a}\sqrt{\eta_r} U|_2\big(\big|\zeta_{a}\sqrt{\eta_r}\varrho^{\gamma-1} V\big|_2+|\varrho|_\infty^{\gamma-1} |\zeta_{a}\sqrt{\eta_r} U|_2\big) \leq C(T)\big|\zeta_{a}\sqrt{\eta_r} (U,\varrho^{\gamma-1} V)\big|_2^2,\\
\mathrm{G}_{19}&\leq C_0\big|\zeta_{\frac{1+3a}{4}} V\big|_\infty |\zeta_{a}\sqrt{\eta_r} U|_2|\zeta_{a} \sqrt{\eta_r}D_\eta U|_2\leq C(a,T)|\zeta_{a}\sqrt{\eta_r} U|_2^2+\frac{\mu}{8}|\zeta_{a} \sqrt{\eta_r}D_\eta U|_2^2,\\
\mathrm{G}_{20}&\leq C(a,T)\big|(r^m\rho_0)^\frac{1}{2}U\big|_2\Big(\Big|\zeta_{a}\sqrt{\eta_r}\big(D_\eta U,\frac{U}{\eta}\big)\Big|_2+\big|(r^m\rho_0)^\frac{1}{4}U\big|_4^2\Big)\\
&\leq C(a,T)+\frac{\mu}{8}\Big|\zeta_a\sqrt{\eta_r}\big(D_\eta U,\frac{U}{\eta}\big)\Big|_2^2.
\end{aligned}    
\end{equation}

Therefore, we obtain from \eqref{dt-g17-g19}--\eqref{g17-g19} that
\begin{equation}\label{dtu}
\frac{\mathrm{d}}{\dt}\big|\zeta_{a}\sqrt{\eta_r} U\big|_2^2 + \frac{\mu}{2}\Big|\zeta_{a}\sqrt{\eta_r}\big(D_\eta U,\frac{U}{\eta}\big)\Big|_2^2 \leq C(a,T)\big(\big|\zeta_{a}\sqrt{\eta_r} (U,\varrho^{\gamma-1} V)\big|_2^2+1\big).  
\end{equation}

\smallskip
\textbf{2.} Multiplying \eqref{eq:v} by $2\zeta_{a}^2\eta_r\varrho^{2\gamma-2} V$, together with $\eqref{eq:VFBP-La}_1$, we have 
\begin{equation*}
(\zeta_{a}^2\eta_r\varrho^{2\gamma-2}V^2)_t=\zeta_{a}^2\varrho^{2\gamma-2}V^2\eta_r\big((3-2\gamma)D_\eta U-(2\gamma-2)m \frac{U}{\eta}\big)-\frac{A\gamma}{\mu}\zeta_{a}^2\eta_r\varrho^{3\gamma-3}V(V-U).
\end{equation*}
Then, integrating the resulting equality over $I$, we can obtain from Lemmas \ref{lemma-bound depth} and \ref{lemma-v Linfty in}, and the H\"older and Young inequalities that
\begin{equation}\label{dtv}
\begin{aligned}
\frac{\mathrm{d}}{\dt}\big|\zeta_{a}\sqrt{\eta_r}\varrho^{\gamma-1} V\big|_2^2&\leq C_0|\varrho|_\infty^{\gamma-1} \big|\zeta_{\frac{1+3a}{4}}V\big|_\infty \big|\zeta_{a}\sqrt{\eta_r}\varrho^{\gamma-1} V\big|_2\Big|\zeta_{a}\sqrt{\eta_r}\big(D_\eta U,\frac{U}{\eta}\big)\Big|_2\\
&\quad + C_0\big(\big|\zeta_{a}\sqrt{\eta_r}\varrho^{\gamma-1} V\big|_2+ |\varrho|_\infty^{\gamma-1} \big|\zeta_{a}\sqrt{\eta_r}U\big|_2\big)|\varrho|_\infty^{\gamma-1}\big|\zeta_{a}\sqrt{\eta_r}\varrho^{\gamma-1} V\big|_2\\
&\leq  C(a,T)\big|\zeta_{a}\sqrt{\eta_r} (U,\varrho^{\gamma-1}V)\big|_2^2+\frac{\mu}{8}\Big|\zeta_{a}\sqrt{\eta_r}\big(D_\eta U,\frac{U}{\eta}\big)\Big|_2^2.
\end{aligned}    
\end{equation}

\smallskip
\textbf{3.} Therefore, combining \eqref{dtu}--\eqref{dtv} leads to
\begin{equation*}
\frac{\mathrm{d}}{\mathrm{d}t}\big|\zeta_{a}\sqrt{\eta_r}\big(U,\varrho^{\gamma-1} V\big)\big|_2^2+ \frac{\mu}{8}\Big|\zeta_{a}\sqrt{\eta_r}\big(D_\eta U,\frac{U}{\eta}\big)\Big|_2^2\leq C(a,T)\big(\big|\zeta_{a}\sqrt{\eta_r}(U,\varrho^{\gamma-1} V)\big|_2^2+1\big),
\end{equation*}
which, along with the Gr\"onwall inequality, yields that, for all $a\in(0,1)$ and $t\in [0,T]$,
\begin{equation*}
\begin{aligned}
\big|\zeta_{a}\sqrt{\eta_r}(U,\varrho^{\gamma-1} V)(t)\big|_2^2+ \int_0^t\Big|\zeta_{a}\sqrt{\eta_r}\big(D_\eta U,\frac{U}{\eta}\big)\Big|_2^2\,\mathrm{d}s
&\leq C(a,T)\big(\big|\zeta_{a}(u_0,\rho_0^{\gamma-1}v_0)\big|_2^2+1\big)\\
&\leq C(a,T).
\end{aligned}    
\end{equation*}
Here, to derive the $L^2$-bound of $\zeta_{a}(u_0,\rho_0^{\gamma-1}v_0)$, it suffices to check that
\begin{equation*}
|\zeta_{a}\rho_0^{\gamma-1}(\log\rho_0)_r|_2\leq |\zeta_{a}\rho_0^{\gamma-1-\beta}(\rho_0^\beta)_r|_2\leq C(a)|(\rho_0^\beta)_r|_\infty\leq C(a).
\end{equation*}

Finally, it follows from the above, Lemmas \ref{lemma-lower bound jacobi} and \ref{sobolev-embedding}, and the H\"older and Young inequalities that, for all $a\in(0,1)$ and  $t\in[0,T]$,
\begin{equation*}
\begin{aligned}
\int_0^t |\zeta_{a} U|_\infty^2\,\mathrm{d}s &\leq C_0\int_0^t\big|\zeta_{a}U((\zeta_{a})_rU,\zeta_{a} U_r)\big|_1\,\mathrm{d}s\\
&\leq C(a)\int_0^t \big(\big|\zeta_{\frac{1+3a}{4}}\sqrt{\eta_r}U\big|_2^2+ |\zeta_{a}\sqrt{\eta_r}U|_2|\zeta_{a}\sqrt{\eta_r}D_\eta U|_2\big)\,\mathrm{d}s\\
&\leq C(a,T)+\int_0^t|\zeta_{a}\sqrt{\eta_r}D_\eta U|_2^2\,\mathrm{d}s.
\end{aligned}
\end{equation*}

This completes the proof of Lemma \ref{lemma-u infty}.
\end{proof}

We now establish the global uniform upper bounds of $(\eta_r,\frac{\eta}{r})$ in $[0,T]\times [0,a]$ for $a\in (0,1)$.
\begin{Lemma}\label{lemma-upper jacobi near}
For any $a\in (0,1)$, there exists a constant $C(a,T)>0$ such that
\begin{equation*}
\chi_a\frac{\eta(t,r)}{r}+\chi_a\eta_r(t,r)\leq C(a,T) \qquad \text{for all $(t,r)\in [0,T]\times \bar I$},
\end{equation*}
where  $\chi_a$ denotes the characteristic function on $[0,a]$ 
{\rm  (}see {\rm \S \ref{othernotation})}.

\end{Lemma}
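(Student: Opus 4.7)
The plan is to adapt the argument of Lemma \ref{lemma-upper jacobi-ex} to the region near the origin, replacing the vacuum weight $\rho_0^K$ by the interior cut-off $\zeta = \zeta_{1/2}$. Since $\rho_0$ is uniformly bounded away from zero on $\support\zeta = [0, 5/8]$, I aim directly for the unweighted bound $|\zeta\log\varrho|_\infty \leq C(T)$, from which the desired upper bound for $\eta_r$ follows immediately via formula \eqref{eq:eta} together with the lower bound $\eta/r \geq C(T)^{-1}$ provided by Lemma \ref{lemma-lower bound jacobi}.

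The first step is to establish $|\zeta\sqrt{\eta_r}\log\varrho(t)|_2 \leq C(T)$. Dividing $\eqref{eq:VFBP-La}_1$ by $\varrho$ gives the transport identity $(\log\varrho)_t + D_\eta U + mU/\eta = 0$. Multiplying this by $2\zeta^2\eta_r\log\varrho$ and using $\eta_t = U$ yields
\begin{equation*}
(\zeta^2\eta_r\log^2\varrho)_t + 2\zeta^2\eta_r(\log\varrho)\Big(D_\eta U + \tfrac{mU}{\eta}\Big) = \zeta^2 U_r\log^2\varrho.
\end{equation*}
After integrating over $I$ and one integration by parts on the right, the right-hand side reduces to (i) interior pairings of $\zeta\sqrt{\eta_r}\log\varrho$ against $\zeta\sqrt{\eta_r}(D_\eta U, U/\eta, V)$, controlled by Cauchy--Schwarz together with Lemmas \ref{lemma-bound depth}, \ref{lemma-v Linfty in}, and \ref{lemma-u infty}, and (ii) commutator terms supported on $[1/2, 5/8]$, where Lemma \ref{lemma-upper jacobi-ex} already provides pointwise control on $(\eta_r, \eta/r)$, hence on $\log\varrho$ via \eqref{eq:eta}. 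Since Lemma \ref{lemma-u infty} makes $|\zeta\sqrt{\eta_r}(D_\eta U, U/\eta)|_2^2$ an $L^1_t$-function on $[0, T]$, Gr\"onwall's inequality then closes the estimate.

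The second step upgrades from $L^2$ to $L^\infty$ by the Sobolev embedding $W^{1,1} \hookrightarrow L^\infty$, mimicking Step 2.1 in the proof of Lemma \ref{lemma-upper jacobi-ex}:
\begin{equation*}
|\zeta^2\log\varrho|_\infty^2 \leq C_0\|\zeta^4\log^2\varrho\|_{1,1}.
\end{equation*}
Using the identity $\varrho_r = \eta_r\varrho(V - U)/(2\mu)$ to compute the spatial derivative, the right-hand side is dominated by $|\zeta\sqrt{\eta_r}\log\varrho|_2 \cdot |\zeta\sqrt{\eta_r}(V, U)|_2$ plus a commutator supported on $[1/2, 5/8]$. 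All factors are bounded by Step 1, Lemmas \ref{lemma-v Linfty in}, and \ref{lemma-u infty}, which yields $|\zeta\log\varrho(t)|_\infty \leq C(T)$. Combining this with \eqref{eq:eta} and Lemma \ref{lemma-lower bound jacobi}, I would obtain
\begin{equation*}
\eta_r(t, r) = \frac{\rho_0(r)}{(\eta(t,r)/r)^m\,\varrho(t, r)} \leq C(T)\exp\big(|\zeta\log\varrho(t)|_\infty\big) \leq C(T) \qquad \text{on } [0, T]\times[0, 1/2],
\end{equation*}
which together with Lemma \ref{lemma-upper jacobi-ex} yields $\eta_r \leq C(T)$ on all of $[0, T]\times \bar I$. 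The bound on $\eta/r$ then follows immediately from $\chi_a\,\eta(t,r)/r = \chi_a r^{-1}\int_0^r \eta_r(t, \tilde r)\, \mathrm{d}\tilde r \leq C(T)$ for every $a \in (0, 1)$.

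The main obstacle will lie in Step 1, specifically in handling the cross term $\zeta^2\eta_r(\log\varrho)\, U/\eta$, which is potentially singular at the origin since neither $U/\eta$ nor $\log\varrho$ is known pointwise a priori. The redistribution $\sqrt{\eta_r}\log\varrho \cdot \sqrt{\eta_r}(U/\eta)$ works only because Lemma \ref{lemma-u infty} furnishes precisely the $L^2_tL^2_r$-integrability of $\zeta\sqrt{\eta_r}(U/\eta)$ required to dominate this pairing in Gr\"onwall form; without this specific weighted dissipation bound near the origin, the identity would not close.
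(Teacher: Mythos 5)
Your route differs from the paper's in a substantive way: you kill the quadratic $\log^2\varrho$ term in the Gr\"onwall step by integrating $\int\zeta^2 U_r\log^2\varrho\,\mathrm{d}r$ by parts (using $U(t,0)=0$ and $\zeta(1)=0$), whereas the paper uses the sharp cut-off $\chi_a$, first establishes $|\chi_a\sqrt{\eta_r}V|_2\leq C(a,T)$ from the damped transport equation \eqref{eq:v}, and then absorbs the quadratic term via a Sobolev pre-estimate $|\chi_a\log\varrho|_\infty^2\lesssim|\chi_a\sqrt{\eta_r}\log\varrho|_2^2+1$. Your integration by parts is a cleaner device and, in principle, lets you skip the paper's Step 1 entirely, since $|\zeta\sqrt{\eta_r}V|_2\leq|\zeta V|_\infty\cdot(\int_0^{5/8}\eta_r\,\mathrm{d}r)^{1/2}=|\zeta V|_\infty\sqrt{\eta(t,5/8)}$ and $\eta(t,5/8)\leq C(T)$ follows from Lemma~\ref{lemma-upper jacobi-ex} together with the monotonicity of $\eta$ in $r$. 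You invoke Lemmas~\ref{lemma-v Linfty in} and~\ref{lemma-u infty} as if they deliver this directly, which they do not; the observation about $\eta(t,5/8)$ needs to be made explicit.

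The serious gap, however, is in your treatment of the commutator. Your cut-off $\zeta=\zeta_{1/2}$ satisfies $\support\zeta_r\subset[1/2,5/8]$, and you claim that on $[1/2,5/8]$ "Lemma~\ref{lemma-upper jacobi-ex} already provides pointwise control on $(\eta_r,\eta/r)$, hence on $\log\varrho$." This is a mis-citation: Lemma~\ref{lemma-upper jacobi-ex} controls $\chi^\sharp_{5/8}(\eta_r,\eta/r)$, i.e. the region $(5/8,1]$, and says nothing about $[1/2,5/8]$. On that interval one has an upper bound on $\varrho$ and a lower bound on $\eta/r$, but no lower bound on $\varrho$ (equivalently, no upper bound on $\eta_r$) — which is exactly what the lemma is trying to establish. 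The same omission shows up in your final assembly: your argument bounds $\eta_r$ on $[0,1/2]$, Lemma~\ref{lemma-upper jacobi-ex} bounds it on $(5/8,1]$, and the claim "$\eta_r\leq C(T)$ on all of $[0,T]\times\bar I$" leaves $[1/2,5/8]$ uncovered. Note also that the lemma asks for the bound on $[0,a]$ for \emph{every} $a\in(0,1)$; fixing $\zeta_{1/2}$ only addresses $a\leq1/2$. The fix is to run your argument with $\zeta_a$ for $a>5/8$, so that $\support(\zeta_a)_r\subset[a,\tfrac{1+3a}{4}]\subset(5/8,1)$ where Lemma~\ref{lemma-upper jacobi-ex} genuinely controls $\log\varrho$, or to use $\chi_a$ as the paper does, trading commutators for a boundary term $U(t,a)\log^2\varrho(t,a)$ at $r=a>5/8$ which is then controlled by the same lemma.
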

\begin{proof}
We divide the proof of this lemma into three steps.

\smallskip
\textbf{1.} First, multiplying \eqref{eq:v} by $2\chi_{a} \eta_r V$ ($a\in(0,1)$) and integrating the resulting equality over $I$, together with Lemmas \ref{lemma-bound depth}, \ref{lemma-v Linfty in}, and \ref{lemma-u infty}, and the H\"older inequality, give
\begin{equation*}
\begin{aligned}
&\,\frac{\mathrm{d}}{\dt}|\chi_{a}\sqrt{\eta_r}V|_2^2+\frac{A\gamma}{\mu}\big|\chi_{a}\sqrt{\eta_r}\varrho^\frac{\gamma-1}{2}V\big|_2^2=\int_0^a V^2U_r\,\mathrm{d}r+\frac{A\gamma}{\mu}\int_0^a \eta_r \varrho^{\gamma-1} UV\,\mathrm{d}r\\
&\leq |\zeta_{a} V|_\infty |\chi_{a}\sqrt{\eta_r}V|_2|\zeta_{a} \sqrt{\eta_r} D_\eta U|_2+\frac{A\gamma}{\mu}|\varrho|_\infty^{\gamma-1}\big|\chi_{a}\sqrt{\eta_r} V\big|_2|\zeta_{a}\sqrt{\eta_r}U|_2,
\end{aligned}
\end{equation*}
which, along with the Young inequality, leads to
\begin{equation*}
\frac{\mathrm{d}}{\dt}|\chi_{a}\sqrt{\eta_r}V|_2^2 \leq C(a,T)\big(|\chi_{a}\sqrt{\eta_r}V|_2^2+|\zeta_{a} \sqrt{\eta_r} D_\eta U|_2^2+1\big).
\end{equation*}
Then applying the Gr\"onwall inequality to the above and using Lemma \ref{lemma-u infty}, we obtain that, for any $t\in [0,T]$ and $a\in (0,1)$,
\begin{equation}\label{435}
|\chi_{a}\sqrt{\eta_r}V(t)|_2\leq C(a,T)(|\chi_{a}v_0|_2+1)\leq C(a,T).
\end{equation}
Here, to derive the $L^2$-bound of $\chi_{a}v_0$, it suffices to note that $\rho_0^\beta\sim 1-r$ and
\begin{equation*}
|\chi_{a}(\log\rho_0)_r|_2\leq C(a)|(\rho_0)_r|_2\leq C(a).
\end{equation*}

\smallskip
\textbf{2. Uniform boundedness of $\log\varrho$ near the origin.} First, it follows from \eqref{v-expression}, \eqref{435}, Lemmas \ref{lemma-lower bound jacobi}, \ref{lemma-u infty}, and \ref{sobolev-embedding}, and the H\"older and Young inequalities that
\begin{equation}\label{log h-pre}
\begin{aligned}
|\chi_{a}\log \varrho|_\infty^2&\leq C(a)|\chi_{a}\log \varrho|_2^2+ C(a)|\chi_{a}(\log \varrho)(\log \varrho)_r|_1 \\
&\leq C(a)|\chi_{a}\log \varrho|_2^2+C(a)|\chi_{a}\sqrt{\eta_r}\log \varrho|_2|\chi_{a}\sqrt{\eta_r}D_\eta\log \varrho|_2\\
&\leq C(a)(|\eta_r^{-1}|_\infty+1) |\chi_{a}\sqrt{\eta_r}\log \varrho|_2^2+\big|\sqrt{\eta_r}(\chi_{a}V,\zeta_{a}U)\big|_2^2\\
&\leq C(a,T)(|\chi_{a}\sqrt{\eta_r}\log \varrho|_2^2+1).
\end{aligned}
\end{equation}

Next, multiplying $\eqref{eq:VFBP-La}_1$ by $2\chi_{a}\eta_r\varrho^{-1}\log \varrho$ and integrating the resulting equality over $I$, we obtain from the above, and the H\"older and Young inequalities that
\begin{equation*}
\begin{aligned}
\frac{\mathrm{d}}{\dt}|\chi_{a}\sqrt{\eta_r}\log \varrho|_2^2&=\int_0^a  (\log \varrho)^2\eta_r D_\eta U\,\mathrm{d}r-2\int_0^a \eta_r(\log \varrho)\big(D_\eta U+\frac{mU}{\eta}\big)\,\mathrm{d}r\\
&\leq C_0(|\chi_{a} \log \varrho|_\infty+1) |\chi_{a}\sqrt{\eta_r}\log \varrho|_2\Big|\zeta_{a}\sqrt{\eta_r}\big(D_\eta U,\frac{U}{\eta}\big)\Big|_2 \\
&\leq C_0(|\chi_{a}\sqrt{\eta_r}\log \varrho|_2^2+1)\Big|\zeta_{a}\sqrt{\eta_r}\big(D_\eta U,\frac{U}{\eta}\big)\Big|_2,    
\end{aligned}
\end{equation*}
which, along with Lemma \ref{lemma-u infty}, the fact that $\rho_0^\beta\sim 1-r$, and the Gr\"onwall and H\"older inequalities, yields that, for all $t\in [0,T]$ and $a\in (0,1)$,
\begin{equation}\label{log h-L2}
|\chi_{a}\sqrt{\eta_r}\log \varrho(t)|_2\leq C(a,T)(|\chi_{a}\log \rho_0|_2+1)\leq C(a,T). 
\end{equation}
This, together with \eqref{log h-pre}, also yields that, for all $t\in [0,T]$ and $a\in (0,1)$,
\begin{equation}\label{log h-Linfty}
|\chi_{a}\log \varrho(t)|_\infty\leq C(a,T).
\end{equation}

\smallskip
\textbf{3. Upper bounds for $(\eta_r,\frac{\eta}{r})$ in $[0,T]\times [0,a]$.} Note that \eqref{log h-Linfty}, together with \eqref{eq:eta}, implies that, for all $(t,r)\in [0,T]\times [0,a]$ and $a\in (0,1)$,
\begin{equation}\label{439}
\frac{r^m\rho_0(r)}{(\eta^m\eta_r)(t,r)}\geq C(a,T)^{-1} \implies (\eta^m\eta_r)(t,r) \leq C(a,T)r^m.
\end{equation}
Therefore, it follows from \eqref{439} and Lemma \ref{lemma-lower bound jacobi} that, for all $(t,r)\in [0,T]\times [0,a]$,
\begin{equation*}
\frac{\eta(t,r)}{r} \leq \Big(\frac{C(a,T)}{\eta_r(t,r)}\Big)^\frac{1}{m} \leq C(a,T),\qquad \eta_r(t,r)\leq  \frac{C(a,T)r^m}{\eta^m(t,r)}\leq C(a,T).
\end{equation*}

This completes the proof of Lemma \ref{lemma-upper jacobi near}.
\end{proof}

Consequently, Lemma \ref{lemma-upper jacobi near}, combined with Lemma \ref{lemma-upper jacobi-ex}, yields that, for all $(t,r)\in[0,T]\times \bar I$,
\begin{equation*}
\frac{\eta(t,r)}{r}+\eta_r(t,r)=\chi_\frac{5}{8}\big(\frac{\eta(t,r)}{r}+ \eta_r(t,r)\big)+\chi_\frac{5}{8}^\sharp\big(\frac{\eta(t,r)}{r}+ \eta_r(t,r)\big)\leq C(T).
\end{equation*} 
which thus provides the global uniform upper bounds of $(\eta_r,\frac{\eta}{r})$ in $[0,T]\times \bar I$:
\begin{Lemma}\label{lemma-upper jacobi}
There exists a constant $C(T)>0$ such that
\begin{equation*}
\frac{\eta(t,r)}{r}+\eta_r(t,r)\leq C(T) \qquad \text{for all $(t,r)\in[0,T]\times \bar I$}.
\end{equation*}
\end{Lemma}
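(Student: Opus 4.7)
The statement to prove is the global-in-time uniform upper bound for $(\eta_r,\frac{\eta}{r})$ on the entire interval $\bar I$, which is the final synthesis step after the two preceding region-specific bounds have been established.

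The plan is to obtain the bound by simply splicing together Lemmas \ref{lemma-upper jacobi-ex} and \ref{lemma-upper jacobi near}. More concretely, I would decompose $\bar I = [0,\tfrac{5}{8}]\cup [\tfrac{5}{8},1]$ using the characteristic functions $\chi_{\frac{5}{8}}$ and $\chi_{\frac{5}{8}}^\sharp$ introduced in \S\ref{othernotation}, and write, for each $(t,r)\in [0,T]\times \bar I$,
\begin{equation*}
\frac{\eta(t,r)}{r}+\eta_r(t,r)
=\chi_{\frac{5}{8}}(r)\Big(\frac{\eta(t,r)}{r}+\eta_r(t,r)\Big)
+\chi_{\frac{5}{8}}^\sharp(r)\Big(\frac{\eta(t,r)}{r}+\eta_r(t,r)\Big).
\end{equation*}
On the exterior piece $r\in [\tfrac{5}{8},1]$, Lemma \ref{lemma-upper jacobi-ex} gives directly $\chi_{\frac{5}{8}}^\sharp(\frac{\eta}{r}+\eta_r)\leq C(T)$. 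On the interior piece $r\in [0,\tfrac{5}{8}]$, applying Lemma \ref{lemma-upper jacobi near} with the parameter choice $a=\tfrac{5}{8}$ yields $\chi_{\frac{5}{8}}(\frac{\eta}{r}+\eta_r)\leq C(\tfrac{5}{8},T)=C(T)$. Adding the two contributions produces the claimed uniform bound.

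Since the two building-block lemmas have already been proved, there is really no ``hard part'' remaining at this stage; the only subtlety to watch for is trivial bookkeeping at the joining point $r=\tfrac{5}{8}$, where both cut-offs contribute and the constants from the two regions must be absorbed into a single $C(T)$ depending only on $(n,\mu,A,\gamma,\beta,\varepsilon_0,\rho_0,u_0,\cK_1,\cK_2,T)$. The substantive work\,---\,namely, the $\rho_0$-weighted effective-velocity estimates that yield the exterior $L^\infty$ control of $\log\varrho$ in Lemma \ref{lemma-upper jacobi-ex}, and the $\sqrt{\eta_r}$-weighted argument producing the interior $L^\infty$ control of $\log\varrho$ in Lemma \ref{lemma-upper jacobi near}\,---\,has already been carried out in the preceding two subsections, together with the boundary identity $\eta_r|_{r=1}=1$ that rules out blow-up as $r\to 1$. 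Thus the proof of Lemma \ref{lemma-upper jacobi} reduces to one line of additive combination, and the constant $C(T)$ inherits its dependence from those of Lemmas \ref{lemma-upper jacobi-ex} and \ref{lemma-upper jacobi near}.
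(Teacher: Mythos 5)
Your proposal matches the paper's own argument exactly: the paper states Lemma \ref{lemma-upper jacobi} immediately after writing $\frac{\eta}{r}+\eta_r=\chi_{\frac{5}{8}}(\frac{\eta}{r}+\eta_r)+\chi_{\frac{5}{8}}^\sharp(\frac{\eta}{r}+\eta_r)\leq C(T)$ and citing Lemmas \ref{lemma-upper jacobi-ex} and \ref{lemma-upper jacobi near}. Your choice of $a=\tfrac{5}{8}$ in Lemma \ref{lemma-upper jacobi near} and the additive splicing via the characteristic functions are precisely what the paper does.
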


\smallskip
\section{Non-Formation of Vacuum States Inside the Fluids  in Finite Time}\label{Section-densitylower}

The purpose of this section is to show that there is no vacuum formation inside the fluids in finite time.

\begin{Lemma}\label{non-vac}
There exists a constant $C(T)>1$ such that
\begin{equation*}
C(T)^{-1}\leq \frac{\varrho(t,r)}{\rho_0(r)}\leq C(T) \qquad\,\, \text{for all $(t,r)\in[0,T]\times \bar I$}.
\end{equation*}
In particular, no vacuum state will occur in $[0,T]\times [0,a]$ for any $T>0$ and $a\in (0,1)$. 
\end{Lemma}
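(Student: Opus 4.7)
The proof will be immediate from the material already assembled in the preceding sections; the only remaining work is to combine the representation formula for $\varrho$ with the two-sided bounds on the Jacobian quantities $(\eta_r, \tfrac{\eta}{r})$. My plan is to start from identity \eqref{eq:eta}, which gives the pointwise expression
\begin{equation*}
\frac{\varrho(t,r)}{\rho_0(r)} = \frac{r^m}{\eta^m \eta_r} = \frac{1}{\bigl(\tfrac{\eta}{r}\bigr)^m \eta_r}.
\end{equation*}
Since $\rho_0(r) > 0$ for $r \in [0,1)$ by \eqref{distance-la}, the right-hand side is well defined on $[0,T]\times \bar I$, and the desired two-sided estimate on $\varrho/\rho_0$ is equivalent to two-sided bounds on $(\tfrac{\eta}{r})^m \eta_r$.

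Next I would invoke the global-in-time lower bound of Lemma \ref{lemma-lower bound jacobi}, which guarantees $\eta_r(t,r) \geq C(T)^{-1}$ and $\tfrac{\eta(t,r)}{r} \geq C(T)^{-1}$ uniformly on $[0,T]\times \bar I$, and the global-in-time upper bound of Lemma \ref{lemma-upper jacobi}, which guarantees $\eta_r(t,r) \leq C(T)$ and $\tfrac{\eta(t,r)}{r} \leq C(T)$ uniformly on $[0,T]\times \bar I$. Combining these directly in the displayed identity above yields
\begin{equation*}
C(T)^{-(m+1)} \leq \frac{\varrho(t,r)}{\rho_0(r)} = \frac{1}{\bigl(\tfrac{\eta}{r}\bigr)^m \eta_r} \leq C(T)^{m+1}\qquad \text{for all }(t,r)\in [0,T]\times \bar I,
\end{equation*}
which is the claimed estimate after absorbing $m+1 \in \{2,3\}$ into the constant $C(T)$.

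For the final assertion on the non-formation of vacuum inside the fluids on $[0,T]\times [0,a]$, I would note that \eqref{distance-la} gives $\rho_0(r) \geq \cK_1(1-r)^{1/\beta} \geq \cK_1 (1-a)^{1/\beta}>0$ uniformly for $r \in [0,a]$ with $a\in(0,1)$; combined with the just-established lower bound $\varrho \geq C(T)^{-1}\rho_0$, this yields $\varrho(t,r) \geq C(T)^{-1}\cK_1(1-a)^{1/\beta}>0$ on $[0,T]\times[0,a]$, ruling out vacuum formation in the interior in finite time. There is essentially no obstacle left at this stage: all of the real difficulty has been concentrated in the earlier sections (the interior BD-type estimate, the novel $\rho_0$-weighted bounds on the effective velocity $V$, and the subsequent bootstrap to uniform upper bounds on $(\eta_r,\tfrac{\eta}{r})$), and Lemma \ref{non-vac} is merely the book-keeping consequence.
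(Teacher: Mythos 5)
Your proof is correct and follows essentially the same route as the paper: both start from the identity $\varrho/\rho_0 = r^m/(\eta^m\eta_r)$ from \eqref{eq:eta}, apply the two-sided bounds on $(\eta_r,\tfrac{\eta}{r})$ from Lemmas \ref{lemma-lower bound jacobi} and \ref{lemma-upper jacobi}, and then use $\rho_0^\beta\sim 1-r$ for the non-vacuum conclusion. Your write-up just makes the intermediate algebra slightly more explicit.
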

\begin{proof}
It follows from \eqref{eq:eta} and Lemmas \ref{lemma-lower bound jacobi} and \ref{lemma-upper jacobi} that, for all $(t,r)\in[0,T]\times \bar I$,
\begin{equation*}
\frac{\varrho(t,r)}{\rho_0(r)}=\frac{r^m}{(\eta^m\eta_r)(t,r)}\in \big[C(T)^{-1},C(T)\big].
\end{equation*}
Moreover, since $\rho_0^\beta\sim 1-r$, the above statement implies 
\begin{equation*}
\varrho(t,r)>0 \ \ \text{in $[0,T]\times [0,a]$}\qquad \text{for any $T>0$ and $a\in (0,1).$}
\end{equation*}
This completes the proof.
\end{proof}

\section{Global-In-Time Uniform Weighted Energy Estimates on the Velocity}\label{Section-globalestimates}
The purpose of this section is to establish the global-in-time uniform weighted energy estimates on the velocity.  Let $T>0$ be a fixed time, and let $(U,\eta)(t,r)$ be the unique classical solutions of {\rm\textbf{IBVP}} \eqref{eq:VFBP-La-eta} in   $[0,T]\times \bar I $ obtained in Theorem \ref{local-Theorem1.1}. 
Then density $\varrho$ can be given by \eqref{eq:eta}, and $(\varrho,U,\eta)(t,r)$ still solves  problem  \eqref{eq:VFBP-La} in $[0,T]\times \bar I $. Clearly, this solution is in the class $D(T)$ as defined in Definition \ref{d2}, 
and hence the estimates obtained in \S \ref{Section-densityupper}--\ref{Section-densitylower} hold for $(U,\eta)$.
Moreover, throughout this section, we always assume that $(\beta,\gamma)$ satisfy
\begin{equation*}
\beta\in\big(\frac{1}{3},\gamma-1\big],\qquad\, \gamma\in \big(\frac{4}{3},\infty\big) \ \ \text{if $n=2$}, \qquad\ \gamma\in \big(\frac{4}{3},3\big) \ \ \text{if $n=3$}.
\end{equation*}

Before establishing the uniform estimates for $U$, we first give the following lemma, which can be seen as a variant of the classical div-curl estimates for spherically symmetric functions. This lemma will be frequently used in the later analysis.
\begin{Lemma}\label{im-1}
Let $\boldsymbol{f}(\boldsymbol{y})=f(r) \frac{\boldsymbol{y}}{r}\in C_{\mathrm{c}}^\infty(B_1)$ with $B_1:=\{\boldsymbol{y}:|\boldsymbol{y}|<1\}$. For any $a\in (0,1)$, the following estimates hold{\rm :}
\begin{equation*}
\begin{aligned}
&\Big|\zeta_a r^\frac{m}{2}\big(D_\eta f,\frac{f}{\eta}\big)\Big|_2\sim\Big|\zeta_a r^\frac{m}{2}\big(D_\eta f+ \frac{mf}{\eta}\big)\Big|_2,\\
&\Big|\zeta_a r^\frac{m}{2}\Big(D_\eta^2 f, D_\eta\big(\frac{f}{\eta}\big)\Big)\Big|_2\sim\Big|\zeta_a r^\frac{m}{2}D_\eta\big(D_\eta f+ \frac{mf}{\eta}\big)\Big|_2,\\
&\Big|\zeta_a r^\frac{m}{2}\Big(D_\eta^3 f,D_\eta^2\big(\frac{f}{\eta}\big), \frac{1}{\eta}D_\eta\big(\frac{f}{\eta}\big)\Big)\Big|_2\sim \Big|\zeta_a r^\frac{m}{2}\Big(D_\eta^2\big(D_\eta f+ \frac{mf}{\eta}\big),\frac{1}{\eta}D_\eta\big(D_\eta f+ \frac{m f}{\eta}\big)\Big) \Big|_2,\\
&\Big|\zeta_a r^\frac{m}{2}\Big(D_\eta^4 f,D_\eta^3\big(\frac{f}{\eta}\big), D_\eta\big(\frac{1}{\eta}D_\eta(\frac{f}{\eta})\big)\Big)\Big|_2
\sim\Big|\zeta_a r^\frac{m}{2}\Big(D_\eta^3\big(D_\eta f+ \frac{mf}{\eta}\big),D_\eta\big(\frac{1}{\eta}D_\eta(D_\eta f+ \frac{m f}{\eta})\big)\Big) \Big|_2,
\end{aligned}
\end{equation*}
where $F_1\sim F_2$ denotes that there exists a constant $C(T)\geq 1$ depending only on $(C_0,T)$ such that $C(T)^{-1}F_1\leq F_2\leq C(T)F_1$.
\end{Lemma}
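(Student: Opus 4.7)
The plan is to prove each of the four equivalences by separating the trivial direction (triangle inequality) from the harder direction, in which the individual mixed derivatives $(D_\eta^k f, D_\eta^j(f/\eta^i), \ldots)$ must be recovered from the ``divergence-type'' combination $D_\eta^\ell(D_\eta f + m f/\eta)$. The central algebraic identity is
\begin{equation*}
D_\eta f + \frac{m f}{\eta} = \frac{1}{\eta^m} D_\eta(\eta^m f),
\end{equation*}
which is simply the radial form of the divergence operator applied to the Eulerian lift $\boldsymbol{F}(t,\boldsymbol{x}) = f(r(t,\boldsymbol{x}))\,\boldsymbol{x}/|\boldsymbol{x}|$ of $\boldsymbol{f}$. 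Throughout, I will use the global uniform equivalences $\eta/r \sim 1$ and $\eta_r \sim 1$ on the support of $\zeta_a$ from Lemmas \ref{lemma-lower bound jacobi} and \ref{lemma-upper jacobi} to freely exchange $\eta$-weights with $r$-weights at the cost of a constant $C(T)$.

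For the first equivalence, I would integrate the above identity: writing $G := D_\eta f + m f/\eta$ and using $f(0)=0$ (a consequence of the smoothness of $\boldsymbol{f}=f(r)\boldsymbol{y}/r$ at the origin), one gets
\begin{equation*}
\eta^m(t,r)\, f(r) = \int_0^r \eta^m(t,\tilde r)\,\eta_r(t,\tilde r)\, G(\tilde r)\,\mathrm{d}\tilde r,
\end{equation*}
so that $|f(r)/\eta(r)|^2 \lesssim r^{-2(m+1)}\bigl(\int_0^r \tilde r^m |G|\,\mathrm{d}\tilde r\bigr)^2$. A weighted Hardy inequality (Lemma \ref{hardy-inequality}) applied with exponent $-m$ then gives $|\zeta_a r^{m/2} f/\eta|_2 \leq C(T) |\zeta_a r^{m/2} G|_2$, and the bound for $|\zeta_a r^{m/2} D_\eta f|_2$ follows at once from $D_\eta f = G - m f/\eta$. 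Because $\zeta_a$ has support well inside $I$, no boundary terms appear in the integration.

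For the second equivalence, I would apply $D_\eta$ to the identity above to get $D_\eta G = D_\eta^2 f + m D_\eta(f/\eta)$, and then apply the already-proved first equivalence to $G$ itself. The needed adjustment is that $G/\eta$ and $D_\eta(f/\eta)$ differ by $(m+1) f/\eta^2$, since $D_\eta(f/\eta)=D_\eta f/\eta - f/\eta^2$ while $G/\eta = D_\eta f/\eta + m f/\eta^2$; these lower-order discrepancies are controlled by the first equivalence applied to $f$. The third and fourth equivalences are proved by the same iterative scheme: one successively computes $D_\eta^k G$, uses the $k$-fold application of the basic identity, and absorbs the commutator terms $[D_\eta,1/\eta]=-1/\eta^2$ and $[D_\eta,1/\eta^2]=-2/\eta^3$ into the lower-order equivalences that have already been established at earlier steps.

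The main obstacle is bookkeeping: at each order one must verify that the error terms arising from the commutators and from the cutoff derivative $(\zeta_a)_r$ (which is supported on $[a,(1+3a)/4]$) are strictly of lower order and therefore captured by the lower-order equivalences previously proved. Smoothness of $\boldsymbol{f}$ at the origin (which forces the expansion $f(r)=c_1 r+c_3 r^3+\cdots$) ensures that all quantities of the form $f/\eta^j$ and $D_\eta^k(f/\eta^j)$ encountered in the iteration are regular at $r=0$, so the Hardy-type integrations are legitimate. An alternative, cleaner viewpoint\,---\,if one prefers to avoid the bookkeeping\,---\,is to identify the radial expressions on each side with the radial/tangential components of $\nabla^k \boldsymbol{F}$ and $\nabla^{k-1}(\mathrm{div}\,\boldsymbol{F})$ of the Eulerian lift $\boldsymbol{F}$ via the coordinate-transformation formulas recorded in Appendix \ref{subsection2.2}; the equivalences then reduce to the classical div--curl estimates for spherically symmetric (hence irrotational) vector fields on $\mathbb{R}^n$, localized by the cut-off $\zeta_a$, after pulling back through the flow map.
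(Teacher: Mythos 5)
Your proposed route (fundamental theorem of calculus in the weighted form $\eta^m f=\int_0^r \eta^m\eta_r\,(D_\eta f+\tfrac{mf}{\eta})\,\mathrm{d}\tilde r$, followed by a Hardy inequality) is genuinely different from the paper's, which never solves for $f$ in terms of $G:=D_\eta f+\tfrac{mf}{\eta}$. The paper instead expands the square
\begin{equation*}
\Big|\zeta_a(\eta^m\eta_r)^{\frac12}G\Big|_2^2=\int_0^1\zeta_a^2\eta^m\eta_r\Big(|D_\eta f|^2+\frac{m^2f^2}{\eta^2}\Big)\,\mathrm{d}r+\text{(cross term)},
\end{equation*}
integrates the cross term by parts, and observes that it equals $m\int\zeta_a^2\eta^m\eta_r\tfrac{f^2}{\eta^2}-2m\int\zeta_a(D_\eta\zeta_a)\eta^{m-1}\eta_r f^2\ge 0$ because $D_\eta\zeta_a\le 0$. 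No Hardy inequality and no pointwise representation of $f$ are needed; the argument is entirely local and the cutoff is handled automatically by the sign of $(\zeta_a)_r$. The higher-order cases are settled by the same expansion-and-integrate-by-parts device, using the algebraic identity \eqref{hengdengshi}.

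There are two concrete gaps in your write-up. First, the Hardy step does not respect the cutoff as written: you bound $\int_0^1\zeta_a^2 r^{-m-2}\big(\int_0^r\tilde r^m|G|\,\mathrm{d}\tilde r\big)^2\,\mathrm{d}r$ by a quantity involving $G$ with no cutoff inside the inner integral, so you end up controlling $|\chi_{(1+3a)/4}r^{m/2}G|_2$ rather than $|\zeta_a r^{m/2}G|_2$. This can be repaired — since $\zeta_a$ is decreasing you may pull $\zeta_a(r)$ inside the integral and replace it by $\zeta_a(\tilde r)\ge\zeta_a(r)$ for $\tilde r\le r$ — but that monotonicity is precisely the same structural fact the paper uses as $D_\eta\zeta_a\le 0$, and it has to be invoked explicitly. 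Relatedly, the cited Lemma \ref{hardy-inequality} gives $|d^\vartheta f|_{L^p}\lesssim |d^{\vartheta+\frac12+\frac1p}(f,f_r)|_{L^2}$, i.e.\ a weight-shift inequality involving $f_r$, not the integral Hardy inequality $\int_0^1 r^{-m-2}\big(\int_0^r u\big)^2\,\mathrm{d}r\lesssim\int_0^1 r^{-m}u^2\,\mathrm{d}r$ that your argument actually needs.

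Second, the higher-order iteration is underspecified. You propose ``apply the already-proved first equivalence to $G$,'' but that yields $\big|\zeta_a r^{m/2}(D_\eta G,\tfrac{G}{\eta})\big|_2\lesssim\big|\zeta_a r^{m/2}(D_\eta G+\tfrac{mG}{\eta})\big|_2$, whereas the target has $D_\eta G$ alone on the right, and individual pieces $D_\eta^2 f,\,D_\eta(\tfrac{f}{\eta})$ (not $D_\eta G,\,G/\eta$) on the left. To run the FTC/Hardy argument at second order you would instead want $\eta^{m+2}D_\eta(\tfrac{f}{\eta})=\int_0^r\eta^{m+1}\eta_r D_\eta G\,\mathrm{d}\tilde r$ and a fresh Hardy step; similar new identities are needed at third and fourth order, where the right-hand side acquires the additional terms $\tfrac{1}{\eta}D_\eta G$ and $D_\eta(\tfrac{1}{\eta}D_\eta G)$ that do not arise from a single ``$k$-fold application of the basic identity.'' The paper's direct expansion avoids all of this bookkeeping and deals with those extra terms in one stroke via the identities $D_\eta^2 f/\eta=D_\eta^2(\tfrac{f}{\eta})+\tfrac{2}{\eta}D_\eta(\tfrac{f}{\eta})$ and their iterates. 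Your final remark — reduce to div--curl estimates for the Eulerian lift — is the right heuristic (the paper even calls the lemma a variant of the div--curl estimates), but turning it into a proof is not free: the pullback replaces $\nabla$ by $\nabla_\cA$ with variable coefficients, and the cutoff terms and Jacobian weights are exactly what the paper's sign argument is designed to absorb.
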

\begin{proof}
For simplicity, we only give the proof for $a=\frac{1}{2}$ and show that the quantities on the left-hand sides can be controlled by those on the right-hand sides. Besides, the following fact will be used frequently later:
\begin{equation*}
D_\eta\zeta\leq 0 \qquad \text{for all $(t,r)\in [0,T]\times\bar I$}. 
\end{equation*}
We divide the proof into four steps. 

\smallskip
\textbf{1.} It follows from integration by parts that
\begin{equation*}
\begin{aligned}
\Big|\zeta(\eta^m\eta_r)^\frac{1}{2}\big(D_\eta f+ \frac{mf}{\eta}\big)\Big|_2^2&=\int_0^1 \zeta^2\eta^m\eta_r\Big(|D_\eta f|^2+\frac{m^2f^2}{\eta^2}+\frac{2m}{\eta}fD_\eta f\Big)\,\mathrm{d}r\\
&=\int_0^1 \zeta^2\eta^m\eta_r\Big(|D_\eta f|^2+\frac{mf^2}{\eta^2}\Big)\,\mathrm{d}r-2m\int_0^1 \zeta D_\eta\zeta \eta^{m-1}\eta_r f^2\,\mathrm{d}r\\
&\geq \int_0^1 \zeta^2\eta^m\eta_r\Big(|D_\eta f|^2+\frac{mf^2}{\eta^2}\Big)\,\mathrm{d}r,
\end{aligned}
\end{equation*}
which, along with Lemmas \ref{lemma-lower bound jacobi} and \ref{lemma-upper jacobi}, leads to
\begin{equation*}
\Big|\zeta r^\frac{m}{2}\big(D_\eta f,\frac{f}{\eta}\big)\Big|_2\leq C(T)\Big|\zeta r^\frac{m}{2}\big(D_\eta f+ \frac{mf}{\eta}\big)\Big|_2.
\end{equation*}

\smallskip
\textbf{2.} First, a direct calculation yields 
\begin{equation}\label{di1}
\begin{aligned}
&\,\Big|\zeta (\eta^m\eta_r)^\frac{1}{2} D_\eta\big(D_\eta f+ \frac{mf}{\eta}\big)\Big|_2^2\\
&=\int_0^1 \zeta^2 \eta^m \eta_r \Big(|D_\eta^2 f|^2+m^2 \Big|D_\eta\big(\frac{f}{\eta}\big)\Big|^2\Big)\,\mathrm{d}r +\underline{2m \int_0^1\zeta^2 \eta^m \eta_r D_\eta^2 f D_\eta\big(\frac{f}{\eta}\big) \,\mathrm{d}r}_{:=\mathrm{L}_3}.
\end{aligned}
\end{equation}

Next, by chain rules, we have
\begin{equation}\label{hengdengshi}
\frac{D_\eta^2 f}{\eta}=D_\eta^2\big(\frac{f}{\eta}\big)+\frac{2}{\eta}D_\eta\big(\frac{f}{\eta}\big).
\end{equation}
Then $\mathrm{L}_3$ can be handled by using \eqref{hengdengshi} and integration by parts:
\begin{equation*}
\begin{aligned}
\mathrm{L}_3&=4m\int_0^1 \zeta^2 \eta^m \eta_r  \Big|D_\eta\big(\frac{f}{\eta}\big)\Big|^2\,\mathrm{d}r +2m\int_0^1 \zeta^2 \eta^{m+1} \eta_rD_\eta^2\big(\frac{f}{\eta}\big)D_\eta\big(\frac{f}{\eta}\big) \,\mathrm{d}r\\
&=(3m-m^2)\int_0^1 \zeta^2 \eta^m \eta_r \Big|D_\eta\big(\frac{f}{\eta}\big)\Big|^2 \,\mathrm{d}r-2m\int_0^1 \zeta D_\eta\zeta \eta^{m+1} \eta_r \Big|D_\eta\big(\frac{f}{\eta}\big)\Big|^2\,\mathrm{d}r\geq 0.
\end{aligned}
\end{equation*}

Hence, substituting the above into \eqref{di1} leads to
\begin{equation*}
\Big|\zeta (\eta^m\eta_r)^\frac{1}{2} D_\eta\big(D_\eta f+ \frac{mf}{\eta}\big)\Big|_2^2\geq \int_0^1 \zeta^2 \eta^m \eta_r \Big(|D_\eta^2 f|^2+m^2 \Big|D_\eta\big(\frac{f}{\eta}\big)\Big|^2\Big)\,\mathrm{d}r, 
\end{equation*}
which, along with Lemmas \ref{lemma-lower bound jacobi} and \ref{lemma-upper jacobi}, yields
\begin{equation*}
\Big|\zeta r^\frac{m}{2}\Big(D_\eta^2 f, D_\eta\big(\frac{f}{\eta}\big)\Big)\Big|_2\leq C(T)\Big|\zeta r^\frac{m}{2}D_\eta\big(D_\eta f+ \frac{mf}{\eta}\big)\Big|_2.
\end{equation*}

\smallskip
\textbf{3.} First, we have
\begin{equation}\label{L4}
\begin{aligned}
&\,\Big|\zeta (\eta^m\eta_r)^\frac{1}{2} D_\eta^2\big(D_\eta f+ \frac{mf}{\eta}\big)\Big|_2^2\\
&=\int_0^1 \zeta^2 \eta^m \eta_r \Big(|D_\eta^3 f|^2 +m^2 \Big|D_\eta^2\big(\frac{f}{\eta}\big)\Big|^2\Big) \,\mathrm{d}r+\underline{2m\int_0^1 \zeta^2\eta^m \eta_r D_\eta^3f D_\eta^2\big(\frac{f}{\eta}\big)\,\mathrm{d}r}_{:=\mathrm{L}_4},
\end{aligned}
\end{equation}
and we can obtain from \eqref{hengdengshi} that the following identity holds:
\begin{equation}\label{hengdengshi2}
D_\eta^3 f= \eta D_\eta^3\big(\frac{f}{\eta}\big)+ 3 D_\eta^2\big(\frac{f}{\eta}\big).
\end{equation}
Hence, for $\mathrm{L}_4$, it follows from the above and integration by parts that
\begin{equation*} 
\begin{aligned}
\mathrm{L}_4&=6m\int_0^1 \zeta^2 \eta^m \eta_r    \Big|D_\eta^2\big(\frac{f}{\eta}\big)\Big|^2 \,\mathrm{d}r  +2m\int_0^1 \zeta^2\eta^{m+1}\eta_r D_\eta^3\big(\frac{f}{\eta}\big)  D_\eta^2\big(\frac{f}{\eta}\big)\,\mathrm{d}r\\
&=(5m-m^2)\int_0^1 \zeta^2 \eta^m \eta_r  \Big|D_\eta^2\big(\frac{f}{\eta}\big)\Big|^2 \,\mathrm{d}r -2m\int_0^1 \zeta D_\eta \zeta \eta^{m+1} \eta_r  \Big|D_\eta^2 \big(\frac{f}{\eta}\big)\Big|^2\, \mathrm{d}r\geq 0,
\end{aligned}
\end{equation*}
which, combined with \eqref{L4}, leads to
\begin{equation}\label{L4'}
\Big|\zeta (\eta^m\eta_r)^\frac{1}{2} D_\eta^2\big(D_\eta f+ \frac{mf}{\eta}\big)\Big|_2^2\geq \int_0^1 \zeta^2 \eta^m \eta_r \Big(|D_\eta^3 f|^2 +m^2 \Big|D_\eta^2\big(\frac{f}{\eta}\big)\Big|^2\Big) \,\mathrm{d}r.
\end{equation}

Next, it follows from \eqref{hengdengshi} that
\begin{equation}\label{L5}
\begin{aligned}
&\,\Big|\zeta (\eta^m\eta_r)^\frac{1}{2} \frac{1}{\eta}D_\eta\Big(D_\eta f+ \frac{m f}{\eta}\Big)\Big|_2^2=\Big|\zeta(\eta^m\eta_r)^\frac{1}{2}\Big(D_\eta^2\big(\frac{f}{\eta}\big)+(m+2)\frac{1}{\eta}D_\eta\big(\frac{f}{\eta}\big)\Big)\Big|_2^2\\
&=\int_0^1 \zeta^2 \eta^{m} \eta_r \Big(\Big|D_\eta^2 \big(\frac{f}{\eta}\big)\Big|^2 +(m+2)^2 \Big|\frac{1}{\eta}D_\eta\big(\frac{f}{\eta}\big)\Big|^2\Big)\,\mathrm{d}r\\
&\quad +\underline{2(m+2)\int_0^1\zeta^2 \eta^{m-1} \eta_r  D_\eta^2\big(\frac{f}{\eta}\big)  D_\eta\big(\frac{f}{\eta}\big)\,\mathrm{d}r}_{:=\mathrm{L}_5},     
\end{aligned}
\end{equation}
where $\mathrm{L}_5$ can be handled by integration by parts:
\begin{equation*}
\begin{aligned}
\mathrm{L}_5&=(2-m-m^2)\int_0^1 \zeta^2 \eta^{m} \eta_r \Big|\frac{1}{\eta}D_\eta\big(\frac{f}{\eta}\big)\Big|^2\,\mathrm{d}r -2(m+2)\int_0^1 \zeta D_\eta\zeta \eta^{m-1} \eta_r \Big|D_\eta\big(\frac{f}{\eta}\big)\Big|^2 \,\mathrm{d}r \\
&\geq (2-m-m^2)\int_0^1 \zeta^2 \eta^{m} \eta_r \Big|\frac{1}{\eta}D_\eta\big(\frac{f}{\eta}\big)\Big|^2\,\mathrm{d}r. 
\end{aligned}    
\end{equation*}
Hence, \eqref{L5}, together with the above, implies that
\begin{equation*}
\Big|\zeta (\eta^m\eta_r)^\frac{1}{2} \frac{1}{\eta}D_\eta\Big(D_\eta f+ \frac{m f}{\eta}\Big)\Big|_2^2\geq \int_0^1 \zeta^2 \eta^{m} \eta_r \Big(\Big|D_\eta^2 \big(\frac{f}{\eta}\big)\Big|^2 +(3m+6)\Big|\frac{1}{\eta}D_\eta\big(\frac{f}{\eta}\big)\Big|^2\Big)\,\mathrm{d}r, 
\end{equation*}
which, along with \eqref{L4'} and Lemmas \ref{lemma-lower bound jacobi} and \ref{lemma-upper jacobi}, leads to
\begin{equation*}
\Big|\zeta r^\frac{m}{2}\Big(D_\eta^3 f,D_\eta^2\big(\frac{f}{\eta}\big), \frac{1}{\eta}D_\eta\big(\frac{f}{\eta}\big)\Big)\Big|_2 \leq C(T)\Big|\zeta r^\frac{m}{2}\Big(D_\eta^2\big(D_\eta f+ \frac{mf}{\eta}\big),\frac{1}{\eta}D_\eta\big(D_\eta f+ \frac{m f}{\eta}\big)\Big) \Big|_2.
\end{equation*}

\smallskip
\textbf{4.} First, a direct calculation gives
\begin{equation}\label{L6}
\begin{aligned}
&\,\Big|\zeta (\eta^m\eta_r)^\frac{1}{2} D_\eta^3\Big(D_\eta f+ \frac{mf}{\eta}\Big)\Big|_2^2\\
&=\int_0^1 \zeta^2 \eta^m \eta_r \Big(|D_\eta^4 f|^2 +m^2 \Big|D_\eta^3\big(\frac{f}{\eta}\big)\Big|^2\Big) \,\mathrm{d}r+\underline{2m\int_0^1 \zeta^2\eta^m \eta_r D_\eta^4f D_\eta^3\big(\frac{f}{\eta}\big)\,\mathrm{d}r}_{:=\mathrm{L}_6}.
\end{aligned}
\end{equation}
Then, for $\mathrm{L}_6$, note that the following identity holds due to \eqref{hengdengshi2}:
\begin{equation*}
D_\eta^4 f=\eta D_\eta^4\big(\frac{f}{\eta}\big)+ 4 D_\eta^3\big(\frac{f}{\eta}\big).
\end{equation*}
Hence, this, together with integration by parts, yields  
\begin{equation*}
\begin{aligned}
\mathrm{L}_6&=8m\int_0^1 \zeta^2 \eta^m \eta_r \Big|D_\eta^3\big(\frac{f}{\eta}\big)\Big|^2 \,\mathrm{d}r +2m\int_0^1 \zeta^2 \eta^{m+1} \eta_r D_\eta^4\big(\frac{f}{\eta}\big) D_\eta^3\big(\frac{f}{\eta}\big)\,\mathrm{d}r\\
&=(7m-m^2)\int_0^1 \zeta^2 \eta^m \eta_r \Big|D_\eta^3\big(\frac{f}{\eta}\big)\Big|^2 \,\mathrm{d}r -2m\int_0^1 \zeta D_\eta \zeta \eta^{m+1} \eta_r  \Big|D_\eta^3 \big(\frac{f}{\eta}\big)\Big|^2\, \mathrm{d}r\geq 0,    
\end{aligned}
\end{equation*}
which, along with \eqref{L6}, gives
\begin{equation}\label{L6'}
\Big|\zeta (\eta^m\eta_r)^\frac{1}{2} D_\eta^3\Big(D_\eta f+ \frac{mf}{\eta}\Big)\Big|_2^2\geq \int_0^1 \zeta^2 \eta^m \eta_r \Big(|D_\eta^4 f|^2 +m^2 \Big|D_\eta^3\big(\frac{f}{\eta}\big)\Big|^2\Big) \,\mathrm{d}r.
\end{equation}

Next, we note that, by \eqref{hengdengshi},
\begin{equation*}
D_\eta\big(\frac{D_\eta^2 f}{\eta}\big)=D_\eta^3\big(\frac{f}{\eta}\big)+2D_\eta\Big(\frac{1}{\eta}D_\eta\big(\frac{f}{\eta}\big)\Big)=\eta D_\eta^2\Big(\frac{1}{\eta}D_\eta\big(\frac{f}{\eta}\big)\Big)+4D_\eta\Big(\frac{1}{\eta}D_\eta\big(\frac{f}{\eta}\big)\Big).
\end{equation*}
Hence, this, together with a direct calculations, implies
\begin{equation}\label{9.6}
\begin{aligned}
&\,\Big|\zeta (\eta^m\eta_r)^\frac{1}{2} D_\eta\Big(\frac{1}{\eta}D_\eta\big(D_\eta f+ \frac{m f}{\eta}\big)\Big)\Big|_2^2\\
&=\Big|\zeta(\eta^m\eta_r)^\frac{1}{2}\Big(\eta D_\eta^2\Big(\frac{1}{\eta}D_\eta\big(\frac{f}{\eta}\big)\Big)+(m+4)D_\eta\Big(\frac{1}{\eta}D_\eta\big(\frac{f}{\eta}\big)\Big)\Big)\Big|_2^2\\
&=\int_0^1 \zeta^2 \eta^{m} \eta_r \Big(\Big|\eta D_\eta^2\Big(\frac{1}{\eta}D_\eta\big(\frac{f}{\eta}\big)\Big)\Big|^2 +(m+4)^2 \Big|D_\eta\Big(\frac{1}{\eta}D_\eta\big(\frac{f}{\eta}\big)\Big)\Big|^2\Big)\,\mathrm{d}r\\
&\quad +\underline{2(m+4)\int_0^1 \zeta^2\eta^{m+1}\eta_r D_\eta^2\Big(\frac{1}{\eta}D_\eta\big(\frac{f}{\eta}\big)\Big)D_\eta\Big(\frac{1}{\eta}D_\eta\big(\frac{f}{\eta}\big)\Big)\,\mathrm{d}r}_{:=\mathrm{L}_7},
\end{aligned}
\end{equation}
where $\mathrm{L}_7$ can be handled by integration by parts:
\begin{equation*}
\begin{aligned}
\mathrm{L}_7&=-(m^2+5m+4)\int_0^1 \zeta^2\eta^m\eta_r\Big|D_\eta\Big(\frac{1}{\eta}D_\eta\big(\frac{f}{\eta}\big)\Big)\Big|^2\,\mathrm{d}r\\
&\quad -2(m+4)\int_0^1 \zeta D_\eta\zeta \eta^{m+1} \eta_r \Big|D_\eta\Big(\frac{1}{\eta}D_\eta\big(\frac{f}{\eta}\big)\Big)\Big|^2 \,\mathrm{d}r\\
&\geq -(m^2+5m+4) \int_0^1 \zeta^2\eta^m\eta_r\Big|D_\eta\Big(\frac{1}{\eta}D_\eta\big(\frac{f}{\eta}\big)\Big)\Big|^2\,\mathrm{d}r.    
\end{aligned}
\end{equation*}
Therefore, \eqref{9.6}, combined with the above, implies 
\begin{equation*}
\Big|\zeta (\eta^m\eta_r)^\frac{1}{2} D_\eta\Big(\frac{1}{\eta}D_\eta\big(D_\eta f+ \frac{m f}{\eta}\big)\Big)\Big|_2^2\geq (3m+12)\int_0^1 \zeta^2 \eta^{m} \eta_r   \Big|D_\eta\Big(\frac{1}{\eta}D_\eta\big(\frac{f}{\eta}\big)\Big)\Big|^2 \,\mathrm{d}r,
\end{equation*}
which, together with \eqref{L6'}  and Lemmas \ref{lemma-lower bound jacobi} and \ref{lemma-upper jacobi}, leads to
\begin{equation*}
\begin{aligned}
&\,\Big|\zeta r^\frac{m}{2}\Big(D_\eta^4 f,D_\eta^3\big(\frac{f}{\eta}\big), D_\eta\big(\frac{1}{\eta}D_\eta(\frac{f}{\eta})\big)\Big)\Big|_2\\
&\leq C(T)\Big|\zeta r^\frac{m}{2}\Big(D_\eta^3\big(D_\eta f+ \frac{mf}{\eta}\big),D_\eta\big(\frac{1}{\eta}D_\eta(D_\eta f+ \frac{m f}{\eta})\big)\Big) \Big|_2.
\end{aligned}
\end{equation*}

This completes the proof.
\end{proof}

\subsection{Tangential estimates of the velocity}
This section is devoted to establishing the tangential estimates for $U$. We first give time-spatial estimates for the velocity, which will be frequently used in the rest of this section.
\begin{Lemma}\label{lemma-time-space}
For any $\iota$ satisfying
\begin{equation*}
\iota\in \big(-\frac{\beta}{2},1+\frac{\beta}{2}\big),
\end{equation*}
and any $a\in (0,1)$, there exists a constant $C(\iota,a,T)>0$ such that
\begin{equation*}
\big|\chi^\sharp_{a}\rho_0^{\iota-\frac{\beta}{2}}D_\eta U(t)\big|_\infty\leq C(\iota,a,T)\big(1+\big|\chi^\sharp_{a}\rho_0^\iota(D_\eta U,U_t)(t)\big|_2\big) \qquad \text{for all $t\in[0,T]$}.
\end{equation*}
Besides, for any such $\iota\in (-\frac{\beta}{2},1+\frac{\beta}{2})$, $a\in (0,1)$, and $\sigma>0$, there exists a constant $C(\sigma,\iota,a,T)>0$ such that 
\begin{equation*}
\big|\chi^\sharp_{a}\rho_0^{\iota-\beta+\sigma}D_\eta U(t)\big|_2\leq C(\sigma,\iota,a,T)\big(1+\big|\chi^\sharp_{a}\rho_0^\iota(D_\eta U,U_t)(t)\big|_2\big) \qquad \text{for all $t\in[0,T]$}.
\end{equation*}
\end{Lemma}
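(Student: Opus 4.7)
The plan is to derive both bounds from the reformulated momentum equation \eqref{reform}, combined with the boundary trace $D_\eta U|_{r=1}=0$ inherited from $U_r|_{r=1}=0$ and $\eta_r|_{r=1}=1$ in the class $D(T)$. Using the identity $\frac{2\mu}{\beta}\varrho^{-\beta}D_\eta(\varrho^\beta)=2\mu D_\eta\log\varrho=V-U$ from Definition \ref{def-v}, dividing \eqref{reform} by $\varrho^\beta$ yields
\begin{equation*}
2\mu D_\eta^2 U = U_t+\tfrac{A\gamma}{\beta}\varrho^{\gamma-1-\beta}D_\eta(\varrho^\beta)-(V-U)D_\eta U-2m\mu D_\eta(U/\eta).
\end{equation*}
By Lemma \ref{non-vac}, $\varrho\sim\rho_0$; by Lemmas \ref{lemma-lower bound jacobi}--\ref{lemma-upper jacobi}, $(\eta_r,\eta/r)$ are uniformly two-sided bounded on $[0,T]\times\bar I$; under $\beta\le\gamma-1$ the pressure coefficient $\varrho^{\gamma-1-\beta}$ is uniformly bounded, so only the convective term $(V-U)D_\eta U$ carries a boundary singularity, of order $\rho_0^{-\beta}\sim(1-r)^{-1}$ by Lemma \ref{lemma-v Linfty ex}.

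I would then treat both assertions via the pointwise representation
\begin{equation*}
D_\eta U(t,r)=-\int_r^1\eta_r(t,\tilde r)\,D_\eta^2 U(t,\tilde r)\,\mathrm{d}\tilde r,\qquad r\in[a,1),
\end{equation*}
followed by weighted Cauchy--Schwarz. For the $L^\infty$ bound, multiplying by $\rho_0^{\iota-\beta/2}(r)$ and inserting an auxiliary weight $\rho_0^{\alpha}$ with $\alpha\in(-\beta/2,\beta/2)\cap(-\infty,\iota]$ gives
\begin{equation*}
\rho_0^{\iota-\beta/2}(r)\,|D_\eta U(t,r)|\le C(T)\,\rho_0^{\iota-\alpha}(r)\,|\chi_a^\sharp\rho_0^{\alpha}D_\eta^2 U|_2
\end{equation*}
after evaluating $\int_r^1\rho_0^{-2\alpha}\,\mathrm{d}\tilde r\sim\rho_0^{\beta-2\alpha}(r)$ via $\rho_0^\beta\sim 1-r$. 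The admissible range $\iota\in(-\beta/2,1+\beta/2)$ is exactly what guarantees the existence of such an $\alpha$ with $\rho_0^{\iota-\alpha}$ bounded on $[a,1]$, and the ensuing weighted $L^2$-norm of $D_\eta^2 U$ can, through the displayed identity above, be dominated by $1+|\chi_a^\sharp\rho_0^{\iota}(D_\eta U,U_t)|_2$, with the convective contribution estimated using Lemmas \ref{lemma-v Linfty ex} and \ref{lemma-u infty}. For the $L^2$ bound, squaring the pointwise representation, multiplying by $\rho_0^{2(\iota-\beta+\sigma)}(r)$, integrating over $r\in[a,1)$, and invoking Fubini yields
\begin{equation*}
|\chi_a^\sharp\rho_0^{\iota-\beta+\sigma}D_\eta U|_2^2\le C(T)\Bigl(\int_a^1\rho_0^{2\sigma-\beta}(r)\,\mathrm{d}r\Bigr)|\chi_a^\sharp\rho_0^{\iota}D_\eta^2 U|_2^2,
\end{equation*}
where the prefactor integral is $\sim\int_a^1(1-r)^{2\sigma/\beta-1}\,\mathrm{d}r<\infty$ precisely because $\sigma>0$.

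The main technical obstacle will be the absorption of the singular convective term $(V-U)D_\eta U$, which naturally carries the weight $\rho_0^{\iota-\beta}$ instead of $\rho_0^{\iota}$; without the $\beta/2$ margin in the $L^\infty$ estimate and the positive buffer $\sigma>0$ in the $L^2$ estimate, this would force either a logarithmic divergence or a Gr\"onwall iteration in the radial variable. Balancing the exponent book-keeping against the integrability thresholds, which dictate the precise endpoints $-\beta/2$ and $1+\beta/2$ of the admissible range of $\iota$, is the subtle point of the argument.
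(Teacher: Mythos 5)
Your plan departs from the paper's argument in a way that introduces a genuine gap. The paper never differentiates: it multiplies $\eqref{eq:VFBP-La-eta}_1$ by $\eta_r$ and integrates the \emph{undifferentiated} momentum balance from $r$ to $1$, which (after an integration by parts on the source term $-2\mu m\,\varrho_r U/\eta$) yields the pointwise formula
\begin{equation*}
D_\eta U(t,r)=\frac{A}{2\mu}\varrho^{\gamma-1}+\frac{m}{\varrho}\int_r^1\varrho\,\eta_r\,D_\eta\big(\tfrac{U}{\eta}\big)\,\mathrm{d}\tilde r-\frac{1}{2\mu\varrho}\int_r^1\varrho\,\eta_r\,U_t\,\mathrm{d}\tilde r.
\end{equation*}
This representation has only $U$, $D_\eta U$, $U_t$ inside the integrals, all with a favorable $\varrho\,\eta_r$ weight, and the lone $1/\varrho$ prefactor is killed by a single Cauchy--Schwarz against $\rho_0^{1-\iota}\cdot\rho_0^{\iota}$; the constraints $\iota\in(-\beta/2,1+\beta/2)$ come purely from the integrability of $\rho_0^{2-2\iota}$ near $r=1$ and from $\gamma-1\ge\beta$. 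Nothing singular, no second derivatives, no effective velocity.

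By contrast, you write $D_\eta U(t,r)=-\int_r^1\eta_r D_\eta^2 U\,\mathrm{d}\tilde r$ and then substitute \eqref{reform}, which puts the term $(V-U)D_\eta U=2\mu(D_\eta\log\varrho)D_\eta U\sim\rho_0^{-\beta}D_\eta U$ inside the integral. You acknowledge this singular convective term as ``the main technical obstacle'' but the plan does not actually absorb it. The sharper problem, though, is the auxiliary exponent $\alpha$: to make $\int_r^1\rho_0^{-2\alpha}\,\mathrm{d}\tilde r$ finite you must take $\alpha<\beta/2$, while to bound $|\chi_a^\sharp\rho_0^\alpha U_t|_2$ by $|\chi_a^\sharp\rho_0^\iota U_t|_2$ you need $\alpha\ge\iota$ (on $[a,1]$ one has $\rho_0<1$, so a smaller exponent gives a \emph{stronger} weighted norm, not a weaker one). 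These two requirements are incompatible precisely for $\iota\in[\beta/2,1+\beta/2)$, which is most of the claimed range of validity. So the scheme as written cannot establish the lemma for such $\iota$, and the ``$\beta/2$ margin'' you invoke does not rescue it — it merely moves the lower endpoint, not the failure of absorption at the upper end. The fix is to avoid $D_\eta^2 U$ altogether and work with the first-integral representation above, as the paper does.
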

\begin{proof}
Integrating $\eqref{eq:VFBP-La-eta}_1\times\eta_r$ over $[r,1]$ ($r\in(0,1)$) with respect to the radial coordinate gives
\begin{equation}\label{zhongyao}
D_\eta U =\frac{A}{2\mu}\varrho^{\gamma-1} +\frac{m}{\varrho}\int_r^1 \frac{\tilde{r}^m\rho_0}{\eta^m}\big(\frac{D_\eta U}{\eta}-\frac{U}{\eta^2}\big)\mathrm{d}\tilde{r}-\frac{1}{2\mu \varrho}\int_r^1\frac{\tilde{r}^m}{\eta^m}\rho_0U_t\,\mathrm{d}\tilde{r}.
\end{equation}
Hence, due to $\rho_0^\beta\sim 1-r$ and Lemmas \ref{lemma-lower bound jacobi} and \ref{non-vac}, for all $r\in [a,1)$ and $\iota\in(-\frac{\beta}{2},1+\frac{\beta}{2})$,
\begin{equation}\label{floww}
\begin{aligned}
|D_\eta U|&\leq C(T)\rho_0^{\gamma-1} +\frac{C(a,T)}{\rho_0}\int_r^1 \rho_0\left(|U|+|D_\eta U|+|U_t|\right)\mathrm{d}\tilde{r}\\
&\leq C(T)\rho_0^{\gamma-1} +\frac{C(a,T)}{\rho_0}\Big(\int_r^1 \rho_0^{2-2\iota}\,\mathrm{d}\tilde{r}\Big)^\frac{1}{2}\Big(\int_a^1 \rho_0^{2\iota}\left(|U|^2+|D_\eta U|^2+|U_t|^2\right)\mathrm{d}\tilde{r}\Big)^\frac{1}{2}\\
&\leq C(T)\rho_0^{\gamma-1} +C(\iota,a,T)\rho_0^{\frac{\beta}{2}-\iota}\big|\chi^\sharp_{a}\rho_0^\iota(U,D_\eta U,U_t)\big|_2,
\end{aligned}
\end{equation}
which, along with Lemma \ref{lemma-refine u Lp}, yields  
\begin{equation*}
\big|\chi^\sharp_{a}\rho_0^{\iota-\frac{\beta}{2}}D_\eta U\big|_\infty\leq C(\iota,T)+C(\iota,a,T)\big(1+\big|\chi^\sharp_{a}\rho_0^\iota(D_\eta U,U_t)\big|_2\big).
\end{equation*}

Moreover, using the facts that 
\begin{equation*}
\gamma-1+\iota-\beta\geq -\frac{\beta}{2},\qquad \rho_0^\beta\sim 1-r,
\end{equation*}
we also see that, for all $\sigma>0$,
\begin{equation*}
\big|\chi^\sharp_{a}\rho_0^{\iota-\beta+\sigma}D_\eta U\big|_2\leq C(\sigma,\iota,T)+C(\sigma,\iota,a,T)\big(1+\big|\chi^\sharp_{a}\rho_0^\iota(D_\eta U,U_t)\big|_2\big).
\end{equation*}

This completes the proof.
\end{proof}

\subsubsection{Zeroth-order tangential estimates}
The zeroth-order tangential estimate for 
$U$ is exactly given by Lemma \ref{lemma-basic energy}. 
For the case of use, we also present some lower-order estimates for $U$ here, which are due to Lemmas \ref{lemma-lower bound jacobi}, \ref{lemma-refine u Lp}, and \ref{lemma-u infty}.
\begin{Lemma}\label{lemma-u-0order}
There exists a constant $C(T)>0$ such that
\begin{equation*}
\big|(r^m\rho_0)^\frac{1}{2}U(t)\big|_2^2+ \int_0^t\Big|(r^m\rho_0)^\frac{1}{2}\big(D_\eta U,\frac{U}{\eta}\big)\Big|_2^2\leq C_0 \qquad \text{for all $t\in [0,T]$}.
\end{equation*}
Moreover, for any $p\in (0,\infty)$, $\iota\in (-\frac{1}{p},\infty)$, and $a\in (0,1)$, there exist two positive constants $C(p,\iota,a,T)$ and $C(a,T)$ such that, for all $t\in [0,T]$,
\begin{equation*}
\big|\chi_{a}^\sharp \rho_0^{\iota\beta} U(t)\big|_p\leq C(p,\iota,a,T),\qquad
|\zeta_{a}U(t)|_2^2+\int_0^t\Big(\Big|\zeta_{a}\big(D_\eta U,\frac{U}{\eta}\big)\Big|_2^2+|\zeta_{a} U|_\infty^2\Big)\,\mathrm{d}s\leq C(a,T).
\end{equation*}
\end{Lemma}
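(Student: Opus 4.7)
The proof proposal is essentially assembly: the author's own remark before the statement signals that the lemma is just a consolidation of three previously established results, namely Lemmas \ref{lemma-lower bound jacobi}, \ref{lemma-refine u Lp}, and \ref{lemma-u infty} (together with the basic energy estimate of Lemma \ref{lemma-basic energy}). I will split the claim into three parts and explain how each follows.

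For the first estimate, the bound on $|(r^m\rho_0)^{1/2}U(t)|_2^2$ together with the time-integrated dissipation $\int_0^t |(r^m\rho_0)^{1/2}(D_\eta U,U/\eta)|_2^2\,\mathrm{d}s\le C_0$ is exactly the content of Lemma \ref{lemma-basic energy}, obtained by testing $\eqref{eq:VFBP-La-eta}_1$ with $2\eta^m\eta_r U$ and integrating. For the second estimate, the weighted $L^p$-bound $|\chi_a^\sharp \rho_0^{\iota\beta}U(t)|_p\le C(p,\iota,a,T)$ is exactly Lemma \ref{lemma-refine u Lp}; there is nothing to add.

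For the third (interior) estimate, I invoke Lemma \ref{lemma-u infty}, which gives the $\sqrt{\eta_r}$-weighted version
\[
|\zeta_a\sqrt{\eta_r}\,U(t)|_2^2+\int_0^t\Big(\big|\zeta_a\sqrt{\eta_r}\big(D_\eta U,\tfrac{U}{\eta}\big)\big|_2^2+|\zeta_a U|_\infty^2\Big)\,\mathrm{d}s\le C(a,T),
\]
and then use Lemma \ref{lemma-lower bound jacobi} to remove the weight. Specifically, since $\eta_r(t,r)\ge C(T)^{-1}$ uniformly on $[0,T]\times\bar I$, for any function $f$ supported in $\{\zeta_a>0\}$ one has
\[
|\zeta_a f|_2^2=\int_0^1\zeta_a^2 f^2\,\mathrm{d}r\le C(T)\int_0^1\zeta_a^2\eta_r f^2\,\mathrm{d}r=C(T)\,|\zeta_a\sqrt{\eta_r}\,f|_2^2.
\]
Applying this pointwise-in-time inequality with $f=U$ and then, inside the time integral, with $f\in\{D_\eta U,U/\eta\}$ converts the weighted bound into the desired unweighted bound, at the cost of enlarging the constant $C(a,T)$. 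The $|\zeta_a U|_\infty$ term is already weight-free, so it passes through untouched.

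No genuine obstacle arises: the only substantive input is the uniform lower bound on $\eta_r$ from Section \ref{Section-etarlower}, which is precisely what makes the weighted and unweighted interior $L^2$-norms equivalent on $[0,T]\times\bar I$. Thus the lemma is a direct corollary of the \emph{a priori} estimates already established in Sections \ref{Section-densityupper}--\ref{Section-etarupper}, and no new computation is required.
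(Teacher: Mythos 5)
Your proposal is correct and matches the paper's approach: the paper itself does not spell out a proof but explicitly attributes the lemma to Lemmas \ref{lemma-basic energy}, \ref{lemma-lower bound jacobi}, \ref{lemma-refine u Lp}, and \ref{lemma-u infty}, and your three-part assembly, including the use of the uniform lower bound $\eta_r\ge C(T)^{-1}$ from Lemma \ref{lemma-lower bound jacobi} to remove the $\sqrt{\eta_r}$ weight from the interior estimates of Lemma \ref{lemma-u infty}, is exactly the intended argument.
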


\subsubsection{First-order tangential estimates}\label{subsub911}
\begin{Lemma}\label{lemma-u-D1}
There exists a constant $C(T)>0$ such that 
\begin{equation*}
\Big|(r^m\rho_0)^\frac{1}{2}\big(D_\eta U,\frac{U}{\eta}\big)(t)\Big|_2^2+ \int_0^t\big|(r^m\rho_0)^\frac{1}{2}U_t\big|_2^2\,\mathrm{d}s\leq C(T) \qquad \text{for all $t\in [0,T]$}.
\end{equation*}
\end{Lemma}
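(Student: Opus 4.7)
The plan is to run a direct energy estimate obtained by testing the momentum equation $\eqref{eq:VFBP-La-eta}_1$ with the multiplier $\eta^m\eta_r U_t$. This choice is canonical because the identity $\eta^m\eta_r\varrho = r^m\rho_0$ from \eqref{eq:eta} converts the inertial piece into exactly the target dissipation $|(r^m\rho_0)^{1/2}U_t|_2^2$. After integrating over $I$, all boundary contributions arising in the subsequent integrations by parts vanish: at $r=0$ via $\eta|_{r=0}=0$ (using $m\geq 1$), and at $r=1$ via $\varrho|_{r=1}=0$.

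The heart of the computation is the viscous contribution. Integration by parts yields $-2\mu\int r^m\rho_0\,Q\,D_\eta U_t\,\mathrm{d}r$ with $Q := D_\eta U + mU/\eta$, together with a coordinate-singularity remainder $-2\mu m\int\eta^{m-1}\eta_r\varrho\,Q\,U_t\,\mathrm{d}r$. Differentiating $Q$ in time via $\eta_t = U$ produces the kinematic identity
\begin{equation*}
D_\eta U_t \,=\, Q_t + (D_\eta U)^2 + m\big(\tfrac{U}{\eta}\big)^2 - m\tfrac{U_t}{\eta},
\end{equation*}
whose last piece generates exactly the negative of the singular remainder, producing a perfect cancellation. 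What survives is $-\mu\frac{\mathrm{d}}{\mathrm{d}t}|(r^m\rho_0)^{1/2}Q|_2^2$ together with a cubic nonlinearity $-2\mu\int r^m\rho_0\,Q[(D_\eta U)^2 + m(U/\eta)^2]\,\mathrm{d}r$. A parallel manipulation of the pressure term, now using the continuity identity $(\varrho^{\gamma-1})_t = -(\gamma-1)\varrho^{\gamma-1}Q$ to absorb the resulting time derivative, yields $A\frac{\mathrm{d}}{\mathrm{d}t}\!\!\int\! r^m\rho_0\varrho^{\gamma-1}Q\,\mathrm{d}r$ plus quadratic residuals in $(D_\eta U, U/\eta)$ tamed by the uniform bound $|\varrho|_\infty\leq C(T)$ from Lemma \ref{non-vac}. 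The leftover coordinate-singularity forcing $-2\mu m\int\eta^{m-1}\varrho_r U U_t\,\mathrm{d}r$ is recast via Definition \ref{def-v} as $-m\int\eta^{m-1}\eta_r\varrho(V-U)UU_t\,\mathrm{d}r$ and controlled through the uniform $L^\infty$ bounds on $V$ from Lemmas \ref{lemma-v Linfty ex} and \ref{lemma-v Linfty in}, together with Lemma \ref{lemma-u-0order}.

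Assembling these identities produces a differential inequality of the form
\begin{equation*}
\frac{\mathrm{d}}{\mathrm{d}t}\Big(\mu|(r^m\rho_0)^{1/2}Q|_2^2 - A\!\!\int\! r^m\rho_0\varrho^{\gamma-1}Q\,\mathrm{d}r\Big) + \tfrac{1}{2}|(r^m\rho_0)^{1/2}U_t|_2^2 \,\leq\, C(T)\big(1 + |(r^m\rho_0)^{1/2}Q|_2^2\big),
\end{equation*}
to which Gr\"onwall's inequality applies after noting that the crossed term is absorbable into the leading $Q$-norm via Cauchy--Schwarz and $|\varrho|_\infty\leq C(T)$, and that the initial datum is finite by $|(r^m\rho_0)^{1/2}Q(0)|_2^2\leq \cE(0,U)<\infty$, guaranteed by the hypothesis of Theorem \ref{local-Theorem1.1}. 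Finally, to extract the individual norms of $(D_\eta U, U/\eta)$ from the composite norm of $Q$, I invoke the div-curl-type equivalence of Lemma \ref{im-1} on the support of the cut-off $\zeta$, while away from the origin the uniform lower bound $\eta/r\geq C(T)^{-1}$ of Lemma \ref{lemma-lower bound jacobi} gives $|U/\eta|\leq C(T)|U|$, reducing the exterior piece to Lemma \ref{lemma-u-0order}.

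The main obstacle will be closing the cubic term $\int r^m\rho_0\,Q[(D_\eta U)^2 + m(U/\eta)^2]\,\mathrm{d}r$: since $Q$ is itself the quantity being bounded in $L^2_{r^m\rho_0}$, a cubic cannot be absorbed linearly without an auxiliary pointwise control on $D_\eta U$. The resolution is Lemma \ref{lemma-time-space}, which\,---\,by inverting the momentum equation\,---\,yields weighted bounds of the form $|\chi_a^\sharp\rho_0^{\iota-\beta/2}D_\eta U|_\infty \lesssim 1 + |\chi_a^\sharp\rho_0^{\iota}(D_\eta U, U_t)|_2$; choosing $\iota$ just above $\beta/2$ makes the $\rho_0^{\iota-\beta/2}$ weight absorbable against the $r^m\rho_0$ factor in the cubic integrand, and the $U_t$ part on the right-hand side is buried into the dissipation on the left via Young's inequality with a small constant. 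This delicate $\rho_0$-weight bookkeeping near the vacuum boundary $r=1$, rather than the algebraic manipulations themselves, is where the substantive work lies.
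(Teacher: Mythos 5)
Your test function $\eta^m\eta_r U_t$, the identification of the dissipation term, the perfect cancellation of the coordinate-singularity remainder via the kinematic identity for $Q_t$, and the handling of the pressure forcing via $V$ all match the structure of the paper's proof (which works with $|D_\eta U|^2 + m|U/\eta|^2$ directly rather than $|Q|^2$, but this is a cosmetic difference resolved by Lemma~\ref{im-1} plus the exterior lower bound on $\eta/r$, as you note). The genuine problem is in your plan for the cubic term.

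Your proposal is to pull $|\chi^\sharp_a\rho_0^{\iota-\beta/2}D_\eta U|_\infty$ out of the cubic, choosing $\iota$ just above $\beta/2$. But these weights have to satisfy two incompatible constraints when $\beta<1$. On one side, after factoring out the pointwise bound, the remaining integrand carries the weight $\rho_0^{1-\iota+\beta/2}$; to dominate this by $\rho_0$ (so that what remains is the target energy $|(r^m\rho_0)^{1/2}(Q,D_\eta U)|_2^2$) you need $\iota\le\beta/2$. On the other side, Lemma~\ref{lemma-time-space} puts $|\chi^\sharp_a\rho_0^{\iota}U_t|_2$ on the right-hand side, and for this to be absorbed by Young into $|(r^m\rho_0)^{1/2}U_t|_2^2$ you need $\rho_0^{\iota}\lesssim\rho_0^{1/2}$ on the exterior, i.e.\ $\iota\ge\tfrac12$. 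Since $\beta\in(\tfrac13,\gamma-1]$ can be strictly below $1$ (indeed $\gamma<2$ forces $\beta<1$), these cannot both hold, so taking ``$\iota$ just above $\beta/2$'' fails. If instead you force $\iota\ge\tfrac12$, the remaining weight $\rho_0^{1-\iota+\beta/2}<\rho_0$ is too weak and the cubic is no longer controlled by the energy quantities.

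The paper escapes this bind by not attacking the cubic by a pure pointwise bound: it first integrates by parts once in the exterior cubic (moving a derivative onto the squared factor, producing a $D_\eta^2 U$ and a boundary-free structure), then invokes the elliptic identity (\ref{ex-ell-2}) to trade $\rho_0^{(1+\varepsilon)/2}D_\eta^2 U$ for $\rho_0^{1/2}(D_\eta U,U_t)$, together with the weighted Gagliardo--Nirenberg interpolation of Lemma~\ref{GNinequality}. This extra $\rho_0^{\varepsilon/2}$ of decay from the elliptic estimate is exactly what compensates the mismatch above and lets the $U_t$ tail be Young-absorbed. A parallel interior argument uses (\ref{in-ell-2}) and (\ref{DU-L4}). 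Your proposal does not engage with this elliptic bootstrap, and as written it also does not address the interior ($r\le\tfrac12$) cubic at all, since Lemma~\ref{lemma-time-space} only provides control away from the origin. These two omissions are the missing ideas; the rest of your outline is sound.
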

\begin{proof}
We divide the proof into two steps.

\smallskip
\textbf{1.} We give some auxiliary estimates. First, $\eqref{eq:VFBP-La-eta}_1$, combined with \eqref{v-expression}, yields 
\begin{equation}\label{ell-0}
D_\eta\big(D_\eta U+ \frac{mU}{\eta}\big)=\frac{1}{2\mu} U_t-\frac{1}{2\mu}(V-U)D_\eta U +\frac{A\gamma}{4\mu^2} \varrho^{\gamma-1}(V-U).
\end{equation}
Then it follows from the above, Lemmas \ref{lemma-bound depth},  \ref{lemma-v Linfty in}, \ref{im-1}, and \ref{lemma-u-0order}, and the H\"older inequality that 
\begin{equation}\label{in-ell-2}
\begin{aligned}
&\,\Big|\zeta_{a} r^\frac{m}{2}\Big(D_\eta^2 U,D_\eta\big(\frac{U}{\eta}\big)\Big)\Big|_2\leq C(T)\Big|\zeta_{a} r^\frac{m}{2} D_\eta\big(D_\eta U+ \frac{mU}{\eta}\big)\Big|_2\\
&\leq C(T)\big(|\zeta_{a} r^\frac{m}{2}U_t|_2+  |\zeta_{a}(V,U)|_\infty\big|\zeta_{\frac{1+3a}{4}} r^\frac{m}{2} D_\eta U\big|_2+|r^\frac{m}{2}|_\infty|\varrho|_\infty^{\gamma-1} |\zeta_{a} (V,U)|_2 \big)\\
&\leq C(a,T)\big(|\zeta_{a} r^\frac{m}{2}U_t|_2+ (1+|\zeta_{a} U|_\infty)\big|\zeta_{\frac{1+3a}{4}} r^\frac{m}{2}D_\eta U\big|_2 +1\big).
\end{aligned}
\end{equation}

Next, rewrite \eqref{ell-0} as
\begin{equation}\label{ell-0-ex}
D_\eta^2 U=\frac{1}{2\mu} U_t-\frac{m}{\eta}\big(D_\eta U-\frac{U}{\eta}\big)-\frac{1}{2\mu}(V-U)D_\eta U+\frac{A\gamma}{4\mu^2} \varrho^{\gamma-1}(V-U).
\end{equation}
Multiplying the above by $\chi^\sharp \rho_0^\frac{1+\varepsilon}{2}$ ($\varepsilon>0$), we obtain from Lemmas \ref{lemma-bound depth}, \ref{lemma-lower bound jacobi}, \ref{lemma-v Lp ex}, \ref{lemma-upper jacobi}, \ref{lemma-time-space} (with $\iota=\frac{1}{2}$), and \ref{lemma-u-0order}, and the H\"older inequality that
\begin{equation}\label{ex-ell-2}
\!\!\begin{aligned}
\big|\chi^\sharp \rho_0^\frac{1+\varepsilon}{2} D_\eta^2 U\big|_2&\leq C(T)\big(\big|\chi^\sharp \rho_0^\frac{1+\varepsilon}{2}(U,D_\eta U,U_t)\big|_2+  \big|\chi^\sharp \rho_0^\frac{\beta+\varepsilon}{2}(V,U)\big|_2(\big|\chi^\sharp \rho_0^\frac{1-\beta}{2} D_\eta U\big|_\infty+1)\big)\\
&\leq C(\varepsilon,T)\big(|\rho_0^\beta|_\infty^\frac{\varepsilon}{2\beta}\big|\chi^\sharp \rho_0^\frac{1}{2}(U,D_\eta U,U_t)\big|_2+ \big|\chi^\sharp \rho_0^\frac{1-\beta}{2} D_\eta U\big|_\infty +1\big)\\
&\leq C(\varepsilon,T)\big(\big|\chi^\sharp\rho_0^\frac{1}{2}(D_\eta U,U_t)\big|_2 +1\big).
\end{aligned}
\end{equation}

\smallskip
\textbf{2.} First, multiplying $\eqref{eq:VFBP-La-eta}_1$ by $\eta^m\eta_r$, along with \eqref{v-expression}, we have
\begin{equation*}
r^m\rho_0U_t +\frac{A\gamma}{2\mu} \frac{(r^m\rho_0)^\gamma}{(\eta^m\eta_r)^{\gamma-1}} (V-U)=2\mu \Big(r^m\rho_0\frac{D_\eta U}{\eta_r} \Big)_r-2\mu m r^m\rho_0\frac{U}{\eta^2}.   
\end{equation*}
Then multiplying the above by $U_t$ gives
\begin{equation}\label{eq-1order-pre}
\begin{aligned}
&\,\mu r^m\rho_0\Big(|D_\eta U|^2+ m \frac{U^2}{\eta^2}\Big)_t+r^m\rho_0U_t^2\\
&=2\mu \Big(r^m\rho_0\frac{D_\eta U U_t}{\eta_r}\Big)_r-\frac{A\gamma}{2\mu} \frac{(r^m\rho_0)^\gamma}{(\eta^m\eta_r)^{\gamma-1}} (V-U)U_t-2\mu r^m\rho_0\Big((D_\eta U)^3+m \frac{U^3}{\eta^3}\Big).
\end{aligned}
\end{equation}
Integrating \eqref{eq-1order-pre} over $I$, we obtain 
\begin{equation}\label{dt-j1-j4}
\begin{aligned}
&\, \mu \frac{\mathrm{d}}{\dt}\Big(\big|(r^m\rho_0)^\frac{1}{2}D_\eta U\big|_2^2+m\Big|(r^m\rho_0)^\frac{1}{2}\frac{U}{\eta}\Big|_2^2\Big) + \big|(r^m\rho_0)^\frac{1}{2}U_t\big|_2^2 \\
&=-\frac{A\gamma}{2\mu}\int_0^1 \frac{(r^m\rho_0)^\gamma}{(\eta^m\eta_r)^{\gamma-1}}  (V-U) U_t\,\mathrm{d}r -2\mu \int_0^1r^m\rho_0\Big((D_\eta U)^3+m \frac{U^3}{\eta^3}\Big)\,\mathrm{d}r := \sum_{i=1}^2 \mathrm{J}_i.
\end{aligned}    
\end{equation}

\smallskip
\textbf{2.1. Estimate for $\mathrm{J}_1$.} For $\mathrm{J}_1$, it follows from Lemmas \ref{lemma-v Lp ex}, \ref{lemma-v Linfty in}, and \ref{lemma-u-0order}, and the H\"older and Young inequalities that
\begin{equation}\label{rm-J1}
\begin{aligned}
\mathrm{J}_1&=-\frac{A\gamma}{2\mu}\int_0^1 (\zeta+\zeta^\sharp) \frac{(r^m\rho_0)^\gamma}{(\eta^m\eta_r)^{\gamma-1}}  (V-U) U_t\,\mathrm{d}r\\
&\leq C_0\big(|\zeta(V,U)|_2|\rho_0^\beta|_\infty^{\frac{2\gamma-1}{2\beta}}+ |\zeta^\sharp \rho_0^\beta (V,U)|_2|\rho_0^\beta|_\infty^{\frac{2\gamma-1-2\beta}{2\beta}}\big)\Big|\frac{r^{(\gamma-\frac{1}{2})m}}{(\eta^m\eta_r)^{\gamma-1}}\Big|_\infty\big|(r^m\rho_0)^\frac{1}{2}U_t\big|_2\\
&\leq C(T)(|\zeta V|_\infty+1)\big|(r^m\rho_0)^\frac{1}{2}U_t\big|_2 \leq C(T)+\frac{1}{8}\big|(r^m\rho_0)^\frac{1}{2}U_t\big|_2^2.
\end{aligned}    
\end{equation}

\smallskip
\textbf{2.2. Estimate for $\mathrm{J}_2$.} To estimate $\mathrm{J}_2$, we first divide it into three parts:
\begin{equation}\label{rm-J2}
\begin{aligned}
\mathrm{J}_2&=-2\mu \int_0^1 \zeta r^m\rho_0\Big((D_\eta U)^3+m \frac{U^3}{\eta^3}\Big)\,\mathrm{d}r-2\mu \int_0^1 \zeta^\sharp r^m\rho_0 |D_\eta U|^3 \,\mathrm{d}r\\
&\quad -2\mu m\int_0^1 \zeta^\sharp r^m\rho_0 \frac{U^3}{\eta^3} 
\mathrm{d}r:=\sum_{i=1}^3\mathrm{J}_{2,i}.
\end{aligned}
\end{equation}

For $\mathrm{J}_{2,1}$, note that, due to $\frac{2m+1}{4}\geq \frac{m}{2}$ and Lemmas \ref{lemma-upper jacobi} and \ref{GNinequality}, for any $a\in (0,1)$, 
\begin{equation}\label{DU-L4}
\Big|\chi_{a} r^\frac{m}{2}\big(D_\eta U,\frac{U}{\eta}\big)\Big|_4^2\leq C(a)\Big|\zeta_{a} r^\frac{m}{2}\big(D_\eta U,\frac{U}{\eta}\big)\Big|_2 \Big|\zeta_{a} r^\frac{m}{2}\big(D_\eta U,\frac{U}{\eta},D_\eta^2 U, D_\eta\big(\frac{U}{\eta}\big)\Big)\Big|_2.
\end{equation}
Hence, due to $\rho_0^\beta\sim 1-r$, \eqref{in-ell-2}, and the H\"older and Young inequalities, we have
\begin{equation}\label{jj1}
\begin{aligned}
\mathrm{J}_{2,1} &\leq C_0\Big|\zeta\big(D_\eta U,\frac{U}{\eta}\big)\Big|_2\Big|\chi_{\frac{5}{8}} r^\frac{m}{2}\big(D_\eta U,\frac{U}{\eta}\big)\Big|_4^2\\
&\leq C(T)\Big|\zeta\big(D_\eta U,\frac{U}{\eta}\big)\Big|_2\Big|\zeta_{\frac{5}{8}} r^\frac{m}{2}\big(D_\eta U,\frac{U}{\eta}\big)\Big|_2 \Big|\zeta_{\frac{5}{8}} r^\frac{m}{2}\big(D_\eta U,\frac{U}{\eta},U_t\big)\Big|_2 \\
&\quad + C(T)\Big|\zeta\big(D_\eta U,\frac{U}{\eta}\big)\Big|_2\Big|\zeta_{\frac{5}{8}} r^\frac{m}{2}\big(D_\eta U,\frac{U}{\eta}\big)\Big|_2  \big((1+|\zeta_{\frac{5}{8}} U|_\infty)\big|\zeta_{\frac{3}{4}} r^\frac{m}{2}D_\eta U\big|_2 +1\big)\\
&\leq C(T)\Big(1\!+\!|\zeta_{\frac{5}{8}} U|_\infty^2\!+\!\Big|\zeta\big(D_\eta U,\frac{U}{\eta}\big)\Big|_2^2\Big)\Big(1\!+\!\Big|(r^m\rho_0)^\frac{1}{2}\big(D_\eta U,\frac{U}{\eta}\big)\Big|_2^2\Big)\!+\!\frac{1}{8}\big|(r^m\rho_0)^\frac{1}{2}U_t\big|_2^2.    
\end{aligned}
\end{equation}

For $\mathrm{J}_{2,2}$, it follows from integration by parts, \eqref{eq:eta}, and \eqref{v-expression} that
\begin{equation*} 
\begin{aligned}
\mathrm{J}_{2,2}&=-2\mu \int_0^1 (\zeta^\sharp  \eta^m\varrho  |D_\eta U|^2)\eta_r D_\eta U \,\mathrm{d}r=2\mu \int_0^1 D_\eta(\zeta^\sharp  \eta^m\varrho|D_\eta U|^2)\eta_r U \,\mathrm{d}r \\
&= \int_0^1 r^m\rho_0\Big(2\mu \big(D_\eta \zeta^\sharp + \frac{m\zeta^\sharp}{\eta}\big) + \zeta^\sharp V\Big) U|D_\eta U|^2 \,\mathrm{d}r\\
&\quad\,\,\, \underline{-\int_0^1 \zeta^\sharp r^m\rho_0 U^2 |D_\eta U|^2 \,\mathrm{d}r}_{\,\leq 0}+4\mu \int_0^1 \zeta^\sharp  r^m\rho_0  U (D_\eta U) (D_\eta^2 U) \,\mathrm{d}r\leq \mathrm{J}_{2,21}+\mathrm{J}_{2,22},    
\end{aligned}
\end{equation*}
where $\mathrm{J}_{2,21}$--$\mathrm{J}_{2,22}$ can be handled by using \eqref{ex-ell-2}, Lemmas \ref{lemma-lower bound jacobi}, \ref{lemma-v Linfty ex}, \ref{lemma-time-space} (with $\iota=\frac{1}{2}$), \ref{lemma-u-0order}, and \ref{GNinequality}, and the H\"older and Young inequalities: for any fixed $\varepsilon\in (0,\frac{\beta}{2})$,  
\begin{align*}
&\begin{aligned}
\mathrm{J}_{2,21}&\leq C(T)(|\rho_0^\beta|_\infty+|\chi^\sharp \rho_0^{\beta} V|_\infty)\big|\chi^\sharp \rho_0^{-\frac{\varepsilon}{2}} U\big|_2\big|\chi^\sharp \rho_0^\frac{1-\beta}{2}D_\eta U\big|_\infty\big|\chi^\sharp \rho_0^\frac{1+\varepsilon-\beta}{2}D_\eta U\big|_2\\
&\leq C(T)\big(1+\big|\chi^\sharp \rho_0^\frac{1}{2}(D_\eta U,U_t)\big|_2\big)\big(\big|\chi^\sharp\rho_0^\frac{1+\varepsilon}{2}D_\eta U\big|_2+\big|\chi^\sharp\rho_0^\frac{1+\varepsilon}{2}D_\eta U\big|_2^\frac{1}{2}\big|\chi^\sharp\rho_0^\frac{1+\varepsilon}{2}D_\eta^2 U\big|_2^\frac{1}{2}\big) \\
&\leq C(T)\big(1+\big|(r^m\rho_0)^\frac{1}{2}D_\eta U\big|_2^2\big)+\frac{1}{16}\big|(r^m\rho_0)^\frac{1}{2}U_t\big|_2^2, 
\end{aligned}\\
&\begin{aligned}
\mathrm{J}_{2,22}&\leq C_0\big|\chi^\sharp\rho_0^{\frac{\beta}{4}-\varepsilon} U\big|_4\big|\chi^\sharp\rho_0^{\frac{1+\varepsilon}{2}-\frac{\beta}{4}}D_\eta U\big|_4\big|\chi^\sharp\rho_0^\frac{1+\varepsilon}{2}D_\eta^2 U\big|_2\\
&\leq C(T) \big(\big|\chi^\sharp\rho_0^\frac{1+\varepsilon}{2}D_\eta U\big|_2\big|\chi^\sharp\rho_0^\frac{1+\varepsilon}{2}D_\eta^2 U\big|_2+\big|\chi^\sharp\rho_0^\frac{1+\varepsilon}{2}D_\eta U\big|_2^\frac{1}{2}\big|\chi^\sharp\rho_0^\frac{1+\varepsilon}{2}D_\eta^2 U\big|_2^\frac{3}{2}\big)\\
&\leq C(T)\big(1+\big|(r^m\rho_0)^\frac{1}{2}D_\eta U\big|_2^2\big)+\frac{1}{16}\big|(r^m\rho_0)^\frac{1}{2}U_t\big|_2^2.    
\end{aligned}
\end{align*}
Hence, we arrive at
\begin{equation}\label{rm-J2,2}
\mathrm{J}_{2,2}\leq C(T)\big(1+\big|(r^m\rho_0)^\frac{1}{2}D_\eta U\big|_2^2\big)+\frac{1}{8}\big|(r^m\rho_0)^\frac{1}{2}U_t\big|_2^2.
\end{equation}

For $\mathrm{J}_{2,3}$, it follows from Lemmas \ref{lemma-lower bound jacobi} and \ref{lemma-u-0order} that
\begin{equation}\label{rm-J2,3}
\mathrm{J}_{2,3}\leq C_0\big|\chi^\sharp r^m\eta^{-3}\big|_\infty\big|\chi^\sharp \rho_0^\frac{1}{3}U\big|_3^3\leq C(T).
\end{equation}

Collecting \eqref{rm-J2}--\eqref{jj1} and \eqref{rm-J2,2}--\eqref{rm-J2,3} gives
\begin{equation}\label{rm-J2'}
\mathrm{J}_2\leq C(T)\Big(1+|\zeta_{\frac{5}{8}} U|_\infty^2+\Big|\zeta\big(D_\eta U,\frac{U}{\eta}\big)\Big|_2^2\Big)\Big(1+\Big|(r^m\rho_0)^\frac{1}{2}\big(D_\eta U,\frac{U}{\eta}\big)\Big|_2^2\Big)+\frac{1}{4}\big|(r^m\rho_0)^\frac{1}{2}U_t\big|_2^2.
\end{equation}
 
\smallskip
\textbf{2.3. Close the energy estimate.} Collecting \eqref{dt-j1-j4}--\eqref{rm-J1} and \eqref{rm-J2'}, and then applying the Gr\"onwall inequality, we obtain from Lemma \ref{lemma-u-0order} that, for $t\in[0,T]$,
\begin{equation*}
\Big|(r^m\rho_0)^\frac{1}{2}\big(D_\eta U,\frac{U}{\eta}\big)(t)\Big|_2^2+ \int_0^t\big|(r^m\rho_0)^\frac{1}{2}U_t\big|_2^2\,\mathrm{d}s \leq  C(T) (1+\cE(0,U)) \leq C(T) . 
\end{equation*}

This completes the proof.
\end{proof}

\subsubsection{Second-order tangential estimates}
\begin{Lemma}\label{lemma-u-D2}
There exists a constant $C(T)>0$ such that 
\begin{equation*}
\big|(r^m\rho_0)^\frac{1}{2}U_t(t)\big|_2^2+ \int_0^t\Big|(r^m\rho_0)^\frac{1}{2}\big(D_\eta U_t,\frac{U_t}{\eta}\big)\Big|_2^2\,\mathrm{d}s\leq C(T) \qquad \text{for all $t\in [0,T]$}.
\end{equation*}
Moreover, for all $t\in [0,T]$,
\begin{equation*}
\begin{aligned}
\big|\chi^\sharp \rho_0^\frac{1-\beta}{2}D_\eta U(t)\big|_\infty \leq C(T),\qquad \big|\chi^\sharp \rho_0^\frac{1+\varepsilon}{2}D_\eta^2 U(t)\big|_2&\leq C(\varepsilon,T) \qquad \text{for any $\varepsilon>0$},\\
\big|\zeta_{a}U(t)\big|_\infty+\Big|\zeta_{a} r^\frac{m}{2}\Big(D_\eta^2 U,D_\eta\big(\frac{U}{\eta}\big)\Big)(t)\Big|_2&\leq C(a,T) \qquad  \text{for any $a\in (0,1)$}.
\end{aligned}
\end{equation*}
\end{Lemma}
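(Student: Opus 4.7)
The plan is to obtain the $L^\infty_t L^2$ bound for $(r^m\rho_0)^{1/2}U_t$ and its associated dissipation by applying $\partial_t$ to the Lagrangian momentum equation $\eqref{eq:VFBP-La-eta}_1$ (equivalently to the weighted form $\eqref{eq:VFBP-La}_2$), testing the resulting identity against $\eta^m\eta_r U_t$, and integrating over $I$. Since $r^m\rho_0$ is time-independent, the inertia contributes $\tfrac{1}{2}\tfrac{\mathrm{d}}{\mathrm{d}t}|(r^m\rho_0)^{1/2}U_t|_2^2$; the $t$-derivative of the viscous term $2\mu\partial_t D_\eta(\varrho(D_\eta U+\tfrac{mU}{\eta}))$, after one integration by parts and an appeal to Lemma \ref{im-1}, yields the dissipation $|(r^m\rho_0)^{1/2}(D_\eta U_t,\tfrac{U_t}{\eta})|_2^2$ modulo commutators generated by $\varrho_t=-\varrho(D_\eta U+\tfrac{mU}{\eta})$ (from $\eqref{eq:VFBP-La}_1$) and by $(\eta,\eta_r)_t=(U,U_r)$. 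The $t$-derivative of the pressure $A\partial_t D_\eta(\varrho^\gamma)$ is likewise converted via the continuity identity into terms of lower total derivative order in $D_\eta U$ and $U/\eta$. The resulting remainder terms will be closed by combining the uniform bounds on $(\varrho,\eta_r,\eta/r,\varrho/\rho_0)$ from Lemmas \ref{lemma-bound depth}, \ref{lemma-lower bound jacobi}, \ref{lemma-upper jacobi}, \ref{non-vac}; the effective-velocity bounds in Lemmas \ref{lemma-v Lp ex}, \ref{lemma-v Linfty ex}, \ref{lemma-v Linfty in}; the first-order energy of Lemma \ref{lemma-u-D1}; and, crucially, the interior interpolation \eqref{DU-L4} together with the two elliptic identities \eqref{in-ell-2} and \eqref{ex-ell-2}. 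Near the vacuum boundary, Lemma \ref{lemma-time-space} (with $\iota=\tfrac{1}{2}$) allows us to replace $|\chi^\sharp\rho_0^{(1-\beta)/2}D_\eta U|_\infty$ by $|\chi^\sharp\rho_0^{1/2}(D_\eta U,U_t)|_2$, which feeds back into the quantity being estimated; after absorbing the dissipation and applying Gr\"onwall, the main estimate follows.

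The four auxiliary bounds are extracted as follows. The weighted $L^\infty$-bound $|\chi^\sharp\rho_0^{(1-\beta)/2}D_\eta U|_\infty\leq C(T)$ is immediate from Lemma \ref{lemma-time-space} with $\iota=\tfrac{1}{2}$ together with the just-obtained $L^\infty_t L^2$-bound on $(r^m\rho_0)^{1/2}U_t$ and Lemma \ref{lemma-u-D1}. The weighted $L^2$-bound $|\chi^\sharp\rho_0^{(1+\varepsilon)/2}D_\eta^2 U|_2\leq C(\varepsilon,T)$ is precisely \eqref{ex-ell-2} applied with the same input. For each $a\in(0,1)$, the multidimensional Sobolev embedding $H^2(B_a)\hookrightarrow L^\infty(B_a)$ for spherically symmetric $\boldsymbol{U}$, combined with the interior elliptic identity \eqref{in-ell-2}, Lemma \ref{lemma-u-D1}, and a short bootstrap in which the factor $|\zeta_a U|_\infty$ on the right-hand side of \eqref{in-ell-2} is first controlled crudely via interpolation and then refined, yields simultaneously $|\zeta_a U|_\infty\leq C(a,T)$ and $|\zeta_a r^{m/2}(D_\eta^2 U, D_\eta(U/\eta))|_2\leq C(a,T)$.

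The main obstacle is the cubic commutator arising when $\partial_t$ falls on $\eta_r$ inside $D_\eta(\varrho D_\eta U)$, producing structures such as $r^m\rho_0(D_\eta U)^2 D_\eta U_t$ in the interior and $r^m\rho_0\varrho^{\gamma-1}(D_\eta U)\cdot(D_\eta U)\cdot U_t$ near the vacuum boundary. In the degenerate exterior region these cannot be absorbed by Lemma \ref{lemma-u-D1} alone; they must be split using the $\rho_0^{(1+\varepsilon)/2}$-weighted second-order elliptic estimate \eqref{ex-ell-2} together with Lemma \ref{lemma-time-space}, which forces the main energy estimate and the $L^\infty$-bound on $\chi^\sharp\rho_0^{(1-\beta)/2}D_\eta U$ to be closed simultaneously through a single Gr\"onwall loop rather than sequentially. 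In the interior region, the cubic terms are treated via \eqref{DU-L4} and \eqref{in-ell-2}, with the $|\zeta_a U|_\infty$ factor produced there being absorbed by the dissipation at the price of the first-order energy from Lemma \ref{lemma-u-D1}.
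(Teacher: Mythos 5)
Your proposal follows the same route as the paper: apply $\partial_t$ to the momentum equation, test against $\eta^m\eta_r U_t$ to generate the dissipation, parametrize the interior and exterior auxiliary elliptic/$L^\infty$ estimates (\eqref{in-ell-2}, \eqref{ex-ell-2}, Lemma \ref{lemma-time-space} with $\iota=\tfrac{1}{2}$) by the unknown norm $|(r^m\rho_0)^{1/2}U_t|_2$, handle the cubic commutators via \eqref{DU-L4} in the interior and the $\rho_0$-weighted bounds in the exterior, and close by Gr\"onwall using the $L^1_t$ integrability of $|(r^m\rho_0)^{1/2}U_t|_2^2$ from Lemma \ref{lemma-u-D1}. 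The extraction of the four auxiliary bounds from the closed energy also matches the paper's Step 2.4.
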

\begin{proof}
We divide the proof into two steps.

\smallskip
\textbf{1.} First, since $\frac{m}{2}\leq 1$ and $\rho_0^\beta\sim 1-r$, it follows from Lemmas \ref{lemma-lower bound jacobi}, \ref{lemma-upper jacobi}, \ref{lemma-u-0order}--\ref{lemma-u-D1}, and \ref{sobolev-embedding}--\ref{hardy-inequality} that, for all $a\in (0,1)$,
\begin{equation}\label{U-infty-new}
\begin{aligned}
|\zeta_{a}U|_\infty^2&\leq C_0|\zeta_{a}U|_2 |((\zeta_{a})_rU,\zeta_{a} U_r)|_2\leq C(a,T)(1+ |\zeta_{a}D_\eta U|_2)\\
&\leq C(a,T)\big(1 + \big|r(\zeta_{a}D_\eta U,(\zeta_{a})_rD_\eta U,\zeta_{a}D_\eta^2 U)\big|_2\big)\\
&\leq C(a,T)\big(1+\big|r^\frac{m}{2} (\rho_0^\frac{1}{2}D_\eta U,\zeta_{a} D_\eta^2 U)\big|_2\big) \leq C(a,T)\big(1 + \big|\zeta_{a}r^\frac{m}{2}D_\eta^2 U\big|_2\big),
\end{aligned}    
\end{equation}
where we have also used the fact that
\begin{equation*}
|(\zeta_a)_r|\leq C(a)\zeta_\frac{1+3a}{4} \qquad\text{for all $r\in \bar I$}.
\end{equation*}
Next, it follows from $\frac{m+3}{4}>\frac{m}{2}$ and Lemmas \ref{lemma-u-D1} and \ref{hardy-inequality} that
\begin{equation*}
\begin{aligned}
\Big|\chi_{a} r^\frac{m}{4}\big(D_\eta U, \frac{U}{\eta} \big)\Big|_4&\leq C(a)\Big|\chi_{a} r^\frac{m+3}{4}\Big(D_\eta U, \frac{U}{\eta} ,D_\eta^2 U,D_\eta \big(\frac{U}{\eta}\big)\Big)\Big|_2\\
&\leq C(a,T)\Big(\Big|\zeta_{a} r^\frac{m}{2}\Big(D_\eta^2 U,D_\eta \big(\frac{U}{\eta}\big)\Big)\Big|_2+1\Big).
\end{aligned}
\end{equation*}
Consequently, \eqref{in-ell-2}, together with the above, \eqref{U-infty-new}, and Lemma \ref{lemma-u-D1}, gives
\begin{equation*}
\begin{aligned}
&\,\Big|\chi_{a} r^\frac{m}{4}\big(D_\eta U, \frac{U}{\eta} \big)\Big|_4+\Big|\zeta_{a} r^\frac{m}{2}\Big(D_\eta^2 U,D_\eta \big(\frac{U}{\eta}\big)\Big)\Big|_2  \\
&\leq C(a,T)\big(\big|(r^m\rho_0)^\frac{1}{2}U_t\big|_2+ |\zeta_{a} U|_\infty +1\big) \leq C(a,T)\big(\big|(r^m\rho_0)^\frac{1}{2}U_t\big|_2+ \big|\zeta_{a}r^\frac{m}{2}D_\eta^2 U\big|_2^\frac{1}{2} +1\big),
\end{aligned}
\end{equation*}
which, along with the Young inequality and \eqref{U-infty-new}, implies that, for all $a\in (0,1)$,
\begin{equation}\label{in-ell-2-new}
|\zeta_{a}U|_\infty^2+\Big|\chi_{a} r^\frac{m}{4}\big(D_\eta U, \frac{U}{\eta} \big)\Big|_4+\Big|\zeta_{a} r^\frac{m}{2}\Big(D_\eta^2 U,D_\eta\big(\frac{U}{\eta}\big)\Big)\Big|_2\leq C(a,T)\big(\big|(r^m\rho_0)^\frac{1}{2}U_t\big|_2+1\big).  
\end{equation}

Besides, we obtain from  \eqref{ex-ell-2} and Lemmas \ref{lemma-upper jacobi}, \ref{lemma-time-space} (with $\iota=\frac{1}{2}$), and \ref{lemma-u-D1} that, for any $\varepsilon>0$,
\begin{equation}\label{ex-ell-2-new}
\big|\chi^\sharp \rho_0^\frac{1-\beta}{2}D_\eta U\big|_\infty+\big|\chi^\sharp \rho_0^\frac{1+\varepsilon}{2}D_\eta^2 U\big|_2\leq C(T)\big(\big|(r^m\rho_0)^\frac{1}{2}U_t\big|_2 +1\big).
\end{equation}

\smallskip
\textbf{2.}
First, multiplying $\eqref{eq:VFBP-La-eta}_1$ by $\eta^m\eta_r$, and then applying $\partial_t$ to the resulting equality, along with $\eqref{eq:VFBP-La}_1$ and \eqref{v-expression}, yield
\begin{equation}\label{eq-2order-pre}
\begin{aligned}
& \ r^m\rho_0U_{tt} -A\gamma \Big(\varrho^\gamma\big(D_\eta U+\frac{mU}{\eta}\big)\Big)_r\eta^m +\frac{mA\gamma}{2\mu}\eta^m\eta_r\varrho^\gamma(V-U)\frac{U}{\eta} \\
&=2\mu \Big(r^m\rho_0\frac{D_\eta U_t}{\eta_r}\Big)_r-2\mu m r^m\rho_0\frac{U_t}{\eta^2}-4\mu \Big(r^m\rho_0\frac{|D_\eta U|^2}{\eta_r} \Big)_r+4\mu m r^m\rho_0\frac{U^2}{\eta^3}.
\end{aligned}    
\end{equation}
Next, multiplying the above by $U_t$, we obtain 
\begin{equation}\label{eq-2order}
\begin{aligned}
&\,\frac{1}{2}(r^m\rho_0U_{t}^2)_t + 2\mu r^m\rho_0\Big(|D_\eta U_t|^2+m\frac{U_t^2}{\eta^2}\Big)\\
&= \Big(2\mu r^m\rho_0\frac{D_\eta U_tU_t}{\eta_r}-4\mu r^m\rho_0\frac{|D_\eta U|^2U_t}{\eta_r} +A\gamma  \eta^m \varrho^\gamma\big(D_\eta U+\frac{mU}{\eta}\big)U_t\Big)_r\\
&\quad-A\gamma \eta^m\eta_r \varrho^\gamma\big(D_\eta U+\frac{mU}{\eta}\big)\big(D_\eta U_t+\frac{mU_t}{\eta}\big)-\frac{mA\gamma}{2\mu}\eta^m\eta_r \varrho^\gamma (V-U)\frac{U}{\eta}U_t\\
&\quad+4\mu r^m\rho_0\Big(|D_\eta U|^2 D_\eta U_t+m\frac{U^2U_t}{\eta^3}\Big).
\end{aligned}    
\end{equation}
Hence, integrating \eqref{eq-2order} over $I$ leads to
\begin{equation}\label{dt-J3-J5}
\begin{aligned}
&\,\frac{1}{2}\frac{\mathrm{d}}{\dt}\big|(r^m\rho_0)^\frac{1}{2}U_t\big|_2^2+ 2\mu \big|(r^m\rho_0)^\frac{1}{2}D_\eta U_t\big|_2^2+ 2\mu m \Big|(r^m\rho_0)^\frac{1}{2}\frac{U_t}{\eta}\Big|_2^2\\
&= -A\gamma\int_0^1 \eta^{m}\eta_r\varrho^\gamma\big(D_\eta U+\frac{mU}{\eta}\big)\big(D_\eta U_t+\frac{mU_t}{\eta}\big)\,\mathrm{d}r\\
&\quad-\frac{mA\gamma}{2\mu}\int_0^1\eta^{m}\eta_r\varrho^\gamma (V-U)\frac{U}{\eta}U_t\,\mathrm{d}r\\
&\quad +4\mu \int_0^1 r^m\rho_0\Big(|D_\eta U|^2D_\eta U_t+m\frac{U^2U_t}{\eta^3}\Big)\mathrm{d}r := \sum_{i=3}^5 \mathrm{J}_i.
\end{aligned}
\end{equation}

\smallskip
\textbf{2.1. Estimate for $\mathrm{J}_3$.} For $\mathrm{J}_3$, it follows from \eqref{eq:eta}, Lemmas \ref{lemma-bound depth} and \ref{lemma-u-D1}, and the H\"older and Young inequalities that
\begin{equation}\label{rm-J3}
\begin{aligned}
\mathrm{J}_3&\leq C_0|\varrho|_\infty^{\gamma-1}\Big|(r^m\rho_0)^\frac{1}{2}\big(D_\eta U+\frac{mU}{\eta}\big)\Big|_2 \Big|(r^m\rho_0)^\frac{1}{2}\big(D_\eta U_t+\frac{mU_t}{\eta}\big)\Big|_2\\
&\leq C(T)+\frac{\mu}{8} \Big|(r^m\rho_0)^\frac{1}{2} \big(D_\eta U_t,\frac{U_t}{\eta}\big)\Big|_2^2.
\end{aligned}
\end{equation}

\smallskip
\textbf{2.2. Estimate for $\mathrm{J}_4$.} For $\mathrm{J}_4$, it follows from \eqref{in-ell-2-new}, the fact that $\beta\leq \gamma-1$, Lemmas \ref{lemma-bound depth}, \ref{lemma-lower bound jacobi}, \ref{lemma-v Linfty ex}, \ref{lemma-v Linfty in}, and \ref{lemma-u-0order}--\ref{lemma-u-D1}, and the H\"older and Young inequalities that
\begin{equation}\label{rm-J4}
\begin{aligned}
\mathrm{J}_4&=-\frac{mA\gamma}{2\mu}\int_0^1 (\zeta+\zeta^\sharp) \eta^{m}\eta_r\varrho^\gamma (V-U)\frac{U}{\eta}U_t\,\mathrm{d}r\\
&\leq  C_0|\varrho|_\infty^{\gamma-1} |\zeta(V,U)|_\infty\Big|(r^m\rho_0)^\frac{1}{2}\frac{U}{\eta}\Big|_2\big|(r^m\rho_0)^\frac{1}{2}U_t\big|_2\\
&\quad + C(T)\big(|\rho_0|_\infty^{\gamma-1-\beta} |\chi^\sharp \rho_0^\beta V|_\infty\big|(r^m\rho_0)^\frac{1}{2}U\big|_2+\big|\chi^\sharp\rho_0^\frac{2\gamma-1}{4}U\big|_4^2\big)\big|(r^m\rho_0)^\frac{1}{2}U_t\big|_2\\
&\leq C(T)\big|(r^m\rho_0)^\frac{1}{2}U_t\big|_2\leq C(T)\big(\big|(r^m\rho_0)^\frac{1}{2}U_t\big|_2^4+1\big).    
\end{aligned}
\end{equation}

\smallskip
\textbf{2.3. Estimate for $\mathrm{J}_5$.} To estimate $\mathrm{J}_5$, we first divide it into two parts:
\begin{equation}\label{rm-J5}
\begin{aligned}
\mathrm{J}_5&=4\mu \int_0^1 \chi r^m\rho_0\Big(|D_\eta U|^2D_\eta U_t+m\frac{U^2U_t}{\eta^3}\Big)\,\mathrm{d}r\\
&\quad +4\mu \int_0^1 \chi^\sharp r^m\rho_0\Big(|D_\eta U|^2D_\eta U_t+m\frac{U^2U_t}{\eta^3}\Big) :=\sum_{i=1}^2 \mathrm{J}_{5,i}.
\end{aligned}
\end{equation}

Then, for $\mathrm{J}_{5,1}$--$\mathrm{J}_{5,2}$, we obtain from the fact that $\frac{3\beta}{2}>\frac{1}{2}$, \eqref{in-ell-2-new}--\eqref{ex-ell-2-new}, Lemmas \ref{lemma-lower bound jacobi}, \ref{lemma-u-0order}--\ref{lemma-u-D1}, and \ref{hardy-inequality}, and the H\"older and Young inequalities that
\begin{align}
&\begin{aligned}\label{rm-J5,1}
\mathrm{J}_{5,1}&\leq C_0|\rho_0^\beta|_\infty^\frac{1}{2\beta}\Big|\chi r^\frac{m}{4}\big(D_\eta U,\frac{U}{\eta}\big)\Big|_4^2\Big|(r^m\rho_0)^\frac{1}{2}\big(D_\eta U_t,\frac{U_t}{\eta}\big)\Big|_2\\
&\leq C(T)\big(\big|(r^m\rho_0)^\frac{1}{2}U_t\big|_2^4+1\big) +\frac{\mu}{8}\Big|(r^m\rho_0)^\frac{1}{2}\big(D_\eta U_t,\frac{U_t}{\eta}\big)\Big|_2^2,
\end{aligned}\\
&\begin{aligned}\label{rm-J5,2}
\mathrm{J}_{5,2}&\leq C(T)\big(\big|\chi^\sharp\rho_0^\frac{1-\beta}{2}D_\eta U\big|_\infty \big|\chi^\sharp\rho_0^\frac{\beta}{2}D_\eta U\big|_2+\big|\chi^\sharp\rho_0^\frac{1}{4}U\big|_4^2\big)\big|(r^m\rho_0)^\frac{1}{2}(U_t,D_\eta U_t)\big|_2\\
&\leq C(T)\big(\big|\chi^\sharp\rho_0^\frac{1-\beta}{2}D_\eta U\big|_\infty \big|\chi^\sharp\rho_0^\frac{3\beta}{2}(D_\eta U,D_\eta^2 U)\big|_2+1\big)\big|(r^m\rho_0)^\frac{1}{2}(U_t,D_\eta U_t)\big|_2\\
&\leq C(T)\big(\big|(r^m\rho_0)^\frac{1}{2}U_t\big|_2^2+1\big) \big|(r^m\rho_0)^\frac{1}{2}(U_t,D_\eta U_t)\big|_2\\
&\leq C(T)\big(\big|(r^m\rho_0)^\frac{1}{2}U_t\big|_2^4+1\big)+\frac{\mu}{8} \big|(r^m\rho_0)^\frac{1}{2}D_\eta U_t\big|_2^2.    
\end{aligned}
\end{align}

Collecting \eqref{rm-J5}--\eqref{rm-J5,2} thus leads to
\begin{equation}\label{rm-J5'}
\begin{aligned}
\mathrm{J}_{5}\leq C(T)\big(\big|(r^m\rho_0)^\frac{1}{2}U_t\big|_2^4+1\big)+\frac{\mu}{4}\Big|(r^m\rho_0)^\frac{1}{2}\big(D_\eta U_t,\frac{U_t}{\eta}\big)\Big|_2^2.
\end{aligned}    
\end{equation}

\smallskip
\textbf{2.4. Close the energy estimate.} Collecting \eqref{dt-J3-J5}--\eqref{rm-J4} and \eqref{rm-J5'}, and applying the Gr\"onwall inequality, we obtain from Lemma \ref{lemma-u-D1} that, for $t\in[0,T]$,
\begin{equation*}
\big|(r^m\rho_0)^\frac{1}{2}U_t(t)\big|_2^2+ \int_0^t\Big|(r^m\rho_0)^\frac{1}{2}\big(D_\eta U,\frac{U_t}{\eta}\big)\Big|_2^2\,\mathrm{d}s\leq C(T)(1+\cE(0,U))\leq  C(T).
\end{equation*}

Finally, this, together with \eqref{in-ell-2-new}--\eqref{ex-ell-2-new}, yields that, for all $t\in [0,T]$,
\begin{equation*}
\begin{aligned}
\big|\chi^\sharp \rho_0^\frac{1-\beta}{2}D_\eta U(t)\big|_\infty \leq C(T),\qquad \big|\chi^\sharp \rho_0^\frac{1+\varepsilon}{2}D_\eta^2 U(t)\big|_2&\leq C(\varepsilon,T) \qquad \text{for any $\varepsilon>0$},\\
\big|\zeta_{a}U(t)\big|_\infty+\Big|\zeta_{a} r^\frac{m}{2}\Big(D_\eta^2 U,D_\eta\big(\frac{U}{\eta}\big)\Big)(t)\Big|_2&\leq C(a,T) \qquad  \text{for any $a\in (0,1)$}.
\end{aligned}
\end{equation*}

This completes the proof of Lemma \ref{lemma-u-D2}.
\end{proof}

\subsubsection{Third-order tangential estimates}\label{subsub914}
We first establish the interior $L^2$-energy estimates for $(D_\eta V,\frac{V}{\eta})$.
\begin{Lemma}\label{lemma-Vr-L2}
For any $a\in(0,1)$, there exists a constant $C(a,T)>0$ such that
\begin{equation*}
\Big|\zeta_a r^\frac{m}{2}\big(D_\eta V,\frac{V}{\eta}\big)(t)\Big|_2 \leq C(a,T) \qquad \text{for all $t\in[0,T]$}.
\end{equation*}    
\end{Lemma}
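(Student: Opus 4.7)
The plan is to derive a damped transport equation for $V_r$, carry out a weighted $L^2$ energy estimate using the uniform bounds already established, and then recover the control of $V/\eta$ by combining Lemma \ref{im-1} (or a Hardy-type argument) with the vanishing of $V$ at the symmetry center.

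\textit{Step 1 (transport equation for $V_r$).} Differentiating \eqref{eq:v} in $r$ and using the identity $2\mu\varrho_r = \varrho\,\eta_r(V-U)$, which follows from $V=U+2\mu D_\eta\log\varrho$, I arrive at
\begin{equation*}
(V_r)_t + \frac{A\gamma}{2\mu}\varrho^{\gamma-1}V_r
= \frac{A\gamma}{2\mu}\varrho^{\gamma-1}U_r - \frac{A\gamma(\gamma-1)}{4\mu^2}\eta_r\varrho^{\gamma-1}(V-U)^2.
\end{equation*}
The key point is that differentiating in $r$, rather than via the Eulerian derivative $D_\eta$, avoids the awkward cross term $(D_\eta U)(D_\eta V)$ that would otherwise demand a local $L^\infty$ bound on $D_\eta U$ unavailable at this stage.

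\textit{Step 2 (energy estimate).} Multiplying the above by $2\zeta_a^2 r^m V_r$ and integrating over $I$, I control the right-hand side by Cauchy--Schwarz and Young. On $\mathrm{supp}\,\zeta_a$, Lemmas \ref{lemma-bound depth}, \ref{lemma-lower bound jacobi}, \ref{lemma-upper jacobi}, and \ref{non-vac} provide uniform two-sided bounds on $(\varrho,\eta_r,\eta/r)$; Lemmas \ref{lemma-u-D2} and \ref{lemma-v Linfty in} yield $L^\infty$ bounds on $\zeta_{(1+3a)/4}(U,V)$; and Lemma \ref{lemma-u-D1}, combined with the positive lower bound of $\rho_0$ on $\mathrm{supp}\,\zeta_a$, gives $\|\zeta_a r^{m/2} D_\eta U\|_{L^2}\le C(a,T)$. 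These together produce
\begin{equation*}
\frac{\mathrm{d}}{\mathrm{d}t}\|\zeta_a r^{m/2}V_r\|_{L^2}^2 \le C(a,T)\bigl(1+\|\zeta_a r^{m/2}V_r\|_{L^2}^2\bigr),
\end{equation*}
whose resolution by Gr\"onwall yields $\|\zeta_a r^{m/2}V_r(t)\|_{L^2}\le C(a,T)$ on $[0,T]$. The initial bound is ensured by writing $v_0=u_0+2\mu(\log\rho_0)_r$ and invoking the regularity of $(u_0,\rho_0)$ in Definition \ref{d2}.

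\textit{Step 3 (from $V_r$ to $(D_\eta V, V/\eta)$).} Since $\eta_r$ is bounded above and below on $\mathrm{supp}\,\zeta_a$, the $V_r$ bound converts directly into $\|\zeta_a r^{m/2} D_\eta V\|_{L^2}\le C(a,T)$. To handle the singular piece $V/\eta$, I would apply Lemma \ref{im-1} to recast the target norm in terms of the spherical divergence $D_\eta V+mV/\eta=(\eta^m V)_r/(\eta^m\eta_r)$, whose $\zeta_a r^{m/2}L^2$-control follows directly from the bounds on $V_r$, $V$ (Lemma \ref{near-BD}), and the uniform two-sided bounds of $(\eta,\eta_r)$. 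Equivalently, one can exploit $V(t,0)=0$ -- valid for spherically symmetric classical solutions by $U|_{r=0}=0$ from \eqref{N111} and the vanishing of $\partial_{\boldsymbol{x}}\log\rho$ at the origin for smooth radial $\rho$ -- together with a Hardy-type inequality and $\eta\sim r$ on $\mathrm{supp}\,\zeta_a$.

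\textit{Main obstacle.} The delicate point is the choice to differentiate in $r$ rather than in $D_\eta$: the latter would introduce a $(D_\eta U)(D_\eta V)$ advective term that cannot be absorbed without a local $L^\infty$ bound on $D_\eta U$, which is not yet available. The secondary subtlety is handling the singular weight $1/\eta$ at the origin, addressed either via Lemma \ref{im-1} or via the vanishing of $V$ at $r=0$.
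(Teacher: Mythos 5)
Your Steps 1 and 2 are correct and match the paper's computation in Lemma \ref{lemma-V-Vr}: differentiating \eqref{eq:v} in $r$ and substituting $2\mu\varrho_r=\eta_r\varrho(V-U)$ indeed produces a damped transport equation for $V_r$ with no $(D_\eta U)(D_\eta V)$ cross term, and the weighted $L^2$ energy estimate against $2\zeta_a^2 r^m V_r$ closes using exactly the bounds you cite. The gap is in Step 3, and it is a genuine one in the two-dimensional case.

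First, the route through Lemma \ref{im-1} is circular as stated. You write
$D_\eta V+mV/\eta=V_r/\eta_r+mV/\eta$, and the $\zeta_a r^{m/2}$-weighted $L^2$ norm of the right-hand side still contains $\|\zeta_a r^{m/2}V/\eta\|_{L^2}$, which is precisely the quantity you want to bound. Lemma \ref{near-BD} gives $\|\zeta_a r^{m/2}V\|_{L^2}\le C(a,T)$, but that is not $\|\zeta_a r^{m/2}V/\eta\|_{L^2}\sim\|\zeta_a r^{(m-2)/2}V\|_{L^2}$, which is more singular near the origin. Second, the Hardy route runs into the borderline case for $m=1$: you need the inequality
\begin{equation*}
\Big\|\zeta_a r^{-1/2}V\Big\|_{L^2}\le C\Big\|\zeta_a r^{1/2}(V,V_r)\Big\|_{L^2},
\end{equation*}
i.e.\ Lemma \ref{hardy-inequality} with $\vartheta=-\tfrac12$, which lies outside its admissible range. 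This is not an artifact of the lemma's formulation: the Hardy operator $f\mapsto \frac{1}{r}\int_0^r f$ is unbounded on $L^2(r\,\mathrm{d}r)$, and the condition $V(t,0)=0$ does not rescue the estimate, because the natural argument $|V(r)|^2\le r\int_0^r V_r^2$ only yields $\int_0^a V^2/r\,\mathrm{d}r\lesssim\int_0^a V_r^2\,\mathrm{d}r$, an unweighted $L^2$ norm of $V_r$ that your Step 2 does not control (you only control $\int_0^a rV_r^2\,\mathrm{d}r$, and passing to the unweighted version would require testing the transport equation against $V_r$ without the $r^m$ weight, which would then demand the unweighted $\|\zeta_a D_\eta U\|_{L^2}$ and fail near the origin). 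For $n=3$ your plan does work, since there $\vartheta=0>-\tfrac12$ is admissible and $V(t,0)=0$ is not even needed.

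The paper avoids this by never separating $V_r$ from $V/r$: it tests \eqref{eq:v} simultaneously against $\zeta_a^2 r^{m-2}V$ and against $\zeta_a^2 r^m V_r\partial_r$, which produces a single Gr\"onwall inequality for the combined energy $\|\zeta_a r^{m/2}V_r\|_{L^2}^2+\|\zeta_a r^{(m-2)/2}V\|_{L^2}^2$. On the right-hand side the cross term $\varrho^{\gamma-1}UV/r^2$ is absorbed using the previously established unweighted bound $\|\zeta_a(D_\eta U,U/\eta)\|_{L^2}\le C(a,T)$ from \eqref{DU-L2}, which is itself obtained via Hardy on $U$ — where the extra radial derivative $D_\eta^2 U$ is available and the borderline issue does not arise. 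So the fix is straightforward: add the $V$-multiplier into your energy estimate in Step 2 rather than trying to recover $V/\eta$ from $V_r$ after the fact.
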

\begin{proof}
First, since $\frac{m}{2}\leq 1$, it follows from Lemmas \ref{lemma-u-D1}--\ref{lemma-u-D2} and \ref{hardy-inequality} that, for all $a\in (0,1)$,
\begin{equation}\label{DU-L2}
\begin{aligned}
\Big|\zeta_{a}\big(D_\eta U,\frac{U}{\eta}\big)\Big|_2&\leq \Big|\chi_{\frac{1+3a}{4}}\big(D_\eta U,\frac{U}{\eta}\big)\Big|_2 \leq C(a)\Big|\chi_{\frac{1+3a}{4}}r\Big(D_\eta U,\frac{U}{\eta},D_\eta^2 U,D_\eta\big(\frac{U}{\eta}\big)\Big)\Big|_2\\
&\leq C(a)\Big|r^\frac{m}{2}\Big(\rho_0^\frac{1}{2}D_\eta U,\rho_0^\frac{1}{2}\frac{U}{\eta},\zeta_{\frac{1+3a}{4}} D_\eta^2 U,\zeta_{\frac{1+3a}{4}}D_\eta\big(\frac{U}{\eta}\big)\Big)\Big|_2\leq C(a,T).
\end{aligned}
\end{equation}

Next, multiplying \eqref{eq:v} by $\zeta_a^2r^{m-2}V$ and applying $\zeta_a^2r^m V_r\partial_r$ to \eqref{eq:v}, respectively, and then combining these two resulting equations yield
\begin{equation*}
\begin{aligned}
& \ \zeta_a^2r^m\big(V_r^2+\frac{V^2}{r^2}\big)_t+\frac{A\gamma}{\mu}\zeta_a^2r^m\varrho^{\gamma-1}\big(V_r^2+\frac{V^2}{r^2}\big)\\
&=\frac{A\gamma}{\mu}\zeta_a^2r^m\varrho^{\gamma-1}\big(V_rU_r+\frac{UV}{r^2}\big)-\frac{A\gamma(\gamma-1)}{2\mu^2}\zeta_a^2r^m \eta_r\varrho^{\gamma-1} V_r(V-U)^2.
\end{aligned}
\end{equation*}
Integrating the above over $I$, then we obtain from \eqref{DU-L2}, Lemmas \ref{lemma-bound depth}, \ref{lemma-v Linfty in}, \ref{lemma-upper jacobi}, and \ref{lemma-u-D1}--\ref{lemma-u-D2}, and the H\"older and Young inequalities that 
\begin{equation*}
\begin{aligned}
&\,\frac{\mathrm{d}}{\dt} \big(\big|\zeta_a r^\frac{m}{2}V_r\big|_2^2+\big|\zeta_a r^\frac{m-2}{2}V\big|_2^2\big) \\
&\leq C_0|\varrho|_\infty^{\gamma-1}\Big|\zeta_a r^\frac{m}{2}\big(V_r,\frac{V}{r}\big)\Big|_2\Big|\big(\eta_r,\frac{\eta}{r}\big)\Big|_\infty\Big|\zeta_a\big(D_\eta U,\frac{U}{\eta}\big)\Big|_2 \\
&\quad+C_0\big|\zeta_a r^\frac{m}{2}V_r\big|_2\big|r^\frac{m}{2}\eta_r\varrho^{\gamma-1}\big|_2\big|\zeta_{\frac{1+3a}{4}}(V,U)\big|_\infty^2\leq C(a,T)\big(\big|\zeta_a r^\frac{m}{2}V_r\big|_2^2+\big|\zeta_a r^\frac{m-2}{2}V\big|_2^2\big),
\end{aligned}
\end{equation*}
which, along with Lemma \ref{lemma-lower bound jacobi} and the Gr\"onwall inequality, leads to
\begin{equation*}
\Big|\zeta_a r^\frac{m}{2}\big(D_\eta V,\frac{V}{\eta}\big)\Big|_2 \leq C(T)\Big|\zeta_a r^\frac{m}{2}\big(V_r,\frac{V}{r}\big)\Big|_2 \leq C(a,T)\Big|\zeta_a r^\frac{m}{2}\big((v_0)_r,\frac{v_0}{r}\big)\Big|_2\leq C(a,T).
\end{equation*} 
Here, to control the initial value $v_0$, it suffices to see 
\begin{equation*}
\Big|\zeta_a r^\frac{m}{2}\big((\log\rho_0)_{rr},\frac{(\log\rho_0)_r}{r}\big)\Big|_2\leq C(a)\Big|\zeta_a r^\frac{m}{2}\big((\rho_0^\beta)_{rr},|(\rho_0^\beta)_r|^2,\frac{(\rho_0^\beta)_r}{r}\big)\Big|_2\leq C(a),
\end{equation*}
due to \eqref{distance-la} and Lemma \ref{lemma-initial}.
\end{proof}

\begin{Lemma}\label{lemma-u-D3}
There exists a constant $C(T)>0$ such that
\begin{equation*}
\Big|(r^m\rho_0)^\frac{1}{2}\big(D_\eta U_t,\frac{U_t}{\eta}\big)(t)\Big|_2^2+\int_0^t\big|(r^m\rho_0)^\frac{1}{2}U_{tt}\big|_2^2\,\mathrm{d}s\leq C(T) \qquad\text{for all $t\in[0,T]$}.
\end{equation*}
In addition, for any $t\in[0,T]$, $a\in (0,1)$, and $\varepsilon>0$ satisfying
\begin{equation*}
0<\varepsilon<\min\big\{\beta,\frac{3\beta-1}{2}\big\},
\end{equation*}
the following estimates also hold{\rm:}
\begin{equation*}
\begin{aligned}
&\big|\chi^\sharp \rho_0^\frac{1}{2}D_\eta^2 U(t)\big|_2+\Big|\big(U,D_\eta U,\frac{U}{\eta}\big)(t)\Big|_\infty\leq C(T),\qquad \big|\chi^\sharp \rho_0^{-\varepsilon} D_\eta U(t)\big|_\infty\leq C(\varepsilon,T),\\
&\Big|\zeta_{a}r^\frac{m}{2}\Big(D_\eta^3 U,D_\eta^2\big(\frac{U}{\eta}\big),\frac{1}{\eta}D_\eta\big(\frac{U}{\eta}\big)\Big)(t)\Big|_2\leq C(a,T).
\end{aligned}
\end{equation*}
\end{Lemma}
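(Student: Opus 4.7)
The plan is to follow the same pattern as in Lemma \ref{lemma-u-D2}, pushed up by one time derivative, and then bootstrap the resulting control on $U_t$ into third-order spatial estimates via the elliptic form of the momentum equation. More precisely, I would start from equation \eqref{eq-2order-pre} (the $\partial_t$-differentiated version of $\eqref{eq:VFBP-La-eta}_1$ multiplied by $\eta^m\eta_r$) and apply one more $\partial_t$ together with $\eqref{eq:VFBP-La}_1$ and \eqref{v-expression} to obtain an equation of the schematic form
\begin{equation*}
r^m\rho_0 U_{ttt} - 2\mu\bigl(r^m\rho_0 D_\eta U_{tt}/\eta_r\bigr)_r + 2\mu m r^m\rho_0\, U_{tt}/\eta^2 = \mathcal{R}(U,U_t,V,\varrho,\eta),
\end{equation*}
where $\mathcal{R}$ collects all remaining terms. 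Testing against $U_{tt}$ and integrating over $I$ yields
\begin{equation*}
\tfrac12\tfrac{\mathrm d}{\dt}\Bigl|(r^m\rho_0)^{\frac12}\!\bigl(D_\eta U_t,\tfrac{U_t}{\eta}\bigr)\Bigr|_2^2 + \mu\,\bigl|(r^m\rho_0)^{\frac12}U_{tt}\bigr|_2^2 \;\leq\; \text{(RHS terms)}.
\end{equation*}

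The RHS contains three families of terms: (i) pressure-type terms $\partial_t\bigl(\varrho^\gamma(D_\eta U + mU/\eta)\bigr)\cdot U_{tt}$, where $\partial_t\varrho$ is rewritten via $\eqref{eq:VFBP-La}_1$ as $-\varrho(D_\eta U + mU/\eta)$; (ii) cubic dissipation-type terms coming from $\partial_t$ of $|D_\eta U|^2$ and $U^2/\eta^3$ factors; (iii) singular boundary-type terms involving $V-U$. For each of these I would split into the interior $\zeta$-region and the exterior $\chi^\sharp$-region. On the interior, Lemmas \ref{lemma-u-D1}--\ref{lemma-u-D2} together with the new interior $L^2$-bound on $(D_\eta V, V/\eta)$ from Lemma \ref{lemma-Vr-L2} give enough regularity to control cubic terms by $\|\zeta_{5/8}(U,D_\eta U,U_t)\|_\infty$ times the dissipation, using the Sobolev and Hardy inequalities (Lemmas \ref{sobolev-embedding}--\ref{hardy-inequality}) as in the proof of Lemma \ref{lemma-u-D1}. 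On the exterior, the $\chi^\sharp\rho_0^\beta$-weighted $L^\infty$ estimate for $V$ (Lemma \ref{lemma-v Linfty ex}), the $\rho_0^{(1+\varepsilon)/2}$-weighted $L^2$-bound on $D_\eta^2 U$ (Lemma \ref{lemma-u-D2}), and the space--time estimate of Lemma \ref{lemma-time-space} with $\iota=\tfrac12$ provide the weights needed to absorb the pressure terms. After extracting $\tfrac{\mu}{2}|(r^m\rho_0)^{1/2}U_{tt}|_2^2$ into the left-hand side, the Grönwall inequality and the hypothesis $\cE(0,U)<\infty$ close the first estimate.

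Given the bound on $|(r^m\rho_0)^{1/2}(D_\eta U_t,U_t/\eta)|_2$, the remaining assertions follow by elliptic reformulation. Rewriting \eqref{ell-0} as
\begin{equation*}
2\mu D_\eta^2 U = U_t + \text{(lower-order terms in } V,U,D_\eta U,U/\eta\text{)},
\end{equation*}
multiplying by $\chi^\sharp\rho_0^{1/2}$ and using Lemmas \ref{lemma-v Linfty ex}, \ref{lemma-time-space}, and \ref{lemma-u-D2} gives $|\chi^\sharp\rho_0^{1/2}D_\eta^2 U|_2 \leq C(T)$; an analogous computation with weight $\rho_0^{-\varepsilon}$ combined with Lemma \ref{lemma-time-space} (at $\iota$ slightly above $\beta/2$) yields $|\chi^\sharp\rho_0^{-\varepsilon}D_\eta U|_\infty\leq C(\varepsilon,T)$ and the corresponding $L^\infty$-bound for $D_\eta U$ and $U/\eta$ on the interior (hence everywhere). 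The $L^\infty$-bound on $U$ then follows from Hardy's inequality and the basic energy bound. Finally, the third-order interior estimate comes from differentiating \eqref{ell-0} in $D_\eta$, estimating the resulting expression for $D_\eta(D_\eta U + mU/\eta)$ in $\zeta_a$-weighted $L^2$ using the bounds just obtained (plus Lemma \ref{lemma-Vr-L2} for $D_\eta V$), and then invoking the equivalence in Lemma \ref{im-1} to recover the genuine third-order quantities $D_\eta^3 U$, $D_\eta^2(U/\eta)$, and $\tfrac{1}{\eta}D_\eta(U/\eta)$.

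The main obstacle will be the pressure-derivative term $\partial_t^2(A\gamma\varrho^\gamma(D_\eta U + mU/\eta))$ in the energy identity: after substituting $\partial_t\varrho = -\varrho(D_\eta U + mU/\eta)$, this produces quadratic combinations of $D_\eta U$ and $D_\eta U_t$ with critical $\varrho$-weights, and near the vacuum boundary $\varrho \sim \rho_0 \sim (1-r)^{1/\beta}$ so naive use of the BD-type estimates is insufficient. Balancing these terms requires playing $|\chi^\sharp\rho_0^\beta V|_\infty$ against $\rho_0^{(1+\varepsilon)/2}$-weights on $D_\eta^2 U$ so that the residual factor remains in $L^2$---this is precisely where the constraint $\beta>\tfrac13$ and the new effective-velocity bounds of Section \ref{Section-effectivevelocity} are essential, and where the cut-off $\varepsilon_0$ in \eqref{varepsilon0} must be tuned.
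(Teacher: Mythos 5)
Your overall strategy coincides with the paper's: a weighted energy estimate at the $U_{tt}$ level from \eqref{eq-2order-pre}, closed by Gr\"onwall, followed by elliptic bootstrapping via \eqref{ell-0}, Lemma \ref{lemma-time-space}, and Lemma \ref{im-1}, with Lemmas \ref{lemma-Vr-L2} and \ref{lemma-v Linfty ex} supplying the needed control on the effective velocity in the interior and exterior. That part is right.

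There is, however, a concrete error at the first step that would prevent the Gr\"onwall step from closing. You propose applying one more $\partial_t$ to \eqref{eq-2order-pre} to obtain an equation of the form
\begin{equation*}
r^m\rho_0 U_{ttt} - 2\mu\bigl(r^m\rho_0 D_\eta U_{tt}/\eta_r\bigr)_r + 2\mu m\, r^m\rho_0\, U_{tt}/\eta^2 = \mathcal{R},
\end{equation*}
and then testing against $U_{tt}$. Doing so gives, after integration by parts,
\begin{equation*}
\frac12\frac{\mathrm{d}}{\dt}\bigl|(r^m\rho_0)^{1/2}U_{tt}\bigr|_2^2 + 2\mu\Bigl|(r^m\rho_0)^{1/2}\bigl(D_\eta U_{tt},\tfrac{U_{tt}}{\eta}\bigr)\Bigr|_2^2 \le \text{RHS},
\end{equation*}
which is \emph{not} the identity you display. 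The identity you actually wrote — $\frac{\mathrm{d}}{\dt}$ of $|(r^m\rho_0)^{1/2}(D_\eta U_t,U_t/\eta)|_2^2$ on the left, $|(r^m\rho_0)^{1/2}U_{tt}|_2^2$ as dissipation — is correct, but it comes from testing \eqref{eq-2order-pre} \emph{directly} against $U_{tt}$, with no additional time derivative: it is the companion of Lemma \ref{lemma-u-D2}, where the same equation \eqref{eq-2order-pre} is tested against $U_t$. The extra-$\partial_t$ procedure you describe is instead the one used for the time-weighted Lemma \ref{lemma-qiexiang-time weight}, and there the $t$-weight is indispensable precisely because $\cE(0,U)<\infty$ does \emph{not} control $|(r^m\rho_0)^{1/2}U_{tt}(0)|_2$, so the Gr\"onwall step in your scheme would not close from the initial data. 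Dropping the extra $\partial_t$ and multiplying \eqref{eq-2order-pre} by $U_{tt}$ fixes this; the rest of your term-by-term plan is then sound. One additional small slip: in Lemma \ref{lemma-time-space} the weight produced is $\rho_0^{\iota-\beta/2}$, so to obtain the $\rho_0^{-\varepsilon}$-weighted $L^\infty$ bound on $D_\eta U$ you take $\iota=\beta/2-\varepsilon$, slightly \emph{below} $\beta/2$ rather than above; the requirement $\iota>-\beta/2$ is then exactly the constraint $\varepsilon<\beta$ in the hypothesis.
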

\begin{proof}
We divide the proof into two steps.

\smallskip
\textbf{1.} We first give some auxiliary estimates associated with the third-order derivatives of $U$.

\smallskip
\textbf{1.1.} First, it follows from \eqref{DU-L2} and Lemmas \ref{lemma-upper jacobi}, \ref{lemma-u-D2}, and \ref{sobolev-embedding}--\ref{hardy-inequality} that, for all $a\in (0,1)$,
\begin{equation}\label{in-Ur-infty}
\begin{aligned}
\Big|\zeta_{a}\big(D_\eta U,\frac{U}{\eta}\big)\Big|_\infty^2&\leq C_0\Big|\zeta_{a}\big(D_\eta U,\frac{U}{\eta}\big)\Big|_2\Big(\Big|(\zeta_{a})_r\big(D_\eta U,\frac{U}{\eta}\big)\Big|_2+\Big|\zeta_{a}\Big(D_\eta^2 U,D_\eta\big(\frac{U}{\eta}\big)\Big)\Big|_2\Big)\\
&\leq C(a,T)\big(1+\big|r\big(\zeta_{a}D_\eta^2 U,(\zeta_{a})_rD_\eta^2 U,\zeta_{a}D_\eta^3 U\big)\big|_2\big)\\
&\quad + C(a,T)\Big(1+\Big|r\Big(\zeta_{a}D_\eta\big(\frac{U}{\eta}\big),(\zeta_{a})_rD_\eta\big(\frac{U}{\eta}\big),\zeta_{a}D_\eta^2\big(\frac{U}{\eta}\big)\Big)\Big|_2\Big)\\
&\leq C(a,T)\Big(1+\Big|\zeta_{a}r^\frac{m}{2}\Big(D_\eta^3 U,D_\eta^2\big(\frac{U}{\eta}\big)\Big)\Big|_2\Big).
\end{aligned}
\end{equation}

Next, based on \eqref{ell-0}, we can obtain from Lemmas \ref{lemma-bound depth}, \ref{lemma-upper jacobi}, \ref{lemma-u-D1}, and \ref{lemma-Vr-L2} that, for all $a\in (0,1)$,
\begin{equation}\label{in-ell-3-pre1}
\begin{aligned}
\Big|\zeta_{a} r^\frac{m}{2}\frac{1}{\eta}D_\eta\big(D_\eta U+ \frac{mU}{\eta}\big)\Big|_2 &\leq C_0\Big(\Big|\zeta_{a} r^\frac{m}{2}\frac{U_t}{\eta}\Big|_2+  \Big|\zeta_{\frac{1+3a}{4}}r^\frac{m}{2}\big(\frac{V}{\eta},\frac{U}{\eta}\big)\Big|_2 |(\zeta_{a}D_\eta U,\varrho^{\gamma-1})|_\infty \Big)\\
&\leq C(a,T)\Big(\Big|\zeta_{a} r^\frac{m}{2}\frac{U_t}{\eta}\Big|_2+ |\zeta_{a}D_\eta U|_\infty +1\Big).
\end{aligned}
\end{equation}
On the other hand, applying $D_\eta$ to \eqref{ell-0}, along with \eqref{v-expression}, yields
\begin{equation}\label{ell-1}
\begin{aligned}
D_\eta^2\big(D_\eta U+ \frac{mU}{\eta}\big) &=\frac{1}{2\mu} D_\eta U_{t}-\frac{1}{2\mu}(V-U)D_\eta^2 U-\frac{1}{2\mu}(D_\eta V-D_\eta U)D_\eta U \\
&\quad +\frac{A\gamma}{4\mu^2} \varrho^{\gamma-1}(D_\eta V-D_\eta U)+\frac{A\gamma(\gamma-1)}{8\mu^3} \varrho^{\gamma-1}(V-U)^2,
\end{aligned}
\end{equation}
which, along with Lemmas \ref{lemma-bound depth}, \ref{lemma-v Linfty in}, \ref{lemma-upper jacobi}, and \ref{lemma-u-D1}--\ref{lemma-Vr-L2}, leads to
\begin{equation}\label{in-ell-3-pre2}
\begin{aligned}
\Big|\zeta_{a}r^\frac{m}{2}D_\eta^2\big(D_\eta U+ \frac{mU}{\eta}\big)\Big|_2&\leq C_0|\zeta_{a}r^\frac{m}{2}D_\eta U_t|_2+C_0\big|\zeta_{\frac{1+3a}{4}}(V,U)\big|_\infty |\zeta_{a}r^\frac{m}{2}D_\eta^2 U|_2 \\
&\quad +C(T) |(\varrho^{\gamma-1},\zeta_{a}D_\eta U)|_\infty \big|\zeta_{\frac{1+3a}{4}}r^\frac{m}{2}(D_\eta V,D_\eta U)\big|_2\\
&\quad +C(T)|\varrho|_\infty^{\gamma-1}\big|\zeta_{\frac{1+3a}{4}}(V,U)\big|_\infty|\zeta_{a}(V,U)\big|_2\\
&\leq C(a,T)\big(|\zeta_{a}r^\frac{m}{2}D_\eta U_t|_2+|\zeta_{a}D_\eta U|_\infty +1\big).
\end{aligned}
\end{equation}

Collecting \eqref{in-Ur-infty}--\eqref{in-ell-3-pre1}
and \eqref{in-ell-3-pre2}, together with the Young inequality and Lemma \ref{im-1}, we thus obtain that, for all $a\in (0,1)$,
\begin{equation}\label{in-ell-3}
\begin{aligned}
&\,\Big|\zeta_{a}\big(D_\eta U,\frac{U}{\eta}\big)\Big|_\infty^2+\Big|\zeta_{a}r^\frac{m}{2}\Big(D_\eta^3 U,D_\eta^2\big(\frac{U}{\eta}\big),\frac{1}{\eta}D_\eta\big(\frac{U}{\eta}\big)\Big)\Big|_2\\
&\leq C(a,T)\Big(\Big|\zeta_{a} r^\frac{m}{2}\big(D_\eta U_t,\frac{U_t}{\eta}\big)\Big|_2+1\Big)\leq C(a,T)\Big(\Big|(r^m\rho_0)^\frac{1}{2}\big(D_\eta U_t,\frac{U_t}{\eta}\big)\Big|_2+1\Big).
\end{aligned}    
\end{equation}

\textbf{1.2.} Since $\frac{3\beta}{2}>\frac{1}{2}$, it follows from Lemmas \ref{lemma-upper jacobi}, \ref{lemma-time-space} (with $\iota=\frac{\beta}{2}$), \ref{lemma-u-0order}--\ref{lemma-u-D2}, and \ref{sobolev-embedding}--\ref{hardy-inequality} and the H\"older inequality that
\begin{equation}\label{Ur-infty-high}
\begin{aligned}
\Big|\chi^\sharp\big(U,D_\eta U,\frac{U}{\eta}\big)\Big|_\infty&\leq C(T)|\chi^\sharp (D_\eta U,U)|_\infty\leq C(T)(|\chi^\sharp D_\eta U|_\infty+ |\chi^\sharp (D_\eta U,U)|_2)\\[-4pt]
&\leq C(T)(1+|\chi^\sharp D_\eta U|_\infty) \leq C(T)\big(1+\big|\chi^\sharp\rho_0^\frac{\beta}{2}(D_\eta U,U_t)\big|_2\big)\\
&\leq C(T)\big(1+\big|\chi^\sharp\rho_0^\frac{3\beta}{2}(D_\eta U,D_\eta^2 U,U_t,D_\eta U_t)\big|_2\big)\\
&\leq C(T)\big(1+\big|\chi^\sharp\rho_0^\frac{1}{2}D_\eta U_t\big|_2\big)\leq C(T)\big(1+\big|(r^m\rho_0)^\frac{1}{2}D_\eta U_t\big|_2\big).
\end{aligned}    
\end{equation}
Moreover, taking $\iota=\frac{\beta}{2}-\varepsilon$ in Lemma \ref{lemma-time-space} with
\begin{equation*}
0<\varepsilon<\min\big\{\beta,\frac{3\beta-1}{2}\big\},
\end{equation*}
and using an argument similar to \eqref{Ur-infty-high}, we can also obtain the following estimate:
\begin{equation}\label{Ur-infty-refine}
\begin{aligned}
|\chi^\sharp \rho_0^{-\varepsilon} D_\eta U|_\infty &\leq C(\varepsilon,T)\big(1+\big|\chi^\sharp\rho_0^\frac{3\beta-2\varepsilon}{2}(D_\eta U,D_\eta^2 U,U_t,D_\eta U_t)\big|_2\big)\\
&\leq C(\varepsilon,T)\big(1+\big|(r^m\rho_0)^\frac{1}{2}D_\eta U_t\big|_2\big).
\end{aligned}
\end{equation}

Finally, multiplying \eqref{ell-0-ex} by $\chi^\sharp \rho_0^\frac{1}{2}$ and choosing a fixed constant $\sigma>0$ such that
\begin{equation*}
\max\{0,1-\beta\}<\sigma<\min\{2\beta,1+\beta\},
\end{equation*}
we obtain from \eqref{eq:eta}, \eqref{Ur-infty-high}--\eqref{Ur-infty-refine}, and Lemmas \ref{lemma-bound depth}, \ref{lemma-lower bound jacobi}, \ref{lemma-v Lp ex}--\ref{lemma-v Linfty ex}, and \ref{lemma-u-0order}--\ref{lemma-u-D2} that
\begin{equation}\label{ex-ell-2-refine}
\begin{aligned}
\big|\chi^\sharp \rho_0^\frac{1}{2}D_\eta^2 U\big|_2&\leq C(T)\big(\big|\chi^\sharp \rho_0^\frac{1}{2}(U,D_\eta U,U_t)\big|_2+  \big|\chi^\sharp \rho_0^\frac{\beta+\sigma}{2}(V,U)\big|_2\big|\chi^\sharp \rho_0^{\frac{1-\beta-\sigma}{2}}D_\eta U\big|_\infty\big) \\
&\quad + C(T)\big(\big|\chi^\sharp\rho_0^{\frac{2\gamma-1-2\beta}{2}}\big|_2|\chi^\sharp \rho_0^\beta V|_\infty+ \big|\chi^\sharp\rho_0^{\frac{2\gamma-1}{2}}\big|_2|\chi^\sharp U|_\infty\big)\\
&\leq C(T)\big(1+\big|(r^m\rho_0)^\frac{1}{2}D_\eta U_t\big|_2\big). 
\end{aligned}
\end{equation}

\smallskip
\textbf{2.} Now, we come to establish the third-order tangential estimates for $U$. First, multiplying \eqref{eq-2order-pre} by $U_{tt}$, along with \eqref{eq:eta} and \eqref{v-expression}, gives
\begin{equation}\label{eq-3order}
\begin{aligned}
&\ \mu r^m\rho_0\Big(|D_\eta U_t|^2+m\frac{U_t^2}{\eta^2}\Big)_t+r^m\rho_0U_{tt}^2\\
&= \Big(2\mu r^m\rho_0\frac{(D_\eta U_t)U_{tt}}{\eta_r}\Big)_r+\frac{A\gamma}{2\mu} \eta^{m}\eta_r\varrho^\gamma (V-U)\big(\gamma D_\eta U+\frac{m(\gamma-1)U}{\eta}\big)U_{tt}\\
&\quad +A\gamma\eta^m\eta_r \varrho^\gamma D_\eta\big(D_\eta U+\frac{mU}{\eta}\big) U_{tt}-2\mu r^m\rho_0\Big(D_\eta U|D_\eta U_t|^2+m\frac{UU_t^2}{\eta^3}\Big)\\
&\quad -2r^m\rho_0\big(4\mu D_\eta UD_\eta^2 U+ (V-U) |D_\eta U|^2\big)U_{tt}+4\mu m\frac{r^m\rho_0}{\eta}\big(\frac{U^2}{\eta^2}-|D_\eta U|^2\big)U_{tt}.
\end{aligned}
\end{equation}

Hence, integrating \eqref{eq-3order} over $I$ leads to
\begin{equation}\label{dt-J6-J11}
\begin{aligned}
&\ \mu \frac{\mathrm{d}}{\dt}\Big(\big|(r^m\rho_0)^\frac{1}{2}D_\eta U_t\big|_2^2+ m \Big|(r^m\rho_0)^\frac{1}{2}\frac{U_t}{\eta}\Big|_2^2\Big)+\big|(r^m\rho_0)^\frac{1}{2}U_{tt}\big|_2^2\\
&= A\gamma \int_0^1\eta^{m}\eta_r\varrho^\gamma  \Big(\frac{1}{2\mu}(V-U)\big(\gamma D_\eta U+\frac{m(\gamma-1)U}{\eta}\big) +   D_\eta\big(D_\eta U+\frac{mU}{\eta}\big)\Big) U_{tt}\,\mathrm{d}r\\
&\quad-2 \int_0^1r^m\rho_0\Big(\mu D_\eta U|D_\eta U_t|^2+\mu m\frac{UU_t^2}{\eta^3}+\big( 4\mu D_\eta UD_\eta^2 U+ (V-U) |D_\eta U|^2\big)U_{tt}\Big)\,\mathrm{d}r\\
&\quad+4\mu m\int_0^1 \frac{r^m\rho_0}{\eta}\big(\frac{U^2}{\eta^2}-|D_\eta U|^2\big)U_{tt}\,\mathrm{d}r:= \sum_{i=6}^{8} \mathrm{J}_i.
\end{aligned}    
\end{equation}

\smallskip
\textbf{2.1. Estimate for $\mathrm{J}_6$.} For $\mathrm{J}_6$, we divide it into two parts:
\begin{equation}\label{rm-J60}
\begin{aligned}
\mathrm{J}_6& \leq C_0 \int_0^1 \zeta \eta^{m}\eta_r\varrho^\gamma  \Big(|V-U|\big(|D_\eta U|+\Big|\frac{U}{\eta}\Big|\big)+ |D_\eta^2 U|+\Big|D_\eta\big(\frac{U}{\eta}\big)\Big|\Big) |U_{tt}|\,\mathrm{d}r\\
&\quad +C_0 \int_0^1 \!\zeta^\sharp \eta^{m}\eta_r\varrho^\gamma  \Big(|V-U|\big(|D_\eta U|\!+\!\Big|\frac{U}{\eta}\Big|\big)\!+\! |D_\eta^2 U|\!+\!\Big|D_\eta\big(\frac{U}{\eta}\big)\Big|\Big) |U_{tt}|\,\mathrm{d}r:= \sum_{i=1}^{2} \mathrm{J}_{6,i}. 
\end{aligned}
\end{equation}
For $\mathrm{J}_{6,1}$, it follows from \eqref{eq:eta}, Lemmas \ref{lemma-bound depth}, \ref{lemma-v Linfty in}, and \ref{lemma-u-D1}--\ref{lemma-u-D2}, and  the H\"older and Young inequalities that
\begin{equation}\label{jj61}
\begin{aligned}
\mathrm{J}_{6,1}&\leq C_0|\varrho|_\infty^{\gamma-1} |\zeta(V,U)|_\infty \Big|(r^m\rho_0)^\frac{1}{2}\big(D_\eta U,\frac{U}{\eta}\big)\Big|_2\big|(r^m\rho_0)^\frac{1}{2}U_{tt}\big|_2\\
&\quad + C_0|\varrho|_\infty^{\gamma-1}|\rho_0|_\infty^\frac{1}{2}\Big|\zeta r^\frac{m}{2}\Big(D_\eta^2 U, D_\eta\big(\frac{U}{\eta}\big)\Big)\Big|_2\big|(r^m\rho_0)^\frac{1}{2}U_{tt}\big|_2\\
&\leq C(T)\big|(r^m\rho_0)^\frac{1}{2}U_{tt}\big|_2\leq C(T)+\frac{1}{16}\big|(r^m\rho_0)^\frac{1}{2}U_{tt}\big|_2^2.
\end{aligned}
\end{equation}
For $\mathrm{J}_{6,2}$, we can obtain from the fact that $\gamma-\beta-\frac{1}{2}\geq \frac{1}{2}$, \eqref{eq:eta}, \eqref{Ur-infty-high}, \eqref{ex-ell-2-refine}, and Lemmas  \ref{lemma-lower bound jacobi}, \ref{lemma-v Linfty ex}, and \ref{lemma-u-0order}--\ref{lemma-u-D1} that
\begin{equation}\label{jj62}
\begin{aligned}
\mathrm{J}_{6,2}&\leq C(T)(|\chi^\sharp \rho_0^\beta (V,U)|_\infty+|\rho_0^\beta|_\infty) \big|\chi^\sharp\rho_0^{\gamma-\beta-\frac{1}{2}}(U,D_\eta U,D_\eta^2 U)\big|_2 \big|(r^m\rho_0)^\frac{1}{2}U_{tt}\big|_2\\
&\leq C(T)\big(1+\big|(r^m\rho_0)^\frac{1}{2}D_\eta U_t\big|_2\big) \big|(r^m\rho_0)^\frac{1}{2}U_{tt}\big|_2\\
&\leq C(T)\big(1+\big|(r^m\rho_0)^\frac{1}{2}D_\eta U_t\big|_2^4\big)+\frac{1}{16}\big|(r^m\rho_0)^\frac{1}{2}U_{tt}\big|_2^2.    
\end{aligned}
\end{equation}

Hence, collecting \eqref{rm-J60}--\eqref{jj62}, we arrive at
\begin{equation}\label{rm-J6}
\mathrm{J}_6\leq C(T)\big(1+\big|(r^m\rho_0)^\frac{1}{2}D_\eta U_t\big|_2^4\big)+\frac{1}{8}\big|(r^m\rho_0)^\frac{1}{2}U_{tt}\big|_2^2.    
\end{equation}

\smallskip
\textbf{2.2. Estimate for $\mathrm{J}_7$.} For $\mathrm{J}_7$, we first obtain from the H\"older inequality that 
\begin{equation}\label{jj7}
\!\!\!\begin{aligned}
\mathrm{J}_{7}&\leq C_0\Big|\big(D_\eta U,\frac{U}{\eta}\big)\Big|_\infty\Big|(r^m\rho_0)^\frac{1}{2}\big(D_\eta U_t,\frac{U_{t}}{\eta}\big)\Big|_2^2+C_0|D_\eta U|_\infty\big|(r^m\rho_0)^\frac{1}{2}D_\eta^2 U\big|_2\big|(r^m\rho_0)^\frac{1}{2}U_{tt}\big|_2\\
&\quad +C_0\big|(r^m\rho_0)^\frac{1}{2}(V-U)|D_\eta U|^2\big|_2\big|(r^m\rho_0)^\frac{1}{2}U_{tt}\big|_2:=\sum_{i=1}^3 \mathrm{J}_{7,i}.
\end{aligned}
\end{equation}
Then, for $\mathrm{J}_{7,1}$--$\mathrm{J}_{7,2}$, it follows from \eqref{in-ell-3}--\eqref{Ur-infty-high}, \eqref{ex-ell-2-refine}, Lemma \ref{lemma-u-D2},  and the H\"older and Young inequalities that
\begin{equation}
\begin{aligned}
\mathrm{J}_{7,1}&\leq C_0\Big(\Big|\zeta\big(D_\eta U,\frac{U}{\eta}\big)\Big|_\infty+\Big|\chi^\sharp\big(D_\eta U,\frac{U}{\eta}\big)\Big|_\infty\Big)\Big|(r^m\rho_0)^\frac{1}{2}\big(D_\eta U_t,\frac{U_{t}}{\eta}\big)\Big|_2^2\\
&\leq C(T)\Big(1+\Big|(r^m\rho_0)^\frac{1}{2}\big(D_\eta U_t,\frac{U_{t}}{\eta}\big)\Big|_2^4\Big),\\
\mathrm{J}_{7,2}&\leq C_0(|\zeta D_\eta U|_\infty+|\chi^\sharp D_\eta U|_\infty)\big|(\zeta r^\frac{m}{2}D_\eta^2 U,\zeta^\sharp \rho_0^\frac{1}{2}D_\eta^2 U)\big|_2\big|(r^m\rho_0)^\frac{1}{2}U_{tt}\big|_2\\
&\leq C(T)\Big(1+\Big|(r^m\rho_0)^\frac{1}{2}\big(D_\eta U_t,\frac{U_{t}}{\eta}\big)\Big|_2^4\Big)+\frac{1}{16}\big|(r^m\rho_0)^\frac{1}{2}U_{tt}\big|_2^2.
\end{aligned}
\end{equation}
For $\mathrm{J}_{7,3}$, choosing a fixed constant $\sigma$ such that 
\begin{equation*}
\max\{0,1-\beta\}<\sigma<\min\{3\beta+1,5\beta-1\},
\end{equation*}
then we obtain from \eqref{in-ell-3}, \eqref{Ur-infty-refine}, and Lemmas \ref{lemma-v Lp ex}, \ref{lemma-v Linfty in}, and \ref{lemma-u-0order}--\ref{lemma-u-D2} that
\begin{equation}\label{rm-J80}
\begin{aligned}
\mathrm{J}_{7,3}&\leq C_0|\zeta_\frac{5}{8}(V,U)|_\infty |\zeta D_\eta U|_\infty \big|(r^m\rho_0)^\frac{1}{2}D_\eta U\big|_2\big|(r^m\rho_0)^\frac{1}{2}U_{tt}\big|_2\\
&\quad + C_0\big|\zeta^\sharp \rho_0^\frac{\beta+\sigma}{2}(V,U)\big|_2\big|\chi^\sharp \rho_0^\frac{1-\beta-\sigma}{4}D_\eta U\big|_\infty^2\big|(r^m\rho_0)^\frac{1}{2}U_{tt}\big|_2\\
&\leq C(T)\Big(1+\Big|(r^m\rho_0)^\frac{1}{2}\big(D_\eta U_t,\frac{U_{t}}{\eta}\big)\Big|_2^4\Big)+\frac{1}{16}\big|(r^m\rho_0)^\frac{1}{2}U_{tt}\big|_2^2.
\end{aligned}
\end{equation}

Hence, collecting \eqref{jj7}--\eqref{rm-J80} yields
\begin{equation}\label{rm-J8}
\mathrm{J}_{7}\leq C(T)\Big(1+\Big|(r^m\rho_0)^\frac{1}{2}\big(D_\eta U_t,\frac{U_{t}}{\eta}\big)\Big|_2^4\Big)+\frac{1}{8}\big|(r^m\rho_0)^\frac{1}{2}U_{tt}\big|_2^2.
\end{equation}

\smallskip
\textbf{2.3. Estimate for $\mathrm{J}_{8}$.} To estimate $\mathrm{J}_{8}$, we first obtain from integration by parts that
\begin{equation}\label{rm-J11}
\begin{aligned}
\mathrm{J}_{8}&=4\mu m \frac{\mathrm{d}}{\dt}\int_0^1  r^m\rho_0\big(\frac{U^2}{\eta^2}-|D_\eta U|^2\big)\frac{U_{t}}{\eta}\,\mathrm{d}r\\
&\quad +4\mu m \int_0^1 r^m\rho_0 \Big(\frac{3U^3}{\eta^3}-\frac{U|D_\eta U|^2}{\eta}-2(D_\eta U)^3+2D_\eta U D_\eta U_t-\frac{2UU_t}{\eta^2}\Big)\frac{U_{t}}{\eta}\,\mathrm{d}r\\
&:=\frac{\mathrm{d}}{\mathrm{d}t}\widetilde{\mathrm{J}}_{8}+\mathrm{J}_{8,1}.
\end{aligned}
\end{equation}
For $\mathrm{J}_{8,1}$, by \eqref{in-ell-3}--\eqref{Ur-infty-high}, Lemma \ref{lemma-u-D1}, and  the H\"older and Young inequalities, we have
\begin{equation*}
\begin{aligned}
\mathrm{J}_{8,1}&\leq C_0\Big|\big(D_\eta U,\frac{U}{\eta}\big)\Big|_\infty^2\Big|(r^m\rho_0)^\frac{1}{2}\big(D_\eta U,\frac{U}{\eta}\big)\Big|_2 \Big|(r^m\rho_0)^\frac{1}{2}\frac{U_t}{\eta}\Big|_2\\
&\quad +C_0\Big|\big(D_\eta U,\frac{U}{\eta}\big)\Big|_\infty\Big|(r^m\rho_0)^\frac{1}{2}\big(D_\eta U_t,\frac{U_t}{\eta}\big)\Big|_2\Big|(r^m\rho_0)^\frac{1}{2}\frac{U_t}{\eta}\Big|_2\\
&\leq C(T)\Big(1+\Big|(r^m\rho_0)^\frac{1}{2}\big(D_\eta U_t,\frac{U_{t}}{\eta}\big)\Big|_2^4\Big),
\end{aligned}
\end{equation*}
which, along with \eqref{rm-J11}, yields
\begin{equation}\label{rm-J11'}
\mathrm{J}_{8}\leq\frac{\mathrm{d}}{\dt}\widetilde{\mathrm{J}}_{8}+C(T)\Big(1+\Big|(r^m\rho_0)^\frac{1}{2}\big(D_\eta U_t,\frac{U_{t}}{\eta}\big)\Big|_2^4\Big).
\end{equation}

Besides, for $\widetilde{\mathrm{J}}_{8}$, since
\begin{equation*}
\frac{m+3}{4}>\frac{m}{2},\qquad \frac{1+3\beta}{4}>\frac{1}{2},
\end{equation*}
we can deduce from Lemmas \ref{lemma-lower bound jacobi}, \ref{lemma-upper jacobi}, \ref{lemma-u-0order}--\ref{lemma-u-D2}, and \ref{hardy-inequality} that
\begin{equation*}
\begin{aligned}
& \ \Big|(r^m\rho_0)^\frac{1}{4}\big(D_\eta U,\frac{U}{\eta}\big)\Big|_4\leq C_0\Big|\chi r^\frac{m}{4}\big(D_\eta U,\frac{U}{\eta}\big)\Big|_4+C_0\Big|\chi^\sharp \rho_0^\frac{1}{4}\big(D_\eta U,\frac{U}{\eta}\big)\Big|_4\\
&\leq C_0\Big|\chi r^\frac{m+3}{4}\Big(D_\eta U,\frac{U}{\eta},D_\eta^2 U,D_\eta\big(\frac{U}{\eta}\big)\Big)\Big|_2 +C_0\Big|\chi^\sharp \rho_0^\frac{1+3\beta}{4}\Big(D_\eta U,\frac{U}{\eta},D_\eta^2 U,D_\eta\big(\frac{U}{\eta}\big)\Big)\Big|_2\\
&\leq C(T) +C(T)\big|\chi^\sharp \rho_0^\frac{1+3\beta}{4}(U,D_\eta U,D_\eta^2 U)\big|_2\\
&\leq C(T)+C(T)\big|(r^m\rho_0)^\frac{1}{2}(U,D_\eta U)\big|_2+C(T)\big|\chi^\sharp \rho_0^\frac{1+3\beta}{4}D_\eta^2 U\big|_2\leq C(T),
\end{aligned}    
\end{equation*}
which implies that, for all $t\in [0,T]$,
\begin{equation}\label{l4-fuzhu}
\widetilde{\mathrm{J}}_{8}\leq C_0\Big|(r^m\rho_0)^\frac{1}{4}\big(D_\eta U,\frac{U}{\eta}\big)\Big|_4^2\Big|(r^m\rho_0)^\frac{1}{2}\frac{U_t}{\eta}\Big|_2\leq C(T)\Big|(r^m\rho_0)^\frac{1}{2}\frac{U_t}{\eta}\Big|_2.
\end{equation}

\smallskip
\textbf{2.4. Close the energy estimate.} First, collecting \eqref{dt-J6-J11}, \eqref{rm-J6}, \eqref{rm-J8}, and \eqref{rm-J11'} gives 
\begin{equation*}
\begin{aligned}
& \ \mu\frac{\mathrm{d}}{\dt}\Big(\big|(r^m\rho_0)^\frac{1}{2}D_\eta U_t\big|_2^2+ m \Big|(r^m\rho_0)^\frac{1}{2}\frac{U_t}{\eta}\Big|_2^2\Big)+\frac{1}{2}\big|(r^m\rho_0)^\frac{1}{2}U_{tt}\big|_2^2\\
&\leq \frac{\mathrm{d}}{\dt}\widetilde{\mathrm{J}}_{8}+C(T)\Big(1+\Big|(r^m\rho_0)^\frac{1}{2}\big(D_\eta U_t,\frac{U_{t}}{\eta}\big)\Big|_2^4\Big).
\end{aligned}
\end{equation*}
Thus, it follows from \eqref{l4-fuzhu}, Lemma \ref{lemma-u-D2}, and the Gr\"onwall inequality that 
\begin{equation*}
\begin{aligned}
&\sup_{s\in[0,t]}\Big|(r^m\rho_0)^\frac{1}{2}\big(D_\eta U_t,\frac{U_{t}}{\eta}\big)\Big|_2^2+\int_0^t\big|(r^m\rho_0)^\frac{1}{2}U_{tt}\big|_2^2\,\mathrm{d}s\\
&\leq C(T)\Big(\cE(0,U)+ \sup_{s\in[0,t]} \widetilde{\mathrm{J}}_8+1\Big)\leq C(T)\Big(1+  \sup_{s\in[0,t]} \Big|(r^m\rho_0)^\frac{1}{2}\frac{U_t}{\eta}\Big|_2\Big),
\end{aligned}    
\end{equation*}
which, along with the Young inequality, leads to
\begin{equation*} 
\Big|(r^m\rho_0)^\frac{1}{2}\big(D_\eta U_t,\frac{U_{t}}{\eta}\big)(t)\Big|_2^2+\int_0^t\big|(r^m\rho_0)^\frac{1}{2}U_{tt}\big|_2^2\,\mathrm{d}s\leq C(T) \qquad \text{for all $t\in [0,T]$}.
\end{equation*}

Finally, this, together with \eqref{in-ell-3}--\eqref{ex-ell-2-refine}, yields the rest of this lemma.
\end{proof}

\subsection{Estimates of the velocity near the symmetric center}\label{sub92}

\begin{Lemma}\label{ell-inner}
There exists a constant $C(T)>0$ such that
\begin{equation*}
\cE_{\mathrm{in}}(t,U)+\int_0^t \cD_{\mathrm{in}}(s,U)\,\mathrm{d}s \leq C(T) \qquad\text{for all $t\in [0,T]$}.
\end{equation*}
\end{Lemma}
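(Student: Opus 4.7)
The plan is to combine all the tangential and interior elliptic information already proved in Lemmas \ref{lemma-u-0order}--\ref{lemma-u-D3}, plus two new ingredients: a second-order interior elliptic estimate for $U_t$ and a fourth-order interior elliptic estimate for $U$, both in the $\zeta r^{m/2}$-weighted $L^2$-norm. The starting observation is that $\mathrm{supp}\,\zeta\subset[0,5/8]$, where $\rho_0$ is bounded above and below by positive constants; consequently $\zeta r^{m/2}|f|\leq C_0\zeta (r^m\rho_0)^{1/2}|f|$, so the pieces of $\cE_{\mathrm{in}}(t,U)$ involving $(U,D_\eta U,U/\eta,U_t,D_\eta U_t,U_t/\eta)$ and the piece of $\cD_{\mathrm{in}}(t,U)$ involving $U_{tt}$ are immediate from Lemmas \ref{lemma-u-0order}--\ref{lemma-u-D3}. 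The pieces of $\cE_{\mathrm{in}}(t,U)$ involving $(D_\eta^2 U,D_\eta(U/\eta))$ come from Lemma \ref{lemma-u-D2}, and the ones involving $(D_\eta^3 U,D_\eta^2(U/\eta),\frac{1}{\eta}D_\eta(U/\eta))$ from Lemma \ref{lemma-u-D3}.

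To control the $L^1_t L^2$-norm of $\zeta r^{m/2}(D_\eta^2 U_t,D_\eta(U_t/\eta))$, I would first differentiate identity \eqref{ell-0} in time, using the commutation formula $\partial_t D_\eta f=D_\eta f_t-(D_\eta U)(D_\eta f)$ and $(\eta^{-1})_t=-U/\eta^2$, to obtain a pointwise expression
\[
D_\eta\Big(D_\eta U_t+\frac{mU_t}{\eta}\Big)=\tfrac{1}{2\mu}U_{tt}+\mathcal{R}_1(U,V,U_t,D_\eta U,D_\eta V,D_\eta^2 U,\varrho),
\]
where $\mathcal{R}_1$ collects products of lower-order quantities already controlled by the previously established bounds and by Lemma \ref{lemma-Vr-L2} (for $\zeta_a r^{m/2}D_\eta V$). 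The $\zeta r^{m/2}$-$L^2$ norm of the left-hand side then bounds $\zeta r^{m/2}(D_\eta^2 U_t,D_\eta(U_t/\eta))$ by the div-curl type inequality of Lemma \ref{im-1}, and integration in time produces the desired dissipation bound using the already-proved $\int_0^t\bigl|(r^m\rho_0)^{1/2}U_{tt}\bigr|_2^2\,ds\leq C(T)$. For the fourth-order piece, I would apply $D_\eta^2$ and $D_\eta(\frac{1}{\eta}\cdot)$ to \eqref{ell-0} to produce
\[
D_\eta^2\Big(D_\eta U+\frac{mU}{\eta}\Big)=\tfrac{1}{2\mu}D_\eta U_t+\mathcal{R}_2,\qquad D_\eta\Big(\frac{1}{\eta}D_\eta\big(D_\eta U+\frac{mU}{\eta}\big)\Big)=\tfrac{1}{2\mu}D_\eta\big(\frac{U_t}{\eta}\big)+\mathcal{R}_3,
\]
with $\mathcal{R}_2,\mathcal{R}_3$ built from products of $(D_\eta U,D_\eta^2 U,D_\eta V,V-U,\varrho^{\gamma-1},1/\eta)$ times derivatives of order at most three, all already estimated in $\cE_{\mathrm{in}}$ or by Lemmas \ref{lemma-Vr-L2}--\ref{lemma-u-D3}. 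Again the div-curl formula of Lemma \ref{im-1} converts these into the $\zeta r^{m/2}$-$L^2$ bounds for $(D_\eta^4 U,D_\eta^3(U/\eta),D_\eta(\frac{1}{\eta}D_\eta(U/\eta)))$, and integration in time closes the estimate using the just-obtained bound on $(D_\eta^2 U_t,D_\eta(U_t/\eta))$.

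The main obstacle will be the coordinate singularity at $r=0$: the terms of the form $\frac{1}{\eta}D_\eta(\cdot)$ and $D_\eta(\frac{1}{\eta}\cdot)$ generate factors like $\eta^{-2}$ in $\mathcal{R}_2,\mathcal{R}_3$ that must be absorbed without a uniform lower bound on $\eta/r$ (we only have $\eta/r\geq C(T)^{-1}$). These will be handled by pairing each $\eta^{-1}$ with a radial weight $r$, exploiting Lemma \ref{lemma-lower bound jacobi} (which yields $r/\eta\leq C(T)$) and the Hardy inequality in the form $|\zeta r^{m/2-1}g|_2\lesssim |\zeta r^{m/2}(g_r,g)|_2$, together with the $L^\infty$-bounds $|(D_\eta U,U/\eta)|_\infty\leq C(T)$ from Lemma \ref{lemma-u-D3} and the interior $\zeta_a r^{m/2}(D_\eta V,V/\eta)$ bound from Lemma \ref{lemma-Vr-L2}. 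To keep the cut-off compatible across different elliptic reductions, I will systematically use a slightly larger cut-off $\zeta_{5/8}$ on nonlinear factors and $\zeta$ on the highest-order outputs, so that every appearance of $(\zeta_{5/8})_r$ is supported in $[5/8,11/16]$ where Lemmas \ref{lemma-u-D2}--\ref{lemma-u-D3} already provide pointwise and $L^2$ bounds. Once these nonlinear error terms are dominated by $\cE_{\mathrm{in}}(t,U)\in L^\infty(0,T)$ and $\int_0^t|(r^m\rho_0)^{1/2}U_{tt}|_2^2\,ds\leq C(T)$, the Gr\"onwall-free bookkeeping produces the desired inequality $\cE_{\mathrm{in}}(t,U)+\int_0^t\cD_{\mathrm{in}}(s,U)\,ds\leq C(T)$.
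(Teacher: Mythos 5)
Your high-level architecture matches the paper exactly: split off the $\cE_{\mathrm{in}}$ part and the $U_{tt}$ piece of $\cD_{\mathrm{in}}$ as immediate from Lemmas \ref{lemma-u-0order}--\ref{lemma-u-D3}, obtain the $D_\eta^2 U_t$ piece by applying $\partial_t$ to \eqref{ell-0}, obtain the $D_\eta^4 U$ piece by applying $D_\eta^2$ (equivalently $D_\eta$ to \eqref{ell-1}), convert through Lemma \ref{im-1}, and close with $\int_0^t|(r^m\rho_0)^{1/2}U_{tt}|_2^2\,ds\leq C(T)$.

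However, there is a genuine gap in your treatment of the fourth-order piece. When you apply $D_\eta^2$ to \eqref{ell-0}, the terms $D_\eta^2\big[(V-U)D_\eta U\big]$ and $D_\eta^2\big[\varrho^{\gamma-1}(V-U)\big]$ generate $D_\eta^2 V$ (multiplied by $D_\eta U$ and by $\varrho^{\gamma-1}$, respectively), and the cross-term $D_\eta V\cdot D_\eta^2 U$ forces you to estimate $D_\eta V$ in $L^4$ rather than $L^2$. You list only $D_\eta V$ among the factors in $\mathcal{R}_2,\mathcal{R}_3$ and appeal to Lemma \ref{lemma-Vr-L2}, but that lemma gives only $\big|\zeta_a r^{m/2}(D_\eta V,V/\eta)\big|_2\le C(a,T)$ — it controls neither $\big|\zeta r^{m/2}D_\eta^2 V\big|_2$ nor $\big|\zeta r^{m/4}D_\eta V\big|_4$. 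Without these, $\mathcal{R}_2$ is not yet estimable. The paper supplies this missing ingredient by a separate argument: it derives a damped transport equation for $D_\eta\big(D_\eta V+\frac{mV}{\eta}\big)$ from \eqref{eq:v}, solves the resulting linear ODE in time (integrating factor $e^{Q(t)}$ with $Q(t)=-\frac{A\gamma}{2\mu}\int_0^t\varrho^{\gamma-1}\,ds$), and thereby establishes
\begin{equation*}
\Big|\zeta r^{\frac{m}{4}}\big(D_\eta V,\tfrac{V}{\eta}\big)\Big|_4+\Big|\zeta r^{\frac{m}{2}}\Big(D_\eta^2 V,D_\eta\big(\tfrac{V}{\eta}\big)\Big)\Big|_2\leq C(T),
\end{equation*}
using the interpolation $\big|\zeta r^{m/4}D_\eta V\big|_4\lesssim \big|\zeta r^{m/2}(D_\eta V,D_\eta^2 V,\ldots)\big|_2$ and Lemma \ref{lemma-Vr-L2} for the lower-order $V$ data. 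You would need to add an analogous step (or cite the relevant higher-order $V$ estimate) before your $\mathcal{R}_2,\mathcal{R}_3$ bounds can be justified; otherwise the $D_\eta^2 V$ term is uncontrolled and the proposed proof does not close.
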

\begin{proof}
We divide the proof into two steps.

\smallskip
\textbf{1. Elliptic estimates.} First, Lemmas \ref{lemma-u-0order}--\ref{lemma-u-D2}, and \ref{lemma-u-D3} lead to 
\begin{equation}\label{total-Ein}
\cE_{\mathrm{in}}(t,U)\leq C(T) \qquad\text{for all $t\in [0,T]$}.
\end{equation}

\smallskip
\textbf{2. Dissipation estimates.} Next, since we have already obtained 
\begin{equation}\label{D-tt}
\int_0^t |\zeta r^\frac{m}{2}U_{tt}|_2^2\,\mathrm{d}s\leq C(T)
\end{equation}
due to Lemma \ref{lemma-u-D3}, it remains to show that
\begin{equation}\label{D-txx/D-xxxx}
\begin{aligned}
\int_0^t\Big|\zeta r^\frac{m}{2}\Big(D_\eta^2 U_{t},D_\eta\big(\frac{U_{t}}{\eta}\big)\Big)\Big|_2^2\,\mathrm{d}s\leq C(T),\\
\int_0^t\Big|\zeta r^\frac{m}{2}\Big(D_\eta^4 U, D_\eta^3\big(\frac{U}{\eta}\big),D_\eta\big(\frac{1}{\eta}D_\eta(\frac{U}{\eta})\big) \Big)\Big|_2^2\,\mathrm{d}s\leq C(T).
\end{aligned}
\end{equation}

\smallskip
\textbf{2.1. Proof of $\eqref{D-txx/D-xxxx}_1$.} First, it follows from \eqref{eq:v}, Lemmas \ref{lemma-bound depth}, \ref{lemma-v Linfty in}, and \ref{lemma-u-D3} that
\begin{equation}\label{est-vt-0}
|\zeta r^\frac{m}{2} V_t|_2\leq |\zeta V_t|_\infty \leq C_0|\varrho|_\infty^{\gamma-1} |\zeta (V,U)|_\infty\leq C(T) \qquad\text{for all $t\in [0,T]$}.
\end{equation}

Next, applying $\partial_t$ to \eqref{ell-0}, along with $\eqref{eq:VFBP-La}_1$, we have
\begin{equation}\label{ell-0-t-0}
\begin{aligned}
D_\eta\big(D_\eta U_t+ \frac{mU_t}{\eta}\big)&=\underline{\frac{1}{2\mu}U_{tt} +3D_\eta U D_\eta^2 U+m\big(D_\eta U+\frac{2U}{\eta}\big)D_\eta\big(\frac{U}{\eta}\big)}_{:=\mathrm{J}_9} \\
&\quad \underline{-\frac{1}{2\mu}(V_t-U_t)D_\eta U-\frac{1}{2\mu}(V-U)(D_\eta U_t- |D_\eta U|^2)}_{:=\mathrm{J}_{10}}\\
&\quad +\underline{\frac{A\gamma}{4\mu^2} \varrho^{\gamma-1} \Big((V_t-U_t)-(\gamma-1)\big(D_\eta U+\frac{mU}{\eta}\big)(V-U)\Big)}_{:=\mathrm{J}_{11}}.
\end{aligned}
\end{equation}
Then we obtain from \eqref{total-Ein}, \eqref{est-vt-0}, and Lemmas \ref{lemma-bound depth}, \ref{lemma-v Linfty in}, and \ref{lemma-u-D3} that
\begin{equation}\label{proof0}
\begin{aligned}
|\zeta r^\frac{m}{2} \mathrm{J}_{9}|_2&\leq C_0|\zeta r^\frac{m}{2} U_{tt}|_2 +C_0\Big|\big(D_\eta U,\frac{U}{\eta}\big)\Big|_\infty\Big|\zeta r^\frac{m}{2}\Big(D_\eta^2 U,D_\eta\big(\frac{U}{\eta}\big)\Big)\Big|_2\\
&\leq C(T)(|\zeta r^\frac{m}{2} U_{tt}|_2+1),\\
|\zeta r^\frac{m}{2} \mathrm{J}_{10}|_2&\leq C_0|\chi_\frac{5}{8}(D_\eta U,V,U)|_\infty\big(|\zeta r^\frac{m}{2} (V_t,U_t,D_\eta U_t)|_2+|\zeta r^\frac{m}{2}|_2|D_\eta U|_\infty^2\big)\leq C(T),\\
|\zeta r^\frac{m}{2} \mathrm{J}_{11}|_2&\leq C_0|\varrho|_\infty^{\gamma-1}\Big(|\zeta r^\frac{m}{2} (V_t,U_t)|_2+|\chi_\frac{5}{8}(V,U)|_\infty\Big|\zeta r^\frac{m}{2}\big(D_\eta U,\frac{U}{\eta}\big)\Big|_2\Big)\leq C(T),
\end{aligned}
\end{equation}
which, along with \eqref{ell-0-t-0}, implies  
\begin{equation*}
\Big|\zeta r^\frac{m}{2}D_\eta\big(D_\eta U_t+ \frac{mU_t}{\eta}\big)\Big|_2\leq C(T)(|\zeta r^\frac{m}{2} U_{tt}|_2+1).
\end{equation*}
Finally, this, together with  \eqref{D-tt} and Lemma \ref{im-1}, leads to $\eqref{D-txx/D-xxxx}_1$.

\smallskip
\textbf{2.2. Proof of $\eqref{D-txx/D-xxxx}_2$.} We first show that, for any $t\in[0,T]$,
\begin{equation}\label{high-vr-vrr}
\Big|\zeta r^\frac{m}{4}\big(D_\eta V, \frac{V}{\eta}\big) (t)\Big|_4+\Big|\zeta r^\frac{m}{2}\Big(D_\eta^2 V,D_\eta\big(\frac{V}{\eta}\big)\Big)(t)\Big|_2\leq C(T).
\end{equation}
Indeed, due to the fact that $\frac{m+3}{4}>\frac{m}{2}$ and Lemmas \ref{lemma-upper jacobi}, \ref{lemma-Vr-L2}--\ref{lemma-u-D3}, and \ref{hardy-inequality}, we have
\begin{equation}\label{vr-urr-l4}
\begin{aligned}
\Big|\zeta r^\frac{m}{4}\big(D_\eta V, \frac{V}{\eta}\big) \Big|_4&\leq C(T) \Big|r^\frac{m+3}{4}\Big(\zeta D_\eta V,\zeta \frac{V}{\eta},\zeta_r D_\eta V,\zeta_r \frac{V}{\eta},\zeta D_\eta^2 V,\zeta D_\eta\big(\frac{V}{\eta}\big)\Big)\Big|_2 \\
&\leq C(T)\Big(\Big|\zeta r^\frac{m}{2}\Big(D_\eta^2 V,D_\eta\big(\frac{V}{\eta}\big)\Big)\Big|_2+1\Big),\\
\Big|\chi_\frac{5}{8} r^\frac{m}{4}\Big(D_\eta^2 U,D_\eta\big(\frac{U}{\eta}\big)\Big)\Big|_4&\leq C(T)
\Big|\chi_\frac{5}{8} r^\frac{m+3}{4}\Big(D_\eta^2 U,D_\eta\big(\frac{U}{\eta}\big),D_\eta^3 U,D_\eta^2\big(\frac{U}{\eta}\big)\Big)\Big|_2\leq C(T).
\end{aligned}    
\end{equation}

Then, by \eqref{v-expression}--\eqref{eq:v}, we have 
\begin{equation*}
\begin{aligned}
&\,\Big(D_\eta\big(D_\eta V+\frac{mV}{\eta}\big)\Big)_t+\frac{A\gamma}{2\mu}\varrho^{\gamma-1}D_\eta\big(D_\eta V+\frac{mV}{\eta}\big)\\
&=\underline{-2D_\eta U D_\eta^2 V -m\big(D_\eta U+\frac{U}{\eta}\big)D_\eta\big(\frac{V}{\eta}\big)-D_\eta^2 U D_\eta V -mD_\eta\big(\frac{U}{\eta}\big)\frac{V}{\eta}}_{:=\mathrm{\mathrm{J}}_{12}}\\
&\quad +\underline{\frac{A\gamma(\gamma-1)}{4\mu^2}\varrho^{\gamma-1}(V-U)\Big(\big(3D_\eta U+\frac{mU}{\eta}\big)-\big(3D_\eta V+\frac{mV}{\eta}\big)\Big)}_{:=\mathrm{\mathrm{J}}_{13}}\\
&\quad +\underline{\frac{A\gamma}{2\mu}\varrho^{\gamma-1} D_\eta\big(D_\eta U+\frac{mU}{\eta}\big) -\frac{A\gamma(\gamma-1)^2}{8\mu^3}\varrho^{\gamma-1}(V-U)^3}_{:=\mathrm{\mathrm{J}}_{14}}.
\end{aligned}
\end{equation*}
Hence, solving the above ODE yields
\begin{equation}\label{DDV}
\begin{aligned}
& \ D_\eta\big(D_\eta V+\frac{mV}{\eta}\big)= e^{Q(t)}\big((v_0)_r+\frac{mv_0}{r}\big)_r +\int_0^t e^{Q(t)-Q(\tau)}\sum_{i=12}^{14}\mathrm{\mathrm{J}}_i\,\mathrm{d}\tau\\
&\implies \Big|\zeta r^\frac{m}{2}\Big(D_\eta\big(D_\eta V+\frac{mV}{\eta}\big)\Big)\Big|_2\leq \Big|\zeta r^\frac{m}{2}\big((v_0)_r+\frac{mv_0}{r}\big)_r\Big|_2 +\int_0^t \sum_{i=12}^{14}|\zeta r^\frac{m}{2}\mathrm{\mathrm{J}}_i|_2\,\mathrm{d}\tau,
\end{aligned}
\end{equation}
where 
\begin{equation*}
Q(t):=-\int_0^t\frac{A\gamma}{2\mu}\varrho^{\gamma-1}\,\mathrm{d}s.
\end{equation*}

By \eqref{total-Ein}, \eqref{vr-urr-l4}, Lemmas \ref{lemma-bound depth}, \ref{lemma-v Linfty in}, and \ref{lemma-Vr-L2}--\ref{lemma-u-D3}, and the H\"older inequality, we have
\begin{align}
&\begin{aligned}[b]
|\zeta r^\frac{m}{2}\mathrm{\mathrm{J}}_{12}|_2&\leq C_0\Big|\big(D_\eta U,\frac{U}{\eta}\big)\Big|_\infty\Big|\zeta r^\frac{m}{2}\Big(D_\eta^2 V,D_\eta\big(\frac{V}{\eta}\big)\Big)\Big|_2\\
&\quad + C_0\Big|\zeta r^\frac{m}{4}\big(D_\eta V,\frac{V}{\eta}\big)\Big|_4\Big|\chi_\frac{5}{8} r^\frac{m}{4}\Big(D_\eta^2 U,D_\eta\big(\frac{U}{\eta}\big)\Big)\Big|_4\\
&\leq  C(T)\Big(\Big|\zeta r^\frac{m}{2}\Big(D_\eta^2 V,D_\eta\big(\frac{V}{\eta}\big)\Big)\Big|_2+1\Big),
\end{aligned}\label{meixiang}\\
&\begin{aligned}
|\zeta r^\frac{m}{2}\mathrm{\mathrm{J}}_{13}|_2&\leq C_0 |\varrho|_\infty^{\gamma-1} |\chi_\frac{5}{8}(V,U)|_\infty\Big|\zeta r^\frac{m}{2}\big(D_\eta U,\frac{U}{\eta},D_\eta V,\frac{V}{\eta}\big)\Big|_2\leq C(T),\notag\\
|\zeta r^\frac{m}{2}\mathrm{\mathrm{J}}_{14}|_2&\leq C_0|\varrho|_\infty^{\gamma-1}\Big(\Big|\zeta r^\frac{m}{2}\Big(D_\eta^2 U,D_\eta\big(\frac{U}{\eta}\big)\Big)\Big|_2+|\chi_\frac{5}{8}(V,U)|_\infty^3\Big)\leq C(T),
\end{aligned}
\end{align}
and, for the initial data, by $\rho_0^\beta\sim 1-r$ and Lemma \ref{lemma-initial}, we have
\begin{equation}\label{chuzhi}
\begin{aligned}
\Big|\zeta r^\frac{m}{2}\big((v_0)_r+\frac{mv_0}{r}\big)_r\Big|_2&\leq \cE_{\mathrm{in}}(0,U) +C_0\|\zeta\nabla_{\boldsymbol{y}}^3\log\rho_0\|_{L^2(\Omega)}\\
&\leq C_0+C_0\sum_{j=1}^3 \|\nabla_{\boldsymbol{y}}^j \rho_0^\beta\|_{L^2(\Omega)}\|\nabla_{\boldsymbol{y}}\rho_0^\beta\|_{L^\infty(\Omega)}^{3-j}\leq C_0.
\end{aligned}
\end{equation}

Thus, plugging \eqref{meixiang}--\eqref{chuzhi} into $\eqref{DDV}_2$, together with the Gr\"onwall inequality, Lemma \ref{im-1}, and \eqref{vr-urr-l4}, implies
claim \eqref{high-vr-vrr}.

Now, applying $D_\eta$ to \eqref{ell-1}, along with \eqref{v-expression}, we have
\begin{equation*} 
\begin{aligned}
D_\eta^3\big(D_\eta U+ \frac{mU}{\eta}\big)&=\underline{\frac{1}{2\mu}D_\eta^2 U_t-\frac{1}{2\mu}(D_\eta^2V-3D_\eta^2U)D_\eta U-\frac{1}{\mu}D_\eta VD_\eta^2 U}_{:=\mathrm{J}_{15}}\\
&\quad \underline{-\frac{1}{2\mu}(V-U)D_\eta^3U +\frac{A\gamma}{4\mu^2} \varrho^{\gamma-1}(D_\eta^2 V-D_\eta^2 U)}_{:=\mathrm{J}_{16}}\\
&\quad +\underline{\frac{A\gamma(\gamma-1)}{8\mu^3} \varrho^{\gamma-1}(V-U)\Big(3(D_\eta V-D_\eta U)+\frac{\gamma-1}{2\mu} (V-U)^2\Big)}_{:=\mathrm{J}_{17}}.
\end{aligned}
\end{equation*}
Then, based on the above, we obtain from \eqref{total-Ein}, \eqref{high-vr-vrr}, $\eqref{vr-urr-l4}_2$, Lemmas \ref{lemma-bound depth}, \ref{lemma-v Linfty in}, and \ref{lemma-u-D3}, and the H\"older inequality that
\begin{equation}\label{4jie-0}
\begin{aligned}
|\zeta r^\frac{m}{2}\mathrm{J}_{15}|_2&\leq C_0|\zeta r^\frac{m}{2} D_\eta^2 U_{t}|_2+C_0|D_\eta U|_\infty\big|\zeta r^\frac{m}{2} (D_\eta^2 V,D_\eta^2 U)\big|_2\\
&\quad +C_0|\zeta r^\frac{m}{4}D_\eta V|_4\big|\chi_\frac{5}{8} r^\frac{m}{4}D_\eta^2 U\big|_4 \leq C(T)(|\zeta r^\frac{m}{2} D_\eta^2 U_{t}|_2 +1),\\
|\zeta r^\frac{m}{2}\mathrm{J}_{16}|_2&\leq C_0 |\chi_\frac{5}{8}(V,U)|_\infty |\zeta r^\frac{m}{2}D_\eta^3 U|_2+C_0|\varrho|_\infty^{\gamma-1}\big|\zeta r^\frac{m}{2} (D_\eta^2 V,D_\eta^2 U)\big|_2\leq C(T),\\
|\zeta r^\frac{m}{2}\mathrm{J}_{17}|_2&\leq C_0|\varrho|_\infty^{\gamma-1} \big|\chi_\frac{5}{8}(V,U)\big|_\infty\big(\big|\zeta r^\frac{m}{2} (D_\eta V,D_\eta U)\big|_2+ \big|\chi_\frac{5}{8}(V,U)\big|_\infty^2\big)\leq C(T).
\end{aligned}
\end{equation}

Next, multiplying \eqref{ell-0} by $\frac{1}{\eta}$ and then applying $D_\eta$ to the resulting equality, together with \eqref{v-expression}, yield
\begin{equation*} 
\begin{aligned}
D_\eta\Big(\frac{1}{\eta}D_\eta\big(D_\eta U+ \frac{mU}{\eta}\big) \Big)&=\underline{\frac{1}{2\mu}D_\eta\big(\frac{U_{t}}{\eta}\big)-\frac{1}{2\mu}D_\eta\big(\frac{V-U}{\eta}\big)D_\eta U-\frac{1}{2\mu} \frac{V-U}{\eta} D_\eta^2 U}_{:=\mathrm{J}_{18}}\\
&\quad +\underline{\frac{A\gamma(\gamma-1)}{8\mu^3} \varrho^{\gamma-1}\frac{(V-U)^2}{\eta}+\frac{A\gamma}{4\mu^2} \varrho^{\gamma-1}D_\eta\big(\frac{V-U}{\eta}\big)}_{:=\mathrm{J}_{19}}.
\end{aligned}
\end{equation*}
Similar to the calculation of \eqref{4jie-0}, we can derive from the above and Lemma \ref{lemma-Vr-L2} that
\begin{equation*}
\begin{aligned}
|\zeta r^\frac{m}{2}\mathrm{J}_{18}|_2&\leq C_0\Big|\zeta_a r^\frac{m}{2} D_\eta\big(\frac{U_{t}}{\eta}\big)\Big|_2+C_0\Big|\big(D_\eta U,\frac{U}{\eta}\big)\Big|_\infty\Big|\zeta r^\frac{m}{2} \Big(D_\eta\big(\frac{V}{\eta}\big),D_\eta\big(\frac{U}{\eta}\big),D_\eta^2 U\Big)\Big|_2\\
&\quad +C_0\Big|\zeta r^\frac{m}{4}\frac{V}{\eta}\Big|_4\big|\chi_\frac{5}{8} r^\frac{m}{4}D_\eta^2 U\big|_4\leq C(T)\Big(\Big|\zeta r^\frac{m}{2} D_\eta\big(\frac{U_{t}}{\eta}\big)\Big|_2 +1\Big),  \\
|\zeta r^\frac{m}{2}\mathrm{J}_{19}|_2&\leq C_0|\varrho|_\infty^{\gamma-1}\big(\big|\chi_\frac{5}{8}(V,U)\big|_\infty+1\big)\Big|\zeta r^\frac{m}{2}\Big(\frac{V}{\eta},\frac{U}{\eta},D_\eta\big(\frac{V}{\eta}\big),D_\eta\big(\frac{U}{\eta}\big)\Big)\Big|_2\leq C(T).
\end{aligned}    
\end{equation*}

Therefore, it follows from the above and \eqref{4jie-0} that, for all $t\in[0,T]$,
\begin{equation}\label{proof1}
\begin{aligned}
&\,\Big|\zeta r^\frac{m}{2}D_\eta^3\big(D_\eta U+ \frac{mU}{\eta}\big)\Big|_2+
\Big|\zeta r^\frac{m}{2}D_\eta\Big(\frac{1}{\eta}D_\eta\big(D_\eta U+ \frac{m U}{\eta}\big)\Big)\Big|_2\\
&\leq  C(T)\Big(\Big|\zeta r^\frac{m}{2} \Big(D_\eta^2 U_{t},D_\eta\big(\frac{U_{t}}{\eta}\big)\Big)\Big|_2+1\Big),
\end{aligned}
\end{equation}
which, along with Lemmas \ref{im-1} and $\eqref{D-txx/D-xxxx}_1$, leads to $\eqref{D-txx/D-xxxx}_2$.
\end{proof}

\subsection{Estimates of the velocity away from origin}\label{sub93}
This subsection is devoted to establishing the following estimates:
\begin{Lemma}\label{ell-ex}
There exists a constant $C(T)>0$ such that
\begin{equation*}
\cE_{\mathrm{ex}}(t,U)+\int_0^t \cD_{\mathrm{ex}}(s,U)\,\mathrm{d}s\leq C(T) \qquad\text{for all $t\in[0,T]$}.
\end{equation*}
\end{Lemma}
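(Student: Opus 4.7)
The plan is to establish $\cE_{\mathrm{ex}}(t,U)$ and $\int_0^t\cD_{\mathrm{ex}}(s,U)\,\mathrm{d}s$ by exploiting that on $[\tfrac12,1]$ one has $\rho_0^{1/2}\sim (r^m\rho_0)^{1/2}$, so the $\rho_0^{1/2}$-weighted pieces of $\cE_{\mathrm{ex}}$ and $\cD_{\mathrm{ex}}$ (those involving $U,D_\eta U,U_t,D_\eta U_t$ and $U_{tt}$) follow at once from Lemmas \ref{lemma-u-0order}--\ref{lemma-u-D3}. What remains are the $\rho_0^{(\frac32-\varepsilon_0)\beta}$-weighted elliptic estimates: $L^\infty_tL^2$ bounds for $D_\eta^2U$ and $D_\eta^3U$, and $L^2_tL^2$ bounds for $D_\eta^2 U_t$ and $D_\eta^4U$.

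Next I would extract the lower-order weighted elliptic estimates from the reformulation of $\eqref{eq:VFBP-La-eta}_1$ recorded in Remark \ref{remark-energy function}, namely
\begin{equation*}
2\mu\varrho^{\beta} D_\eta^2 U=\varrho^{\beta}U_t+\tfrac{A\gamma}{\beta}\varrho^{\gamma-1}D_\eta(\varrho^{\beta})-\tfrac{2\mu}{\beta}D_\eta(\varrho^{\beta})\,D_\eta U-2m\mu\,\varrho^{\beta}D_\eta\big(\tfrac{U}{\eta}\big).
\end{equation*}
Using $\varrho\sim\rho_0$ (Lemma \ref{non-vac}) and $\rho_0^\beta\sim 1-r$, multiplication by $\chi^\sharp\rho_0^{(\frac12-\varepsilon_0)\beta}$ and $L^2$-integration, combined with H\"older, weighted Hardy, and the tangential estimates of Section \ref{Section-globalestimates}, yield the $L^\infty_t$ bound on $\chi^\sharp\rho_0^{(\frac32-\varepsilon_0)\beta}D_\eta^2U$. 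Differentiating the reformulation once more in $D_\eta$ produces additional terms, the most singular being $\varrho^{\gamma-1-3\beta}|D_\eta(\varrho^{\beta})|^3$; the constraint on $\varepsilon_0$ in \eqref{varepsilon0} is precisely what makes its $\chi^\sharp\rho_0^{(\frac32-\varepsilon_0)\beta}$-weighted $L^2$ norm finite, and the analogous $L^\infty_t$ bound on $\chi^\sharp\rho_0^{(\frac32-\varepsilon_0)\beta}D_\eta^3U$ follows. Applying $\partial_t$ to the reformulation and absorbing the leading $\varrho^{\beta}U_{tt}$ term through the already-established $\chi^\sharp\rho_0^{1/2}U_{tt}\in L^2_tL^2$ then gives $\chi^\sharp\rho_0^{(\frac32-\varepsilon_0)\beta}D_\eta^2 U_t\in L^2_tL^2$.

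The hard part will be the $L^2_tL^2$ bound on $\chi^\sharp\rho_0^{(\frac32-\varepsilon_0)\beta}D_\eta^4U$: the formal expansion in Remark \ref{remark-energy function} contains the term $\tfrac{1}{4\mu^2}\varrho^{\alpha}U_{tt}$, which is only square-integrable in time, so it cannot be absorbed pointwise in $t$. Following the strategy sketched in \S\ref{subsub325}, I would introduce the crossing quantity
\begin{equation*}
\mathcal{T}_{\mathrm{cross}}:=(D_\eta^3U)_r+(\beta^{-1}+2)\rho_0^{-\beta}(\rho_0^\beta)_r\,D_\eta^3U,
\end{equation*}
whose precise algebraic structure triggers a cancellation of the most singular pressure derivatives when $D_\eta$ is applied twice to the reformulated momentum equation. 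Estimating $\mathcal{T}_{\mathrm{cross}}$ by H\"older and Hardy inequalities together with the already-closed bounds $\cE_{\mathrm{ex}}(t,U)\in L^\infty(0,T)$, $\cD_{\mathrm{in}}(t,U)\in L^1(0,T)$, and $\chi^\sharp\rho_0^{(\frac32-\varepsilon_0)\beta}D_\eta^2U_t\in L^2_tL^2$ would yield
\begin{equation*}
|\zeta^\sharp\rho_0^{(\frac32-\varepsilon_0)\beta}\mathcal{T}_{\mathrm{cross}}|_2\leq C(T)\Big(\int_0^t|\chi^\sharp\rho_0^{(\frac32-\varepsilon_0)\beta}D_\eta^4U|_2\,\mathrm{d}s+|\chi^\sharp\rho_0^{(\frac32-\varepsilon_0)\beta}D_\eta^2U_t|_2+1\Big).
\end{equation*}
Invoking Proposition \ref{prop2.1} of Appendix \ref{subsection2.2} then converts the bound on $\mathcal{T}_{\mathrm{cross}}$, together with the already-obtained $L^\infty_t$ bound on $\chi^\sharp\rho_0^{(\frac32-\varepsilon_0)\beta}D_\eta^3U$, into a bound on $\chi^\sharp\rho_0^{(\frac32-\varepsilon_0)\beta}D_\eta^4U$ itself, producing a Gr\"onwall-type inequality whose integration finally closes the $L^2_tL^2$ estimate.
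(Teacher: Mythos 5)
Your plan reproduces the paper's proof structure in \S\ref{sub93} essentially step by step: the $\rho_0^{1/2}$-weighted pieces of $\cE_{\mathrm{ex}}$ and $\cD_{\mathrm{ex}}$ follow from the tangential estimates (Lemma \ref{lemma-qiexiang}); the second- and third-order weighted elliptic estimates for $D_\eta^2U$, $D_\eta^3U$ come from the $\varrho^{\beta}$-reformulation of the momentum equation and its $D_\eta$-derivative, closed by Gr\"onwall (Lemmas \ref{lemma-u-ell-D2}--\ref{lemma-u-D3-ell}); $\chi^\sharp\rho_0^{(\frac{3}{2}-\varepsilon_0)\beta}D_\eta^2U_t$ is obtained by time-differentiating and absorbing $U_{tt}$ in $L^2_t$ (Lemmas \ref{lemma-Utx-refine}--\ref{lemma-ell-D4-ex1}); and the fourth-order estimate is closed exactly as you describe, via $\mathcal{T}_{\mathrm{cross}}$, Proposition \ref{prop2.1}, and Gr\"onwall (Lemma \ref{lemma-ell-D4-ex2}). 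Two small corrections. First, the cubic term $\varrho^{\gamma-1-3\beta}|D_\eta(\varrho^\beta)|^3$ does \emph{not} arise when the reformulation is differentiated once: at the $D_\eta^3 U$ level the most singular pressure contribution is the square $\varrho^{\gamma-1-2\beta}\Lambda^2$ (cf.\ $\mathrm{J}_{25}$ in \eqref{eq:3'}), and the cube first appears at fourth order inside $\mathcal{T}_{\mathrm{cross}}$ (cf.\ $\mathrm{J}_{29}$ in \eqref{979'}); it is there, not at third order, that the constraint $\varepsilon_0<\frac{\gamma-1}{\beta}-1$ (dropped when $\beta=\gamma-1$ because the coefficient vanishes) is used. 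Second, between the third-order elliptic step and the $D_\eta^2U_t$ step the paper inserts a refined bound $\chi^\sharp\rho_0^{(\frac{1}{2}-\varepsilon_0)\beta}D_\eta U_t\in L^2_tL^2$ (Lemma \ref{lemma-Utx-refine}, obtained from the time derivative of the integrated momentum relation \eqref{zhongyao} rather than from the local reformulation), which is what allows the $(V-U)D_\eta U_t$-type terms in the $D_\eta^2U_t$ computation to be handled with the correct weight when $(\frac{1}{2}-\varepsilon_0)\beta<\frac{1}{2}$; your sketch leaves this step implicit. With these two adjustments, your route is the paper's.
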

This proof will be fulfilled by \S \ref{931}--\S\ref{934}.

\subsubsection{Some preliminaries}\label{931}
In what follows, we choose parameter $\varepsilon_0$ 
satisfying \eqref{varepsilon0}, that is,
\begin{equation*}
\begin{cases}
\displaystyle 0<\varepsilon_0 < \min\big\{\frac{3}{2}-\frac{1}{2\beta},\frac{\gamma-1}{\beta}-1,\frac{1}{2}\big\}&\displaystyle\quad \text{for }\beta\in \big(\frac{1}{3},\gamma-1\big),\\[6pt]
\displaystyle 0<\varepsilon_0< \min\big\{\frac{3}{2}-\frac{1}{2\beta},\frac{1}{2}\big\}&\quad\text{for }\beta=\gamma-1.
\end{cases}
\end{equation*}

To further simplify our calculations, we define the following quantities:
\begin{equation}\label{jianhua1}
\Lambda:=D_\eta (\varrho^\beta) ,\qquad \psi_\ell:=D_\eta\log\big(\frac{\eta}{r}\big),\qquad \psi_h:= D_\eta \log\eta_r.
\end{equation}
Clearly, \eqref{jianhua1}, together with \eqref{eq:eta} and \eqref{v-expression}, also yields
\begin{equation}\label{v-express2}
\Lambda=\frac{\varrho^\beta (\rho_0^\beta)_r}{\rho_0^\beta\eta_r}-\beta\varrho^\beta(m\psi_\ell+\psi_h),\qquad\Lambda=\frac{\beta}{2\mu}\varrho^\beta(V-U).
\end{equation}

Then, by Lemmas \ref{lemma-bound depth}, \ref{lemma-v Linfty ex}, 
\ref{lemma-v Linfty in}, and \ref{lemma-u-D3}, we can derive the following 
lemma directly: 
\begin{Lemma}\label{lemma-bound-lambda}
There exists a constant $C(T)>0$ such that
\begin{equation*}
|\Lambda(t)|_\infty\leq C(T) \qquad\,\, \text{for all $t\in[0,T]$}.
\end{equation*}
\end{Lemma}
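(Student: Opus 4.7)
The plan is to exploit the cleaner of the two representations recorded in \eqref{v-express2}, namely $\Lambda=\tfrac{\beta}{2\mu}\varrho^{\beta}(V-U)$, thereby reducing the pointwise bound on $\Lambda$ to pointwise bounds on $\varrho^{\beta}$, $V$, and $U$ that have already been established in \S\ref{Section-densityupper}--\S\ref{Section-globalestimates}. I would split $I$ into the interior piece supported by $\zeta$ and the exterior piece supported by $\zeta^{\sharp}$ and treat each separately, since the effective velocity $V$ behaves very differently near the origin and near the vacuum boundary.

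On the support of $\zeta$ (contained in $[0,5/8]$), Lemma~\ref{lemma-bound depth} furnishes $|\varrho|_\infty\leq C(T)$, Lemma~\ref{lemma-v Linfty in} supplies the interior $L^\infty$-bound $|\zeta V|_\infty\leq C(T)$, and Lemma~\ref{lemma-u-D3} supplies $|U|_\infty\leq C(T)$, so that $|\zeta\Lambda|_\infty\leq \tfrac{\beta}{2\mu}|\varrho|_\infty^{\beta}(|\zeta V|_\infty+|U|_\infty)\leq C(T)$.

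On the support of $\zeta^{\sharp}$ (contained in $[1/2,1]$), the degeneracy of $V$ near the boundary prevents an unweighted $L^{\infty}$-bound on $V$; however, Lemma~\ref{non-vac} gives $\varrho^{\beta}\leq C(T)\rho_0^{\beta}$ on this region, which supplies precisely the weight needed to invoke Lemma~\ref{lemma-v Linfty ex}. Since the definitions in \S\ref{othernotation} force $\zeta^{\sharp}\leq \chi^{\sharp}$, the $V$-contribution satisfies $\zeta^{\sharp}\varrho^{\beta}|V|\leq C(T)\,\chi^{\sharp}\rho_0^{\beta}|V|\leq C(T)$, while the $U$-contribution is controlled by $|\varrho|_\infty^{\beta}|U|_\infty\leq C(T)$. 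Summing the two regions yields $|\Lambda(t)|_\infty\leq C(T)$ on $[0,T]$.

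The key conceptual point is to use the second identity in \eqref{v-express2} in place of the first: the first identity involves the derivative quantities $\psi_\ell$ and $\psi_h$, for which no pointwise control has yet been established at this stage of the paper, whereas the second identity converts the problem into a weighted $L^\infty$-bound for $V$ that matches exactly the $\rho_0^\beta$-weight furnished by Lemma~\ref{lemma-v Linfty ex}. I do not anticipate a genuine obstacle here; the estimate is essentially a bookkeeping step that assembles the effective-velocity bounds of \S\ref{Section-effectivevelocity} with the density bounds of \S\ref{Section-densityupper} and \S\ref{Section-densitylower}.
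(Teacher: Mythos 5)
Your proof is correct and takes essentially the same approach as the paper: the paper states the lemma follows directly from Lemmas~\ref{lemma-bound depth}, \ref{lemma-v Linfty ex}, \ref{lemma-v Linfty in}, and \ref{lemma-u-D3} applied to the identity $\Lambda=\tfrac{\beta}{2\mu}\varrho^{\beta}(V-U)$, with the interior/exterior split precisely as you describe. Your additional explicit invocation of Lemma~\ref{non-vac} to convert $\varrho^\beta$ into a $\rho_0^\beta$-weight in the exterior region is the right step and simply makes the paper's implicit use of the comparability $\varrho\sim\rho_0$ explicit.
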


Finally, we summarize the following estimates for case of later analysis, which follow directly from Lemmas \ref{lemma-u-0order}--\ref{lemma-u-D2} and \ref{lemma-u-D3}. 
\begin{Lemma}\label{lemma-qiexiang}
There exists a constant $C(T)>0$ such that, for all $t\in [0,T]$,
\begin{equation*}
\big|\chi^\sharp\rho_0^\frac{1}{2}(U,D_\eta U, D_\eta^2 U ,U_t,D_\eta U_{t})(t)\big|_2+|(U,D_\eta U)(t)|_\infty\leq C(T). 
\end{equation*}
\end{Lemma}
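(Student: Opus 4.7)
The plan is simply to consolidate the tangential estimates already established in Lemmas \ref{lemma-u-0order}, \ref{lemma-u-D1}, \ref{lemma-u-D2}, and \ref{lemma-u-D3} by exploiting the fact that the weight $r^m$ is harmless on the support of $\chi^\sharp$. Indeed, since $\chi^\sharp$ is the characteristic function of $[\tfrac{1}{2},1]$, we have $r\geq \tfrac{1}{2}$ on its support, so for any measurable function $f$,
\begin{equation*}
|\chi^\sharp \rho_0^{1/2} f|_2 \leq 2^{m/2}\, |(r^m\rho_0)^{1/2} f|_2.
\end{equation*}
This reduces every weighted $L^2$ bound required by the lemma to the corresponding $(r^m\rho_0)^{1/2}$-weighted bound already available.

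More concretely, I would apply this reduction term by term. Lemma \ref{lemma-u-0order} (or equivalently Lemma \ref{lemma-basic energy}) controls $|(r^m\rho_0)^{1/2}U|_2$, which takes care of the $U$-piece. Lemma \ref{lemma-u-D1} yields the bound for $|(r^m\rho_0)^{1/2} D_\eta U|_2$, hence for $|\chi^\sharp\rho_0^{1/2} D_\eta U|_2$. Lemma \ref{lemma-u-D2} provides $|(r^m\rho_0)^{1/2}U_t|_2 \leq C(T)$, giving the $U_t$-piece. The bounds for $|\chi^\sharp\rho_0^{1/2} D_\eta^2 U|_2$ and $|(r^m\rho_0)^{1/2} D_\eta U_t|_2$ are stated in Lemma \ref{lemma-u-D3}; the former is already in the exact form we need, while the latter is converted to the desired $\chi^\sharp\rho_0^{1/2}$-weighted form via the inequality displayed above.

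For the $L^\infty$ bound of $(U, D_\eta U)$, no splitting between interior and exterior regions is needed: Lemma \ref{lemma-u-D3} already provides $|(U, D_\eta U, U/\eta)|_\infty \leq C(T)$ on all of $\bar I$. Collecting these pointwise and norm bounds yields the stated inequality with a single constant $C(T)$. As this is a summary lemma that merely re-packages earlier results for notational convenience in the subsequent exterior elliptic analysis, no genuine obstacle is anticipated; the only minor care required is in verifying that the power $\tfrac{1}{2}$ of $\rho_0$ is compatible with (in fact weaker than) each of the weights appearing in the source lemmas.
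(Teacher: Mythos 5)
Your proposal is correct and matches the paper's own (unstated, one-line) justification, which simply asserts that the lemma "follows directly from Lemmas \ref{lemma-u-0order}--\ref{lemma-u-D2} and \ref{lemma-u-D3}." You have spelled out the routine detail the paper leaves implicit, namely that $r^m \geq 2^{-m}$ on $\support\chi^\sharp$ so that $(r^m\rho_0)^{1/2}$-weighted bounds imply $\chi^\sharp\rho_0^{1/2}$-weighted ones, and you correctly identify where each of the five $L^2$ pieces and the $L^\infty$ piece comes from.
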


\subsubsection{Elliptic estimates}

The first lemma concerns second-order elliptic estimate. This estimate follows directly from the fact that $(\frac{3}{2}-\varepsilon_0)\beta>\frac{1}{2}$ and Lemma \ref{lemma-qiexiang}.
\begin{Lemma}\label{lemma-u-ell-D2}
There exists a constant $C(T)>0$ such that 
\begin{equation*}
\big|\chi^\sharp\rho_0^{(\frac{3}{2}-\varepsilon_0)\beta}D_\eta^2 U(t)\big|_2\leq C(T) \qquad\text{for all $t\in [0,T]$}.
\end{equation*}
\end{Lemma}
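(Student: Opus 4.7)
The plan is to reduce the weighted second-order estimate to the tangential bound already recorded in Lemma \ref{lemma-qiexiang}, using only the fact that the weight $\rho_0^{(\frac{3}{2}-\varepsilon_0)\beta}$ is strictly stronger than $\rho_0^{1/2}$ near the boundary, together with $\rho_0 \in L^\infty$.

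First, I would observe that the constraint on $\varepsilon_0$ in \eqref{varepsilon0} gives, in either branch, $\varepsilon_0 < \frac{3}{2} - \frac{1}{2\beta}$, which is equivalent to
\begin{equation*}
\big(\tfrac{3}{2}-\varepsilon_0\big)\beta > \tfrac{1}{2}.
\end{equation*}
In particular, the exponent gap $\sigma := (\frac{3}{2}-\varepsilon_0)\beta - \frac{1}{2}$ is strictly positive, so $\rho_0^{(\frac{3}{2}-\varepsilon_0)\beta} = \rho_0^{1/2}\,\rho_0^{\sigma}$, and since $\rho_0 \leq |\rho_0|_\infty \leq C_0$, the extra factor $\rho_0^{\sigma}$ is uniformly bounded on $[\tfrac12,1]$.

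Next, I would invoke Lemma \ref{lemma-qiexiang}, which in particular yields the uniform bound
\begin{equation*}
\big|\chi^\sharp \rho_0^{1/2} D_\eta^2 U(t)\big|_2 \leq C(T) \qquad \text{for all } t \in [0,T].
\end{equation*}
Combining this with the previous step through H\"older's inequality,
\begin{equation*}
\big|\chi^\sharp \rho_0^{(\frac{3}{2}-\varepsilon_0)\beta} D_\eta^2 U\big|_2
\leq \big|\chi^\sharp \rho_0^{\sigma}\big|_\infty \big|\chi^\sharp \rho_0^{1/2} D_\eta^2 U\big|_2
\leq C_0^{\sigma}\, C(T),
\end{equation*}
which delivers the claim. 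There is no real obstacle here; the whole content of the lemma is packaged in the admissibility constraint on $\varepsilon_0$ recorded in \eqref{varepsilon0}, which was designed precisely so that the Neumann-type weight $\rho_0^{(\frac{3}{2}-\varepsilon_0)\beta}$ dominates the pressure-balanced weight $\rho_0^{1/2}$ appearing in the tangential estimates.
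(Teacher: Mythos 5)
Your argument is correct and is essentially identical to the paper's, which states verbatim that "This estimate follows directly from the fact that $(\frac{3}{2}-\varepsilon_0)\beta>\frac{1}{2}$ and Lemma \ref{lemma-qiexiang}." You simply unpack the H\"older step and the use of $\rho_0\in L^\infty$ that the paper leaves implicit.
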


Next, we can derive some refined weighted estimates for $U$.
\begin{Lemma}\label{lemma-u-ell-D2-refine}
There exists a constant $C(T)>0$ such that, for all $t\in [0,T]$,
\begin{equation*}
\begin{aligned}
\big|\chi^\sharp\rho_0^{-(\frac{1}{2}+\varepsilon_0)\beta}D_\eta U(t)\big|_2+\big|\chi^\sharp\rho_0^{(\frac{1}{2}-\varepsilon_0)\beta}D_\eta^2 U(t)\big|_2+|\chi^\sharp\rho_0^\beta D_\eta^2 U(t)|_\infty \leq C(T).
\end{aligned}
\end{equation*}
\end{Lemma}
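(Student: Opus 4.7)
The three bounds are tightly coupled through the reformulated momentum equation \eqref{reform}, which after dividing by $2\mu\varrho^\beta$ may be written as
\begin{equation*}
D_\eta^2 U=\frac{1}{2\mu}U_t+\frac{A\gamma}{2\mu\beta}\varrho^{\gamma-1-\beta}\Lambda-\frac{1}{\beta}\varrho^{-\beta}\Lambda\,D_\eta U-m\,D_\eta\Bigl(\frac{U}{\eta}\Bigr).
\end{equation*}
Throughout, I will use $\varrho\sim\rho_0$ on $[\tfrac12,1]$ (Lemmas \ref{lemma-lower bound jacobi}, \ref{lemma-upper jacobi}, \ref{non-vac}), the uniform bound $|\Lambda|_\infty\le C(T)$ from Lemma \ref{lemma-bound-lambda}, and the pointwise controls $|(U,D_\eta U)|_\infty\le C(T)$ from Lemma \ref{lemma-qiexiang}.

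\textbf{Step 1 (first estimate).} I start with the negative-weight bound on $D_\eta U$, which plays the role of a distributional piece in the other two estimates. Lemma \ref{lemma-u-D3} already furnishes $|\chi^\sharp\rho_0^{-\varepsilon}D_\eta U|_\infty\le C(\varepsilon,T)$ for every $\varepsilon\in(0,\min\{\beta,(3\beta-1)/2\})$. The constraint $\varepsilon_0<\tfrac{3}{2}-\tfrac{1}{2\beta}$ in \eqref{varepsilon0} is equivalent to $\varepsilon_0\beta<(3\beta-1)/2$, so I can pick $\varepsilon$ with $\varepsilon_0\beta<\varepsilon<(3\beta-1)/2$. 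Then H\"older yields
\begin{equation*}
\bigl|\chi^\sharp\rho_0^{-(\frac12+\varepsilon_0)\beta}D_\eta U\bigr|_2\le\bigl|\chi^\sharp\rho_0^{-\varepsilon}D_\eta U\bigr|_\infty\,\bigl|\chi^\sharp\rho_0^{\varepsilon-(\frac12+\varepsilon_0)\beta}\bigr|_2,
\end{equation*}
and the volume factor is finite since $\rho_0^{2\varepsilon-(1+2\varepsilon_0)\beta}\sim(1-r)^{2(\varepsilon-\varepsilon_0\beta)/\beta-1}$ is integrable precisely because $\varepsilon>\varepsilon_0\beta$.

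\textbf{Step 2 ($D_\eta^2U$-estimates).} Multiplying the displayed equation by $\chi^\sharp\rho_0^{(\frac12-\varepsilon_0)\beta}$ and taking $L^2$-norm (respectively by $\chi^\sharp\rho_0^\beta$ and taking $L^\infty$-norm), three of the four right-hand terms are manageable. The pressure piece contributes $\rho_0^{(\frac12-\varepsilon_0)\beta}\varrho^{\gamma-1-\beta}\Lambda\sim\rho_0^{\gamma-1-(\frac12+\varepsilon_0)\beta}\Lambda$, whose exponent is positive thanks to $\varepsilon_0<(\gamma-1)/\beta-1$ (automatic when $\beta=\gamma-1$); the cross term $\rho_0^{(\frac12-\varepsilon_0)\beta}\varrho^{-\beta}\Lambda\,D_\eta U\sim\rho_0^{-(\frac12+\varepsilon_0)\beta}\Lambda\,D_\eta U$ is absorbed via Step 1 and $|\Lambda|_\infty$; and $\rho_0^{(\frac12-\varepsilon_0)\beta}D_\eta(U/\eta)$ is bounded by Lemma \ref{lemma-qiexiang}. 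The same structure yields the pointwise inequalities with weight $\rho_0^\beta$, where $\rho_0^\beta\varrho^{-\beta}\Lambda\,D_\eta U\sim\Lambda\,D_\eta U\in L^\infty$ and $\rho_0^\beta\varrho^{\gamma-1-\beta}\Lambda\sim\rho_0^{\gamma-1}\Lambda\in L^\infty$.

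\textbf{Main obstacle.} The genuine difficulty is the $U_t$ contribution, namely showing $|\chi^\sharp\rho_0^{(\frac12-\varepsilon_0)\beta}U_t|_2\le C(T)$ and $|\chi^\sharp\rho_0^\beta U_t|_\infty\le C(T)$. Whenever $(1-2\varepsilon_0)\beta\ge 1$, both are immediate consequences of $|(r^m\rho_0)^{1/2}U_t|_2\le C(T)$ (Lemma \ref{lemma-u-D2}) and of $|(r^m\rho_0)^{1/2}D_\eta U_t|_2\le C(T)$ (Lemma \ref{lemma-u-D3}), via the trivial comparison $\rho_0^{(\frac12-\varepsilon_0)\beta}\le C_0\rho_0^{1/2}$. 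For $\beta$ close to the lower bound $1/3$, however, the weight $\rho_0^{(\frac12-\varepsilon_0)\beta}$ is heavier than $\rho_0^{1/2}$ near $r=1$ and no such comparison exists. In that regime, my plan is to close the gap by a weighted Sobolev--Hardy interpolation between the two tangential estimates on $U_t$ of Lemmas \ref{lemma-u-D2}--\ref{lemma-u-D3}; the admissible exponent range produced by this interpolation matches exactly the combination of restrictions on $\varepsilon_0$ imposed by \eqref{varepsilon0}, which were chosen precisely so that this budget closes.
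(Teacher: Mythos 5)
Your Step 1 is a genuinely different (and more economical) route than the paper's. Rather than invoking Lemma \ref{lemma-time-space} followed by a Hardy argument, you exploit the already-established $L^\infty$ bound $|\chi^\sharp\rho_0^{-\varepsilon}D_\eta U|_\infty\le C(\varepsilon,T)$ from Lemma \ref{lemma-u-D3} and multiply by the integrable profile $\rho_0^{\varepsilon-(\frac{1}{2}+\varepsilon_0)\beta}$; the choice $\varepsilon_0\beta<\varepsilon<\min\{\beta,\frac{3\beta-1}{2}\}$ is admissible precisely because $\varepsilon_0<\frac{3}{2}-\frac{1}{2\beta}$. This works and is shorter than the paper's own derivation. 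Your Step 2 decomposition also agrees with the paper's $(\mathrm{J}_{20},\mathrm{J}_{21},\mathrm{J}_{22})$, and your handling of the pressure and cross terms via $|\Lambda|_\infty\le C(T)$ and Step 1 is correct.

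The actual gap is in your ``main obstacle'' paragraph: you stop at a plan, and the dichotomy on whether $(1-2\varepsilon_0)\beta\ge 1$ is unnecessary. No case-split is needed. One application of the Hardy inequality (Lemma \ref{hardy-inequality}, $p=2$, $d=1-r\sim\rho_0^\beta$, exponent $\vartheta=\frac{1}{2}-\varepsilon_0>-\frac{1}{2}$, using the uniform bounds on $\eta_r$ to convert $(U_t)_r$ into $D_\eta U_t$) gives
\begin{equation*}
\big|\chi^\sharp\rho_0^{(\frac{1}{2}-\varepsilon_0)\beta}U_t\big|_2\le C(T)\big|\chi^\sharp\rho_0^{(\frac{3}{2}-\varepsilon_0)\beta}(U_t,D_\eta U_t)\big|_2\le C(T)\big|\chi^\sharp\rho_0^{\frac{1}{2}}(U_t,D_\eta U_t)\big|_2\le C(T),
\end{equation*}
where the middle step uses $(\frac{3}{2}-\varepsilon_0)\beta>\frac{1}{2}$ (equivalent to $\varepsilon_0<\frac{3}{2}-\frac{1}{2\beta}$ from \eqref{varepsilon0}) and the last is Lemma \ref{lemma-qiexiang}. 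The same Hardy inequality with $p=\infty$ and $\vartheta=1$ yields $|\chi^\sharp\rho_0^\beta U_t|_\infty\le C(T)|\chi^\sharp\rho_0^{\frac{3\beta}{2}}(U_t,D_\eta U_t)|_2\le C(T)$, since $\frac{3\beta}{2}>\frac{1}{2}$ (this is where $\beta>\frac{1}{3}$ enters). This is exactly how the paper handles $\mathrm{J}_{20}$: a single Hardy step, not a separate interpolation scheme, and it works uniformly across the admissible range of $(\beta,\varepsilon_0)$. Writing this out closes your argument.
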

\begin{proof}
We divide the proof into three steps.

\smallskip
\textbf{1. $L^2$-estimates of $\chi^\sharp \rho_0^{-(\frac{1}{2}+\varepsilon_0)\beta}D_\eta U$.}  First, since 
\begin{equation*}
\varepsilon_0<\frac{1}{2}, \qquad \big(\frac{3}{2}-\varepsilon_0\big)\beta>\frac{1}{2},
\end{equation*}
we can choose fixed $(\iota,\sigma)$ in Lemma \ref{lemma-time-space} such that
\begin{equation*}
\iota+\sigma=\big(\frac{1}{2}-\varepsilon_0\big)\beta,\qquad \iota\in \big(-\frac{\beta}{2},1+\frac{\beta}{2}\big), \qquad 0<\sigma<\min\big\{(1-\varepsilon_0)\beta,\big(\frac{3}{2}-\varepsilon_0\big)\beta-\frac{1}{2}\big\}.
\end{equation*}
Then it follows from Lemmas \ref{lemma-upper jacobi}, \ref{lemma-time-space}, \ref{lemma-qiexiang}, and \ref{hardy-inequality} that
\begin{equation}\label{Ur-refine}
\begin{aligned}
\big|\chi^\sharp\rho_0^{-(\frac{1}{2}+\varepsilon_0)\beta}D_\eta U\big|_2&\leq C(T)\big(1+\big|\chi^\sharp\rho_0^{(\frac{1}{2}-\varepsilon_0)\beta-\sigma}(D_\eta U,U_t)\big|_2\big)\\
&\leq C(T)\big(1+\big|\chi^\sharp\rho_0^{(\frac{3}{2}-\varepsilon_0)\beta-\sigma}(D_\eta U,D_\eta^2 U,U_t,D_\eta U_{t})\big|_2\big)\\
&\leq C(T)\big(1+\big|\chi^\sharp\rho_0^{\frac{1}{2}}(D_\eta U,D_\eta^2 U,U_t,D_\eta U_{t})\big|_2\big)\leq C(T).
\end{aligned}    
\end{equation}

\smallskip
\textbf{2. $L^2$-estimate of $\chi^\sharp \rho_0^{(\frac{1}{2}-\varepsilon_0)\beta}D_\eta^2 U$.} First, in view of \eqref{v-express2}, \eqref{ell-0-ex} can be rewritten as
\begin{equation}\label{Urr-J12-J14}
D_\eta^2 U=\underline{\frac{1}{2\mu} U_t-\frac{m}{\eta}\big(D_\eta U-\frac{U}{\eta}\big)}_{:=\mathrm{J}_{20}} \ \underline{-\frac{1}{\beta}\frac{\Lambda}{\varrho^\beta} D_\eta U}_{:=\mathrm{J}_{21}} +\underline{\frac{A\gamma}{2\mu\beta}\varrho^{\gamma-1-\beta}\Lambda}_{:=\mathrm{J}_{22}}.
\end{equation}
Then it follows from the facts that
\begin{equation*}
\rho_0^\beta\sim 1-r,\qquad \big(\frac{1}{2}-\varepsilon_0\big)\beta>0,\qquad   \big(\frac{3}{2}-\varepsilon_0\big)\beta>\frac{1}{2},\qquad -\big(\frac{1}{2}+\varepsilon_0\big)\beta+\gamma-1>0,
\end{equation*}
\eqref{Ur-refine}, and Lemmas \ref{lemma-lower bound jacobi}, \ref{lemma-upper jacobi}, \ref{lemma-bound-lambda}--\ref{lemma-u-ell-D2}, and \ref{hardy-inequality} that, for all $t\in [0,T]$,
\begin{equation}\label{J12-J14-1}
\begin{aligned}
\big|\chi^\sharp \rho_0^{(\frac{1}{2}-\varepsilon_0)\beta}\mathrm{J}_{20}\big|_2&\leq C(T)\big|\chi^\sharp \rho_0^{(\frac{1}{2}-\varepsilon_0)\beta}(U,D_\eta U,U_t)\big|_2\\
&\leq C(T)\big|\chi^\sharp \rho_0^{(\frac{3}{2}-\varepsilon_0)\beta}(U,D_\eta U,D_\eta^2 U,U_t,D_\eta U_{t})\big|_2\leq C(T),\\
\big|\chi^\sharp \rho_0^{(\frac{1}{2}-\varepsilon_0)\beta}(\mathrm{J}_{21},\mathrm{J}_{22})\big|_2& \leq C(T)\big|\chi^\sharp \rho_0^{-(\frac{1}{2}+\varepsilon_0)\beta}(D_\eta U,\rho_0^{\gamma-1})\big|_2 |\Lambda|_\infty \leq C(T).
\end{aligned}    
\end{equation}

Therefore, combining \eqref{Urr-J12-J14}--\eqref{J12-J14-1} leads to
\begin{equation}\label{9984} 
\big|\chi^\sharp \rho_0^{(\frac{1}{2}-\varepsilon_0)\beta}D_\eta^2 U\big|_2\leq  \big|\chi^\sharp \rho_0^{(\frac{1}{2}-\varepsilon_0)\beta}(\mathrm{J}_{20},\mathrm{J}_{21},\mathrm{J}_{22})\big|_2 \leq C(T) \qquad\text{for all $t\in[0,T]$}.
\end{equation}

\smallskip
\textbf{3. $L^\infty$-estimate of $\chi^\sharp \rho_0^\beta D_\eta^2 U$.} Using the fact that
\begin{equation*}
\frac{3}{2}\beta>\frac{1}{2},
\end{equation*}
we obtain from Lemmas \ref{lemma-lower bound jacobi}, \ref{lemma-upper jacobi}, \ref{lemma-qiexiang}, and \ref{hardy-inequality} that, for all $t\in [0,T]$,
\begin{equation*}
\begin{aligned}
|\chi^\sharp \rho_0^\beta\mathrm{J}_{20}|_\infty&\leq C(T)|\chi^\sharp \rho_0^\beta(U,D_\eta U,U_t)|_\infty\\
&\leq C(T)\big(1+\big|\chi^\sharp \rho_0^{\frac{3\beta}{2}}(U_t, D_\eta U_{t})\big|_2\big) \leq C(T),\\
|\chi^\sharp \rho_0^\beta(\mathrm{J}_{21},\mathrm{J}_{22})|_\infty&\leq C(T)(|D_\eta U|_\infty+|\rho_0|_\infty^{\gamma-1}) |\chi^\sharp \Lambda|_\infty\leq C(T).
\end{aligned}    
\end{equation*}
Therefore, \eqref{Urr-J12-J14},  together with the above, leads to
\begin{equation*}
|\chi^\sharp \rho_0^{\beta} D_\eta^2 U|_\infty\leq  |\chi^\sharp \rho_0^{\beta}(\mathrm{J}_{20},\mathrm{J}_{21},\mathrm{J}_{22})|_\infty \leq  C(T) \qquad \text{for all $t\in[0,T]$}.
\end{equation*}

This completes the proof of Lemma \ref{lemma-u-ell-D2-refine}.
\end{proof}

Besides, we have the following estimate for $D_{\eta}\Lambda$.
\begin{Lemma}\label{lemma-jianhua}
There exists a constant $C(T)>0$ such that, for all $(t,r)\in [0,T]\times \bar I$,
\begin{equation*}
\chi^\sharp|D_\eta \Lambda| \leq  C(T)\chi^\sharp\Big(\rho_0^\beta\big(\int_0^t|D_\eta^2 U|\,\mathrm{d}s\big)^2+\int_0^t|(D_\eta^2 U,\rho_0^\beta D_\eta^3 U)|\,\mathrm{d}s+1\Big).
\end{equation*}
\end{Lemma}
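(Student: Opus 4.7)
The plan is to compute $D_\eta\Lambda=D_\eta^2(\varrho^\beta)$ directly and to exhibit a crucial cancellation that absorbs the would-be singular factor $(\rho_0^\beta)_r^2/\rho_0^\beta\sim(1-r)^{-1}$. Setting $\Theta:=\Lambda/\varrho^\beta=(\rho_0^\beta)_r/(\rho_0^\beta\eta_r)-\beta(m\psi_\ell+\psi_h)$ (which is just \eqref{v-express2} and \eqref{jianhua1} rewritten), the product rule gives
\begin{equation*}
D_\eta\Lambda=\Lambda\Theta+\varrho^\beta D_\eta\Theta=\varrho^\beta\Theta^2+\varrho^\beta D_\eta\Big[\frac{(\rho_0^\beta)_r}{\rho_0^\beta\eta_r}\Big]-\beta\varrho^\beta\bigl(m D_\eta\psi_\ell+D_\eta\psi_h\bigr).
\end{equation*}
Writing $h:=(\rho_0^\beta)_r/\rho_0^\beta$ and computing $D_\eta(h/\eta_r)=[(\rho_0^\beta)_{rr}/\rho_0^\beta-h^2]/\eta_r^2-h\psi_h/\eta_r$, the singular summand $\varrho^\beta h^2/\eta_r^2$ present in $\varrho^\beta\Theta^2$ \emph{cancels} the $-\varrho^\beta h^2/\eta_r^2$ produced by $\varrho^\beta D_\eta[h/\eta_r]$. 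The survivors form
\begin{equation*}
\varrho^\beta\Theta^2+\varrho^\beta D_\eta\Big[\frac{(\rho_0^\beta)_r}{\rho_0^\beta\eta_r}\Big]=\frac{\varrho^\beta(\rho_0^\beta)_{rr}}{\rho_0^\beta\eta_r^2}-\frac{\varrho^\beta h\,[2\beta(m\psi_\ell+\psi_h)+\psi_h]}{\eta_r}+\beta^2\varrho^\beta(m\psi_\ell+\psi_h)^2,
\end{equation*}
in which every prefactor $\varrho^\beta/\rho_0^\beta$, $(\rho_0^\beta)_r$, and $(\rho_0^\beta)_{rr}$ is uniformly bounded on $\mathrm{supp}\,\chi^\sharp$ by \eqref{distance-la} together with Lemmas \ref{lemma-lower bound jacobi} and \ref{lemma-upper jacobi}.

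I would next represent $\psi_h$ and $\psi_\ell$ as time integrals of $U$. From $\partial_t\log\eta_r=D_\eta U$, $\partial_t\log(\eta/r)=U/\eta$, and $\eta_r(0)=1$, $\eta(0)=r$, differentiation in $r$ yields
\begin{equation*}
\eta_r(t)\psi_h(t)=\int_0^t\eta_r(s)D_\eta^2U(s)\,\mathrm{d}s,\qquad \eta_r(t)\psi_\ell(t)=\int_0^t\eta_r(s)D_\eta(U/\eta)(s)\,\mathrm{d}s.
\end{equation*}
Combining the two-sided bounds on $\eta_r$ and $\eta$ with $|(U,D_\eta U)|_\infty\leq C(T)$ from Lemma \ref{lemma-u-D3} (and using $D_\eta(U/\eta)=D_\eta U/\eta-U/\eta^2$) gives the pointwise estimates $\chi^\sharp|\psi_h|\leq C(T)\int_0^t|D_\eta^2U|\,\mathrm{d}s$ and $\chi^\sharp|\psi_\ell|\leq C(T)$. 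Feeding these into the previous display controls every surviving term by the right-hand side of the claim.

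For the last two contributions $\varrho^\beta D_\eta\psi_h$ and $\varrho^\beta D_\eta\psi_\ell$, I differentiate the integral representations once more in $r$. Using $\partial_r(\eta_r D_\eta^2U)=\eta_{rr}D_\eta^2U+\eta_r^2 D_\eta^3U$ and $\eta_{rr}=\eta_r^2\psi_h$, one finds
\begin{equation*}
D_\eta\psi_h(t)=-\psi_h(t)^2+\frac{1}{\eta_r(t)^2}\int_0^t\bigl[\eta_{rr}(s)D_\eta^2U(s)+\eta_r(s)^2D_\eta^3U(s)\bigr]\,\mathrm{d}s,
\end{equation*}
and substituting $|\eta_{rr}(s)|\leq C(T)|\psi_h(s)|\leq C(T)\int_0^t|D_\eta^2U|\,\mathrm{d}\tau$ yields $\chi^\sharp|D_\eta\psi_h|\leq C(T)\chi^\sharp\bigl[(\int_0^t|D_\eta^2U|\,\mathrm{d}s)^2+\int_0^t|D_\eta^3U|\,\mathrm{d}s\bigr]$; multiplication by $\varrho^\beta\sim\rho_0^\beta$ reproduces precisely the weighted terms in the claim. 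The same procedure applied to $\psi_\ell$, with $D_\eta^2(U/\eta)=D_\eta^2U/\eta-2D_\eta U/\eta^2+2U/\eta^3$ absorbed by the exterior lower bound on $\eta$ and the $L^\infty$ bounds on $U,D_\eta U$, gives only $(\int_0^t|D_\eta^2U|\,\mathrm{d}s+1)$-type quantities and no $D_\eta^3U$. Collecting everything yields the inequality. The main obstacle is locating and executing the cancellation of $\varrho^\beta h^2/\eta_r^2$ between $\varrho^\beta\Theta^2$ and $\varrho^\beta D_\eta[h/\eta_r]$, without which the bound would inherit a non-integrable $\rho_0^{-\beta}$ factor and the pointwise estimate would break down near $r=1$.
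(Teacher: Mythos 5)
Your proposal is correct and follows essentially the same route as the paper's proof: expand $D_\eta\Lambda$ using \eqref{v-express2}, represent $\psi_h,\psi_\ell$ and their $D_\eta$-derivatives as time integrals via the flow-map equations, and then estimate pointwise on $\operatorname{supp}\chi^\sharp$ using the uniform bounds on $(\eta_r,\eta/r)$, $|(U,D_\eta U)|_\infty$, and $\varrho\sim\rho_0$. The only cosmetic difference is that you isolate $\Theta=\Lambda/\varrho^\beta$ and point out the cancellation of $\varrho^\beta h^2/\eta_r^2$ explicitly, whereas the paper simply writes the post-cancellation formula \eqref{recal} as "a direct calculation"; both arrive at the same expression.
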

\begin{proof}
A direct calculation, together with \eqref{eq:eta} and \eqref{v-express2}, gives
\begin{equation}\label{recal}
\begin{aligned}
D_\eta \Lambda&= \frac{\varrho^\beta(\rho_0^\beta)_{rr}}{\eta_r^2\rho_0^{\beta}}- \frac{\varrho^\beta(\rho_0^\beta)_r}{\eta_r\rho_0^\beta}\big(2\beta m \psi_\ell+ (2\beta+1)\psi_h\big) \\
&\quad + \beta\varrho^\beta\big(\beta ( m \psi_\ell+\psi_h)^2-(m D_\eta\psi_\ell+D_\eta\psi_h)\big).
\end{aligned}    
\end{equation}

Next, by $\eqref{eq:VFBP-La-eta}_2$, we have
\begin{equation*}
\begin{aligned}
((\log\eta_r)_r)_t=(D_\eta U)_r\implies (\log\eta_r)_r=\int_0^t (D_\eta U)_r\,\mathrm{d}s,\\
\big(\log (\frac{\eta}{r})\big)_{tr}= \big(\frac{U}{\eta}\big)_r\implies \big(\log (\frac{\eta}{r})\big)_r=\int_0^t\big(\frac{U}{\eta}\big)_r\,\mathrm{d}s,
\end{aligned} 
\end{equation*}
which, along with \eqref{jianhua1} and the chain rules, implies 
\begin{equation}\label{recal20}
\psi_h=\frac{1}{\eta_r}\int_0^t \eta_r D_\eta^2 U\,\mathrm{d}s,\qquad \psi_\ell=\frac{1}{\eta_r}\int_0^t \eta_r D_\eta\big(\frac{U}{\eta}\big)\,\mathrm{d}s,
\end{equation}
and
\begin{equation}\label{recal2}
\begin{aligned}
D_\eta\psi_h&=-\frac{\psi_h}{\eta_r}\int_0^t \eta_r D_\eta^2 U \,\mathrm{d}s+\frac{1}{\eta_r^2}\int_0^t (\eta_r^2\psi_h D_\eta^2 U +\eta_r^2 D_\eta^3 U) \,\mathrm{d}s\\
&=- \psi_h^2+\frac{1}{\eta_r^2}\int_0^t \big(\eta_r \psi_h (\eta_r \psi_h)_t +\eta_r^2 D_\eta^3 U\big) \,\mathrm{d}s =-\frac{1}{2}\psi_h^2+\frac{1}{\eta_r^2}\int_0^t \eta_r^2 D_\eta^3 U \,\mathrm{d}s,\\
D_\eta \psi_\ell&=-\frac{\psi_h}{\eta_r}\int_0^t \eta_r D_\eta\big(\frac{U}{\eta}\big)\,\mathrm{d}s+\frac{1}{\eta_r^2}\int_0^t \eta_r^2 \Big(\psi_h D_\eta\big(\frac{U}{\eta}\big)+ D_\eta^2\big(\frac{U}{\eta}\big)\Big)\,\mathrm{d}s\\
&=-\psi_h\psi_\ell+\frac{1}{\eta_r^2}\int_0^t\Big(\eta_r^2 \psi_h D_\eta\big(\frac{U}{\eta}\big)+\eta_r^2 D_\eta^2\big(\frac{U}{\eta}\big)\Big)\,\mathrm{d}s.
\end{aligned}
\end{equation}
Hence, it follows from Lemmas \ref{lemma-lower bound jacobi}, 
\ref{lemma-upper jacobi}, and \ref{lemma-u-D3} that
\begin{equation}\label{psi-lh}
\begin{aligned}
\chi^\sharp|\psi_h|&\leq C(T)\int_0^t |D_\eta^2 U|\,\mathrm{d}s,\quad \chi^\sharp|\psi_\ell|\leq C(T),\\
\chi^\sharp|D_\eta\psi_h|&\leq C(T)\Big(\int_0^t \chi^\sharp|D_\eta^2 U|\,\mathrm{d}s\Big)^2+C(T)\int_0^t \chi^\sharp|D_\eta^3 U|\,\mathrm{d}s,\\
\chi^\sharp|D_\eta\psi_\ell|&\leq C(T)\int_0^t \chi^\sharp|D_\eta^2 U|\,\mathrm{d}s+C(T),
\end{aligned}
\end{equation}
which, along with the fact that $\rho_0^\beta\in H^3(\frac{1}{2},1)$ and Lemma \ref{non-vac}, yields the desired estimates of this lemma.
\end{proof}

Now, we can establish the third-order elliptic estimate for $U$.
\begin{Lemma}\label{lemma-u-D3-ell}
There exists a constant $C(T)>0$ such that, for all $t\in [0,T]$,
\begin{equation*}
\big|\chi^\sharp\rho_0^{(\frac{3}{2}-\varepsilon_0)\beta}(\rho_0^{-\beta}D_\eta\Lambda,D_\eta^3 U)(t)\big|_2\leq C(T).
\end{equation*}
\end{Lemma}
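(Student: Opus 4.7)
The plan is to establish a coupled Grönwall-type inequality for the two quantities
$A(t):=|\chi^\sharp\rho_0^{(\frac{3}{2}-\varepsilon_0)\beta}D_\eta^3 U(t)|_2$ and $B(t):=|\chi^\sharp\rho_0^{(\frac{1}{2}-\varepsilon_0)\beta}D_\eta\Lambda(t)|_2$. First, I would apply $D_\eta$ to the elliptic identity \eqref{Urr-J12-J14} and use the definition $D_\eta\varrho^\beta=\Lambda$ to obtain the expansion
\[
D_\eta^3 U =\frac{1}{2\mu}D_\eta U_t+\frac{A\gamma}{2\mu\beta}\varrho^{\gamma-1-\beta}D_\eta\Lambda-\frac{1}{\beta}\varrho^{-\beta}(D_\eta U)(D_\eta\Lambda)+\widetilde{\mathrm{R}},
\]
where the remainder $\widetilde{\mathrm{R}}$ collects the lower-order pieces $\tfrac{m}{\eta}(D_\eta^2 U-D_\eta(\tfrac{U}{\eta}))$, $\tfrac{m}{\eta^2}(D_\eta U-\tfrac{U}{\eta})$, $\tfrac{\Lambda^2}{\varrho^{2\beta}}D_\eta U$, $\tfrac{\Lambda}{\varrho^\beta}D_\eta^2 U$, and $(\gamma-1-\beta)\varrho^{\gamma-2-\beta}\Lambda^2$. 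Multiplying by $\chi^\sharp\rho_0^{(\frac{3}{2}-\varepsilon_0)\beta}$ and taking the $L^2$-norm, the first term is controlled via $(\tfrac{3}{2}-\varepsilon_0)\beta\geq\tfrac{1}{2}$ and Lemma \ref{lemma-qiexiang} ($|\chi^\sharp\rho_0^{1/2}D_\eta U_t|_2\leq C(T)$), the $\varrho^{\gamma-1-\beta}D_\eta\Lambda$ term is absorbed into $B(t)$ since $\gamma-1\geq\beta$, and the $\varrho^{-\beta}(D_\eta U)(D_\eta\Lambda)$ term is likewise dominated by $|D_\eta U|_\infty B(t)$. The singular quadratic pieces $\Lambda^2/\varrho^{2\beta}\sim \rho_0^{-2\beta}$ multiplying $D_\eta U$ are absorbed by the refined bound $|\chi^\sharp\rho_0^{-(\frac{1}{2}+\varepsilon_0)\beta}D_\eta U|_2\leq C(T)$ of Lemma \ref{lemma-u-ell-D2-refine}, while $\tfrac{\Lambda}{\varrho^\beta}D_\eta^2 U$ is handled by $|\chi^\sharp\rho_0^{(\frac{1}{2}-\varepsilon_0)\beta}D_\eta^2 U|_2\leq C(T)$ from the same lemma. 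This yields $A(t)\leq C(T)(1+B(t))$.

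Second, I would invoke Lemma \ref{lemma-jianhua}, multiply by $\chi^\sharp\rho_0^{(\frac{1}{2}-\varepsilon_0)\beta}$, and take the $L^2$-norm. The linear time-integral of $\rho_0^\beta D_\eta^3 U$ gives exactly $\int_0^t A(s)\,\ds$ by Minkowski's integral inequality; the linear time-integral of $D_\eta^2 U$ and the constant piece are bounded by $C(T)$ using Lemma \ref{lemma-u-ell-D2-refine} together with $\chi^\sharp\rho_0^{(\frac{1}{2}-\varepsilon_0)\beta}\in L^2$ (valid since $\varepsilon_0<1$). The delicate contribution is the quadratic time-integral $\rho_0^\beta(\int_0^t|D_\eta^2 U|\,\ds)^2$; weighted, its $L^2$-norm equals the squared $L^4$-norm of $\chi^\sharp\rho_0^{(\frac{3}{4}-\frac{\varepsilon_0}{2})\beta}\int_0^t|D_\eta^2 U|\,\ds$, and by Minkowski together with the weighted interpolation $|\rho_0^{\alpha}f|_4\leq|\rho_0^{\alpha_1}f|_2^{1/2}|\rho_0^{\alpha_2}f|_\infty^{1/2}$ with $\alpha_1=(\frac{1}{2}-\varepsilon_0)\beta$, $\alpha_2=\beta$ (so $\alpha=\frac{\alpha_1+\alpha_2}{2}=(\frac{3}{4}-\frac{\varepsilon_0}{2})\beta$), it is bounded by $\big(\int_0^t|\chi^\sharp\rho_0^{(\frac{1}{2}-\varepsilon_0)\beta}D_\eta^2 U|_2^{1/2}|\chi^\sharp\rho_0^\beta D_\eta^2 U|_\infty^{1/2}\,\ds\big)^{2}\leq C(T)$, again by Lemma \ref{lemma-u-ell-D2-refine}. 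The outcome is $B(t)\leq C(T)+C(T)\int_0^t A(s)\,\ds$.

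Inserting the Step 2 bound into the Step 1 bound produces the scalar inequality $A(t)\leq C(T)+C(T)\int_0^t A(s)\,\ds$, and Grönwall's lemma gives $A(t)\leq C(T)$; feeding this back into Step 2 yields $B(t)\leq C(T)$. Since $|\chi^\sharp\rho_0^{(\frac{3}{2}-\varepsilon_0)\beta}\rho_0^{-\beta}D_\eta\Lambda|_2=B(t)$ by definition, the conclusion of the lemma follows from the convention in \S\ref{Section-maintheorem}.

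The principal obstacle is the quadratic self-interaction $\rho_0^\beta\bigl(\int_0^t|D_\eta^2 U|\,\ds\bigr)^{2}$ appearing in Lemma \ref{lemma-jianhua}: its natural weight sits exactly on the borderline, so neither an $L^\infty$ bound on $\rho_0^\beta D_\eta^2 U$ nor an $L^2$ bound on $\rho_0^{(\frac{1}{2}-\varepsilon_0)\beta}D_\eta^2 U$ closes the estimate on its own. The remedy is the above $L^4$-interpolation between the two weighted bounds of Lemma \ref{lemma-u-ell-D2-refine}, and it is precisely this step that requires the threshold $\varepsilon_0<\frac{1}{2}$ enforced in \eqref{varepsilon0}.
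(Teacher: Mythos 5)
Your proposal is correct and follows essentially the same two-step coupled Grönwall argument as the paper, just with the two steps presented in reverse order: the paper first bounds $|\chi^\sharp\rho_0^{(\frac{1}{2}-\varepsilon_0)\beta}D_\eta\Lambda|_2$ via Lemma~\ref{lemma-jianhua} and then feeds it into the $D_\eta$-differentiated elliptic identity \eqref{eq:3'}, while you do the reverse, and both close by Grönwall. Your $L^4$-interpolation treatment of the quadratic $\rho_0^\beta(\int_0^t|D_\eta^2 U|\,\mathrm{d}s)^2$ term is exactly the same Hölder step the paper performs by splitting the weight $\rho_0^{(\frac{3}{2}-\varepsilon_0)\beta}=\rho_0^{\beta}\cdot\rho_0^{(\frac12-\varepsilon_0)\beta}$ and taking one factor in $L^\infty$ and the other in $L^2$ (note a small typo: the power should be $\varrho^{\gamma-1-2\beta}\Lambda^2$, not $\varrho^{\gamma-2-\beta}\Lambda^2$).
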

\begin{proof}
We divide the proof into two steps.

\smallskip
\textbf{1.} According to Lemmas \ref{lemma-upper jacobi}, \ref{lemma-u-ell-D2-refine}--\ref{lemma-jianhua}, and \ref{hardy-inequality}, the H\"older inequality, and the Minkowski integral inequality such as
\begin{equation*}
\Big|\int_0^t \chi^\sharp\rho_0^{(\frac{1}{2}-\varepsilon_0)\beta}D_\eta^2 U\,\mathrm{d}s\Big|_2\leq \int_0^t \big|\chi^\sharp\rho_0^{(\frac{1}{2}-\varepsilon_0)\beta}D_\eta^2 U\big|_2 \,\mathrm{d}s,
\end{equation*}
we have
\begin{equation}\label{p1p}
\begin{aligned}
\big|\chi^\sharp\rho_0^{(\frac{1}{2}-\varepsilon_0)\beta} D_\eta\Lambda\big|_2&\leq C(T)\Big(\int_0^t \big|\chi^\sharp\rho_0^{\beta}D_\eta^2 U\big|_\infty\,\mathrm{d}s\Big)\Big(\int_0^t \big|\chi^\sharp\rho_0^{(\frac{1}{2}-\varepsilon_0)\beta} D_\eta^2 U\big|_2\,\mathrm{d}s\Big)\\
&\quad +C(T)\Big(\int_0^t \big|\chi^\sharp\rho_0^{(\frac{1}{2}-\varepsilon_0)\beta}(D_\eta^2 U,\rho_0^{\beta} D_\eta^3 U)\big|_2 \,\mathrm{d}s+1\Big)\\
&\leq C(T)\Big(\int_0^t \big|\chi^\sharp\rho_0^{(\frac{3}{2}-\varepsilon_0)\beta} D_\eta^3 U\big|_2\,\mathrm{d}s+1\Big).
\end{aligned}
\end{equation}

\smallskip
\textbf{2. $L^2$-estimate of $\rho_0^{(\frac{3}{2}-\varepsilon_0)\beta}D_\eta^3 U$.}  Applying $D_\eta$ to \eqref{Urr-J12-J14}, together with \eqref{jianhua1}, gives
\begin{equation}\label{eq:3'}
\!\!\begin{aligned}
D_\eta^3 U&=\underline{\frac{1}{2\mu} D_\eta U_t -\frac{m}{\eta} \big(D_\eta^2 U-\frac{2D_\eta U}{\eta}+\frac{2U}{\eta^2}\big)}_{:=\mathrm{J}_{23}}\\
&\quad \underline{-\frac{1}{\beta}\frac{\Lambda}{\varrho^\beta} D_\eta^2 U -\frac{1}{\beta} \big(\frac{D_\eta\Lambda}{\varrho^\beta}- \frac{\Lambda^2}{\varrho^{2\beta}}\big) D_\eta U}_{:=\mathrm{J}_{24}}\!\!\!+\underline{\frac{A\gamma}{2\mu\beta}\varrho^{\gamma-1}\big( \frac{D_\eta\Lambda}{\varrho^{\beta}}+\frac{\gamma-1-\beta}{\beta}\frac{\Lambda^2}{\varrho^{2\beta}}\big)}_{:=\mathrm{J}_{25}}\!\!.
\end{aligned}
\end{equation}

Then it follows from \eqref{p1p}, the facts that
\begin{equation*}
\rho_0^\beta\sim1-r,\qquad\big(\frac{3}{2}-\varepsilon_0\big)\beta>\frac{1}{2},\qquad -\big(\frac{1}{2}+\varepsilon_0\big)\beta+\gamma-1>0,
\end{equation*}
and Lemmas \ref{lemma-lower bound jacobi}, \ref{non-vac}, and \ref{lemma-bound-lambda}--\ref{lemma-u-ell-D2-refine} that, for all $t\in[0,T]$,
\begin{align*}
&\big|\chi^\sharp\rho_0^{(\frac{3}{2}-\varepsilon_0)\beta}\mathrm{J}_{23}\big|_2\leq C(T)\big|\chi^\sharp \rho_0^{(\frac{3}{2}-\varepsilon_0)\beta}(U,D_\eta U,D_\eta^2 U,D_\eta U_{t})\big|_2 \leq C(T),\\
&\begin{aligned}
\big|\chi^\sharp\rho_0^{(\frac{3}{2}-\varepsilon_0)\beta}\mathrm{J}_{24}\big|_2&\leq C(T)\big|\chi^\sharp \rho_0^{(\frac{1}{2}-\varepsilon_0)\beta}D_\eta^2 U\big|_2|\Lambda|_\infty\\
&\quad +C(T)\big(|D_\eta U|_\infty\big|\chi^\sharp \rho_0^{(\frac{1}{2}-\varepsilon_0)\beta} D_\eta\Lambda\big|_2+ \big|\chi^\sharp \rho_0^{-(\frac{1}{2}+\varepsilon_0)\beta}D_\eta U\big|_2|\Lambda|_\infty^2\big)\\
&\leq C(T)\Big(\int_0^t \big|\chi^\sharp\rho_0^{(\frac{3}{2}-\varepsilon_0)\beta} D_\eta^3 U\big|_2\,\mathrm{d}s+1\Big),
\end{aligned}\\
&\begin{aligned}
\big|\chi^\sharp\rho_0^{(\frac{3}{2}-\varepsilon_0)\beta}\mathrm{J}_{25}\big|_2
&\leq C(T)\big(|\rho_0|_\infty^{\gamma-1}\big|\chi^\sharp \rho_0^{(\frac{1}{2}-\varepsilon_0)\beta} D_\eta\Lambda\big|_2+ \big|\chi^\sharp\rho_0^{-(\frac{1}{2}+\varepsilon_0)\beta+\gamma-1}\big|_2|\Lambda|_\infty^2\big)\\
&\leq C(T)\Big(\int_0^t \big|\chi^\sharp\rho_0^{(\frac{3}{2}-\varepsilon_0)\beta} D_\eta^3 U\big|_2\,\mathrm{d}s+1\Big).    
\end{aligned}
\end{align*}

Therefore, the above estimates, together with \eqref{eq:3'}, gives 
\begin{equation*}
\big|\chi^\sharp \rho_0^{(\frac{3}{2}-\varepsilon_0)\beta}D_\eta^3 U\big|_2\leq  \big|\chi^\sharp \rho_0^{(\frac{3}{2}-\varepsilon_0)\beta}(\mathrm{J}_{23},\mathrm{J}_{24},\mathrm{J}_{25})\big|_2 \leq C(T)\Big(\int_0^t \big|\chi^\sharp\rho_0^{(\frac{3}{2}-\varepsilon_0)\beta} D_\eta^3 U\big|_2\,\mathrm{d}s+1\Big),
\end{equation*}
which, along with the Gr\"onwall inequality, implies 
\begin{equation*}
\big|\chi^\sharp \rho_0^{(\frac{3}{2}-\varepsilon_0)\beta}D_\eta^3 U(t)\big|_2\leq C(T) \qquad \text{for all $t\in[0,T]$}.
\end{equation*}

Finally, the estimate on $D_\eta\Lambda$ follows from the above and \eqref{p1p}.
\end{proof}

\subsubsection{Dissipation estimates}\label{934}

We first establish the $L^2([0,T];L^2)$-estimate for $\chi^\sharp \rho_0^{(\frac{1}{2}-\varepsilon_0)\beta}D_{\eta}U_t$.
\begin{Lemma}\label{lemma-Utx-refine}
There exists a constant $C(T)>0$ such that
\begin{equation*}
\int_0^t \big|\chi^\sharp \rho_0^{(\frac{1}{2}-\varepsilon_0)\beta}D_{\eta}U_t\big|_2^2\,\mathrm{d}s\leq C(T) \qquad\text{for all $t\in[0,T]$}.
\end{equation*}
\end{Lemma}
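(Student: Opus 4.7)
The proof relies on a weighted energy identity extracted from the time-differentiated momentum equation \eqref{eq-2order-pre}. The plan is to multiply \eqref{eq-2order-pre} by the test function
\[
2(\tilde{\zeta}^\sharp)^2\,\rho_0^{(1-2\varepsilon_0)\beta-1}\,r^{-m}\,U_t,
\]
where $\tilde{\zeta}^\sharp\in C^\infty(\bar I)$ is a smooth cut-off with $\tilde{\zeta}^\sharp\equiv 1$ on $[\tfrac12,1]$ and $\tilde{\zeta}^\sharp\equiv 0$ on $[0,\tfrac14]$ (so $\tilde{\zeta}^\sharp\geq \chi^\sharp$), and integrate over $I$. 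The $r^m\rho_0 U_{tt}$ term produces $\frac{d}{dt}\bigl|\tilde{\zeta}^\sharp\rho_0^{(1/2-\varepsilon_0)\beta}U_t\bigr|_2^2$, while the principal viscous term $2\mu(r^m\rho_0 D_\eta U_t/\eta_r)_r$ yields, after one integration by parts, the desired dissipation $4\mu\bigl|\tilde{\zeta}^\sharp\rho_0^{(1/2-\varepsilon_0)\beta}D_\eta U_t\bigr|_2^2$, together with several remainder integrals. The boundary contribution at $r=1$ vanishes because $(\tilde{\zeta}^\sharp)^2\rho_0^{(1-2\varepsilon_0)\beta}\to 0$ there.

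Next, the remainder terms naturally split into three groups. The first is the cross term arising from differentiating the weight under the integration by parts,
\[
\mathrm{R}_{\mathrm{cross}}\;=\;-\tfrac{2\mu((1-2\varepsilon_0)\beta-1)}{\beta}\int_0^1(\tilde{\zeta}^\sharp)^2\rho_0^{-2\varepsilon_0\beta}(\rho_0^\beta)_r\,\eta_r^{-1}\,U_t\,D_\eta U_t\,dr.
\]
The second gathers the coordinate-singularity contribution from $-2\mu m r^m\rho_0 U_t/\eta^2$ and the cross term coming from $\partial_r \tilde{\zeta}^\sharp$; both are absorbable by Hardy's inequality and the tangential bounds of Lemmas \ref{lemma-u-D2}--\ref{lemma-u-D3} and Lemma \ref{lemma-qiexiang}. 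The third collects the time derivative of the pressure term $A\gamma(\varrho^\gamma(D_\eta U+mU/\eta))_r\eta^m$ and the nonlinear contribution $-4\mu(r^m\rho_0|D_\eta U|^2/\eta_r)_r$: for these I would integrate by parts once more to shift $\partial_r$ off $\varrho^\gamma(D_\eta U+mU/\eta)$ and off $|D_\eta U|^2$, then estimate by $|\Lambda|_\infty\leq C(T)$ (Lemma \ref{lemma-bound-lambda}), the pointwise bounds $|(U,D_\eta U)|_\infty+|\chi^\sharp\rho_0^\beta D_\eta^2 U|_\infty\leq C(T)$ from Lemma \ref{lemma-u-D3} and Lemma \ref{lemma-u-ell-D2-refine}, and the weighted elliptic estimates $|\chi^\sharp\rho_0^{(3/2-\varepsilon_0)\beta}(D_\eta^3 U,\rho_0^{-\beta}D_\eta\Lambda)|_2\leq C(T)$ of Lemmas \ref{lemma-u-ell-D2}--\ref{lemma-u-D3-ell}.

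The main obstacle will be the cross term $\mathrm{R}_{\mathrm{cross}}$: its weight $\rho_0^{-2\varepsilon_0\beta}$ degenerates (blows up) as $r\to 1$ whenever $\varepsilon_0>0$, and a direct Cauchy--Schwarz splitting
$\rho_0^{-2\varepsilon_0\beta}=\rho_0^{(\frac12-\varepsilon_0)\beta}\cdot\rho_0^{-(\frac12+\varepsilon_0)\beta}$
would demand control of $\bigl|\tilde{\zeta}^\sharp\rho_0^{-(\frac12+\varepsilon_0)\beta}U_t\bigr|_2$, which is not directly available. The way around is to exploit the elliptic identity \eqref{Urr-J12-J14}: combined with the asymptotic bound \eqref{AN111}, i.e.\ $|D_\eta U(r)|\leq C(T)(1-r)\leq C(T)\rho_0^\beta$, the potentially singular term $\Lambda D_\eta U/\varrho^\beta$ in that identity is in fact uniformly bounded, yielding the pointwise bound $|U_t|\leq C(T)(|D_\eta^2 U|+1)$. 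Using this together with Lemma \ref{lemma-u-ell-D2-refine} and performing a second integration by parts on $\mathrm{R}_{\mathrm{cross}}$ (to convert $U_t D_\eta U_t$ into $\partial_r U_t^2$ and then onto the weight), the resulting integral of $U_t^2$ against a bounded remainder weight is controlled by $|\chi^\sharp\rho_0^{(1/2-\varepsilon_0)\beta}U_t|_2^2 + 1\leq C(T)$, modulo a small fraction of the dissipation that is absorbed into the left-hand side.

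Finally, applying the Gr\"onwall inequality to the resulting differential inequality for $|\tilde{\zeta}^\sharp\rho_0^{(1/2-\varepsilon_0)\beta}U_t|_2^2$, and using the initial bound $|\chi^\sharp\rho_0^{(1/2-\varepsilon_0)\beta}U_t|_{t=0}|_2\leq C_0$ (obtained by evaluating \eqref{Urr-J12-J14} at $t=0$ together with the regularity \eqref{distance-la} of the initial data), yields the stated estimate $\int_0^t|\chi^\sharp\rho_0^{(\frac{1}{2}-\varepsilon_0)\beta}D_\eta U_t|_2^2\,\mathrm{d}s\leq C(T)$.
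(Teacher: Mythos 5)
Your strategy is genuinely different from the paper's: you propose a weighted energy estimate obtained by testing the time-differentiated momentum equation \eqref{eq-2order-pre} against $(\tilde\zeta^\sharp)^2\rho_0^{(1-2\varepsilon_0)\beta-1}r^{-m}U_t$ and closing by Gr\"onwall, whereas the paper differentiates the \emph{integrated} form \eqref{zhongyao} in time to obtain a pointwise expression \eqref{9995} for $D_\eta U_t$ on $[\tfrac12,1]$, bounds it pointwise in $r$, takes the weighted $L^2$-norm, and then plugs in Lemma \ref{lemma-u-D3-ell} (via the Hardy inequality) and the $L^2([0,T];L^2)$-bound on $\rho_0^{1/2}U_{tt}$ from Lemma \ref{lemma-u-D3}. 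The paper's route never produces a cross term of the kind you must contend with, and that is precisely where your argument breaks.

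The gap is in your treatment of $\mathrm{R}_{\mathrm{cross}}$. You justify the pointwise bound $|U_t|\leq C(T)\big(|D_\eta^2 U|+1\big)$ by asserting that $\Lambda D_\eta U/\varrho^\beta$ is uniformly bounded, quoting \eqref{AN111}, i.e. $|U_r|\leq C(T)(1-r)\sim\rho_0^\beta$. But at the stage of Lemma \ref{lemma-Utx-refine}, that bound is not among the available a priori estimates: \eqref{AN111} is established only in Step~7 of \S\ref{subsection3.3} (and repeated in \S\ref{Section-global}) as a \emph{consequence} of $U_{rr}\in C((0,T];C(\bar I))$ together with the uniform estimate $|U_{rr}|_\infty\leq C(T)$, and the latter is exactly what the a priori estimates in \S\ref{Section-globalestimates} are designed to produce. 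Invoking \eqref{AN111} here would therefore be circular. What is actually available at this point is Lemma \ref{lemma-u-D3}, namely $|\chi^\sharp\rho_0^{-\varepsilon}D_\eta U|_\infty\leq C(\varepsilon,T)$ for $\varepsilon<\min\{\beta,\tfrac{3\beta-1}{2}\}$ strictly, which gives $|D_\eta U|\leq C(\varepsilon,T)\rho_0^\varepsilon$ with $\varepsilon<\beta$. Consequently $\big|\Lambda D_\eta U/\varrho^\beta\big|\lesssim\rho_0^{\varepsilon-\beta}$, which is \emph{not} uniformly bounded. (The same conclusion follows from the paper's own control of the corresponding term: \eqref{Urr-J12-J14} only gives $|\chi^\sharp\rho_0^\beta\mathrm{J}_{21}|_\infty\leq C(T)$, i.e. $\mathrm{J}_{21}$ has a $\rho_0^{-\beta}$-singularity.) So your claimed pointwise bound on $U_t$ does not follow, and neither does the fallback Cauchy--Schwarz splitting, since $\rho_0^{-(\tfrac12+\varepsilon_0)\beta}U_t$ is outside the reach of the established estimates. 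Moreover, the ``second integration by parts'' you propose on $\mathrm{R}_{\mathrm{cross}}$ only worsens the weight (differentiating $\rho_0^{-2\varepsilon_0\beta}(\rho_0^\beta)_r$ produces $\rho_0^{-2\varepsilon_0\beta-\beta}$), so it does not help you reduce to $\big|\chi^\sharp\rho_0^{(\tfrac12-\varepsilon_0)\beta}U_t\big|_2^2$, which requires the strictly stronger weight $\rho_0^{(1-2\varepsilon_0)\beta}$. I would recommend abandoning the energy approach here and proving the lemma as the paper does, from the pointwise formula \eqref{9995} for $D_\eta U_t$.
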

\begin{proof}
First, applying $\partial_t$ to \eqref{zhongyao}, together with $\eqref{eq:VFBP-La}_1$, gives
\begin{equation}\label{9995}
\!\!\begin{aligned}
D_\eta U_t&=-\frac{A(\gamma-1)}{2\mu}\varrho^{\gamma-1}\big(D_\eta U+\frac{mU}{\eta}\big)+\frac{m}{\varrho}\big(D_\eta U+\frac{mU}{\eta}\big)\int_r^1 \tilde{r}^m\rho_0 \big(\frac{D_\eta U}{\eta^{m+1}}-\frac{U}{\eta^{m+2}}\big)\mathrm{d}\tilde{r} \\
&\quad +\frac{m}{\varrho}\int_r^1 \tilde{r}^m\rho_0 \big(\frac{D_\eta U_t-|D_\eta U|^2}{\eta^{m+1}}-\frac{(m+1)UD_\eta U+U_t}{\eta^{m+2}}+\frac{(m+2)U^2}{\eta^{m+3}}\big)\mathrm{d}\tilde{r}\\
&\quad -\frac{1}{2\mu \varrho}\Big(\big(D_\eta U+\frac{mU}{\eta}\big)\int_r^1 \!\frac{\tilde{r}^m}{\eta^m}\rho_0U_t\,\mathrm{d}\tilde{r}+\!\int_r^1\!\tilde{r}^m \rho_0\big(\frac{U_{tt}}{\eta^m}-\frac{mUU_t}{\eta^{m+1}}\big)\,\mathrm{d}\tilde{r}\Big)+|D_\eta U|^2.
\end{aligned}
\end{equation}
Then, by following the calculation similar to \eqref{floww} in Lemma \ref{lemma-time-space}, it follows from $\rho_0^\beta\sim 1-r$, Lemmas \ref{lemma-lower bound jacobi} and \ref{non-vac}, and the H\"older inequality that, for all $r\in [\frac{1}{2},1]$,
\begin{equation}\label{9966}
\begin{aligned}
|D_\eta U_t|&\leq C(T)(|U|+|D_\eta U|)\Big(\rho_0^{\gamma-1}+\frac{1}{\rho_0}\int_r^1 \rho_0(|U|+|D_\eta U|+|U_t|)\mathrm{d}\tilde{r}\Big) \\
&\quad +\frac{C(T)}{\rho_0}\int_r^1  \rho_0 \big(|U_t|+|D_\eta U_t|+|U_{tt}|+|U||U_t|+|U|^2+|D_\eta U|^2)\mathrm{d}\tilde{r}+|D_\eta U|^2\\
&\leq C(T)\big(|\rho_0^{\gamma-1}(U,D_\eta U)|+|\rho_0^{\frac{\beta-1}{2}}(U,D_\eta U)|\big|\chi^\sharp\rho_0^\frac{1}{2}(U,D_\eta U,U_t)\big|_2\big) \\
&\quad +C(T)\rho_0^{\frac{\beta-1}{2}}\big|\chi^\sharp\rho_0^\frac{1}{2}(U_t,D_\eta U_t,U_{tt},UU_t,U^2,|D_\eta U|^2)\big|_2+|D_\eta U|^2.
\end{aligned}
\end{equation}
Using the facts that
\begin{equation*}
\rho_0^\beta\sim1-r,\qquad \big(\frac{1}{2}-\varepsilon_0\big)\beta+\gamma-1>(1-\varepsilon_0)\beta-\frac{1}{2}>-\frac{\beta}{2},
\end{equation*}
we can obtain from \eqref{9966} and Lemmas \ref{lemma-u-0order}, \ref{lemma-qiexiang}, and \ref{lemma-u-ell-D2-refine} that
\begin{equation}\label{9997}
\begin{aligned}
\big|\chi^\sharp\rho_0^{(\frac{1}{2}-\varepsilon_0)\beta}D_\eta U_t\big|_2&\leq C(T)\big|\chi^\sharp\rho_0^{(1-\varepsilon_0)\beta-\frac{1}{2}}(U,D_\eta U)\big|_2\big(1+\big|\chi^\sharp\rho_0^\frac{1}{2}(U,D_\eta U,U_t)\big|_2\big) \\
&\quad +C(T)\big( \big|\chi^\sharp\rho_0^\frac{1}{2}(U_t,D_\eta U_t,U_{tt})\big|_2+|U|_\infty\big|\chi^\sharp\rho_0^\frac{1}{2}U_t\big|_2\big)\\
&\quad +C(T)|(U,D_\eta U)|_\infty\big(\big|\chi^\sharp\rho_0^\frac{1}{2}(U,D_\eta U)\big|_2+\big|\chi^\sharp\rho_0^{(\frac{1}{2}-\varepsilon_0)\beta}D_\eta U\big|_2\big)\\
&\leq C(T)\big(1+\big|\chi^\sharp\rho_0^{(1-\varepsilon_0)\beta-\frac{1}{2}}D_\eta U\big|_2+\big|\chi^\sharp\rho_0^\frac{1}{2}U_{tt}\big|_2\big).
\end{aligned}
\end{equation}

Next, note that, by Lemmas \ref{lemma-upper jacobi}, \ref{lemma-qiexiang}, \ref{lemma-u-D3-ell}, and \ref{hardy-inequality}, we have
\begin{equation}\label{9998}
\begin{aligned}
\big|\chi^\sharp\rho_0^{(1-\varepsilon_0)\beta-\frac{1}{2}}D_\eta U\big|_2\leq C(T)\big|\chi^\sharp\rho_0^{\frac{3\beta}{2}}(D_\eta U,D_\eta^2 U,D_\eta^3 U)\big|_2\leq C(T).
\end{aligned}
\end{equation}
Therefore, combining \eqref{9997}--\eqref{9998}, along with Lemma \ref{lemma-u-D3}, leads to the desired estimate.
\end{proof}

Now, we establish the $L^2([0,T];L^2)$-estimate for $\chi^\sharp \rho_0^{(\frac{3}{2}-\varepsilon_0)\beta}D_{\eta}^2U_t$.
\begin{Lemma}\label{lemma-ell-D4-ex1}
There exists a constant $C(T)>0$ such that
\begin{equation*}
\int_0^t\big|\chi^\sharp\rho_0^{(\frac{3}{2}-\varepsilon_0)\beta}D_\eta^2 U_t \big|_2^2\,\mathrm{d}s \leq C(T) \qquad\text{for all $t\in [0,T]$}.
\end{equation*}
\end{Lemma}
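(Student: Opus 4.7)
The plan is to derive the bound by algebraic rearrangement of the identity \eqref{ell-0-t-0} rather than by energy differentiation, since all the ingredients have already been assembled in Lemmas \ref{lemma-u-D3}, \ref{lemma-u-ell-D2}, \ref{lemma-bound-lambda}, and \ref{lemma-Utx-refine}. Writing $D_\eta(D_\eta U_t+\tfrac{mU_t}{\eta})=D_\eta^2 U_t+\tfrac{m}{\eta}D_\eta U_t-\tfrac{mU_t}{\eta^2}$ in \eqref{ell-0-t-0}, I would express
\begin{equation*}
D_\eta^2 U_t=-\frac{m}{\eta}D_\eta U_t+\frac{mU_t}{\eta^2}+\mathrm{J}_{9}+\mathrm{J}_{10}+\mathrm{J}_{11},
\end{equation*}
with $\mathrm{J}_9,\mathrm{J}_{10},\mathrm{J}_{11}$ as in \eqref{ell-0-t-0}. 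Since we only need estimates on $r\in[\tfrac{1}{2},1]$, Lemma \ref{lemma-lower bound jacobi} ensures $1/\eta$ and $1/\eta^2$ are bounded on the support of $\chi^\sharp$, so the coordinate-singular prefactors are harmless and can simply be absorbed into $C(T)$.

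After multiplying by $\chi^\sharp \rho_0^{(\frac{3}{2}-\varepsilon_0)\beta}$, each term is estimated separately in $L^2_rL^2_t$. The two coordinate-singular terms are controlled by Lemma \ref{lemma-Utx-refine} and Lemma \ref{lemma-u-D2} after observing $\rho_0^{(\frac{3}{2}-\varepsilon_0)\beta}\le C\rho_0^{(\frac{1}{2}-\varepsilon_0)\beta}$ and $\rho_0^{(\frac{3}{2}-\varepsilon_0)\beta}\le C(r^m\rho_0)^{1/2}$. The principal part $\tfrac{1}{2\mu}U_{tt}$ of $\mathrm{J}_9$ is handled via Lemma \ref{lemma-u-D3}, while the remaining quadratic terms in $\mathrm{J}_9$ (products like $D_\eta U\cdot D_\eta^2 U$ and $(D_\eta U+\tfrac{2U}{\eta})D_\eta(\tfrac{U}{\eta})$) are bounded by $|(D_\eta U,\tfrac{U}{\eta})|_\infty\cdot|\chi^\sharp\rho_0^{(\frac{3}{2}-\varepsilon_0)\beta}(D_\eta^2 U,D_\eta(\tfrac{U}{\eta}))|_2$, using Lemmas \ref{lemma-u-D3} and \ref{lemma-u-ell-D2}.

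For $\mathrm{J}_{10}$ and $\mathrm{J}_{11}$, the key observation is that $V-U=\tfrac{2\mu}{\beta}\varrho^{-\beta}\Lambda$ (see \eqref{v-express2}) together with Lemmas \ref{non-vac} and \ref{lemma-bound-lambda} yields $|\chi^\sharp\rho_0^{\beta}(V-U)|_\infty\le C(T)$, while the effective velocity equation \eqref{eq:v} gives $V_t=-\tfrac{A\gamma}{\beta}\varrho^{\gamma-1-\beta}\Lambda$, which is uniformly bounded because $\beta\le\gamma-1$. Consequently, the factor $\chi^\sharp\rho_0^{(\frac{3}{2}-\varepsilon_0)\beta}(V-U)D_\eta U_t$ reduces, after redistributing the weight, to $\chi^\sharp\rho_0^{(\frac{1}{2}-\varepsilon_0)\beta}D_\eta U_t$ times an $L^\infty$ factor, and Lemma \ref{lemma-Utx-refine} closes the estimate in $L^2_t$. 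The remaining pieces $(V-U)|D_\eta U|^2$, $(V_t-U_t)D_\eta U$, and $\varrho^{\gamma-1}(D_\eta U+\tfrac{mU}{\eta})(V-U)$ are each handled by pulling the factor $\rho_0^\beta(V-U)$ (or the bounded $V_t$) into $L^\infty$ and reducing to quantities already bounded in $L^\infty_tL^2_x$ via Lemmas \ref{lemma-u-D1}--\ref{lemma-u-D3} and \ref{lemma-u-ell-D2-refine}.

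The main obstacle I anticipate is the bookkeeping of weights, specifically verifying that every nonlinear product coming from $\mathrm{J}_{10}$ and $\mathrm{J}_{11}$ decomposes cleanly into one factor that sits in $L^\infty$ (controlled by $|\Lambda|_\infty$, $|D_\eta U|_\infty$, or the bound on $V_t$) and one factor whose weight matches precisely the $(\frac{1}{2}-\varepsilon_0)\beta$-threshold of Lemma \ref{lemma-Utx-refine} or the $\frac{1}{2}$-threshold of Lemma \ref{lemma-u-D3}; this requires crucial use of the constraint $\beta\le\gamma-1$ (so $\gamma-1-\beta\ge 0$) together with the condition $\varepsilon_0<\frac{3}{2}-\frac{1}{2\beta}$ (so $(\frac{3}{2}-\varepsilon_0)\beta\ge\frac{1}{2}$) built into \eqref{varepsilon0}. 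Once these two structural constraints are invoked, collecting all the pieces and integrating in $t\in[0,T]$ yields the desired bound.
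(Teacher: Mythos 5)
Your proposal is correct and follows essentially the same route as the paper: starting from the time-differentiated elliptic identity \eqref{ell-0-t-0}, isolating $D_\eta^2 U_t$ from the coordinate-singular lower-order terms (harmless on $\mathrm{supp}\,\chi^\sharp$ by Lemma \ref{lemma-lower bound jacobi}), multiplying by $\chi^\sharp\rho_0^{(\frac{3}{2}-\varepsilon_0)\beta}$, and bounding $\mathrm{J}_9$ via Lemma \ref{lemma-u-D3} for the $U_{tt}$ piece, $\mathrm{J}_{10}$ via the weight split $(\frac{3}{2}-\varepsilon_0)\beta=\beta+(\frac{1}{2}-\varepsilon_0)\beta$ feeding into Lemma \ref{lemma-Utx-refine}, and $\mathrm{J}_{11}$ directly. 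The only cosmetic difference is that you control $V_t$ in $L^\infty$ through $V_t=-\tfrac{A\gamma}{\beta}\varrho^{\gamma-1-\beta}\Lambda$ with $\gamma-1-\beta\ge0$ and Lemma \ref{lemma-bound-lambda}, whereas the paper settles for the weighted $L^2$ bound $|\chi^\sharp\rho_0^{(\frac{3}{2}-\varepsilon_0)\beta}V_t|_2\le C(T)$ obtained from $|\chi^\sharp\rho_0^\beta V|_\infty$; both suffice.
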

\begin{proof}
First, it follows from \eqref{eq:v} and Lemmas \ref{lemma-bound depth}, \ref{lemma-v Linfty ex}, and \ref{lemma-qiexiang} that, for all $t\in [0,T]$,
\begin{equation*} 
\big|\chi^\sharp\rho_0^{(\frac{3}{2}-\varepsilon_0)\beta} V_t\big|_2 \leq C_0|\varrho|_\infty^{\gamma-1} \big|\chi^\sharp\rho_0^{(\frac{3}{2}-\varepsilon_0)\beta} (V,U)\big|_2\leq C(T)(|\chi^\sharp \rho_0^\beta V|_\infty+|\chi^\sharp\rho_0^{\frac{1}{2}}U|_2)\leq C(T).
\end{equation*}
Then we obtain from the above, \eqref{ell-0-t-0}, $\varepsilon_0<\frac{1}{2}$, Lemmas \ref{lemma-lower bound jacobi}, \ref{lemma-v Linfty ex}, and \ref{lemma-qiexiang}--\ref{lemma-u-ell-D2-refine} that
\begin{align}
& \ \begin{aligned}
\big|\chi^\sharp\rho_0^{(\frac{3}{2}-\varepsilon_0)\beta}\mathrm{J}_{9}\big|_2&\leq C(T) (|\chi^\sharp(U,D_\eta U)|_\infty+1)\big|\chi^\sharp\rho_0^{(\frac{3}{2}-\varepsilon_0)\beta}(U_{tt},U,D_\eta U,D_\eta^2 U)\big|_2\big)\\
&\leq C(T)\big(\big|(r^m\rho_0)^\frac{1}{2} U_{tt}\big|_2+1\big),\notag
\end{aligned}\\
&\begin{aligned}
\big|\chi^\sharp\rho_0^{(\frac{3}{2}-\varepsilon_0)\beta}\mathrm{J}_{10}\big|_2&\leq C_0 |D_\eta U|_\infty\big|\chi^\sharp\rho_0^{(\frac{3}{2}-\varepsilon_0)\beta}(V_t,U_t)\big|_2\\
&\quad +C_0 |\chi^\sharp \rho_0^\beta(V,U)|_\infty\big|\chi^\sharp\rho_0^{(\frac{1}{2}-\varepsilon_0)\beta}D_\eta U_t\big|_2 \\
&\quad +C_0|\chi^\sharp \rho_0^\beta(V,U)|_\infty|D_\eta U|_\infty\big|\chi^\sharp\rho_0^{(\frac{1}{2}-\varepsilon_0)\beta} D_\eta U\big|_2\\
&\leq C(T)\big(\big|\chi^\sharp\rho_0^{(\frac{1}{2}-\varepsilon_0)\beta}D_\eta U_t\big|_2+1\big),
\end{aligned}\label{proof3}\\
&\begin{aligned}
\big|\chi^\sharp\rho_0^{(\frac{3}{2}-\varepsilon_0)\beta}\mathrm{J}_{11}\big|_2&\leq C_0|\varrho|_\infty^{\gamma-1}\big|\chi^\sharp\rho_0^{(\frac{3}{2}-\varepsilon_0)\beta}(V_t,U_t)\big|_2\\
&\quad +C(T)|\varrho|_\infty^{\gamma-1} |\chi^\sharp\rho_0^\beta(V,U)|_\infty\big|\chi^\sharp\rho_0^{(\frac{1}{2}-\varepsilon_0)\beta}\big|_2|(U,D_\eta U)|_\infty\leq C(T),\notag
\end{aligned}
\end{align}
which thus yields 
\begin{equation*}
\begin{aligned}
\big|\chi^\sharp\rho_0^{(\frac{3}{2}-\varepsilon_0)\beta}D_\eta^2 U_t \big|_2&\leq \big|\chi^\sharp\rho_0^{(\frac{3}{2}-\varepsilon_0)\beta}(\mathrm{J}_{9},\mathrm{J}_{10},\mathrm{J}_{11})\big|_2+C(T)\big|\chi^\sharp\rho_0^{(\frac{3}{2}-\varepsilon_0)\beta}(U_t,D_\eta U_t)\big|_2\\
&\leq C(T)\big(\big|(r^m\rho_0)^\frac{1}{2} U_{tt}\big|_2+\big|\chi^\sharp\rho_0^{(\frac{1}{2}-\varepsilon_0)\beta}D_\eta U_t\big|_2+1\big).
\end{aligned}
\end{equation*}

Then this, together with Lemmas \ref{lemma-qiexiang} and \ref{lemma-Utx-refine},  leads to the desired estimate.
\end{proof}

Finally, we establish the $L^2([0,T];L^2)$-estimate for $\chi^\sharp \rho_0^{(\frac{3}{2}-\varepsilon_0)\beta}D_{\eta}^4U$.
\begin{Lemma}\label{lemma-ell-D4-ex2}
There exists a constant $C(T)>0$ such that
\begin{equation*}
\int_0^t\big|\chi^\sharp\rho_0^{(\frac{3}{2}-\varepsilon_0)\beta}D_\eta^4 U \big|_2^2\,\mathrm{d}s \leq C(T) \qquad\text{for all $t\in [0,T]$}.
\end{equation*}
\end{Lemma}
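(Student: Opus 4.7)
\bigskip

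The plan is to differentiate the identity \eqref{eq:3'} for $D_\eta^3 U$ once more and to control the resulting expression for $D_\eta^4 U$ in the weighted $L^2$-norm $|\chi^\sharp\rho_0^{(\frac{3}{2}-\varepsilon_0)\beta}\cdot|_2$, then integrate in time. Applying $D_\eta$ to \eqref{eq:3'} produces four qualitatively different contributions: (i) the highest-order tangential term $\frac{1}{2\mu}D_\eta^2 U_t$, already controlled in $L^2([0,T];L^2)$ by Lemma \ref{lemma-ell-D4-ex1}; (ii) coordinate-singularity terms of the form $\eta^{-1}(\cdots)$, which are harmless away from the origin since Lemma \ref{lemma-lower bound jacobi} gives $\eta\geq C(T)^{-1}$ on $[\frac12,1]$; (iii) terms cubic in $\Lambda/\varrho^\beta$, whose most singular summand is $\frac{A\gamma(\gamma-1)(\gamma-1-\beta)}{2\mu\beta^3}\varrho^{\gamma-1-3\beta}\Lambda^3$ — this is precisely the term $\clubsuit_1$ in Remark \ref{remark-energy function}, and the choice $\alpha=(\frac{3}{2}-\varepsilon_0)\beta$ together with $|\Lambda|_\infty\le C(T)$ (Lemma \ref{lemma-bound-lambda}) and $\rho_0^\beta\sim 1-r$ is exactly what makes it $L^2$-integrable in $[\frac12,1]$; and (iv) the truly dangerous contribution involving $D_\eta^2\Lambda$, which re-imports $D_\eta^4 U$ into the equation via \eqref{recal}--\eqref{recal2}.

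The treatment of (iv) is the core of the argument. Differentiating \eqref{recal2} once more and applying Lemmas \ref{lemma-lower bound jacobi}, \ref{lemma-upper jacobi}, \ref{lemma-u-D3}, and \ref{lemma-u-D3-ell}, one sees that $D_\eta^2\psi_h$ and $D_\eta^2\psi_\ell$ are, up to lower-order integrals, time-integrals of $D_\eta^4 U$ and $D_\eta^3(U/\eta)$. Substituting this into \eqref{recal} and then into the $D_\eta$-derivative of \eqref{eq:3'}, the combination
\[
\mathcal{T}_{\mathrm{cross}}:=(D_\eta^3 U)_r+(\beta^{-1}+2)\rho_0^{-\beta}(\rho_0^\beta)_r D_\eta^3 U = \rho_0^{-(1+2\beta)}\bigl(\rho_0^{1+2\beta}D_\eta^3 U\bigr)_r
\]
emerges as the ``effective'' derivative of $D_\eta^3 U$: the coefficient $\beta^{-1}+2$ is exactly the one needed to cancel the boundary-singular contributions produced by the pressure term $\varrho^{\gamma-1}D_\eta\Lambda/\varrho^\beta$ and by the chain-rule expansion of $(\Lambda D_\eta U)/\varrho^\beta$. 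Together with Lemmas \ref{lemma-bound-lambda}, \ref{lemma-u-ell-D2-refine}, \ref{lemma-u-D3-ell}, \ref{lemma-ell-D4-ex1}, and $\rho_0^\beta\in H^3(\frac12,1)$, this should deliver the Gr\"onwall-type bound
\[
\bigl|\zeta^\sharp\rho_0^{(\frac{3}{2}-\varepsilon_0)\beta}\mathcal{T}_{\mathrm{cross}}\bigr|_2
\le C(T)\Bigl(\!\int_0^t\!\bigl|\chi^\sharp\rho_0^{(\frac{3}{2}-\varepsilon_0)\beta}D_\eta^4 U\bigr|_2\mathrm{d}s+\bigl|\chi^\sharp\rho_0^{(\frac{3}{2}-\varepsilon_0)\beta}D_\eta^2 U_t\bigr|_2+1\Bigr).
\]

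With $\mathcal{T}_{\mathrm{cross}}$ under control, I next invoke the weighted elliptic estimate of Proposition \ref{prop2.1} in Appendix \ref{subsection2.2} with weight $\rho_0^{(\frac{3}{2}-\varepsilon_0)\beta}$ to recover $D_\eta^4 U$ from $\mathcal{T}_{\mathrm{cross}}$ and lower-order data:
\[
\bigl|\chi^\sharp\rho_0^{(\frac{3}{2}-\varepsilon_0)\beta}D_\eta^4 U\bigr|_2
\le C(T)\bigl(\mathcal{D}_{\mathrm{in}}(t,U)+\bigl|\chi^\sharp\rho_0^{(\frac{3}{2}-\varepsilon_0)\beta}D_\eta^3 U\bigr|_2+\bigl|\zeta^\sharp\rho_0^{(\frac{3}{2}-\varepsilon_0)\beta}\mathcal{T}_{\mathrm{cross}}\bigr|_2\bigr).
\]
Combining the last two displays, squaring, integrating on $[0,t]$, and using $\mathcal{D}_{\mathrm{in}}(t,U)\in L^1(0,T)$ (Lemma \ref{ell-inner}), $\sup_{[0,T]}|\chi^\sharp\rho_0^{(\frac{3}{2}-\varepsilon_0)\beta}D_\eta^3 U|_2\le C(T)$ (Lemma \ref{lemma-u-D3-ell}), and $\chi^\sharp\rho_0^{(\frac{3}{2}-\varepsilon_0)\beta}D_\eta^2 U_t\in L^2([0,T];L^2)$ (Lemma \ref{lemma-ell-D4-ex1}), Gr\"onwall's inequality closes the estimate and yields $\int_0^t|\chi^\sharp\rho_0^{(\frac{3}{2}-\varepsilon_0)\beta}D_\eta^4 U|_2^2\,\mathrm{d}s\le C(T)$.

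The principal obstacle is the algebraic bookkeeping needed to verify that the coefficient $\beta^{-1}+2$ in $\mathcal{T}_{\mathrm{cross}}$ is exactly the one that absorbs the boundary-singular leading terms arising from $\partial_r(D_\eta^3 U)$ after differentiating \eqref{eq:3'}: since the exponent $\alpha=(\frac{3}{2}-\varepsilon_0)\beta$ is sharp for the two critical quantities $\varrho^{\gamma-1-3\beta}\Lambda^3$ and $\varrho^\beta U_{tt}$ (cf.~Remark \ref{remark-energy function}), any miscount of powers of $\rho_0^\beta$ would destroy $L^2$-integrability near $r=1$. Once this cancellation is established, the remaining steps are a direct time integration combined with a single application of Gr\"onwall's lemma.
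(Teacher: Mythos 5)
Your proposal follows essentially the same route as the paper: you identify the same decomposition of the differentiated equation (including the correct observation that the highest-order tangential piece, the cubic-in-$\Lambda$ piece, and the $D_\eta^2\Lambda$ piece are the three structurally distinct obstacles), form the same cross-derivative quantity $\mathcal{T}_{\mathrm{cross}}=\rho_0^{-(1+2\beta)}(\rho_0^{1+2\beta}D_\eta^3 U)_r$, recognize that $D_\eta^2\psi_h,D_\eta^2\psi_\ell$ import a time integral of $D_\eta^4 U$ into the bound on $D_\eta^2\Lambda$, and close with Proposition \ref{prop2.1} plus Gr\"onwall exactly as the paper does. The only gap is that you leave the term-by-term verification (the paper's $\mathrm{J}_{26}$--$\mathrm{J}_{29}$) unchecked, but you correctly flag precisely where the sharpness of the weight $(\frac32-\varepsilon_0)\beta$ must be verified, so the strategy is sound and matches the source.
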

\begin{proof}
We divide the proof into two steps.

\smallskip
\textbf{1.} First, we can show that
\begin{equation}\label{control1}
|\chi^\sharp(\psi_\ell,\psi_h,D_\eta \Lambda)|_\infty\leq C(T)\Big(\int_0^t\big|\chi^\sharp \rho_0^{(\frac{3}{2}-\varepsilon_0)\beta} D_\eta^4 U\big|_2\,\mathrm{d}s+1\Big).
\end{equation}
Indeed, recall from Lemma \ref{lemma-jianhua} that
\begin{equation*}
\chi^\sharp|D_\eta \Lambda| \leq  C(T)\chi^\sharp\Big(\rho_0^\beta\big(\int_0^t|D_\eta^2 U|\,\mathrm{d}s\big)^2+\int_0^t|(D_\eta^2 U,\rho_0^\beta D_\eta^3 U)|\,\mathrm{d}s+1\Big).
\end{equation*}
Then \eqref{control1} can be directly obtained from the above, Lemma \ref{lemma-u-ell-D2-refine}, and the following controls due to Lemmas \ref{lemma-upper jacobi}, \ref{lemma-u-ell-D2}, \ref{lemma-u-D3-ell}, and \ref{sobolev-embedding}--\ref{hardy-inequality}:
\begin{equation}\label{control2}
\begin{aligned}
|\chi^\sharp(D_\eta^2 U,\rho_0^\beta D_\eta^3 U)|_\infty&\leq C(T)\big(|\chi^\sharp (D_\eta^2 U,D_\eta^3 U)|_{1}+|\chi^\sharp \rho_0^\beta D_\eta^3 U|_\infty\big)\\
&\leq C(T)\sum_{j=2}^4 \big|\chi^\sharp \rho_0^{(\frac{3}{2}-\varepsilon_0)\beta} D_\eta^j U\big|_2\leq C(T) \big(\big|\chi^\sharp \rho_0^{(\frac{3}{2}-\varepsilon_0)\beta} D_\eta^4 U\big|_2+1\big),
\end{aligned}
\end{equation}
where we have used  Lemma \ref{hardy-inequality} by taking $\vartheta=-\varepsilon_0>-\frac{1}{2}$ and $\varepsilon=\varepsilon_0$.

Next, we claim:
\begin{equation}\label{d2-lambda}
\big|\chi^\sharp\rho_0^{(\frac{1}{2}-\varepsilon_0)\beta}D_\eta^2\Lambda\big|_2\leq C(T)\Big(\int_0^t \big|\chi^\sharp\rho_0^{(\frac{3}{2}-\varepsilon_0)\beta}D_\eta^4 U\big|_2\,\mathrm{d}s+1\Big).  
\end{equation}
Indeed, it follows from the calculations in \eqref{recal} that
\begin{equation*}
\begin{aligned}
D_\eta^2 \Lambda&= \frac{\varrho^\beta (\rho_0^\beta)_{rrr}}{\eta_r^3\rho_0^{\beta}}- \frac{\varrho^\beta(\rho_0^\beta)_{rr}}{\eta_r^2\rho_0^{\beta}}\big(3\beta m \psi_\ell+ (3\beta+3)\psi_h\big)\\
&\quad + \frac{\varrho^\beta(\rho_0^\beta)_r}{\eta_r\rho_0^\beta} \big(3\beta^2 m^2 \psi_\ell^2+ 3\beta m(2\beta+1) \psi_\ell\psi_h+ (3\beta^2+3\beta+1)\psi_h^2\big)\\
&\quad- \frac{\varrho^\beta(\rho_0^\beta)_r}{\eta_r\rho_0^\beta} \big(3\beta m D_\eta\psi_\ell+ (3\beta+1)D_\eta\psi_h\big)\\ 
&\quad -\beta\varrho^\beta \big(\beta^2(m \psi_\ell+ \psi_h)^3 -3\beta(m \psi_\ell+ \psi_h)(m D_\eta\psi_\ell+ D_\eta\psi_h)+(m D_\eta^2\psi_\ell+ D_\eta^2\psi_h)\big),
\end{aligned}
\end{equation*}
which, along with the fact that $\rho_0^\beta\in H^3(\frac{1}{2},1)$, Lemmas \ref{lemma-lower bound jacobi} and \ref{non-vac}, and the Young inequality that, for all $(t,r)\in [0,T]\times \bar I$,
\begin{equation*}
\begin{aligned}
\chi^\sharp|D_\eta^2 \Lambda|&\leq  C(T)\chi^\sharp\big(1+|(\rho_0^\beta)_{rrr}|+|(\psi_\ell,\psi_h)|^2+|(D_\eta\psi_\ell,D_\eta\psi_h)|\big)\\
&\quad+ C(T)\chi^\sharp \rho_0^\beta \big(|(\psi_\ell,\psi_h)|^3+|(\psi_\ell,\psi_h)||(D_\eta\psi_\ell,D_\eta\psi_h)|+|(D_\eta^2\psi_\ell,D_\eta^2\psi_h)|\big).
\end{aligned}
\end{equation*}
Moreover, a direct calculation, together with \eqref{recal20}--\eqref{recal2}, gives
\begin{equation*}
\begin{aligned}
D_\eta^2\psi_h&=- \psi_h D_\eta\psi_h-\frac{2\psi_h}{\eta_r^2}\int_0^t \eta_r^2 D_\eta^3 U \,\mathrm{d}s+\frac{2}{\eta_r^3}\int_0^t \eta_r^3 \psi_h D_\eta^3 U \,\mathrm{d}s+\frac{1}{\eta_r^3}\int_0^t \eta_r^3 D_\eta^4 U \,\mathrm{d}s,\\
D_\eta^2 \psi_\ell&=-\psi_hD_\eta\psi_\ell-\psi_\ell D_\eta\psi_h-\frac{2\psi_h}{\eta_r^2}\int_0^t \eta_r^2 \psi_h D_\eta\big(\frac{U}{\eta}\big)\,\mathrm{d}s\\
&\quad +\frac{1}{\eta_r^3}\int_0^t \eta_r^3(2\psi_h^2+ D_\eta\psi_h) D_\eta\big(\frac{U}{\eta}\big)\,\mathrm{d}s+\frac{3}{\eta_r^3}\int_0^t \eta_r^3 \psi_h D_\eta^2\big(\frac{U}{\eta}\big)\,\mathrm{d}s\\
&\quad-\frac{2\psi_h}{\eta_r^2}\int_0^t \eta_r^2 D_\eta^2\big(\frac{U}{\eta}\big)\,\mathrm{d}s +\frac{1}{\eta_r^3}\int_0^t \eta_r^3 D_\eta^3\big(\frac{U}{\eta}\big)\,\mathrm{d}s,
\end{aligned}
\end{equation*}
which, along with \eqref{psi-lh}, Lemmas \ref{lemma-lower bound jacobi}, \ref{lemma-upper jacobi}, and \ref{lemma-qiexiang}, and the Young inequality that
\begin{equation*}
\begin{aligned}
\chi^\sharp|(D_\eta^2\psi_h,D_\eta^2\psi_\ell)|&\leq C(T)\Big(\big(\int_0^t \chi^\sharp|D_\eta^2 U|\,\mathrm{d}s\big)^3+ \big(\int_0^t \chi^\sharp|D_\eta^2 U|\,\mathrm{d}s\big)\big(\int_0^t \chi^\sharp|D_\eta^3 U|\,\mathrm{d}s\big)\Big)\\
&\quad +C(T)\Big(\int_0^t \chi^\sharp|(D_\eta^2 U,D_\eta^3 U,D_\eta^4 U)|\,\mathrm{d}s+1\Big).
\end{aligned}
\end{equation*}
Therefore, it follows from the facts that $\rho_0^\beta\in H^3(\frac{1}{2},1)$ and $\varepsilon_0<\frac{1}{2}$, \eqref{control2}, Lemmas \ref{lemma-u-ell-D2-refine} and \ref{lemma-u-D3-ell}, and the Minkowski integral inequality that 
\begin{equation*}
\begin{aligned}
\big|\chi^\sharp\rho_0^{(\frac{1}{2}-\varepsilon_0)\beta}D_\eta^2\Lambda\big|_2&\leq  C(T)\Big(|\rho_0^\beta|_\infty^{\frac{1}{2}-\varepsilon_0}|\chi^\sharp(\rho_0^\beta)_{rrr}|_2+\int_0^t \big|\chi^\sharp\rho_0^{(\frac{1}{2}-\varepsilon_0)\beta}D_\eta^3 U\big|_2\,\mathrm{d}s+1\Big)\\
&\quad +C(T)\Big(\int_0^t |\rho_0^\beta D_\eta^2 U|_\infty\,\mathrm{d}s\Big) \Big(\int_0^t \big|\chi^\sharp\rho_0^{(\frac{1}{2}-\varepsilon_0)\beta}D_\eta^2 U\big|_2\,\mathrm{d}s\Big)\int_0^t |D_\eta^2 U|_\infty\,\mathrm{d}s\\
&\quad+ C(T)\Big(1+\int_0^t \big|\chi^\sharp\rho_0^{(\frac{3}{2}-\varepsilon_0)\beta}D_\eta^3 U\big|_2\,\mathrm{d}s\Big) \int_0^t |D_\eta^2 U|_\infty\,\mathrm{d}s \\
&\quad + C(T)\int_0^t \big|\chi^\sharp\rho_0^{(\frac{3}{2}-\varepsilon_0)\beta}D_\eta^4 U\big|_2\,\mathrm{d}s\leq C(T)\Big(\int_0^t \big|\chi^\sharp\rho_0^{(\frac{3}{2}-\varepsilon_0)\beta}D_\eta^4 U\big|_2\,\mathrm{d}s+1\Big).
\end{aligned}
\end{equation*}
This completes the proof of \eqref{d2-lambda}.

\smallskip
\textbf{2.} Rewrite $\eqref{eq:VFBP-La-eta}_1$ in view of \eqref{jianhua1} as
\begin{equation*}
\varrho^\beta D_\eta^2 U+\frac{1}{\beta} D_\eta(\varrho^\beta) D_\eta U=-m\varrho^\beta D_\eta\big(\frac{U}{\eta}\big)+\frac{1}{2\mu}\varrho^\beta U_t +\frac{A\gamma}{2\mu\beta} \varrho^{\gamma-1} D_\eta(\varrho^\beta).
\end{equation*}
Applying $D_\eta^2$ to both sides of the above, then dividing the resulting equality by $\varrho^\beta$, together with \eqref{jianhua1}, implies 
\begin{equation}\label{979'}
\mathcal{T}_{\mathrm{cross}}:=(D_\eta^3 U)_r+\big(\frac{1}{\beta}+2\big) \frac{(\rho_0^\beta)_r}{\rho_0^\beta} D_\eta^3 U=\sum_{i=26}^{29}\mathrm{J}_i,
\end{equation}
where
\begin{equation*}
\begin{aligned}
\mathrm{J}_{26}&:=(1+2\beta) \eta_r(m \psi_\ell+ \psi_h) D_\eta^3 U-\big(1+\frac{2}{\beta} \big)\frac{\eta_r}{\varrho^{\beta}}D_\eta \Lambda D_\eta^2 U-\frac{1}{\beta}\frac{\eta_r}{\varrho^{\beta}} D_\eta^2\Lambda D_\eta U,\\
\mathrm{J}_{27}&:=-m\frac{\eta_r}{\varrho^{\beta}}\Big(D_\eta \Lambda D_\eta\big(\frac{U}{\eta}\big)+2 \Lambda D_\eta^2\big(\frac{U}{\eta}\big)+ \varrho^\beta D_\eta^3\big(\frac{U}{\eta}\big)\Big),\\
\mathrm{J}_{28}&:=\frac{1}{2\mu}\frac{\eta_r}{\varrho^{\beta}}\big(D_\eta \Lambda U_t+2\Lambda D_\eta U_t+ \varrho^\beta D_\eta^2 U_t\big),\\
\mathrm{J}_{29}&:=\frac{A\gamma}{2\mu\beta} \eta_r \varrho^{\gamma-1-3\beta}\Big( \varrho^{2\beta} D_\eta^2 \Lambda+\frac{3(\gamma-1)}{\beta}\varrho^\beta \Lambda D_\eta \Lambda +\frac{(\gamma-1)(\gamma-1-\beta)}{\beta^2}\Lambda^3\Big).
\end{aligned}
\end{equation*}

For $\mathrm{J}_{26}$--$\mathrm{J}_{29}$, it follows from the facts that
\begin{equation*}
\rho_0^\beta\sim 1-r,\quad \big(\frac{3}{2}-\varepsilon_0\big)\beta>\frac{1}{2}>\varepsilon_0,\quad\beta\leq \gamma-1,\quad \big(\frac{3}{2}-\varepsilon_0\big)\beta+\gamma-1-3\beta>-\frac{\beta}{2},
\end{equation*}
\eqref{control1}, \eqref{d2-lambda}, and Lemmas \ref{lemma-lower bound jacobi}, \ref{lemma-upper jacobi}, \ref{non-vac}, \ref{lemma-bound-lambda}--\ref{lemma-u-ell-D2-refine}, \ref{lemma-u-D3-ell}, and \ref{hardy-inequality} that
\begin{align}
&\begin{aligned}
\big|\chi^\sharp \rho_0^{(\frac{3}{2}-\varepsilon_0)\beta}\mathrm{J}_{26}\big|_2&\leq C(T)\big|\chi^\sharp(\psi_\ell,\psi_h)\big|_\infty\big|\chi^\sharp \rho_0^{(\frac{3}{2}-\varepsilon_0)\beta}D_\eta^3U\big|_2\\[0.5mm]
&\quad +C(T)\big(|\chi^\sharp D_\eta\Lambda|_\infty\big|\chi^\sharp \rho_0^{(\frac{1}{2}-\varepsilon_0)\beta} D_\eta^2U\big|_2\!+\!\big|\chi^\sharp \rho_0^{(\frac{1}{2}-\varepsilon_0)\beta}D_\eta^2 \Lambda\big|_2|D_\eta U|_\infty\big)\notag\\[0.5mm]
&\leq C(T)\Big(\int_0^t\big|\chi^\sharp \rho_0^{(\frac{3}{2}-\varepsilon_0)\beta} D_\eta^4 U\big|_2\,\mathrm{d}s+1\Big),
\end{aligned}\\[0.5mm]
&\begin{aligned}
\big|\chi^\sharp \rho_0^{(\frac{3}{2}-\varepsilon_0)\beta}\mathrm{J}_{27}\big|_2&\leq C(T)\big|\chi^\sharp \rho_0^{(\frac{1}{2}-\varepsilon_0)\beta}D_\eta \Lambda\big|_2|(U,D_\eta U)|_\infty\\
&\quad + C(T)(1+|\Lambda|_\infty) \big|\chi^\sharp \rho_0^{(\frac{3}{2}-\varepsilon_0)\beta} (U,D_\eta U,D_\eta^2 U,D_\eta^3 U)\big|_2\leq C(T),\notag
\end{aligned}\\
&\begin{aligned}
\big|\chi^\sharp \rho_0^{(\frac{3}{2}-\varepsilon_0)\beta}\mathrm{J}_{28}\big|_2&\leq C(T)\big(|\chi^\sharp D_\eta \Lambda|_\infty\big|\chi^\sharp \rho_0^{(\frac{1}{2}-\varepsilon_0)\beta} U_t\big|_2+|\Lambda|_\infty\big|\chi^\sharp \rho_0^{(\frac{1}{2}-\varepsilon_0)\beta} D_\eta U_t\big|_2\big)\\
&\quad +C(T)\big|\chi^\sharp \rho_0^{(\frac{3}{2}-\varepsilon_0)\beta}D_\eta^2 U_t\big|_2\\
&\leq C(T)|\chi^\sharp D_\eta \Lambda|_\infty\big|\chi^\sharp \rho_0^{(\frac{3}{2}-\varepsilon_0)\beta} (U_t,D_\eta U_t)\big|_2\\
&\quad +C(T)\big|\chi^\sharp \rho_0^{(\frac{3}{2}-\varepsilon_0)\beta}(D_\eta U_t,D_\eta^2 U_t)\big|_2\\
&\leq C(T)\Big(\int_0^t\big|\chi^\sharp \rho_0^{(\frac{3}{2}-\varepsilon_0)\beta} D_\eta^4 U\big|_2\,\mathrm{d}s+\big|\chi^\sharp \rho_0^{(\frac{3}{2}-\varepsilon_0)\beta} D_\eta^2 U_t\big|_2+1\Big),
\end{aligned}\label{j20}\\
&\begin{aligned}
\big|\chi^\sharp \rho_0^{(\frac{3}{2}-\varepsilon_0)\beta}\mathrm{J}_{29}\big|_2&\leq C(T) |\rho_0|_\infty^{\gamma-1}\big|\chi^\sharp \rho_0^{(\frac{1}{2}-\varepsilon_0)\beta}D_\eta^2\Lambda\big|_2 \\
&\quad +C(T)|\rho_0|_\infty^{\gamma-1-\beta} |\Lambda|_\infty\big|\chi^\sharp \rho_0^{(\frac{1}{2}-\varepsilon_0)\beta}D_\eta \Lambda\big|_2\notag\\
&\quad +C(T)\underline{(\gamma-1-\beta)\big|\chi^\sharp \rho_0^{(\frac{3}{2}-\varepsilon_0)\beta+\gamma-1-3\beta}\big|_2|\Lambda|_\infty^3}_{\,(=0,\text{ if $\beta=\gamma-1$})}\\
&\leq C(T)\Big(\int_0^t\big|\chi^\sharp \rho_0^{(\frac{3}{2}-\varepsilon_0)\beta} D_\eta^4 U\big|_2\,\mathrm{d}s+1\Big).
\end{aligned}
\end{align}

Therefore, it follows from \eqref{979'}--\eqref{j20} that
\begin{equation*}
\begin{aligned}
\big|\zeta^\sharp \rho_0^{(\frac{3}{2}-\varepsilon_0)\beta}\mathcal{T}_{\mathrm{cross}}\big|_2&\leq \big|\chi^\sharp \rho_0^{(\frac{3}{2}-\varepsilon_0)\beta}(\mathrm{J}_{26},\mathrm{J}_{27},\mathrm{J}_{28},\mathrm{J}_{29})\big|_2\\
&\leq C(T)\Big(\int_0^t\big|\chi^\sharp \rho_0^{(\frac{3}{2}-\varepsilon_0)\beta} D_\eta^4 U\big|_2\,\mathrm{d}s+\big|\chi^\sharp \rho_0^{(\frac{3}{2}-\varepsilon_0)\beta} D_\eta^2 U_t\big|_2+1\Big).
\end{aligned}
\end{equation*}
Since
\begin{equation*}
\chi^\sharp=\zeta-\chi+\zeta^\sharp,\qquad\rho_0^\beta\sim 1-r,
\end{equation*}
we can obtain from Lemma \ref{lemma-upper jacobi} and Lemma \ref{prop2.1} in Appendix \ref{subsection2.2} that
\begin{equation*}
\begin{aligned}
\big|\chi^\sharp \rho_0^{(\frac{3}{2}-\varepsilon_0)\beta} D_\eta^4 U\big|_2&\leq C_0\cD_{\mathrm{in}}(t,U)+\big|\zeta^\sharp \rho_0^{(\frac{3}{2}-\varepsilon_0)\beta} D_\eta^4 U\big|_2\\
&\leq C(T)\big(\cD_{\mathrm{in}}(t,U)+ \big|\chi^\sharp \rho_0^{(\frac{3}{2}-\varepsilon_0)\beta} D_\eta^3 U\big|_2 + \big|\zeta^\sharp \rho_0^{(\frac{3}{2}-\varepsilon_0)\beta}\mathcal{T}_{\mathrm{cross}}\big|_2\big)\\
&\leq C(T)\Big(\int_0^t\big|\chi^\sharp \rho_0^{(\frac{3}{2}-\varepsilon_0)\beta} D_\eta^4 U\big|_2\,\mathrm{d}s+\big|\chi^\sharp \rho_0^{(\frac{3}{2}-\varepsilon_0)\beta} D_\eta^2 U_t\big|_2+\cD_{\mathrm{in}}(t,U)+1\Big),
\end{aligned}
\end{equation*}
which, along with Lemmas \ref{ell-inner} and \ref{lemma-u-D3-ell}, and the Gr\"onwall inequality, leads to 
\begin{equation}\label{proof4}
\big|\chi^\sharp \rho_0^{(\frac{3}{2}-\varepsilon_0)\beta} D_\eta^4 U\big|_2
\leq C(T)\big( \big|\chi^\sharp \rho_0^{(\frac{3}{2}-\varepsilon_0)\beta} D_\eta^2 U_t\big|_2+1\big).
\end{equation}

Finally, this, together with Lemma \ref{lemma-ell-D4-ex1}, implies the desired result of this lemma.
\end{proof}

\subsection{Time-weighted estimates of the velocity}\label{sub94}

\begin{Lemma}\label{lemma-qiexiang-time weight}
There exists a constant $C(T)>0$ such that
\begin{equation*}
t\big|(r^m\rho_0)^\frac{1}{2}U_{tt}(t)\big|_2^2+\int_0^t s\Big|(r^m\rho_0)^\frac{1}{2}\big(D_\eta U_{tt},\frac{U_{tt}}{\eta}\big)\Big|_2^2\,\ds\leq C(T) \qquad\text{for all $t\in [0,T]$}.
\end{equation*}
\end{Lemma}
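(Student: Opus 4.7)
The strategy is to derive an equation for $U_{ttt}$ by applying $\partial_t$ to the second-order momentum equation \eqref{eq-2order-pre}, test it against $t U_{tt}$, and integrate by parts; the time weight $t$ absorbs the lack of an a priori bound on $U_{tt}(0)$. Concretely, differentiating \eqref{eq-2order-pre} in $t$ produces
\begin{equation*}
r^m\rho_0 U_{ttt}-2\mu\Big(r^m\rho_0\frac{D_\eta U_{tt}}{\eta_r}\Big)_r+2\mu m\,r^m\rho_0\frac{U_{tt}}{\eta^2}= \mathcal{R}_1+\mathcal{R}_2+\mathcal{R}_3,
\end{equation*}
where $\mathcal{R}_1$ collects the terms coming from $\partial_t^2$ of the pressure contribution (i.e.\ derivatives of $\eta^m\eta_r \varrho^\gamma (D_\eta U+\tfrac{mU}{\eta})$ and $\eta^m\eta_r\varrho^\gamma(V-U)\tfrac{U}{\eta}$), $\mathcal{R}_2$ collects the cubic self-interactions such as $D_\eta U\,|D_\eta U_t|^2$, $(D_\eta U)^2 D_\eta U_{tt}$, $D_\eta U D_\eta^2 U\,U_{tt}$, and $\mathcal{R}_3$ collects the lower-order coordinate-singularity terms involving $U^2/\eta^3$, $UU_t/\eta^2$, etc.

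I would then multiply by $tU_{tt}$ and integrate over $I$. Using the time identity $t\partial_t f = \partial_t(tf)-f$, one obtains, after integration by parts in $r$ and using the boundary conditions $U|_{r=0}=U_r|_{r=1}=0$,
\begin{equation*}
\frac{\mathrm{d}}{\mathrm{d}t}\Big(t\big|(r^m\rho_0)^{1/2}U_{tt}\big|_2^2\Big)+4\mu t\Big|(r^m\rho_0)^{1/2}\big(D_\eta U_{tt},\tfrac{U_{tt}}{\eta}\big)\Big|_2^2\leq \big|(r^m\rho_0)^{1/2}U_{tt}\big|_2^2+2t\sum_{i=1}^{3}\langle \mathcal{R}_i,U_{tt}\rangle.
\end{equation*}
Integrating in $s\in[0,t]$ and invoking Lemma~\ref{lemma-u-D3} to absorb the first term on the right as $\int_0^t |(r^m\rho_0)^{1/2}U_{tt}|_2^2\,\mathrm{d}s\le C(T)$ reduces the problem to bounding $\int_0^t s\langle \mathcal{R}_i,U_{tt}\rangle\,\mathrm{d}s$ by $C(T)+\tfrac{1}{2}\int_0^t s|(r^m\rho_0)^{1/2}(D_\eta U_{tt},U_{tt}/\eta)|_2^2\mathrm{d}s +C\int_0^t s|(r^m\rho_0)^{1/2}U_{tt}|_2^2\,\mathrm{d}s$, after which the Gr\"onwall inequality closes the estimate.

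To control $\mathcal{R}_1,\mathcal{R}_2,\mathcal{R}_3$ the key ingredients are: the uniform bounds for $(\eta_r,\eta/r,\varrho,|\zeta V|_\infty,|\chi^\sharp\rho_0^\beta V|_\infty)$ from Lemmas~\ref{lemma-lower bound jacobi}--\ref{lemma-v Linfty in} and \ref{lemma-upper jacobi}; the tangential bounds $|(r^m\rho_0)^{1/2}(U,D_\eta U,U/\eta,U_t,D_\eta U_t)|_2\le C(T)$ together with $|(D_\eta U,U/\eta,U)|_\infty\le C(T)$ and $\int_0^t|(r^m\rho_0)^{1/2}U_{tt}|_2^2\mathrm{d}s\le C(T)$ from Lemmas~\ref{lemma-u-D1}--\ref{lemma-u-D3}; and the weighted elliptic bounds $|\zeta r^{m/2}(D_\eta^2 U,D_\eta(U/\eta))|_2$, $|\chi^\sharp\rho_0^{1/2}D_\eta^2 U|_2$ from \S\ref{sub92}--\S\ref{sub93}. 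The delicate terms of $\mathcal{R}_1$ involving $\varrho^{\gamma-1}(V_t-U_t)$ and $\varrho^{\gamma-1}(V-U)(D_\eta U+\tfrac{mU}{\eta})U_{tt}$ are handled by splitting $I=[0,\tfrac12)\cup[\tfrac12,1]$ and applying Lemma~\ref{lemma-v Linfty in} and Lemma~\ref{lemma-v Linfty ex} respectively, together with the bound $|\zeta r^{m/2}V_t|_2\le C(T)$ already derived in \eqref{est-vt-0}. The most delicate cubic term in $\mathcal{R}_2$, namely $\int s\,r^m\rho_0(D_\eta U)^2 D_\eta U_{tt}\cdot U_{tt}/U_{tt}$ type contributions, is tamed by the $L^\infty$-bound on $D_\eta U$ from Lemma~\ref{lemma-u-D3} and the Cauchy-Schwarz inequality, yielding absorption into the dissipation $t|(r^m\rho_0)^{1/2}D_\eta U_{tt}|_2^2$ plus $\int_0^t s|(r^m\rho_0)^{1/2}U_{tt}|_2^2\,\mathrm{d}s$.

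The main obstacle I anticipate is bookkeeping the cubic boundary-degenerate term $s\langle \varrho^{\gamma-1}(V-U)^2,U_{tt}^2\rangle$-type pieces that appear after one further time differentiation of the pressure: near $r=1$ the factor $\varrho^{\gamma-1}\sim (1-r)^{(\gamma-1)/\beta}$ is not uniformly small, and one must exploit the weighted estimate $|\chi^\sharp\rho_0^\beta V|_\infty+|\chi^\sharp\rho_0^{1/2}U_t|_2\le C(T)$ combined with the refined bounds of Lemmas~\ref{lemma-u-ell-D2-refine}, \ref{lemma-Utx-refine}, and \ref{lemma-ell-D4-ex1} to control the resulting $\int_0^t s|\chi^\sharp\rho_0^{1/2}U_{tt}|_2|\chi^\sharp\rho_0^{(1/2-\varepsilon_0)\beta}D_\eta U_t|_2\cdots\mathrm{d}s$ within the available bounds; this is where the special role of $\varepsilon_0\in(0,\frac12)$ enters. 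Once these remainders are under control, Gr\"onwall's inequality delivers the stated estimate.
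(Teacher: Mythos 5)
The proposal follows essentially the same route as the paper: differentiate the second-order momentum equation \eqref{eq-2order-pre} (or its effective-velocity rewrite \eqref{eq-4order-pre}) in $t$, test against $U_{tt}$ weighted by $t$, use the boundary conditions $U|_{r=0}=U_r|_{r=1}=0$, absorb the remainders using the tangential and elliptic estimates of Lemmas~\ref{lemma-u-D1}--\ref{lemma-u-D3} and the $V$-estimates of Lemmas~\ref{lemma-v Linfty ex}--\ref{lemma-v Linfty in}, and close with the Gr\"onwall inequality. Your identification of the three remainder families $\mathcal{R}_1,\mathcal{R}_2,\mathcal{R}_3$ agrees with the paper's decomposition into $\mathrm{J}_{30},\mathrm{J}_{31},\mathrm{J}_{32}$ in \eqref{dt-j15-17}, and the estimates you cite are the right ones. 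The paper's preparatory rewriting of \eqref{eq-2order-pre} via the effective velocity $V$ (giving \eqref{eq-4order-pre}, using \eqref{v-expression}) is just a bookkeeping device that lets one turn time derivatives of the pressure contribution into lower-order quantities driven by the ODE \eqref{eq:v}; your $\mathcal{R}_1$ captures the same content.

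The one step that is not yet justified as written is the integration ``in $s\in[0,t]$'' of the differential inequality for $\frac{\mathrm{d}}{\mathrm{d}t}\bigl(t|(r^m\rho_0)^{1/2}U_{tt}|_2^2\bigr)$. You take it for granted that
\begin{equation*}
\lim_{\tau\to 0^+}\tau\big|(r^m\rho_0)^{1/2}U_{tt}(\tau)\big|_2^2=0,
\end{equation*}
stating ``the time weight $t$ absorbs the lack of an a priori bound on $U_{tt}(0)$,'' but this does not follow from merely knowing that $U_{tt}\in L^2([0,T];L^2_{r^m\rho_0})$ from Lemma~\ref{lemma-u-D3}: a function $F(\tau)=|(r^m\rho_0)^{1/2}U_{tt}(\tau)|_2^2\in L^1(0,T)$ can still have $\limsup_{\tau\to0^+}\tau F(\tau)>0$. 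The paper closes this gap by Lemma~\ref{bjr}: since $F\in L^1(0,T)$, one may select a sequence $\tau_k\to 0$ with $\tau_k F(\tau_k)\to 0$, integrate the differential inequality on $[\tau_k,t]$, and pass $\tau_k\to 0$ to obtain \eqref{tauk2}. Without this measure-theoretic selection the boundary term at $s=0$ is uncontrolled and the Gr\"onwall argument cannot be launched from $s=0$; once you add it, your proof agrees with the paper's.
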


\begin{proof}
First, by \eqref{v-expression}, we rewrite \eqref{eq-2order-pre} as
\begin{equation}\label{eq-4order-pre}
\begin{aligned}
& \ r^m\rho_0U_{tt} -A\gamma \Big(\varrho^\gamma\big(D_\eta U+\frac{mU}{\eta}\big)\Big)_r\eta^m +mA \eta^m\eta_rD_\eta(\varrho^{\gamma})\frac{U}{\eta}\\
&=2\mu \Big(r^m\rho_0\frac{D_\eta U_t}{\eta_r}\Big)_r-2\mu m r^m\rho_0\frac{U_t}{\eta^2}-4\mu \Big(r^m\rho_0\frac{|D_\eta U|^2}{\eta_r} \Big)_r+4\mu m r^m\rho_0\frac{U^2}{\eta^3}.
\end{aligned}    
\end{equation}
Note that the following equality holds in view of $\eqref{eq:VFBP-La}_1$:
\begin{equation*}
(\eta^m\eta_rD_\eta(\varrho^\gamma))_t=-\frac{\gamma}{\beta}\eta^m\eta_r\varrho^{\gamma-\beta}\Lambda\big(\gamma D_\eta U+\frac{m(\gamma-1)U}{\eta}\big)-\gamma\eta^m\eta_r\varrho^\gamma D_\eta\big(D_\eta U+\frac{mU}{\eta}\big).
\end{equation*}
Hence, based on the above, we can apply $\partial_t$ to \eqref{eq-4order-pre} and obtain from \eqref{jianhua1} that
\begin{equation*}
\begin{aligned}
&\,r^m\rho_0U_{ttt} +A\gamma  \Big(\gamma\varrho^{\gamma}\big(D_\eta U+\frac{mU}{\eta}\big)^2+\varrho^\gamma\big(|D_\eta U|^2+\frac{mU^2}{\eta^2}\big)-\varrho^\gamma\big(D_\eta U_t+\frac{mU_t}{\eta}\big)\Big)_r\eta^m\\
&-A\gamma m \Big(\varrho^\gamma\big(D_\eta U+\frac{mU}{\eta}\big)\Big)_r\eta^{m-1}U\\
&-mA\gamma\eta^m\eta_r\varrho^{\gamma-\beta}\Big(\big(\frac{\gamma}{\beta} D_\eta U\frac{U}{\eta}+\frac{m(\gamma-1)+1}{\beta}\frac{U^2}{\eta^2}-\frac{U_t}{\beta\eta}\big)\Lambda+ \varrho^\beta D_\eta\big(D_\eta U+\frac{mU}{\eta}\big) \frac{U}{\eta}\Big)\\
&=2\mu \Big(r^m\rho_0\frac{D_\eta U_{tt}}{\eta_r}\Big)_r-2\mu m r^m\rho_0\frac{U_{tt}}{\eta^2} \\
&\quad +12\mu \Big(\frac{r^m\rho_0}{\eta_r}\big((D_\eta U)^3-D_\eta U D_\eta U_t\big) \Big)_r+12\mu m r^m\rho_0\Big(\frac{U_tU}{\eta^3}- \frac{U^3}{\eta^4}\Big).
\end{aligned}
\end{equation*}

Multiplying the above by $U_{tt}$ and integrating the resulting equality over $I$, we arrive at
\begin{equation}\label{dt-j15-17}
\frac{1}{2}\frac{\mathrm{d}}{\dt}\big|(r^m\rho_0)^\frac{1}{2}U_{tt}\big|_2^2+2\mu \big|(r^m\rho_0)^\frac{1}{2}D_\eta U_{tt}\big|_2^2+2\mu m\Big|(r^m\rho_0)^\frac{1}{2}\frac{U_{tt}}{\eta}\Big|_2^2=\sum_{i=30}^{32}\mathrm{J}_i,
\end{equation}
where
\begin{align*}
&\begin{aligned}
\mathrm{J}_{30}&:=A\gamma\int_0^1 \eta^m\eta_r\varrho^{\gamma}\Big(\gamma\big(D_\eta U+\frac{mU}{\eta}\big)^2+\big(|D_\eta U|^2+\frac{mU^2}{\eta^2}\big)\Big)\big(D_\eta U_{tt}+\frac{mU_{tt}}{\eta}\big)\,\mathrm{d}r\\
&\quad\,\, - A\gamma\int_0^1 \eta^m\eta_r\varrho^{\gamma}\big(D_\eta U_t+\frac{mU_t}{\eta}\big) \big(D_\eta U_{tt}+\frac{mU_{tt}}{\eta}\big)\,\mathrm{d}r\notag \\
&\quad\,\, -A\gamma m\int_0^1 \eta^m\eta_r\varrho^{\gamma}\big(D_\eta U+\frac{mU}{\eta}\big)\Big(\frac{U}{\eta}\big(D_\eta U_{tt}+\frac{mU_{tt}}{\eta}\big)+D_\eta\big(\frac{U}{\eta}\big)U_{tt} \Big)\,\mathrm{d}r,
\end{aligned}\\
&\begin{aligned}
\mathrm{J}_{31}&:= mA\gamma  \int_0^1 \eta^m\eta_r\varrho^{\gamma-\beta}\big(\frac{\gamma}{\beta} D_\eta U\frac{U}{\eta} + \frac{m(\gamma-1)+1}{\beta}\frac{U^2}{\eta^2}-\frac{U_t}{\beta\eta}\big)\Lambda U_{tt}\,\mathrm{d}r\\
&\quad\,\, + mA\gamma  \int_0^1 \eta^m\eta_r\varrho^{\gamma}D_\eta\big(D_\eta U + \frac{mU}{\eta}\big) \frac{U}{\eta}U_{tt} \,\mathrm{d}r,\notag
\end{aligned}\\
&\begin{aligned}
\mathrm{J}_{32}&:=12\mu \int_0^1 r^m\rho_0\Big((D_\eta U_t-|D_\eta U|^2)D_\eta U D_\eta U_{tt}+m\big(\frac{U_{t}}{\eta}-\frac{U^2}{\eta^2}\big)\frac{UU_{tt}}{\eta^2}\Big)\,\mathrm{d}r.\notag
\end{aligned}
\end{align*}
Then it follows from \eqref{v-expression}, Lemmas \ref{lemma-bound depth}, \ref{lemma-lower bound jacobi}, \ref{lemma-v Linfty ex}, \ref{lemma-v Linfty in}, \ref{lemma-u-D1}, and \ref{lemma-u-D3}--\ref{lemma-qiexiang}, and the H\"older and Young inequalities that
\begin{align}
&\begin{aligned}
\mathrm{J}_{30}&\leq C_0|\varrho|_\infty^{\gamma-1} \Big|\big(1,D_\eta U,\frac{U}{\eta}\big)\Big|_\infty \Big|(r^m\rho_0)^\frac{1}{2}\big(D_\eta U,\frac{U}{\eta},D_\eta U_{t},\frac{U_{t}}{\eta}\big)\Big|_2\Big|(r^m\rho_0)^\frac{1}{2}\big(D_\eta U_{tt},\!\frac{U_{tt}}{\eta}\big)\Big|_2\\
&\quad +C(T)|\varrho|_\infty^{\gamma-1} \Big|\big(D_\eta U,\frac{U}{\eta}\big)\Big|_\infty\Big(\Big|\zeta r^\frac{m}{2}D_\eta\big(\frac{U}{\eta}\big)\Big|_2+\big|\chi^\sharp\rho_0^\frac{1}{2}(U,D_\eta U)\big|_2\Big)\big|(r^m\rho_0)^\frac{1}{2}U_{tt}\big|_2\notag\\
&\leq C(T)\big(1+\big|(r^m\rho_0)^\frac{1}{2}U_{tt}\big|_2^2\big)+\frac{\mu}{8}\Big|(r^m\rho_0)^\frac{1}{2}\big(D_\eta U_{tt},\frac{U_{tt}}{\eta}\big)\Big|_2^2,  
\end{aligned}\\
&\begin{aligned}
\mathrm{J}_{31}&\leq C_0|\varrho|_\infty^{\gamma-1-\beta}|\Lambda|_\infty\Big(\Big|\big(D_\eta U,\frac{U}{\eta}\big)\Big|_\infty \Big|(r^m\rho_0)^\frac{1}{2}\frac{U}{\eta}\Big|_2+\Big|(r^m\rho_0)^\frac{1}{2}\frac{U_t}{\eta}\Big|_2\Big)\big|(r^m\rho_0)^\frac{1}{2}U_{tt}\big|_2\\
&\quad +C_0|\varrho|_\infty^{\gamma-1}\Big|\frac{U}{\eta}\Big|_\infty \Big|(r^m\rho_0)^\frac{1}{2}\Big(D_\eta^2 U,D_\eta\big(\frac{U}{\eta}\big)\Big)\Big|_2\big|(r^m\rho_0)^\frac{1}{2}U_{tt}\big|_2\\
&\leq C(T)\cE(t,U)\big|(r^m\rho_0)^\frac{1}{2}U_{tt}\big|_2\leq C(T)\big(1+\big|(r^m\rho_0)^\frac{1}{2}U_{tt}\big|_2^2\big),
\end{aligned}\label{j15-17}\\
&\begin{aligned}
\mathrm{J}_{32}&\leq C(T)\Big|\big(1,D_\eta U,\frac{U}{\eta}\big)\Big|_\infty^2 \Big|(r^m\rho_0)^\frac{1}{2}\big(D_\eta U_{t},\frac{U_{t}}{\eta},D_\eta U,\frac{U}{\eta}\big)\Big|_2\Big|(r^m\rho_0)^\frac{1}{2}\big(D_\eta U_{tt},\frac{U_{tt}}{\eta}\big)\Big|_2\notag\\
&\leq C(T)+\frac{\mu}{8}\Big|(r^m\rho_0)^\frac{1}{2}\big(D_\eta U_{tt},\frac{U_{tt}}{\eta}\big)\Big|_2^2.
\end{aligned}
\end{align}

Therefore, \eqref{dt-j15-17}--\eqref{j15-17} lead to
\begin{equation*}
\frac{\mathrm{d}}{\dt}\big(t\big|(r^m\rho_0)^\frac{1}{2}U_{tt}\big|_2^2\big)+t\Big|(r^m\rho_0)^\frac{1}{2}\big(D_\eta U_{tt},\frac{U_{tt}}{\eta}\big)\Big|_2^2\leq C(T)\big(1+\big|(r^m\rho_0)^\frac{1}{2}U_{tt}\big|_2^2\big).
\end{equation*}
Then we integrate the above over $[\tau,t]$ and use Lemma \ref{lemma-u-D3} to obtain
\begin{equation}\label{tauk}
t\big|(r^m\rho_0)^\frac{1}{2}U_{tt}(t)\big|_2^2+\int_\tau^t s\Big|(r^m\rho_0)^\frac{1}{2}\big(D_\eta U_{tt},\frac{U_{tt}}{\eta}\big)\Big|_2^2\,\ds\leq \tau\big|(r^m\rho_0)^\frac{1}{2}U_{tt}(\tau)\big|_2^2+C(T).    
\end{equation}
Thanks to Lemmas \ref{lemma-u-D3} and \ref{bjr}, we can find a sequence $\{\tau_k\}_{k=1}^\infty$ such that
\begin{equation*}
\tau_k\to 0, \quad\ \tau_k\big|(r^m\rho_0)^\frac{1}{2}U_{tt}(\tau_k)\big|_2\to 0 
\qquad\,\, \text{as $k\to\infty$}.
\end{equation*}
Taking $\tau=\tau_k$ in \eqref{tauk} and then letting $k\to\infty$, we finally obtain
\begin{equation}\label{tauk2}
t\big|(r^m\rho_0)^\frac{1}{2}U_{tt}(t)\big|_2^2+\int_0^t s\Big|(r^m\rho_0)^\frac{1}{2}\big(D_\eta U_{tt},\frac{U_{tt}}{\eta}\big)\Big|_2^2\,\ds\leq  C(T) \qquad \text{for all $t\in[0,T]$}.    
\end{equation}

This completes the proof.
\end{proof}

\begin{Lemma}\label{lemma-ell-time weight-in1}
There exists a constant $C(T)>0$ such that
\begin{equation*}
t\cD_{\mathrm{in}}(t,U) \leq C(T) \qquad\,\, \text{for all $t\in [0,T]$}.
\end{equation*}
\end{Lemma}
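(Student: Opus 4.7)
The goal is to bound each of the six summands in $\cD_{\mathrm{in}}(t,U)$ pointwise in $t\in (0,T]$ by a quantity of the form $C(T)(1+|\zeta r^{m/2}U_{tt}|_2^2)$ (plus terms already known to be uniformly bounded); multiplying through by $t$ and invoking Lemma \ref{lemma-qiexiang-time weight} will then close the estimate. The term $|\zeta r^{m/2}U_{tt}|_2^2$ is controlled directly by Lemma \ref{lemma-qiexiang-time weight}. For the remaining five summands, I would apply Lemma \ref{im-1} at the outset to replace the quantities $(D_\eta^2 U_t, D_\eta(\frac{U_t}{\eta}))$, $(D_\eta^4 U, D_\eta^3(\frac{U}{\eta}))$, and $D_\eta(\frac{1}{\eta}D_\eta(\frac{U}{\eta}))$ by the ``div-curl'' combinations $D_\eta^k(D_\eta U+\frac{mU}{\eta})$ and $\frac{1}{\eta}D_\eta^{k-1}(D_\eta U+\frac{mU}{\eta})$ for $k=1,2,3$, all weighted by $\zeta r^{m/2}$.

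\textbf{Stage 1: the time-spatial pair.} Starting from the identity \eqref{ell-0-t-0}, I would reproduce the pointwise-in-$t$ estimates on $\mathrm{J}_9,\mathrm{J}_{10},\mathrm{J}_{11}$ exactly as in \eqref{proof0}, relying on the already-established uniform bounds of Lemmas \ref{lemma-bound depth}, \ref{lemma-v Linfty in}, \ref{lemma-u-D3}, and \eqref{est-vt-0}. This yields
\begin{equation*}
\Big|\zeta r^{m/2}\Big(D_\eta^2 U_t,D_\eta\big(\tfrac{U_t}{\eta}\big)\Big)\Big|_2\leq C(T)\big(|\zeta r^{m/2}U_{tt}|_2+1\big).
\end{equation*}
Squaring, multiplying by $t$, and invoking Lemma \ref{lemma-qiexiang-time weight} controls both time-spatial summands of $\cD_{\mathrm{in}}$.

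\textbf{Stage 2: the fourth-order spatial terms.} Next I would apply $D_\eta$ and $\frac{1}{\eta}D_\eta$ to \eqref{ell-1} and repeat the pointwise-in-$t$ estimates on $\mathrm{J}_{15}$--$\mathrm{J}_{19}$ leading to \eqref{proof1}. The result is
\begin{equation*}
\Big|\zeta r^{m/2}D_\eta^3\big(D_\eta U+\tfrac{mU}{\eta}\big)\Big|_2+\Big|\zeta r^{m/2}D_\eta\big(\tfrac{1}{\eta}D_\eta(D_\eta U+\tfrac{mU}{\eta})\big)\Big|_2\leq C(T)\Big(\Big|\zeta r^{m/2}\Big(D_\eta^2 U_t,D_\eta\big(\tfrac{U_t}{\eta}\big)\Big)\Big|_2+1\Big),
\end{equation*}
where the crucial fact is that the bound \eqref{high-vr-vrr} on $(D_\eta^2 V,D_\eta(V/\eta))$ in $L^2$-norm (uniform in $t\in[0,T]$) has already been proved during Lemma \ref{ell-inner}. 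Squaring and multiplying by $t$ reduces the remaining three summands of $\cD_{\mathrm{in}}$ to the bound obtained in Stage 1.

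\textbf{Main obstacle.} The only delicate point is ensuring that every lower-order piece populating $\mathrm{J}_9$--$\mathrm{J}_{19}$ admits a uniform bound on all of $[0,T]$, not merely an integrable one. The estimates on $V_t$, $(D_\eta V,\frac{V}{\eta})$, and $(D_\eta^2 V,D_\eta(\frac{V}{\eta}))$ from \eqref{est-vt-0}, Lemma \ref{lemma-Vr-L2}, and \eqref{high-vr-vrr} supply exactly this uniform control, while Lemma \ref{lemma-u-D3} supplies uniform control on $(D_\eta U, U/\eta, U)$ and on the $L^2$-based third-order interior norms of $U$. With these ingredients the $t$-weight in Lemma \ref{lemma-qiexiang-time weight} propagates through the elliptic system without loss, and summing the Stage 1 and Stage 2 estimates produces $t\cD_{\mathrm{in}}(t,U)\leq C(T)$.
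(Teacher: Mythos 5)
Your proposal is correct and follows the same route as the paper: the paper's proof consists precisely of multiplying the pointwise-in-$t$ estimates \eqref{proof0} and \eqref{proof1} (already established during Lemma \ref{ell-inner}) by $\sqrt{t}$, absorbing $\sqrt{t}|(r^m\rho_0)^{1/2}U_{tt}|_2$ via Lemma \ref{lemma-qiexiang-time weight}, and converting back to the individual derivatives via Lemma \ref{im-1}. You correctly identify the one delicate point, namely that the lower-order ingredients in $\mathrm{J}_9$--$\mathrm{J}_{19}$ (the bounds on $V_t$, $D_\eta V$, $D_\eta^2 V$, and the $L^\infty$ control on $(U,D_\eta U,U/\eta)$) are uniform on $[0,T]$ rather than merely time-integrable, which is what allows the $t$-weight to propagate without loss.
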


\begin{proof}
Note that, by \eqref{proof0}, \eqref{proof1}, and Lemma \ref{lemma-qiexiang-time weight}, we have 
\begin{equation*} 
\begin{aligned}
&\,\sqrt{t}\Big|\zeta r^\frac{m}{2}D_\eta\big(D_\eta U_t+ \frac{mU_t}{\eta}\big)\Big|_2\leq C(T)\big(1+\sqrt{t}\big|(r^m\rho_0)^\frac{1}{2} U_{tt}\big|_2\big)\leq C(T),\\
&\,\sqrt{t} \Big|\zeta r^\frac{m}{2}D_\eta^3\big(D_\eta U+ \frac{mU}{\eta}\big)\Big|_2+
\sqrt{t} \Big|\zeta r^\frac{m}{2}D_\eta\Big(\frac{1}{\eta}D_\eta\big(D_\eta U+ \frac{m U}{\eta}\big)\Big)\Big|_2\\
&\quad\leq  C(T)\Big(\sqrt{t}\Big|\zeta r^\frac{m}{2} \Big(D_\eta^2 U_{t},D_\eta\big(\frac{U_{t}}{\eta}\big)\Big)\Big|_2+1\Big).
\end{aligned}
\end{equation*}
Hence, this, together with Lemma \ref{im-1}, leads to the desired result of this lemma.
\end{proof}

\begin{Lemma}\label{lemma-ell-time weight-ex2}
There exists a constant $C(T)>0$ such that
\begin{equation*}
t\cD_{\mathrm{ex}}(t,U)\leq C(T) \qquad\text{for all $t\in [0,T]$}.
\end{equation*}
\end{Lemma}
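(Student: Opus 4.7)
The plan is to upgrade each of the three quantities composing $\cD_{\mathrm{ex}}(t,U)$ to a time-weighted pointwise-in-time bound, using Lemma \ref{lemma-qiexiang-time weight} as the fundamental input. The key observation is that the proofs of Lemmas \ref{lemma-Utx-refine}, \ref{lemma-ell-D4-ex1}, and \ref{lemma-ell-D4-ex2} all produce pointwise-in-time inequalities (followed by either time integration or a Gr\"onwall step on the spatial quantity $D_\eta^4 U$), so multiplying by $\sqrt{t}$ should commute cleanly through each step.

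First, the $U_{tt}$ piece is immediate: since $r^{m/2} \geq (1/2)^{m/2}$ on $[1/2,1]$, one has
$$t\,\big|\chi^\sharp \rho_0^{1/2} U_{tt}(t)\big|_2^2 \leq C_0\, t\,\big|(r^m\rho_0)^{1/2} U_{tt}(t)\big|_2^2 \leq C(T)$$
directly from Lemma \ref{lemma-qiexiang-time weight}. Next, for $\sqrt{t}\,\big|\chi^\sharp \rho_0^{(\frac{3}{2}-\varepsilon_0)\beta} D_\eta^2 U_t(t)\big|_2$, I would revisit the pointwise inequality derived in the proof of Lemma \ref{lemma-ell-D4-ex1}, namely
$$\big|\chi^\sharp\rho_0^{(\frac{3}{2}-\varepsilon_0)\beta}D_\eta^2 U_t\big|_2 \leq C(T)\big(\big|(r^m\rho_0)^\frac{1}{2} U_{tt}\big|_2+\big|\chi^\sharp\rho_0^{(\frac{1}{2}-\varepsilon_0)\beta}D_\eta U_t\big|_2+1\big),$$
multiply by $\sqrt{t}$, and handle the intermediate term by going back to the pointwise estimate \eqref{9997} from Lemma \ref{lemma-Utx-refine}:
$$\big|\chi^\sharp\rho_0^{(\frac{1}{2}-\varepsilon_0)\beta}D_\eta U_t\big|_2 \leq C(T)\big(1+\big|\chi^\sharp\rho_0^{(1-\varepsilon_0)\beta-\frac{1}{2}}D_\eta U\big|_2+\big|\chi^\sharp\rho_0^\frac{1}{2}U_{tt}\big|_2\big).$$
The weighted factor $\sqrt{t}\,\big|\chi^\sharp\rho_0^\frac{1}{2}U_{tt}\big|_2$ is again bounded by Lemma \ref{lemma-qiexiang-time weight}, and the $D_\eta U$ term is uniformly bounded by \eqref{9998} (which combines Lemmas \ref{lemma-u-ell-D2-refine}, \ref{lemma-u-D3-ell}, and \ref{hardy-inequality}), so $\sqrt{t}\,\big|\chi^\sharp\rho_0^{(\frac{1}{2}-\varepsilon_0)\beta}D_\eta U_t\big|_2 \leq C(T)$, which in turn closes $\sqrt{t}\,\big|\chi^\sharp\rho_0^{(\frac{3}{2}-\varepsilon_0)\beta}D_\eta^2 U_t\big|_2 \leq C(T)$.

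Finally, for $\sqrt{t}\,\big|\chi^\sharp \rho_0^{(\frac{3}{2}-\varepsilon_0)\beta} D_\eta^4 U(t)\big|_2$, I would use the pointwise estimate \eqref{proof4}:
$$\big|\chi^\sharp \rho_0^{(\frac{3}{2}-\varepsilon_0)\beta} D_\eta^4 U\big|_2 \leq C(T)\big(\big|\chi^\sharp \rho_0^{(\frac{3}{2}-\varepsilon_0)\beta} D_\eta^2 U_t\big|_2+1\big),$$
which is the output of the Gr\"onwall argument applied to $D_\eta^4 U$ in the proof of Lemma \ref{lemma-ell-D4-ex2}. Multiplying by $\sqrt{t}$ and invoking the second step concludes the proof.

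The main obstacle is verifying that \eqref{proof4} is genuinely pointwise in $t$ (which it is, because the Gr\"onwall step absorbs only the $\int_0^t |\chi^\sharp\rho_0^{(3/2-\varepsilon_0)\beta} D_\eta^4 U|_2 \,ds$ term while $|\chi^\sharp\rho_0^{(3/2-\varepsilon_0)\beta}D_\eta^2 U_t|_2$ remains on the right-hand side evaluated at time $t$), and that the auxiliary bound \eqref{control1}--\eqref{d2-lambda} on $D_\eta\Lambda$ and $D_\eta^2\Lambda$ inside the proof of Lemma \ref{lemma-ell-D4-ex2} are compatible with this pointwise reading\,---\,they are, since they involve only time integrals of already-controlled quantities, which pose no issue when we multiply by $\sqrt{t} \leq \sqrt{T}$ at the end. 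No new energy identity is needed.
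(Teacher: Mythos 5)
Your proof is correct and follows essentially the same route as the paper: multiplying the pointwise-in-time inequalities \eqref{9997}--\eqref{9998}, \eqref{proof3}, and \eqref{proof4} by $\sqrt{t}$ and feeding in the time-weighted $U_{tt}$ bound from Lemma \ref{lemma-qiexiang-time weight}. The paper compresses the first step into a cross-reference to Lemmas \ref{lemma-Utx-refine} and \ref{lemma-qiexiang-time weight}, which you have simply unpacked; your explicit treatment of the $\rho_0^{1/2}U_{tt}$ piece and your check that \eqref{proof4} is genuinely pointwise in $t$ are both sound.
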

\begin{proof}
First, using the same argument as in the proofs of Lemmas \ref{lemma-Utx-refine} and \ref{lemma-qiexiang-time weight}, we can obtain
\begin{equation*}
\sqrt{t}\big|\chi^\sharp \rho_0^{(\frac{1}{2}-\varepsilon_0)\beta}D_\eta U_t\big|_2\leq C(T).
\end{equation*}
Hence, this, together with \eqref{proof3}, \eqref{proof4}, and Lemma \ref{lemma-qiexiang-time weight}, leads to
\begin{equation*}
\begin{aligned}
\sqrt{t}\big|\chi^\sharp \rho_0^{(\frac{3}{2}-\varepsilon_0)\beta}(D_\eta^2 U_t, D_\eta^4 U)\big|_2
&\leq C(T)\big(\sqrt{t}\big|\chi^\sharp \rho_0^{(\frac{3}{2}-\varepsilon_0)\beta} D_\eta^2 U_t\big|_2+1\big)\\
&\leq C(T)\big(\sqrt{t}\big|(r^m\rho_0)^{\frac{1}{2}}U_{tt}\big|_2+1\big)\leq C(T).
\end{aligned}
\end{equation*}

This completes the proof.
\end{proof}

\section{Global-In-Time Well-Posedness of the Classical Solutions}\label{Section-global}

Our goal of this section is to prove Theorem \ref{Theorem1.1}. We divide the proof into two steps.

\smallskip
\textbf{1. Existence and Uniqueness of $(U,\eta)$.} According to Theorem \ref{local-Theorem1.1}, there exists a unique classical solution $(U,\eta)(t,r)$ of \textbf{IBVP} \eqref{eq:VFBP-La-eta} in $[0,T_*]\times \bar I$ for some $T_*>0$, satisfying \eqref{b1-lo}.  Now, suppose that $\overline{T}_*>0$ is the life span of $(U,\eta)(t,r)$, and $T$ is any fixed time satisfying $T\in (0,\overline{T}_*)$. We claim:
\begin{equation}\label{claim-span}
\overline{T}_*=\infty.
\end{equation}
Otherwise, if $\overline{T}_*<\infty$, collecting the uniform {\it a priori} bounds obtained in  Lemmas \ref{lemma-lower bound jacobi}, \ref{lemma-upper jacobi}, \ref{ell-inner}--\ref{ell-ex}, and \ref{lemma-ell-time weight-in1}--\ref{lemma-ell-time weight-ex2}, we arrive at all the desired global uniform estimates:
\begin{equation}\label{811}
\begin{aligned}
&\sup_{t\in[0,\overline{T}_*)}(\cE(t,U)+t\cD(t,U))+\int_0^{\overline{T}_*}\cD (s,U)\,\mathrm{d}s\leq C(\overline{T}_*),\\
&\,\,(\eta_r,\frac{\eta}{r})(t,r)\in [C^{-1}(\overline{T}_*),C(\overline{T}_*)] \qquad \text{for all $(t,r)\in [0,\overline{T}_*)\times \bar I$},
\end{aligned}
\end{equation}
where $C(\overline{T}_*)\in (1,\infty)$ is a constant depending only on $(n,\mu,\gamma,A,\beta,\varepsilon_0,\rho_0,u_0,\cK_1,\cK_2,\overline{T}_*)$. Moreover, from Lemma \ref{lemma-gaowei} in Appendix \ref{AppB}, $\eqref{811}_1$ also leads to
\begin{equation}\label{811'}
\sup_{t\in[0,\overline{T}_*)}(\mathring\cE(t,U)+t\mathring\cD(t,U))
+\int_0^{\overline{T}_*}\mathring\cD (s,U)\,\mathrm{d}s\leq C(\overline{T}_*),
\end{equation}
where $(\mathring\cE,\mathring\cD)(t,U)$ are defined in \eqref{E-1a}--\eqref{D-1a} of Appendix \ref{AppB}.

Consequently, for any sequence $\{t_k\}_{k=1}^\infty\subset [0,\overline{T}_*)$ with $t_k\to \overline{T}_*$,  there exist
both a subsequence $\{t_{k_\ell}\}_{\ell=1}^\infty$ and a limit vector $(U,\eta)(\overline{T}_*,r)$ such that, for $j=0,1$, as $\ell \to\infty$,
\begin{equation}\label{ququ}
\begin{aligned}
\zeta r^\frac{m}{2}\big(\partial_t^jU, \partial_t^jU_r,\frac{\partial_t^jU}{r}\big)(t_{k_\ell},r) \to \zeta r^\frac{m}{2}\big(\partial^j_tU, \partial^j_tU_r,\frac{\partial^j_tU}{r}\big)(\overline{T}_*,r) \qquad \text{weakly in $L^2$},\\
\zeta r^\frac{m}{2}\big(\partial_r^{j+2} U,\partial_r^{j+1} (\frac{U}{r})\big)(t_{k_\ell},r)\to \zeta r^\frac{m}{2}\big(\partial_r^{j+2} U,\partial_r^{j+1}(\frac{U}{r})\big)(\overline{T}_*,r)\qquad \text{weakly in $L^2$},\\
\zeta r^\frac{m-2}{2} \big(\frac{U}{r}\big)_r(t_{k_\ell},r)\to \zeta r^\frac{m-2}{2} \big(\frac{U}{r}\big)_r(\overline{T}_*,r)\qquad \text{weakly in $L^2$},
\end{aligned}
\end{equation}
and
\begin{equation}\label{ququ2}
\begin{aligned}
\chi^\sharp \rho_0^{\frac{1}{2}}(\partial_t^jU, \partial_t^jU_r)(t_{k_\ell},r)\to \chi^\sharp \rho_0^{\frac{1}{2}}(\partial_t^jU, \partial_t^jU_r)(\overline{T}_*,r)  \qquad &\text{weakly in $L^2$},\\
\chi^\sharp \rho_0^{(\frac{3}{2}-\varepsilon_0)\beta}\partial_r^{j+2} U (t_{k_\ell},r)\to \chi^\sharp \rho_0^{(\frac{3}{2}-\varepsilon_0)\beta}\partial_r^{j+2} U (\overline{T}_*,r)  \qquad &\text{weakly in $L^2$},\\
(\eta_r,\frac{\eta}{r}) (t_{k_\ell},r)\to (\eta_r,\frac{\eta}{r})(\overline{T}_*,r)\qquad &\text{weakly* in $L^\infty$}.
\end{aligned}
\end{equation}
Here, since $(r,\rho_0(r))$ only vanish at the boundaries $\{r=0\}$ and $\{r=1\}$, respectively, we can obtain the uniqueness of limits in the above by initially applying the weak convergence argument on each interval $[a,1-a]$ with $a\in (0,1)$. For example $\eqref{ququ}_3$, on one hand, 
\begin{equation*}
\zeta r^\frac{m-2}{2} \big(\frac{U}{r}\big)_r(t_{k_\ell},r)\to \widehat F(\overline{T}_*,r)\qquad \text{weakly in $L^2$ as $\ell \to\infty$},
\end{equation*}
for some limit $\widehat F(\overline{T}_*,r)\in L^2$. 
On the other hand, \eqref{811'} implies that, 
for each $a\in (0,1)$,
\begin{equation*}
U(t_{k_\ell},r)\to U(\overline{T}_*,r)\qquad \text{weakly in $H^3(a,1-a)$  as $\ell \to\infty$}.
\end{equation*}
Thus, for any $\varphi\in C_\mathrm{c}^\infty(0,1)$, as $\ell \to\infty$,
\begin{equation*}
\begin{aligned}
\Big<\zeta r^\frac{m-2}{2} \big(\frac{U}{r}\big)_r(t_{k_\ell}), \varphi\Big>&=\big<U_r (t_{k_\ell}), \zeta r^\frac{m-4}{2} \varphi\big>-\big<U (t_{k_\ell}), \zeta r^\frac{m-6}{2}\varphi\big>\\
&\to \big<U_r (\overline{T}_*), \zeta r^\frac{m-4}{2} \varphi\big>-\big<U (\overline{T}_*), \zeta r^\frac{m-6}{2}\varphi\big>=\Big<\zeta r^\frac{m-2}{2} \big(\frac{U}{r}\big)_r(\overline{T}_*), \varphi\Big>,
\end{aligned}
\end{equation*}
which implies
\begin{equation*}
\widehat F(\overline{T}_*,r)= \zeta r^\frac{m-2}{2} \big(\frac{U}{r}\big)_r(\overline{T}_*,r) \qquad\text{for \textit{a.e.} $r\in (0,1)$}.
\end{equation*}

Now, \eqref{ququ}--\eqref{ququ2}, together with \eqref{811}--\eqref{811'}, Lemma \ref{lemma-gaowei}, and the lower semi-continuity of the weak convergence, lead to
\begin{equation}\label{ji}
\cE(\overline{T}_*,U)\leq C(\overline{T}_*)\mathring\cE(\overline{T}_*,U)\leq C(\overline{T}_*),\qquad 
(\eta_r,\frac{\eta}{r})(\overline{T}_*,r)\in [C^{-1}(\overline{T}_*),C(\overline{T}_*)].
\end{equation}
According to Theorem \ref{local-Theorem1.1} and Remark \ref{remk31}, \eqref{ji} implies that there exists $T_0>0$ such that $(U,\eta)$ is the classical solution of \textbf{IBVP} \eqref{eq:VFBP-La-eta} on the time interval $[0,\overline{T}_*+T_0]$, which contradicts to the maximality of $\overline{T}_*$. This shows claim \eqref{claim-span}.

Therefore, for any $T>0$, \textbf{IBVP} \eqref{eq:VFBP-La-eta} admits a unique solution $(U,\eta)(t,r)$ in $[0,T]\times \bar I$
such that
\begin{equation*}
\begin{aligned}
&\sup_{t\in[0,T]}\big(\cE(t,U)+t\cD(t,U)\big)+\int_0^{T}\cD (s,U)\,\mathrm{d}s\leq C(T),\\
&\,\,(\eta_r,\frac{\eta}{r})(t,r)\in [C^{-1}(T),C(T)] \qquad \text{for all $(t,r)\in [0,T]\times \bar I$},
\end{aligned}
\end{equation*}
where $C(T)\in (1,\infty)$ is a constant depending only on $(n,\mu,\gamma,A,\beta,\varepsilon_0,\rho_0,u_0,\cK_1,\cK_2,T)$. It remains to show that $(U,\eta)(t,r)$ is actually a classical solution in $[0,T]\times \bar I$.

\smallskip
\textbf{2. $(U,\eta)$ is classical satisfying \eqref{N111}--\eqref{AN111}.} First, the regularity of $U$ and \eqref{N111}--\eqref{AN111} can be proved by the same argument as in Steps 6--7 of \S \ref{subsection3.3}. Then the regularity of $\eta$ follows easily from the formula: $\eta_t=U$. Finally, following a similar argument in Step 8 of \S \ref{subsection3.3}, we can show that $\eqref{eq:VFBP-La-eta}_1$ holds pointwise in $(0,T]\times \bar I$. 

This completes the proof of Theorem \ref{Theorem1.1}.

\section{Local-In-Time Well-Posedness of the Classical Solutions}\label{Section-local}

In this section, we establish the local well-posedness of \textbf{IBVP} \eqref{eq:VFBP-La-eta} stated in Theorem \ref{local-Theorem1.1}. 
Such a local well-posedness theory is highly non-trivial, owing to the strong degeneracy at the vacuum boundary.

In what follows, we denote by $\cH^1_{\mathrm{w}}(J)$ the space of all functions $f$ satisfying $(f,f_r,\frac{f}{r})\in L^2_\mathrm{w}(J)$ for some interval $J=(0,a)$ with $a\in (0,1)$:
\begin{equation*}
\cH^1_{\mathrm{w}}(J):=\big\{f:\,\|f\|_{\cH^1_{\mathrm{w}}(J)}<\infty\big\}\qquad\,\,
\text{with} \,\,\,\, \|f\|_{\cH^1_{\mathrm{w}}(J)}^2:=\int_J \mathrm{w}\Big(f^2+f_r^2+\frac{mf^2}{r^2}\Big)\,\mathrm{d}r,
\end{equation*}
where $\mathrm{w}=\mathrm{w}(r)\ge 0$ is a weight function on $J$, and we let $\cH^{-1}_{\mathrm{w}}(J):=(\cH^1_{\mathrm{w}}(J))^*$. In particular, if $J=I$, we simply write $\cH^1_{\mathrm{w}}=\cH^1_{\mathrm{w}}(I)$.

Before starting the proof, we first give an important property for space $\cH^1_{\mathrm{w}}$, which will be used in the subsequent analysis.
\begin{Proposition}\label{prop-bijin}
Let $\mathrm{w}=r^m\rho_0^c$ with $c\geq 0$. 
Then $\cH^1_{\mathrm{w}}$ is a reflexive separable Banach space. 
Moreover, for any $f\in \cH^1_{\mathrm{w}}$, there exists 
a sequence $\{f^\varepsilon\}_{\varepsilon>0}\subset C^\infty(\bar I)\cap\cH^1_{r^m}$ such that 
\begin{equation*}
\|f^\varepsilon-f\|_{\cH^1_\mathrm{w}}\to 0 \qquad\text{as $\varepsilon\to 0$}.
\end{equation*}
\end{Proposition}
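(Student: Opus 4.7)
The plan is to prove the Banach-space, reflexivity, and separability assertions via an isometric embedding of $\cH^1_\mathrm{w}$ into a product of ordinary $L^2$ spaces, and to establish the density claim via a three-step approximation scheme: truncation near the origin, dilation to push the support strictly inside $(0,1)$, and standard mollification. The crucial mechanism throughout is the Hardy-type summand $m\mathrm{w} f^2/r^2$ built into the norm, which is available because $m = n-1 \ge 1$.

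For the first three properties, I would consider the linear map
\begin{equation*}
T : f \mapsto \big(\sqrt{\mathrm{w}}\, f,\ \sqrt{\mathrm{w}}\, f_r,\ \sqrt{m\mathrm{w}/r^2}\, f\big) \in \big(L^2(I)\big)^3,
\end{equation*}
which is by construction a linear isometry. It remains to verify that its image is closed. Given a sequence $\{f_k\}\subset \cH^1_\mathrm{w}$ whose images $\{Tf_k\}$ converge in $(L^2(I))^3$ to some $(F,G,H)$, I would use that $\mathrm{w} = r^m\rho_0^c$ is continuous on $\bar I$ and strictly positive on $(0,1)$ (since $\rho_0^\beta\sim 1-r$ forces $\rho_0>0$ on $[0,1)$), hence bounded above and bounded away from zero on every compact subinterval $[\delta,1-\delta]\subset(0,1)$. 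Thus $f_k\to f:=F/\sqrt{\mathrm{w}}$ locally in $H^1(0,1)$, and a distributional identification gives $G=\sqrt{\mathrm{w}}\, f_r$ and $H=\sqrt{m\mathrm{w}/r^2}\, f$ a.e., so $Tf=(F,G,H)$. Therefore $T(\cH^1_\mathrm{w})$ is a closed subspace of the reflexive separable Hilbert space $(L^2(I))^3$, and all three properties transfer to $\cH^1_\mathrm{w}$.

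For the density assertion, fix $f\in\cH^1_\mathrm{w}$. (i) \emph{Truncation near $r=0$:} with the cutoff $\zeta_a^\sharp$ from \S\ref{othernotation}, set $f^{(a)}:=\zeta_a^\sharp f$. Direct expansion and the bound $|(\zeta_a^\sharp)_r|\le C/a$ give
\begin{equation*}
\|f - f^{(a)}\|_{\cH^1_\mathrm{w}}^2 \le C\!\int_0^{\frac{1+3a}{4}}\!\!\!\mathrm{w}\Big(f^2 + f_r^2 + \tfrac{mf^2}{r^2}\Big)dr + \frac{C}{a^2}\!\int_a^{\frac{1+3a}{4}}\!\!\!\mathrm{w}\, f^2\,dr.
\end{equation*}
The first integral vanishes as $a\to 0$ by absolute continuity of the Lebesgue integral, and using $r\ge a$ on the support of $(\zeta_a^\sharp)_r$ the last integrand is dominated by $C\,\mathrm{w}f^2/r^2$, which is integrable thanks precisely to the Hardy summand in the norm, and therefore the second term also vanishes. (ii) \emph{Dilation away from the boundary:} set $f^{(a,\lambda)}(r):=f^{(a)}(\lambda r)$ for $\lambda = 1+\delta$ and extend by zero for $r>1/\lambda$; its support lies in the compact subinterval $[a/\lambda,1/\lambda]\subset(0,1)$, and continuity of translation and dilation in the weighted space $L^2_\mathrm{w}$ (valid because $\mathrm{w}$ is continuous on $\bar I$) gives $f^{(a,\lambda)}\to f^{(a)}$ in $\cH^1_\mathrm{w}$ as $\delta\to 0^+$. (iii) \emph{Mollification:} on the compact support of $f^{(a,\lambda)}$ the weight is uniformly bounded above and below, so the standard Friedrichs mollifier produces $f^\varepsilon\in C_\mathrm{c}^\infty(0,1)$ that converges to $f^{(a,\lambda)}$ in the classical $H^1$ norm on the support, hence in $\cH^1_\mathrm{w}$. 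Because $f^\varepsilon$ vanishes identically near both endpoints of $\bar I$, it belongs to $C^\infty(\bar I)\cap \cH^1_{r^m}$, and a diagonal argument yields the desired approximating sequence.

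The main technical obstacle is step (i): the factor $|(\zeta_a^\sharp)_r|^2\lesssim 1/a^2$ generated by the cutoff would be uncontrollable without a Hardy-type term in the norm, and it is exactly the summand $m\mathrm{w} f^2/r^2$ (present precisely because $m\ge 1$ in both $n=2,3$) that absorbs this singular contribution. Near the outer boundary $r=1$ no analogous difficulty appears, since the weight $\rho_0^c$ already degenerates there and a mere dilation suffices to push the support strictly inside $(0,1)$ before a standard mollification is applied.
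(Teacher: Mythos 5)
Your first claim (reflexive separable Banach) via the isometric embedding $T:\cH^1_\mathrm{w}\hookrightarrow(L^2(I))^3$ onto a closed subspace is a correct and self-contained alternative to the paper's citation of the Kufner-type weighted Sobolev lemma (Lemma \ref{W-space}); both routes are fine.

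The density argument, however, has a genuine gap, and it is the step near $r=1$ that fails. Your plan produces approximants $f^\varepsilon\in C^\infty_{\mathrm c}(0,1)$, i.e.\ vanishing identically near \emph{both} endpoints. But $C^\infty_{\mathrm c}(0,1)$ is not dense in $\cH^1_\mathrm{w}$ when $c/\beta\le 1$, which includes the case $c=0$ covered by the proposition: for such $c$ one has $\rho_0^c\sim(1-r)^{c/\beta}$ with $c/\beta\le 1$, so $\int_{1/2}^1|f_r|\,dr\le(\int(1-r)^{c/\beta}|f_r|^2\,dr)^{1/2}(\int(1-r)^{-c/\beta}\,dr)^{1/2}<\infty$, hence $f\in W^{1,1}(\tfrac12,1)\hookrightarrow C[\tfrac12,1]$ has a well-defined trace $f(1)$ that is generically nonzero, and no sequence in $C^\infty_{\mathrm c}(0,1)$ can reproduce a nonzero boundary value. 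Concretely, your step (ii) already breaks: $f^{(a)}(1)$ need not vanish, so the zero-extension of $f^{(a)}(\lambda r)$ past $r=1/\lambda$ creates a jump at an interior point where $\mathrm{w}>0$, and $f^{(a,\lambda)}\notin\cH^1_\mathrm{w}$. Near $r=1$ there is no Hardy-type mechanism to absorb a cutoff error for $c/\beta\le 1$, so the boundary must be handled differently; the paper does this by invoking the one-dimensional weighted-Sobolev density theorem (again Lemma \ref{W-space}) on $(\tfrac13,1)$, which produces approximants $f^\varepsilon_\sharp\in C^\infty[\tfrac13,1]$ that need \emph{not} vanish at $r=1$, and then glues interior and exterior pieces with the fixed cutoffs $\zeta,\zeta^\sharp$.

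A separate, more minor issue is in step (i): the cutoff $\zeta_a^\sharp$ defined in \S\ref{othernotation} transitions on $[a,\tfrac{1+3a}{4}]$, an interval of length $\approx\tfrac14$ that does \emph{not} shrink as $a\to 0$, so $|(\zeta_a^\sharp)_r|$ is in fact bounded independently of $a$ (not $O(1/a)$), and, more seriously, $f-\zeta_a^\sharp f=\zeta_a f$ is supported on $[0,\tfrac{1+3a}{4}]\to[0,\tfrac14]$, so the "first integral'' does not vanish. The idea is sound if you instead use a cutoff transitioning on $[a,2a]$ (so the support of the error shrinks to $\{0\}$ and $|\psi_a'|\lesssim 1/a$ is absorbed by the Hardy summand $m\mathrm{w}f^2/r^2$ exactly as you describe). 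Note this would be a legitimate $1$-D alternative to the paper's interior argument, which instead mollifies the $n$-dimensional lift $\boldsymbol f=f\,\boldsymbol y/r$ with a rotationally invariant kernel and projects back; the paper's route avoids the cutoff entirely, your (corrected) route avoids the $n$-D lift. But neither variant rescues step (ii), which is where the proof is broken.
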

\begin{proof}
The first assertion follows directly from Lemma \ref{W-space} in Appendix \ref{appendix A}. To derive the convergence, we first define $\boldsymbol{f}(\boldsymbol{y}):=f(r)\frac{\boldsymbol{y}}{r}$. Clearly, $\boldsymbol{f}$ is a spherically symmetric vector function and, due to Lemma \ref{lemma-initial} in Appendix \ref{appb} and the fact that $\rho_0^\beta\sim 1-r$ and $\boldsymbol{f}\in H^1(B_a)$ for any $a\in (0,1)$, 
where $B_a:=\{\boldsymbol{y}: \,|\boldsymbol{y}|<a\}$. 

Now, we claim that there exists a sequence of spherically symmetric vector functions $\{\boldsymbol{f}_\flat^\varepsilon\}_{\varepsilon>0}\subset C^\infty(B_\frac{3}{4})$, with the form: $\boldsymbol{f}_\flat^\varepsilon(\boldsymbol{y})
=f^\varepsilon_\flat(r)\frac{\boldsymbol{y}}{r}$, such that
\begin{equation}\label{cliam121}
\boldsymbol{f}_\flat^\varepsilon\to \boldsymbol{f}\qquad \text{in $H^1(B_\frac{3}{4})$ as $\varepsilon\to 0$}.
\end{equation}
To obtain this, let $\{\omega_\epsilon(\boldsymbol{y})\}_{\varepsilon>0}$ be the standard spherically symmetric mollifier defined on $\mathbb{R}^n$. 
Then, due to the standard theory of regularization, $\boldsymbol{f}_\flat^\varepsilon(\boldsymbol{y}):=(\boldsymbol{f}*\omega_\epsilon)(\boldsymbol{y})$ (with $\varepsilon$ small) satisfies \eqref{cliam121}, 
and it suffices to check the spherical symmetry of $\boldsymbol{f}_\flat^\varepsilon(\boldsymbol{y})$. 
Thanks to Lemma \ref{duichen-dengjia}, this is equivalent to showing that $\boldsymbol{f}_\flat^\varepsilon(\mathcal{O}\boldsymbol{y})=(\mathcal{O}\boldsymbol{f}_\flat^\varepsilon)(\boldsymbol{y})$ for any matrix $\mathcal{O}\in \mathrm{SO}(n)$. In fact, we have
\begin{equation*}
\boldsymbol{f}_\flat^\varepsilon(\mathcal{O}\boldsymbol{y})= \int_{\Omega} \boldsymbol{f}(\mathcal{O}\boldsymbol{y}-\boldsymbol{z})\omega_\epsilon(\boldsymbol{z})\,\mathrm{d}\boldsymbol{z} \qquad\text{for $0<\varepsilon<\frac{1}{100}$ and $\boldsymbol{y}\in B_\frac{3}{4}$}.
\end{equation*}
Changing the coordinate $\boldsymbol{z}\mapsto \mathcal{O}\boldsymbol{z}$, along with $|\mathcal{O}\boldsymbol{y}|=|\boldsymbol{y}|$ and $\det \mathcal{O}=1$, gives
\begin{equation*}
\begin{aligned}
\boldsymbol{f}_\flat^\varepsilon(\mathcal{O}\boldsymbol{y})&= \int_{\Omega} \boldsymbol{f}(\mathcal{O}\boldsymbol{y}-\mathcal{O}\boldsymbol{z})\omega_\epsilon(\mathcal{O}\boldsymbol{z})(\det \mathcal{O})\,\mathrm{d}\boldsymbol{z} = \int_{\Omega} \boldsymbol{f}(\mathcal{O}(\boldsymbol{y}-\boldsymbol{z}))\omega_\epsilon(\boldsymbol{z}) \,\mathrm{d}\boldsymbol{z}\\
&= \int_{\Omega} (\mathcal{O} \boldsymbol{f})(\boldsymbol{y}-\boldsymbol{z})\omega_\epsilon(\boldsymbol{z}) \,\mathrm{d}\boldsymbol{z} =\Big(\mathcal{O}\big( \int_{\Omega} \boldsymbol{f}(\cdot-\boldsymbol{z})\omega_\epsilon(\boldsymbol{z}) \,\mathrm{d}\boldsymbol{z}\big)\Big)(\boldsymbol{y})=(\mathcal{O}\boldsymbol{f}_\flat^\varepsilon)(\boldsymbol{y}).
\end{aligned}
\end{equation*}
This completes the proof of the claim. 

Consequently, it follows from \eqref{cliam121} and Lemma \ref{lemma-initial} that 
\begin{equation}\label{jin}
\chi_\frac{3}{4}f^\varepsilon_\flat\in \cH^1_{r^m},\qquad\,\, 
\|\zeta f^\varepsilon_\flat-\zeta f\|_{\cH^1_\mathrm{w}}\to 0 \qquad\text{as $\varepsilon\to 0$}.
\end{equation}

On the other hand, it follows from Lemma \ref{W-space} that there exists a smooth sequence $\{f_\sharp^\varepsilon\}_{\varepsilon>0}\subset C^\infty[\frac{1}{3},1]$ such that
\begin{equation}\label{yuan}
\|\zeta^\sharp f^\varepsilon_\sharp-\zeta^\sharp f\|_{\cH^1_\mathrm{w}}\leq C\|\zeta^\sharp f^\varepsilon_\sharp-\zeta^\sharp f\|_{1,\rho_0^c}\to 0 \qquad\text{as $\varepsilon\to 0$}.
\end{equation}
Therefore, defining $f^\varepsilon:=\zeta f_\flat^\varepsilon+\zeta^\sharp f_\sharp^\varepsilon$, we see that $f^\varepsilon\in C^\infty(\bar I)\cap \cH^1_{r^m}$. Then we can obtain from \eqref{jin}--\eqref{yuan} that
\begin{equation*}
\|f^\varepsilon-f\|_{\cH^1_\mathrm{w}}\leq \|\zeta f^\varepsilon_\flat-\zeta f\|_{\cH^1_\mathrm{w}}+\|\zeta^\sharp f^\varepsilon_\sharp-\zeta^\sharp f\|_{\cH^1_\mathrm{w}}\to 0 \qquad\text{as $\varepsilon\to 0$}.
\end{equation*}

This completes the proof.
\end{proof}

The rest of this section is organized as follows:
\begin{itemize}
\item[\S\ref{Subsection9.1}:] Establish the global well-posedness of the linearized problem via the Galerkin scheme;
\item[\S\ref{subsection9.2}:] Establish the uniform {\it a priori} estimates for the linearized problem;
\item[\S\ref{subsection9.3}:] Establish the local well-posedness of the nonlinear problem via the Picard iteration.
\end{itemize}

\subsection{Linearization and global well-posedness of the linearized problem}\label{Subsection9.1} 

In \S\ref{Subsection9.1},  $C\in (1,\infty)$ denotes a generic constant depending only on $(n,\mu,A,\gamma,\beta,\varepsilon_0,\rho_0,u_0,\cK_1,\cK_2)$, and $C(l_1,\cdots\!,l_k)\in (1,\infty)$ a generic  constant depending on $C$
and additional  parameters $(l_1,\cdots\!,l_k)$, which may be different at each occurrence.

We first linearize problem \eqref{eq:VFBP-La-eta}, and then establish global well-posedness of classical solutions of the linearized problem 
via a modified Galerkin scheme. 
Specifically, we initiate to study the following linearized problem in $[0, T] \times I$ (multiply $\eqref{eq:VFBP-La-eta}_1$ by $\eta^m\eta_r$ and then replace $\eta\mapsto \bar\eta$): 
\begin{equation}\label{lp}
\begin{cases}
\displaystyle r^m\rho_0U_t +A\Big(\frac{r^{\gamma m}\rho_0^\gamma }{\bar\eta_r(\bar\eta^m\bar\eta_r)^{\gamma-1}}\Big)_r-Am \frac{r^{\gamma m}\rho_0^\gamma }{\bar\eta(\bar\eta^m\bar\eta_r)^{\gamma-1}} =2\mu \big(r^m\rho_0\frac{U_r}{\bar\eta_r^2}  \big)_r -2\mu mr^m\rho_0\frac{U}{\bar\eta^2},\\[9pt]
U|_{t=0}= u_0 \qquad \text{in }I,
\end{cases}
\end{equation}
where $\bar \eta $ stands for the flow map corresponding to $\bar U$:
\begin{equation}\label{given-flow}
\bar \eta (t,r)=r+\int_0^t \bar U(s,r)\,\ds,\qquad \bar \eta(0,r)=r,
\end{equation}
and $\bar U$ is a given function satisfying $\bar U(0,r)=u_0(r)$ for $r\in I$ 
and, for any $T>0$,
\begin{equation}\label{given}
\begin{aligned}
&\bar\cE(t,\bar U)+t\bar\cD(t,\bar U)\in L^\infty(0,T),\qquad \bar\cD (t,\bar U)\in L^1(0,T),\\
&\big(\bar U,\bar U_r,\frac{\bar U}{r}\big)\in C([0,T];C(\bar I)),\qquad \big(\bar U_{rr},(\frac{\bar U}{r})_r,\bar U_t\big)\in C((0,T];C(\bar I)),\\[4pt]
&\bar U|_{r=0}=\bar U_r|_{r=1}=0\qquad \,\text{for $t\in(0,T]$},
\end{aligned} 
\end{equation}
where $(\bar\cE,\bar\cD)(t,f)$ are defined in the same way as $(\cE,\cD)(t,f)$ in \eqref{E-1} and \eqref{D-1}, except with $\eta$ in place of $\bar\eta$. Besides, 
we define $(\bar\cE_{\mathrm{in}},\bar\cE_{\mathrm{ex}},\bar\cD_{\mathrm{in}},\bar\cD_{\mathrm{ex}})(t,f)$ in the similar manner to $(\bar\cE,\bar\cD)(t,f)$.

Clearly, \eqref{given-flow}, together with \eqref{given}, also implies the regularity of $\bar\eta$, that is,
\begin{equation}\label{given-bareta}
\begin{aligned}
&\zeta r^\frac{m}{2}\big(\bar \eta,\bar \eta_r,\frac{\bar \eta}{r},\bar \eta_{rr}, (\frac{\bar \eta}{r})_r,\bar \eta_{rrr}, (\frac{\bar \eta}{r})_{rr},\frac{1}{r}(\frac{\bar \eta}{r})_r,\bar \eta_{rrrr}, (\frac{\bar \eta}{r})_{rrr},(\frac{1}{r}(\frac{\bar \eta}{r})_r)_r \big)\in C([0,T];L^2),\\[4pt]
&\chi^\sharp \rho_0^\frac{1}{2}(\bar \eta,\bar \eta_r)\in C([0,T];L^2),\qquad \chi^\sharp\rho_0^{(\frac{3}{2}-\varepsilon_0)\beta}(\bar \eta_{rr},\bar \eta_{rrr},\bar \eta_{rrrr})\in C([0,T];L^2),\\[4pt]
&\big(\bar \eta, \bar\eta_r,\frac{\bar \eta}{r}\big)\in C^1([0,T];C(\bar I)),\qquad \big(\bar \eta_{rr},(\frac{\bar \eta}{r})_r\big)\in C^1((0,T];C(\bar I)).
\end{aligned} 
\end{equation}
Moreover, we assume here that 
\begin{equation}\label{jibenjiashe}
(\bar\eta_r,\frac{\bar\eta}{r})(t,r)\in \big[\frac{1}{2},\frac{3}{2}\big] \qquad \text{for all }(t,r)\in [0,T]\times \bar I.
\end{equation}
This requirement will be fulfilled in \S \ref{subsection9.2} for the corresponding linearization procedure. 

Now we define the classical solution of the linearized problem \eqref{lp}, which slightly differs from Definition \ref{definition-lag}.
\begin{Definition}\label{fed-cl}
We say that $U(t,r)$ is a classical solution of the linearized problem \eqref{lp} in $[0,T]\times \bar I$ if $U(t,r)$ satisfies equation $\eqref{lp}_1$ pointwise in $(0,T]\times \bar I$, takes the initial data $\eqref{lp}_2$ continuously, and
\begin{equation}\label{regu-class}
\big(U,U_r,\frac{U}{r}\big)\in C([0,T];C(\bar I)),\qquad \big(U_{rr},(\frac{U}{r})_r,U_t\big)\in C((0,T];C(\bar I)).
\end{equation}
\end{Definition}

Then the main conclusion in \S\ref{Subsection9.1} can be stated in the following lemma:
\begin{Lemma}\label{existence-linearize}
Let $n=2$ or $3$ and $\gamma\in (\frac{4}{3},\infty)$. Assume that $\rho_0(r)$ satisfies \eqref{distance-la} for some $\beta\in (\frac{1}{3},\gamma-1]$ and $u_0(r)$ satisfies
\begin{equation*}
\cE(0,U)<\infty.
\end{equation*}
Then, for any $T>0$, the linearized problem \eqref{lp} admits a unique classical solution $U$ in $[0,T]\times \bar I$ satisfying 
\begin{equation*}
\begin{aligned}
&\bar\cE(t,U)+t\,\bar\cD(t,U)\in L^\infty(0,T),\qquad \bar\cD (t,U)\in L^1(0,T),\\[4pt]
&U|_{r=0}=U_r|_{r=1}=0\qquad\qquad  \text{for $t\in(0,T]$},\\[4pt]
&|U_r(t,r)|\leq C(T)(1-r)\qquad \text{for  $(t,r)\in(0,T]\times \bar I$}.
\end{aligned}
\end{equation*}
\end{Lemma}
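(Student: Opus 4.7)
The plan is to prove Lemma \ref{existence-linearize} via a modified Galerkin approximation in the weighted Hilbert space $\cH^1_{r^m\rho_0}$. Since $\bar\eta_r$ and $\bar\eta/r$ lie in $[\frac{1}{2},\frac{3}{2}]$ by \eqref{jibenjiashe} and $\bar\eta$ enjoys the regularity in \eqref{given-bareta}, the equation $\eqref{lp}_1$ is, for each fixed $t$, a linear degenerate elliptic problem in $U$ with a known source determined by $\bar\eta$ and $\rho_0$. First, I would identify the bilinear form
\[
B(U,\phi;t)=2\mu\int_0^1 r^m\rho_0\Big(\frac{U_r\phi_r}{\bar\eta_r^2}+\frac{mU\phi}{\bar\eta^2}\Big)\,\mathrm{d}r,
\]
which is continuous and coercive on $\cH^1_{r^m\rho_0}$ with constants independent of $t\in[0,T]$, and verify that the source $r^m\rho_0 F(t,r):=-A(\cdots)_r+Am(\cdots)$ satisfies $F\in L^2_t(\cH^{-1}_{r^m\rho_0})$ via \eqref{given-bareta}. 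Using Proposition \ref{prop-bijin}, I would select a countable basis $\{\phi_k\}\subset C^\infty(\bar I)\cap\cH^1_{r^m}$ of spherically symmetric profiles vanishing at $r=0$, dense simultaneously in $\cH^1_{r^m\rho_0}$ and in the higher-order weighted spaces encoded by $\bar\cE$, and solve the Galerkin ODE for $U^N=\sum_{k=1}^N c^N_k(t)\phi_k$ with initial data $U^N|_{t=0}=P^N u_0$, where $P^N$ is the $\cH^1_{r^m\rho_0}$-projection.

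Next, I would derive tiered $N$-uniform a priori bounds on $U^N$ mirroring the structure of $\bar\cE(t,U)+t\bar\cD(t,U)$ and $\int_0^T\bar\cD(s,U)\,\mathrm{d}s$. These fall into four groups, proved in the order: (i) the baseline energy identity by testing with $U^N$; (ii) $L^\infty_t L^2$-bounds on $U^N_t$ and $U^N_{tt}$ obtained by differentiating the Galerkin ODE in time and testing with $U^N_t$, $U^N_{tt}$, where the initial values $U^N_t(0), U^N_{tt}(0)$ are recovered by evaluating $\eqref{lp}_1$ and its time derivative at $t=0$ using $\cE(0,U)<\infty$; (iii) interior elliptic estimates of orders two and three by reading $\eqref{lp}_1$ as an algebraic expression for $D_{\bar\eta}^2 U^N$, differentiating in $r$, and invoking Lemma \ref{im-1} to convert radial into tangential estimates; (iv) the $\rho_0^{(\frac{3}{2}-\varepsilon_0)\beta}$-weighted exterior elliptic estimates following the cross-term argument of \S\ref{sub93} (culminating in the analog of Lemma \ref{lemma-ell-D4-ex2}). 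Because $\bar\eta$ is frozen and bounded, all constants depend only on $(C,\|\bar\cE\|_{L^\infty_T},\|\bar\cD\|_{L^1_T},T)$, and no iterative closure on $U^N$ itself is needed at this stage.

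Standard weak-$*$ compactness and Aubin-Lions then extract a limit $U$ satisfying $\eqref{lp}_1$ distributionally with the claimed energy/dissipation regularity. The continuity $(U,U_r,U/r)\in C([0,T];C(\bar I))$ follows from embedding the achieved bounds into $H^k$ on $[0,\frac{3}{4}]$ and into the $\rho_0$-weighted Sobolev scale on $[\frac{1}{2},1]$, exactly as in \S\ref{Section-global}; likewise $(U_{rr},(U/r)_r,U_t)\in C((0,T];C(\bar I))$ follows from the time-weighted dissipation via Lemmas \ref{lemma-ell-time weight-in1}--\ref{lemma-ell-time weight-ex2}. The origin condition $U|_{r=0}=0$ is automatic, as every element of $\cH^1_{r^m\rho_0}$ with the requisite regularity vanishes at $r=0$. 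The condition $U_r|_{r=1}=0$ is not imposed on the Galerkin basis; instead, following Remark \ref{rmk1.4}, I would multiply the rewritten form $2\mu\varrho^\beta D_{\bar\eta}^2 U=(\cdots)$ of $\eqref{lp}_1$ by $\rho_0^{\beta-1}$, take $r\to 1$, and use $(\rho_0^\beta)_r|_{r=1}\neq 0$ together with the established continuity of $(D_{\bar\eta}U,D_{\bar\eta}^2U,U_t)$ up to $r=1$ to force $U_r|_{r=1}=0$. The decay $|U_r(t,r)|\leq C(T)(1-r)$ then follows by integration $U_r(t,r)=-\int_r^1 U_{rr}(t,\tilde r)\,\mathrm{d}\tilde r$ combined with the $L^\infty$-bound on $\chi^\sharp\rho_0^\beta D_{\bar\eta}^2 U$. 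Uniqueness is the routine energy argument: the difference $W=U_1-U_2$ solves the homogeneous version of \eqref{lp}, and testing with $W$ plus coercivity of $B(\cdot,\cdot;t)$ and Gr\"onwall yield $W\equiv 0$. The main obstacle will be step (iv): securing the $\rho_0^{(\frac{3}{2}-\varepsilon_0)\beta}$-weighted high-order exterior estimates at the Galerkin level while preserving the delicate balance between the pressure cubic term $\clubsuit_1$ and the highest tangential term $\clubsuit_2$ of Remark \ref{remark-energy function}, since the $L^2$-projection $P^N$ does not commute with the $\rho_0^\alpha$-weighted differential operators and must be handled through careful choice of basis (e.g.\ generalized eigenfunctions of an auxiliary weighted Sturm-Liouville operator) that renders the commutators harmless.
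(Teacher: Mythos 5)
The Galerkin framework you outline — coercive bilinear form $B(\cdot,\cdot;t)$ on $\cH^1_{r^m\rho_0}$, source handled in $\cH^{-1}_{r^m\rho_0}$, Sturm--Liouville-type basis — is indeed the same skeleton the paper uses in Proposition~\ref{prop1}. Steps (i)--(ii) of your energy hierarchy, the uniqueness argument, the recovery of $U_r|_{r=1}=0$ via the rewritten equation, and the interior/exterior continuity splice of \S\ref{Section-global} all align with the paper's Steps 1, 2, 6 and 7. However, there are two genuine gaps.

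The central one is in your steps (iii) and (iv). You propose to obtain the higher-order interior and exterior elliptic estimates \emph{at the Galerkin level}, by ``reading $\eqref{lp}_1$ as an algebraic expression for $D_{\bar\eta}^2 U^N$'' and then differentiating in $r$. But $U^N$ does \emph{not} satisfy $\eqref{lp}_1$ pointwise — it satisfies only the projection of that equation onto $\mathrm{span}\{\phi_1,\dots,\phi_N\}$ — so there is no algebraic identity to read off, and the commutator between $P^N$ and the $\rho_0^\alpha$-weighted differential operators that you flag at the end is not a technical nuisance but the breaking point of the scheme. You correctly anticipate this obstacle but do not resolve it; the suggestion of an ``auxiliary weighted Sturm--Liouville operator'' is not substantiated, and Proposition~\ref{prop-bijin} only gives density in $\cH^1_{r^m\rho_0}$, not the simultaneous density in the full $\bar\cE$-scale that your basis would need. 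The paper sidesteps this entirely: the Galerkin scheme is used \emph{only} to produce a weak solution (Proposition~\ref{prop1}) and its strong tangential improvement (Proposition~\ref{prop-strong}); the key next move is to show that $U_t$ itself solves a \emph{second, companion} weak problem \eqref{33152} of the same type (and to identify its weak solution with $U_t$ by a uniqueness comparison), which gives $U_t\in L^\infty_t\cH^1_{r^m\rho_0}$ without any high-order Galerkin-level manipulation. Elliptic regularity is then established \emph{a posteriori} on the limiting solution via Lemma~\ref{Lemma-point} and \eqref{new-lp}, where the equation genuinely holds pointwise for \textit{a.e.} $t$; only at that stage are the weighted second- and third-order estimates available, and only then does the cross-derivative device of Proposition~\ref{prop2.1} apply. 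This ``weak existence first, then PDE-level bootstrap'' ordering is what your proposal is missing.

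The second gap is local but clean: your derivation of $|U_r(t,r)|\leq C(T)(1-r)$ from $U_r(t,r)=-\int_r^1 U_{rr}(t,\tilde r)\,\mathrm{d}\tilde r$ ``combined with the $L^\infty$-bound on $\chi^\sharp\rho_0^\beta D_{\bar\eta}^2 U$'' does not close. A bound $|\rho_0^\beta U_{rr}|\leq C$ gives only $|U_{rr}|\leq C(1-r)^{-1}$, so the integral $\int_r^1|U_{rr}|\,\mathrm{d}\tilde r$ is at best logarithmically divergent, not $O(1-r)$. What is actually needed (and what the paper establishes in Step~6.2 via $\rho_0^{(\frac{3}{2}-\varepsilon_0)\beta}D_{\bar\eta}^3U\in L^2$ and the Hardy inequality) is the unweighted continuity $U_{rr}\in C((0,T];C(\bar I))$, which then yields $|U_r(t,r)|\leq|U_{rr}|_\infty(1-r)$ as in \eqref{asym}.
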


\subsubsection{The modified Galerkin method{\rm:} weak and strong solutions of some general problems}
In order to establish the well-posedness of \eqref{lp}, we initiate to study a  general initial boundary value problem in $[0,T]\times I$:
\begin{equation}\label{galerkin-w}
\begin{cases}
\displaystyle  r^m\rho_0w_t-2\mu \big(r^m\rho_0\frac{w_r}{\bar\eta_r^2}  \big)_r+2\mu mr^m\rho_0\frac{w}{\bar\eta^2}=m(r^m\rho_0)^\frac{1}{2}\frac{\mathfrak{q}_1}{\bar\eta}-\big((r^m\rho_0)^\frac{1}{2}\frac{\mathfrak{q}_2}{\bar\eta_r} \big)_r,\\[8pt]
w|_{t=0}=w_0\qquad \text{on } I,
\end{cases}
\end{equation}
where $\mathfrak{q}_1,\mathfrak{q}_2\in L^2([0,T];L^2)$ are given functions, 
and $w_0\in L^2_{r^m\rho_0}$.

First, we study the weak solutions of problem \eqref{galerkin-w}. 
This existence result will be frequently used in \S \ref{subsection3.3} for proving Lemma \ref{existence-linearize}.

\begin{Definition}\label{def3.1}
We say that a function $w(t,r)$ is a weak solution in $[0,T]\times I$ of 
problem \eqref{galerkin-w} if the following three properties hold{\rm:}
\begin{enumerate}
\item[{\rm (i)}] $w\in C([0,T];L^2_{r^m\rho_0})\cap L^2([0,T];\cH^1_{r^m\rho_0})$ and $r^m\rho_0 w_t\in L^2([0,T];\cH^{-1}_{r^m\rho_0});$
\item[{\rm (ii)}]
for all $\varphi$ satisfying $\varphi \in \cH^1_{r^m\rho_0}$ and {\it a.e.} time $t\in(0, T)$,
\begin{equation}\label{weak.F.}
\begin{aligned}
&\left<r^m\rho_0 w_t, \varphi\right>_{\cH^{-1}_{r^m\rho_0}\times \cH^1_{r^m\rho_0}}+2\mu\langle r^m\rho_0 D_{\bar\eta} w,D_{\bar\eta}\varphi \rangle+2\mu m\big<\frac{r^m\rho_0}{\bar\eta^2}w,\varphi\big>\\
&=\big<(r^m\rho_0)^\frac{1}{2} \mathfrak{q}_1,\frac{m\varphi}{\bar\eta}\big>+\langle(r^m\rho_0)^\frac{1}{2} \mathfrak{q}_2, D_{\bar\eta}\varphi\rangle;
\end{aligned}
\end{equation}
\item[{\rm (iii)}] $w(0,r)=w_0(r)$ for {\it a.e.} $ r\in I$.
\end{enumerate}
\end{Definition}

Now we can establish the following existence of weak solutions and their related estimates. 
\begin{Proposition}\label{prop1}
For all $T>0$,  problem \eqref{galerkin-w} admits a unique weak solution $w$ in $[0,T]\times I$, which satisfies 
\begin{equation*}
\begin{aligned}
&\sup_{t\in[0,T]}|w|_{2,r^m\rho_0}^2 + \int_0^T  \big(\|w\|_{\cH^{1}_{r^m\rho_0}}^2 + \big\|r^m\rho_0 w_t\big\|_{\cH^{-1}_{r^m\rho_0}}^2\big)\,\dt\\
&\leq C(T)\Big(|w_0|_{2,r^m\rho_0}^2 +  \int_0^T |(\mathfrak{q}_1,\mathfrak{q}_2)|_2^2\,\dt\Big).
\end{aligned}
\end{equation*}
\end{Proposition}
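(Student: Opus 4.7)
The strategy is a Galerkin scheme adapted to the weighted space $\cH^1_{r^m\rho_0}$, followed by weak compactness. First I would exploit Proposition \ref{prop-bijin}: smooth functions are dense in $\cH^1_{r^m\rho_0}$ and this space is separable and reflexive, so one can select a sequence $\{e_k\}_{k\in\NN^*}\subset C^\infty(\bar I)$ that is linearly independent, whose span is dense in $\cH^1_{r^m\rho_0}$, and (after Gram--Schmidt) orthonormal in $L^2_{r^m\rho_0}$. Set $V_N=\mathrm{span}\{e_1,\ldots,e_N\}$ and seek
\begin{equation*}
w^N(t,r)=\sum_{k=1}^N c^N_k(t)\,e_k(r),\qquad c^N_k(0)=\langle w_0, e_k\rangle_{L^2_{r^m\rho_0}},
\end{equation*}
solving \eqref{weak.F.} tested against every $e_j$ with $1\le j\le N$. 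Under the standing assumption \eqref{jibenjiashe}, the bilinear form $B(\cdot,\cdot)$ appearing in \eqref{weak.F.} is continuous and coercive on $\cH^1_{r^m\rho_0}$ with constants depending only on $C$, so the resulting system of linear ODEs for $(c^N_1,\ldots,c^N_N)$ has time-dependent coefficients in $L^\infty([0,T])$ and forcing in $L^2([0,T])$; the existence and uniqueness of $c^N\in H^1([0,T];\RR^N)$ is then immediate from the Carath\'eodory theory.

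Next I would derive the two basic uniform estimates. Testing the discrete system by $w^N$ itself gives, thanks to \eqref{jibenjiashe} and the Cauchy--Schwarz/Young inequalities,
\begin{equation*}
\tfrac{1}{2}\tfrac{\mathrm{d}}{\dt}|w^N|_{2,r^m\rho_0}^2+\mu\|w^N\|_{\cH^1_{r^m\rho_0}}^2\leq C|(\mathfrak{q}_1,\mathfrak{q}_2)|_2^2,
\end{equation*}
which, upon Gronwall integration together with $|w^N(0)|_{2,r^m\rho_0}\le|w_0|_{2,r^m\rho_0}$, furnishes the desired $L^\infty_tL^2_{r^m\rho_0}\cap L^2_t\cH^1_{r^m\rho_0}$ bound uniformly in $N$. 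The bound on $r^m\rho_0 w^N_t$ in $L^2_t\cH^{-1}_{r^m\rho_0}$ then follows by duality: for any $\varphi\in\cH^1_{r^m\rho_0}$, decomposing $\varphi=\varphi_N+\varphi_N^\perp$ with $\varphi_N\in V_N$ (the $L^2_{r^m\rho_0}$-orthogonal projection, which is $\cH^1_{r^m\rho_0}$-bounded provided the $e_k$ are chosen orthogonal in $\cH^1_{r^m\rho_0}$ as well, or by a standard reduction), the discrete equation gives
\begin{equation*}
\langle r^m\rho_0 w^N_t,\varphi_N\rangle=-B(w^N,\varphi_N)+\text{(forcing)}(\varphi_N),
\end{equation*}
whose right-hand side is bounded by $C(\|w^N\|_{\cH^1_{r^m\rho_0}}+|(\mathfrak{q}_1,\mathfrak{q}_2)|_2)\|\varphi\|_{\cH^1_{r^m\rho_0}}$, exactly the dual-norm bound needed.

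With these uniform estimates in hand, I would pass to the limit. Extract a subsequence such that $w^N\wconverge w$ in $L^2_t\cH^1_{r^m\rho_0}$, $w^N\wsconverge w$ in $L^\infty_t L^2_{r^m\rho_0}$, and $r^m\rho_0w^N_t\wconverge r^m\rho_0 w_t$ in $L^2_t\cH^{-1}_{r^m\rho_0}$. Since each $\varphi\in\cH^1_{r^m\rho_0}$ can be approximated in $\cH^1_{r^m\rho_0}$ by elements of $\bigcup_N V_N$, the weak formulation \eqref{weak.F.} is verified in the limit for {\it a.e.} $t\in(0,T)$. The standard weighted-space version of the Lions--Magenes lemma (which is justified here because the pairing $\langle r^m\rho_0 w_t,w\rangle_{\cH^{-1}_{r^m\rho_0}\times \cH^1_{r^m\rho_0}}=\tfrac12\tfrac{\mathrm{d}}{\dt}|w|^2_{2,r^m\rho_0}$ can be proved by density using $C^\infty(\bar I)\cap \cH^1_{r^m\rho_0}$) upgrades $w$ to $C([0,T];L^2_{r^m\rho_0})$, so that $w(0,\cdot)$ is well-defined; the identification $w(0,\cdot)=w_0$ follows by the usual integration-by-parts test against time-dependent test functions vanishing at $t=T$.

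Uniqueness is then the trivial application of the same energy estimate to the difference of two weak solutions (with zero data and zero forcing), yielding $w\equiv 0$. The main delicate point in this program is the interaction between the two-sided degeneracy of the weight $r^m\rho_0$ (vanishing at $r=0$ from $r^m$ and at $r=1$ from $\rho_0$) and the Galerkin framework: one must make sure that the projection onto $V_N$ is bounded in $\cH^1_{r^m\rho_0}$, and that the density theorem of Proposition \ref{prop-bijin} actually provides a basis of \emph{smooth} functions in $\bar I$ (so the ODE reduction makes sense pointwise). If needed, I would first solve a non-degenerate regularization (replacing $r^m\rho_0$ by $r^m\rho_0+\epsilon$ and adding a small viscosity near the endpoints) and recover the degenerate problem as $\epsilon\to 0^+$ via the same uniform estimates; this detour bypasses any ambiguity in the weighted projection at the cost of an extra limit, and the coercivity constants in the estimates above are uniform in $\epsilon$ thanks to \eqref{jibenjiashe}.
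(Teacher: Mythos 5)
Your overall architecture — Galerkin scheme, energy estimate, weak compactness, weighted Lions--Magenes continuity, energy uniqueness — matches the paper's, but the two key technical points you flag are handled very differently in the paper, and it is worth noting this precisely.

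First, on the basis. You propose a Gram--Schmidt basis in the degenerate space $L^2_{r^m\rho_0}$ built from the smooth dense family of Proposition \ref{prop-bijin}, and you correctly observe that this will not simultaneously diagonalize the $\cH^1_{r^m\rho_0}$ form, which is the source of your concern about the $L^2$-projection being $\cH^1$-bounded. The paper instead takes the eigenfunctions $\{\xi_j\}$ of the Sturm--Liouville problem \eqref{SL} with weight $r^m$ only (not $r^m\rho_0$), which are smooth, orthonormal in $L^2_{r^m}$, and orthogonal in $\cH^1_{r^m}$. The extra degenerate weight $\rho_0$ is absorbed into the Gram matrix $\mathfrak{A}$ of the ODE system, which remains nonsingular since $\{(r^m\rho_0)^{1/2}\xi_j\}$ is linearly independent. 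The price is that $\langle r^m w_0,\xi_j\rangle$ need not converge for $w_0\in L^2_{r^m\rho_0}$ only, hence the paper's two-parameter mollification $w_0^\vartheta$ and the double limit $N,\vartheta^{-1}\to\infty$. Your basis choice avoids this mollification (your initial projection contracts in $L^2_{r^m\rho_0}$), which is a genuine simplification of that one step.

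Second, on the $\cH^{-1}_{r^m\rho_0}$ bound for $r^m\rho_0 w_t$. Your route estimates the approximant $r^m\rho_0 w^N_t$ directly, which requires either simultaneous $L^2/\cH^1$-orthogonality of the basis or a bounded projection; you flag this as delicate and offer a nondegenerate $\epsilon$-regularization as a fallback. The paper does something cleaner: it does \emph{not} estimate $r^m\rho_0 w^N_t$ at the approximate level at all. After passing to the weak limit, the weak formulation \eqref{3,,20} holds against all test functions from the span of the basis; the distribution $r^m\rho_0 w_t$ is then shown to extend to a bounded functional on $L^2([0,T];\cH^1_{r^m\rho_0})$ via the density claim \eqref{dens} and the Hahn--Banach theorem (see \eqref{H-1}). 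This post-limit extension argument sidesteps the projection-boundedness question entirely, needs no eigenvalue simultaneity, and produces the stated $\cH^{-1}$ bound without a separate regularization layer. Your fallback would also work, but the Hahn--Banach route is the more economical fix and is what the paper actually does.

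\end{document}
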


\begin{proof}
We divide the proof into four steps.

\smallskip
\textbf{1. Introduction of the Galerkin scheme.}  First, according to Lemma \ref{W-space}, for given $w_0\in L^2_{r^m\rho_0}$, there exists a smooth sequence $\{w_0^\vartheta\}_{\vartheta>0}\subset C^\infty(\bar I)$ satisfying
\begin{equation}\label{wdelta-w}
\lim_{\vartheta\to0} |w^\vartheta_0- w_0|_{2,r^m\rho_0}=0.
\end{equation}

Next, we construct a sequence of Galerkin bases. In order to match the spherical symmetry structure of \eqref{galerkin-w} and fulfill the  Neumann boundary condition, it is reasonable to consider the following eigenvalue problem, which is the so-called Sturm-Liouville problem:
\begin{equation}\label{SL}
-(r^m\xi_r)_r+mr^{m-2} \xi=\lambda r^{m} \xi \quad \text{on $I$}, \qquad \,\,
\xi|_{r=0}=\xi_r|_{r=1}=0.
\end{equation}
Then we can expect to construct a Hilbert basis $\{\xi_j\}_{j\in \NN^*}$ of $\cH^1_{r^m}$, which is orthonormal in $L^2_{r^m}$ and orthogonal in $\cH^1_{r^m}$. Actually, we have the following well-known Sturm-Liouville theorem:

\begin{Lemma}[\cite{zettl}]\label{hilbert}
Consider the Sturm-Liouville problem \eqref{SL}.
\begin{enumerate}
\item[{\rm(i)}] All eigenvalues $\lambda$ of problem \eqref{SL} are nonzero, 
real, and have multiplicity one. 
Moreover, there are infinite but countable eigenvalues 
$\{\lambda_j\}_{j\in\NN^*}$, which are bounded below, strictly increasing and $\lambda_j\to \infty$ as $j\to\infty$.
\item[{\rm(ii)}] There exists a sequence of eigenfunctions $\{\xi_j\}_{j\in\NN^*}\subset \cH^1_{r^m}$, corresponding to the eigenvalues $\{\lambda_j\}_{j\in\NN^*}$. Such a sequence of eigenfunctions $\{\xi_j\}_{j\in\NN^*}$ is orthonormal and  complete in $L^2_{r^m}$, namely, $\langle r^m  \xi_j ,  \xi_k \rangle=\delta_{kj}$ for $k,j\in\NN^*$, and
\begin{equation*}
\lim_{N\to\infty}|f_N-f|_{2,r^m}=0 \qquad \text{for any $f\in L^2_{r^m}$}, 
\end{equation*}
where $f_N:=\sum_{j=1}^N \langle r^m f, \xi_j\rangle\xi_j$.

\item[{\rm(iii)}] Such a sequence of eigenfunctions $\{\xi_j\}_{j\in\NN^*}$ is also orthogonal and complete in $\cH^1_{r^m}$, namely, $\langle r^m (\xi_j)_r, (\xi_k)_r\rangle+m\big< r^{m-2} \xi_j, \xi_k\big>=\lambda_j\delta_{jk}$ for $k,j\in\NN^*$, and
\begin{equation*}
\lim_{N\to\infty}\|f_N-f\|_{\cH^1_{r^m}}=0 \qquad \text{for any $f\in \cH^1_{r^m}$}.
\end{equation*}
\end{enumerate}
\end{Lemma}
\begin{proof}
We only prove the completeness in (iii). Define the bilinear form $\mathscr{B}[\cdot,\cdot]$ by
\begin{equation*}
\mathscr{B}[f,g]:=\langle r^m f_r, g_r\rangle+m\langle r^{m-2} f, g\rangle.
\end{equation*}
First, a direct calculation, combined with Lemma \ref{hardy-inequality} and the H\"older inequality, implies
\begin{equation*}
\begin{aligned}
&\mathscr{B}[f,g]\leq |f_r|_{2,r^m}|g_r|_{2,r^m}+m|f|_{2,r^{m-2}} |g|_{2,r^{m-2}}\leq C\|f\|_{\cH^1_{r^m}}\|g\|_{\cH^1_{r^m}},\\
&\mathscr{B}[f,f]=|f_r|_{2,r^m}^2+ m|f|_{2,r^{m-2}}^2\geq C^{-1}\|f\|_{\cH^1_{r^m}}^2,
\end{aligned}   
\end{equation*}
which yields that $\mathscr{B}[f,g]$ is a bounded coercive bilinear form in $\cH^1_{r^m}$. Next, suppose that
\begin{equation}\label{comet}
\mathscr{B}[\xi_j,f]=0 \qquad \text{for all $j\in\NN^*$}.  
\end{equation}
To show that $\{\xi_j\}_{j\in\NN^*}$ is complete in $\cH^1_{r^m}$, it suffices to prove $f=0$.

Note that, since $\xi_j$ is the eigenfunction corresponding to the eigenvalue $\lambda_j$, we have
\begin{equation*}
\mathscr{B}[\xi_j,f]=\lambda_j\langle r^m \xi_j,f\rangle \qquad\,\, 
\text{for all $f\in \cH^1_{r^m}$}.
\end{equation*}
This, combined with \eqref{comet}, yields 
\begin{equation}\label{come2t}
\langle r^m \xi_j,f\rangle=0 \qquad \text{for all $j\in\NN^*$},  
\end{equation}
which, along with the completeness of $\{\xi_j\}_{j\in\NN^*}$ in $L^2_{r^m}$, implies $f=0$. 
\end{proof}

Consequently, in view of the above Galerkin basis $\{\xi_j\}_{j\in \NN^*}$, we set 
\begin{equation}\label{U^n}
w^{N,\vartheta}(t,r):=\sum_{k=1}^N \mu_k^{N,\vartheta}(t) \xi_k(r) \qquad \text{for }\vartheta\in (0,1)\text{ and }N\in \NN^*.
\end{equation}
Here, $\mu_k^{N,\vartheta}(t)$ are selected by solving the following  ODE problem in $[0,T]$:
\begin{equation}\label{galerkin-n}
\begin{cases}
\displaystyle \big<r^m\rho_0 w^{N,\vartheta}_t, \xi_j\big>+2\mu\langle r^m\rho_0 D_{\bar\eta}w^{N,\vartheta},D_{\bar\eta}\xi_j\rangle+2\mu m\big<\frac{r^m\rho_0}{\bar\eta^2}w^{N,\vartheta},\xi_j\big>\\[8pt]
\qquad \displaystyle=\big<(r^m\rho_0)^\frac{1}{2} \mathfrak{q}_1, \frac{m\xi_j}{\bar\eta}\big>+\langle(r^m\rho_0)^\frac{1}{2} \mathfrak{q}_2, D_{\bar\eta}\xi_j\rangle,\\[8pt]
\mu_j^{n,\vartheta}(0)=\langle r^m w_0^\vartheta,\xi_j\rangle, \ \ j=1,2,\cdots\!,N,
\end{cases}
\end{equation}
which can also  be rewritten as 
\begin{equation}\label{mu^n}
\begin{cases}
\displaystyle \mathfrak{A}\cdot\frac{\mathrm{d}}{\dt}\mu^{N,\vartheta}(t)+\mathfrak{B}(t)\cdot\mu^{N,\vartheta}(t)=\mathfrak{c}(t) \qquad
\text{in $(0,T]$},\\[10pt]
\mu_j^{N,\vartheta}(0)=\langle r^mw_0^\vartheta,\xi_j\rangle \
\qquad \text{for $j=1,2,\cdots\!,N$},
\end{cases}
\end{equation}
where $\mu^{N,\vartheta}(t):=(\mu_1^{N,\vartheta},\cdots\!,\mu_N^{N,\vartheta})^\top(t)$ and 
\begin{equation}\label{ABC}
\begin{aligned}
\mathfrak{A}&\,=(\mathfrak{A}_{kj})_{1\leq k,j\leq N}, \qquad \mathfrak{B}(t)=(\mathfrak{B}_{kj}(t))_{1\leq k,j\leq N}, \qquad \mathfrak{c}(t)=(\mathfrak{c}_{1},\cdots\!,\mathfrak{c}_{N})^\top(t),\\
\mathfrak{A}_{kj}&:=\int_0^1 r^m \rho_0 \xi_k \xi_j\,\mathrm{d}r, \qquad\ \mathfrak{B}_{kj}(t):= 2\mu\int_0^1 r^m\rho_0\big(D_{\bar\eta}\xi_k D_{\bar\eta}\xi_j+m \frac{\xi_k \xi_j}{\bar\eta^2} \big)\,\mathrm{d}r,\\
\mathfrak{c}_j(t)&:= \int_0^1(r^m\rho_0)^\frac{1}{2}\big(\mathfrak{q}_1 \frac{\xi_j}{\bar\eta} +\mathfrak{q}_2D_{\bar\eta}\xi_j\big)\,\mathrm{d}r.
\end{aligned}
\end{equation}

To solve  \eqref{mu^n}, we first note that, due to the facts that $\rho_0^\beta\sim 1-r$ and the linear independency of $\{\xi_j\}_{j\in \NN^*}$, $\{(r^m\rho_0)^\frac{1}{2} \xi_j\}_{j\in \NN^*}$ are linearly independent, and hence its Gram matrix $\mathfrak{A}$ is non-singular. Next, from \eqref{given-flow} and Lemma \ref{hilbert}, it is direct to check that  $\mathfrak{B}(t)\in C^1[0,T]$ and $\mathfrak{c}(t)\in L^2(0,T)$. Therefore, we can obtain the following existence result for \eqref{mu^n} from the classical  ODE theory.
\begin{Lemma}[\cite{cod}]
Problem \eqref{mu^n} admits a unique solution $\mu_j^{N,\vartheta}\in AC[0,T]$, for each $j=1,2,\cdots\!, N$, $N\in \NN^*$, and $\vartheta\in (0,1)$, where $AC$ denotes the space of absolutely continuous functions. As a consequence, $w^{N,\vartheta}(t,r)\in AC([0,T];\cH^1_{r^m})$ and $w^{N,\vartheta}$ is differentiable \textit{a.e.} in $t$, for each $N\in \NN^*$ and $\vartheta\in (0,1)$.
\end{Lemma}

\textbf{2. Uniform estimates of $w^{N,\vartheta}$.} First, multiplying \eqref{galerkin-n} by $\mu^{N,\vartheta}_j(t)$ and  summing the resulting equality with respect to $j$ from $1$ to $N$ imply 
\begin{equation*}
\begin{aligned}
&\,\frac{1}{2}\frac{\mathrm{d}}{\dt}\int_0^1 r^m\rho_0|w^{N,\vartheta}|^2\,\mathrm{d}r+2\mu\int_0^1 r^m\rho_0\big(|D_{\bar\eta}w^{N,\vartheta}|^2+m \frac{|w^{N,\vartheta}|^2}{\bar\eta^2} \big)\,\mathrm{d}r\\
&=\int_0^1(r^m\rho_0)^\frac{1}{2} \mathfrak{q}_1 \frac{mw^{N,\vartheta}}{\bar\eta} \,\mathrm{d}r+\int_0^1(r^m\rho_0)^\frac{1}{2} \mathfrak{q}_2D_{\bar\eta}w^{N,\vartheta}\,\mathrm{d}r,
\end{aligned}
\end{equation*}
which, along with the Young inequality, yields
\begin{equation}\label{I11I22}
\frac{\mathrm{d}}{\dt}\int_0^1 r^m\rho_0|w^{N,\vartheta}|^2\,\mathrm{d}r\!+\mu\int_0^1 r^m\rho_0\big(|D_{\bar\eta}w^{N,\vartheta}|^2+m \frac{|w^{N,\vartheta}|^2}{\bar\eta^2} \big)\,\mathrm{d}r\leq  C|(\mathfrak{q}_1,\mathfrak{q}_2)|_2^2.
\end{equation}

Integrating the above over $[0,t]$, together with \eqref{jibenjiashe}, leads to
\begin{equation}\label{33125}
|w^{N,\vartheta}(t)|_{2,r^m\rho_0}^2+\int_0^{t} \|w^{N,\vartheta}\|_{\cH^1_{r^m\rho_0}}^2\!\dt\leq C(T)\Big(|w^{N,\vartheta}(0)|_{2,r^m\rho_0}^2+ \int_0^{t} |(\mathfrak{q}_1,\mathfrak{q}_2)|_2^2\,\dt\Big),
\end{equation}
where
$w^{N,\vartheta}(0,r)=\sum_{j=1}^N \mu_j^{N,\vartheta}(0)e_j=\sum_{j=1}^N \langle w_0^\vartheta,\xi_j\rangle \xi_j$.

Next, we derive the $L^2_{r^m\rho_0}$-boundedness of $w^{N,\vartheta}(0,r)$. It follows from \eqref{wdelta-w} that, for any $\varepsilon>0$, there exists $\vartheta_0=\vartheta_0(\varepsilon)>0$ such that
\begin{equation}\label{zer1}
|w^\vartheta_0-w_0|_{2,r^m\rho_0}<\frac{\varepsilon}{2} \qquad \text{ for any $0<\vartheta\leq\vartheta_0$}.
\end{equation}
Then, for such $\varepsilon,\vartheta_0>0$ and fixed $\vartheta\in (0,\vartheta_0]$, by $w_0^\vartheta\in L^2\subset L^2_{r^m\rho_0}$ and Lemma \ref{hilbert}, we can find a large $N_0=N_0(\varepsilon,\vartheta_0)\in \NN^*$ so that
\begin{equation*}
|w^{N,\vartheta}(0)-w^\vartheta_0|_{2,r^m\rho_0}=\Big|\sum_{j=1}^N \langle w_0^\vartheta,\xi_j\rangle \xi_j-w_0^\vartheta\Big|_{2,r^m\rho_0}<\frac{\varepsilon}{2} \qquad \text{for any $N\geq N_0$},
\end{equation*}
which, combined with \eqref{zer1}, yields 
\begin{equation*}
|w^{N,\vartheta}(0)-w_0|_{2,r^m\rho_0}\leq|w^{N,\vartheta}(0)-w^\vartheta_0|_{2,r^m\rho_0}+|w^\vartheta_0-w_0|_{2,r^m\rho_0}<\varepsilon.
\end{equation*}
Hence, setting $\varepsilon:=|w_0|_{2,r^m\rho_0}$, the above statement implies that there exist $\vartheta_0=\vartheta_0(\varepsilon)>0$ and $N_0=N_0(\varepsilon,\vartheta_0)\in \NN^*$ such that, for any $\vartheta\in (0,\vartheta_0]$ and $N\geq N_0$,
\begin{equation}\label{initial-converge}
|w^{N,\vartheta}(0)|_{2,r^m\rho_0}\leq 2|w_0|_{2,r^m\rho_0}.
\end{equation}

Finally, substituting  \eqref{initial-converge} into \eqref{33125} yields
\begin{equation}\label{3...18}
\sup_{t\in[0,T]} |w^{N,\vartheta}|_{2,r^m\rho_0}^2+\int_0^{T} \|w^{N,\vartheta}\|_{\cH^1_{r^m\rho_0}}^2\dt
\leq C(T)\Big(|w_0|_{2,r^m\rho_0}^2+\int_0^{T}|(\mathfrak{q}_1,\mathfrak{q}_2)|_2^2\,\dt\Big).
\end{equation}

\textbf{3. Taking  the limit as $N,\vartheta^{-1}\to \infty$.}
Based on \eqref{3...18}, the fact that $L^2_{r^m\rho_0}$ is a separable reflexive Banach space, owing to Lemma \ref{W-space} and the weak convergence arguments, we can extract a subsequence (still denoted by) $w^{N,\vartheta}$ and some limits $w$ and  $(\mathfrak{w}_1,\mathfrak{w}_2)$ such that
\begin{equation}\label{3..124}
\begin{aligned}
w^{N,\vartheta} \rightharpoonup w\qquad  &\text{weakly* in }L^\infty([0,T];L^2_{r^m\rho_0}),\\
\big(w^{N,\vartheta}_r,\frac{w^{N,\vartheta}}{r}\big) \rightharpoonup (\mathfrak{w}_1,\mathfrak{w}_2)\qquad &\text{weakly \, in }L^2([0,T];L^2_{r^m\rho_0}).
\end{aligned}
\end{equation}
Then the definition of weak derivatives and the uniqueness of limits imply that $(\mathfrak{w}_1,\mathfrak{w}_2)=(w_r,\frac{w}{r})$.   
In addition, the lower semi-continuity of weak convergence \eqref{3..124} also implies that \eqref{3...18} holds for $w$.

Now we take the limit as $N,\vartheta^{-1}\to \infty$ in \eqref{galerkin-n}. Let $\mathfrak{g}^M(t,r)=\sum_{j=1}^M g_j(t)\xi_j(r)$ with $g_j \in C_{\mathrm{c}}^\infty(0,T)$ for $j=1,\cdots\!,M$ and $M\in \NN^*$. 
Then it follows from \eqref{galerkin-n} that, for any $N\geq M$,
\begin{equation}\label{equ3320}
\begin{aligned}
&\int_0^T \!\!\big<r^m\rho_0 w^{N,\vartheta}_t \!, \mathfrak{g}^M\big>\,\dt \! + \!2\mu \int_0^T \!\!\big< r^m\rho_0 D_{\bar\eta} w^{N,\vartheta}\! ,D_{\bar\eta} \mathfrak{g}^M \big>\,\dt \!+ \!2\mu m \int_0^T\!\! \big<\frac{r^m\rho_0}{\bar\eta^2}w^{N,\vartheta} \!,\mathfrak{g}^M\big>\,\dt\\
&=\int_0^T\big<(r^m\rho_0)^\frac{1}{2} \mathfrak{q}_1, \frac{m\mathfrak{g}^M}{\bar\eta}\big>\,\dt+\int_0^T\big<(r^m\rho_0)^\frac{1}{2} \mathfrak{q}_2, D_{\bar\eta} \mathfrak{g}^M\big>\,\dt.
\end{aligned}
\end{equation}
Since both $w^{N,\vartheta}$ and $\mathfrak{g}^M$ are differentiable with respect to $t$, $\partial_t$ in \eqref{equ3320} can be transferred from $w^{N,\vartheta}$ to $\mathfrak{g}^M$ via integration by parts. Then, based on \eqref{3..124}, we let $N,\vartheta^{-1}\to \infty$ in the resulting equality to obtain
\begin{equation}\label{3,,20}
\begin{aligned}
&-\int_0^T \langle r^m\rho_0 w, \mathfrak{g}^M_t\rangle\,\dt  + 2\mu \int_0^T \big< r^m\rho_0 D_{\bar\eta} w ,D_{\bar\eta} \mathfrak{g}^M \big>\,\dt + 2\mu m \int_0^T\big<\frac{r^m\rho_0}{\bar\eta^2}w,\mathfrak{g}^M\big>\,\dt\\
&=\int_0^T\big<(r^m\rho_0)^\frac{1}{2} \mathfrak{q}_1, \frac{m\mathfrak{g}^M}{\bar\eta}\big>\,\dt+\int_0^T\big<(r^m\rho_0)^\frac{1}{2} \mathfrak{q}_2, D_{\bar\eta} \mathfrak{g}^M\big>\,\dt.
\end{aligned}
\end{equation}

Next, since $r^m\rho_0 w_t\in H^{-1}([0,T];L^2)$ due to $w\in L^2([0,T];L^2_{r^m\rho_0})$, by \eqref{jibenjiashe}, \eqref{3,,20}, and the definition of  distributional derivatives, we obtain
\begin{equation*}
\begin{aligned}
&\,\big|\langle r^m\rho_0 w_t, \mathfrak{g}^M\rangle_{H_t^{-1}((H^1)^*)\times H^1_{0,t}(H^1)}\big|
=\absB{\int_0^T\langle r^m\rho_0 w, \mathfrak{g}^M_t\rangle\,\dt} \\
&\leq 2\mu\int_0^T\big|\big< r^m\rho_0 D_{\bar\eta} w ,D_{\bar\eta} \mathfrak{g}^M \big>\big|\,\dt+ 2\mu m\int_0^T\Big|\big<\frac{r^m\rho_0}{\bar\eta^2}w,\mathfrak{g}^M\big>\Big| \,\dt \\
&\quad+\int_0^T\Big|\big<(r^m\rho_0)^\frac{1}{2} \mathfrak{q}_1, \frac{m\mathfrak{g}^M}{\bar\eta}\big>\Big|\,\dt+\int_0^T\big|\big<(r^m\rho_0)^\frac{1}{2} \mathfrak{q}_2, D_{\bar\eta} \mathfrak{g}^M\big>\big|\,\dt \\
&\leq C \big(\|w\|_{L^2_t(\cH^1_{r^m\rho_0})}+ \|(\mathfrak{q}_1,\mathfrak{q}_2)\|_{L^2_t(L^2)}\big)\|\mathfrak{g}^M\|_{L^2_t(\cH^1_{r^m\rho_0})}. 
\end{aligned}    
\end{equation*}
We now claim that
\begin{equation}\label{dens}
\left\{\mathfrak{g}^M\right\}_{M\in\NN^*}\text{ is dense in }L^2([0,T];\cH^1_{r^m\rho_0}).
\end{equation}
Once this is proved, it follows from the Hahn-Banach theorem (see \cite{conway}) that $r^m\rho_0 w_t$, which is a functional initially defined on $H^1_0([0,T];L^2)$, can be uniquely extended to a functional defined on $L^2([0,T];\cH^1_{r^m\rho_0})$ and 
\begin{equation}\label{H-1}
\|r^m\rho_0 w_t\|_{L^2_t(\cH^{-1}_{r^m\rho_0})}\leq C\big(\|w\|_{L^2_t(\cH^1_{r^m\rho_0})}+ \|(\mathfrak{q}_1,\mathfrak{q}_2)\|_{L^2_t(L^2)}\big)\leq C.
\end{equation}
To derive \eqref{dens}, it suffices to prove the density of  $\mathrm{span}\{\xi_j\}_{j\in\NN^*}$ in $\cH^1_{r^m\rho_0}$. Indeed, thanks to Proposition \ref{W-space}, for any given $f\in \cH^1_{r^m\rho_0}$, there exists a sequence $\{f^\varepsilon\}_{\varepsilon>0}\subset C^\infty(\bar I)\cap \cH^1_{r^m}$ such that
\begin{equation*}
f^\varepsilon\to f \ \ \text{in $\cH^1_{r^m\rho_0}$} \qquad \text{as $\varepsilon\to 0$}.
\end{equation*}
On the other hand, for any $f\in C^\infty(\bar I)\cap \cH^1_{r^m}\subset \cH^1_{r^m}$, we obtain from Lemma \ref{hilbert} and $\cH^1_{r^m}\subset \cH^1_{r^m\rho_0}$ that
\begin{equation*}
\sum_{j=1}^M \langle f,\xi_j\rangle \xi_j\to f \, \,\,\, \text{in $\cH^1_{r^m\rho_0}$} \qquad\,\, \text{as $M\to\infty$}.
\end{equation*}
Consequently, if setting $f^{M,\varepsilon}:=\sum_{j=1}^M \langle f^\varepsilon,\xi_j\rangle \xi_j$, we derive a sequence of $\{f^{M,\varepsilon}\}\subset\mathrm{span}\{\xi_j\}_{j\in\NN^*}$ converging to $f$ in $\cH^1_{r^m\rho_0}$ as $M,\varepsilon^{-1}\to\infty$, which shows claim \eqref{dens}. 

Finally, for any $\mathfrak{g}\in L^2([0,T];\cH^1_{r^m\rho_0})$,  
there exists a sequence $\{\mathfrak{g}^M\}_{M\in\NN^*}$ 
with each $\mathfrak{g}^M\in L^2([0,T];\cH^1_{r^m\rho_0})$ such that $\mathfrak{g}^M\to \mathfrak{g}$ in $L^2([0,T];\cH^1_{r^m\rho_0})$ as $M\to\infty$. Taking the limit as $M\to \infty$ in \eqref{3,,20}, along with  \eqref{H-1}, implies that, for all $\mathfrak{g}\in L^2([0,T];\cH^1_{r^m\rho_0})$,
\begin{equation}\label{3..133}
\begin{aligned}
&\int_0^T\!\! \langle r^m\rho_0 w_t, \mathfrak{g} \rangle_{\cH^{-1}_{r^m\rho_0} \!\times \cH^1_{r^m\rho_0}}\,\dt  + 2\mu \int_0^T\!\! \big< r^m\rho_0 D_{\bar\eta} w ,D_{\bar\eta} \mathfrak{g} \big>\,\dt + 2\mu m \int_0^T\!\!\big<\frac{r^m\rho_0}{\bar\eta^2}w,\mathfrak{g} \big>\,\dt \\
&=\int_0^T\big<(r^m\rho_0)^\frac{1}{2} \mathfrak{q}_1, \frac{m\mathfrak{g}}{\bar\eta}\big>\,\dt+\int_0^T\langle(r^m\rho_0)^\frac{1}{2} \mathfrak{q}_2, D_{\bar\eta} \mathfrak{g}\rangle\,\dt.
\end{aligned}
\end{equation}
Hence, the weak formulation \eqref{weak.F.} follows simply by setting $\mathfrak{g}(t,r)=\varphi(r)\in \cH^1_{r^m\rho_0}$ in \eqref{3..133} and applying $\partial_t$ to both sides of the resulting equality.

\medskip
\textbf{4. Uniqueness and time continuity.}
First, it follows from $w\in L^2([0,T];\cH^1_{r^m\rho_0})$, \eqref{H-1}, and Lemma \ref{Aubin} (let $s=1$ in Lemma \ref{Aubin}) that $w\in C([0,T];L^2_{r^m\rho_0})$.

It still remains to show $w(0,r)=w_0(r)$ for \textit{a.e.} $r\in I$. On one hand, thanks to \eqref{weak.F.} and $w\in C([0,T];L^2_{r^m\rho_0})$, for any $\mathfrak{g}\in C_{\mathrm{c}}^1([0,T);\cH^1_{r^m\rho_0})$, 
\begin{equation}\label{3...125}
\begin{aligned}
&-\int_0^T \langle r^m\rho_0 w,\mathfrak{g}_t\rangle\,\dt+ 2\mu \int_0^T \big< r^m\rho_0 D_{\bar\eta} w ,D_{\bar\eta} \mathfrak{g} \big>\,\dt + 2\mu m \int_0^T \big<\frac{r^m\rho_0}{\bar\eta^2}w,\mathfrak{g} \big>\,\dt \\
&=\langle r^m\rho_0 w(0),\mathfrak{g}(0)\rangle+\int_0^T\big<(r^m\rho_0)^\frac{1}{2} \mathfrak{q}_1, \frac{m\mathfrak{g}}{\bar\eta}\big>\,\dt+\int_0^T\langle(r^m\rho_0)^\frac{1}{2} \mathfrak{q}_2, D_{\bar\eta} \mathfrak{g}\rangle\,\dt.
\end{aligned}
\end{equation}
On the other hand, choose $\mathfrak{g}^M(t,x)=\sum_{j=1}^M g_j(t) \xi_j$ in \eqref{equ3320} with $g_j(t)\in C^\infty_{\mathrm{c}}([0,T))$ ($j=1,\cdots\!,M$) such that $\mathfrak{g}^M\to \mathfrak{g}$ in $C^1_{\mathrm{c}}([0,T);\cH^1_{r^m\rho_0})$ as $M\to \infty$. Then \eqref{equ3320} becomes
\begin{equation*}
\begin{aligned}
&-\int_0^T\!\!\big<r^m\rho_0 w^{N,\vartheta}\!\!, \mathfrak{g}^M_t\big>\,\dt +2\mu\!\int_0^T\!\!\big< r^m\rho_0  D_{\bar\eta} w^{N,\vartheta}\!\!, D_{\bar\eta}\mathfrak{g}^M\big>\,\dt+2\mu m\!\int_0^T\!\!\big<\frac{r^m\rho_0}{\bar\eta^2}w^{N,\vartheta}\!\!,\mathfrak{g}^M\big>\,\dt \\
&=\big<r^m\rho_0 w^{N,\vartheta}(0), \mathfrak{g}^M(0)\big>+\!\int_0^T\!\!\big<(r^m\rho_0)^\frac{1}{2} \mathfrak{q}_1, \frac{m\mathfrak{g}^M}{\bar\eta}\big>\,\dt\!+\!\int_0^T\!\!\big<(r^m\rho_0)^\frac{1}{2} \mathfrak{q}_2, D_{\bar\eta}\mathfrak{g}^M \big>\,\dt
\end{aligned}
\end{equation*}
for all $M\leq N$. Taking the limit as $M, N\to \infty$ in the above equality gives that, for all $\mathfrak{g}\in C^1_{\mathrm{c}}([0,T);\cH^1_{r^m\rho_0})$,
\begin{equation}\label{3...126}
\begin{aligned}
&-\int_0^T \langle r^m\rho_0 w,\mathfrak{g}_t\rangle\,\dt+ 2\mu \int_0^T \big< r^m\rho_0 D_{\bar\eta} w ,D_{\bar\eta} \mathfrak{g} \big>\,\dt + 2\mu m \int_0^T \big<\frac{r^m\rho_0}{\bar\eta^2}w,\mathfrak{g} \big>\,\dt \\
&=\langle r^m\rho_0 w_0,\mathfrak{g}(0)\rangle+\int_0^T\big<(r^m\rho_0)^\frac{1}{2} \mathfrak{q}_1, \frac{m\mathfrak{g}}{\bar\eta}\big>\,\dt+\int_0^T\langle(r^m\rho_0)^\frac{1}{2} \mathfrak{q}_2, D_{\bar\eta} \mathfrak{g}\rangle\,\dt.
\end{aligned}
\end{equation}
Comparing with \eqref{3...125} and \eqref{3...126} yields that $w(0,r)=w_0$ for \textit{a.e.} $r\in I$. Finally, setting $w_0=0$, $\varphi=w$, and $\mathfrak{q}_1=\mathfrak{q}_2=0$ in \eqref{weak.F.} yields $w=0$, which implies the uniqueness. 

This completes the proof of Proposition \ref{prop1}
\end{proof}

Next, we show that, if $w_0\in \cH^1_{r^m\rho_0}$, the regularity of weak solutions obtained in Proposition \ref{prop1} can be improved.
\begin{Proposition}\label{prop-strong}
Assume that $w$ is the weak solution of \eqref{galerkin-n} obtained in {\rm Proposition \ref{prop1}}. If $w_0\in \cH^1_{r^m\rho_0}$ and 
\begin{equation}\label{gal-ass1}
(\mathfrak{q_1},\mathfrak{q}_2)\in L^\infty([0,T];L^2),\qquad   ((\mathfrak{q_1})_t,(\mathfrak{q}_2)_t)\in L^2([0,T];L^2),
\end{equation}
then 
\begin{equation}\label{reg-H}
\begin{aligned}
&w\in L^\infty([0,T];\cH^1_{r^m\rho_0}),\quad w_t\in L^2([0,T];L^2_{r^m\rho_0}),\\
&\sqrt{t} w_t\in L^\infty([0,T];L^2_{r^m\rho_0})\cap L^2([0,T];\cH^1_{r^m\rho_0}),\quad \sqrt{t}(r^m\rho_0 w_{tt})\in L^2([0,T];\cH^{-1}_{r^m\rho_0}),
\end{aligned}
\end{equation}
and $w$ satisfies the following formulation{\rm :} for any \textit{a.e.} $t>0$ and $\varphi\in  \cH^1_{r^m\rho_0}$,
\begin{equation}\label{weak-F-H}
\begin{aligned}
&\, \big<r^m\rho_0 w_{tt}, \varphi\big>_{\cH^{-1}_{r^m\rho_0}\times \cH^1_{r^m\rho_0}}+2\mu\langle r^m\rho_0 D_{\bar\eta}w_t,D_{\bar\eta}\varphi\rangle+2\mu m\big<\frac{r^m\rho_0}{\bar\eta^2}w_t,\varphi\big>\\
&-4\mu\langle r^m\rho_0 D_{\bar\eta}\bar UD_{\bar\eta}w,D_{\bar\eta}\varphi\rangle-4\mu m\big<\frac{r^m\rho_0}{\bar\eta^3}\bar Uw,\varphi\big>\\ 
&=\big<(r^m\rho_0)^\frac{1}{2} \big((\mathfrak{q}_1)_t- \mathfrak{q}_1 \frac{\bar U}{\bar\eta}\big), \frac{m\varphi}{\bar\eta}\big>+\langle(r^m\rho_0)^\frac{1}{2} \big((\mathfrak{q}_2)_t- \mathfrak{q}_2D_{\bar\eta}\bar U\big), D_{\bar\eta}\varphi\rangle.
\end{aligned}
\end{equation}
In particular, the following energy equality holds for \textit{a.e.} $t>0${\rm:}
\begin{equation}\label{Energy-Id}
\begin{aligned}
&\,\frac{1}{2}\frac{\mathrm{d}}{\dt}\int_0^1 r^m\rho_0 |w_t|^2\,\mathrm{d}r+2\mu\int_0^1 r^m\rho_0\big(|D_{\bar\eta}w_t|^2+m \frac{|w_t|^2}{\bar\eta^2} \big)\,\mathrm{d}r\\
&=4\mu \int_0^1 r^m\rho_0 D_{\bar\eta}\bar UD_{\bar\eta}wD_{\bar\eta}w_t\,\mathrm{d}r+4\mu m\int_0^1 \frac{r^m\rho_0}{\bar\eta^3}\bar Uw w_t\,\mathrm{d}r\\
&\quad +m\int_0^1(r^m\rho_0)^\frac{1}{2} \big((\mathfrak{q}_1)_t- \mathfrak{q}_1 \frac{\bar U}{\bar\eta}\big) \frac{w_t}{\bar\eta} \,\mathrm{d}r+\int_0^1(r^m\rho_0)^\frac{1}{2} \big((\mathfrak{q}_2)_t- \mathfrak{q}_2D_{\bar\eta}\bar U\big) D_{\bar\eta}w_t\,\mathrm{d}r.
\end{aligned}
\end{equation}
\end{Proposition}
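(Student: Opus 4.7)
My plan is to return to the Galerkin approximations $w^{N,\vartheta}$ constructed in the proof of Proposition \ref{prop1} and derive higher-order estimates from them, exploiting the stronger regularity of $w_0$ and $(\mathfrak{q}_1,\mathfrak{q}_2)$. Since $w_0\in \cH^1_{r^m\rho_0}$, Proposition \ref{prop-bijin} lets me choose the approximating sequence $\{w_0^\vartheta\}\subset C^\infty(\bar I)\cap\cH^1_{r^m}$ so that $w_0^\vartheta\to w_0$ in $\cH^1_{r^m\rho_0}$; combined with the $\cH^1_{r^m}$-completeness of $\{\xi_j\}$ from Lemma \ref{hilbert}, the initial projections $w^{N,\vartheta}(0)$ are uniformly bounded in $\cH^1_{r^m\rho_0}$ with respect to $N$ and $\vartheta$.

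First, I would test \eqref{galerkin-n} against $(\mu_j^{N,\vartheta})_t$ and sum in $j$ to produce the $L^2_{r^m\rho_0}$-norm of $w_t^{N,\vartheta}$ on the left. Using the time-derivative identities
\begin{equation*}
\partial_t|D_{\bar\eta}w|^2=2D_{\bar\eta}w\cdot D_{\bar\eta}w_t-2(D_{\bar\eta}w)^2 D_{\bar\eta}\bar U,\quad \partial_t\big(\tfrac{w}{\bar\eta}\big)^2=2\tfrac{ww_t}{\bar\eta^2}-2\tfrac{\bar U w^2}{\bar\eta^3},
\end{equation*}
the viscous bilinear forms rearrange into the time derivative of the natural $\cH^1_{r^m\rho_0}$-energy of $w^{N,\vartheta}$ plus correction terms that are harmlessly controlled by $|D_{\bar\eta}\bar U|_\infty$ and $|\bar U/\bar\eta|_\infty$, which are finite thanks to \eqref{given} and \eqref{jibenjiashe}. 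After absorbing $|w_t^{N,\vartheta}|_{2,r^m\rho_0}^2$ from the right via Cauchy--Schwarz and applying Gronwall's inequality, this yields uniform bounds for $w^{N,\vartheta}$ in $L^\infty([0,T];\cH^1_{r^m\rho_0})$ and for $w_t^{N,\vartheta}$ in $L^2([0,T];L^2_{r^m\rho_0})$.

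Next, I would differentiate the Galerkin system in $t$; using the same identities, the resulting system for $(\mu_j^{N,\vartheta})_t$ is exactly the Galerkin version of \eqref{weak-F-H}. Testing that equation against $t(\mu_j^{N,\vartheta})_t$, summing in $j$, and rewriting $t\tfrac{d}{dt}|w_t^{N,\vartheta}|_{2,r^m\rho_0}^2=\tfrac{d}{dt}(t|w_t^{N,\vartheta}|_{2,r^m\rho_0}^2)-|w_t^{N,\vartheta}|_{2,r^m\rho_0}^2$, I would absorb the $-|w_t^{N,\vartheta}|_{2,r^m\rho_0}^2$ term using the first energy bound, control the cross terms involving $\bar U D_{\bar\eta}w^{N,\vartheta}$ and $\bar U w^{N,\vartheta}/\bar\eta^3$ by the just-established $\cH^1_{r^m\rho_0}$-bound on $w^{N,\vartheta}$ together with $|\bar U|_\infty$, and handle the new forcing via $(\mathfrak{q}_i)_t\in L^2([0,T];L^2)$ from \eqref{gal-ass1}. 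Gronwall again produces the uniform estimate $\sqrt{t}\,w_t^{N,\vartheta}\in L^\infty([0,T];L^2_{r^m\rho_0})\cap L^2([0,T];\cH^1_{r^m\rho_0})$.

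Finally, passing to weak and weak-$*$ limits as in the proof of Proposition \ref{prop1}, lower semicontinuity delivers the regularities in \eqref{reg-H}, and the density of $\mathrm{span}\{\xi_j\}$ in $\cH^1_{r^m\rho_0}$ established around \eqref{dens} gives the weak formulation \eqref{weak-F-H}. The bound $\sqrt{t}\,(r^m\rho_0 w_{tt})\in L^2([0,T];\cH^{-1}_{r^m\rho_0})$ follows from \eqref{weak-F-H} by duality, exactly in the spirit of the $\cH^{-1}_{r^m\rho_0}$-estimate derived for $r^m\rho_0 w_t$ in Proposition \ref{prop1}. To obtain the energy identity \eqref{Energy-Id}, I would test \eqref{weak-F-H} against $\varphi=w_t(t,\cdot)$, which is legitimate for a.e.\ $t>0$ since $\sqrt{t}\,w_t\in L^2([0,T];\cH^1_{r^m\rho_0})$, and apply the $\partial_t$-identities once more to the $w_t$-analogues of the viscous terms. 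I expect the main technical obstacle to be the careful tracking of the commutator corrections generated when $\partial_t$ is moved through the $\bar\eta$-dependent operators $D_{\bar\eta}$ and $1/\bar\eta^2$, and verifying that these correction terms can indeed be absorbed via the bounds on $\bar U$ from \eqref{given}--\eqref{jibenjiashe} without losing control of the time-weighted estimates near $t=0$.
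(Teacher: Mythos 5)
Your proposal is correct and follows essentially the same route as the paper's proof: mollify $w_0$ via Proposition \ref{prop-bijin}, test the Galerkin system against $(\mu^{N,\vartheta}_j)_t$ to get the first energy level, differentiate the Galerkin ODE in time and derive the time-weighted estimate, pass to limits, obtain the $\cH^{-1}_{r^m\rho_0}$-bound for $\sqrt{t}\,r^m\rho_0 w_{tt}$ by duality, and finally test against $w_t$ via the Lemma \ref{Aubin}-type absolute-continuity identity. The only genuine difference is at the time-weighted step: you absorb the lower-order term by writing $t\tfrac{d}{dt}(\cdot)=\tfrac{d}{dt}(t\cdot)-(\cdot)$ and integrating from $0$ (legitimate at the Galerkin level since $t\,|w_t^{N,\vartheta}(t)|_{2,r^m\rho_0}^2\to0$ as $t\to0$), whereas the paper integrates from $\tau$ and extracts a sequence $\tau_k\to0$ via Lemma \ref{bjr}; both are valid, and your version is a mild simplification since continuity of the Galerkin approximants at $t=0$ makes the $\tau_k$-selection unnecessary. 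One small point worth making explicit in a final write-up: before differentiating the Galerkin ODE in $t$, one should note (as the paper does) that \eqref{gal-ass1} gives $\mathfrak{c}(t)\in H^1(0,T)$ and hence $(\mu^{N,\vartheta})_t\in AC[0,T]$, which is what justifies the time differentiation.
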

\begin{proof}
We divide the proof into three steps.

\smallskip
\textbf{1.} First, from Proposition \ref{prop-bijin}, for given $w_0\in \cH^1_{r^m\rho_0}$, it follows that there exists a smooth sequence $\{w_0^\vartheta\}_{\vartheta>0}\subset C^\infty(\bar I)\cap \cH^1_{r^m\rho_0}$ satisfying
\begin{equation}\label{wdelta-w2}
\lim_{\vartheta\to0} \|w^\vartheta_0- w_0\|_{\cH^1_{r^m\rho_0}}=0.
\end{equation}

\smallskip
\textbf{2.} Based on the mollified initial data $w_0^\vartheta$ and the Galerkin scheme shown in Step 1 of the proof of Proposition \ref{prop1}, we can construct a Galerkin approximate sequence $w^{N,\vartheta}\in AC([0,T];\cH^1_{r^m\rho_0})$ and an ODE problem, which are the same as those given in \eqref{U^n}--\eqref{mu^n}. 

Now, by multiplying $\eqref{galerkin-n}_1$ by $(\mu_j^{N,\vartheta})_t(t)$ and summing the
resulting equality with respect to $j$ from $1$ to $N$, we have
\begin{equation}\label{I0*}
\begin{aligned}
&\ \mu\frac{\mathrm{d}}{\mathrm{d}t}\int_0^1 r^m\rho_0 \big(|D_{\bar\eta} w^{N,\vartheta}|^2+m\frac{|w^{N,\vartheta}|^2}{\bar\eta^2}\big)\,\mathrm{d}r+\int_0^1 r^m\rho_0|w^{N,\vartheta}_t|^2\,\mathrm{d}r\\
&=\underline{-2\mu\int_0^1 r^m\rho_0\big(D_{\bar\eta}\bar U|D_{\bar\eta} w^{N,\vartheta}|^2+m\frac{\bar U |w^{N,\vartheta}|^2}{\bar\eta^3}\big)\,\mathrm{d}r}\\
&\quad \underline{+\int_0^1 (r^m\rho_0)^\frac{1}{2}\Big(\big(\mathfrak{q}_1\frac{\bar U}{\bar\eta}-(\mathfrak{q}_1)_t\big)\frac{m w^{N,\vartheta}}{\bar\eta}+(\mathfrak{q}_2D_{\bar\eta}\bar U-(\mathfrak{q}_2)_t) D_{\bar\eta} w^{N,\vartheta} \Big)\,\mathrm{d}r}_{:=\mathrm{I}_1}\\
&\quad +\frac{\mathrm{d}}{\mathrm{d}t}\underline{\int_0^1 (r^m\rho_0)^\frac{1}{2}\big(\mathfrak{q}_1\frac{m w^{N,\vartheta}}{\bar\eta}+\mathfrak{q}_2D_{\bar\eta} w^{N,\vartheta}\big)\,\mathrm{d}r}_{:=\mathrm{I}_*(t)}.
\end{aligned}
\end{equation}
Here, for $\mathrm{I}_1$, it follows from the H\"older and Young inequalities that
\begin{equation}\label{I1*}
\begin{aligned}
\mathrm{I}_1&\leq C\Big|\big(D_{\bar\eta}\bar U,\frac{\bar U}{\bar\eta}\big)\Big|_\infty\Big|\big(D_{\bar\eta} w^{N,\vartheta},\frac{w^{N,\vartheta}}{\bar\eta}\big)\Big|_{2,r^m\rho_0}^2\\
&\quad +C\Big(\Big|\big(D_{\bar\eta}\bar U,\frac{\bar U}{\bar\eta}\big)\Big|_\infty|(\mathfrak{q}_1,\mathfrak{q}_2)|_2+|((\mathfrak{q}_1)_t,(\mathfrak{q}_2)_t)|_2\Big)\Big|\big(D_{\bar\eta} w^{N,\vartheta},\frac{w^{N,\vartheta}}{\bar\eta}\big)\Big|_{2,r^m\rho_0}\\
&\leq C(T)\Big|\big(D_{\bar\eta} w^{N,\vartheta},\frac{w^{N,\vartheta}}{\bar\eta}\big)\Big|_{2,r^m\rho_0}^2 +C(T)|(\mathfrak{q}_1,\mathfrak{q}_2,(\mathfrak{q}_1)_t,(\mathfrak{q}_2)_t)|_2^2,
\end{aligned}
\end{equation}
and $\mathrm{I}_*(t)$ can be controlled by
\begin{equation}\label{I*}
|\mathrm{I}_*(t)|\leq C\varepsilon^{-1}|(\mathfrak{q}_1,\mathfrak{q}_2)|_2^2+\varepsilon\Big|\big(D_{\bar\eta} w^{N,\vartheta},\frac{w^{N,\vartheta}}{\bar\eta}\big)\Big|_{2,r^m\rho_0}^2 \qquad\text{for all $\varepsilon\in(0,1)$}.
\end{equation}
Thus, substituting \eqref{I1*}--\eqref{I*} into \eqref{I0*} with $\varepsilon$ sufficiently small, we deduce from the Gr\"onwall inequality that
\begin{equation*}
\begin{aligned}
&\sup_{t\in[0,T]}\|w^{N,\vartheta}\|_{\cH^1_{r^m\rho_0}}^2+\int_0^T |w^{N,\vartheta}_t|_{2,r^m\rho_0}^2\,\mathrm{d}t\\
&\leq C(T)\Big(\|w^{N,\vartheta}(0)\|_{\cH^1_{r^m\rho_0}}^2+\sup_{t\in[0,T]}|(\mathfrak{q}_1,\mathfrak{q}_2)|_2^2+\int_0^T |((\mathfrak{q}_1)_t,(\mathfrak{q}_2)_t)|_2^2\,\mathrm{d}t\Big).
\end{aligned}
\end{equation*}
For the initial data, based on (iii) of Lemma \ref{hilbert}, \eqref{wdelta-w2}, and an argument similar to \eqref{zer1}--\eqref{initial-converge}, we obtain that there exist $\vartheta_0>0$ and $N_0=N_0(\vartheta_0)\in \NN^*$ such that, for any $\vartheta\in (0,\vartheta_0]$ and $N\geq N_0$,
\begin{equation*} 
\|w^{N,\vartheta}(0)\|_{\cH^1_{r^m\rho_0}}\leq 2\|w_0\|_{\cH^1_{r^m\rho_0}}.
\end{equation*}
Therefore, we arrive at the uniform estimate:
\begin{equation}\label{uni-1}
\begin{aligned}
&\sup_{t\in[0,T]}\|w^{N,\vartheta}\|_{\cH^1_{r^m\rho_0}}^2+\int_0^T |w^{N,\vartheta}_t|_{2,r^m\rho_0}^2\,\mathrm{d}t\\
&\leq C(T)\Big(\|w_0\|_{\cH^1_{r^m\rho_0}}^2+\sup_{t\in[0,T]}|(\mathfrak{q}_1,\mathfrak{q}_2)|_2^2+\int_0^T |((\mathfrak{q}_1)_t,(\mathfrak{q}_2)_t)|_2^2\,\mathrm{d}t\Big).
\end{aligned}
\end{equation}

\smallskip
\textbf{3.} Note that, due to \eqref{gal-ass1}, we have 
\begin{equation*}
\mathfrak{B}(t)\in C^1[0,T],\qquad \mathfrak{c}(t)\in H^1(0,T),
\end{equation*}
where $(\mathfrak{B}(t),\mathfrak{c}(t))$ are defined in \eqref{ABC}. Then the classical theory of ODEs (\textit{cf.} \cite{cod}) implies that $(\mu^{N,\vartheta})_t(t)\in AC([0,T])$, so that $w^{N,\vartheta}_{t}$ is differentiable \textit{a.e.} in $t$. As a consequence, 
we apply $\partial_t$ to $\eqref{galerkin-n}_1$ and obtain 
\begin{equation*}
\begin{aligned}
&\,\big<r^m\rho_0 w^{N,\vartheta}_{tt}, \xi_j\big>+2\mu\langle r^m\rho_0 D_{\bar\eta}w^{N,\vartheta}_t,D_{\bar\eta}\xi_j\rangle+2\mu m\big<\frac{r^m\rho_0}{\bar\eta^2}w^{N,\vartheta}_t,\xi_j\big>\\
&-4\mu\langle r^m\rho_0 D_{\bar\eta}\bar UD_{\bar\eta}w^{N,\vartheta},D_{\bar\eta}\xi_j\rangle-4\mu m\big<\frac{r^m\rho_0}{\bar\eta^3}\bar Uw^{N,\vartheta},\xi_j\big>\\ 
&=\big<(r^m\rho_0)^\frac{1}{2} \big((\mathfrak{q}_1)_t- \mathfrak{q}_1 \frac{\bar U}{\bar\eta}\big), \frac{m\xi_j}{\bar\eta}\big>+\langle(r^m\rho_0)^\frac{1}{2} \big((\mathfrak{q}_2)_t- \mathfrak{q}_2D_{\bar\eta}\bar U\big), D_{\bar\eta}\xi_j\rangle.
\end{aligned}
\end{equation*}

Next, multiplying the above by $(\mu_j^{N,\vartheta})_{t}(t)$ and summing the
resulting equality with respect to $j$ from $1$ to $N$,  it follows from the H\"older inequality that
\begin{equation*}
\begin{aligned}
&\,\frac{1}{2}\frac{\mathrm{d}}{\dt}\int_0^1 r^m\rho_0|w^{N,\vartheta}_t|^2\,\mathrm{d}r+2\mu\int_0^1 r^m\rho_0\big(|D_{\bar\eta}w^{N,\vartheta}_t|^2+m \frac{|w^{N,\vartheta}_t|^2}{\bar\eta^2} \big)\,\mathrm{d}r\\
&=4\mu \int_0^1 r^m\rho_0 D_{\bar\eta}\bar UD_{\bar\eta}w^{N,\vartheta} D_{\bar\eta}w^{N,\vartheta}_t\,\mathrm{d}r+4\mu m\int_0^1 \frac{r^m\rho_0}{\bar\eta^3}\bar Uw^{N,\vartheta} w^{N,\vartheta}_t\,\mathrm{d}r\\
&\quad +m\int_0^1(r^m\rho_0)^\frac{1}{2} \big((\mathfrak{q}_1)_t- \mathfrak{q}_1 \frac{\bar U}{\bar\eta}\big) \frac{w^{N,\vartheta}_t}{\bar\eta} \,\mathrm{d}r+\int_0^1(r^m\rho_0)^\frac{1}{2} \big((\mathfrak{q}_2)_t- \mathfrak{q}_2D_{\bar\eta}\bar U\big) D_{\bar\eta}w^{N,\vartheta}_t\,\mathrm{d}r\\
&\leq C\Big|\big(D_{\bar\eta}\bar U,\frac{\bar U}{\bar\eta}\big)\Big|_\infty\Big|\big(D_{\bar\eta} w^{N,\vartheta},\frac{w^{N,\vartheta}}{\bar\eta}\big)\Big|_{2,r^m\rho_0} \Big|\big(D_{\bar\eta} w^{N,\vartheta}_t,\frac{w^{N,\vartheta}_t}{\bar\eta}\big)\Big|_{2,r^m\rho_0}\\
&\quad +C\Big(\Big|\big(D_{\bar\eta}\bar U,\frac{\bar U}{\bar\eta}\big)\Big|_\infty|(\mathfrak{q}_1,\mathfrak{q}_2)|_2+|((\mathfrak{q}_1)_t,(\mathfrak{q}_2)_t)|_2\Big)\Big|\big(D_{\bar\eta} w_t^{N,\vartheta},\frac{w_t^{N,\vartheta}}{\bar\eta}\big)\Big|_{2,r^m\rho_0},
\end{aligned}
\end{equation*}
which thus, along with the Young inequality, leads to
\begin{equation*}
\begin{aligned}
&\,\frac{\mathrm{d}}{\mathrm{d}t}|w^{N,\vartheta}_t|_{2,r^m\rho_0}^2+ \mu\|w^{N,\vartheta}_t\|_{\cH^1_{r^m\rho_0}}^2 \\
&\leq C(T)\Big(\Big|\big(D_{\bar\eta} w^{N,\vartheta},\frac{w^{N,\vartheta}}{\bar\eta}\big)\Big|_{2,r^m\rho_0}^2+|(\mathfrak{q}_1,\mathfrak{q}_2,(\mathfrak{q}_1)_t,(\mathfrak{q}_2)_t)|_2^2+|w^{N,\vartheta}_t|_{2,r^m\rho_0}^2\Big).
\end{aligned}
\end{equation*}
Multiplying the above by $t$ and integrating the resulting inequality over $[\tau,t]$, together with \eqref{uni-1}, we have
\begin{equation}\label{uni-2pre}
\begin{aligned}
&\,t|w^{N,\vartheta}_t(t)|_{2,r^m\rho_0}^2 +\int_\tau^ts\|w^{N,\vartheta}_t\|_{\cH^1_{r^m\rho_0}}^2\,\mathrm{d}s\\
&\leq\tau|w^{N,\vartheta}_t(\tau)|_{2,r^m\rho_0}^2+C(T)\Big(\|w_0\|_{\cH^1_{r^m\rho_0}}^2+\sup_{t\in[0,T]}|(\mathfrak{q}_1,\mathfrak{q}_2)|_2^2+\int_0^T |((\mathfrak{q}_1)_t,(\mathfrak{q}_2)_t)|_2^2\,\mathrm{d}t\Big).
\end{aligned}
\end{equation}
Finally, by Lemma \ref{bjr} and the fact that $(r^m\rho_0)^\frac{1}{2}w^{N,\vartheta}_t\in L^2([0,T];L^2)$, there exists a sequence $\{\tau_k\}_{k=1}^\infty\subset[0,T]$ such that
\begin{equation*}
\tau_k\to 0, \quad \tau_k|w^{N,\vartheta}_t(\tau_k)|_{2,r^m\rho_0}^2\to 0\qquad\,\,\,\text{as $k\to \infty$}.
\end{equation*}
Hence, setting $\tau=\tau_k$ in \eqref{uni-2pre} and then letting $\tau_k\to 0$, we arrive at
\begin{equation*}
\begin{aligned}
&\,t|w^{N,\vartheta}_t(t)|_{2,r^m\rho_0}^2 +\int_\tau^t s\|w^{N,\vartheta}_t\|_{\cH^1_{r^m\rho_0}}^2\,\mathrm{d}s\\
&\leq C(T)\Big(\|w_0\|_{\cH^1_{r^m\rho_0}}^2+\sup_{t\in[0,T]}|(\mathfrak{q}_1,\mathfrak{q}_2)|_2^2+\int_0^T |((\mathfrak{q}_1)_t,(\mathfrak{q}_2)_t)|_2^2\,\mathrm{d}t\Big).
\end{aligned}
\end{equation*}

\smallskip
\textbf{4.} \eqref{reg-H} can be derived similarly from the weak convergence argument shown in Step 3 of the proof of Proposition \ref{prop1}. For brevity, we omit the details here.

Next, we show that $\sqrt{t}\rho_0w_{tt}\in L^2([0,T];\cH^{-1}_{r^m\rho_0})$.  According to \eqref{weak.F.}, since $(r^m\rho_0)^\frac{1}{2}\rho_0w_{t}\in L^2([0,T];L^2_{r^m\rho_0})$, we see that, 
for any $\varphi\in \cH^1_{r^m\rho_0}$,
\begin{equation*}
\begin{aligned}
&\left<r^m\rho_0 w_t, \varphi\right> +2\mu\langle r^m\rho_0 D_{\bar\eta} w,D_{\bar\eta}\varphi \rangle+2\mu m\big<\frac{r^m\rho_0}{\bar\eta^2}w,\varphi\big>\\
&=\big<(r^m\rho_0)^\frac{1}{2} \mathfrak{q}_1,\frac{m\varphi}{\bar\eta}\big>+\langle(r^m\rho_0)^\frac{1}{2} \mathfrak{q}_2, D_{\bar\eta}\varphi\rangle.
\end{aligned}
\end{equation*}
Differentiating the above with respect to $t$ gives that, for \textit{a.e.} $t>0$,
\begin{equation}\label{weak.F.tt}
\begin{aligned}
\frac{\mathrm{d}}{\mathrm{d}t}\left<r^m\rho_0 w_t, \varphi\right>&=\langle (r^m\rho_0)^\frac{1}{2} ((\mathfrak{q}_2)_t- \mathfrak{q}_2 D_{\bar\eta}\bar U) +2\mu r^m\rho_0 (2D_{\bar\eta} wD_{\bar\eta}\bar U- D_{\bar\eta} w_t),  D_{\bar\eta}\varphi \rangle\\
&\quad +\big< (r^m\rho_0)^\frac{1}{2} \big((\mathfrak{q}_1)_t-\mathfrak{q}_1\frac{\bar U}{\bar\eta}\big) + 2\mu r^m\rho_0 \big(\frac{2w\bar U}{\bar\eta^2}-  \frac{w_t}{\bar\eta}\big), \frac{m\varphi}{\bar\eta}\big>\\
&\leq C\Big(|((\mathfrak{q}_1)_t,(\mathfrak{q}_2)_t)|_2+\Big|\big(D_{\bar\eta}\bar U,\frac{\bar U}{\bar\eta}\big)\Big|_\infty|(\mathfrak{q}_1,\mathfrak{q}_2)|_2\Big)\|\varphi\|_{\cH^{1}_{r^m\rho_0}}\\
&\quad + C\Big(\|w_t\|_{\cH^{1}_{r^m\rho_0}}+\Big|\big(D_{\bar\eta}\bar U,\frac{\bar U}{\bar\eta}\big)\Big|_\infty\|w\|_{\cH^{1}_{r^m\rho_0}}\Big)\|\varphi\|_{\cH^{1}_{r^m\rho_0}},
\end{aligned}
\end{equation}
Hence, we multiply the above by $t$ to derive
\begin{equation*}
\Big|\frac{\mathrm{d}}{\mathrm{d}t}\big<t(r^m\rho_0 w_t), \varphi\big>\Big|\leq F(t)\|\sqrt{t}\varphi\|_{\cH^{1}_{r^m\rho_0}} \qquad\text{for some $F(t)\in L^2(0,T)$},
\end{equation*}
which, along with Lemma 1.1 on \cite{temam}*{page 250}, leads to $\sqrt{t}(r^m\rho_0 w_{tt})\in L^2([0,T];\cH^{-1}_{r^m\rho_0})$. This, together with Lemma \ref{Aubin}, gives
\begin{equation}\label{dt-t}
\big<\sqrt{t}(r^m\rho_0 w_{tt}), \sqrt{t}w_t\big>_{\cH^{-1}_{r^m\rho_0}\times \cH^{1}_{r^m\rho_0}}=\frac{1}{2}\frac{\mathrm{d}}{\mathrm{d}t}(t\|w_t\|_{\cH^1_{r^m\rho_0}}^2).
\end{equation}
Finally, \eqref{weak-F-H}--\eqref{Energy-Id} follow from \eqref{weak.F.tt}--\eqref{dt-t}. 

This completes the proof of Proposition \ref{prop-strong}.
\end{proof}

\subsubsection{Proof for Lemma \ref{existence-linearize}}\label{subsection3.3}
We now prove Lemma \ref{existence-linearize}. We divide the proof into eight steps. 

\smallskip
\textbf{1. Tangential estimate $U\in C([0,T];L^2_{r^m\rho_0})$.} First, we let 
\begin{equation*}
w^{(0)}|_{t=0}=w_0^{(0)}:=u_0,\qquad \mathfrak{q}_1^{(0)}=\mathfrak{q}_2^{(0)}:=A\frac{(r^m\rho_0)^{\gamma-\frac{1}{2}}}{(\bar\eta^{m}\bar\eta_r)^{\gamma-1}},
\end{equation*}
in \eqref{lp}. Then we can check that $(\mathfrak{q}_1^{(0)},\mathfrak{q}_2^{(0)})\in L^2([0,T];L^2)$ and $w_0^{(0)}\in L^2_{r^m\rho_0}$, thereby using Proposition \ref{prop1} to obtain a unique weak solution $w^{(0)}=U$ of \eqref{lp} such that
\begin{equation}\label{3..142}
U\in C([0,T];L^2_{r^m\rho_0})\cap L^2([0,T];\cH^1_{r^m\rho_0}),\qquad r^m\rho_0 U_t\in L^2([0,T];\cH^{-1}_{r^m\rho_0}).
\end{equation}

\smallskip
\textbf{2. Tangential estimates $U\in C([0,T];\cH^1_{r^m\rho_0})$ and $U_t\in C([0,T];L^2_{r^m\rho_0})$.}
First, if formally applying $\partial_t$ to both sides of the equation in \eqref{lp}, we would obtain
\begin{equation}\label{33151}
r^m\rho_0U_{tt}-2\mu \big(r^m\rho_0\frac{U_{tr}}{\bar\eta_r^2}  \big)_r +2\mu mr^m\rho_0\frac{U_t}{\bar\eta^2}=m\frac{(r^m\rho_0)^\frac{1}{2}}{\bar\eta}\mathfrak{q}_1^{(1)} - \big( \frac{(r^m\rho_0)^\frac{1}{2}}{\bar\eta_r}\mathfrak{q}_2^{(1)}\big)_{r},
\end{equation}
where
\begin{equation}\label{q1-q2-1}
\begin{aligned}
\mathfrak{q}_1^{(1)}&:=4\mu (r^m\rho_0)^\frac{1}{2}\frac{U \bar U}{\bar\eta^{2}} \ \underline{-A\frac{(r^{m}\rho_0)^{\gamma-\frac{1}{2}}}{(\bar\eta^{m}\bar\eta_r)^{\gamma-1}} \big(\frac{((\gamma-1) m+1)\bar U}{\bar\eta}+(\gamma-1)D_{\bar\eta}\bar U\big)}_{:=\mathfrak{q}_{1*}^{(1)}},\\
\mathfrak{q}_2^{(1)}&:=4\mu (r^m\rho_0)^\frac{1}{2}D_{\bar\eta}UD_{\bar\eta}\bar U \  \underline{- A\frac{(r^{m}\rho_0)^{\gamma-\frac{1}{2}}}{(\bar\eta^{m}\bar\eta_r)^{\gamma-1}}\big(\frac{(\gamma-1)m\bar U}{\bar\eta}+\gamma D_{\bar\eta}\bar U\big)}_{:=\mathfrak{q}_{2*}^{(1)}}.
\end{aligned}
\end{equation}
As a consequence, we regard \eqref{33151} as the equation of $w^{(1)}:=U_t$ and study the problem: 
\begin{equation}\label{33152}
\!\!\begin{cases}
\displaystyle r^m\rho_0w_t^{(1)}-2\mu \big(r^m\rho_0\frac{w_r^{(1)}}{\bar\eta_r^2}  \big)_r+2\mu mr^m\rho_0\frac{w^{(1)}}{\bar\eta^2}=m\frac{(r^m\rho_0)^\frac{1}{2}}{\bar\eta} \mathfrak{q}_1^{(1)}-\big(\frac{(r^m\rho_0)^\frac{1}{2}}{\bar\eta_r} \mathfrak{q}_2^{(1)}\big)_r,\\[10pt]
w^{(1)}|_{t=0}=w^{(1)}_0=U_t\qquad\text{on } I.
\end{cases}
\end{equation}
Clearly, $w^{(1)}_0\in L^2_{r^m\rho_0}$ and $(\mathfrak{q}_1^{(1)},\mathfrak{q}_2^{(1)})\in L^2([0,T];L^2)$ due to \eqref{given}--\eqref{jibenjiashe} and \eqref{3..142}. Hence, by Proposition \ref{prop1},  \eqref{33152} admits a unique weak solution $w^{(1)}$ satisfying
\begin{equation}\label{3.142'}
w^{(1)}\in C([0,T];L^2_{r^m\rho_0})\cap L^2([0,T];\cH^1_{r^m\rho_0}),\qquad r^m\rho_0 w^{(1)}_t\in L^2([0,T];\cH^{-1}_{r^m\rho_0}).
\end{equation}
Now, we show that  $w^{(1)}=U_t$ for {\it a.e.} $(t,r)\in (0,T)\times I$. Define 
\begin{equation}\label{rela}
\widetilde U(t,r):=\int_0^t w^{(1)}(\tau,r)\,\mathrm{d}\tau+ u_0(r), \qquad \ Y:=\widetilde U-U.
\end{equation}
It suffices to show that $Y=0$ for {\it a.e.} $(t,r)\in (0,T)\times I$.

Since $U$ and $w^{(1)}$ are weak solutions of \eqref{lp} and \eqref{33152}, respectively, it follows from \eqref{weak.F.} that, for all $\varphi \in \cH^1_{r^m}$ and {\it a.e.} $t\in(0,T)$,
\begin{equation}\label{equ1}
\begin{aligned}
&\,\left<r^m\rho_0 U_t, \varphi\right>_{\cH^{-1}_{r^m\rho_0}\times \cH^1_{r^m\rho_0}} +2\mu\big<r^m\rho_0 D_{\bar\eta}U,D_{\bar\eta}\varphi \big>+2\mu m\big<\frac{r^m\rho_0}{\bar\eta^2}U,\varphi\big>\\
&=\big<(r^m\rho_0)^\frac{1}{2} \mathfrak{q}_1^{(0)}, \frac{m\varphi}{\bar\eta}\big>+\langle(r^m\rho_0)^\frac{1}{2} \mathfrak{q}_2^{(0)}, D_{\bar\eta} \varphi\rangle,
\end{aligned}
\end{equation}
and
\begin{equation}\label{equ1'}
\begin{aligned}
&\,\big<r^m\rho_0 w^{(1)}_t, \varphi\big>_{\cH^{-1}_{r^m\rho_0}\times \cH^1_{r^m\rho_0}} +2\mu\big<r^m\rho_0 D_{\bar\eta}w^{(1)},D_{\bar\eta}\varphi \big>+2\mu m\big<\frac{r^m\rho_0}{\bar\eta^2}w^{(1)},\varphi\big>\\
&=\big<(r^m\rho_0)^\frac{1}{2} \mathfrak{q}_1^{(1)}, \frac{m\varphi}{\bar\eta}\big>+\langle(r^m\rho_0)^\frac{1}{2} \mathfrak{q}_2^{(1)}, D_{\bar\eta} \varphi\rangle.
\end{aligned}
\end{equation}
Next, due to \eqref{rela}, we replace $w^{(1)}$ in \eqref{equ1'} by $\widetilde U_t$ and integrate the resulting equality over $[0,t]$ for $t\in (0,T]$. Using the compatibility condition $\eqref{116}_1$ in Appendix \ref{AppB}:
\begin{equation*}
r^m\rho_0 U_t(0,r) =2\mu (r^m\rho_0(u_0)_r)_r-2\mu m r^m\rho_0 \frac{u_0}{r^2} +Am r^{m-1}\rho_0^\gamma-A (r^m\rho_0^\gamma)_r,
\end{equation*}
together with the relations:
\begin{equation*}
\big(\frac{\mathfrak{q}_1^{(0)}}{\bar\eta}\big)_t=\frac{\mathfrak{q}_{1*}^{(1)}}{\bar\eta},\qquad\big(\frac{\mathfrak{q}_2^{(0)}}{\bar\eta_r}\big)_t=\frac{\mathfrak{q}_{2*}^{(1)}}{\bar\eta_r},
\end{equation*}
we arrive at the following equality:
\begin{equation*}
\begin{aligned}
&\,\big<r^m\rho_0 \widetilde U_t,\varphi\big>_{\cH^{-1}_{r^m\rho_0}\times \cH^1_{r^m\rho_0}}+2\mu\big<r^m\rho_0 D_{\bar\eta}\widetilde U,D_{\bar\eta}\varphi \big>+2\mu m\big<\frac{r^m\rho_0}{\bar\eta^2}\widetilde U,\varphi\big>\\
&=\big<(r^m\rho_0)^\frac{1}{2} \mathfrak{q}_1^{(0)}, \frac{m\varphi}{\bar\eta}\big>+\langle(r^m\rho_0)^\frac{1}{2} \mathfrak{q}_2^{(0)}, D_{\bar\eta} \varphi\rangle\\
&\quad -4\mu\int_0^t \big< r^m\rho_0 D_{\bar\eta}\bar U D_{\bar\eta} Y, D_{\bar\eta}\varphi\big>\,\ds-4\mu m\int_0^t \big<r^m\rho_0\frac{\bar UY}{\bar\eta^3},\varphi\big>\,\mathrm{d}s.
\end{aligned}
\end{equation*}
Subtracting \eqref{equ1} from the above equality leads to
\begin{equation}\label{equ3}
\begin{aligned}
&\,\left<r^m\rho_0 Y_t,\varphi\right>_{\cH^{-1}_{r^m\rho_0}\times \cH^{1}_{r^m\rho_0}}+2\mu\big<r^m\rho_0 D_{\bar\eta}Y,D_{\bar\eta}\varphi \big>+2\mu m\big<\frac{r^m\rho_0}{\bar\eta^2}Y,\varphi\big>\\
&=-4\mu\int_0^t \big< r^m\rho_0 D_{\bar\eta}\bar U D_{\bar\eta} Y, D_{\bar\eta}\varphi\big>\,\ds-4\mu m\int_0^t \big<r^m\rho_0\frac{\bar UY}{\bar\eta^3},\varphi\big>\,\mathrm{d}s. 
\end{aligned}
\end{equation}
Thanks to $Y\in L^2([0,T];\cH^1_{r^m\rho_0})$, we set $\varphi=Y$ in \eqref{equ3} and obtain from Lemma \ref{hardy-inequality}, the Young inequality, and the similar calculations in \eqref{I11I22} that
\begin{equation*}
\begin{aligned}
\frac{\mathrm{d}}{\dt}|Y|_{2,r^m\rho_0}^2+\|Y\|_{\cH^1_{r^m\rho_0}}^2\leq C\Big(\Big|\big( D_{\bar\eta} \bar U,\frac{\bar U}{\bar\eta}\big)\Big|_{\infty}+1\Big)\int_0^t \|Y\|_{\cH^1_{r^m\rho_0}}^2\,\ds.
\end{aligned}
\end{equation*}
Integrating the above over $[0,t]$, together  with the strong continuity of $Y$ at $t=0$ and $Y|_{t=0}=0$, yields  that 
\begin{equation*}
\int_0^t \|Y\|_{\cH^1_{r^m\rho_0}}^2\,\ds\leq C\Big(\Big|\big( D_{\bar\eta} \bar U,\frac{\bar U}{\bar\eta}\big)\Big|_{\infty}+1\Big)t\int_0^t\|Y\|_{\cH^1_{r^m\rho_0}}^2\,\ds.
\end{equation*}
Then it follows from the Gr\"onwall inequality that $\|Y\|_{L^2_t(\cH^1_{r^m\rho_0})}= 0$, and hence $Y=0$.

Consequently, \eqref{3.142'} holds for $U_t$, \textit{i.e.},
\begin{equation}\label{33153}
U_t\in C([0,T];L^2_{r^m\rho_0})\cap L^2([0,T];\cH^1_{r^m\rho_0}),\qquad r^m\rho_0 U_{tt}\in L^2([0,T];\cH^{-1}_{r^m\rho_0}),
\end{equation}
which, together  with \eqref{3..142} and Lemma \ref{sobolev-embedding}, yields
\begin{equation}\label{equ33.38}
U\in C([0,T];\cH^1_{r^m\rho_0}).
\end{equation}

\smallskip
\textbf{3. Boundary condition of $U$.} Thanks to  \eqref{lp} and  \eqref{weak.F.}, we can show that $\eqref{lp}_1$ holds for \textit{a.e.} $(t,r)\in (0,T)\times (0,1)$, and $U$ satisfies the boundary condition $\rho_0 U_r|_{r=1}=0$. These are crucial to our further analysis.
\begin{Lemma}\label{Lemma-point}
For any $a\in (0,1)$ and \textit{a.e.} $t\in (0,T)$,
\begin{equation}\label{div-Uxx}
r^\frac{m}{2}\big(D_{\bar\eta}^2U,D_{\bar\eta}(\frac{U}{\bar\eta})\big)\in L^2(0,a),\qquad \big(r^m\rho_0\frac{U_r}{\bar\eta_r^2}\big)_r\in L^2(a,1).
\end{equation}
Furthermore, equation $\eqref{lp}_1$ holds for \textit{a.e.} $(t,r)\in (0,T)\times (0,1)$, and $U$ satisfies 
\begin{equation}\label{equ33.41}
U\in H^3_{\mathrm{loc}},\qquad \ \rho_0 U_r|_{r=1}=0 \quad \text{for {\it a.e.} $t\in (0,T)$}.
\end{equation}
\end{Lemma}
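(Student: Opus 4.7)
The plan is to upgrade the distributional identity \eqref{weak.F.} to pointwise information by testing it against cutoffs supported on $(a,1)$ and $(0,a)$ separately, using the regularity \eqref{33153} together with the uniform bounds on $\bar\eta$ in \eqref{jibenjiashe}--\eqref{given-bareta}.

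For the exterior bound, I would take $\varphi\in C^\infty_{\mathrm{c}}(a/2,1)$ in \eqref{weak.F.}; distributional integration by parts on the viscous term yields the identity
\begin{equation*}
\Big(r^m\rho_0\frac{U_r}{\bar\eta_r^2}\Big)_r=\frac{1}{2\mu}r^m\rho_0U_t+mr^m\rho_0\frac{U}{\bar\eta^2}+\frac{A}{2\mu}\Big(\frac{r^{\gamma m}\rho_0^\gamma}{\bar\eta_r(\bar\eta^m\bar\eta_r)^{\gamma-1}}\Big)_r-\frac{Am}{2\mu}\frac{r^{\gamma m}\rho_0^\gamma}{\bar\eta(\bar\eta^m\bar\eta_r)^{\gamma-1}}
\end{equation*}
in $\mathcal{D}'(a/2,1)$, and each right-hand term lies in $L^2(a,1)$ for a.e.\ $t$ because $r^m$ is bounded below on $[a/2,1]$ while \eqref{33153} controls $(U_t,U)\in L^2_{r^m\rho_0}$ and the pressure-type terms are regular by \eqref{given-bareta} and $\rho_0^\gamma\in C^2$ up to $r=1$. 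For the interior bound, on $[0,2a]$ (choosing $a<1/2$) the density $\rho_0$ is bounded away from zero, so dividing $\eqref{lp}_1$ through by $r^m\rho_0\bar\eta_r$ converts the viscous piece into $2\mu D_{\bar\eta}(D_{\bar\eta}U+mU/\bar\eta)$ plus smooth multiplicative corrections; the resulting identity expresses this Lagrangian divergence as a combination of $U_t,\,U/\bar\eta,\,$ and pressure-type terms whose $r^{m/2}$-weighted $L^2(0,a)$-norm is finite by \eqref{33153} and \eqref{given-bareta}. Invoking the div-curl-type identity of Lemma~\ref{im-1} then recovers $r^{m/2}(D_{\bar\eta}^2U,D_{\bar\eta}(U/\bar\eta))\in L^2(0,a)$.

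Once \eqref{div-Uxx} is established, both sides of $\eqref{lp}_1$ are $L^2_{\mathrm{loc}}$ functions on $(0,T)\times(0,1)$ and coincide distributionally, so the equation holds pointwise a.e. On any $[a,1-a]\subset(0,1)$, $\rho_0$ is bounded below, so solving pointwise for $U_{rr}\in L^2$ and then differentiating $\eqref{lp}_1$ once in $r$, using $U_{tr}\in L^2_{\mathrm{loc}}$ from \eqref{33153} together with the smoothness of $\bar\eta$ in \eqref{given-bareta}, gives $U_{rrr}\in L^2(a,1-a)$, hence $U\in H^3_{\mathrm{loc}}$.

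For the boundary trace, set $\Psi:=r^m\rho_0 U_r/\bar\eta_r^2\in H^1(a,1)\hookrightarrow C([a,1])$ for a.e.\ $t$, so $\Psi(t,1)$ is a well-defined classical value. Introducing $\Lambda:=\Psi+(A/(2\mu))r^{\gamma m}\rho_0^\gamma/(\bar\eta_r(\bar\eta^m\bar\eta_r)^{\gamma-1})$, the pointwise equation reduces to $\Lambda_r=\tilde F\in L^1(0,1)$, and integration from $0$ to $1$ together with $\Lambda(0)=0$ (from $r^m|_{r=0}=0$) and $\rho_0^\gamma|_{r=1}=0$ yields $\Psi(t,1)=\int_0^1\tilde F\,\mathrm{d}r$. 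The vanishing of this integral, and hence $\rho_0 U_r|_{r=1}=0$ in the trace sense, is derived by testing \eqref{weak.F.} with $\varphi\equiv 1$ (an admissible element of $\cH^1_{r^m\rho_0}$ when $n=3$) or with a sequence of cutoffs $\phi_k\in C^\infty(\bar I)$ satisfying $\phi_k=0$ on $[0,1/k]$, $\phi_k=1$ on $[2/k,1]$, and $|\phi_k'|\le Ck$ when $n=2$. The main subtlety is the $n=2$ case, since $1\notin\cH^1_{r\rho_0}$: one must show that the cutoff-derivative contribution $\int r\rho_0 U_r\phi_k'/\bar\eta_r^2\,\mathrm{d}r$ vanishes as $k\to\infty$, which follows by Cauchy--Schwarz from the uniform bound $\|\sqrt{r\rho_0}\,\phi_k'\|_{L^2}\le C$ (coming from the $O(1/k)$ support and $|\phi_k'|\le Ck$) combined with $\int_{1/k}^{2/k}r\rho_0 U_r^2\,\mathrm{d}r\to 0$ by Lebesgue dominated convergence.
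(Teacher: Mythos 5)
Your treatment of $\eqref{div-Uxx}$, the pointwise validity of $\eqref{lp}_1$, and $U\in H^3_{\mathrm{loc}}$ follows the paper's path essentially step by step (testing \eqref{weak.F.} against compactly supported $\varphi$ to get the $L^2(a,1)$ weak derivative, dividing out $r^m\rho_0$ in the interior and invoking Lemma~\ref{im-1}, then local elliptic bootstrapping). The one place you depart from the paper is the boundary trace $\rho_0 U_r|_{r=1}=0$, and there I think you have a genuine gap.

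The paper never integrates across the origin. It tests both the weak identity \eqref{weak.F.} and the pointwise equation against $\varphi\in C^{\infty}_{\mathrm{c}}((0,1])$ — test functions that vanish near $r=0$ but not at $r=1$ — and reads off the boundary term as the difference. This is clean in both dimensions and never asks anything of the solution at the origin. Your route instead integrates the conservative form $\Lambda_r=\tilde F$ from $0$ to $1$ and then tries to kill $\int_0^1\tilde F$ by testing with $\varphi\equiv 1$ (for $n=3$) or a cutoff sequence $\phi_k\to 1$ (for $n=2$). That brings in two facts at the origin that you do not establish and that do not follow from $\cH^1_{r^m\rho_0}$ membership alone. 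First, you need $\tilde F\in L^1(0,1)$ in order for $\Lambda$ to be absolutely continuous up to $r=0$; for $n=2$ the term $2\mu m\,r\rho_0 U/\bar\eta^2$ behaves like $U/r$ near the origin, and Cauchy--Schwarz against $\sqrt{r\rho_0}U/\bar\eta\in L^2$ fails because $\sqrt{r\rho_0}/\bar\eta\sim r^{-1/2}$ is only borderline square-integrable while $\int_0^a\rho_0/r\,\mathrm{d}r$ diverges. Second, in your $n=2$ cutoff argument you only verify that the $\phi_k'$-contribution from the viscous pairing tends to zero; you never address the convergence of $\int (r\rho_0/\bar\eta^2)\,U\,\phi_k\,\mathrm{d}r$, which is the term actually carrying the integrability obstruction. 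Both points can be repaired by observing that $\eqref{div-Uxx}_1$ is an $H^2$-type interior bound which, via Lemma~\ref{lemma-initial} and the two-dimensional Sobolev embedding $H^2(B_a)\hookrightarrow L^\infty(B_a)$, gives $U/\bar\eta\in L^\infty(0,a)$ and hence $\tilde F\in L^1$; the same bound also justifies $\Lambda(0^+)=0$. As written, though, your argument identifies the easy piece of the $n=2$ difficulty and omits the one that actually needs this input, whereas the paper's comparison of \eqref{WF'} with \eqref{WF''} avoids the issue entirely.
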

\begin{proof}
First, it follows from \eqref{weak.F.} in Definition \ref{def3.1}, \eqref{33153}, and the H\"older inequality that, for all $\varphi\in C_{\mathrm{c}}^\infty((0,1])$,
\begin{equation*}
\begin{aligned}
\Big|\big<r^m\rho_0\frac{U_r}{\bar\eta_r^2},\varphi_r \big>\Big|&\leq C\Big|\big<\frac{r^m\rho_0}{\bar\eta^2}U,\varphi\big>\Big|+C\big|\langle r^m\rho_0 U_t, \varphi\rangle\big|\\
&\quad + C\Big|\big<(r^m\rho_0)^\frac{1}{2} \frac{\mathfrak{q}_1^{(0)}}{\bar\eta}, \varphi\big>\Big|+C\big|\big<\big((r^m\rho_0)^\frac{1}{2} \frac{\mathfrak{q}_2^{(0)}}{\bar\eta_r}\big)_r, \varphi\big>\big|\leq C_\varphi |\varphi|_2,
\end{aligned}
\end{equation*}
where $C_\varphi>0$ is a constant that depends on the support of $\varphi$. This yields that $r^m\rho_0\frac{U_r}{\bar\eta_r^2}$ admits the weak derivative 
\begin{equation*}
\big(r^m\rho_0\frac{U_r}{\bar\eta_r^2}\big)_r\in L^2(a,1) \qquad\text{for $a\in(0,1)$ and \textit{a.e.} $t\in (0,T)$},
\end{equation*}
which, along with \eqref{33153} and the regularity of $\bar\eta$ in \eqref{given-bareta}, yields that each term in $\eqref{lp}_1$ belongs to $L^1_{\mathrm{loc}}$, and hence $\eqref{lp}_1$ holds for \textit{a.e.} $(t,r)\in (0,T)\times (0,1)$. Moreover, rewrite $\eqref{lp}_1$ as
\begin{equation*}
(\mathfrak{A}_1 U_r)_r+\mathfrak{A}_2 U=\mathfrak{A}_3,
\end{equation*}
where
\begin{equation*}
\mathfrak{A}_1=2\mu\frac{r^m\rho_0}{\bar\eta_r^2},\quad\mathfrak{A}_2=-2\mu m \frac{r^m\rho_0}{\bar\eta^2},\quad \mathfrak{A}_3=r^m\rho_0U_t + m(r^m\rho_0)^\frac{1}{2}\frac{\mathfrak{q}_1^{(0)}}{\bar\eta}-\big((r^m\rho_0)^\frac{1}{2}\frac{\mathfrak{q}_2^{(0)}}{\bar\eta_r}\big)_r.
\end{equation*}
As can be checked, $(\mathfrak{A}_1,\mathfrak{A}_2)\in H^3_{\mathrm{loc}}$ and $\mathfrak{A}_3\in H^1_{\mathrm{loc}}$  for \textit{a.e.} $t\in (0,T)$. Hence, it follows from the classical regularity theory of elliptic equations (\textit{cf.} \cites{evans}) that $U\in H^3_{\mathrm{loc}}$ for \textit{a.e.} $t\in (0,T)$.

Next, we show $\eqref{div-Uxx}_1$. Since $U\in H^3_{\mathrm{loc}}$ and $U_t\in H^1_{\mathrm{loc}}$ for \textit{a.e.} $t\in (0,T)$, and $\eqref{lp}_1$ holds for \textit{a.e.} $(t,r)\in (0,T)\times (0,1)$, we multiply $\eqref{lp}_1$ by $(r^m \rho_0)^{-1}$ to obtain
\begin{equation}\label{new-lp}
D_{\bar\eta}\big(D_{\bar\eta} U+ \frac{mU}{\bar\eta}\big) =\frac{1}{2\mu} U_t+\frac{A\gamma}{2\mu}\bar\varrho^{\gamma-2} D_{\bar\eta}\bar\varrho-\frac{D_{\bar\eta}\bar\varrho}{\bar\varrho} D_{\bar\eta} U,\qquad\bar\varrho:=\frac{r^m\rho_0}{\bar\eta^m\bar\eta_r}.
\end{equation}
Then we can directly check that, for any $a\in (0,1)$, 
\begin{equation*}
\Big|\zeta_a D_{\bar\eta}\big(D_{\bar\eta} U+ \frac{mU}{\bar\eta}\big)\Big|_{2,r^m}\leq C\big(|\zeta_aU_t|_{2,r^m}+ |\bar\varrho^{\gamma-2}|_\infty|\zeta_a\bar\varrho_r|_{2,r^m}+|\zeta_a \bar\varrho_r D_{\bar\eta} U|_{2,r^m}\big)\leq C(a,T),
\end{equation*}
due to \eqref{33153}--\eqref{equ33.38}, the regularity of $\bar\eta$ in \eqref{given-bareta}, and the fact that $\varrho^\beta\sim 1-r$. Hence, by Lemma \ref{im-1} in \S \ref{Section-globalestimates}, we derive $\eqref{div-Uxx}_1$.

Finally, we give the proof for  \eqref{equ33.41}. Due to the weak formulation \eqref{weak.F.},  we obtain that, for all $\varphi\in C_{\mathrm{c}}^\infty((0,1])$,
\begin{equation}\label{WF'}
\begin{aligned}
&\left<r^m\rho_0 U_t, \varphi\right> +2\mu\langle r^m\rho_0 D_{\bar\eta} U,D_{\bar\eta}\varphi \rangle+2\mu m\big<\frac{r^m\rho_0}{\bar\eta^2}U,\varphi\big>\\
&=\big<(r^m\rho_0)^\frac{1}{2} \mathfrak{q}_1^{(0)},\frac{m\varphi}{\bar\eta}\big>+\langle(r^m\rho_0)^\frac{1}{2} \mathfrak{q}_2^{(0)}, D_{\bar\eta}\varphi\rangle.
\end{aligned}
\end{equation}
Moreover, since  Lemma \ref{sobolev-embedding} and \eqref{div-Uxx} lead 
to $\rho_0 U_r\in C([0,T]\times [a,1])$ for any $a\in (0,1)$, multiplying $\eqref{lp}_1$ by such $\varphi$ and integrating over $I$ yield 
\begin{equation}\label{WF''}
\begin{aligned}
&\left<r^m\rho_0 U_t, \varphi\right> +2\mu\langle r^m\rho_0 D_{\bar\eta} U,D_{\bar\eta}\varphi \rangle+2\mu m\big<\frac{r^m\rho_0}{\bar\eta^2}U,\varphi\big>\\
&=2\mu\frac{r^m\rho_0 U_r}{\bar\eta_r^2}\varphi\Big|_{r=1}+\big<(r^m\rho_0)^\frac{1}{2} \mathfrak{q}_1^{(0)},\frac{m\varphi}{\bar\eta}\big>+\langle(r^m\rho_0)^\frac{1}{2} \mathfrak{q}_2^{(0)}, D_{\bar\eta}\varphi\rangle.
\end{aligned}
\end{equation}
Comparing \eqref{WF'}--\eqref{WF''}, together with $(\bar\eta_r,\frac{\bar\eta}{r})\in [\frac{1}{2},\frac{3}{2}]$ for $(t,r)\in [0,T]\times \bar I$, we have
\begin{equation*}
r^m\rho_0 U_r\varphi\big|_{r=1}=0 \qquad\text{for all $\varphi\in C_{\mathrm{c}}^\infty((0,1])$},
\end{equation*}
which implies that  $\rho_0 U_r|_{r=1}=0$.  This completes the proof of Lemma \ref{Lemma-point}.
\end{proof}

\textbf{4. Tangential estimates $U_t\in L^\infty([0,T];\cH^1_{r^m\rho_0})$ and time-weighted estimates.}
We continue to improve the regularity of $U$. First, we claim that
\begin{equation}\label{L2Linfty}
\rho_0^\frac{1-\beta}{2}D_{\bar\eta}U\in L^\infty([0,T];L^\infty(a,1)) \qquad\text{for $a\in(0,1)$}.
\end{equation}
Indeed, since $\rho_0 D_{\bar\eta}U\in C[a,1]$ for $a\in (0,1)$ due to Lemmas \ref{Lemma-point} and \ref{sobolev-embedding}, by integrating \eqref{new-lp} over $[r,1]$ ($r\in (0,1)$), it follows from a similar argument in Lemma \ref{lemma-time-space}, the fact that
\begin{equation*}
1-r\sim \rho_0^\beta\in H^3(a,1),\qquad\beta\leq \gamma-1,
\end{equation*}
\eqref{33153}, \eqref{equ33.41}, Lemma \ref{hardy-inequality}, and the H\"older inequality that
\begin{equation}\label{3...136}
\begin{aligned}
& \ D_{\bar\eta} U(t,r) =\frac{A}{2\mu}\bar\varrho^{\gamma-1} +\frac{m}{\bar\varrho}\int_r^1 \bar\varrho\big(\frac{U_r}{\bar\eta}-\frac{U\bar\eta_r}{\bar\eta^2}\big)\mathrm{d}\tilde{r}-\frac{1}{2\mu \bar\varrho}\int_r^1\bar\varrho \bar\eta_r U_t\,\mathrm{d}\tilde{r}\\
&\implies \big|\chi^\sharp_a\rho_0^\frac{1- \beta}{2}D_{\bar\eta}U\big|_\infty\leq C(a)\big(1 + \big|\chi^\sharp_a\rho_0^\frac{1}{2}(U,D_{\bar\eta} U,U_t)\big|_2\big)\leq C(a,T).
\end{aligned}
\end{equation}
This implies claim \eqref{L2Linfty}. 

Next, by direct calculation, $w^{(1)}(0,r)\in \cH^1_{r^m\rho_0}$ and $(\mathfrak{q}_1^{(1)},\mathfrak{q}_2^{(1)})\in L^\infty([0,T];L^2)$ due to \eqref{given}--\eqref{jibenjiashe}, and \eqref{3..142}, it still remains to show that
\begin{equation}\label{q1t-q2t}
((\mathfrak{q}_1^{(1)})_t,(\mathfrak{q}_2^{(1)})_t)\in L^2([0,T];L^2).
\end{equation}
To obtain this, we first have
\begin{align}
&\begin{aligned}
(\mathfrak{q}_1^{(1)})_t&=4\mu (r^m\rho_0)^\frac{1}{2}\big(\frac{U \bar U_t}{\bar\eta^{2}}+\frac{U_t \bar U}{\bar\eta^{2}}-\frac{2U \bar U^2}{\bar\eta^{3}}\big)\\
&\quad -A\frac{(r^{m}\rho_0)^{\gamma-\frac{1}{2}}}{(\bar\eta^{m}\bar\eta_r)^{\gamma-1}} \Big(((\gamma-1) m+1) \big(\frac{\bar U_t}{\bar\eta}-\frac{\bar U^2}{\bar\eta^2}\big)+(\gamma-1)(D_{\bar\eta}\bar U_t-|D_{\bar\eta}\bar U|^2)\Big)\\
&\quad +A(\gamma-1)\frac{(r^{m}\rho_0)^{\gamma-\frac{1}{2}}}{(\bar\eta^{m}\bar\eta_r)^{\gamma-1}} \big(\frac{((\gamma-1) m+1)\bar U}{\bar\eta}+(\gamma-1)D_{\bar\eta}\bar U\big)\big(D_{\bar\eta}\bar U+\frac{m\bar U}{\bar\eta}\big),
\end{aligned}\label{q1-q2-1-t}\\
&\begin{aligned}
(\mathfrak{q}_2^{(1)})_t&=4\mu (r^m\rho_0)^\frac{1}{2}\big(D_{\bar\eta}UD_{\bar\eta}\bar U_t+D_{\bar\eta}U_tD_{\bar\eta}\bar U-2D_{\bar\eta}U|D_{\bar\eta}\bar U|^2\big) \\
&\quad - A\frac{(r^{m}\rho_0)^{\gamma-\frac{1}{2}}}{(\bar\eta^{m}\bar\eta_r)^{\gamma-1}}\Big((\gamma-1)m\big(\frac{\bar U_t}{\bar\eta}-\frac{\bar U^2}{\bar\eta^2}\big)+\gamma (D_{\bar\eta}\bar U_t-|D_{\bar\eta}\bar U|^2)\Big)\\
&\quad +A(\gamma-1)\frac{(r^{m}\rho_0)^{\gamma-\frac{1}{2}}}{(\bar\eta^{m}\bar\eta_r)^{\gamma-1}}\big(\frac{(\gamma-1)m\bar U}{\bar\eta}+\gamma D_{\bar\eta}\bar U\big)\big(D_{\bar\eta}\bar U+\frac{m\bar U}{\bar\eta}\big).
\end{aligned}\label{q1-q2-1-ttt}
\end{align}
Then, with the help of \eqref{L2Linfty}, we can obtain \eqref{q1t-q2t} by using the regularities of $(\bar\eta,\bar U)$, the facts that $\rho_0^\beta\sim 1-r$ and $\beta>\frac{1}{3}$, \eqref{33153}--\eqref{equ33.38}, and Lemmas \ref{Lemma-point} and \ref{hardy-inequality}, for example,
\begin{equation*}
\begin{aligned}
&\,\big|(r^m\rho_0)^\frac{1}{2} D_{\bar\eta}UD_{\bar\eta}\bar U_t\big|_2 \leq C\big(\big|\chi r^\frac{m}{2} D_{\bar\eta}UD_{\bar\eta}\bar U_t\big|_2+\big|\chi^\sharp \rho_0^\frac{1}{2} D_{\bar\eta}UD_{\bar\eta}\bar U_t\big|_2\big)\\
&\leq C\big|\chi r^\frac{m}{4} D_{\bar\eta}U|_4\big|\chi r^\frac{m}{4}D_{\bar\eta}\bar U_t\big|_4+C\big|\chi^\sharp\rho_0^\frac{1- \beta}{2}D_{\bar\eta}U\big|_\infty\big|\chi^\sharp\rho_0^\frac{\beta}{2}D_{\bar\eta}\bar U_t\big|_2\\
&\leq C(T)\big(\big|\chi r^\frac{m+3}{4}(D_{\bar\eta}U,D_{\bar\eta}^2U)\big|_2\big|\chi  r^\frac{m+3}{4}(D_{\bar\eta}^2\bar U_t,D_{\bar\eta}^2\bar U_t)\big|_2+ \big|\chi^\sharp \rho_0^\frac{3\beta}{2}D_{\bar\eta}(D_{\bar\eta}^2\bar U_t,D_{\bar\eta}^2\bar U_t)\big|_2\big)\\
&\leq C(T)(1+\bar\cD(t,\bar U)^\frac{1}{2}),
\end{aligned}
\end{equation*}
which implies that $(r^m\rho_0)^\frac{1}{2} D_{\bar\eta}UD_{\bar\eta}\bar U_t\in L^2([0,T];L^2)$ due to $\bar\cD(t,\bar U)\in L^1(0,T)$. The remaining calculation is straightforward.

Therefore, from Proposition \ref{prop-strong}, it follows that the weak solution $U_t$ of \eqref{33152} satisfies \eqref{weak-F-H}--\eqref{Energy-Id} with $(w,\mathfrak{q}_1,\mathfrak{q}_2)$ replaced by $(U_t,\mathfrak{q}_1^{(1)},\mathfrak{q}_2^{(1)})$ and
\begin{equation}\label{linear-D3-qiexiang}
\begin{aligned}
&U_t\in L^\infty([0,T];\cH^1_{r^m\rho_0}),\qquad U_{tt}\in L^2([0,T];L^2_{r^m\rho_0}),\\
&\sqrt{t}U_{tt}\in L^\infty([0,T];L^2_{r^m\rho_0})\cap L^2([0,T];\cH^{1}_{r^m\rho_0}),\quad \sqrt{t}(r^m\rho_0U_{ttt})\in L^2([0,T];\cH^{-1}_{r^m\rho_0}).
\end{aligned}
\end{equation}
Moreover, using a similar argument in Lemma \ref{Lemma-point}, we can deduce from \eqref{weak-F-H} that \eqref{33151} holds for \textit{a.e.} $(t,r)\in (0,T)\times (0,1)$, and for any $a\in (0,1)$ and \textit{a.e.} $t\in (0,T)$,
\begin{equation}\label{div-Uxx-H}
\begin{aligned}
&r^\frac{m}{2}\big(D_{\bar\eta}^2U_t,D_{\bar\eta}(\frac{U_t}{\bar\eta})\big)\in L^2(0,a),\qquad \big(r^m\rho_0\frac{U_{tr}}{\bar\eta_r^2}\big)_r\in L^2(a,1),\\
&U_t\in H^2_{\mathrm{loc}},\qquad U\in H^4_{\mathrm{loc}}, \qquad \rho_0 U_{tr}|_{r=1}=0.
\end{aligned}
\end{equation}

In summary, collecting \eqref{3..142}, \eqref{33153}--\eqref{equ33.38}, and \eqref{linear-D3-qiexiang}, we arrive at all the tangential estimates for $U$:
\begin{equation}\label{TETE}
\begin{aligned}
&U\in C([0,T];\cH^1_{r^m\rho_0}),\qquad U_t\in C([0,T];L^2_{r^m\rho_0})\cap L^\infty([0,T];\cH^1_{r^m\rho_0}),\\
&U_{tt}\in L^2([0,T];L^2_{r^m\rho_0}),\qquad
\sqrt{t}U_{tt}\in L^\infty([0,T];L^2_{r^m\rho_0})\cap L^2([0,T];\cH^{1}_{r^m\rho_0}),\\
&r^m\rho_0 U_{tt}\in L^2([0,T];\cH^{-1}_{r^m\rho_0}),\qquad \sqrt{t}(r^m\rho_0U_{ttt})\in L^2([0,T];\cH^{-1}_{r^m\rho_0}).
\end{aligned}
\end{equation}

\smallskip
\textbf{5. Total energy and dissipation estimates for $U$.} With the help of \eqref{TETE}, we now can  show that
\begin{equation}\label{bbbb}
\begin{aligned}
\bar\cE_{\mathrm{in}}(t,U)+t\bar\cD_{\mathrm{in}}(t,U)\in L^\infty(0,T),\qquad \bar\cD_{\mathrm{in}}(t,U)\in L^1(0,T),\\
\bar\cE_{\mathrm{ex}}(t,U)+t\bar\cD_{\mathrm{ex}}(t,U)\in L^\infty(0,T),\qquad \bar\cD_{\mathrm{ex}}(t,U)\in L^1(0,T).
\end{aligned}
\end{equation}
In fact, the regularity properties that $U\in H^4_{\mathrm{loc}}$ and $U_t\in H^2_{\mathrm{loc}}$, which follow from \eqref{div-Uxx-H}, allow us to freely apply $D_{\bar\eta}$, $D_{\bar\eta}^2$, or $\partial_t$ to \eqref{new-lp}. This process can then be used to rigorously obtain higher spatial regularity for $U$. However, the specific calculation above is rather tedious; we will provide the details in Lemmas \ref{c_1-c_2}--\ref{c_3} of \S \ref{subsub-11.2.2}, where the method can be applied here in a similar manner.

In fact, the major difference here, compared with the calculations in Lemmas \ref{c_1-c_2}--\ref{c_3}, is on establishing 
\begin{equation}\label{eequ3.66}
\chi^\sharp\rho_0^{(\frac{3}{2}-\varepsilon_0)\beta}D_{\bar\eta}^4 U \in L^2([0,T];L^2),\qquad \sqrt{t}\chi^\sharp\rho_0^{(\frac{3}{2}-\varepsilon_0)\beta}D_{\bar\eta}^4 U \in L^\infty([0,T];L^2).
\end{equation}
Note that the derivations of $\eqref{eequ3.66}_1$ and $\eqref{eequ3.66}_2$ are identical, we only sketch the proof of $\eqref{eequ3.66}_1$. Thus, assume that $\eqref{bbbb}_1$ holds, and the following estimates for $U$ in the 
exterior domain have been derived:
\begin{equation}\label{lowlow}
\bar\cE_{\mathrm{ex}}(t,U)\in L^\infty(0,T),\qquad (\rho_0^\frac{1}{2}U_{tt},\rho_0^{(\frac{3}{2}-\varepsilon_0)\beta}D_{\bar\eta}^2U_t)\in L^2([0,T];L^2).
\end{equation}
First, we can multiply  \eqref{new-lp} by $\varrho^{\beta}$ and then apply $\varrho^{-\beta}\bar\eta_rD_{\bar\eta}^2$ to the resulting equation to obtain
\begin{equation}\label{xxxx-l}
\begin{aligned}
&\bar\cT_{\mathrm{cross}}=\bar\cT_{\mathrm{cross}}(t,r):=(D_{\bar\eta}^3U)_r+\big(\frac{1}{\beta}+2\big)\frac{(\rho_0^\beta)_r}{\rho_0^\beta}D_{\bar\eta}^3U =\sum_{i=12}^{15}\mathrm{I}_{i},
\end{aligned}
\end{equation}
where $\mathrm{I}_i$ ($i=12,13,14,15$) is defined in \eqref{rr1-rr3} of \S\ref{subsub-11.2.2}. As can be checked, $\chi^\sharp\rho_0^{(\frac{3}{2}-\varepsilon_0)\beta}\mathrm{I}_i\in L^2([0,T];L^2)$ for $i=12,13,14,15$ (see Lemma \ref{c_3} for details), and hence
\begin{equation*}
\chi^\sharp\rho_0^{(\frac{3}{2}-\varepsilon_0)\beta}\bar\cT_{\mathrm{cross}} \in L^2([0,T];L^2).    
\end{equation*}
Consequently, once Proposition \ref{prop2.1} in Appendix \ref{subsection2.2} can be utilized, we obtain the desired conclusion immediately. 
However, Proposition \ref{prop2.1} is not applicable in this case, since it requires one \textit{a priori} assumption:
\begin{equation}\label{refine-3}
\chi^\sharp \rho_0^{\frac{1+\beta}{2}} D_{\bar\eta}^3 U \in L^2([0,T];L^2),    
\end{equation}
which can not be implied by \eqref{lowlow} directly, unless $\varepsilon_0>\frac{2\beta-1}{2\beta}$.

Therefore, to derive $\eqref{eequ3.66}_1$, we need to first obtain \eqref{refine-3}. 
To achieve this, we claim that
\begin{equation}\label{refine-Ux}
\chi^\sharp\rho_0^{\frac{1}{2}-\beta}D_{\bar\eta}U\in L^\infty([0,T];L^2).
\end{equation}
Indeed, based on $\eqref{3...136}_1$, using the same argument as in the proof of Lemma \ref{lemma-time-space}, we can obtain that, for all $\iota\in (-\frac{\beta}{2},1+\frac{\beta}{2})$ and $\sigma>0$,
\begin{equation}\label{cal-1}
\big|\chi^\sharp\rho_0^{\iota-\beta+\sigma}D_{\bar\eta} U\big|_2\leq C(\sigma,\iota,T)\big(1+\big|\chi^\sharp\rho_0^\iota(U,D_{\bar\eta} U,U_t)\big|_2\big) \qquad \text{for all $t\in[0,T]$}.
\end{equation}
Hence, choosing
\begin{equation*}
\iota=\frac{1}{2}-\sigma\qquad\text{with some fixed $\sigma<\min\big\{\frac{1+\beta}{2},\beta\big\}$},
\end{equation*}
then we derive from \eqref{cal-1} and Lemma \ref{hardy-inequality} that 
\begin{equation}\label{cal-2}
\begin{aligned}
\big|\chi^\sharp\rho_0^{\frac{1}{2}-\beta}D_{\bar\eta} U\big|_2&\leq C(T)\big(1+\big|\chi^\sharp\rho_0^{\frac{1}{2}-\sigma}(U,D_{\bar\eta} U,U_t)\big|_2\big)\\
&\leq C(T)\big(1+\big|\chi^\sharp\rho_0^{\frac{1}{2}-\sigma+2\beta}(U,D_{\bar\eta} U,D_{\bar\eta}^2 U,D_{\bar\eta}^3 U)\big|_2\big)\\
&\quad +C(T)\big|\chi^\sharp\rho_0^{\frac{1}{2}-\sigma+\beta}(U_t,D_{\bar\eta} U_t)\big|_2 \leq C(T)(1+\bar\cE_{\mathrm{ex}}(t,U)^\frac{1}{2}),
\end{aligned}
\end{equation}
which, along with \eqref{lowlow}, leads to \eqref{refine-Ux}.

We continue to show \eqref{refine-3}. Multiplying \eqref{new-lp} by $\chi^\sharp\rho_0^{\frac{1}{2}}$ and applying the $L^2$-norm to the resulting equality, we can deduce from $\rho_0^\beta\sim 1-r$, \eqref{lowlow}, \eqref{refine-Ux}, and the regularity of $\bar\eta$ in \eqref{given-bareta} that
\begin{equation*} 
\big|\chi^\sharp\rho_0^{\frac{1}{2}}D_{\bar\eta}^2 U\big|_2\leq C\big(\big|\chi^\sharp\rho_0^{\frac{1}{2}}(U_t,U,D_{\bar\eta}U)\big|_2+(\big|\chi^\sharp\rho_0^{\frac{1}{2}-\beta} D_{\bar\eta} U\big|_2+ |\bar\varrho|_\infty^{\gamma-1-\beta})|D_{\bar\eta}\bar\varrho^\beta|_\infty\big)\leq C(T),
\end{equation*}
that is, $\chi^\sharp\rho_0^{\frac{1}{2}}D_{\bar\eta}^2 U\in L^\infty([0,T];L^2)$.  Similarly, based on this and \eqref{refine-Ux}, applying $\chi^\sharp\rho_0^{\frac{1}{2}+\beta}D_{\bar\eta}$ to \eqref{new-lp}, we can further obtain
\begin{equation}\label{refine-2}
\chi^\sharp\rho_0^{\frac{1}{2}+\beta} D_{\bar\eta}^3 U\in L^\infty([0,T];L^2).    
\end{equation}
Next, multiplying \eqref{new-lp} by $\bar\varrho^{\beta}$ and then applying $\bar\varrho^{-\beta}\bar\eta_rD_{\bar\eta}$ to the resulting equality, we arrive at the following type of equation:
\begin{equation*} 
\widetilde{\cT}_{\mathrm{cross}}:=(D_{\bar\eta}^2 U)_r+ \big(\frac{1}{\beta}+1\big)\frac{(\rho_0^\beta)_r}{\rho_0^\beta} D_{\bar\eta} U=\mathrm{I}_2+\mathrm{I}_3,
\end{equation*}
where
\begin{equation*}
\begin{aligned}
\mathrm{I}_2&:=\frac{1}{2\mu} \eta_r D_{\bar\eta}U_t+\frac{1}{2\mu}\frac{(\bar\varrho^{\beta})_r}{\bar\varrho^\beta}U_t-m \eta_r D_{\bar\eta} \big(\frac{U}{\bar\eta}\big)-m\frac{(\bar\varrho^{\beta})_r}{\bar\varrho^\beta} D_{\bar\eta}\big(\frac{U}{\bar\eta}\big)\\
&\quad\,\, -\frac{1}{\beta} (D_{\bar\eta} \bar\varrho^\beta)_r D_{\bar\eta} U-(1+\beta)\frac{\bar\eta^m\bar\eta_r}{r^m}\big(\frac{r^m}{\bar\eta^m\bar\eta_r}\big)_rD_{\bar\eta} U,\\
\mathrm{I}_3&:=\frac{A\gamma}{2\mu\beta} \bar\varrho^{\gamma-1-\beta}(D_{\bar\eta}\bar\varrho^\beta)_r+\frac{A\gamma(\gamma-1)}{2\mu\beta^2}   \bar\varrho^{\gamma-1-2\beta}(\bar\varrho^\beta)_rD_{\bar\eta}\bar\varrho^\beta.
\end{aligned}
\end{equation*}
After a direct calculation by using $\rho_0^\beta\sim 1-r$, $\beta\leq \gamma-1$, \eqref{lowlow}, Lemma \ref{hardy-inequality}, and the regularity of $\bar\eta$ in \eqref{given-bareta}, we can check that
\begin{equation*}
\chi^\sharp\rho_0^\frac{1+\beta}{2}(\mathrm{I}_2,\mathrm{I}_3)\in L^2([0,T];L^2)\implies \zeta^\sharp\rho_0^\frac{1+\beta}{2}\widetilde{\cT}_{\mathrm{cross}}\in L^2([0,T];L^2),
\end{equation*}
which, along with \eqref{refine-2} and the fact that $ \chi^\sharp\rho_0^{\frac{1+\beta}{2}}D_{\bar\eta}^2 U \in L^\infty([0,T];L^2)$ and Proposition \ref{prop2.1}, leads to $\zeta^\sharp\rho_0^\frac{1+\beta}{2}D_{\bar\eta}^3U\in L^2([0,T];L^2)$, so that
\begin{equation*}
\begin{aligned}
\big|\chi^\sharp\rho_0^\frac{1+\beta}{2}D_{\bar\eta}^3U\big|_2&\leq \big|(\zeta-\chi)\rho_0^\frac{1+\beta}{2}D_{\bar\eta}^3U\big|_2+\big|\zeta^\sharp\rho_0^\frac{1+\beta}{2}D_{\bar\eta}^3U\big|_2\\
&\leq \bar\cE_{\mathrm{in}}(t,U)^\frac{1}{2}+\bar\cD_{\mathrm{in}}(t,U)^\frac{1}{2}+\big|\zeta^\sharp\rho_0^\frac{1+\beta}{2}D_{\bar\eta}^3U\big|_2.
\end{aligned}
\end{equation*}
This, together with $\eqref{bbbb}_1$, yields the desired estimate \eqref{refine-3}, and thus yields $\eqref{eequ3.66}_1$.

\smallskip
\textbf{6. Regularity of $U$ given in \eqref{regu-class}.} Now, we can show that
\begin{equation}\label{classical-1}
\big(U,U_r,\frac{U}{r}\big)\in C([0,T];C(\bar I)),\qquad \big(U_{rr},(\frac{U}{r})_r,U_t\big)\in C((0,T];C(\bar I)).
\end{equation}

\smallskip
\textbf{6.1. Regularity of $U$ near the origin.} In this step, we aim to show that
\begin{equation}\label{classical-in}
\big(U,U_r,\frac{U}{r}\big)\in C([0,T];C(\bar I_\flat)),\qquad \big(U_{rr},(\frac{U}{r})_r\big)\in C((0,T];C(\bar I_\flat)),
\end{equation}
where $I_\flat:=[0,\frac{1}{2})$, and the regularity of $U_t$ can be derived similarly.

To obtain this, define the M-D representative of $U$ as
\begin{equation*}
\boldsymbol{U}(t,\boldsymbol{y})=U(t,r)\frac{\boldsymbol{y}}{r}.
\end{equation*}

First, using $\eqref{bbbb}_1$, Lemma \ref{lemma-initial}, and Lemma \ref{lemma-gaowei} in Appendix \ref{AppB}, we have
\begin{equation*}
\zeta\boldsymbol{U}\in L^\infty([0,T];H^3_0(\Omega))\cap L^2([0,T];H^4_0 (\Omega)),\qquad
\zeta\boldsymbol{U}_{t} \in L^2([0,T];H^2_0 (\Omega)).
\end{equation*}
Here, $\zeta$ is read as $\zeta(\boldsymbol{y})=\zeta(r)$ defined on $\Omega$. Then this, together with Lemma \ref{triple}, implies 
\begin{equation*}
\zeta\boldsymbol{U}\in C([0,T];H^3_0(\Omega)).
\end{equation*}
Using Lemma \ref{lemma-initial} again, we obtain from the above that
\begin{equation*}
r^\frac{m}{2}\big(U,U_r,\frac{U}{r},U_{rr},(\frac{U}{r})_r,U_{rrr},(\frac{U}{r})_{rr}\big)\in C([0,T];L^2(I_\flat)),
\end{equation*}
which, along with Lemmas \ref{sobolev-embedding}--\ref{hardy-inequality}, 
implies
\begin{equation*}
\big(U,U_r,\frac{U}{r},U_{rr},(\frac{U}{r})_r\big)\in C([0,T];L^2(I_\flat))\implies \big(U,U_r,\frac{U}{r}\big)\in C([0,T];C(\bar I_\flat)).
\end{equation*}

Next, similarly, it follows from $\eqref{bbbb}_1$ and Lemmas \ref{lemma-initial} and \ref{lemma-gaowei} that
\begin{equation*}
t\zeta\nabla_{\boldsymbol{y}}^2 \boldsymbol{U}\in L^\infty([0,T];H^2_0 (\Omega)),\qquad
(t\zeta\nabla_{\boldsymbol{y}}^2 \boldsymbol{U})_t\in L^2([0,T];L^2 (\Omega),
\end{equation*}
which, along with Lemma \ref{triple}, leads to
\begin{equation*}
t\zeta\nabla_{\boldsymbol{y}}^2 \boldsymbol{U}\in C([0,T];W^{1,4} (\Omega)).
\end{equation*}
This, together with Lemma \ref{lemma-initial}, implies
\begin{equation}\label{444}
r^\frac{m}{4}\big(U_{rr},(\frac{U}{r})_r,U_{rrr},(\frac{U}{r})_{rr}\big)\in C((0,T];L^4(I_\flat)).
\end{equation}
On the other hand, since, for any function $f=f(r)$, 
\begin{equation*}
|f|_1\leq  |r^{-\frac{m}{4}}r^\frac{m}{4}f|_1\leq  |r^{-\frac{m}{4}}|_\frac{4}{3}|r^\frac{m}{4}f|_4\leq C|r^\frac{m}{4}f|_4,
\end{equation*}
we can derive from the above, \eqref{444}, and Lemma \ref{sobolev-embedding} that
\begin{equation*} 
\big(U_{rr},(\frac{U}{r})_r\big)\in C((0,T];W^{1,1}(I_\flat))\implies \big(U_{rr},(\frac{U}{r})_r\big)\in C((0,T];C(\bar I_\flat)),
\end{equation*}
which completes the proof of \eqref{classical-in}.

\smallskip
\textbf{6.2. Regularity of $U$ away from the origin.} Define $I_\sharp:=(\frac{1}{2},1)$. To obtain \eqref{classical-1}, it still remains to show
\begin{equation}\label{classical-ex}
U\in C([0,T];C^1(\bar I_\sharp)),\qquad (U_{rr},U_t)\in C((0,T];C(\bar I_\sharp)).
\end{equation}
 
First, it follows from \eqref{TETE} that
\begin{equation}\label{tete}
\rho_0^\frac{1}{2}(U,U_r,U_t)\in C([0,T];L^2(I_\sharp)).
\end{equation}

Next, note that, for \textit{a.e.} $t\in (0,T)$, due to the fact that 
\begin{equation*}
\big(\frac{1}{2}-\varepsilon_0)\beta>-\frac{\beta}{2},
\end{equation*}
we obtain from \eqref{bbbb} and Lemmas \ref{sobolev-embedding}--\ref{hardy-inequality} and \ref{lemma-gaowei} that
\begin{equation*}
\rho_0^{(\frac{3}{2}-\varepsilon_0)\beta}(U_{rr},U_{rrr})\in H^1(I_\sharp)\implies \rho_0^{(\frac{3}{2}-\varepsilon_0)\beta}(U_{rr},U_{rrr}) \in C(\bar I_\sharp),
\end{equation*}
which, by using \eqref{bbbb} again, leads to
\begin{equation*}
\begin{aligned}
&\zeta_\frac{1}{3}^\sharp\rho_0^\frac{(3-\varepsilon_0)\beta}{2} (U_{rr},U_{rrr}) \in L^\infty([0,T];L^2)\cap L^2([0,T];H^1_0),\\
&\zeta_\frac{1}{3}^\sharp\rho_0^\frac{(3-\varepsilon_0)\beta}{2}(U_{trr},U_{trrr}) \in L^2([0,T];H^{-1}).
\end{aligned}
\end{equation*}
Then it follows from the above and Lemma \ref{triple} that
\begin{equation}\label{tete2}
\rho_0^\frac{(3-\varepsilon_0)\beta}{2} (U_{rr},U_{rrr}) \in C([0,T];L^2(I_\sharp)).
\end{equation}
Hence, $\eqref{classical-ex}_1$ follows from \eqref{tete}--\eqref{tete2} and Lemmas \ref{sobolev-embedding}--\ref{hardy-inequality}:
\begin{equation*}
U \in C([0,T];W^{2,1}(I_\sharp))\implies U \in C([0,T];C^1(\bar I_\sharp)).
\end{equation*}

It remains to prove $\eqref{classical-ex}_2$. To this end,  we first obtain from \eqref{TETE} that
\begin{equation*}
\begin{aligned}
tU_{tt}\in L^\infty([0,T];L^2_{r^m\rho_0})\cap L^2([0,T];\cH_{r^m\rho_0}^{1}),\qquad  r^m\rho_0 (tU_{tt})_t  \in L^2([0,T];\cH_{r^m\rho_0}^{-1}),
\end{aligned}
\end{equation*}
which, along with \eqref{bbbb} and Lemmas \ref{sobolev-embedding} and \ref{Aubin}, gives
\begin{equation}\label{tete4}
t(U_{tr},U_{tt})\in C([0,T];L^2_{r^m\rho_0})\implies  \rho_0^\frac{1}{2}(U_{tr},U_{tt})\in C((0,T];L^2(I_\sharp)).
\end{equation}

Now, following an argument similar to the proof of $\sqrt{t}\rho_0^{(\frac{3}{2}-\varepsilon_0)\beta}D_{\bar\eta}^2U_t \in L^\infty([0,T];L^2(I_\sharp))$ 
(this proof is omitted in Step 5 above; see Steps 2.1 and 3 in the proof of Lemma \ref{c_3} below for details),
with the help of time-continuities \eqref{tete}--\eqref{tete4} and the regularities of $(\bar U,\bar\eta)$ in \eqref{given}--\eqref{given-bareta}, we can also arrive at
\begin{equation*} 
\rho_0^\frac{(3-\varepsilon_0)\beta}{2} D_{\bar\eta}^2U_t \in C((0,T];L^2(I_\sharp)),
\end{equation*}
and then additionally utilize the chain rules to obtain
\begin{equation}\label{tete5}
\rho_0^\frac{(3-\varepsilon_0)\beta}{2} U_{trr} \in C((0,T];L^2(I_\sharp)).
\end{equation}

Finally, recalling \eqref{xxxx-l}, we similarly define
\begin{equation*}
\bar\cT_{\mathrm{cross}}^*=\bar\cT_{\mathrm{cross}}^*(t,r):=U_{rrrr}+\big(\frac{1}{\beta}+2\big)\frac{(\rho_0^\beta)_r}{\rho_0^\beta}U_{rrr}.
\end{equation*}
Clearly, based on \eqref{tete2}--\eqref{tete5}, the chain rules and the regularity of $\bar\eta$ in \eqref{given-bareta}, we can follow a similar argument in Steps 2.2 and 3 of Lemma \ref{c_3} (this argument is actually contained in Step 5 above, however, we omit it)  and obtain the time-continuity for $\bar\cT^*_{\mathrm{cross}}$, \textit{i.e.},
\begin{equation*}
\rho_0^\frac{(3-\varepsilon_0)\beta}{2} \bar\cT^*_{\mathrm{cross}}\in C((0,T];L^2(I_\sharp)).
\end{equation*}
Then, by Proposition \ref{prop2.1}, for any $0<t,t_0\leq T$,
\begin{equation*}
\begin{aligned}
\big|\rho_0^\frac{(3-\varepsilon_0)\beta}{2} (U_{rrrr}(t)-U_{rrrr}(t_0))\big|_2&\leq C(T)\big|\rho_0^\frac{(3-\varepsilon_0)\beta}{2} (\bar\cT_{\mathrm{cross}}(t)-\bar\cT_{\mathrm{cross}}(t_0))\big|_2\\
&\quad +C(T)\big|\rho_0^\frac{(3-\varepsilon_0)\beta}{2} (U_{rrr}(t)-U_{rrr}(t_0))\big|_2.
\end{aligned}
\end{equation*}
Taking the limit as $t\to t_0$, together with \eqref{tete2}, yields
\begin{equation}\label{tete6}
\rho_0^\frac{(3-\varepsilon_0)\beta}{2} U_{rrrr} \in C((0,T];L^2(I_\sharp)).    
\end{equation}

Therefore, $\eqref{classical-ex}_2$ follows from \eqref{tete2}, \eqref{tete4}--\eqref{tete6}, and Lemmas \ref{sobolev-embedding}--\ref{hardy-inequality}:
\begin{equation*}
(U_{rr},U_t)\in C((0,T];W^{1,1}(I_\sharp))\implies (U_{rr},U_t)\in C((0,T];C(\bar I_\sharp)).
\end{equation*}

This completes the proof of \eqref{classical-ex}, and hence the proof of \eqref{classical-1}.

\smallskip
\textbf{7. Derivation of the boundary condition.} The Neumann boundary condition of $U$ can be proved by basically follow the argument used in  Remark \ref{rmk1.4} of  \S \ref{Section-maintheorem}. First, since $\eqref{lp}_1$ holds pointwise in $(0,T]\times (0,1)$, we can divide $\eqref{lp}_1$ by $\bar\eta^m\bar\eta_r$to obtain
\begin{equation}\label{lp-pre}
\bar\varrho U_t +A D_{\bar\eta}(\bar\varrho^{\gamma})=2\mu  \bar\varrho D_{\bar\eta}^2 U +2\mu (D_{\bar\eta}\bar\varrho)(D_{\bar\eta} U)+2\mu m \bar\varrho D_{\bar\eta}(\frac{U}{\bar\eta}),\qquad \bar\varrho=\frac{r^m\rho_0}{\bar\eta^m\bar\eta_r}.
\end{equation}
Then multiplying the above by $\bar\eta_r^2\rho_0^\beta \bar\varrho^{-1}$ gives
\begin{equation}\label{lp-pre;}
\begin{aligned}
\frac{2\mu}{\beta} (\rho_0^\beta)_rU_r&=\bar\eta_r^2\rho_0^{\beta}U_t+\frac{A\gamma}{\beta}\bar\eta_r \bar\varrho^{\gamma-1} \Big((\rho_0^\beta)_r-\beta \rho_0^\beta\big(m \frac{r}{\bar\eta}(\frac{\bar\eta}{r})_r + \frac{\bar\eta_{rr}}{\bar\eta_r} \big)\Big)\\
&\quad -2m\mu \bar\eta_r \rho_0^\beta \big((\frac{U}{r})_r\frac{r}{\bar\eta}-\frac{U}{r}\frac{r^2}{\bar\eta^2}(\frac{\bar\eta}{r})_r\big) -2\mu \bar\eta_r \rho_0^\beta \big(\frac{U_{rr}}{\bar\eta_r}-\frac{\bar\eta_{rr}U_r}{\bar\eta_r^2}\big) \\
&\quad +2\mu\rho_0^\beta\big(m \frac{r}{\bar\eta}(\frac{\bar\eta}{r})_r+ \frac{\bar\eta_{rr}}{\bar\eta_r} \big)U_r.
\end{aligned}
\end{equation}
Due to the facts that
\begin{equation}\label{the facts}
\begin{aligned}
&\big(U_r,\frac{U}{r},U_{rr},(\frac{U}{r})_r,\bar\eta_r,\frac{\bar\eta}{r},\bar\eta_{rr},(\frac{\bar\eta}{r})_r\big)\in C((0,T];C(\bar I)) ,\\
&\big(\bar\eta_r,\frac{\bar\eta}{r}\big)\in \big[\frac{1}{2},\frac{3}{2}\big],\qquad  \rho_0^\beta\sim 1-r,\qquad (\rho_0, (\rho_0^\beta)_r) \in C(\bar I),
\end{aligned}
\end{equation}
we see that the right-hand side of \eqref{lp-pre;} belongs to  $C((0,T]\times \bar I)$ and vanishes at the boundary $\partial I$. As a consequence, taking the limit as $r\to 1$ in \eqref{lp-pre;}, we obtain from $(\rho_0^\beta)_r|_{r=1}\neq 0$ that
\begin{equation*}
(\rho_0^\beta)_rU_r|_{r=1}=\lim_{r\to 1}(\rho_0^\beta)_rU_r=0\implies U_r|_{r=1}=0.
\end{equation*}

Next, $U|_{r=0}=0$ follows directly from $\frac{U}{r}\in C([0,T];C(\bar I))$. Finally, for any $(t,r)\in (0,T]\times\bar I$, by $U_r|_{r=1}=0$ and $U_{rr} \in C((0,T];C(\bar I))$,
\begin{equation}\label{asym}
|U_r(t,r)|=\Big|\int_r^1 U_{rr}(t,\tilde{r})\,\mathrm{d}\tilde{r}\Big|\leq |U_{rr}|_\infty(1-r)\leq C(T)(1-r).
\end{equation}

\smallskip
\textbf{8. Equation $\eqref{lp}_1$ holds pointwise in $(0, T]\times \bar I$.} By Definition \ref{fed-cl}, it remains to show that $U(t,r)$ satisfies equation $\eqref{lp}_1$ pointwise in $(0, T]\times \bar I$. However, to make the method in this step applicable to the well-posedness of the nonlinear problem \eqref{eq:VFBP-La-eta}, we consider a more general case here. More precisely, we show that $\eqref{lp}_1\times (\bar\eta^m\bar\eta_r)^{-1}$, that is, \eqref{lp-pre} holds pointwise in $(0, T]\times \bar I$. Furthermore, based on the structure of \eqref{lp-pre}, it suffices to show that the ``most singular'' term
\begin{equation*} 
\mathbb{S}=\mathbb{S}(t,r):=2\mu (D_{\bar\eta}\bar\varrho)(D_{\bar\eta} U)
\end{equation*}
holds pointwise in $(0,T]\times \bar I$.

A direct calculation gives 
\begin{equation}\label{most-singular1'}
\begin{aligned}
\SS&= - 2\mu \big(m\frac{r}{\bar\eta}(\frac{\bar\eta}{r})_r+\frac{\bar\eta_{rr}}{\bar\eta_r}\big)\frac{r^m\rho_0}{\bar\eta^m \bar\eta_r^3} U_r+\frac{2\mu }{\beta}\frac{r^m (\rho_0^\beta)_r}{\bar\eta^m \bar\eta_r^3} \underline{\rho_0^{1-\beta} U_r}_{:=\SS_*(t,r)}.
\end{aligned}
\end{equation}
From \eqref{the facts}, it follows that all the terms in \eqref{most-singular1'} belong to  $C((0,T]\times \bar I)$, hence holding pointwise in $(0,T]\times \bar I$, except for the term $\SS_*$. 

To prove that $\SS_*$ holds pointwise in $(0,T]\times \bar I$, we only need to check
\begin{equation}\label{claim-ss'}
\SS_*|_{r=1}<\infty \qquad\text{on }(0,T]. 
\end{equation}
Indeed, since $\rho_0^\beta \sim 1-r$ and $U_r|_{r=1}=0$, by \eqref{asym}, we obtain that, for all $(t,r)\in (0,T]\times I$,
\begin{equation*}
|\SS_*(t,r)|\leq C(T)|(1-r)^{\frac{1}{\beta}-1}U_r| \leq C(T)(1-r)^{\frac{1}{\beta}}.
\end{equation*}
Letting $r\to 1$ leads to $|\SS_*(t,r)|\to 0$ for each $t\in (0,T]$. Hence, we derive \eqref{claim-ss'}.

This completes the proof of Lemma \ref{existence-linearize}. 

\subsection{Uniform estimates to the linearized problem}\label{subsection9.2}
Based on  Lemma \ref{existence-linearize}, we can establish the uniform estimates 
for the classical solution $U$ of problem \eqref{lp}.

In \S\ref{subsection9.2}--\S\ref{subsection9.3}, $C\in (1,\infty)$ denotes a generic constant depending only on $(n,\mu,A,\gamma,\beta,\varepsilon_0)$, and $C(l_1,\cdots\!,l_k)\in (1,\infty)$ a generic  constant depending on $C$
and the additional  parameters $(l_1,\cdots\!,l_k)$, which may be different at each occurrence. Moreover, for simplicity, we denote $\mathfrak{p}(\cdot)$ a generic polynomial function, taking the form: 
\begin{equation*}
\mathfrak{p}(s)=\sum_{j=1}^k s^j \qquad\text{with some $k\in \NN^*$}.
\end{equation*}

Now, let $(\rho_0,u_0)$ be given initial data satisfying the hypothesis of Lemma \ref{existence-linearize}.
Assume that 
there exists a constant $c_0>1$ such that 
\begin{equation}\label{38}
1+\cK_1+\cK_2+\|\rho_0^\beta\|_{H^3(\Omega)}+\cE(0,U)\leq c_0.
\end{equation}
Then fix $T>0$, and assume that there exist some constants $T^*\in (0,T]$ and  $c_1$ such that $1<c_0\leq c_1$ and, for all $t\in [0,T^*]$, 
\begin{equation}\label{39}
\bar\cE(t,\bar U)+t\bar\cD(t,\bar U)+\int_0^{t}\bar\cD(s,\bar U)\,\mathrm{d}s\leq c_1,\qquad
|\bar\eta_r(t)-1|_\infty+\Big|\frac{\bar\eta(t)}{r}-1\Big|_\infty\leq \frac{1}{2}.
\end{equation}
Here, $(c_1,T^*)$ will be determined later, which depend only on $(c_0,\varepsilon_0,\beta,\mu,n,A,\gamma,T)$.

\subsubsection{Some basic estimates}\label{Remark-useful-bounds}

First, we have the following estimates associated with $\bar U$.
\begin{Lemma}\label{lemma-useful1}
For any $t\in[0,T^*]$,
\begin{equation*}
\Big|\big(\bar U,D_{\bar\eta} \bar U,\frac{\bar U}{\bar\eta}\big)\Big|_\infty
\leq C\mathfrak{p}(c_1),\qquad
\Big|\big(D_{\bar\eta}^2 \bar U,D_{\bar\eta}(\frac{\bar U}{\bar\eta}),\bar U_t\big)\Big|_\infty
\leq C(\mathfrak{p}(c_1)+\mathfrak{p}(c_0)\bar\cD(t,\bar U)^\frac{1}{2}),
\end{equation*}
and, for any $a\in (0,1)$,
\begin{equation*}
\begin{aligned}
&\Big|\chi_a r^\frac{m}{4}\big(D_{\bar\eta}\bar U,\frac{\bar U}{\bar\eta}\big)\Big|_{4}\leq  C(a)\mathfrak{p}(c_1),\qquad
\Big|\chi_a r^\frac{m}{4}\big(D_{\bar\eta}\bar U_{t},\frac{\bar U_t}{\bar\eta}\big)\Big|_{4} \leq  C(a)(\mathfrak{p}(c_1)+\bar\cD(t,\bar U)^\frac{1}{2}),\\
&\big|\chi^\sharp_a\rho_0^{\beta}(D_{\bar\eta}^2\bar U,\bar U_t)\big|_{\infty} \leq  C(a) \mathfrak{p}(c_1),\qquad
\big|\chi_a^\sharp\rho_0^\beta D_{\bar\eta}\bar U_{t}\big|_{\infty} \leq  C(a)(\mathfrak{p}(c_1)+\mathfrak{p}(c_0)\bar\cD(t,\bar U)^\frac{1}{2}).
\end{aligned}
\end{equation*}
\end{Lemma}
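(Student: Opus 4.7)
\noindent\textbf{Proof proposal for Lemma \ref{lemma-useful1}.}
The plan is to exploit the two-region decomposition baked into the energy $\bar\cE$ and dissipation $\bar\cD$: interior bounds will come from the $\zeta r^{m/2}$-weighted terms (which are essentially the spherically symmetric restriction of an $H^3(\Omega)$-norm on $\{|\boldsymbol{y}|<5/8\}$ via Lemma \ref{lemma-gaowei}), while exterior bounds will come from the $\rho_0^\frac{1}{2}$- and $\rho_0^{(\frac{3}{2}-\varepsilon_0)\beta}$-weighted terms together with weighted Sobolev/Hardy embeddings. The bilateral bound $(\bar\eta_r,\frac{\bar\eta}{r})\in[\frac{1}{2},\frac{3}{2}]$ from \eqref{39} lets me freely replace $D_{\bar\eta}$ by $\partial_r$ up to harmless constants and apply Lemma \ref{im-1} to recover the individual derivatives $(D_{\bar\eta}^2 f, D_{\bar\eta}(\frac{f}{\bar\eta}))$ from the combination $D_{\bar\eta}(D_{\bar\eta} f+\frac{mf}{\bar\eta})$.

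First, for the zeroth-order $L^\infty$-bound $|(\bar U,D_{\bar\eta}\bar U,\frac{\bar U}{\bar\eta})|_\infty$, I will combine the interior M-D representation $\bar{\boldsymbol U}(\boldsymbol y)=\bar U(r)\boldsymbol y/r$ with the Sobolev embedding $H^3(\Omega)\hookrightarrow C^1(\bar\Omega)$; by Lemmas \ref{lemma-initial} and \ref{lemma-gaowei} this $H^3$-norm is controlled by $\bar\cE_{\mathrm{in}}(t,\bar U)^{1/2}\leq C c_1^{1/2}$. In the exterior region I will use the embedding $W^{1,1}(\tfrac12,1)\hookrightarrow L^\infty(\tfrac12,1)$ together with Hardy's inequality (Lemma \ref{hardy-inequality}) to trade the power $\rho_0^{(\frac{3}{2}-\varepsilon_0)\beta}$ for what is available in $\bar\cE_{\mathrm{ex}}$, noting that $(\frac32-\varepsilon_0)\beta>\frac12$ so the weight balances one additional derivative. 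Patching the two regions through $1=\zeta+\chi^\sharp$ (plus the obvious $L^\infty$ bound on $\bar\eta_r,\frac{\bar\eta}{r}$) yields the polynomial bound $C\mathfrak{p}(c_1)$.

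Next, for $|(D_{\bar\eta}^2\bar U,D_{\bar\eta}(\frac{\bar U}{\bar\eta}),\bar U_t)|_\infty$ and the $L^\infty$-weighted estimates on $\rho_0^\beta$, I will run the same scheme one derivative higher, but now controlling $W^{3,1}$ or a comparable space, which necessarily pulls in $\bar\cD$. In the interior, the M-D embedding $H^4(\Omega)\hookrightarrow C^2(\bar\Omega)$ combined with Lemmas \ref{lemma-initial} and \ref{lemma-gaowei} gives control by $\bar\cE_{\mathrm{in}}^{1/2}+\bar\cD_{\mathrm{in}}^{1/2}$. In the exterior, the embedding $H^4_{\rho_0^{2\alpha}}(\frac12,1)\hookrightarrow C^2[\frac12,1]$ indicated in Remark \ref{remark-energy function} with $\alpha=(\frac32-\varepsilon_0)\beta$ gives control by $\bar\cE_{\mathrm{ex}}^{1/2}+\bar\cD_{\mathrm{ex}}^{1/2}$. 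Since $\bar\cE\leq c_1$ contributes $\mathfrak{p}(c_1)$ and $\bar\cD^{1/2}$ contributes the factor $\bar\cD(t,\bar U)^{1/2}$ (multiplied by the fixed $c_0$-dependent embedding constants coming from the initial weight profile through Lemma \ref{lemma-initial}), the asymmetry $\mathfrak{p}(c_1)+\mathfrak{p}(c_0)\bar\cD^{1/2}$ matches the claim exactly. The $L^4$-bounds $|\chi_a r^{m/4}(\cdot)|_4$ and $|\chi_a r^{m/4}(\cdot)|_4$ will come from the interpolation $H^1(\Omega)\hookrightarrow L^4(\Omega)$ via Lemma \ref{lemma-gaowei}, applied respectively to $\bar{\boldsymbol U}$ (bounded by $\bar\cE$) and to $\bar{\boldsymbol U}_t$ (bounded by $\bar\cE$ for the $L^2$-part and by $\bar\cD^{1/2}$ for the $H^1$-part).

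The main obstacle I anticipate is the exterior $L^\infty$-bound $|\chi_a^\sharp\rho_0^\beta(D_{\bar\eta}^2\bar U,\bar U_t)|_\infty$, where the weight $\rho_0^\beta$ is strictly weaker than the ``natural'' $\rho_0^{(\frac{3}{2}-\varepsilon_0)\beta}$ appearing in $\bar\cE_{\mathrm{ex}}$; to bridge the gap I will invoke the inequality $H^1_{\rho_0^{(3-2\varepsilon_0)\beta}}(\frac12,1)\hookrightarrow L^\infty_{\rho_0^\beta}(\frac12,1)$ via a weighted $W^{1,1}$-embedding after a direct computation of $(\rho_0^\beta f)_r$, where the extra $\rho_0^\beta$ absorbs exactly the mismatch of weights provided $2\beta-(\frac{3}{2}-\varepsilon_0)\beta>-\frac12$, which reduces to $\varepsilon_0<\frac12+\frac{1}{2\beta}$, already satisfied by \eqref{varepsilon0}. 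The analogous step for $\bar U_t$ consumes one factor of $\bar\cD^{1/2}$, which accounts for its appearance in the last claimed bound.
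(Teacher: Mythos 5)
Your overall plan — two-region decomposition, Sobolev embeddings in the interior, weighted embeddings near the vacuum boundary, and bilateral control of $(\bar\eta_r,\frac{\bar\eta}{r})$ to pass between $D_{\bar\eta}$ and $\partial_r$ — is in the right spirit, but it diverges from the paper's proof in a way that introduces a genuine gap, and the interior portion is also more circuitous than necessary. The paper works entirely in one dimension: it applies Lemma \ref{sobolev-embedding} ($W^{1,1}(J)\hookrightarrow L^\infty(J)$) and then uses the weighted Hardy inequality (Lemma \ref{hardy-inequality}) to convert the resulting unweighted $L^1$-norms into the weighted $L^2$-norms appearing directly in $\bar\cE$ and $\bar\cD$, with $d=r$ near the origin and $d\sim\rho_0^\beta\sim 1-r$ near the boundary. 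Your interior route through the multidimensional framework (Lemmas \ref{lemma-initial} and \ref{lemma-gaowei} followed by $H^3(\Omega)\hookrightarrow C^1$, $H^4(\Omega)\hookrightarrow C^2$, $H^1(\Omega)\hookrightarrow L^4$) is valid — $\bar\eta_t=\bar U$ and \eqref{39} do supply exactly what Lemma \ref{lemma-gaowei} requires — but you are re-invoking a lemma whose proof is as involved as the present one, to prove estimates that the 1-D Hardy inequality gives in a single line.

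The genuine gap is in your treatment of $|\chi_a^\sharp\rho_0^\beta(D_{\bar\eta}^2\bar U,\bar U_t)|_\infty$. You propose to bound $\|\rho_0^\beta f\|_{L^\infty(\frac12,1)}$ by $\|\rho_0^\beta f\|_{W^{1,1}(\frac12,1)}$ and control the $L^1$-norms by H\"older against the available $\rho_0^{(\frac32-\varepsilon_0)\beta}$-weighted $L^2$-norms. But $(\rho_0^\beta f)_r=(\rho_0^\beta)_r f+\rho_0^\beta f_r$ with $(\rho_0^\beta)_r\sim-1$ bounded, so you are left with the \emph{unweighted} $\|f\|_{L^1}$; H\"older would then require $\|\rho_0^{-(\frac32-\varepsilon_0)\beta}\|_{L^2(\frac12,1)}<\infty$, which fails since $\int_{1/2}^1(1-r)^{-3+2\varepsilon_0}\,\mathrm{d}r=\infty$ for $\varepsilon_0<\frac12$. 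Your side-constraint ``$2\beta-(\frac32-\varepsilon_0)\beta>-\frac12$, which reduces to $\varepsilon_0<\frac12+\frac1{2\beta}$'' is algebraically incorrect — that inequality actually reduces to the trivially satisfied $\varepsilon_0>-\frac12-\frac1{2\beta}$ — and is not the operative constraint. The correct step, implicit in the paper, is to apply Lemma \ref{hardy-inequality}(ii) with $p=\infty$, $\vartheta=1$, $d\sim\rho_0^\beta$, giving $\|\rho_0^\beta f\|_\infty\leq C\|\rho_0^{3\beta/2}(f,f_r)\|_{L^2}$; the constraint that actually matters is $\frac{3\beta}{2}>\frac12$, i.e.\ $\varepsilon_0<\frac32-\frac1{2\beta}$, which is built into \eqref{varepsilon0} and guarantees $\rho_0^{3\beta/2}\leq C(c_0)\min\{\rho_0^{1/2},\rho_0^{(\frac32-\varepsilon_0)\beta}\}$, so that the right-hand side reads off directly from $\bar\cE_{\mathrm{ex}}$ (for $f=D_{\bar\eta}^2\bar U,\bar U_t$) or $\bar\cD_{\mathrm{ex}}$ (for $f=D_{\bar\eta}\bar U_t$). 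Alternatively, if you insist on the $W^{1,1}$ route, the unweighted $\|f\|_{L^1}$ must itself be bounded by Hardy with $p=1$, i.e.\ $\|f\|_{L^1(\frac12,1)}\leq C\|\rho_0^{(\frac32-\varepsilon_0)\beta}(f,f_r)\|_{L^2}$ — not by H\"older; either repair works, but as written your argument does not close.
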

\begin{proof}
The proof can be directly derived by \eqref{39} and Lemmas \ref{sobolev-embedding}--\ref{hardy-inequality}. For example, it follows that, for all $t\in [0,T^*]$ and $a\in (0,1)$,
\begin{align*}
& \  \begin{aligned}
\Big|\big(D_{\bar\eta}^2 \bar U,D_{\bar\eta}(\frac{\bar U}{\bar\eta})\big)\Big|_\infty&\leq C\Big|\big(D_{\bar\eta}^2 \bar U,D_{\bar\eta}^3 \bar U,D_{\bar\eta}(\frac{\bar U}{\bar\eta}),D_{\bar\eta}^2(\frac{\bar U}{\bar\eta})\big)\Big|_1\\
&\leq  C \sum_{j=2}^4\Big(\big|\chi r^{\frac{m}{2}}D_{\bar\eta}^j\bar U\big|_{2} +\Big|\chi r^\frac{m}{2}D_{\bar\eta}^{j-1}(\frac{\bar U}{\bar\eta}) \Big|_2+\big|\chi^\sharp\rho_0^{(\frac{3}{2}-\varepsilon_0)\beta}D_{\bar\eta}^j\bar U\big|_{2}\Big) \\
&\leq C(\mathfrak{p}(c_1)+\mathfrak{p}(c_0)\bar\cD(t,\bar U)^\frac{1}{2}),
\end{aligned}\\
&\begin{aligned}
\Big|\chi_a r^\frac{m}{4}\big(D_{\bar\eta}\bar U_{t},\frac{\bar U_t}{\bar\eta}\big)\Big|_{4} &\leq C(a) \Big|\chi_a r^\frac{m+3}{4}\big(D_{\bar\eta}\bar U_{t},D_{\bar\eta}^2\bar U_{t},\frac{\bar U_t}{\bar\eta},D_{\bar\eta}(\frac{\bar U_t}{\bar\eta})\big)\Big|_{2}\\
&\leq C(a)(\mathfrak{p}(c_1)+\bar\cD(t,\bar U)^\frac{1}{2}),
\end{aligned}\\
&\qquad \ \ \big|\chi^\sharp_a\rho_0^\frac{\beta}{2} D_{\bar\eta}\bar U_{t}\big|_{2} \leq  C(a) \big|\chi^\sharp \rho_0^\frac{3\beta}{2}(D_{\bar\eta}\bar U_{t},D_{\bar\eta}^2\bar U_{t})\big|_{2} \leq C(a)(\mathfrak{p}(c_1)+\mathfrak{p}(c_0)\bar\cD(t,\bar U)^\frac{1}{2}).
\end{align*}

The rest of this lemma can be proved analogously, we omit the details here for brevity.
\end{proof}
 
Next, to further simplify the calculations, we define the quantities: 
\begin{equation}\label{bar-Lambda}
\bar\Lambda:=D_{\bar\eta} (\bar\varrho^\beta)=D_{\bar\eta} (\rho_0^\beta \bar{\mathscr{J}}^{-\beta}),\qquad \bar{\mathscr{J}}:=\frac{\bar\eta^m\bar\eta_r}{r^m}.
\end{equation}
Then we obtain some useful estimates for $(\bar\Lambda,\bar{\mathscr{J}})$.

\begin{Lemma}\label{lemma-Lambda-guji}
For any $0\leq t\leq T_1=\min\{T^*, \mathfrak{p}(c_1)^{-1}\}$, $a\in (0,1)$, and $\sigma>0$,
\begin{equation*}
\begin{aligned}
\Big|\zeta_a r^\frac{m}{4}\big(D_{\bar\eta}\bar\Lambda,\frac{\bar\Lambda}{\bar\eta}\big)\Big|_4+\Big|\zeta_a r^\frac{m}{2}\big(D_{\bar\eta}\bar\Lambda,\frac{\bar\Lambda}{\bar\eta},D_{\bar\eta}^2\bar\Lambda,D_{\bar\eta}(\frac{\bar\Lambda}{\bar\eta})\big)\Big|_2\leq C(a)\mathfrak{p}(c_0),\\
|\bar\Lambda|_\infty\leq C\mathfrak{p}(c_0),\qquad |\chi^\sharp_a D_{\bar\eta}\bar{\mathscr{J}}|_\infty
+|\chi^\sharp_a D_{\bar\eta}\bar\Lambda|_\infty+\big|\chi^\sharp_a \rho_0^{(\frac{1}{2}-\varepsilon_0)\beta} D_{\bar\eta}^2\bar\Lambda\big|_2\leq C(a)\mathfrak{p}(c_0),\\[5pt] 
\big|\chi^\sharp_a \rho_0^{-\frac{\beta}{2}+\sigma}\bar\Lambda\big|_2\leq C(a,\sigma)\mathfrak{p}(c_0),\qquad|\zeta_a r^\frac{m}{4}\bar\Lambda_t|_4+|\chi_a^\sharp\Lambda_t|_\infty\leq C(a)\mathfrak{p}(c_1).
\end{aligned}
\end{equation*}
\end{Lemma}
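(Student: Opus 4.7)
The plan is to exploit the smallness of the time window $T_1 = \min\{T^*, \mathfrak{p}(c_1)^{-1}\}$: on this window, the flow Jacobian $\bar{\mathscr{J}} = (\bar\eta/r)^m \bar\eta_r$ stays close to $1$, and every departure of $\bar\varrho^\beta = \rho_0^\beta \bar{\mathscr{J}}^{-\beta}$ from $\rho_0^\beta$ contributes at worst a factor of $t\cdot\mathfrak{p}(c_1)\leq 1$. Concretely, from $\bar\eta_t = \bar U$ one obtains $\bar{\mathscr{J}}_t = \bar{\mathscr{J}}(D_{\bar\eta}\bar U + m\bar U/\bar\eta)$, and analogues of the integral formulas (10.5)--(10.7) from Lemma \ref{lemma-jianhua} with $U,\eta$ replaced by $\bar U,\bar\eta$ express $\bar\psi_h := D_{\bar\eta}\log\bar\eta_r$ and $\bar\psi_\ell := D_{\bar\eta}\log(\bar\eta/r)$, together with their $D_{\bar\eta}$-derivatives, as time integrals of $D_{\bar\eta}^j\bar U$ and $D_{\bar\eta}^{j-1}(\bar U/\bar\eta)$. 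Combined with Lemma \ref{lemma-useful1} and Minkowski's inequality in time, each of these quantities is bounded in any of the weighted norms appearing in the statement by $C t\cdot\mathfrak{p}(c_1) \leq C$ on $[0,T_1]$.

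The next step is to expand $\bar\Lambda$ and its derivatives by the chain rule, separating an ``initial'' piece governed by $\rho_0^\beta \in H^3$ from a ``flow perturbation'' piece that carries $\bar\psi_h,\bar\psi_\ell$ and their derivatives. Writing
\[
\bar\Lambda = \tfrac{(\rho_0^\beta)_r}{\bar\eta_r}\bar{\mathscr{J}}^{-\beta} - \beta\rho_0^\beta\bar{\mathscr{J}}^{-\beta}(m\bar\psi_\ell + \bar\psi_h),
\]
the $L^\infty$ bound $|\bar\Lambda|_\infty\leq C\mathfrak{p}(c_0)$ follows at once from $(\rho_0^\beta)_r \in L^\infty$ (a consequence of $\rho_0^\beta \in H^3(\Omega)$ via Lemma \ref{lemma-initial} and Sobolev embedding), the uniform two-sided bounds on $\bar\eta_r$ and $\bar\eta/r$ from \eqref{39}, and the smallness of $(\bar\psi_h,\bar\psi_\ell)$ on $[0,T_1]$. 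The weighted $L^2$/$L^4$ estimates on $D_{\bar\eta}\bar\Lambda$, $\bar\Lambda/\bar\eta$, $D_{\bar\eta}(\bar\Lambda/\bar\eta)$ and $D_{\bar\eta}^2\bar\Lambda$ localized by $\zeta_a$ near the origin are then obtained by differentiating the identity above, grouping terms by order, and applying Lemmas \ref{sobolev-embedding}--\ref{hardy-inequality} and \ref{lemma-useful1}. Derivatives of $\rho_0^\beta$ up to third order are absorbed into $C(a)\mathfrak{p}(c_0)$ using $\rho_0^\beta\in H^3$, while every appearance of $D_{\bar\eta}^j\bar U$ inside the integral formulas for $D_{\bar\eta}^{j-1}\bar\psi_{h,\ell}$ gets multiplied by $t\leq \mathfrak{p}(c_1)^{-1}$, cancelling the $\mathfrak{p}(c_1)$ growth and leaving only $\mathfrak{p}(c_0)$.

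For the exterior estimates $|\chi_a^\sharp D_{\bar\eta}\bar{\mathscr{J}}|_\infty$, $|\chi_a^\sharp D_{\bar\eta}\bar\Lambda|_\infty$, and $|\chi_a^\sharp \rho_0^{(1/2-\varepsilon_0)\beta}D_{\bar\eta}^2\bar\Lambda|_2$, the key point is that away from the origin the geometric weights $1/\bar\eta$ are harmless, so one is essentially bounding purely Cartesian derivatives against the $\rho_0$-weighted $H^4(\tfrac12,1)$-structure of $\bar U$ already captured by $\bar\cE_{\mathrm{ex}}(t,\bar U)$. The embedding $W^{1,1}\hookrightarrow L^\infty$ together with $\rho_0^\beta \sim 1-r$ and Hardy's inequality yields $|\chi_a^\sharp D_{\bar\eta}^2\bar U|_\infty+|\chi_a^\sharp \bar U_t|_\infty \leq C(a)\mathfrak{p}(c_1)$, so the integral representations again give $t\cdot\mathfrak{p}(c_1)\leq 1$. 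The weighted $L^2$ bound $|\chi_a^\sharp\rho_0^{-\beta/2+\sigma}\bar\Lambda|_2\leq C(a,\sigma)\mathfrak{p}(c_0)$ follows by substituting the expansion of $\bar\Lambda$ above and noting that $\rho_0^{-\beta/2+\sigma}(\rho_0^\beta)_r \sim \rho_0^{\beta/2-\beta+\sigma-1} \cdot \rho_0^{\beta-1}(\rho_0^\beta)_r$ and the integrability power $-\beta/2+\sigma > -\beta/2$ makes the relevant integrals convergent by $\rho_0^\beta \sim 1-r$.

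The time-derivative estimates for $\bar\Lambda_t$ are handled last, using the commutator identity $(D_{\bar\eta} f)_t = D_{\bar\eta} f_t - (D_{\bar\eta}\bar U)(D_{\bar\eta} f)$ combined with $\bar{\mathscr{J}}_t = \bar{\mathscr{J}}(D_{\bar\eta}\bar U + m\bar U/\bar\eta)$ to express
\[
\bar\Lambda_t = -\beta D_{\bar\eta}\bigl(\bar\varrho^\beta(D_{\bar\eta}\bar U + m\bar U/\bar\eta)\bigr) - \bar\Lambda\, D_{\bar\eta}\bar U,
\]
so $\bar\Lambda_t$ reduces to $\bar U$-derivatives multiplied by already-controlled factors; the bound $\mathfrak{p}(c_1)$ (rather than $\mathfrak{p}(c_0)$) is natural here since $\bar\Lambda_t$ contains first-derivatives of $\bar U$ without the cushioning time-integral. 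The main obstacle in the whole proof will be the $D_{\bar\eta}^2\bar\Lambda$ bound: it produces cross-terms of the form $\bar\psi_h D_{\bar\eta}\bar\psi_h$ and integrals of $D_{\bar\eta}^3\bar U$, requiring a careful split between an interior piece (where the $r^{m/2}$ weight plus Lemma \ref{im-1}-type div-curl structure from Lemma \ref{im-1} supplies the needed regularity) and a boundary piece (where the $\rho_0^{(1/2-\varepsilon_0)\beta}$ weight must be balanced against the highest weighted norm $|\chi^\sharp\rho_0^{(3/2-\varepsilon_0)\beta}D_{\bar\eta}^3\bar U|_2$ of $\bar U$). The smallness in $t$ is crucial for closing both pieces with the initial-data constant $\mathfrak{p}(c_0)$.
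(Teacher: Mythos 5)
Your overall strategy matches the paper's: exploit the smallness of $T_1\leq\mathfrak{p}(c_1)^{-1}$, express the Jacobian and its $D_{\bar\eta}$-derivatives as time integrals of derivatives of $\bar U$ (you route this through $\bar\psi_h,\bar\psi_\ell$ as in Lemma~\ref{lemma-jianhua}, while the paper works directly with $D_{\bar\eta}^k\bar{\mathscr J}$, but these are equivalent since $m\bar\psi_\ell+\bar\psi_h=D_{\bar\eta}\log\bar{\mathscr J}$), and expand $D_{\bar\eta}^k\bar\Lambda$ by chain rule, splitting interior $r^{m/2}$-weighted estimates from exterior $\rho_0$-weighted ones. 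The $\bar\Lambda_t$ commutator formula and the $|\chi_a^\sharp\rho_0^{-\beta/2+\sigma}\bar\Lambda|_2$ argument are fine.

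There is, however, a concrete off-by-one gap in the exterior piece. You state that the $|\chi_a^\sharp\rho_0^{(1/2-\varepsilon_0)\beta}D_{\bar\eta}^2\bar\Lambda|_2$ bound is closed against ``the highest weighted norm $|\chi^\sharp\rho_0^{(3/2-\varepsilon_0)\beta}D_{\bar\eta}^3\bar U|_2$,'' i.e.\ against $\bar\cE_{\mathrm{ex}}$. But $D_{\bar\eta}^2\bar\Lambda=D_{\bar\eta}^3(\rho_0^\beta\bar{\mathscr J}^{-\beta})$ produces the term $\rho_0^\beta D_{\bar\eta}^3(\bar{\mathscr J}^{-\beta})$, whose leading contribution $D_{\bar\eta}^3\bar{\mathscr J}$ solves a transport equation whose source involves $D_{\bar\eta}^4\bar U$ (compare the analogue of \eqref{1189}, or of the $D_\eta^2\psi_h$ identity used in Lemma~\ref{lemma-ell-D4-ex2}). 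After absorbing the $\rho_0^\beta$ prefactor into the target weight, the quantity to control is $\int_0^t|\chi^\sharp\rho_0^{(3/2-\varepsilon_0)\beta}D_{\bar\eta}^4\bar U|_2\,\mathrm{d}s$, which is part of $\int_0^t\bar\cD_{\mathrm{ex}}^{1/2}\,\mathrm{d}s$ and is only closed through $\int_0^T\bar\cD\,\mathrm{d}s\leq c_1$ together with $T_1\leq\mathfrak{p}(c_1)^{-1}$ (Cauchy--Schwarz in time), not through $\bar\cE_{\mathrm{ex}}$ alone. The same issue appears, one order down, in your $L^\infty$ claims: the unweighted bound $|\chi_a^\sharp D_{\bar\eta}^2\bar U|_\infty\leq C(a)\mathfrak{p}(c_1)$ is not what Lemma~\ref{lemma-useful1} gives (that estimate carries a $\rho_0^\beta$ weight); the unweighted estimate contains a $\mathfrak{p}(c_0)\bar\cD(t,\bar U)^{1/2}$ part, so the $L^\infty$ bounds on $D_{\bar\eta}\bar{\mathscr J}$ and $D_{\bar\eta}\bar\Lambda$ again require integrating the dissipation in time, not a pointwise bound by $\mathfrak{p}(c_1)$. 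These are fixable with the tools you already cite (you do list $\int_0^T\bar\cD\,\mathrm{d}s\leq c_1$ in the hypotheses), but as written the argument would stop short at the highest-derivative terms.
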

\begin{proof}
By \eqref{given-flow} and \eqref{38}--\eqref{39}, we see that, for all $(t,r)\in [0,T^*]\times \bar I$,
\begin{equation}\label{jt}
\bar{\mathscr{J}}_t =\bar{\mathscr{J}}\big(D_{\bar\eta}\bar U+\frac{m\bar U}{\bar\eta}\big) ,\qquad C^{-1}\leq \bar{\mathscr{J}}(t,r)\leq C.
\end{equation}

Then 
\begin{align*}
&\begin{aligned}
(D_{\bar\eta}\bar{\mathscr{J}})_t&=D_{\bar\eta}\bar{\mathscr{J}}\frac{m\bar U}{\bar\eta}+\bar{\mathscr{J}}D_{\bar\eta}\big(D_{\bar\eta}\bar U+\frac{m\bar U}{\bar\eta}\big),\\
(D_{\bar\eta}^2\bar{\mathscr{J}})_t&=D_{\bar\eta}^2\bar{\mathscr{J}}\big(\frac{m\bar U}{\bar\eta}-D_{\bar\eta}\bar U\big)+D_{\bar\eta}\bar{\mathscr{J}}D_{\bar\eta}\big(D_{\bar\eta}\bar U+\frac{2m\bar U}{\bar\eta}\big)+\bar{\mathscr{J}}D_{\bar\eta}^2\big(D_{\bar\eta}\bar U+\frac{m\bar U}{\bar\eta}\big),
\end{aligned}\\
&\begin{aligned}
(D_{\bar\eta}^3\bar{\mathscr{J}})_t&=D_{\bar\eta}^3\bar{\mathscr{J}}\big(\frac{m\bar U}{\bar\eta}-2D_{\bar\eta}\bar U\big)+D_{\bar\eta}^2\bar{\mathscr{J}} D_{\bar\eta}\big(\frac{3m\bar U}{\bar\eta}\big)\\
&\quad +D_{\bar\eta}\bar{\mathscr{J}}D_{\bar\eta}^2\big(2D_{\bar\eta}\bar U+\frac{3m\bar U}{\bar\eta}\big)+\bar{\mathscr{J}}D_{\bar\eta}^3\big(D_{\bar\eta}\bar U+\frac{m\bar U}{\bar\eta}\big).
\end{aligned}
\end{align*}
Hence, these, together with \eqref{jt} and Lemma \ref{lemma-useful1}, yield
\begin{align}
&\begin{aligned}
|D_{\bar\eta}\bar{\mathscr{J}}|&\leq Ce^{Ct\mathfrak{p}(c_1)}\int_0^t\Big|\big(D_{\bar\eta}^2\bar U,D_{\bar\eta}(\frac{\bar U}{\bar\eta})\big)\Big|\,\mathrm{d}s,\notag\\
|D_{\bar\eta}^2\bar{\mathscr{J}}|&\leq Ce^{2Ct\mathfrak{p}(c_1)}\Big(\int_0^t \Big|\big(D_{\bar\eta}^2\bar U,D_{\bar\eta}(\frac{\bar U}{\bar\eta})\big)\Big|\,\mathrm{d}s\Big)^2+e^{Ct\mathfrak{p}(c_1)}\int_0^t\Big|\big(D_{\bar\eta}^3\bar U,D_{\bar\eta}^2(\frac{\bar U}{\bar\eta})\big)\Big|\,\mathrm{d}s,
\end{aligned}\\
&\begin{aligned}[t]\label{1189}
|D_{\bar\eta}^3\bar{\mathscr{J}}|&\leq Ce^{3Ct\mathfrak{p}(c_1)}\Big(\int_0^t \Big|\big(D_{\bar\eta}^2\bar U,D_{\bar\eta}(\frac{\bar U}{\bar\eta})\big)\Big|\,\mathrm{d}s\Big)^3\\
&\quad +Ce^{2Ct\mathfrak{p}(c_1)}\Big(\int_0^t \Big|\big(D_{\bar\eta}^2\bar U,D_{\bar\eta}(\frac{\bar U}{\bar\eta})\big)\Big|\,\mathrm{d}s\Big)\Big(\int_0^t\Big|\big(D_{\bar\eta}^3\bar U,D_{\bar\eta}^2(\frac{\bar U}{\bar\eta})\big)\Big|\,\mathrm{d}s\Big)\\
&\quad +Ce^{Ct\mathfrak{p}(c_1)}\int_0^t \Big|\big(D_{\bar\eta}^4\bar U,D_{\bar\eta}^3(\frac{\bar U}{\bar\eta})\big)\Big|\,\mathrm{d}s.
\end{aligned}
\end{align}

On the other hand, by the chain rules, we have
\begin{equation}\label{1190}
\begin{aligned}
D_{\bar\eta}^k\bar\Lambda &=\sum_{j=0}^{k+1} C_{k,j} (D_{\bar\eta}^{k+1-j} \rho_0^\beta) D_{\bar\eta}^{j}(\bar{\mathscr{J}}^{-\beta})\qquad \text{for $k=0,1,2$},\\
\bar\Lambda_t&= \beta \bar\varrho^\beta D_{\bar\eta}\big(D_{\bar\eta}\bar U+\frac{m\bar U}{\bar\eta}\big)+\beta \bar\Lambda\big((\beta-1)D_{\bar\eta}\bar U+\frac{m\bar U}{\bar\eta}\big), 
\end{aligned}
\end{equation}
where $C_{k,j}$ are some constants depend only on $(k,j)$.

Therefore, for all $0\leq t\leq T_1:=\min\{T^*,\mathfrak{p}(c_1)^{-1}\}$, combining \eqref{1189}--\eqref{1190}, together with \eqref{38}--\eqref{39}, \eqref{jt}, Lemmas \ref{lemma-useful1} and \ref{sobolev-embedding}--\ref{hardy-inequality}, and the H\"older and Minkowski inequalities, we can recursively obtain the desired estimates of this lemma. The calculation is tedious, but rather direct.
\end{proof}

\subsubsection{Uniform estimates of $U$}\label{subsub-11.2.2}
The proof will be divided into the following several lemmas.

\begin{Lemma}\label{c_0-c_1}
For all $0\leq t\leq T_2=\min\{T_1, \mathfrak{p}(c_1)^{-1}\}$.
\begin{equation*}
\Big|(r^m\rho_0)^\frac{1}{2}\big(U,D_{\bar\eta} U,\frac{U}{\bar\eta},U_t\big)(t)\Big|_2 +\int_0^t\Big|(r^m\rho_0)^\frac{1}{2}\big(D_{\bar\eta} U_t,\frac{U_t}{\bar\eta}\big)\Big|_2^2\,\mathrm{d}s\leq C\mathfrak{p}(c_0).
\end{equation*}
\end{Lemma}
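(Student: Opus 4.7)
\textbf{Proof proposal for Lemma \ref{c_0-c_1}.}

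The plan is to run the standard three-tier energy scheme for the linearized problem \eqref{lp}, exploiting the structural analogy with $\eqref{eq:VFBP-La-eta}_1$ that is now linear in $U$ with coefficients frozen at the background flow $(\bar U,\bar\eta)$. The key point is that on the short time scale $T_2=\min\{T_1,\mathfrak{p}(c_1)^{-1}\}$, all source terms involving $\bar U$ and its derivatives can be absorbed through Gr\"onwall, leaving only a bound by $C\mathfrak{p}(c_0)$ coming from the initial data $\cE(0,U)\leq c_0$.

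First, the zeroth-order estimate. Testing the weak form of \eqref{lp} against $U$ produces, after integration by parts (identical to the computation for $\mathrm{J}_1$ in Lemma \ref{lemma-basic energy} with $\eta\mapsto\bar\eta$),
\[
\frac{1}{2}\frac{\mathrm{d}}{\dt}\big|(r^m\rho_0)^\frac{1}{2}U\big|_2^2+2\mu\Big|(r^m\rho_0)^\frac{1}{2}\big(D_{\bar\eta}U,\tfrac{U}{\bar\eta}\big)\Big|_2^2
= A\!\int_0^1\!\tfrac{r^{\gamma m}\rho_0^\gamma}{(\bar\eta^m\bar\eta_r)^{\gamma-1}}\big(D_{\bar\eta}U+\tfrac{mU}{\bar\eta}\big)\,\mathrm{d}r.
\]
Using $\bar\varrho^{\gamma-1}\sim\rho_0^{\gamma-1}\sim\rho_0^\beta\cdot\rho_0^{\gamma-1-\beta}$ together with $\bar\Lambda\in L^\infty$ (Lemma~\ref{lemma-Lambda-guji}) and \eqref{jibenjiashe}, the right-hand side is bounded by $C\mathfrak{p}(c_0)+\tfrac{\mu}{2}|(r^m\rho_0)^{1/2}(D_{\bar\eta}U,U/\bar\eta)|_2^2$, yielding the $L^\infty_t L^2$ and dissipation bounds for $U$ itself.

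Second, the first-order (velocity-gradient) estimate. Testing \eqref{lp} against $U_t$ and handling the viscous terms as in the derivation of \eqref{dt-j1-j4} (but with $\eta\mapsto\bar\eta$, so that the time derivative produces harmless factors $D_{\bar\eta}\bar U,\bar U/\bar\eta$ rather than nonlinear ones) gives
\[
\mu\frac{\mathrm{d}}{\dt}\Big|(r^m\rho_0)^\frac{1}{2}\big(D_{\bar\eta}U,\tfrac{U}{\bar\eta}\big)\Big|_2^2+\big|(r^m\rho_0)^\frac{1}{2}U_t\big|_2^2
\leq C\Big|\big(D_{\bar\eta}\bar U,\tfrac{\bar U}{\bar\eta}\big)\Big|_\infty\Big|(r^m\rho_0)^\frac{1}{2}\big(D_{\bar\eta}U,\tfrac{U}{\bar\eta}\big)\Big|_2^2+C\mathfrak{p}(c_0),
\]
where the pressure contribution and the cubic self-interaction terms are handled via Lemma~\ref{lemma-Lambda-guji} and $\|(D_{\bar\eta}\bar U,\bar U/\bar\eta)\|_\infty\leq C\mathfrak{p}(c_1)$ (Lemma~\ref{lemma-useful1}). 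Since $t\mathfrak{p}(c_1)\leq 1$ on $[0,T_2]$, Gr\"onwall closes this bound at the level $C\mathfrak{p}(c_0)$.

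Third, the estimate for $U_t$. Here I would use the equation $\eqref{33151}$ already derived in Step~2 of \S\ref{subsection3.3}, with right-hand sides $\mathfrak{q}_1^{(1)},\mathfrak{q}_2^{(1)}$ in \eqref{q1-q2-1}. Testing against $U_t$ and integrating gives an identity of the form
\[
\frac{1}{2}\frac{\mathrm{d}}{\dt}\big|(r^m\rho_0)^\frac{1}{2}U_t\big|_2^2+2\mu\Big|(r^m\rho_0)^\frac{1}{2}\big(D_{\bar\eta}U_t,\tfrac{U_t}{\bar\eta}\big)\Big|_2^2
\leq C\mathfrak{p}(c_1)\big(\mathfrak{p}(c_0)+\big|(r^m\rho_0)^\frac{1}{2}U_t\big|_2^2\big),
\]
where every source is split into a ``$\bar U$-independent'' piece (bounded by $C\mathfrak{p}(c_0)$ via Lemma~\ref{lemma-Lambda-guji}) and a piece linear in $(D_{\bar\eta}\bar U,\bar U/\bar\eta,D_{\bar\eta}\bar U_t,\bar U_t/\bar\eta)$ (absorbable by Lemma~\ref{lemma-useful1} and the $L^2_t$ bound on $\bar\cD(\cdot,\bar U)^{1/2}$ given by \eqref{39}). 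The initial value $|(r^m\rho_0)^{1/2}U_t(0)|_2$ is controlled by $\cE(0,U)\leq c_0$ via the compatibility relation $\eqref{116}_1$ (cf.\ the identity used in Step~2 of \S\ref{subsection3.3}). Applying Gr\"onwall on $[0,T_2]$ and using $T_2\mathfrak{p}(c_1)\leq 1$ yields the $U_t$ bounds by $C\mathfrak{p}(c_0)$.

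The main obstacle will be the systematic tracking of pressure-type source terms, in particular those carrying $\varrho^{\gamma-1-\beta}\bar\Lambda$ weights when $\gamma-1-\beta$ can vanish; here the identity $\beta\leq\gamma-1$ from \eqref{beta} and the weighted controls on $\bar\Lambda$ in Lemma~\ref{lemma-Lambda-guji} must be combined with a careful split $\chi+\chi^\sharp$ in order to guarantee that every resulting integral stays finite and depends only on $c_0$, with the remaining $c_1$-dependent factors reduced to a time factor that is consumed by the restriction $t\leq\mathfrak{p}(c_1)^{-1}$.
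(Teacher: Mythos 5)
Your proposal follows essentially the same three-tier energy scheme as the paper's proof (Lemmas~\ref{c_0-c_1}'s Steps~1--2): test $\eqref{lp}_1$ against $U$ for the zeroth-order bound, test against $U_t$ for the gradient bound, then test the time-differentiated equation~\eqref{33151} against $U_t$, and close the three via Gr\"onwall on $[0,T_2]$ using $T_2\mathfrak{p}(c_1)\leq 1$. The method and key ingredients (Lemmas~\ref{lemma-useful1} and~\ref{lemma-Lambda-guji}, the bound $\bar\cD(\cdot,\bar U)\in L^1(0,T^*)$ from \eqref{39}, the compatibility identity~$\eqref{116}_1$ for $|(r^m\rho_0)^{1/2}U_t(0)|_2$) match those in the paper.

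Two small inaccuracies in your displays worth flagging. First, testing $\eqref{lp}_1$ against $U_t$ forces the pressure term through an integration by parts into $A\int\bar\eta^m\bar\eta_r\bar\varrho^\gamma(D_{\bar\eta}U_t+mU_t/\bar\eta)\,\mathrm{d}r$, so the resulting inequality must carry a term $\frac{\mu}{8}|(r^m\rho_0)^{1/2}(D_{\bar\eta}U_t,U_t/\bar\eta)|_2^2$ on the right, to be absorbed by the dissipation of the third tier; your first-order display omits this and is therefore slightly too strong as written. Second, in the third tier the sources $(\mathfrak{q}_1^{(1)},\mathfrak{q}_2^{(1)})$ from~\eqref{q1-q2-1} are controlled via~\eqref{est-q1-q2-1} by $|(r^m\rho_0)^{1/2}(D_{\bar\eta}U,U/\bar\eta)|_2$, not by $|(r^m\rho_0)^{1/2}U_t|_2$ as in your display; the coupling between the second and third tier runs through the gradient, and the combined Gr\"onwall argument closes because of this. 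Neither point changes the logic, but both need to appear in a complete write-up for the absorption to balance.
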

\begin{proof}
We divide the proof into two steps.

\smallskip
\textbf{1.} Multiplying  $\eqref{lp}_1$ by $U$ and integrating the resulting equation over $I$, 
then we obtain from \eqref{38}--\eqref{39} and the Young inequality that
\begin{equation*} 
\begin{aligned}
&\,\frac{1}{2}\frac{\mathrm{d}}{\dt}\big|(r^m\rho_0)^\frac{1}{2}U\big|_2^2+2\mu\big|(r^m\rho_0)^\frac{1}{2} D_{\bar\eta} U\big|_2^2+2\mu m\Big|(r^m\rho_0)^\frac{1}{2}\frac{U}{\bar\eta}\Big|_2^2\\
&= A \int_0^1 \bar\eta^m\bar\eta_r\bar\varrho^\gamma\big(D_{\bar\eta}U+\frac{mU}{\bar\eta}\big)\,\mathrm{d}r 
\leq  C\mathfrak{p}(c_0) +\frac{\mu}{8}\Big|(r^m\rho_0)^\frac{1}{2}\big(D_{\bar\eta} U,\frac{U}{\bar\eta}\big)\Big|_2^2.
\end{aligned}    
\end{equation*}
Integrating the above over $[0,t]$, along with \eqref{38}, implies that, for $t\in[0,T_1]$,
\begin{equation}\label{1}
\big|(r^m\rho_0)^\frac{1}{2}U(t)\big|_2^2+ \int_0^t\Big|(r^m\rho_0)^\frac{1}{2}\big(D_{\bar\eta} U,\frac{U}{\bar\eta}\big)\Big|_2^2\mathrm{d}s
\leq C(\bar\cE(0,U)+ \mathfrak{p}(c_0)t) \leq C\mathfrak{p}(c_0).
\end{equation}

\smallskip
\textbf{2.}
Multiplying  $\eqref{lp}_1$ by $U_t$ and integrating the resulting equation over $I$, we obtain from \eqref{38}--\eqref{39}, Lemma \ref{lemma-useful1}, and the Young inequality that
\begin{equation}\label{DU-t}
\begin{aligned}
&\,\mu\frac{\mathrm{d}}{\dt}\Big(\big|(r^m\rho_0)^\frac{1}{2} D_{\bar\eta} U\big|_2^2+m\Big|(r^m\rho_0)^\frac{1}{2}\frac{U}{\bar\eta}\Big|_2^2\Big)+\big|(r^m\rho_0)^\frac{1}{2}U_t\big|_2^2\\
&=-2\mu\int_0^1 r^m\rho_0\big(D_{\bar\eta}\bar U|D_{\bar\eta} U|^2+m\frac{\bar U |U|^2}{\bar\eta^3}\big)\,\mathrm{d}r + A \int_0^1 \bar\eta^m\bar\eta_r\bar\varrho^\gamma\big(D_{\bar\eta}U_t+\frac{mU_t}{\bar\eta}\big)\,\mathrm{d}r\\
&\leq  C\Big|\big(D_{\bar\eta}\bar U,\frac{\bar U}{\bar\eta}\big)\Big|_\infty\Big|(r^m\rho_0)^\frac{1}{2}\big(D_{\bar\eta}U,\frac{U}{\bar\eta}\big)\Big|_2^2+C\mathfrak{p}(c_0) +\frac{\mu}{8}\Big|(r^m\rho_0)^\frac{1}{2}\big(D_{\bar\eta} U_t,\frac{U_t}{\bar\eta}\big)\Big|_2^2\\
&\leq  C\mathfrak{p}(c_1)\Big(\Big|(r^m\rho_0)^\frac{1}{2}\big(D_{\bar\eta}\bar U,\frac{\bar U}{\bar\eta}\big)\Big|_2^2+1\Big) +\frac{\mu}{8}\Big|(r^m\rho_0)^\frac{1}{2}\big(D_{\bar\eta} U_t,\frac{U_t}{\bar\eta}\big)\Big|_2^2.
\end{aligned}    
\end{equation}

Next, recalling $(\mathfrak{q}_1^{(1)},\mathfrak{q}_2^{(1)})$ in \eqref{q1-q2-1}, we derive from \eqref{38}--\eqref{39} and Lemma \ref{lemma-useful1} that 
\begin{equation}\label{est-q1-q2-1}
\begin{aligned}
|(\mathfrak{q}_1^{(1)},\mathfrak{q}_2^{(1)})|_2&\leq C\Big(\Big|\big(D_{\bar\eta} \bar U,\frac{\bar U}{\bar\eta}\big)\Big|_\infty+ |\bar\varrho|_\infty^{\gamma-1}\Big)\Big|(r^m\rho_0)^\frac{1}{2}\big(D_{\bar\eta}U,\frac{U}{\bar\eta}\big)\Big|_2\\
&\leq C\mathfrak{p}(c_1)\Big|(r^m\rho_0)^\frac{1}{2}\big(D_{\bar\eta}U,\frac{U}{\bar\eta}\big)\Big|_2.
\end{aligned}
\end{equation}
Then multiplying \eqref{33151} by $U_t$ and integrating the resulting equation over $I$, combined with \eqref{est-q1-q2-1} and the  Young inequality, imply 
\begin{equation}\label{O-2}
\begin{aligned}
&\,\frac{1}{2}\frac{\mathrm{d}}{\dt}\big|(r^m\rho_0)^\frac{1}{2}U_t\big|_2^2+2\mu\big|(r^m\rho_0)^\frac{1}{2} D_{\bar\eta} U_t\big|_2^2+2\mu m\Big|(r^m\rho_0)^\frac{1}{2}\frac{U_t}{\bar\eta}\Big|_2^2\\
&=\int_0^1 (r^m\rho_0)^\frac{1}{2}\Big(\mathfrak{q}_1^{(1)}\frac{m U_t}{\bar\eta}+ \mathfrak{q}_2^{(1)}D_{\bar\eta}U_t\Big)\,\mathrm{d}r \\
&\leq C\mathfrak{p}(c_1)\Big|(r^m\rho_0)^\frac{1}{2}\big(D_{\bar\eta}U,\frac{U}{\bar\eta}\big)\Big|_2^2 +\frac{\mu}{8}\Big|(r^m\rho_0)^\frac{1}{2}\big(D_{\bar\eta}U_t,\frac{U_t}{\bar\eta}\big)\Big|_2^2,
\end{aligned}    
\end{equation}
which, along with \eqref{38}, \eqref{1}--\eqref{DU-t}, and the Gr\"onwall inequality, implies that, for all $0\leq t\leq  T_2=\min\{T_1, \mathfrak{p}(c_1)^{-1}\}$,
\begin{equation}\label{2}
\begin{aligned}
&\,\Big|(r^m\rho_0)^\frac{1}{2}\big(D_{\bar\eta} U,\frac{U}{\bar\eta}\big)(t)\Big|_2^2+\big|(r^m\rho_0)^\frac{1}{2}U_t(t)\big|_2^2\\
&+ \int_0^t\Big|(r^m\rho_0)^\frac{1}{2}\big(D_{\bar\eta} U_t,\frac{U_t}{\bar\eta}\big)\Big|_2^2\,\mathrm{d}s
\leq Ce^{C\mathfrak{p}(c_1)t}(\bar\cE(0,U)+ \mathfrak{p}(c_0)) \leq C\mathfrak{p}(c_0).
\end{aligned}
\end{equation}

This completes the proof.
\end{proof}

\begin{Lemma}\label{c_1}
For all $t\in[0,T_2]$,
\begin{equation*}
\Big|(r^m\rho_0)^\frac{1}{2}\big(D_{\bar\eta} U_t,\frac{U_t}{\bar\eta},\sqrt{t}U_{tt}\big)(t)\Big|_2 +\int_0^t \Big|(r^m\rho_0)^\frac{1}{2}\big(U_{tt},sD_{\bar\eta} U_{tt},s\frac{U_{tt}}{\bar\eta}\big)\Big|_2^2\,\mathrm{d}s\leq C\mathfrak{p}(c_0).
\end{equation*}
\end{Lemma}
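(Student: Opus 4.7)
The plan is to extend the energy method used in Lemma \ref{c_0-c_1} one step further in the time derivative. For the bounds on $(r^m\rho_0)^\frac{1}{2}(D_{\bar\eta}U_t, U_t/\bar\eta)$ in $L^\infty_tL^2$ and on $(r^m\rho_0)^\frac{1}{2}U_{tt}$ in $L^2_tL^2$, I would multiply \eqref{33151} (the equation for $U_t$) by $U_{tt}$, integrate over $I$, and reorganize the dissipation using $(\bar\eta_r)_t=\bar U_r$ and $\bar\eta_t=\bar U$ into the form
\begin{equation*}
\mu\frac{\mathrm{d}}{\mathrm{d}t}\Big(\big|(r^m\rho_0)^\frac{1}{2}D_{\bar\eta}U_t\big|_2^2+m\Big|(r^m\rho_0)^\frac{1}{2}\tfrac{U_t}{\bar\eta}\Big|_2^2\Big)+\big|(r^m\rho_0)^\frac{1}{2}U_{tt}\big|_2^2=\mathrm{RHS},
\end{equation*}
where the RHS collects commutators $\int r^m\rho_0D_{\bar\eta}\bar U|D_{\bar\eta}U_t|^2\,\mathrm{d}r$ and $\int r^m\rho_0(\bar U/\bar\eta^3)|U_t|^2\,\mathrm{d}r$ together with contributions from $(\mathfrak{q}_1^{(1)},\mathfrak{q}_2^{(1)})$. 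The commutators are bounded by $C\mathfrak{p}(c_1)|(r^m\rho_0)^\frac{1}{2}(D_{\bar\eta}U_t,U_t/\bar\eta)|_2^2$ via Lemma \ref{lemma-useful1}, while \eqref{est-q1-q2-1} combined with a Young inequality absorbs a quarter of $|(r^m\rho_0)^\frac{1}{2}U_{tt}|_2^2$ into the left-hand side and leaves a forcing of size $C\mathfrak{p}(c_1)^2|(r^m\rho_0)^\frac{1}{2}(D_{\bar\eta}U,U/\bar\eta)|_2^2$ already bounded via Lemma \ref{c_0-c_1}. The initial value $|(r^m\rho_0)^\frac{1}{2}(D_{\bar\eta}U_t,U_t/\bar\eta)(0)|_2^2$ is controlled by $\cE(0,U)\leq c_0$ through the compatibility formula for $U_t(0)$ derived from $\eqref{lp}_1$ at $t=0$. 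Gr\"onwall on $[0,T_2]$, exploiting $T_2\leq \mathfrak{p}(c_1)^{-1}$, then yields the first two target bounds.

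For the time-weighted bounds on $\sqrt{t}(r^m\rho_0)^\frac{1}{2}U_{tt}$ in $L^\infty_tL^2$ and on $s(r^m\rho_0)^\frac{1}{2}(D_{\bar\eta}U_{tt},U_{tt}/\bar\eta)$ in $L^2_tL^2$, I would invoke the weak formulation \eqref{weak-F-H} of Proposition \ref{prop-strong} applied to $w=U_t$ (so that the identity involves $U_{ttt}$ and the dissipation of $U_{tt}$), take the test function $\varphi=U_{tt}$, and multiply by $t^2$. This produces
\begin{equation*}
\frac{\mathrm{d}}{\mathrm{d}t}\big(t^2\big|(r^m\rho_0)^\frac{1}{2}U_{tt}\big|_2^2\big)+4\mu t^2\Big|(r^m\rho_0)^\frac{1}{2}\big(D_{\bar\eta}U_{tt},\tfrac{U_{tt}}{\bar\eta}\big)\Big|_2^2\leq 2t\big|(r^m\rho_0)^\frac{1}{2}U_{tt}\big|_2^2+t^2\cdot\mathrm{RHS},
\end{equation*}
where the RHS collects the crossing terms $r^m\rho_0D_{\bar\eta}\bar U\cdot D_{\bar\eta}U_t\cdot D_{\bar\eta}U_{tt}$ and $r^m\rho_0(\bar U/\bar\eta^3)U_tU_{tt}$ together with the contributions of $(\mathfrak{q}_1^{(1)})_t-\mathfrak{q}_1^{(1)}\bar U/\bar\eta$ and $(\mathfrak{q}_2^{(1)})_t-\mathfrak{q}_2^{(1)}D_{\bar\eta}\bar U$. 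Using the explicit formulas \eqref{q1-q2-1-t}--\eqref{q1-q2-1-ttt}, Lemmas \ref{lemma-useful1}--\ref{lemma-Lambda-guji}, and the first step of this lemma, one controls the RHS by
\begin{equation*}
C\mathfrak{p}(c_1)\big(1+\bar\cD(t,\bar U)\big)\Big(1+\big|(r^m\rho_0)^\frac{1}{2}(D_{\bar\eta}U_t,U_t/\bar\eta)\big|_2^2\Big)+\tfrac{\mu}{2}\Big|(r^m\rho_0)^\frac{1}{2}\big(D_{\bar\eta}U_{tt},\tfrac{U_{tt}}{\bar\eta}\big)\Big|_2^2,
\end{equation*}
absorbing the last summand. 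Integrating on $[\tau,t]$ and then selecting a sequence $\tau_k\to 0^+$ with $\tau_k|(r^m\rho_0)^\frac{1}{2}U_{tt}(\tau_k)|_2^2\to 0$ (which exists by Lemma \ref{bjr} applied to the $L^2_tL^2$ bound from the first step), I would pass to the limit exactly as in \eqref{tauk}--\eqref{tauk2} to obtain the two weighted bounds on $[0,T_2]$, using $T_2\leq \mathfrak{p}(c_1)^{-1}$ and $\bar\cD(t,\bar U)\in L^1(0,T_2)$ to close the time integral.

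I expect the main obstacle to be the $L^2$-control of $(\mathfrak{q}_j^{(1)})_t$, which by \eqref{q1-q2-1-t}--\eqref{q1-q2-1-ttt} couples $\bar U_t$ and $D_{\bar\eta}\bar U_t$ nonlinearly with $D_{\bar\eta}U$ and $U/\bar\eta$ under pressure weights. Since neither $\bar U_t$ nor $D_{\bar\eta}\bar U_t$ belongs to $L^\infty_tL^\infty_x$, a region-segmentation argument is required: near the origin one uses the interior bound $|\chi_a r^\frac{m}{4}(D_{\bar\eta}\bar U_t,\bar U_t/\bar\eta)|_4\leq C(a)(\mathfrak{p}(c_1)+\bar\cD(t,\bar U)^\frac{1}{2})$ together with a dual $L^4$ bound on $(D_{\bar\eta}U,U/\bar\eta)$ from the first step and Hardy-type inequalities; away from the origin one leverages $|\chi_a^\sharp\rho_0^\beta D_{\bar\eta}\bar U_t|_\infty\leq C(a)(\mathfrak{p}(c_1)+\mathfrak{p}(c_0)\bar\cD(t,\bar U)^\frac{1}{2})$ together with $\rho_0^\beta\sim 1-r$ to convert the vacuum weight into a Hardy-integrable singularity. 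This ultimately yields $|(\mathfrak{q}_j^{(1)})_t|_2\leq C\mathfrak{p}(c_1)+C\mathfrak{p}(c_0)\bar\cD(t,\bar U)^\frac{1}{2}F(t)$ with $F\in L^2(0,T_2)$, which is sufficient to close the Gr\"onwall step after multiplication by $t^2$.
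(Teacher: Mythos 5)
Your outline captures the right ingredients—test the $U_t$-equation against $U_{tt}$, reorganize the dissipation using $(\bar\eta_r)_t=\bar U_r$, control $(\mathfrak{q}_j^{(1)})_t$ by region segmentation, and use Lemma~\ref{bjr} to pass to the limit—but two steps do not close as written. In the unweighted step, you claim that \eqref{est-q1-q2-1} plus Young absorbs a quarter of $|(r^m\rho_0)^{1/2}U_{tt}|_2^2$ and leaves a bounded forcing. This fails: the $\mathfrak{q}_2^{(1)}$-forcing pairs with $D_{\bar\eta}U_{tt}$ (not $U_{tt}$), and at this order there is no dissipation of $D_{\bar\eta}U_{tt}$ to absorb it. The paper therefore integrates by parts \emph{in time} (see \eqref{I0*3}), rewriting $\int(r^m\rho_0)^{1/2}(\mathfrak{q}_1^{(1)}\tfrac{mU_{tt}}{\bar\eta}+\mathfrak{q}_2^{(1)}D_{\bar\eta}U_{tt})\,\mathrm{d}r$ as $\tfrac{\mathrm{d}}{\mathrm{d}t}\mathrm{I}_{**}(t)$ plus commutators in which $(\mathfrak{q}_j^{(1)})_t$ now pairs against $U_t/\bar\eta$ and $D_{\bar\eta}U_t$. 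Consequently the bounds \eqref{qq12-t-1}--\eqref{qq12-t-2} on $(\mathfrak{q}_j^{(1)})_t$ are needed \emph{already for this first step}, not only for the weighted one, and the resulting forcing is not your stated $C\mathfrak{p}(c_1)^2|(r^m\rho_0)^{1/2}(D_{\bar\eta}U,U/\bar\eta)|_2^2$, but rather $C\mathfrak{p}(c_0)\bar\cD(t,\bar U)^{1/2}\cF(t)^{1/2}$, with $\cF(t):=|(r^m\rho_0)^{1/2}D_{\bar\eta}U_t|_2^2+m|(r^m\rho_0)^{1/2}U_t/\bar\eta|_2^2$. A naive Gr\"onwall does not close this sub-linear term; the paper sets $\cY(t):=C\mathfrak{p}(c_0)\big(1+\int_0^t\bar\cD^{1/2}\cF^{1/2}\big)$, observes $\cF\le\cY$ and $(\cY^{1/2})'\lesssim\mathfrak{p}(c_0)\bar\cD^{1/2}$, and concludes $\cY^{1/2}\le C\mathfrak{p}(c_0)$ from $\bar\cD(\cdot,\bar U)\in L^1(0,T_2)$.

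In the weighted step, multiplying by $t^2$ is the wrong power: it yields $t^2|(r^m\rho_0)^{1/2}U_{tt}(t)|_2^2\le C\mathfrak{p}(c_0)$, i.e.\ boundedness of $t|U_{tt}|_2$, which is strictly weaker than the target $\sqrt{t}|(r^m\rho_0)^{1/2}U_{tt}|_2\le C\mathfrak{p}(c_0)$ that the lemma asserts and that Lemma~\ref{c_3} uses downstream. The paper multiplies \eqref{O-4} by $t$, getting
$\tfrac{\mathrm{d}}{\mathrm{d}t}\big(t|(r^m\rho_0)^{1/2}U_{tt}|_2^2\big)+\mu t\big|(r^m\rho_0)^{1/2}(D_{\bar\eta}U_{tt},U_{tt}/\bar\eta)\big|_2^2\le C\mathfrak{p}(c_1)$,
from which both the $\sqrt{t}U_{tt}$ bound and $\int_0^t s|\cdot|_2^2\,\mathrm{d}s\le C\mathfrak{p}(c_0)$ follow after the $\tau_k\to0$ argument; the $\int_0^t s^2|\cdot|_2^2\,\mathrm{d}s$ bound in the statement is then immediate since $s\le T_2\le \mathfrak{p}(c_1)^{-1}\le1$. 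Replacing $t^2$ by $t$, and folding the time-derivative trick together with the $(\mathfrak{q}_j^{(1)})_t$ control and the $\cY$-type Gr\"onwall into the first step, would bring your argument into line with the paper's proof.
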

\begin{proof}
We divide the proof into three steps.

\smallskip
\textbf{1.} First, it follows from \eqref{new-lp} and \eqref{bar-Lambda} that
\begin{equation}\label{new-lp*}
D_{\bar\eta}\big(D_{\bar\eta} U+ \frac{mU}{\bar\eta}\big)=\frac{1}{2\mu} U_t-\frac{1}{\beta}\frac{\bar\Lambda}{\bar\varrho^\beta} D_{\bar\eta} U+\frac{A\gamma}{2\mu \beta}\bar\varrho^{\gamma-1-\beta}\bar\Lambda.
\end{equation}
Then, due to the fact that $\rho_0^\beta\sim 1-r$, \eqref{38}, and Lemmas \ref{im-1} and \ref{lemma-Lambda-guji}--\ref{c_0-c_1}, we see that, for all $t\in [0,T_2]$, 
\begin{equation}\label{1197}
\begin{aligned}
&\,\Big|\zeta r^\frac{m}{2}\big(D_{\bar\eta}^2 U,D_{\bar\eta}(\frac{U}{\bar\eta})\big)\Big|_2\leq C\Big|\zeta r^\frac{m}{2} D_{\bar\eta}\big(D_{\bar\eta} U+ \frac{mU}{\bar\eta}\big)\Big|_2\\
&\leq C\big(|\zeta r^\frac{m}{2}U_t|_2+(|\zeta r^\frac{m}{2}D_{\bar\eta} U|_2+ |\zeta r^\frac{m}{2}\bar\varrho^{\gamma-1-\beta}|_2)|\bar\Lambda|_\infty\big)\leq C\mathfrak{p}(c_0),   
\end{aligned}
\end{equation}
which, along with Lemma \ref{hardy-inequality}, also leads to
\begin{equation}\label{11123}
\Big|\zeta r^\frac{m}{4}\big(D_{\bar\eta} U,\frac{U}{\bar\eta}\big)\Big|_4\leq\Big|r^\frac{m+3}{4}\big(\zeta D_{\bar\eta} U,\zeta \frac{U}{\bar\eta},\zeta_r D_{\bar\eta} U,\zeta_r \frac{U}{\bar\eta},\zeta D_{\bar\eta}^2 U,\zeta D_{\bar\eta}(\frac{U}{\bar\eta})\big)\Big|_2 \leq C\mathfrak{p}(c_0).   
\end{equation}

Next, using \eqref{1}, \eqref{2}, and the same argument as in the proof of Lemma \ref{lemma-time-space}, we obtain that, for all $t\in [0,T_2]$, 
\begin{equation}\label{xingxing}
\big|\chi^\sharp\rho_0^\frac{1-\beta}{2}D_{\bar\eta} U\big|_\infty \leq C\mathfrak{p}(c_0)\big(1+  \big|\chi^\sharp\rho_0^{\frac{1}{2} }(U,D_{\bar\eta} U,U_{t})\big|_2\big)\leq C\mathfrak{p}(c_0).
\end{equation}
Of course, based on \eqref{2}, we also derive from \eqref{est-q1-q2-1} that, for all $t\in [0,T_1]$, 
\begin{equation}\label{qq12}
|(\mathfrak{q}_1^{(1)},\mathfrak{q}_2^{(1)})|_2\leq C\mathfrak{p}(c_0).    
\end{equation}

Finally, recall $((\mathfrak{q}_1^{(1)})_t,(\mathfrak{q}_2^{(1)})_t)$ in \eqref{q1-q2-1-t}--\eqref{q1-q2-1-ttt}. It then follows from \eqref{38}--\eqref{39}, \eqref{11123}, Lemmas \ref{lemma-useful1} and \ref{c_0-c_1}, and the H\"older inequality that 
\begin{align}
& \ \begin{aligned}\label{qq12-t-1}
\big|\chi((\mathfrak{q}_1^{(1)})_t,(\mathfrak{q}_2^{(1)})_t)\big|_2&\leq \mathfrak{p}(c_0) \Big|\chi r^\frac{m}{4}\big(D_{\bar\eta}U,\frac{U}{\bar\eta}\big)\Big|_4\Big|\chi r^\frac{m}{4}\big(D_{\bar\eta}\bar U_t,\frac{\bar U_t}{\bar\eta}\big)\Big|_4\\
&\quad +\mathfrak{p}(c_0)\Big|\chi r^\frac{m}{2}\big(D_{\bar\eta}U_t,\frac{U_t}{\bar\eta}\big)\Big|_2\Big|\big(D_{\bar\eta}\bar U,\frac{\bar U}{\bar\eta}\big)\Big|_\infty\\
&\quad +\mathfrak{p}(c_0)\Big|\chi r^\frac{m}{2}\big(D_{\bar\eta}U,\frac{U}{\bar\eta}\big)\Big|_2\Big|\big(D_{\bar\eta}\bar U,\frac{\bar U}{\bar\eta}\big)\Big|_\infty^2 \\
&\leq  C\mathfrak{p}(c_0)\Big(\mathfrak{p}(c_1)+\bar\cD(t,\bar U)^\frac{1}{2}+\Big|(r^m\rho_0)^\frac{1}{2}\big(D_{\bar\eta}U_t,\frac{U_t}{\bar\eta}\big)\Big|_2\Big),
\end{aligned}\\
&\begin{aligned}\label{qq12-t-2}
\big|\chi^\sharp((\mathfrak{q}_1^{(1)})_t,(\mathfrak{q}_2^{(1)})_t)\big|_2&\leq C\big(\mathfrak{p}(c_0)+ |\chi^\sharp\rho_0^\frac{1}{2} U|_2(|\bar U_t|_\infty+|\bar U|_\infty^2)+\big|\chi^\sharp\rho_0^\frac{1}{2}D_{\bar\eta} U\big|_2|D_{\bar\eta} \bar U|_\infty^2\big)\\
&\quad+ C\big|\chi^\sharp\rho_0^\frac{1}{2}(U_t,D_{\bar\eta} U_t)\big|_2|(\bar U,D_{\bar\eta} \bar U)|_\infty\\
&\quad + C\big|\chi^\sharp\rho_0^\frac{\beta}{2}D_{\bar\eta}\bar U_t\big|_2\big|\chi^\sharp\rho_0^\frac{1-\beta}{2}D_{\bar\eta}U\big|_\infty\\
&\leq  C\mathfrak{p}(c_0)\big(\mathfrak{p}(c_1)+\bar\cD(t,\bar U)^\frac{1}{2}+\big|(r^m\rho_0)^\frac{1}{2}D_{\bar\eta} U_t\big|_2\big).
\end{aligned}
\end{align}

\smallskip
\textbf{2.} Now, multiplying \eqref{33151} by $U_{tt}$ and integrating over $I$, we have
\begin{equation}\label{I0*3}
\begin{aligned}
&\ \mu\frac{\mathrm{d}}{\mathrm{d}t}\Big(\underline{\big|(r^m\rho_0)^\frac{1}{2} D_{\bar\eta} U_t\big|_2^2+ m\Big|(r^m\rho_0)^\frac{1}{2}\frac{U_t}{\bar\eta}\Big|_2^2}_{:=\cF(t)}\Big)+\big|(r^m\rho_0)^\frac{1}{2}U_{tt}\big|_2^2\\
&=\underline{-2\mu\int_0^1 r^m\rho_0\big(D_{\bar\eta}\bar U|D_{\bar\eta} U_t|^2+m\frac{\bar U |U_t|^2}{\bar\eta^3}\big)\,\mathrm{d}r}\\
&\quad\,\, \underline{+\int_0^1 (r^m\rho_0)^\frac{1}{2}\Big(\big(\mathfrak{q}_1^{(1)}\frac{\bar U}{\bar\eta}-(\mathfrak{q}_1^{(1)})_t\big)\frac{m U_t}{\bar\eta}+(\mathfrak{q}_2^{(1)}D_{\bar\eta}\bar U-(\mathfrak{q}_2^{(1)})_t) D_{\bar\eta} U_t\big)\Big)\,\mathrm{d}r}_{:=\mathrm{I}_4}\\
&\quad\,\, +\frac{\mathrm{d}}{\mathrm{d}t}\underline{\int_0^1 (r^m\rho_0)^\frac{1}{2}\big(\mathfrak{q}_1^{(1)}\frac{m U_t}{\bar\eta}+\mathfrak{q}_2^{(1)}D_{\bar\eta} U_t\big)\,\mathrm{d}r}_{:=\mathrm{I}_{**}(t)}.
\end{aligned}
\end{equation}
For $\mathrm{I}_4$, it follows from \eqref{qq12}--\eqref{qq12-t-2}, Lemma \ref{lemma-useful1}, and the H\"older and Young inequalities that
\begin{equation}\label{I1*3}
\begin{aligned}
\mathrm{I}_4&\leq C\Big(\Big|\big(D_{\bar\eta}\bar U,\frac{\bar U}{\bar\eta}\big)\Big|_\infty\cF(t)^\frac{1}{2}\! +\!\Big|\big(D_{\bar\eta}\bar U,\frac{\bar U}{\bar\eta}\big)\Big|_\infty|(\mathfrak{q}_1^{(1)},\mathfrak{q}_2^{(1)})|_2\!+|((\mathfrak{q}_1^{(1)})_t,(\mathfrak{q}_2^{(1)})_t)|_2\Big)\cF(t)^\frac{1}{2}\\
&\leq C\mathfrak{p}(c_1)(1+\cF(t))+C\mathfrak{p}(c_0)\bar\cD(t,\bar U)^\frac{1}{2}\cF(t)^\frac{1}{2},
\end{aligned}
\end{equation}
and $\mathrm{I}_{**}(t)$ can be handled by
\begin{equation}\label{I*3}
|\mathrm{I}_{**}(t)|\leq C\varepsilon^{-1}\mathfrak{p}(c_0)+\varepsilon\cF(t)\qquad\text{for all $\varepsilon\in(0,1)$}.
\end{equation}
Thus, substituting \eqref{I1*3}--\eqref{I*3} into \eqref{I0*3} with $\varepsilon$ sufficiently small, we can deduce from the Gr\"onwall inequality that, for all $t\in[0,T_2]$,
\begin{equation*}
\cF(t)+\int_0^t \big|(r^m\rho_0)^\frac{1}{2}U_{tt}\big|_2^2\,\mathrm{d}s\leq C\mathfrak{p}(c_0)\Big(1+\int_0^t \bar\cD(s,\bar U)^\frac{1}{2}\cF(s)^\frac{1}{2}\,\mathrm{d}s\Big).
\end{equation*}

To further simplified the above inequality, define
\begin{equation*}
\cY(t)=C\mathfrak{p}(c_0)\Big(1+\int_0^t \bar\cD(s,\bar U)^\frac{1}{2}\cF(s)^\frac{1}{2}\,\mathrm{d}s\Big).
\end{equation*}
Then $\cF(t)\leq \cY(t)$ and 
\begin{equation*}
\cY'(t)=C\mathfrak{p}(c_0) \bar\cD(t,\bar U)^\frac{1}{2}\cF(t)^\frac{1}{2} \leq C\mathfrak{p}(c_0) \bar\cD(t,\bar U)^\frac{1}{2}\cY(t)^\frac{1}{2}.
\end{equation*}
Clearly, this, together with \eqref{39}, implies that, for all $t\in[0,T_2]$, 
\begin{equation*}
\cY^\frac{1}{2}(t)\leq \cY^\frac{1}{2}(0)+C\mathfrak{p}(c_0)\int_0^t\bar\cD(s,\bar U)^\frac{1}{2}\,\mathrm{d}s\leq C\mathfrak{p}(c_0)^\frac{1}{2}+C\mathfrak{p}(c_0)(c_1t)^\frac{1}{2}\leq C\mathfrak{p}(c_0),
\end{equation*}
which yields that, for all $t\in[0,T_2]$,
\begin{equation}\label{11130}
\Big|(r^m\rho_0)^\frac{1}{2}\big(D_{\bar\eta} U_{t},\frac{U_{t}}{\bar\eta}\big)(t)\Big|_2+\int_0^t \big|(r^m\rho_0)^\frac{1}{2}U_{tt}\big|_2^2\,\mathrm{d}s\leq C\mathfrak{p}(c_0) .
\end{equation}

\smallskip
\textbf{3.} Since we have shown that $U$ satisfies \eqref{Energy-Id} with $(w,\mathfrak{q}_1,\mathfrak{q}_2)$ replaced by $(U_t,\mathfrak{q}_1^{(1)},\mathfrak{q}_2^{(1)})$ in Step 4 of \S\ref{subsection3.3}, we have
\begin{equation}\label{O-4}
\begin{aligned}
&\,\frac{1}{2}\frac{\mathrm{d}}{\dt}\big|(r^m\rho_0)^\frac{1}{2}U_{tt}\big|_2^2+2\mu\big|(r^m\rho_0)^\frac{1}{2} D_{\bar\eta} U_{tt}\big|_2^2+2\mu m\Big|(r^m\rho_0)^\frac{1}{2}\frac{U_{tt}}{\bar\eta}\Big|_2^2\\
&=\underline{4\mu \int_0^1 r^m\rho_0 \Big(D_{\bar\eta}\bar UD_{\bar\eta} U_{t}D_{\bar\eta} U_{tt}+m \frac{\bar U U_{t}  U_{tt}}{\bar\eta^3}\Big)\,\mathrm{d}r}_{:=\mathrm{I}_5}\\
&\quad +\underline{\int_0^1(r^m\rho_0)^\frac{1}{2} \Big(\big((\mathfrak{q}_2^{(1)})_t- \mathfrak{q}_2^{(1)}D_{\bar\eta}\bar U\big) D_{\bar\eta}U_{tt} +m \big((\mathfrak{q}_1^{(1)})_t- \mathfrak{q}_1^{(1)} \frac{\bar U}{\bar\eta}\big) \frac{U_{tt}}{\bar\eta}\Big) \,\mathrm{d}r}_{:=\mathrm{I}_6},
\end{aligned}    
\end{equation}
where, by using \eqref{qq12}--\eqref{qq12-t-2}, \eqref{11130}, Lemma \ref{lemma-useful1}, the H\"older and Young inequalities, 
\begin{align}
&\begin{aligned}\label{i5}
\mathrm{I}_5&\leq C\Big|\big(D_{\bar\eta}\bar U,\frac{\bar U}{\bar\eta}\big)\Big|_\infty\Big|(r^m\rho_0)^\frac{1}{2}\big(D_{\bar\eta} U_{t},\frac{U_{t}}{\bar\eta}\big)\Big|_2\Big|(r^m\rho_0)^\frac{1}{2}\big(D_{\bar\eta} U_{tt},\frac{U_{tt}}{\bar\eta}\big)\Big|_2\\
&\leq C\mathfrak{p}(c_1)+\frac{\mu}{100}\Big|(r^m\rho_0)^\frac{1}{2}\big(D_{\bar\eta} U_{tt},\frac{U_{tt}}{\bar\eta}\big)\Big|_2^2,
\end{aligned}\\
&\begin{aligned}\label{i6}
\mathrm{I}_6&\leq C\Big(\Big|\big(D_{\bar\eta}\bar U,\frac{\bar U}{\bar\eta}\big)\Big|_\infty|(\mathfrak{q}_1^{(1)},\mathfrak{q}_2^{(1)})|_2\!+|((\mathfrak{q}_1^{(1)})_t,(\mathfrak{q}_2^{(1)})_t)|_2\Big)\Big|(r^m\rho_0)^\frac{1}{2}\big(D_{\bar\eta} U_{tt},\frac{U_{tt}}{\bar\eta}\big)\Big|_2\\
&\leq C(\mathfrak{p}(c_1)+\bar\cD(t,\bar U))+\frac{\mu}{100}\Big|(r^m\rho_0)^\frac{1}{2}\big(D_{\bar\eta} U_{tt},\frac{U_{tt}}{\bar\eta}\big)\Big|_2^2.
\end{aligned}
\end{align}

Thus, plugging \eqref{i5}--\eqref{i6} into \eqref{O-4}, we obtain from \eqref{39} that
\begin{equation*}
\begin{aligned}
&\,\frac{\mathrm{d}}{\dt}\big(t\big|(r^m\rho_0)^\frac{1}{2}U_{tt}\big|_2^2\big)+\mu t\Big|(r^m\rho_0)^\frac{1}{2}\big(D_{\bar\eta} U_{tt},\frac{U_{tt}}{\bar\eta}\big)\Big|_2^2\\
&\leq Ct(\mathfrak{p}(c_1)+\bar\cD(t,\bar U))\leq C\mathfrak{p}(c_1),
\end{aligned}
\end{equation*}
which, along with the same argument as for \eqref{tauk}--\eqref{tauk2} 
in \S\ref{Section-globalestimates}, yields the desired results.
\end{proof}

\begin{Lemma}\label{c_1-c_2}
For any $t\in [0,T_2]$,
\begin{equation}
\bar\cE(t,U)+\Big|(U,D_{\bar\eta}U,\frac{U}{\bar\eta}\big)(t)\Big|_\infty\leq C\mathfrak{p}(c_0).
\end{equation}
\end{Lemma}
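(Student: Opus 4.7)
\medskip

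The plan is to combine the tangential estimates already obtained in Lemmas \ref{c_0-c_1}--\ref{c_1} with elliptic-type arguments based on the reformulated equation \eqref{new-lp*}, in direct analogy with the interior/exterior decomposition carried out for the nonlinear problem in \S\ref{Section-globalestimates}, but now in the much simpler setting where hypothesis \eqref{39} already furnishes the uniform bounds $(\bar\eta_r,\bar\eta/r)\in[\tfrac12,\tfrac32]$ and Lemma \ref{lemma-Lambda-guji} controls $\bar\Lambda$ and its derivatives. The quantity $\bar\cE(t,U)$ naturally splits as $\bar\cE_{\mathrm{in}}+\bar\cE_{\mathrm{ex}}$, so I would bound each part separately and finish with the $L^\infty$ bound via the Sobolev embeddings in Lemmas \ref{sobolev-embedding}--\ref{hardy-inequality}.

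First, for the interior part, I would recover the second-order control $|\zeta r^{m/2}(D_{\bar\eta}^2U,D_{\bar\eta}(U/\bar\eta))|_2\leq C\mathfrak{p}(c_0)$ directly from identity \eqref{1197} (this step is already done in Lemma \ref{c_1}). For the third-order interior control, I would apply $D_{\bar\eta}$ and $\bar\eta^{-1}D_{\bar\eta}$ to equation \eqref{new-lp*} respectively to obtain two identities of the form
\begin{equation*}
D_{\bar\eta}^2\bigl(D_{\bar\eta}U+\tfrac{mU}{\bar\eta}\bigr)=\tfrac{1}{2\mu}D_{\bar\eta}U_t+\text{(lower-order terms in }U,\bar\Lambda,\bar\varrho),
\end{equation*}
\begin{equation*}
D_{\bar\eta}\Bigl(\tfrac{1}{\bar\eta}D_{\bar\eta}\bigl(D_{\bar\eta}U+\tfrac{mU}{\bar\eta}\bigr)\Bigr)=\tfrac{1}{2\mu}D_{\bar\eta}\bigl(\tfrac{U_t}{\bar\eta}\bigr)+\text{(lower-order terms)},
\end{equation*}
exactly as in the derivation of \eqref{proof1}. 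Taking $L^2_{\zeta r^{m/2}}$-norms and invoking Lemmas \ref{lemma-useful1}, \ref{lemma-Lambda-guji}, \ref{c_0-c_1}, and \ref{c_1}, combined with the geometric identity $|\zeta r^{m/2}(D_{\bar\eta}^3U,D_{\bar\eta}^2(U/\bar\eta),\bar\eta^{-1}D_{\bar\eta}(U/\bar\eta))|_2\sim|\zeta r^{m/2}(\cdots)|_2$ from Lemma \ref{im-1}, yields the remaining interior estimates.

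For the exterior part, I would first derive the weighted second-order bound $|\chi^\sharp\rho_0^{(3/2-\varepsilon_0)\beta}D_{\bar\eta}^2U|_2\leq C\mathfrak{p}(c_0)$ by multiplying \eqref{new-lp*} by $\rho_0^{(3/2-\varepsilon_0)\beta}$ and using the a priori tangential bound on $\rho_0^{1/2}U_t$ from Lemma \ref{c_0-c_1} together with Lemma \ref{lemma-Lambda-guji}; the two key inequalities
\begin{equation*}
\rho_0^{(3/2-\varepsilon_0)\beta}\tfrac{\bar\Lambda}{\bar\varrho^\beta}D_{\bar\eta}U\in L^2,\qquad \rho_0^{(3/2-\varepsilon_0)\beta+\gamma-1-\beta}\bar\Lambda\in L^2
\end{equation*}
follow because $(3/2-\varepsilon_0)\beta-\beta>-\beta/2$ and $\gamma-1-\beta\geq 0$. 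For the third-order exterior estimate, I would mirror Lemma \ref{lemma-u-D3-ell}: apply $D_{\bar\eta}$ to \eqref{new-lp*}, multiply by $\rho_0^{(3/2-\varepsilon_0)\beta}$, and run a Gr\"onwall argument in $t$ to absorb the term involving $D_{\bar\eta}\bar\Lambda$ (handled by the second inequality of Lemma \ref{lemma-Lambda-guji}). Finally, the $L^\infty$ bounds for $(U,D_{\bar\eta}U,U/\bar\eta)$ follow from the Sobolev embedding $W^{1,1}\hookrightarrow L^\infty$, the Hardy inequality, and the weighted $H^1$--type bounds on the interior and exterior pieces just obtained (\emph{cf.} \eqref{in-ell-3}, \eqref{Ur-infty-high}, \eqref{xingxing}).

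The main obstacle will be the third-order exterior estimate: the cubic term $\bar\varrho^{\gamma-1-3\beta}\bar\Lambda^3$ appearing after differentiating the pressure contribution, which was the critical point of Lemma \ref{lemma-u-D3-ell}, is borderline when $\beta=\gamma-1$ (it vanishes identically in that case) and requires the precise choice $\varepsilon_0<(\gamma-1)/\beta-1$ from \eqref{varepsilon0} in the strict range $\beta\in(1/3,\gamma-1)$. All other terms are handled by routine weighted H\"older estimates using Lemma \ref{lemma-Lambda-guji}, the uniform bound $|\bar\Lambda|_\infty\leq C\mathfrak{p}(c_0)$, and the fact that on $[0,T_2]$ the smallness of $t\leq\mathfrak{p}(c_1)^{-1}$ keeps all nonlinear constants under $C\mathfrak{p}(c_0)$ rather than $C\mathfrak{p}(c_1)$.
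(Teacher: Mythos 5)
Your overall plan matches the paper's: split $\bar\cE(t,U)$ into interior and exterior parts, use \eqref{new-lp*} and its derivatives together with Lemma \ref{im-1} and Lemmas \ref{lemma-useful1}--\ref{c_1} for the interior, then mirror the nonlinear exterior elliptic machinery and close the $L^\infty$ estimate via the $W^{1,1}\hookrightarrow L^\infty$ embedding and Hardy's inequality. However, the exterior part as you describe it has a genuine gap. You propose to get the second-order estimate directly with the weight $\rho_0^{(3/2-\varepsilon_0)\beta}$ and then handle the third order by ``routine weighted H\"older estimates'' plus Gr\"onwall, treating the cubic term $\bar\varrho^{\gamma-1-3\beta}\bar\Lambda^3$ as the only delicate point. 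That is not enough: after applying $D_{\bar\eta}$ to \eqref{new-lp*}, the term $\bar\varrho^{-2\beta}\bar\Lambda^2 D_{\bar\eta}U$ appears (analogue of $\mathrm{J}_{24}$ in Lemma \ref{lemma-u-D3-ell}), and weighting it by $\rho_0^{(3/2-\varepsilon_0)\beta}$ leaves $\rho_0^{-(1/2+\varepsilon_0)\beta}\bar\Lambda^2 D_{\bar\eta}U$. Using $|\bar\Lambda|_\infty\leq C\mathfrak{p}(c_0)$ and Lemma \ref{lemma-Lambda-guji} to absorb one factor of $\bar\Lambda$ still demands $\rho_0^{-\varepsilon_0\beta-\sigma}D_{\bar\eta}U\in L^\infty$, which the bound $|\chi^\sharp\rho_0^{(1-\beta)/2}D_{\bar\eta}U|_\infty<\infty$ from \eqref{xingxing} only supplies when $\beta>1$; for $\beta\leq 1$ (in particular the shallow water case $\beta\leq 1=\gamma-1$) it fails.

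What the paper does, and what your proposal omits, is the intermediate Step 2.2: first establish the \emph{refined} weighted estimates $|\chi^\sharp\rho_0^{-(1/2+\varepsilon_0)\beta}D_{\bar\eta}U|_2\leq C\mathfrak{p}(c_0)$ and $|\chi^\sharp\rho_0^{(1/2-\varepsilon_0)\beta}D_{\bar\eta}^2 U|_2\leq C\mathfrak{p}(c_0)$. These are \emph{not} obtained by weight counting on \eqref{new-lp*}; they come from the integral representation \eqref{cal-1'} (the linearized analogue of Lemma \ref{lemma-time-space}), which trades the low weight on $D_{\bar\eta}U$ for a higher weight on $(U,D_{\bar\eta}U,U_t)$ that is controlled by $\bar\cE_{\mathrm{ex}}$. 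Once these refined bounds are in hand, the term $\rho_0^{(3/2-\varepsilon_0)\beta}\bar\varrho^{-2\beta}\bar\Lambda^2 D_{\bar\eta}U$ and the term $\rho_0^{(3/2-\varepsilon_0)\beta}\bar\varrho^{-\beta}\bar\Lambda D_{\bar\eta}^2 U$ fall out, and the Gr\"onwall closure proceeds as in Lemma \ref{lemma-u-D3-ell}. A minor secondary issue: your stated ``key inequality'' $(3/2-\varepsilon_0)\beta-\beta>-\beta/2$ is a tautology; the condition that actually drives the weight distribution (between $\rho_0^{-\beta/2+\sigma}\bar\Lambda$ from Lemma \ref{lemma-Lambda-guji} and $\rho_0^{(1-\beta)/2}D_{\bar\eta}U$ from \eqref{xingxing}) is $(3/2-\varepsilon_0)\beta>1/2$, precisely the constraint encoded in \eqref{varepsilon0}.
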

\begin{proof}
We divide the proof into two steps.

\smallskip
\textbf{1. Boundedness of $\bar\cE_{\mathrm{in}}(t,U)$.} It only remains to establish the third-order elliptic estimate for $U$ near the origin. First, it follows from \eqref{new-lp} that
\begin{equation}\label{new-lp*3}
\begin{aligned}
D_{\bar\eta}^2\big(D_{\bar\eta} U+ \frac{mU}{\bar\eta}\big)&=\frac{1}{2\mu}D_{\bar\eta} U_t-\frac{1}{\beta}\frac{D_{\bar\eta}\bar\Lambda}{\bar\varrho^\beta} D_{\bar\eta} U+\frac{1}{\beta}\frac{\bar\Lambda^2}{\bar\varrho^{2\beta}} D_{\bar\eta} U-\frac{1}{\beta}\frac{\bar\Lambda}{\bar\varrho^\beta} D_{\bar\eta}^2 U\\
&\quad +\frac{A\gamma(\gamma-1-\beta)}{2\mu \beta^2}\bar\varrho^{\gamma-1-2\beta}\bar\Lambda^2+\frac{A\gamma}{2\mu \beta}\bar\varrho^{\gamma-1-\beta}D_{\bar\eta}\bar\Lambda. 
\end{aligned}
\end{equation}
Then it follows from the fact that $\rho_0^\beta\sim 1-r$, \eqref{38}, and Lemmas \ref{im-1} and \ref{lemma-Lambda-guji}--\ref{c_0-c_1} that, for all $t\in [0,T_2]$, 
\begin{equation}\label{11112}
\begin{aligned}
&\,\Big|\zeta r^\frac{m}{2}\big(D_{\bar\eta}^3 U,D_{\bar\eta}^2(\frac{U}{\bar\eta}),\frac{1}{\bar\eta}D_{\bar\eta}(\frac{U}{\bar\eta})\big)\Big|_2\\
&\leq C\Big|\zeta r^\frac{m}{2} D_{\bar\eta}^2\big(D_{\bar\eta} U+ \frac{mU}{\bar\eta}\big)\Big|_2+\Big|\zeta r^\frac{m}{2} \frac{1}{\bar\eta}D_{\bar\eta}\big(D_{\bar\eta} U+ \frac{mU}{\bar\eta}\big) \Big|_2\\
&\leq C\Big(\Big|\zeta r^\frac{m}{2}\big(D_{\bar\eta}U_t,\frac{\bar U_t}{\bar\eta}\big)\Big|_2+\Big|\zeta_\frac{5}{8} r^\frac{m}{4}\big(D_{\bar\eta}\bar\Lambda,\frac{\bar\Lambda}{\bar\eta}\big)\Big|_4|\zeta r^\frac{m}{4} D_{\bar\eta} U|_4+|\bar\Lambda|_\infty\big|\zeta r^\frac{m}{2}D_{\bar\eta} U\big|_2\Big)\\
&\quad +C\Big(|\Lambda|_\infty^2\big|\zeta r^\frac{m}{2}(D_{\bar\eta}^2 U,\bar\varrho^{\gamma-1-2\beta})\big|_2+\Big|\zeta r^\frac{m}{2}\big(D_{\bar\eta}\bar\Lambda,\frac{\bar\Lambda}{\bar\eta}\big)\Big|_2|\bar\varrho|_\infty^{\gamma-1-\beta}\Big)\leq C\mathfrak{p}(c_0).   
\end{aligned}
\end{equation}

\smallskip
\textbf{2. Boundedness of $\bar\cE_{\mathrm{ex}}(t,U)$.} It only remains to establish the second- and third-order elliptic estimates for $U$ away from the origin.

\smallskip
\textbf{2.1.} First, rewrite \eqref{new-lp*} as
\begin{equation}\label{new-lp**}
\begin{aligned}
D_{\bar\eta}^2 U=\frac{1}{2\mu} U_t-mD_{\bar\eta}
\big(\frac{U}{\bar\eta}\big)-\frac{1}{\beta}\frac{\bar\Lambda}{\bar\varrho^\beta} D_{\bar\eta} U+\frac{A\gamma}{2\mu \beta}\bar\varrho^{\gamma-1-\beta}\bar\Lambda.
\end{aligned}
\end{equation}
Then it follows from the above, \eqref{38}, and Lemmas \ref{lemma-Lambda-guji}--\ref{c_0-c_1} that, for all $\varepsilon>0$,
\begin{equation}
\begin{aligned}
\big|\chi^\sharp \rho_0^{\frac{1}{2}+\varepsilon}D_{\bar\eta}^2 U\big|_2&\leq C\big|\chi^\sharp \rho_0^{\frac{1}{2}+\varepsilon}(U_t,U,D_{\bar\eta}U)\big|_2+C\big|\chi^\sharp \rho_0^{-\frac{\beta}{2}+\varepsilon}\bar\Lambda\big|_2\big|\chi^\sharp \rho_0^\frac{1-\beta}{2}D_{\bar\eta} U\big|_\infty\\
&\quad +C\big|\chi^\sharp\rho_0^{\gamma-\frac{1}{2}-\beta+\varepsilon}\big|_2|\bar\Lambda|_\infty\leq C(\varepsilon)\mathfrak{p}(c_0).
\end{aligned}
\end{equation}
Clearly, since $(\frac{3}{2}-\varepsilon_0)\beta>\frac{1}{2}$, the above also leads to
\begin{equation}\label{11139}
\big|\chi^\sharp \rho_0^{(\frac{3}{2}-\varepsilon_0)\beta}D_{\bar\eta}^2 U\big|_2 \leq C\mathfrak{p}(c_0).
\end{equation}

\smallskip
\textbf{2.2.} 
Next, we can obtain similarly from \eqref{cal-1} that, for all $\iota\in (-\frac{\beta}{2},1+\frac{\beta}{2})$ and $\sigma>0$,
\begin{equation}\label{cal-1'}
\big|\chi^\sharp\rho_0^{\iota-\beta+\sigma}D_{\bar\eta} U\big|_2\leq C(\sigma,\iota)\mathfrak{p}(c_0)\big(1+\big|\chi^\sharp\rho_0^\iota(U,D_{\bar\eta} U,U_t)\big|_2\big) \qquad \text{for all $t\in[0,T_2]$}.
\end{equation}
Then, due to the facts that
\begin{equation*}
\varepsilon_0<\frac{1}{2}, \qquad \big(\frac{3}{2}-\varepsilon_0\big)\beta>\frac{1}{2},
\end{equation*}
we can choose fixed $(\iota,\sigma)$ in \eqref{cal-1} such that
\begin{equation*}
\iota+\sigma=\big(\frac{1}{2}-\varepsilon_0\big)\beta,\qquad \iota\in \big(-\frac{\beta}{2},1+\frac{\beta}{2}\big), \qquad 0<\sigma<\min\big\{(1-\varepsilon_0)\beta,\big(\frac{3}{2}-\varepsilon_0\big)\beta-\frac{1}{2}\big\}.
\end{equation*}
Hence, it follows from $(\frac{3}{2}-\varepsilon_0)\beta-\sigma>\frac{1}{2}$, \eqref{11139}, and Lemmas \ref{c_0-c_1}--\ref{c_1} and \ref{hardy-inequality} that
\begin{equation}\label{4015}
\begin{aligned}
&\,\big|\chi^\sharp\rho_0^{-(\frac{1}{2}+\varepsilon_0)\beta}D_{\bar\eta} U\big|_2\leq C\mathfrak{p}(c_0)\big(1+\big|\chi^\sharp\rho_0^{(\frac{1}{2}-\varepsilon_0)\beta-\sigma}(U,D_{\bar\eta} U,U_t)\big|_2\big)\\
&\leq C\mathfrak{p}(c_0)\big(1+\big|\chi^\sharp\rho_0^{(\frac{3}{2}-\varepsilon_0)\beta-\sigma}(U,D_{\bar\eta} U,D_{\bar\eta}^2 U,U_t,D_{\bar\eta} U_{t})\big|_2\big)\leq C\mathfrak{p}(c_0).
\end{aligned}    
\end{equation}

Finally, based on \eqref{new-lp**}, using an argument similar to \eqref{J12-J14-1}--\eqref{9984} in Lemma \ref{lemma-u-ell-D2-refine} with $\eta$ replaced by $\bar\eta$, together with \eqref{4015} and Lemmas \ref{lemma-Lambda-guji}--\ref{c_1}, yields that, for all $0\leq t\leq T_2$,
\begin{equation}\label{4016}
\big|\chi^\sharp\rho_0^{(\frac{1}{2}-\varepsilon_0)\beta}D_{\bar\eta}^2 U\big|_2\leq C\mathfrak{p}(c_0).
\end{equation}

\smallskip
\textbf{2.3.} Based on \eqref{new-lp*3}, using a similar argument in Step 2 of the proof of Lemma \ref{lemma-u-D3-ell} with $\eta$ replaced by $\bar\eta$, we obtain from \eqref{11139}--\eqref{4016} and Lemmas \ref{lemma-Lambda-guji}--\ref{c_1} that, for all $t\in[0,T_2]$,
\begin{equation}\label{3369}
\big|\chi^\sharp\rho_0^{(\frac{3}{2}-\varepsilon_0)\beta}D_{\bar\eta}^3 U\big|_2\leq C\mathfrak{p}(c_0)\Big(1+\int_0^t \big|\chi^\sharp\rho_0^{(\frac{3}{2}-\varepsilon_0)\beta}D_{\bar\eta}^3 U\big|_2\,\mathrm{d}s\Big),
\end{equation}
which, along with the Gr\"onwall inequality, leads to the desired result of this lemma.

\smallskip
\textbf{3.} Finally,  it follows from \eqref{1197}, \eqref{11112}, \eqref{11139}, \eqref{3369}, and Lemmas \ref{c_0-c_1}--\ref{c_1} and  \ref{sobolev-embedding}--\ref{hardy-inequality} that, for all $t\in[0,T_2]$,
\begin{equation*}
\begin{aligned}
&\,\Big|(U,D_{\bar\eta}U,\frac{U}{\bar\eta}\big)\Big|_\infty \leq C\Big|(U,D_{\bar\eta}U,D_{\bar\eta}^2U,\frac{U}{\bar\eta},D_{\bar\eta}(\frac{U}{\bar\eta})\big)\Big|_1\\
&\leq C\sum_{j=0}^3 \big(\big|\chi r^\frac{m}{2}D_{\bar\eta}^j U\big|_2+\big|\chi^\sharp\rho_0^{(\frac{3}{2}-\varepsilon_0)\beta}D_{\bar\eta}^j U\big|_2\big) +C\sum_{j=0}^2\Big|\zeta r^\frac{m}{2}D_{\bar\eta}^j(\frac{U}{\bar\eta})\Big|_2\leq C\mathfrak{p}(c_0).
\end{aligned}
\end{equation*}

This completes the proof.
\end{proof}

\begin{Lemma}\label{c_3}
For all $t\in[0,T_2]$,
\begin{equation*}
t\bar\cD(t,U)+\int_0^t\bar\cD(s,U)\,\mathrm{d}s \leq C\mathfrak{p}(c_0).
\end{equation*}
\end{Lemma}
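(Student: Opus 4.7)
The strategy is to split $\bar\cD(t,U) = \bar\cD_{\mathrm{in}}(t,U) + \bar\cD_{\mathrm{ex}}(t,U)$, observe that the tangential dissipation pieces $\|(r^m\rho_0)^{1/2}U_{tt}\|_{L^2(0,t;L^2)}^2$ and $\|(r^m\rho_0)^{1/2}U_{tt}\|_{L^2}^2$ at positive times have already been controlled by Lemma \ref{c_1}, and then turn all remaining dissipation quantities into spatial elliptic estimates driven by these tangential bounds together with Lemmas \ref{lemma-useful1}--\ref{c_1-c_2}. The plan essentially mirrors the layout of \S\ref{sub92}--\S\ref{sub94} for the nonlinear problem, but with $\eta$ replaced by $\bar\eta$ and with all uniformity absorbed into the polynomial $\mathfrak{p}(c_0)$ via \eqref{38}--\eqref{39}.

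For the interior dissipation, I would first apply $\partial_t$ to the identity \eqref{new-lp*} to obtain an expression for $D_{\bar\eta}(D_{\bar\eta}U_t + mU_t/\bar\eta)$ in which the principal term is $\tfrac{1}{2\mu}U_{tt}$ and the remainder is a polynomial in $(\bar\Lambda, \bar\varrho^{-\beta}, \bar U, D_{\bar\eta}\bar U, D_{\bar\eta}U, D_{\bar\eta}^2U, U_t, \bar U_t)$; combining this with Lemma \ref{im-1} (to recover both $D_{\bar\eta}^2 U_t$ and $D_{\bar\eta}(U_t/\bar\eta)$) and with the bounds from Lemmas \ref{lemma-Lambda-guji}, \ref{c_0-c_1}, and \ref{c_1-c_2} immediately yields the $\|(r^m\rho_0)^{1/2}U_{tt}\|_{L^2}$ part of $\bar\cD_{\mathrm{in}}$ after squaring and integrating in time. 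For the fourth-order interior quantities, applying $D_{\bar\eta}$ and $D_{\bar\eta}/\bar\eta$ to \eqref{new-lp*3} and using Lemma \ref{im-1} a second time reduces $\|\zeta r^{m/2}(D_{\bar\eta}^4U, D_{\bar\eta}^3(U/\bar\eta), D_{\bar\eta}(\tfrac{1}{\bar\eta}D_{\bar\eta}(U/\bar\eta)))\|_{L^2}$ to the already bounded interior energy pieces plus $\|\zeta r^{m/2}(D_{\bar\eta}^2 U_t, D_{\bar\eta}(U_t/\bar\eta))\|_{L^2}$; integrating in $s$ and invoking Lemma \ref{c_1} closes the interior dissipation in $L^1_t$.

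For the exterior dissipation, $\|\rho_0^{1/2}U_{tt}\|_{L^2(1/2,1)}$ is already in $L^2(0,t)$ by Lemma \ref{c_1}. Differentiating \eqref{new-lp**} in $t$ and using $\rho_0^\beta \sim 1-r$, together with the weighted bounds for $\bar\Lambda$, $D_{\bar\eta}\bar\Lambda$, $\bar\Lambda_t$ from Lemma \ref{lemma-Lambda-guji} and the refined $\rho_0$-weighted $D_{\bar\eta}U$ estimate obtained exactly as in \eqref{cal-1'}--\eqref{4015}, I get $\|\chi^\sharp \rho_0^{(3/2-\varepsilon_0)\beta} D_{\bar\eta}^2 U_t\|_{L^2}$ dominated by $\mathfrak{p}(c_0)(1 + \|(r^m\rho_0)^{1/2}(U_{tt}, D_{\bar\eta}U_t)\|_{L^2})$, which is $L^2$-in-time by Lemma \ref{c_1}. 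The delicate piece is the fourth-order $\chi^\sharp \rho_0^{(3/2-\varepsilon_0)\beta} D_{\bar\eta}^4 U$: following Step 5 of \S\ref{subsection3.3}, I would multiply \eqref{new-lp} by $\bar\varrho^{\beta}$ and apply $\bar\varrho^{-\beta}\bar\eta_r D_{\bar\eta}^2$ to produce the $\bar\cT_{\mathrm{cross}}$ equation in the form \eqref{xxxx-l}, show each $\mathrm{I}_i$ on the right-hand side is in $L^2((1/2,1))$ with weight $\rho_0^{(3/2-\varepsilon_0)\beta}$ via Lemmas \ref{lemma-Lambda-guji} and \ref{c_1-c_2}, and then apply Proposition \ref{prop2.1} in Appendix \ref{subsection2.2}. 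Proposition \ref{prop2.1} requires the auxiliary bound $\chi^\sharp \rho_0^{(1+\beta)/2} D_{\bar\eta}^3 U \in L^2([0,t];L^2)$, which must first be bootstrapped exactly as in the proof of \eqref{refine-3}--\eqref{refine-2} by a two-step iteration starting from $\chi^\sharp \rho_0^{1/2-\beta}D_{\bar\eta}U \in L^\infty_t L^2$ and the intermediate cross equation $\widetilde{\cT}_{\mathrm{cross}}$.

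The main obstacle is exactly this fourth-order exterior estimate: the natural weight $\rho_0^{(3/2-\varepsilon_0)\beta}$ is too weak to absorb $D_{\bar\eta}^3 U$ directly from the tangential energy alone when $\varepsilon_0 \leq (2\beta-1)/(2\beta)$, so Proposition \ref{prop2.1} cannot be applied until the $\rho_0^{(1+\beta)/2}$-weighted $D_{\bar\eta}^3 U$ estimate has been bootstrapped. Once all interior and exterior dissipation pieces are bounded by $\mathfrak{p}(c_0)(1 + $ tangential dissipation$)$ in $L^2_t$, integrating in time produces the $\int_0^t \bar\cD(s,U)\,\mathrm{d}s \leq C\mathfrak{p}(c_0)$ bound; the time-weighted bound $t\bar\cD(t,U) \leq C\mathfrak{p}(c_0)$ follows by repeating the same elliptic chain pointwise in $t$ and using the $\sqrt{t}U_{tt}$ bound from Lemma \ref{c_1} in place of the $L^2_t$-control of $U_{tt}$.
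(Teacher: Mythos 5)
Your proposal matches the paper's proof of Lemma \ref{c_3} closely in structure: the same split into interior/exterior pieces, the same use of $\partial_t$-differentiated identities \eqref{new-lp*}, \eqref{new-lp*3} together with Lemma \ref{im-1} for the interior, the same pair of exterior estimates (the intermediate bound for $\chi^\sharp\rho_0^{(\frac{1}{2}-\varepsilon_0)\beta}D_{\bar\eta}U_t$ via the integrated form \eqref{3...136} feeding into $\chi^\sharp\rho_0^{(\frac{3}{2}-\varepsilon_0)\beta}D_{\bar\eta}^2 U_t$), and the same $\bar\cT_{\mathrm{cross}}$ plus Proposition \ref{prop2.1} reduction for the fourth-order piece. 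Two points are worth flagging, neither a fatal gap.

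First, you propose re-running the bootstrap through $\widetilde{\cT}_{\mathrm{cross}}$ (the analogue of \eqref{refine-Ux}--\eqref{refine-3}) to validate the qualitative hypothesis of Proposition \ref{prop2.1}, and identify this as ``the main obstacle.'' In the paper this bootstrap lives in the proof of Lemma \ref{existence-linearize} (Step 5 of \S\ref{subsection3.3}), whose conclusion already gives $\bar\cD(t,U)\in L^1(0,T)$ for the linearized solution. At the point where Lemma \ref{c_3} is proved, one therefore knows $\chi^\sharp\rho_0^{(\frac{3}{2}-\varepsilon_0)\beta}D_{\bar\eta}^4 U\in L^2(\frac{1}{2},1)$ for a.e.\ $t$, so the hypothesis of Proposition \ref{prop2.1} is already satisfied with $q=b=\frac{3}{2}-\varepsilon_0$ and the proposition can be invoked directly. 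Your extra bootstrap is correct but redundant in this lemma; the obstacle you describe belongs to the existence step, not the uniform-bound step.

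Second, your pointwise bound $\big|\chi^\sharp\rho_0^{(\frac{3}{2}-\varepsilon_0)\beta}D_{\bar\eta}^2 U_t\big|_2\lesssim \mathfrak{p}(c_0)\big(1+\big|(r^m\rho_0)^{\frac{1}{2}}(U_{tt},D_{\bar\eta}U_t)\big|_2\big)$ glosses over the fact that the $\bar U$-dependent terms (such as $\chi^\sharp\rho_0^\beta D_{\bar\eta}^2\bar U$ and $\chi^\sharp\rho_0^\beta\bar\Lambda_t$, controlled via Lemmas \ref{lemma-useful1}--\ref{lemma-Lambda-guji}) contribute a $\mathfrak{p}(c_1)$ pointwise in $t$, not merely $\mathfrak{p}(c_0)$. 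The paper records this as the additive $C\mathfrak{p}(c_1)$ in \eqref{9997***}, and the downgrade to $C\mathfrak{p}(c_0)$ happens only after integrating and multiplying by $t$, using $t\le T_2\le\mathfrak{p}(c_1)^{-1}$ (and similarly for the $L^\infty_t$ time-weighted piece). You gesture at this with ``uniformity absorbed via \eqref{38}--\eqref{39},'' but the precise mechanism is the choice of $T_2$, which should be made explicit since it is the step that prevents $c_1$ from appearing in the final answer.
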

\begin{proof}
We divide the proof into three steps.

\smallskip
\textbf{1. $L^1(0,T)$-boundedness of $\bar\cD_{\mathrm{in}}(t,U)$.} 

\smallskip
\textbf{1.1.} 
First, it follows from \eqref{new-lp*} by applying $\partial_t$ that 
\begin{equation}\label{11140*}
D_{\bar\eta}\big(D_{\bar\eta} U_t+ \frac{mU_t}{\bar\eta}\big)=\sum_{i=7}^9\mathrm{I}_i,
\end{equation}
where
\begin{equation}\label{I789}
\begin{aligned}
\mathrm{I}_7&:=D_{\bar\eta}^2 \bar U D_{\bar\eta} U+D_{\bar\eta}(\frac{\bar U}{\bar\eta})\frac{m U}{\bar\eta}+2D_{\bar\eta} \bar U D_{\bar\eta}^2 U+m\big(D_{\bar\eta} \bar U+\frac{ \bar U}{\bar\eta}\big)D_{\bar\eta}(\frac{U}{\bar\eta}),\\
\mathrm{I}_8&:=\frac{1}{2\mu} U_{tt}-\frac{1}{\beta}\frac{\bar\Lambda_t}{\bar\varrho^\beta} D_{\bar\eta} U-\frac{\bar\Lambda}{\bar\varrho^{\beta}} \Big(\big(\frac{\beta-1}{\beta}D_{\bar\eta}\bar U+\frac{m\bar U}{\bar\eta}\big)D_{\bar\eta} U+\frac{1}{\beta} D_{\bar\eta} U_t\Big),\\
\mathrm{I}_9&:=-\frac{A\gamma(\gamma-1-\beta)}{2\mu \beta}\bar\varrho^{\gamma-1-\beta}\big(D_{\bar\eta}\bar U+\frac{m\bar U}{\bar\eta}\big)\bar\Lambda+\frac{A\gamma}{2\mu \beta}\bar\varrho^{\gamma-1-\beta}\bar\Lambda_t. 
\end{aligned}
\end{equation}

Then, by the fact that $\rho_0^\beta\sim 1-r$, \eqref{38},  Lemmas \ref{lemma-useful1}--\ref{lemma-Lambda-guji}, and the H\"older inequality, we have
\begin{align}
&\begin{aligned}\label{11142}
|\zeta r^\frac{m}{2}\mathrm{I}_7|_2&\leq C\Big|\zeta_\frac{5}{8} r^\frac{m}{4}\big(D_{\bar\eta}^2 \bar U,D_{\bar\eta}(\frac{\bar U}{\bar\eta})\big)\Big|_4\Big|\zeta r^\frac{m}{4}\big(D_{\bar\eta}U,\frac{U}{\bar\eta}\big)\Big|_4\\
&\quad +C\Big|\big(D_{\bar\eta} \bar U,\frac{\bar U}{\bar\eta}\big)\Big|_\infty\Big|\zeta r^\frac{m}{2}\big(D_{\bar\eta}^2 \bar U,D_{\bar\eta}(\frac{\bar U}{\bar\eta})\big)\Big|_2\leq C\mathfrak{p}(c_0),   
\end{aligned}\\
&\begin{aligned}
|\zeta r^\frac{m}{2}\mathrm{I}_8|_2&\leq C|\zeta r^\frac{m}{2}U_{tt}|_2+C|\zeta r^\frac{m}{4}\bar\Lambda_t|_4|\zeta r^\frac{m}{4}D_{\bar\eta} U|_4\\
&\quad +C|\bar\Lambda|_\infty\Big(\Big|\big(D_{\bar\eta} \bar U,\frac{\bar U}{\bar\eta}\big)\Big|_\infty+|\zeta r^\frac{m}{2}D_{\bar\eta} U_t|_2\Big)\leq C(|\zeta r^\frac{m}{2}U_{tt}|_2+\mathfrak{p}(c_1)),
\end{aligned}\\
&\begin{aligned}\label{11144}
|\zeta r^\frac{m}{2}\mathrm{I}_9|_2\leq C|\bar\varrho|_\infty^{\gamma-1-\beta}\Big(|\bar\Lambda|_\infty \Big|\big(D_{\bar\eta} \bar U,\frac{\bar U}{\bar\eta}\big)\Big|_\infty + |\zeta r^\frac{m}{4}\bar\Lambda_t|_4\Big)\leq C\mathfrak{p}(c_1).
\end{aligned}
\end{align}

Hence, collecting \eqref{11140*} and \eqref{11142}--\eqref{11144}, together with Lemmas \ref{im-1} and \ref{c_1}, gives that, for all $t\in [0,T_2]$,
\begin{equation}\label{11145}
\int_0^t\Big|\zeta r^\frac{m}{2} \big(D_{\bar\eta}^2 U_t,D_{\bar\eta}(\frac{U_t}{\bar\eta})\big)\Big|_2^2\,\mathrm{d}s\leq C\Big(\int_0^t|\zeta r^\frac{m}{2}U_{tt}|_2^2\,\mathrm{d}s+\mathfrak{p}(c_1)t\Big)\leq C\mathfrak{p}(c_0).
\end{equation}

\smallskip
\textbf{1.2.} It follows from \eqref{new-lp*3} by applying $D_{\bar\eta}$ that
\begin{equation}\label{11146}
D_{\bar\eta}^3\big(D_{\bar\eta} U+ \frac{mU}{\bar\eta}\big)=\sum_{i=10}^{11}\mathrm{I}_i,
\end{equation}
where
\begin{equation*}
\begin{aligned}
\mathrm{I}_{10}&:=\frac{1}{2\mu} D_{\bar\eta}^2 U_t-\frac{1}{\beta}\frac{D_{\bar\eta}^2\bar\Lambda}{\bar\varrho^\beta} D_{\bar\eta} U+\frac{3}{\beta}\frac{\bar\Lambda D_{\bar\eta}\bar\Lambda}{\bar\varrho^{2\beta}} D_{\bar\eta} U-\frac{2}{\beta}\frac{\bar\Lambda^3}{\bar\varrho^{3\beta}} D_{\bar\eta} U\\
&\quad\,\,-\frac{2}{\beta}\frac{D_{\bar\eta}\bar\Lambda}{\bar\varrho^\beta} D_{\bar\eta}^2 U +\frac{2}{\beta}\frac{\bar\Lambda^2}{\bar\varrho^{2\beta}} D_{\bar\eta}^2 U-\frac{1}{\beta}\frac{\bar\Lambda}{\bar\varrho^\beta} D_{\bar\eta}^3 U,\\
\mathrm{I}_{11}&:=\frac{A\gamma(\gamma-1-\beta)(\gamma-1-2\beta)}{2\mu \beta^3}\bar\varrho^{\gamma-1-3\beta}\bar\Lambda^3\\
&\quad\,\,+\frac{3A\gamma(\gamma-1-\beta)}{2\mu \beta^2}\bar\varrho^{\gamma-1-2\beta}\bar\Lambda D_{\bar\eta}\bar\Lambda+\frac{A\gamma}{2\mu \beta}\bar\varrho^{\gamma-1-\beta}D_{\bar\eta}^2\bar\Lambda.
\end{aligned}
\end{equation*}
Then, due to the fact that $\rho_0^\beta\sim 1-r$, \eqref{38}, \eqref{11123}, Lemmas \ref{lemma-Lambda-guji} and \ref{c_1-c_2}, and the H\"older inequality, we obtain that, for all $t\in [0,T_2]$, 
\begin{align}
&\begin{aligned}
|\zeta r^\frac{m}{2}\mathrm{I}_{10}|_2&\leq C|\zeta r^\frac{m}{2}D_{\bar\eta}^2 U_t|_2+C\big(|\zeta r^\frac{m}{2}D_{\bar\eta}^2 \bar\Lambda|_2+|\bar\Lambda|_\infty|\zeta r^\frac{m}{2}D_{\bar\eta} \bar\Lambda|_2+|\bar\Lambda|_\infty^3\big)|D_{\bar\eta} U|_\infty\\
&\quad\, +C\big(\big|\zeta_\frac{5}{8} r^\frac{m}{4}D_{\bar\eta} \bar\Lambda\big|_4+|\bar\Lambda|_\infty^2\big)|\zeta r^\frac{m}{4} D_{\bar\eta}^2 U|_4+C |\bar\Lambda|_\infty |\zeta r^\frac{m}{2} D_{\bar\eta}^3 U|_2\\
&\leq C(|\zeta r^\frac{m}{2}D_{\bar\eta}^2 U_t|_2+\mathfrak{p}(c_0)),
\end{aligned}\\
&\begin{aligned}\label{11149}
|\zeta r^\frac{m}{2}\mathrm{I}_{11}|_2& \leq C\mathfrak{p}(c_0)\big(|\bar\Lambda|_\infty^3+|\bar\Lambda|_\infty|\zeta r^\frac{m}{2}D_{\bar\eta}\bar \Lambda|_2+|\zeta r^\frac{m}{2}D_{\bar\eta}^2\bar \Lambda|_2\big)\leq C\mathfrak{p}(c_0).
\end{aligned}
\end{align}
Hence, collecting \eqref{11146}--\eqref{11149} gives that, for all $t\in[0,T_2]$,
\begin{equation}\label{11150}
\Big|\zeta r^\frac{m}{2}D_{\bar\eta}^3\big(D_{\bar\eta} U+ \frac{mU}{\bar\eta}\big)\Big|_2\leq C(|\zeta r^\frac{m}{2}D_{\bar\eta}^2 U_t|_2+\mathfrak{p}(c_0)).
\end{equation}

Next, we multiply \eqref{new-lp*} by $\frac{1}{\bar\eta}$ and apply $D_{\bar\eta}$ to the resulting equality to obtain 
\begin{equation*}
\begin{aligned}
D_{\bar\eta}\Big(\frac{1}{\bar\eta}D_{\bar\eta}\big(D_{\bar\eta} U+ \frac{mU}{\bar\eta}\big)\Big)&=\frac{1}{2\mu} D_{\bar\eta}(\frac{U_t}{\bar\eta})-\frac{1}{\beta}\Big(D_{\bar\eta}(\frac{\bar\Lambda}{\bar\eta})\frac{D_{\bar\eta} U}{\bar\varrho^\beta}- \frac{\bar\Lambda^2}{\bar\eta\bar\varrho^{2\beta}} D_{\bar\eta} U+ \frac{\bar\Lambda}{\bar\eta\bar\varrho^\beta} D_{\bar\eta}^2 U\Big)\\
&\quad +\frac{A\gamma(\gamma-1-\beta)}{2\mu \beta^2}\bar\varrho^{\gamma-1-2\beta}\frac{\bar\Lambda^2}{\bar\eta}+\frac{A\gamma}{2\mu \beta}\bar\varrho^{\gamma-1-\beta}D_{\bar\eta}(\frac{\bar\Lambda}{\bar\eta}).
\end{aligned}
\end{equation*}
Then, following the calculations \eqref{11146}--\eqref{11150}, we can similarly obtain 
\begin{equation}\label{11150-}
\Big|\zeta r^\frac{m}{2}D_{\bar\eta}\Big(\frac{1}{\bar\eta}D_{\bar\eta}\big(D_{\bar\eta} U+ \frac{mU}{\bar\eta}\big)\Big)\Big|_2\leq C\Big(\Big|\zeta r^\frac{m}{2} D_{\bar\eta}(\frac{U_t}{\bar\eta})\Big|_2+\mathfrak{p}(c_0)\Big).
\end{equation}

Finally, combining \eqref{11150}--\eqref{11150-}, along with Lemma \ref{im-1} and \eqref{11145}, implies that, for all $t\in[0,T_2]$,
\begin{equation}
\begin{aligned}
&\,\int_0^t\Big|\zeta r^\frac{m}{2} \Big(D_{\bar\eta}^4 U,D_{\bar\eta}^3(\frac{U}{\bar\eta}),D_{\bar\eta}\big(\frac{1}{\bar\eta}D_{\bar\eta}(\frac{U}{\bar\eta})\big)\Big)\Big|_2^2\,\mathrm{d}s \\
&\leq C\int_0^t\Big|\zeta r^\frac{m}{2} \big(D_{\bar\eta}^2 U_t,D_{\bar\eta}(\frac{U_t}{\bar\eta})\big)\Big|_2^2\,\mathrm{d}s+Ct \mathfrak{p}(c_0)\leq  C\mathfrak{p}(c_0).
\end{aligned}
\end{equation}

\smallskip
\textbf{2. $L^1(0,T)$-boundedness of $\bar\cD_{\mathrm{ex}}(t,U)$.} 

\smallskip
\textbf{2.1.} First, applying $\partial_t$ to $\eqref{3...136}_1$ yields
\begin{equation*}
\begin{aligned}
D_{\bar\eta} U_t&=-\frac{A(\gamma-1)}{2\mu}\bar\varrho^{\gamma-1}\big(D_{\bar\eta} \bar U+\frac{m\bar U}{\bar\eta}\big)+\frac{m}{\bar\varrho}\big(D_{\bar\eta} \bar U+\frac{m\bar U}{\bar \eta}\big)\int_r^1 \tilde{r}^m\rho_0 \big(\frac{D_{\bar\eta} U}{\bar\eta^{m+1}}-\frac{U}{\bar\eta^{m+2}}\big)\mathrm{d}\tilde{r} \\
&\quad\,\, +\frac{m}{\bar\varrho}\int_r^1 \tilde{r}^m\rho_0 \big(\frac{D_{\bar\eta} U_t-D_{\bar\eta}\bar U D_{\bar\eta} U}{\bar\eta^{m+1}}-\frac{(m+1)\bar UD_{\bar\eta} U+U_t}{\bar\eta^{m+2}}+\frac{(m+2)\bar U U}{\bar \eta^{m+3}}\big)\mathrm{d}\tilde{r}\\
&\quad\,\, -\frac{1}{2\mu \bar\varrho}\Big(\big(D_{\bar\eta} U+\frac{m\bar U}{\bar\eta}\big)\int_r^1 \frac{\tilde{r}^m}{\bar\eta^m}\rho_0U_t\,\mathrm{d}\tilde{r}+\int_r^1\tilde{r}^m \rho_0\big(\frac{U_{tt}}{\bar\eta^m}-\frac{m \bar UU_t}{\bar\eta^{m+1}}\big)\,\mathrm{d}\tilde{r}\Big)+ D_{\bar\eta}\bar UD_{\bar\eta} U.
\end{aligned}
\end{equation*}
Note that the above equality enjoys a similar structure of \eqref{9995}, and hence we can follow analogous calculations \eqref{9966}--\eqref{9998} to derive
\begin{equation*} 
\big|\chi^\sharp\rho_0^{(\frac{1}{2}-\varepsilon_0)\beta}D_{\bar\eta} U_t\big|_2\leq C\mathfrak{p}(c_1)\big(1+\big|\chi^\sharp\rho_0^{(1-\varepsilon_0)\beta-\frac{1}{2}}D_{\bar\eta} U\big|_2\big)+C\mathfrak{p}(c_0)\big|\chi^\sharp\rho_0^\frac{1}{2}U_{tt}\big|_2,
\end{equation*}
and, by Lemma \ref{hardy-inequality}, 
\begin{equation*} 
\big|\chi^\sharp\rho_0^{(1-\varepsilon_0)\beta-\frac{1}{2}}D_{\bar\eta} U\big|_2\leq C(T)\big|\chi^\sharp\rho_0^{\frac{3\beta}{2}}(D_{\bar\eta} U,D_{\bar\eta}^2 U,D_{\bar\eta}^3 U)\big|_2\leq C\mathfrak{p}(c_1).
\end{equation*}
Therefore, combining the above two inequalities gives that, for all $t\in[0,T_2]$,
\begin{equation}\label{9997**}
\big|\chi^\sharp\rho_0^{(\frac{1}{2}-\varepsilon_0)\beta}D_{\bar\eta} U_t\big|_2\leq C\big(\mathfrak{p}(c_1)+\mathfrak{p}(c_0)\big|\chi^\sharp\rho_0^\frac{1}{2}U_{tt}\big|_2\big).
\end{equation}

As a consequence, recalling \eqref{11140*}--\eqref{I789}, we obtain from \eqref{varepsilon0}, \eqref{38}, \eqref{4015}--\eqref{4016}, \eqref{9997**}, and Lemmas \ref{lemma-useful1}--\ref{lemma-Lambda-guji} and \ref{hardy-inequality} that, for all $t\in [0, T_2]$,
\begin{align*}
&\begin{aligned} 
\big|\chi^\sharp \rho_0^{(\frac{3}{2}-\varepsilon_0)\beta}\mathrm{I}_7\big|_2&\leq C\big|\chi^\sharp \rho_0^\beta D_{\bar\eta}^2 \bar U\big|_\infty \big|\chi^\sharp \rho_0^{(\frac{1}{2}-\varepsilon_0)\beta} D_{\bar\eta} U\big|_2\\
&\quad +C|(\bar U,D_{\bar\eta}\bar U)|_\infty\big|\chi^\sharp\rho_0^{(\frac{1}{2}-\varepsilon_0)\beta}(U,D_{\bar\eta} U,D_{\bar\eta}^2 U)\big|_2\leq C\mathfrak{p}(c_1),
\end{aligned}\\
&\begin{aligned}
\big|\chi^\sharp \rho_0^{(\frac{3}{2}-\varepsilon_0)\beta}\mathrm{I}_8\big|_2&\leq C\big(\big|\chi^\sharp \rho_0^{(\frac{3}{2}-\varepsilon_0)\beta}U_{tt}\big|_2+|\chi^\sharp \bar\Lambda_t|_\infty\big|\chi^\sharp \rho_0^{(\frac{1}{2}-\varepsilon_0)\beta}D_{\bar\eta}U\big|_2\big)\\
&\quad +C|\bar\Lambda|_\infty\big(|(\bar U,D_{\bar\eta}\bar U)|_\infty\big|\chi^\sharp \rho_0^{(\frac{1}{2}-\varepsilon_0)\beta}D_{\bar\eta}U\big|_2+ \big|\chi^\sharp \rho_0^{(\frac{1}{2}-\varepsilon_0)\beta}D_{\bar\eta}U_t\big|_2\big)\\
&\leq C\big(\mathfrak{p}(c_1)+\mathfrak{p}(c_0)\big|\chi^\sharp\rho_0^\frac{1}{2}U_{tt}\big|_2\big),
\end{aligned}\\
&\begin{aligned}
\big|\chi^\sharp \rho_0^{(\frac{3}{2}-\varepsilon_0)\beta}\mathrm{I}_9\big|_2&\leq C\big|\chi^\sharp \rho_0^{\gamma-1+(\frac{1}{2}-\varepsilon_0)\beta}\big|_2 |(1,\bar U,D_{\bar\eta}\bar U)|_\infty|\chi^\sharp(\bar\Lambda,\bar\Lambda_t)|_\infty\leq C\mathfrak{p}(c_1), 
\end{aligned}
\end{align*}
which, along with \eqref{11140*} and Lemmas \ref{c_0-c_1}--\ref{c_1}, leads to
\begin{equation}\label{9997***}
\begin{aligned}
\big|\chi^\sharp\rho_0^{(\frac{3}{2}-\varepsilon_0)\beta}D_{\bar\eta}^2 U_t\big|_2&\leq C\big|\chi^\sharp\rho_0^{(\frac{3}{2}-\varepsilon_0)\beta}(U_t,D_{\bar\eta} U_t)\big|_2+C\big(\mathfrak{p}(c_1)+\mathfrak{p}(c_0)\big|\chi^\sharp\rho_0^\frac{1}{2}U_{tt}\big|_2\big)\\
&\leq C\big(\mathfrak{p}(c_1)+\mathfrak{p}(c_0)\big|\chi^\sharp\rho_0^\frac{1}{2}U_{tt}\big|_2\big).
\end{aligned}
\end{equation}
Finally, integrating the above over $[0,t]$, together with Lemma \ref{c_1}, implies that, for all $t\in[0,T_2]$,
\begin{equation}\label{9997+}
\int_0^t\big|\chi^\sharp\rho_0^{(\frac{3}{2}-\varepsilon_0)\beta}D_{\bar\eta}^2 U_t\big|_2^2\,\mathrm{d}s\leq C\mathfrak{p}(c_1)t+C\mathfrak{p}(c_0)\int_0^t\big|\chi^\sharp\rho_0^\frac{1}{2}U_{tt}\big|_2^2\,\mathrm{d}s\leq C\mathfrak{p}(c_0).
\end{equation}

\smallskip
\textbf{2.2.}
First, recall the derivation of \eqref{xxxx-l}:
\begin{equation}\label{xxxx-l'}
\bar\cT_{\mathrm{cross}}:=(D_{\bar\eta}^3U)_r+\big(\frac{1}{\beta}+2\big)\frac{(\rho_0^\beta)_r}{\rho_0^\beta}D_{\bar\eta}^3U =\sum_{i=12}^{15}\mathrm{I}_i,
\end{equation}
where
\begin{equation}\label{rr1-rr3}
\begin{aligned}
\mathrm{I}_{12}&:=(1+2\beta) \bar\eta_r\frac{D_{\bar\eta}\bar{\mathscr{J}}}{\bar{\mathscr{J}}^{\beta+1}} D_{\bar\eta}^3 U-\big(1+\frac{2}{\beta} \big)\frac{\bar\eta_r}{\bar\varrho^{\beta}}D_{\bar\eta} \bar\Lambda D_{\bar\eta}^2 U-\frac{1}{\beta}\frac{\bar\eta_r}{\bar\varrho^{\beta}} D_{\bar\eta}^2\bar\Lambda D_{\bar\eta} U,\\
\mathrm{I}_{13}&:=-m\frac{\bar\eta_r}{\bar\varrho^{\beta}}\Big(D_{\bar\eta} \bar\Lambda D_{\bar\eta}\big(\frac{U}{\bar\eta}\big)+2 \bar\Lambda D_{\bar\eta}^2\big(\frac{U}{\bar\eta}\big)+ \bar\varrho^\beta D_{\bar\eta}^3\big(\frac{U}{\bar\eta}\big)\Big),\\
\mathrm{I}_{14}&:=\frac{1}{2\mu}\frac{\bar\eta_r}{\bar\varrho^{\beta}}\big(D_{\bar\eta} \bar\Lambda U_t+2\bar\Lambda D_{\bar\eta} U_t+ \bar\varrho^\beta D_{\bar\eta}^2 U_t\big),\\
\mathrm{I}_{15}&:=\frac{A\gamma}{2\mu\beta} \bar\eta_r \bar\varrho^{\gamma-1-3\beta}\Big( \bar\varrho^{2\beta} D_{\bar\eta}^2 \bar\Lambda+\frac{3(\gamma-1)}{\beta}\bar\varrho^\beta \bar\Lambda D_{\bar\eta} \bar\Lambda +\frac{(\gamma-1)(\gamma-1-\beta)}{\beta^2}\bar\Lambda^3\Big).
\end{aligned}
\end{equation}

For the right-hand side of the above, it follows from the facts that
\begin{equation*}
\rho_0^\beta\sim 1-r,\qquad \big(\frac{1}{2}-\varepsilon_0\big)\beta+\gamma-1>0,\qquad \big(\frac{3}{2}-\varepsilon_0\big)\beta+\gamma-1-3\beta>-\frac{\beta}{2},
\end{equation*} 
\eqref{varepsilon0}, \eqref{38}, \eqref{4016}, and Lemmas \ref{lemma-useful1}--\ref{c_1-c_2} and \ref{hardy-inequality} that, for all $t\in [0,T_2]$,
\begin{align}
&\begin{aligned}
\big|\chi^\sharp \rho_0^{(\frac{3}{2}-\varepsilon_0)\beta}\mathrm{I}_{12}\big|_2&\leq C\big(|\chi^\sharp D_{\bar\eta}\bar{\mathscr{J}}|_\infty\big|\chi^\sharp \rho_0^{(\frac{3}{2}-\varepsilon_0)\beta}D_{\bar\eta}^3U\big|_2+|\chi^\sharp D_{\bar\eta}\bar\Lambda|_\infty\big|\chi^\sharp \rho_0^{(\frac{1}{2}-\varepsilon_0)\beta} D_{\bar\eta}^2U\big|_2\big)\\
&\quad +C\big|\chi^\sharp \rho_0^{(\frac{1}{2}-\varepsilon_0)\beta}D_{\bar\eta}^2 \bar\Lambda\big|_2|\chi^\sharp D_{\bar\eta} U|_\infty \leq C\mathfrak{p}(c_0),\notag
\end{aligned}\\
&\begin{aligned}
\big|\chi^\sharp \rho_0^{(\frac{3}{2}-\varepsilon_0)\beta}\mathrm{I}_{13}\big|_2&\leq C\big|\chi^\sharp \rho_0^{(\frac{1}{2}-\varepsilon_0)\beta}D_{\bar\eta} \bar\Lambda\big|_2|\chi^\sharp(U,D_\eta U)|_\infty\\
&\quad + C(1+|\bar\Lambda|_\infty) \big|\chi^\sharp \rho_0^{(\frac{3}{2}-\varepsilon_0)\beta} (U,D_{\bar\eta} U,D_{\bar\eta}^2 U,D_{\bar\eta}^3 U)\big|_2\leq C\mathfrak{p}(c_0),\notag
\end{aligned}\\
&\begin{aligned}[t]\label{QQQ}
\big|\chi^\sharp \rho_0^{(\frac{3}{2}-\varepsilon_0)\beta}\mathrm{I}_{14}\big|_2&\leq C|\chi^\sharp D_{\bar\eta} \bar\Lambda|_\infty\big|\chi^\sharp \rho_0^{(\frac{3}{2}-\varepsilon_0)\beta} (U_t,D_{\bar\eta} U_t)\big|_2\\
&\quad +C(1+|\bar\Lambda|_\infty)\big|\chi^\sharp \rho_0^{(\frac{3}{2}-\varepsilon_0)\beta} (D_{\bar\eta} U_t,D_{\bar\eta}^2 U_t)\big|_2\\
&\leq C\mathfrak{p}(c_0)\big(1+\big|\chi^\sharp \rho_0^{(\frac{3}{2}-\varepsilon_0)\beta} D_{\bar\eta}^2 U_t\big|_2\big),
\end{aligned}\\
&\begin{aligned}
\big|\chi^\sharp \rho_0^{(\frac{3}{2}-\varepsilon_0)\beta}\mathrm{I}_{15}\big|_2&\leq C\big(|\rho_0|_\infty^{\gamma-1}\big|\chi^\sharp \rho_0^{(\frac{1}{2}-\varepsilon_0)\beta}D_{\bar\eta}^2\bar\Lambda\big|_2 + |\bar\Lambda|_\infty\big|\chi^\sharp \rho_0^{(\frac{1}{2}-\varepsilon_0)\beta}D_{\bar\eta} \bar\Lambda\big|_2\big)\\
&\quad +C\underline{(\gamma-1-\beta)\big|\chi^\sharp \rho_0^{(\frac{3}{2}-\varepsilon_0)\beta+\gamma-1-3\beta}\big|_2|\bar\Lambda|_\infty^3}_{\,(=0,\text{ if $\beta=\gamma-1$})}\leq C\mathfrak{p}(c_0).\notag
\end{aligned}
\end{align}

Therefore, substituting \eqref{QQQ} into \eqref{xxxx-l'} gives
\begin{equation*}
\big|\zeta^\sharp\rho_0^{(\frac{3}{2}-\varepsilon_0)\beta}\bar\cT_{\mathrm{cross}}\big|_2\leq C\mathfrak{p}(c_0)\big(1+\big|\chi^\sharp \rho_0^{(\frac{3}{2}-\varepsilon_0)\beta} D_{\bar\eta}^2 U_t\big|_2\big),
\end{equation*}
which, along with  Proposition \ref{prop2.1} and Lemma \ref{c_1-c_2}, yields
\begin{equation*}
\begin{aligned}
\big|\chi^\sharp\rho_0^{\left(\frac{3}{2}-\varepsilon_0\right)\beta}D_{\bar\eta}^4U\big|_2&\leq \big|(\zeta-\chi)\rho_0^{\left(\frac{3}{2}-\varepsilon_0\right)\beta}D_{\bar\eta}^4U\big|_2+\big|\zeta^\sharp\rho_0^{\left(\frac{3}{2}-\varepsilon_0\right)\beta}D_{\bar\eta}^4U\big|_2\\
&\leq C\bar\cD_{\mathrm{in}}(t,U)^\frac{1}{2}+C\big|\zeta^\sharp\rho_0^{(\frac{3}{2}-\varepsilon_0)\beta}\bar\cT_{\mathrm{cross}}\big|_2+C\mathfrak{p}(c_0) \big|\rho_0^{\left(\frac{3}{2}-\varepsilon_0\right)\beta}D_{\bar\eta}^3U\big|_2\\
&\leq  C\mathfrak{p}(c_0)\big(1+\bar\cD_{\mathrm{in}}(t,U)^\frac{1}{2}+\big|\chi^\sharp \rho_0^{(\frac{3}{2}-\varepsilon_0)\beta} D_{\bar\eta}^2 U_t\big|_2\big).
\end{aligned}
\end{equation*}

Finally, it follows from the $L^1(0,T)$-estimate of $\bar\cD(t,U)$ and \eqref{9997+} that, for all $t\in[0,T_2]$,
\begin{equation*}
\begin{aligned}
\int_0^t\big|\chi^\sharp\rho_0^{\left(\frac{3}{2}-\varepsilon_0\right)\beta}D_{\bar\eta}^4U\big|_2^2\,\mathrm{d}s\leq  C\mathfrak{p}(c_0)\Big(t+\int_0^t\big(\bar\cD_{\mathrm{in}}(t,U)+\big|\chi^\sharp \rho_0^{(\frac{3}{2}-\varepsilon_0)\beta} D_{\bar\eta}^2 U_t\big|_2^2\big)\,\mathrm{d}s\Big)\leq C\mathfrak{p}(c_0).
\end{aligned}
\end{equation*}

\smallskip
\textbf{3. $L^\infty(0,T)$-boundedness for $t\bar\cD(t,U)$.} Following a similar argument in Step 1--Step 2, we can obtain
\begin{equation*}
\begin{aligned}
&\sqrt{t}\Big|\zeta r^\frac{m}{2} \big(D_{\bar\eta}^2 U_t,D_{\bar\eta}(\frac{U_t}{\bar\eta})\big)\Big|_2\leq C\sqrt{t}(|\zeta r^\frac{m}{2}U_{tt}|_2+\mathfrak{p}(c_1)),\\
&\sqrt{t}\Big|\zeta r^\frac{m}{2} \Big(D_{\bar\eta}^4 U,D_{\bar\eta}^3(\frac{U}{\bar\eta}),D_{\bar\eta}\big(\frac{1}{\bar\eta}D_{\bar\eta}(\frac{U}{\bar\eta})\big)\Big)\Big|_2\leq C\sqrt{t}\Big(\Big|\zeta r^\frac{m}{2} \big(D_{\bar\eta}^2 U_t,D_{\bar\eta}(\frac{U_t}{\bar\eta})\big)\Big|_2+\mathfrak{p}(c_0)\Big),\\
&\sqrt{t}\big|\chi^\sharp\rho_0^{(\frac{3}{2}-\varepsilon_0)\beta}D_{\bar\eta}^2 U_t\big|_2\leq C\sqrt{t}\big(\mathfrak{p}(c_0)\big|\chi^\sharp\rho_0^\frac{1}{2}U_{tt}\big|_2+\mathfrak{p}(c_1)\big),\\
&\sqrt{t}\big|\chi^\sharp\rho_0^{\left(\frac{3}{2}-\varepsilon_0\right)\beta}D_{\bar\eta}^4U\big|_2\leq C\sqrt{t}\mathfrak{p}(c_0)\big(\big|\chi^\sharp \rho_0^{(\frac{3}{2}-\varepsilon_0)\beta} D_{\bar\eta}^2 U_t\big|_2+\bar\cD_{\mathrm{in}}(t,U)^\frac{1}{2}+1\big).
\end{aligned}
\end{equation*}

Therefore, this, combined with Lemma \ref{c_1}, leads to the desired result of this lemma.
\end{proof}

\begin{Lemma}\label{lemma-12.11}
For any $0\leq t\leq T_3=\min\{T_2,(2C\mathfrak{p}(c_0))^{-1}\}$,
\begin{equation}\label{39''-0}
\cE(t,U)+t\cD(t,U)+\int_0^{t} \cD(s,U)\,\mathrm{d}s\leq C\mathfrak{p}(c_0),\qquad|\eta_r(t)-1|_\infty+\Big|\frac{\eta(t)}{r}-1\Big|_\infty\leq \frac{1}{2}.
\end{equation}
\end{Lemma}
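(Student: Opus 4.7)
The plan is to derive the two conclusions of the lemma in sequence: first the smallness bound on $\eta_r-1$ and $\eta/r-1$, which furnishes a valid window on which $(\eta_r,\eta/r)\in[1/2,3/2]$, and then to transfer the uniform bounds on $\bar\cE(t,U),\bar\cD(t,U)$ proved in Lemmas~\ref{c_0-c_1}--\ref{c_3} to the corresponding quantities defined with respect to $\eta$ rather than $\bar\eta$.

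For the first step, I would combine the uniform $L^\infty$ bound $|(U,D_{\bar\eta}U,U/\bar\eta)|_\infty\le C\mathfrak{p}(c_0)$ from Lemma~\ref{c_1-c_2} with the assumption $(\bar\eta_r,\bar\eta/r)\in[1/2,3/2]$ from \eqref{39} to obtain the pointwise bounds $|U_r|_\infty=|\bar\eta_rD_{\bar\eta}U|_\infty\le C\mathfrak{p}(c_0)$ and $|U/r|_\infty=|(\bar\eta/r)(U/\bar\eta)|_\infty\le C\mathfrak{p}(c_0)$. Since $\eta_t=U$ with $\eta|_{t=0}=r$, integrating in time gives
\begin{equation*}
\eta_r(t,r)-1=\int_0^t U_r(s,r)\,\mathrm{d}s,\qquad \frac{\eta(t,r)}{r}-1=\frac{1}{r}\int_0^t U(s,r)\,\mathrm{d}s,
\end{equation*}
whence $|\eta_r-1|_\infty+|\eta/r-1|_\infty\le 2tC\mathfrak{p}(c_0)$. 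Choosing $T_3=\min\{T_2,(2C\mathfrak{p}(c_0))^{-1}\}$ with an appropriate constant $C$ yields the desired $\tfrac{1}{2}$-bound on $[0,T_3]$, and in particular $(\eta_r,\eta/r)\in[1/2,3/2]$ on this time interval.

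For the second step, I would exploit the fact that $(\eta_r,\eta/r)$ and $(\bar\eta_r,\bar\eta/r)$ satisfy identical uniform upper and lower bounds on $[0,T_3]$. Using this together with the equivalent formulations of the energy and dissipation functionals given in Appendix~\ref{AppB} (essentially rewriting $D_\eta^k f$ as $\eta_r^{-1}\partial_r$ composed $k$ times, modulo lower-order terms involving $\eta_{rr},\eta_{rrr}$ and analogously for $\bar\eta$), one can write both $\cE(t,U)$ and $\bar\cE(t,U)$ (respectively $\cD$ and $\bar\cD$) as norms of $U$ equivalent to the $r$-intrinsic Sobolev norms, with equivalence constants depending only on the uniform bounds on the flow maps and their higher derivatives. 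Since the higher derivatives of $\eta$ are obtained by time-integration of those of $U$, the bounds $\bar\cE(t,U)\le C\mathfrak{p}(c_0)$ and $t\bar\cD(t,U)+\int_0^t\bar\cD\le C\mathfrak{p}(c_0)$ from Lemmas~\ref{c_1-c_2}--\ref{c_3} translate, after shrinking $T_3$ if necessary to preserve the smallness of $\eta_{rr}-\bar\eta_{rr}$ in the relevant weighted norms, into the analogous bounds for $\cE$ and $\cD$.

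The main obstacle will be the third-order comparison between $D_\eta^k U$ and $D_{\bar\eta}^k U$ ($k=2,3,4$) inside the weighted $L^2$-norms making up $\cE_{\mathrm{ex}}$ and $\cD_{\mathrm{ex}}$: these differ by expressions involving $\eta_{rr}-\bar\eta_{rr}$ and $\eta_{rrr}-\bar\eta_{rrr}$ weighted by $\rho_0^{(3/2-\varepsilon_0)\beta}$, whose control near the vacuum boundary requires using $\eta_{rr}=\int_0^t U_{rr}\,\mathrm{d}s$ together with the Minkowski inequality in time and the already-established exterior bounds on $\rho_0^{(\tfrac{3}{2}-\varepsilon_0)\beta}D_{\bar\eta}^2 U$ and $\rho_0^{(\tfrac{3}{2}-\varepsilon_0)\beta}D_{\bar\eta}^3 U$. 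Once these time-integrated bounds on $\eta_{rr}$ and $\eta_{rrr}$ are in hand, substituting into the chain-rule expansions for $D_\eta^k$ and absorbing the extra error terms (which all carry a factor of $t$ through the time integration and can thus be made small on $[0,T_3]$ after possibly further shrinking $T_3$) closes the comparison and yields \eqref{39''-0}.
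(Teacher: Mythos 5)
Your proposal is essentially correct and follows the paper's argument: the flow-map bound comes from integrating $|U_r|_\infty,\ |U/r|_\infty\le C\mathfrak{p}(c_0)$ (which use the $[1/2,3/2]$ bounds on $\bar\eta_r,\bar\eta/r$) over $[0,T_3]$, and the transfer $\bar\cE\to\cE$ is carried out through the $r$-intrinsic functional $\mathring\cE$ via the chain-rule equivalences of Lemma~\ref{lemma-gaowei} in Appendix~\ref{AppB}. Two small corrections: what must be controlled by time-integration is the size of $\eta_{rr},\eta_{rrr}$ and $\bar\eta_{rr},\bar\eta_{rrr}$ \emph{individually} (each vanishes at $t=0$), not the difference $\eta_{rr}-\bar\eta_{rr}$, which need not be small since $U$ and $\bar U$ are not a priori close; and no further shrinking of $T_3$ is required, because the equivalence factor $Ce^{CT}$ produced by Lemma~\ref{lemma-gaowei} is simply absorbed into $C\mathfrak{p}(c_0)$ rather than used as a smallness parameter.
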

\begin{proof}
Collecting the  estimates established  in Lemmas \ref{c_0-c_1}--\ref{c_3}, we see that, for all $t\in[0,T_2]$,
\begin{equation}\label{39''}
\bar\cE(t,U)+t\bar\cD(t,U)+\int_0^{t} \bar\cD(s,U)\,\mathrm{d}s\leq C\mathfrak{p}(c_0).
\end{equation}

First, \eqref{given-flow}, together  with \eqref{39} and Lemma \ref{c_1-c_2}, yields that, for any $0\leq t\leq T_3:=\min\{T_2,(2C\mathfrak{p}(c_0))^{-1}\}$, 
\begin{equation}\label{eta-etar}
\begin{aligned}
|\eta_r(t)-1|_\infty+\Big|\frac{\eta(t)}{r}-1\Big|_\infty&\leq \int_0^t \Big|\big(U_r,\frac{U}{r}\big)\Big|_\infty \,\mathrm{d}s\\
&\leq C\int_0^t \Big|\big(D_{\bar\eta}U,\frac{U}{\bar\eta}\big)\Big|_\infty \,\mathrm{d}s\leq C\mathfrak{p}(c_0)T_3\leq \frac{1}{2}.
\end{aligned}
\end{equation}

Next, let $(\mathring{\cE},\mathring{\cD})(t,f)$ be defined in the same way as $(\cE,\cD)(t,f)$ in \eqref{E-1} and \eqref{D-1}, respectively, except for letting $\eta(r)=r$. Then,  based on \eqref{39''}--\eqref{eta-etar}, we obtain from Lemma \ref{lemma-gaowei} that, 
for all $t\in[0,T_3]$, 
\begin{equation*}
\begin{aligned}
\mathring\cE(t,U)+t\mathring\cD(t,U)+\int_0^{t} \mathring\cD(s,U)\,\mathrm{d}s\leq C\mathfrak{p}(c_0)\Big(\bar\cE(t,U)+t\bar\cD(t,U)+\int_0^{t} \bar\cD(s,U)\,\mathrm{d}s \Big),\\
\cE(t,U)+t\cD(t,U)+\int_0^{t} \cD(s,U)\,\mathrm{d}s\leq C\mathfrak{p}(c_0)\Big(\mathring\cE(t,U)+t\mathring\cD(t,U)+\int_0^{t} \mathring\cD(s,U)\,\mathrm{d}s\Big),
\end{aligned}
\end{equation*}
which thus leads to $\eqref{39''-0}_1$.
\end{proof}

Hence, defining constants $(c_1,T^*)$ as 
\begin{equation}\label{3.84}
c_1:= C\mathfrak{p}(c_0),\qquad
T^*:=T_3=\min\{T_2,(2C\mathfrak{p}(c_0))^{-1}\},
\end{equation}
then we obtain from Lemma \ref{lemma-12.11} that, for all $t\in[0,T^*]$,
\begin{equation}\label{uniform bounds}
\cE(t,U)+t\cD(t,U)+\int_0^{t} \cD(s,U)\,\mathrm{d}s\leq c_1,\qquad |\eta_r(t)-1|_\infty+\Big|\frac{\eta(t)}{r}-1\Big|_\infty \leq \frac{1}{2}.
\end{equation}

\subsection{Local-in-time well-posedness of the nonlinear problem}\label{subsection9.3}

In this section, we prove the local well-posedness of classical solutions of \eqref{eq:VFBP-La-eta} stated in Theorem \ref{local-Theorem1.1}. 

For convenience, in the rest of \S \ref{subsection9.3}, we let $(\mathring{\cE},\mathring{\cD})(t,f)$ and $(\cE_\mathrm{k}, \cD_\mathrm{k})(t,f)$ be defined in the same way as $(\cE,\cD)(t,f)$ in \eqref{E-1} and \eqref{D-1}, except with $\eta(r)$ in place of $r$ and $\eta^\mathrm{k}$, respectively, where $\eta^\mathrm{k}$ denotes the $\mathrm{k}$-th generation of the iterative sequence that will be given later. 

\begin{proof}
We divide the proof into the following four steps.

\smallskip
\textbf{1. Construction of the iterative sequence.} Let $c_0$ be given as in \eqref{38} and
\begin{equation*}
U^0(t,r):= u_0,\qquad \eta^0(t,r)=r+tu_0.
\end{equation*}
Then there exists a small positive time $T^\prime\leq T^*$, with $T^*$ defined in \eqref{3.84}, 
such that,  for all $t\in[0,T']$,
\begin{equation}\label{395}
\cE_0(t,U^0)+t\cD_0(t,U^0)+\int_0^{t} \cD_0(s,U^0)\,\mathrm{d}s\leq c_1,\quad |\eta_r^0(t)-1|_\infty+\Big|\frac{\eta^0(t)}{r}-1\Big|_\infty \leq \frac{1}{2}.
\end{equation}

Next, set $\bar \eta=\eta^0$ in \eqref{lp}. By Lemma \ref{existence-linearize} and \eqref{given-flow}, problem \eqref{lp} admits a unique classical solution $(U^1,\eta^1)$ in $[0,T']\times \bar I$ . Certainly, it follows from \eqref{uniform bounds} that $(U^1,\eta^1)$ also satisfies the following uniform estimates on $[0,T']$:
\begin{equation}\label{395'''}
\cE_1(t,U^1)+t\cD_1(t,U^1)+\int_0^{t} \cD_1(s,U^1)\,\mathrm{d}s\leq c_1,\quad |\eta_r^1(t)-1|_\infty+\Big|\frac{\eta^1(t)}{r}-1\Big|_\infty \leq \frac{1}{2}.
\end{equation}

As a consequence, the approximate sequence $(U^{\mathrm{k}+1},\eta^{\mathrm{k}+1})$ ($\mathrm{k}\in \NN^*$) can be constructed iteratively as follows: Given $(U^\mathrm{k},\eta^\mathrm{k})$, define $(U^{\mathrm{k}+1},  \eta^{\mathrm{k}+1})$ by solving the following problem in $(0,T']\times I$: 
\begin{equation}\label{396}
\begin{cases}
\displaystyle r^m\rho_0U^{\mathrm{k}+1}_t-2\mu \big(r^m\rho_0\frac{U^{\mathrm{k}+1}_r}{(\eta^\mathrm{k}_r)^2}  \big)_r +2\mu mr^m\rho_0\frac{U^{\mathrm{k}+1}}{(\eta^\mathrm{k})^2}\\[8pt]
\qquad \displaystyle =-A ((\eta^\mathrm{k})^m (\varrho^\mathrm{k})^\gamma)_r+Am (\eta^\mathrm{k})^{m-1}\eta^\mathrm{k}_r (\varrho^\mathrm{k})^\gamma,\\[4pt]
\displaystyle \eta^{\mathrm{k}+1}_t=U^{\mathrm{k}+1},\\[4pt]
(U^{\mathrm{k}+1},\eta^{\mathrm{k}+1})(r)=(u_0(r),r)\qquad \text{for $r\in I$},
\end{cases}
\end{equation}
where $\varrho^\mathrm{k}$ is given by 
\begin{equation*}
\varrho^\mathrm{k}=\frac{r^m\rho_0}{(\eta^\mathrm{k})^m\eta^\mathrm{k}_r}.
\end{equation*}
By \eqref{uniform bounds}, we obtain an iterative solution sequence $(U^\mathrm{k},\eta^\mathrm{k})$ satisfying \eqref{395}, that is, for all $t\in[0,T']$ and $\mathrm{k}\in \NN$, 
\begin{equation}\label{395k}
\cE_\mathrm{k}(t,U^\mathrm{k})+t\cD_\mathrm{k}(t,U^\mathrm{k})+\int_0^{t} \cD_\mathrm{k}(s,U^\mathrm{k})\,\mathrm{d}s\leq c_1,\quad |\eta_r^\mathrm{k}(t)-1|_\infty+\Big|\frac{\eta^\mathrm{k}(t)}{r}-1\Big|_\infty \leq \frac{1}{2}.
\end{equation}
Of course, by Lemma \ref{lemma-gaowei}, \eqref{395k} also implies
\begin{equation}\label{395k-r}
\mathring\cE(t,U^\mathrm{k})+t\mathring\cD(t,U^\mathrm{k})+\int_0^{t} \mathring\cD(s,U^\mathrm{k})\,\mathrm{d}s\leq C\mathfrak{p}(c_1),\quad |\eta_r^\mathrm{k}(t)-1|_\infty+\Big|\frac{\eta^\mathrm{k}(t)}{r}-1\Big|_\infty \leq \frac{1}{2}.
\end{equation}

\smallskip
\textbf{2. Convergence of $(U^\mathrm{k},\eta^\mathrm{k})$.}
Define
\begin{equation*}
\begin{aligned}
\wU^{\mathrm{k}+1}&:=U^{\mathrm{k}+1}-U^\mathrm{k},\qquad &&\weta^{\mathrm{k}+1}:=\eta^{\mathrm{k}+1}-\eta^\mathrm{k}=\int_0^t \wU^{\mathrm{k}+1}(s,r)\,\ds,\\
\Phi^\mathrm{k}&:=(\varrho^\mathrm{k})^{\gamma-1},\qquad &&\widehat \Phi^\mathrm{k+1}\!:=\Phi^\mathrm{k+1}-\Phi^\mathrm{k},
\end{aligned}
\end{equation*}
and introduce the following energy function:
\begin{equation*}
\begin{aligned}
\widehat E_\mathrm{k}(t)&:=\sup_{s\in[0,t]}\big|(r^m\rho_0)^\frac{1}{2} \wU^{\mathrm{k}}\big|_2^2+\int_0^t\Big|(r^m\rho_0)^\frac{1}{2}\big(\wU_r^{\mathrm{k}},\frac{\wU^{\mathrm{k}}}{r}\big)\Big|_2^2\,\ds.   
\end{aligned}
\end{equation*}

Then, based on \eqref{396}, the problem of $(\wU^{\mathrm{k}+1},\weta^{\mathrm{k}+1})$ can be written as 
\begin{equation}\label{398}
\!\!\begin{cases}
\displaystyle r^m\rho_0\wU^{\mathrm{k}+1}_t-2\mu \big(r^m\rho_0\frac{\wU^{\mathrm{k}+1}_r}{(\eta^\mathrm{k}_r)^2}  \big)_r+2\mu mr^m\rho_0\frac{\wU^{\mathrm{k}+1}}{(\eta^\mathrm{k})^2}=\big((r^m\rho_0)^\frac{1}{2}\frac{\mathfrak{q}_1^\mathrm{k}}{\eta^\mathrm{k}_r}\big)_r+(r^m\rho_0)^\frac{1}{2}\frac{\mathfrak{q}_2^\mathrm{k}}{\eta^\mathrm{k}},\\[8pt]
\weta^{\mathrm{k}+1}_t=\wU^{\mathrm{k}+1},\\[4pt]
(\wU^{\mathrm{k}+1},\weta^{\mathrm{k}+1})(r)=(0,0) \qquad\text{for }r\in I,
\end{cases}
\end{equation}
where
\begin{equation*}
\begin{aligned}
\mathfrak{q}_1^\mathrm{k}&:=(r^m\rho_0)^\frac{1}{2} \Big(2\mu \frac{U_{r}^\mathrm{k}}{\eta^\mathrm{k}_r}\big(1-\frac{(\eta^\mathrm{k}_r)^2}{(\eta^{\mathrm{k}-1}_r)^{2}}\big) -A  \big(\widehat \Phi^\mathrm{k}+\Phi^\mathrm{k-1} (1 - \frac{\eta^\mathrm{k}_r}{\eta^\mathrm{k-1}_r})\big)\Big),\\
\mathfrak{q}_2^\mathrm{k}&:=(r^m\rho_0)^\frac{1}{2}\Big(-2\mu m \frac{U^{\mathrm{k}}}{\eta^\mathrm{k}}\big(1-\frac{(\eta^\mathrm{k})^2}{(\eta^{\mathrm{k}-1})^{2}}\big)+Am \big( \widehat \Phi^\mathrm{k}+ \Phi^\mathrm{k-1}(1-\frac{\eta^\mathrm{k}}{\eta^\mathrm{k-1}})\big)\Big).
\end{aligned}
\end{equation*}

\smallskip
\textbf{2.1. Estimates for $(\mathfrak{q}_1^\mathrm{k},\mathfrak{q}_2^\mathrm{k})$.} Since
\begin{align*}
&\begin{aligned}
&\ \big(\frac{\eta^\mathrm{k}}{\eta^\mathrm{k-1}}-1\big)_t=\frac{\wU^\mathrm{k}}{\eta^\mathrm{k-1}}-\frac{U^\mathrm{k-1}}{\eta^\mathrm{k-1}}\big(\frac{\eta^\mathrm{k}}{\eta^\mathrm{k-1}}-1\big)\\
&\implies\frac{\eta^\mathrm{k}}{\eta^\mathrm{k-1}}=1+\int_0^{t} \exp\Big(-\int_s^t\frac{U^\mathrm{k-1}}{\eta^\mathrm{k-1}}\,\mathrm{d}\tau\Big)\frac{\wU^\mathrm{k}}{\eta^\mathrm{k-1}}\,\mathrm{d}s,
\end{aligned}
\end{align*}
it follows from \eqref{395k} that, for all $t\in[0,T']$,
\begin{equation}\label{395kk}
\Big|\frac{\eta^\mathrm{k}}{\eta^\mathrm{k-1}}-1\Big|+\Big|\big(\frac{\eta^\mathrm{k}}{\eta^\mathrm{k-1}}\big)^2-1\Big|\leq Ce^{C\mathfrak{p}(c_1)t}\int_0^{t} \Big|\frac{\wU^\mathrm{k}}{r}\Big|\,\mathrm{d}s.
\end{equation}
Similarly, we can also obtain
\begin{equation}\label{395kk'}
\Big|\frac{\eta_r^\mathrm{k}}{\eta_r^\mathrm{k-1}}-1\Big|+\Big|\big(\frac{\eta_r^\mathrm{k}}{\eta_r^\mathrm{k-1}}\big)^2-1\Big| \leq Ce^{C\mathfrak{p}(c_1)t}\int_0^{t} |\wU^\mathrm{k}_r|\,\mathrm{d}s.
\end{equation}
Moreover, since
\begin{equation*}
\begin{aligned}
\widehat \Phi^\mathrm{k}_t&=-(\gamma-1)\widehat \Phi^\mathrm{k}\big(\frac{U_r^\mathrm{k}}{\eta_r^\mathrm{k}}+\frac{mU^\mathrm{k}}{\eta^\mathrm{k}}\big)-(\gamma-1) \Phi^\mathrm{k-1}\Big(\frac{\wU_r^\mathrm{k}}{\eta_r^\mathrm{k}}+ \frac{U_r^\mathrm{k-1}}{\eta_r^\mathrm{k}}\big(1-\frac{\eta_r^\mathrm{k}}{\eta_r^\mathrm{k-1}}\big)\Big)\\
&\quad -m(\gamma-1) \Phi^\mathrm{k-1}\Big(\frac{\wU^\mathrm{k}}{\eta^\mathrm{k}}+ \frac{U^\mathrm{k-1}}{\eta^\mathrm{k}}\big(1-\frac{\eta^\mathrm{k}}{\eta^\mathrm{k-1}}\big)\Big),
\end{aligned}
\end{equation*}
it follows from \eqref{395k} and \eqref{395kk}--\eqref{395kk'} that, for all $t\in [0,T']$,
\begin{equation}\label{395kkk}
|\widehat \Phi^\mathrm{k}| \leq Ce^{C\mathfrak{p}(c_1)t}\int_0^{t} \Big(|\wU^\mathrm{k}_r|+\Big|\frac{\wU^\mathrm{k}}{r}\Big|\Big)\,\mathrm{d}s.
\end{equation}

Therefore, it follows from \eqref{395k}, \eqref{395kk}--\eqref{395kkk}, and the H\"older and Minkowski inequalities that, for all $t\in [0,T']$ and $\mathrm{k}\in \NN^*$, 
\begin{equation}\label{basic}
\begin{aligned}
|(\mathfrak{q}_1^\mathrm{k},\mathfrak{q}_2^\mathrm{k})(t)|_2^2
\leq C\mathfrak{p}(c_1)te^{C\mathfrak{p}(c_1)t} \int_0^t\Big|(r^m\rho_0)^\frac{1}{2}\big(\wU_r^{\mathrm{k}},\frac{\wU^{\mathrm{k}}}{r}\big)\Big|_2^2\,\ds.
\end{aligned}
\end{equation}

\textbf{2.2.}  Now, multiplying $\eqref{398}_1$ by $\wU^{\mathrm{k}+1}$ and integrating the resulting equality over $I$, we obtain from \eqref{395k}, \eqref{basic}, and the Young inequality that
\begin{equation*}
\begin{aligned}
&\,\frac{\mathrm{d}}{\dt}\big|(r^m\rho_0)^\frac{1}{2} \wU^{\mathrm{k}+1}\big|_2^2+\mu\Big|(r^m\rho_0)^\frac{1}{2}\big(\wU_r^{\mathrm{k+1}},\frac{\wU^{\mathrm{k+1}}}{r}\big)\Big|_2^2\leq C|(\mathfrak{q}_1^\mathrm{k},\mathfrak{q}_2^\mathrm{k})|_2^2\\
&\leq C\mathfrak{p}(c_1)te^{C\mathfrak{p}(c_1)t} \int_0^t\Big|(r^m\rho_0)^\frac{1}{2}\big(\wU_r^{\mathrm{k}},\frac{\wU^{\mathrm{k}}}{r}\big)\Big|_2^2\,\ds.
\end{aligned}
\end{equation*}
Integrating the above over $[0,t]$ leads to
\begin{equation*}
\widehat E_{\mathrm{k}+1}(t)\leq C\mathfrak{p}(c_1)t^2e^{C\mathfrak{p}(c_1)t} \widehat E_\mathrm{k}(t) \qquad \text{for all $t\in [0,T^\prime]$ and $\mathrm{k}\in \NN^*$}.
\end{equation*}

Choosing $t=T_*$ in the above inequality such that
\begin{equation*}
C\mathfrak{p}(c_1)T_*^2e^{C\mathfrak{p}(c_1)T_*}\leq \frac{1}{2},\qquad T_*\leq T',
\end{equation*}
leads to 
\begin{equation}\label{total-bound}
\sum_{\mathrm{k}=1}^{\infty} \widehat E_\mathrm{k}(T_*) \leq \Big(\sum_{\mathrm{k}=0}^{\infty}2^{-\mathrm{k}}\Big) \widehat E_1(T_*)\leq C\mathfrak{p}(c_1).
\end{equation}

Then \eqref{total-bound} implies that $\{U^{\mathrm{k}}\}_{\mathrm{k}\in \NN}$ is a Cauchy sequence that converges to some limit $U$ as $\mathrm{k}\to\infty$ in the following sense:
\begin{equation*} 
\begin{aligned}
(r^m\rho_0)^\frac{1}{2} U^{\mathrm{k}}\to (r^m\rho_0)^\frac{1}{2} U &\qquad  \text{in } C([0,T_*];L^2),\\
(r^m\rho_0)^\frac{1}{2}\big(U^\mathrm{k}_r,\frac{U^\mathrm{k}}{r}\big)\to (r^m\rho_0)^\frac{1}{2}\big(U_r,\frac{U}{r}\big)&\qquad  \text{in } L^2([0,T_*];L^2),
\end{aligned}
\end{equation*}
which also leads to 
\begin{equation}\label{5.10}
U^{\mathrm{k}}\to U \qquad  \text{in } L^2([0,T_*];H^1_{\mathrm{loc}}) \qquad \text{as $\mathrm{k}\to \infty$}.
\end{equation}
On the other hand, from \eqref{395k-r} and $\rho_0^\beta\sim 1-r$, it follows that, for any $t\in [0,T_*]$ and $a\in (0,1)$, 
\begin{equation}\label{5.11}
t\|U^\mathrm{k}\|_{H^4(a,1-a)}^2\leq C(a) (\mathring\cE(t,U)+t\mathring\cD(t,U))\leq C(a)\mathfrak{p}(c_1).
\end{equation}
Hence, it follows \eqref{5.10}--\eqref{5.11} and Lemma \ref{GNinequality'} that, for any $a\in (0,1)$, 
\begin{equation}\label{H333}
U^\mathrm{k}\to U\qquad \text{in }L^1([0,T_*];H^3_\mathrm{loc}) \ \ \text{as $\mathrm{k}\to \infty$}.
\end{equation}

Now, based on \eqref{H333} and $\eqref{396}_2$, we have
\begin{equation}\label{H333'}
\eta^\mathrm{k}=r+\int_0^tU^\mathrm{k}\,\mathrm{d}s\to r+\int_0^tU \,\mathrm{d}s \qquad \text{in } C([0,T_*];H^3_\mathrm{loc}). 
\end{equation}
This also implies that $\{\eta^\mathrm{k}\}_{k\in\NN}$ is a Cauchy sequence in $C([0,T_*];H^3_\mathrm{loc})$ which converges to some limit $\eta$ so that
\begin{equation*}
\eta=r+\int_0^tU \,\mathrm{d}s \qquad \text{for \textit{a.e.} $(t,r)\in (0,T_*)\times (0,1)$}.
\end{equation*}

To recover equation $\eqref{eq:VFBP-La-eta}_1$, we first divide $\eqref{396}_1$ by $(\eta^\mathrm{k})^m\eta_r^\mathrm{k}$ to derive
\begin{equation*}
U^\mathrm{k+1}_t= \frac{1}{\varrho^\mathrm{k}\eta_r^\mathrm{k}} \Big(2\mu \varrho^\mathrm{k} \frac{U_r^\mathrm{k+1}}{\eta_r^\mathrm{k}}+ 2\mu m \varrho^\mathrm{k}\frac{U^\mathrm{k+1}}{\eta^\mathrm{k}} -A (\varrho^\mathrm{k})^{\gamma}\Big)_r- 2\mu m \frac{(\varrho^\mathrm{k})_r U^\mathrm{k+1}}{\varrho^\mathrm{k}\eta^\mathrm{k}\eta_r^\mathrm{k}}.
\end{equation*}
Then, due to Lemma \ref{sobolev-embedding}, $\rho_0^\beta\sim 1-r$, $\rho^\beta_0 \in H^3_{\mathrm{loc}}$, and \eqref{H333}--\eqref{H333'}, we take the limit as $k\to\infty$ in the above and obtain
\begin{equation*} 
U^\mathrm{k}_t\to \frac{1}{\varrho} D_\eta\Big(2\mu \Big(\varrho\big(D_\eta U+ \frac{mU}{\eta}\big)\Big) -A \varrho^{\gamma}\Big)- 2\mu m \frac{ D_\eta\varrho U}{\varrho\eta}\qquad \text{in } L^2([0,T_*];H^1_\mathrm{loc}),
\end{equation*}
which, along with the uniqueness of limits, implies that $\eqref{eq:VFBP-La-eta}_1$ holds for \textit{a.e.} $(t,r)\in (0,T_*)\times (0,1)$.  Moreover, by the lower semi-continuity of weak convergence and \eqref{395k-r}, we have
\begin{equation*}
\mathring\cE(t,U)+t\mathring\cD(t,U)+\int_0^{t} \mathring\cD(s,U)\,\mathrm{d}s\leq C\mathfrak{p}(c_1),\quad |\eta_r(t)-1|_\infty+\Big|\frac{\eta(t)}{r}-1\Big|_\infty \leq \frac{1}{2}.
\end{equation*}

This completes the proof of the existence.

\smallskip
\textbf{3. Uniqueness.} Let $(U^\mathsf{a},\eta^\mathsf{a})$ and $(U^\mathsf{b},\eta^\mathsf{b})$ be two solutions of \eqref{eq:VFBP-La-eta} in $(0,T_*)\times (0,1)$. Define 
\begin{equation*}
(\weta,\wU):=(\eta^\mathsf{b}-\eta^\mathsf{a},U^\mathsf{b}-U^\mathsf{a}),\quad \,\,
\widehat E(t):=\sup_{s\in[0,t]}\big|(r^m\rho_0)^\frac{1}{2} \wU\big|_2^2+\int_0^t\Big|(r^m\rho_0)^\frac{1}{2}\big(\wU_r,\frac{\wU}{r}\big)\Big|_2^2\,\ds.
\end{equation*}

It follows from  \eqref{eq:VFBP-La-eta} that  $(\wU,\weta)$ solves the following problem in $(0,T_*]\times I$:
\begin{equation*}
\begin{cases}
\displaystyle r^m\rho_0\wU_t-2\mu \big(r^m\rho_0\frac{\wU_r}{(\eta^\mathsf{b}_r)^2}  \big)_r+2\mu mr^m\rho_0\frac{\wU}{(\eta^\mathsf{b})^2}=\big((r^m\rho_0)^\frac{1}{2}\frac{\tilde{\mathfrak{q}}_1}{\eta^\mathsf{b}_r}\big)_r+(r^m\rho_0)^\frac{1}{2}\frac{\tilde{\mathfrak{q}}_2}{\eta^\mathsf{b}},\\[8pt]
\weta_t=\wU,\\[4pt]
(\wU,\weta)(r)=(0,0) \qquad\text{for }r\in I,
\end{cases}
\end{equation*}
where
\begin{equation*}
\begin{aligned}
\tilde{\mathfrak{q}}_1 &:=(r^m\rho_0)^\frac{1}{2} \Big(2\mu \frac{U_{r}^\mathsf{b}}{\eta^\mathsf{b}_r}\big(1-\frac{(\eta^\mathsf{b}_r)^2}{(\eta^{\mathsf{a}}_r)^{2}}\big) -A  \big(\widehat \Phi +\Phi^\mathsf{a} (1 - \frac{\eta^\mathsf{b}_r}{\eta^\mathsf{a}_r})\big)\Big),\\
\tilde{\mathfrak{q}}_2 &:=(r^m\rho_0)^\frac{1}{2}\Big(-2\mu m \frac{U^{\mathsf{b}}}{\eta^\mathsf{b}}\big(1-\frac{(\eta^\mathsf{b}_r)^2}{(\eta^{\mathsf{a}}_r)^{2}}\big)+Am \big(\widehat \Phi +\Phi^\mathsf{a} (1 - \frac{\eta^\mathsf{b}_r}{\eta^\mathsf{a}_r})\Big),\\
\Phi^\mathsf{a}&:=\big(\frac{r^m\rho_0}{(\eta^\mathsf{a})^m\eta^\mathsf{a}_r}\big)^{\gamma-1},\quad \Phi^\mathsf{b}  :=\big(\frac{r^m\rho_0}{(\eta^\mathsf{b})^m\eta^\mathsf{b}_r}\big)^{\gamma-1},\quad \widehat \Phi :=\Phi^\mathsf{b}-\Phi^\mathsf{a}. 
\end{aligned}
\end{equation*}

Similarly, we can show that $(\tilde{\mathfrak{q}}_1,\tilde{\mathfrak{q}}_2)$ satisfy \eqref{basic} with $\wU^{\mathrm{k}}$ replaced by $\wU$. Hence, by the same arguments as in Step 2.2, we have 
\begin{equation*}
\frac{\mathrm{d}}{\dt}\big|(r^m\rho_0)^\frac{1}{2} \wU\big|_2^2+\mu\Big|(r^m\rho_0)^\frac{1}{2}\big(\wU_r,\frac{\wU}{r}\big)\Big|_2^2\leq C\mathfrak{p}(c_1)te^{C\mathfrak{p}(c_1)t} \int_0^t\Big|(r^m\rho_0)^\frac{1}{2}\big(\wU_r,\frac{\wU}{r}\big)\Big|_2^2\,\ds,
\end{equation*}
which, together  with the Gr\"onwall inequality, yields that  $\widehat E(t) \equiv 0$, \textit{i.e.}, $U^\mathsf{a}\equiv U^\mathsf{b}$.

\smallskip
\textbf{4. $(U,\eta)$ is classical satisfying \eqref{b1-lo}.} First, the regularity of $U$ and \eqref{b1-lo} can be proved by the same argument as in Steps 6--7 in \S \ref{subsection3.3}. Then the regularity of $\eta$ follows directly from the formula: $\eta_t=U$. Finally, following a similar argument in Step 8 in \S \ref{subsection3.3}, we can show that $\eqref{eq:VFBP-La-eta}_1$ holds pointwise in $(0,T_*]\times \bar I$. 

This completes the proof of Theorem \ref{local-Theorem1.1}.
\end{proof}
 
\appendix
\section{Some Basic Lemmas}\label{appendix A}

For the convenience of readers, we list some basic facts that have been used frequently in this paper. Throughout the following Appendices \ref{appendix A}--\ref{subsection2.2}, let $\rho_0(\boldsymbol{y})=\rho_0(r)$, with $\rho_0(r)$ satisfy \eqref{distance-la}, that is, for some constants $\cK_2>\cK_1>0$ and $\beta\in (\frac{1}{3},\gamma-1]$,
\begin{equation*}
\begin{aligned}
&r^\frac{m}{2}\big(\rho_0^\beta,(\rho_0^\beta)_r,(\rho_0^\beta)_{rr},\frac{(\rho_0^\beta)_r}{r},(\rho_0^\beta)_{rrr},(\frac{(\rho_0^\beta)_r}{r})_r\big)\in L^2(I),\\
&\cK_1(1-r)^\frac{1}{\beta}\leq \rho_0(r)\leq \cK_2(1-r)^\frac{1}{\beta} \qquad\,\, \text{for all $r\in I$}.
\end{aligned}
\end{equation*}

The first lemma concerns the separability and density of the weighted Sobolev spaces. 
\begin{Lemma}[\cite{kufner}]\label{W-space}
Let $\vartheta_1,\vartheta_2\in (-1,\infty)$, and let $d=d(r)=r^{\vartheta_1}(1-r)^{\vartheta_2}$ be a function defined on $I$. Then, for $k\in \ZZ$, $H^k_{d}$ is a reflexive separable Banach space. Moreover, if $k\in \mathbb{N}$, $C^\infty(\bar I)$ is dense in $H^k_{d}$ with respect to the norm $\norm{\cdot}_{k,d}$. 
\end{Lemma}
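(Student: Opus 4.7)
The plan is to prove the two assertions separately using the standard framework from Kufner's monograph on weighted Sobolev spaces with admissible weights. For the Banach space structure, I would first treat the case $k \geq 0$ by realizing $H^k_d$ as a closed subspace of the product Hilbert space $(L^2_d)^{k+1}$ via the map $\Psi(f) = (f, \partial_r f, \ldots, \partial_r^k f)$. Closedness follows because $L^2_d$-convergence implies $L^1_{\mathrm{loc}}(I)$-convergence: on any compact $K \subset I$, the weight $d$ is bounded below by a positive constant, so $d^{-1}$ is locally bounded on $K$, and the Cauchy--Schwarz inequality gives $\int_K |f| \, dr \leq |K|^{1/2}(\inf_K d)^{-1/2} \|f\|_{L^2_d}$. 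Hence distributional derivatives of a convergent sequence in $L^2_d$ converge as distributions, so if $\Psi(f_n) \to (g_0,\ldots,g_k)$ in the product, then $g_j = \partial_r^j g_0$ in $\mathcal{D}'(I)$, i.e., the image is closed. The base space $L^2_d$ is isometrically isomorphic to $L^2(I)$ via the unitary map $f \mapsto \sqrt{d}\,f$, hence reflexive and separable, and these properties pass to countable products and closed subspaces. The case $k < 0$ follows by duality from $H^k_d := (H^{-k}_d)^*$, since duals of reflexive separable Banach spaces are themselves reflexive and separable.

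For the density of $C^\infty(\bar I)$ when $k \in \mathbb{N}$, I would use the classical two-step approximation: first reduce to functions supported in a compact subinterval via a smooth truncation, then mollify. Concretely, given $f \in H^k_d$ and $\varepsilon \in (0, 1/4)$, introduce a cutoff $\zeta_\varepsilon \in C^\infty_{\mathrm{c}}(I)$ with $\zeta_\varepsilon \equiv 1$ on $[2\varepsilon, 1-2\varepsilon]$, $\support \zeta_\varepsilon \subset [\varepsilon, 1-\varepsilon]$, and $|\partial_r^i \zeta_\varepsilon| \leq C_i \varepsilon^{-i}$. On the subinterval $[\varepsilon, 1-\varepsilon]$, the weight $d$ is bounded above and below by strictly positive constants depending on $\varepsilon$, so mollification of $\zeta_\varepsilon f$ against a smooth compactly supported kernel yields $C^\infty(\bar I)$ functions converging to $\zeta_\varepsilon f$ in the unweighted $H^k$ norm on $I$, hence in $\|\cdot\|_{k,d}$. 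A standard diagonal argument then produces $C^\infty(\bar I)$ approximants of $f$ itself, provided one can establish $\zeta_\varepsilon f \to f$ in $H^k_d$ as $\varepsilon \to 0$.

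The main obstacle is precisely this truncation convergence. Applying the Leibniz rule, the difference $\partial_r^k(\zeta_\varepsilon f - f)$ produces terms of the form $(\zeta_\varepsilon - 1)\partial_r^k f$, which is easy (absolute continuity of the integral gives $\int_0^{2\varepsilon} d\,|\partial_r^k f|^2 dr + \int_{1-2\varepsilon}^1 d\,|\partial_r^k f|^2 dr \to 0$), and cross terms $\partial_r^i \zeta_\varepsilon \cdot \partial_r^{k-i} f$ with $1 \leq i \leq k$ whose weighted norms are of order $\varepsilon^{-2i} \int_{\varepsilon}^{2\varepsilon} d(r)\,|\partial_r^{k-i}f|^2\,dr$ and similarly near $r=1$. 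Controlling these requires weighted Hardy-type inequalities
\begin{equation*}
\int_0^{2\varepsilon} r^{\vartheta_1 - 2i} |g(r)|^2 \, dr \leq C_i \int_0^{3\varepsilon} r^{\vartheta_1} |\partial_r^i g(r)|^2 \, dr + (\text{boundary contribution at } r = 3\varepsilon),
\end{equation*}
valid precisely when $\vartheta_1 > -1$ (and the analogous estimate near $r=1$ requiring $\vartheta_2 > -1$); iterating these inequalities bounds the cross-term errors by $\int_0^{3\varepsilon} d(r)|\partial_r^k f|^2 dr$ plus vanishing boundary traces, both of which tend to $0$ as $\varepsilon \to 0$. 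The threshold $\vartheta_j > -1$ is thus exactly the hypothesis that simultaneously ensures local integrability of $d$ near the endpoints and the validity of the Hardy inequalities, pinpointing where the assumption is genuinely used.
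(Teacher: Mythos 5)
Your treatment of the Banach-space structure is correct: embedding $H^k_d$ into $(L^2_d)^{k+1}$ via $\Psi(f)=(f,\partial_rf,\ldots,\partial_r^kf)$, observing that $L^2_d$-convergence forces distributional convergence on compact subsets of $(0,1)$ where $d$ is bounded above and below (so the image of $\Psi$ is closed), noting that $L^2_d$ is unitarily equivalent to $L^2$ via $f\mapsto\sqrt{d}\,f$, and then passing reflexivity and separability to finite products, closed subspaces, and (for $k<0$) duals. That portion of the argument is complete as written.

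The density argument, however, has a gap that cannot be repaired within your framework. After truncating by $\zeta_\varepsilon$ and mollifying you produce approximants in $C^\infty_{\mathrm c}(I)$, but the $H^k_d$-closure of $C^\infty_{\mathrm c}(I)$ is a \emph{proper} subspace of $H^k_d$ whenever $\vartheta_j<1$. The simplest case is already a counterexample: take $\vartheta_1=\vartheta_2=0$ and $k=1$, so $H^1_d$ is the unweighted $H^1(0,1)$, and take $f\equiv 1$. Then
\begin{equation*}
\|\partial_r(\zeta_\varepsilon-1)\|_{L^2}^2 \gtrsim \varepsilon\cdot\varepsilon^{-2}=\varepsilon^{-1}\to\infty,
\end{equation*}
so $\zeta_\varepsilon f\not\to f$ in $H^1$; indeed, $\overline{C^\infty_{\mathrm c}(0,1)}^{\,H^1}=H^1_0(0,1)\neq H^1(0,1)$. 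Your Hardy inequality cannot save the estimate because it is false in the range you claim: for $g\equiv 1$, $\vartheta_1=0$, $i=1$ the left-hand side $\int_0^{2\varepsilon}r^{-2}\,dr$ is already $+\infty$ while the right-hand side is finite. The Hardy inequality $\int_0^a r^{\alpha}|g|^2\lesssim\int_0^a r^{\alpha+2}|g'|^2$ (plus boundary) requires either $\alpha<-1$ together with $g(0)=0$, or $\alpha>-1$ together with decay at the outer endpoint — neither of which holds in your setting. The hypothesis $\vartheta_j>-1$ is what makes the weight integrable at the endpoint (this is used in the easy zeroth-order term and in the correct argument below), but it is not the threshold that validates the Hardy estimates your cross-term analysis needs, and it does not make $C^\infty_{\mathrm c}$ dense.

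The standard remedy, and the one Kufner's proof rests on, is to move \emph{away from} the singular endpoint rather than cut down to it, so that boundary values are preserved up to a small shift instead of being annihilated. Concretely, for $f\in H^k_d$ one sets $f_\delta(r):=f\big(\delta+(1-2\delta)r\big)$; since $d$ is bounded above and below on $[\delta,1-\delta]$, the dilated function $f_\delta$ belongs to the unweighted $H^k(0,1)$ and hence can be approximated in $H^k$ (a fortiori in $H^k_d$) by $C^\infty(\bar I)$ functions with nontrivial boundary jets. One then shows $f_\delta\to f$ in $H^k_d$ by a change of variables: this is the step where $\vartheta_j>-1$ is genuinely used, via the absolute continuity of $\int d\,|\partial_r^jf|^2\,dr$ and pointwise convergence of the Jacobian-weight ratio. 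The structural point your proposal misses is that truncation and dilation are not interchangeable here — cutoff forces boundary values to zero, dilation does not, and for sub-critical exponents only the latter can reach all of $H^k_d$.
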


The second lemma concerns the well-known interpolation theory for the $H^k$ spaces.
\begin{Lemma}[\cite{leoni}]\label{GNinequality'} 
Let $J\subset \RR$ be some open interval and $F\in H^p(J)\cap H^q(J)$ $(p,q\geq 0)$. Then $F\in H^l(J)$ for $l=p\vartheta+q(1-\vartheta)$ and  $0\leq \vartheta\leq 1$, and the following inequality holds{\rm:}
\begin{equation*}
\norm{F}_{l}\leq C \norm{F}_{p}^\vartheta\norm{F}_{q}^{1-\vartheta},
\end{equation*}
where $C>0$ is a constant depending only on $(p, q,\vartheta)$.
\end{Lemma}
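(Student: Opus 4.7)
\textbf{Proof proposal for Lemma \ref{GNinequality'}.} The plan is to reduce the statement on a general open interval $J \subset \mathbb{R}$ to the case $J = \mathbb{R}$ via a Sobolev extension operator, and then to prove the whole-line inequality by Fourier-analytic interpolation. First, I would invoke a universal extension operator $E : H^s(J) \to H^s(\mathbb{R})$, valid simultaneously for every $s \in [0, \max\{p,q\}]$ (for example, the Stein extension, or for an interval the reflection/partition-of-unity construction), so that $\|E F\|_{H^s(\mathbb{R})} \leq C_s \|F\|_{H^s(J)}$. Since the constants $C_s$ depend continuously on $s$ in a compact parameter range, they are uniformly bounded in $p, q, \vartheta$ in the regime of interest, and the restriction $(EF)|_J = F$ gives $\|F\|_{H^l(J)} \leq \|EF\|_{H^l(\mathbb{R})}$. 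Hence it suffices to establish the inequality with $\|\cdot\|_{H^s(J)}$ replaced by $\|\cdot\|_{H^s(\mathbb{R})}$.

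On the full line I would use the Fourier characterisation $\|G\|_{H^s(\mathbb{R})}^2 = \int_{\mathbb{R}}(1+|\xi|^2)^{s}|\widehat G(\xi)|^2\,\mathrm{d}\xi$. Writing
\begin{equation*}
(1+|\xi|^2)^{l}|\widehat G(\xi)|^2 = \bigl((1+|\xi|^2)^{p}|\widehat G(\xi)|^2\bigr)^{\vartheta}\bigl((1+|\xi|^2)^{q}|\widehat G(\xi)|^2\bigr)^{1-\vartheta},
\end{equation*}
since $l = p\vartheta + q(1-\vartheta)$, H\"older's inequality with conjugate exponents $1/\vartheta$ and $1/(1-\vartheta)$ yields
\begin{equation*}
\int_{\mathbb{R}} (1+|\xi|^2)^{l}|\widehat G(\xi)|^2\,\mathrm{d}\xi \leq \Bigl(\int_{\mathbb{R}} (1+|\xi|^2)^{p}|\widehat G(\xi)|^2\,\mathrm{d}\xi\Bigr)^{\vartheta}\Bigl(\int_{\mathbb{R}}(1+|\xi|^2)^{q}|\widehat G(\xi)|^2\,\mathrm{d}\xi\Bigr)^{1-\vartheta},
\end{equation*}
which is exactly $\|G\|_{H^l(\mathbb{R})} \leq \|G\|_{H^p(\mathbb{R})}^{\vartheta}\|G\|_{H^q(\mathbb{R})}^{1-\vartheta}$. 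Applying this to $G = EF$ and combining with the extension bound produces the stated inequality on $J$ with $C = C(p,q,\vartheta)$.

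The only genuine subtlety is the extension step: the standard Stein/Calderón extension is defined for integer orders, while for non-integer $s$ one either uses the same operator together with a complex interpolation argument between the integer orders $\lfloor s\rfloor$ and $\lceil s \rceil$, or else relies on a reflection-based extension that preserves all non-negative fractional orders by direct inspection of its Fourier symbol. In either case the resulting operator is bounded $H^s(J) \to H^s(\mathbb{R})$ for every $s \in [0,\max\{p,q\}]$ with constants that are bounded in terms of $(p,q)$ alone. This is the main technical obstacle, but it is entirely standard and reduces the lemma to the one-line Fourier computation above. Once both ingredients are in place, the inequality on $J$ follows with a constant $C$ depending only on $(p,q,\vartheta)$, as claimed.
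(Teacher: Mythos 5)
The paper does not give a proof of this lemma at all; it is cited directly to Leoni's monograph as a known fact, so there is no in-paper argument against which to compare yours. Your proposed proof is the standard and correct one: extend to $\mathbb{R}$ by a bounded extension operator valid across a range of Sobolev orders, then obtain the inequality on $\mathbb{R}$ from the Fourier characterisation $\|G\|_{H^s(\mathbb{R})}^2 = \int (1+|\xi|^2)^s |\widehat G(\xi)|^2\,\mathrm{d}\xi$ together with H\"older's inequality with exponents $1/\vartheta$ and $1/(1-\vartheta)$, which makes the whole-line estimate hold with constant $1$. That core computation is exactly right.

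One point worth being more careful about: the operator norm of a Sobolev extension operator $E : H^s(J)\to H^s(\mathbb{R})$ for a bounded interval $J$ generally depends on $J$ (in particular it degenerates as $|J| \to 0$), so after the extension step the constant $C$ in the final inequality really depends on $(p,q,\vartheta,J)$, not only on $(p,q,\vartheta)$ as you (and the paper's statement) assert. This imprecision is already present in the lemma as stated in the paper, and it is harmless in the places the lemma is invoked there, since $J$ is always a fixed interval; but strictly speaking your claim that the extension constants are ``bounded in terms of $(p,q)$ alone'' is not correct without fixing $J$. With that caveat the argument is complete and is the same reduction-to-$\mathbb{R}$ route found in the cited reference.
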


The third lemma is on the classical Sobolev embedding theorem.
\begin{Lemma}[\cite{leoni}]\label{sobolev-embedding}
Let $J\subset \RR$ be some open interval and $f=f(r)$ be some function on $J$. Then there exist two positive constants $(s_0,C)$, depending only on the Lebesgue measure of $J$, such that
\begin{equation*}
\begin{aligned}
&\|f\|_{L^\infty(J)} \leq s_0\|f\|_{L^1(J)}+C\|f_r\|_{L^1(J)} &&\quad \text{for all }f\in W^{1,1}(J),\\[4pt]
&\|f\|_{L^\infty(J)} \leq s_0\|f\|_{L^2(J)}+C\|f_r\|_{L^2(J)} &&\quad \text{for all }f\in H^1(J),\\
&\|f\|_{L^\infty(J)} \leq s_0\|f\|_{L^2(J)}+ C\|f\|^\frac{1}{2}_{L^2(J)}\|f_r\|^\frac{1}{2}_{L^2(J)} &&\quad \text{for all }f\in H^1(J).
\end{aligned}
\end{equation*}
In particular, $W^{1,1}(J)\into C(\bar J)$ and $H^1(J)\into C(\bar J)$ continuously{\rm ;} moreover, 
if $f(r_0)=0$ for some $r_0\in \bar J$, then $s_0=0$ can be chosen in the above.
\end{Lemma}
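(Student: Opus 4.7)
The plan is to prove all three inequalities by the same unified approach: combine the fundamental theorem of calculus with an averaging argument over the reference point. Since the three estimates and the two embeddings are all classical consequences of the same elementary identity, no genuine obstacle arises; the work is in choosing the right representation of $f$ for each case.

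First I would reduce to smooth functions by density and then, for any $r,\tilde r\in \bar J$, write
\begin{equation*}
f(r)=f(\tilde r)+\int_{\tilde r}^{r}f_{r}(s)\,\mathrm{d}s.
\end{equation*}
Taking absolute values and bounding the integral by $\|f_r\|_{L^1(J)}$ (respectively by $|J|^{1/2}\|f_r\|_{L^2(J)}$ via Cauchy--Schwarz) yields the pointwise bound $|f(r)|\le |f(\tilde r)|+\|f_r\|_{L^1(J)}$ for every $\tilde r\in \bar J$. Averaging $\tilde r$ over $J$ then gives
\begin{equation*}
|f(r)|\le \tfrac{1}{|J|}\|f\|_{L^1(J)}+\|f_r\|_{L^1(J)},\qquad |f(r)|\le |J|^{-1/2}\|f\|_{L^2(J)}+|J|^{1/2}\|f_r\|_{L^2(J)},
\end{equation*}
which are exactly the first two inequalities with $s_0=|J|^{-1}$ and $s_0=|J|^{-1/2}$, and constants $C$ depending only on $|J|$. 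Taking the supremum over $r$ also delivers the continuous embeddings $W^{1,1}(J)\hookrightarrow C(\bar J)$ and $H^1(J)\hookrightarrow C(\bar J)$.

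For the third (Gagliardo--Nirenberg type) inequality, I would instead apply the fundamental theorem to $f^{2}$:
\begin{equation*}
f^{2}(r)=f^{2}(\tilde r)+2\int_{\tilde r}^{r}f(s)f_{r}(s)\,\mathrm{d}s,
\end{equation*}
and use Cauchy--Schwarz on the right to get $|f(r)|^{2}\le f^{2}(\tilde r)+2\|f\|_{L^2(J)}\|f_r\|_{L^2(J)}$. Averaging over $\tilde r\in J$ and taking square roots produces
\begin{equation*}
|f(r)|\le |J|^{-1/2}\|f\|_{L^2(J)}+\sqrt{2}\,\|f\|_{L^2(J)}^{1/2}\|f_r\|_{L^2(J)}^{1/2},
\end{equation*}
which is the claimed inequality.

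Finally, for the improvement when $f(r_{0})=0$ for some $r_{0}\in\bar J$, I would not average at all: simply set $\tilde r=r_{0}$ in each of the two identities above. The $f(\tilde r)$ and $f^{2}(\tilde r)$ terms vanish identically, leaving $|f(r)|\le\|f_{r}\|_{L^{1}(J)}$ (respectively $\le|J|^{1/2}\|f_{r}\|_{L^{2}(J)}$) and $|f(r)|^{2}\le 2\|f\|_{L^{2}(J)}\|f_{r}\|_{L^{2}(J)}$. Hence $s_{0}=0$ is admissible in every inequality, completing the proof.
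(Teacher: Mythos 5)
Your proof is correct, and there is nothing in the paper to compare it against: Lemma \ref{sobolev-embedding} is stated as a cited fact from Leoni's textbook \cite{leoni} and is not proved in the paper. Your argument — the fundamental theorem of calculus (applied to $f$ for the first two inequalities and to $f^2$ for the interpolation inequality), followed by averaging the reference point $\tilde r$ over $J$ to produce the $s_0\|f\|$ term, and setting $\tilde r=r_0$ to drop it when $f(r_0)=0$ — is exactly the standard elementary proof of these one-dimensional embeddings. One small remark: your averaging step implicitly requires $|J|<\infty$, which is consistent with how the lemma is invoked throughout the paper (always on $I$ or bounded subintervals), so no harm is done; just be aware that the second inequality genuinely needs a bounded interval, whereas the first and third hold on $\mathbb{R}$ with $s_0=0$ by letting $\tilde r\to\pm\infty$ along a sequence where $|f(\tilde r)|\to 0$.
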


The next two lemmas are on the Hardy inequality and some weighted interpolation inequality. In Lemmas \ref{hardy-inequality}--\ref{GNinequality}, we let $0\leq a<b<\infty$ and $J=(a,b)$, and let $d=d(r)$ be some function on $J$, taking one of the following two forms{\rm:}
\begin{equation*}
d(r)=r-a,\qquad \text{or} \ \ d(r)=b-r.
\end{equation*}
\begin{Lemma}[\cites{opic1,opic2}]\label{hardy-inequality}
Let $p\in [1,\infty]$ and $\vartheta>-\frac{1}{p}$ $(\vartheta>0\text{ if }p=\infty)$. Then, for any $f$ such that $d^{\vartheta+\frac{1}{2}+\frac{1}{p}} (f,f_r)\in L^2(J)$,
\begin{enumerate}
\item[{\rm (i)}] If $p\in [1,2)$, for any $\varepsilon>0$, there exists a constant $C_1>0$, depending only on $(\varepsilon, p,a,b,\vartheta)$, such that
\begin{equation*}
\|d^{\vartheta+\varepsilon} f\|_{L^p(J)} \leq C_1 \big\|d^{\vartheta+\frac{1}{2}+\frac{1}{p}} (f,f_r)\big\|_{L^2(J)};
\end{equation*}
\item[{\rm (ii)}] If $p\in [2,\infty]$, there exists a constant $C_2>0$, which depends only on $(p,a,b,\vartheta)$ if $p\ne \infty$ and depends only on $(a,b,\vartheta)$ if $p=\infty$, such that
\begin{equation*}
\|d^\vartheta f\|_{L^p(J)} \leq C_2 \big\|d^{\vartheta+\frac{1}{2}+\frac{1}{p}} (f,f_r)\big\|_{L^2(J)}. 
\end{equation*}
Moreover, if $p=\infty$, $d^\vartheta f\in C(\bar J)$.
\end{enumerate}
\end{Lemma}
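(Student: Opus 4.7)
The plan is to prove case (ii) first, working through the endpoints $p=\infty$ and $p=2$ before handling general $p \in (2,\infty)$ by adapting the same integration-by-parts argument, and then to deduce case (i) from case (ii) at $p=2$ via Hölder with an $\varepsilon$-shift. Without loss of generality I treat $d(r)=r-a$; the case $d(r)=b-r$ is symmetric. Fix an interior point $r_*\in(a,b)$. On $[r_*,b]$ the weight is bounded below, so the hypothesis implies $f\in H^1([r_*,b]) \hookrightarrow C([r_*,b])$, giving a pointwise bound on $f(r)$ for $r\geq r_*$ in terms of $\|d^{\vartheta+1/2+1/p}(f,f_r)\|_{L^2(J)}$. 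This controls all boundary terms that appear below.

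For the $L^\infty$ statement of (ii) with $\vartheta>0$, I would represent $f(r)=f(r_*)-\int_r^{r_*} f_s(s)\,\mathrm{d}s$ for $r\in(a,r_*)$, multiply by $d(r)^\vartheta$, and estimate the integral via Cauchy--Schwarz with the split $|f_s| = (d^{\vartheta+1/2}|f_s|)\cdot d^{-\vartheta-1/2}$. The auxiliary integral $\int_r^{r_*} d(s)^{-2\vartheta-1}\,\mathrm{d}s$ is finite because $\vartheta>0$ and equals $\frac{1}{2\vartheta}(d(r)^{-2\vartheta}-d(r_*)^{-2\vartheta})\leq C d(r)^{-2\vartheta}$. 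Multiplying through, the factors of $d(r)^\vartheta$ cancel, yielding $d(r)^\vartheta |f(r)|\leq C\|d^{\vartheta+1/2}(f,f_r)\|_{L^2(J)}$. The same computation shows $d(r)^\vartheta f(r)\to 0$ as $r\to a^+$, so $d^\vartheta f$ extends continuously to $\bar J$.

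For $p\in[2,\infty)$, the central identity is $(d^{\vartheta p+1}|f|^p)_r=(\vartheta p+1)d^{\vartheta p}|f|^p+p\,d^{\vartheta p+1}|f|^{p-2}ff_r$. Integrating over $J$ (the boundary term at $r=a$ vanishes by a density argument using $\vartheta p+1>0$, which follows from $\vartheta>-1/p$; the term at $r=b$ is controlled by $|f(b)|^p$ via the Sobolev remark above) yields
\begin{equation*}
(\vartheta p+1)\|d^\vartheta f\|_{L^p(J)}^p \leq C\|d^{\vartheta+1/2+1/p}(f,f_r)\|_{L^2(J)}^p + p\int_J d^{\vartheta p+1}|f|^{p-1}|f_r|\,\mathrm{d}r.
\end{equation*}
For the integral, I would apply Cauchy--Schwarz with the weight split $d^{\vartheta p+1}=d^{\vartheta+1/2+1/p}\cdot d^{\vartheta(p-1)+1/2-1/p}$ to extract a factor $\|d^{\vartheta+1/2+1/p}f_r\|_{L^2(J)}$, leaving $\big(\int d^{2\vartheta(p-1)+1-2/p}|f|^{2(p-1)}\,\mathrm{d}r\big)^{1/2}$. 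Writing $|f|^{2(p-1)}=|f|^p|f|^{p-2}$ and factoring the weight as $d^{\vartheta p}\cdot (d^{\vartheta+1/p})^{p-2}$, this integral is bounded by $\|d^\vartheta f\|_{L^p(J)}^p\cdot \|d^{\vartheta+1/p}f\|_{L^\infty(J)}^{p-2}$. The $L^\infty$-factor is controlled by the $L^\infty$-estimate just proved, applied with weight $\vartheta+1/p>0$. Young's inequality then absorbs $\|d^\vartheta f\|_{L^p(J)}^p$ into the left side. The main obstacle in the whole argument lies here: the $L^p$-norm appears on both sides via the $|f|^{p-1}$ factor, and closing the self-referential estimate requires carefully invoking the previously established $L^\infty$-bound with a shifted weight.

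For case (i) with $p\in[1,2)$, given $\varepsilon>0$ I would pick $\delta\in[1/p-1/2,\,\varepsilon+1/p-1/2)$; this interval is nonempty precisely because $\varepsilon>0$. Case (ii) at $p=2$ applied with weight $\vartheta+\delta$ gives $\|d^{\vartheta+\delta}f\|_{L^2(J)}\leq C\|d^{\vartheta+\delta+1}(f,f_r)\|_{L^2(J)}\leq C\|d^{\vartheta+1/2+1/p}(f,f_r)\|_{L^2(J)}$, using $\delta+1\geq 1/2+1/p$ and boundedness of $d$. Hölder with exponents $2$ and $r=2p/(2-p)$ then produces $\|d^{\vartheta+\varepsilon}f\|_{L^p(J)}\leq \|d^{\varepsilon-\delta}\|_{L^r(J)}\cdot \|d^{\vartheta+\delta}f\|_{L^2(J)}$, and the upper bound $\delta<\varepsilon+1/p-1/2$ is precisely what ensures $(\varepsilon-\delta)r>-1$, keeping the $L^r$-factor finite. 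The $\varepsilon$-loss is essential: setting $\varepsilon=0$ would force $\delta=1/p-1/2$, but then $\int d^{(\varepsilon-\delta)r}\,\mathrm{d}r=\int d^{-1}\,\mathrm{d}r$ diverges near $r=a$.
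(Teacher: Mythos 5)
The paper cites Lemma \ref{hardy-inequality} from the Opic--Gurka and Brown--Opic references and gives no proof of its own, so there is no internal argument to compare against; the nearest in-house analogue is the proof of the interpolation inequality Lemma \ref{GNinequality}, which uses the same integration-by-parts-with-power-weight plus Cauchy--Schwarz plus Young template that you adopt. Your plan is sound and all the exponent arithmetic checks out: the weight split $d^{\vartheta p+1}=d^{\vartheta+1/2+1/p}\cdot d^{\vartheta(p-1)+1/2-1/p}$ is exact, the shifted $L^\infty$ weight $\vartheta+1/p$ is positive precisely under $\vartheta>-1/p$ so that your first step furnishes the needed $\|d^{\vartheta+1/p}f\|_{L^\infty}$ control, and the window $\delta\in[1/p-1/2,\varepsilon+1/p-1/2)$ is exactly the range for which the $L^2$-estimate applies (since $\vartheta+\delta>-1/2$) and the H\"older factor $\|d^{\varepsilon-\delta}\|_{L^r}$ with $r=2p/(2-p)$ is finite (since $(\varepsilon-\delta)r>-1$).

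Two points deserve to be made explicit in a careful writeup, though neither is fatal. First, the assertion that ``the same computation shows $d(r)^\vartheta f(r)\to 0$ as $r\to a^+$'' is too quick: your bound is $d(r)^\vartheta|f(r)|\lesssim d(r)^\vartheta|f(r_*)|+\|d^{\vartheta+1/2}f_r\|_{L^2([r,r_*])}$, and the second term does not tend to zero as $r\to a^+$ by itself. Either split the tail integral $\int_r^{r_*}=\int_r^{r_1}+\int_{r_1}^{r_*}$ and use that $\|d^{\vartheta+1/2}f_r\|_{L^2([a,r_1])}\to 0$ as $r_1\to a^+$, or (more cleanly) run the density argument supplied by Lemma \ref{W-space}: for smooth $f^k$ one has $d(a)^\vartheta f^k(a)=0$, your $L^\infty$-estimate applied to $f^k-f^m$ shows $d^\vartheta f^k$ is Cauchy in $C(\bar J)$, and the limit identifies $d^\vartheta f$ with a continuous representative vanishing at $r=a$. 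Second, the Young absorption in the $p\in[2,\infty)$ step implicitly requires $\|d^\vartheta f\|_{L^p(J)}<\infty$ a priori; again density handles this (establish the inequality for $f^k\in C^\infty(\bar J)$ with a constant independent of $k$, then pass to the limit). You invoke density to kill the boundary term at $r=a$, but should note that it also underwrites both of these steps.
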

 
\begin{Lemma}[\cite{ZJW}]\label{GNinequality}
Let $p\in [2,\infty]$ and $\vartheta>-\frac{1}{p}$ $(\vartheta>0\text{ if }p=\infty)$. Then, for any $f$ satisfying $d^{\vartheta+\frac{1}{p}} (f,f_r)\in L^2(J)$, there exists a constant $C>0$, which depends only on $(p,a,b,\vartheta)$ if $p\ne \infty$ and depends only on $(a,b,\vartheta)$ if $p=\infty$, such that
\begin{equation}\label{G-N1}
\|d^\vartheta f\|_{L^p(J)} \leq C\big(\big\|d^{\vartheta+\frac{1}{p}} f\big\|_{L^2(J)}+\big\|d^{\vartheta+\frac{1}{p}} f\big\|_{L^2(J)}^\frac{1}{2}\big\|d^{\vartheta+\frac{1}{p}} f_r\big\|_{L^2(J)}^\frac{1}{2}\big).
\end{equation}
\end{Lemma}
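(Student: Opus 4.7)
The inequality is a weighted Gagliardo--Nirenberg-type refinement of Hardy's inequality from Lemma \ref{hardy-inequality}(ii): on the right-hand side, the single norm $\|d^{\vartheta+1/p+1/2}(f,f_r)\|_{L^2}$ that Hardy would produce is replaced by the sharper mixed expression $\|d^{\vartheta+1/p}f\|_{L^2}+\|d^{\vartheta+1/p}f\|_{L^2}^{1/2}\|d^{\vartheta+1/p}f_r\|_{L^2}^{1/2}$, which is strictly smaller when the derivative norm dominates. By symmetry, we may assume $d(r)=r-a$. The plan is to handle the endpoint cases $p=2$ and $p=\infty$ directly by integration by parts, and then derive the intermediate case $p\in(2,\infty)$ by applying the $p=\infty$ estimate with a \emph{shifted} exponent.

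For the $p=2$ case, I would use the identity $(d^{2\vartheta+1})_r=(2\vartheta+1)\,d^{2\vartheta}$ to integrate $\int_a^b d^{2\vartheta}f^2\,\mathrm{d}r$ by parts. The hypothesis $\vartheta>-1/2$ forces the boundary contribution at $r=a$ to vanish; the boundary contribution at $r=b$ equals $(b-a)^{2\vartheta+1}|f(b)|^2$, and it is controlled by applying Lemma \ref{sobolev-embedding} on the subinterval $J_\sharp:=\big(\tfrac{a+b}{2},b\big)$, where $d$ is bounded above and below so that the weights $d^{\vartheta+1/2}$ are equivalent to constants. The remaining bulk integral $\int d^{2\vartheta+1}ff_r\,\mathrm{d}r$ is exactly of Cauchy--Schwarz type and produces the sharp product $\|d^{\vartheta+1/2}f\|_{L^2}\|d^{\vartheta+1/2}f_r\|_{L^2}$, which after taking a square root yields the desired geometric-mean bound.

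For the $p=\infty$ case (where $\vartheta>0$), I would split $J=J_\flat\cup J_\sharp$ with $J_\flat:=\big(a,\tfrac{a+b}{2}\big)$. On $J_\sharp$ the weight is bounded above and below, so Lemma \ref{sobolev-embedding} directly gives the desired bound. On $J_\flat$ I would integrate the pointwise identity $\big(d^{2\vartheta}f^2\big)_s=2\vartheta\,d^{2\vartheta-1}d'\,f^2+2\,d^{2\vartheta}f f_s$ from $r$ to $(a+b)/2$, so that the problem reduces to controlling $\int d^{2\vartheta-1}f^2\,\mathrm{d}s$. The latter is handled by an auxiliary boundary-Hardy identity (itself a further integration by parts whose boundary contribution at $r=a$ vanishes thanks to $\vartheta>0$), producing first the additive bound $\|d^\vartheta f\|_\infty\leq C(\|d^\vartheta f\|_{L^2}+\|d^\vartheta f_r\|_{L^2})$. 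A scaling argument in the weight parameter then upgrades this additive bound to the desired geometric-mean form.

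For general $p\in(2,\infty)$, I would repeat the $p=2$ integration by parts to obtain $\int d^{p\vartheta}|f|^p\,\mathrm{d}r\leq C(b-a)^{p\vartheta+1}|f(b)|^p+C\int d^{p\vartheta+1}|f|^{p-1}|f_r|\,\mathrm{d}r$, the boundary term again being handled by Sobolev on $J_\sharp$. The bulk term is attacked via the algebraic identity $d^{p\vartheta+1}=(d^{\vartheta+1/p})^p$: factoring the integrand as $(d^{\vartheta+1/p}|f|)^{p-2}\cdot(d^{\vartheta+1/p}|f|)(d^{\vartheta+1/p}|f_r|)$ and applying H\"older pulls out a factor $\|d^{\vartheta+1/p}f\|_\infty^{p-2}$. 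The main obstacle is to control this $L^\infty$ factor sharply enough: the naive Hardy bound $\|d^{\vartheta+1/p}f\|_\infty\leq C(X+Y)$, with $X=\|d^{\vartheta+1/p}f\|_{L^2}$ and $Y=\|d^{\vartheta+1/p}f_r\|_{L^2}$, is not tight, but because the assumption $\vartheta>-1/p$ forces $\vartheta+1/p>0$, the already-established $p=\infty$ case applied with the shifted exponent $\vartheta+1/p$ yields $\|d^{\vartheta+1/p}f\|_\infty\leq C(X+X^{1/2}Y^{1/2})$. A Young inequality with dual exponents $p/(p-2)$ and $p/2$ then allows the resulting contribution to be absorbed into the left-hand side. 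The shift $\vartheta\mapsto\vartheta+1/p$ is precisely what converts the assumption $\vartheta>-1/p$ into the strict positivity required to invoke the $p=\infty$ endpoint, and this reduction is the key new ingredient beyond what Hardy's inequality alone provides.
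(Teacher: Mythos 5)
The high-level plan (handle $p=2$, then $p=\infty$, then intermediate $p$ by applying the $p=\infty$ endpoint with a shifted exponent) is the same as the paper's, and your $p=2$ argument works: your Sobolev-on-$J_\sharp$ treatment of the boundary term $|f(b)|^2$ is a sound alternative to the paper's algebraic elimination, which combines the two integration-by-parts identities at exponents $\vartheta$ and $\vartheta+\tfrac12$ so that the $f^2(1)$ terms cancel. The two routes are essentially equivalent in content: Lemma~\ref{sobolev-embedding}'s interpolation form $|f(b)|\le C\big(\|f\|_{L^2(J_\sharp)}+\|f\|_{L^2(J_\sharp)}^{1/2}\|f_r\|_{L^2(J_\sharp)}^{1/2}\big)$ on $J_\sharp$, where the weight is comparable to a constant, reproduces exactly the geometric-mean right-hand side. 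Likewise, your treatment of $p\in(2,\infty)$ -- bulk integral factored as $(d^{\vartheta+1/p}|f|)^{p-2}\cdot d^{\vartheta+1/p}|f|\cdot d^{\vartheta+1/p}|f_r|$, then H\"older, then the $p=\infty$ case at exponent $\vartheta+\tfrac1p>0$ -- is the paper's argument in all but the final phrasing (the paper needs no absorption step: once the $p=\infty$ bound is substituted, the inequality is already closed; your invocation of Young is harmless but superfluous).

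However, your $p=\infty$ step has a genuine gap. After integrating $(d^{2\vartheta}f^2)_s$ on $J_\flat$ you are left with the term $\int_{J_\flat}d^{2\vartheta-1}f^2\,\mathrm{d}s=\|d^{\vartheta-\frac12}f\|_{L^2(J_\flat)}^2$, and you propose to control it by ``an auxiliary boundary-Hardy identity'' to first obtain the additive bound $\|d^\vartheta f\|_\infty\le C(\|d^\vartheta f\|_2+\|d^\vartheta f_r\|_2)$, then upgrade to the geometric mean by ``a scaling argument in the weight parameter.'' That last step does not work as stated: the usual scaling $r\mapsto a+\lambda(r-a)$ changes the domain $(a,b)$, so the constant $C$ -- which depends on $(a,b)$ -- is no longer at your disposal, and there is no other dilation symmetry available on a fixed bounded interval with this weight. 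The correct and simpler route (and the one the paper takes) is to invoke the already-established $p=2$ case at the shifted exponent $\vartheta-\tfrac12>-\tfrac12$, which immediately yields $\|d^{\vartheta-\frac12}f\|_{2}^2\le C\big(\|d^\vartheta f\|_2^2+\|d^\vartheta f\|_2\|d^\vartheta f_r\|_2\big)$, already in geometric-mean form; combined with the Cauchy--Schwarz bound on $\int d^{2\vartheta}|f||f_r|$ and the Sobolev bound on $J_\sharp$, this closes the $p=\infty$ case with no scaling argument. In effect you correctly identified the ``shift the exponent and invoke an earlier case'' mechanism for $p\in(2,\infty)$, but missed that the same mechanism -- shift $\vartheta\mapsto\vartheta-\tfrac12$ and invoke $p=2$ -- is exactly what the $p=\infty$ step requires.
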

\begin{proof}
It suffices to prove \eqref{G-N1} with $J=I$ and $d(r)=r$. The proof for \eqref{G-N1} 
with the other cases can be obtained similarly. 

Let  $f\in C^\infty(\bar I)$. First, if $p=2$ and $\vartheta>-\frac{1}{2}$, a direct calculation, combined with integration by parts, yields
\begin{equation}\label{A..5}
\begin{aligned}
|r^\vartheta f|_2^2=& \frac{1}{2\vartheta+1}f^2(1)- \frac{2}{2\vartheta+1} \int_0^1 r^{2\vartheta+1} ff_r\,\mathrm{d}r,\\
|r^{\vartheta+\frac{1}{2}} f|_2^2=& \frac{1}{2\vartheta+2}f^2(1)- \frac{2}{2\vartheta+2} \int_0^1 r^{2\vartheta+2} f f_r\,\mathrm{d}r,
\end{aligned}
\end{equation}
which, along with the H\"older inequality, implies
\begin{equation}\label{AA4}
\begin{aligned}
|r^\vartheta f|_2^2
&= \frac{2\vartheta+2}{2\vartheta+1} \big|r^{\vartheta+\frac{1}{2}} f\big|_2^2+ \frac{2}{2\vartheta+1} \int_0^1 (r-1)r^{2\vartheta+1} ff_r\,\mathrm{d}r\\
&\leq C\big(\big|r^{\vartheta+\frac{1}{2}} f\big|_2^2+ \big|r^{\vartheta+\frac{1}{2}} f\big|_2\big|r^{\vartheta+\frac{1}{2}} f_r\big|_2\big).
\end{aligned}
\end{equation}

Next, if $p=\infty$ and $\vartheta>0$, it follows from the above, Lemma \ref{sobolev-embedding}, and the H\"older inequality that
\begin{equation}\label{A7}
\begin{aligned}
|r^\vartheta f|_\infty^2&\leq  C\int_0^1 |(r^{2\vartheta} f^2)_r|\,\mathrm{d}r=C\int_0^1 r^{2\vartheta-1}f^2\,\mathrm{d}r+C\int_0^1 r^{2\vartheta}|f||f_r|\,\mathrm{d}r\\
&\leq C\big|r^{\vartheta-\frac{1}{2}} f\big|_2^2+C|r^\vartheta f|_2|r^\vartheta f_r|_2\leq C\big(\big|r^\vartheta f\big|_2^2+|r^\vartheta f|_2|r^\vartheta f_r|_2\big).
\end{aligned}
\end{equation}

Finally, if $p\in (2,\infty)$ and $\vartheta>-\frac{1}{p}$, repeating the similar calculations in \eqref{A..5}--\eqref{AA4}, combined with \eqref{A7}, leads to
\begin{equation} 
\begin{aligned}
|r^\vartheta f|_p^p
&= \frac{p\vartheta+2}{p\vartheta+1} \big|r^{\vartheta+\frac{1}{p}} f\big|_p^p+ \frac{p}{p\vartheta+1} \int_0^1 (r-1)r^{p\vartheta+1} |f|^{p-2}ff_r\,\mathrm{d}r\\
&\leq C\big(\big|r^{\vartheta+\frac{1}{p}} f\big|_p^p+ \big|r^{(\vartheta+\frac{1}{p})(p-1)} |f|^{p-1}\big|_2\big|r^{\vartheta+\frac{1}{p}} f_r\big|_2\big)\\
&\leq C\big|r^{\vartheta+\frac{1}{p}} f\big|_\infty^{p-2}\big(\big|r^{\vartheta+\frac{1}{p}} f\big|_2^2+ \big|r^{\vartheta+\frac{1}{p}} f\big|_2\big|r^{\vartheta+\frac{1}{p}} f_r\big|_2\big)\\
&\leq C \big(\big|r^{\vartheta+\frac{1}{p}} f\big|_2^p+ \big|r^{\vartheta+\frac{1}{p}} f\big|_2^\frac{p}{2}\big|r^{\vartheta+\frac{1}{p}} f_r\big|_2^\frac{p}{2}\big).
\end{aligned}
\end{equation}
Therefore, we complete the proof for \eqref{G-N1} when $f\in C^\infty(\bar I)$.

For $f\in H^1_{d^{2\vartheta+\frac{2}{p}}}$, there exists a sequence $\{f^\varepsilon\}_{\varepsilon>0}\subset C^\infty(\bar I)$ due to Lemma \ref{W-space} 
such that
\begin{equation*} 
\big|r^{\vartheta+\frac{1}{p}} (f^\varepsilon- f)\big|_2+\big|r^{\vartheta+\frac{1}{p}} (f^\varepsilon_r- f_r)\big|_2 \to 0 \qquad \text{as }\varepsilon\to 0.
\end{equation*}
Then we can show that \eqref{G-N1} holds for all $f\in H^1_{d^{2\vartheta+\frac{2}{p}}}$ via the density argument. 

This completes the proof of Lemma \ref{GNinequality}.
\end{proof}

The sixth lemma is used to obtain the time-weighted estimates of the velocity.
\begin{Lemma}[\cite{bjr}]\label{bjr}
Let $f\in L^2([0,T]; L^2)$. Then there exists a sequence $\{t_k\}_{k=1}^\infty$ such that
\begin{equation*}
t_k\rightarrow 0, \quad\,\, t_k |f(t_k)|^2_{2}\rightarrow 0 \qquad\,\, \text{as $k\rightarrow\infty$}.
\end{equation*}
\end{Lemma}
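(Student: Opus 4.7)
The plan is to argue by contradiction, reducing the claim to the elementary fact that $1/t$ is not integrable near the origin. Set $g(t):=|f(t)|_2^2$. By hypothesis $g\in L^1(0,T)$, and the goal becomes: produce a sequence $t_k\to 0^+$ with $t_k g(t_k)\to 0$.

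The key construction is, for each $k\in\mathbb{N}^*$, the measurable set
\[
A_k := \bigl\{t\in (0, 1/k)\,:\, t\,g(t) < 1/k \bigr\}.
\]
I would show that $|A_k|>0$ for every $k$. Indeed, if $|A_k|=0$ for some $k$, then $t\,g(t)\ge 1/k$ for a.e.\ $t\in(0,1/k)$, hence $g(t)\ge 1/(kt)$ a.e.\ on $(0,1/k)$, which gives
\[
\int_0^{T} g(t)\,\mathrm{d}t \;\ge\; \int_0^{1/k}\frac{1}{kt}\,\mathrm{d}t \;=\; +\infty,
\]
contradicting $g\in L^1(0,T)$. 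Once $|A_k|>0$ is secured, one picks an arbitrary representative $t_k\in A_k$ for each $k$; then $0<t_k<1/k$ and $t_k\,|f(t_k)|_2^2<1/k$, so both $t_k\to 0$ and $t_k|f(t_k)|_2^2\to 0$ as $k\to\infty$, delivering the desired sequence.

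There is no real obstacle in this argument: the only substantive point is the non-integrability of $1/t$ at the origin, which is precisely what forces $g$ to dip below the level $1/(kt)$ on a set of positive measure. One minor technicality worth recording is that $f(t)$ is only defined for a.e.\ $t\in(0,T)$, but the selection $t_k\in A_k$ automatically respects this, since $A_k$ is defined via the measurable representative $g(t)=|f(t)|_2^2$.
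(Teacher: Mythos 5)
Your proof is correct and follows essentially the same contradiction argument as the paper: both reduce to the non-integrability of $1/t$ near the origin to force $t\,|f(t)|_2^2$ below $1/k$ somewhere in a shrinking neighborhood of $0$. Your version is a bit more explicit about the measure-theoretic selection (defining $A_k$ and checking $|A_k|>0$), but the underlying idea is identical.
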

\begin{proof}
Let $F(t)=|f(t)|^2_{2}$. Clearly, $0\leq F(t)\in L^1(0,T)$. Then it suffices to show that, for any $k\ge 1$, there exists $t_k\in (0,\frac{T}{1+k})$ such that 
\begin{equation*}
t_kF(t_k)<\frac{1}{k}\to 0\qquad \text{as $k\to \infty$}.
\end{equation*}
Assume by contradiction that there exists some $k_0\geq 1$ such that
\begin{equation*}
tF(t)\ge \frac{1}{k_0} \qquad \text{for any $t\in\big(0,\frac{T}{1+k_0}\big)$}.
\end{equation*}
Then
\begin{equation*}
\int_0^T F(s)\,\mathrm{d}s\ge \frac{1}{k_0}\int_0^\frac{T}{1+k_0} \frac{1}{s }\,\mathrm{d}s=\infty.    
\end{equation*}
This contradicts with the fact that $F(t)\in L^1(0,T)$. Thus, the claim holds. 
\end{proof}

The seventh lemma is an equivalent statement for spherical symmetric vector  functions.
\begin{Lemma}\label{duichen-dengjia}
Let $\boldsymbol{f}=\boldsymbol{f}(\boldsymbol{y})$ be a spherically symmetric  continuous vector function on $B_R=\{\boldsymbol{y}: \,|\boldsymbol{y}|<R\}$ for some $R>0$. Then $\boldsymbol{f}$ takes the form{\rm :} $\boldsymbol{f}(\boldsymbol{y})=f(|\boldsymbol{y}|)\frac{\boldsymbol{y}}{|\boldsymbol{y}|}$ if and only if 
\begin{equation}\label{dengjia}
\mathcal{O}\boldsymbol{f}(\boldsymbol{y})=\boldsymbol{f}(\mathcal{O}\boldsymbol{y}) \qquad \text{for all $\boldsymbol{y}\in B_R$ and $\mathcal{O}\in \mathrm{SO}(n)$}.
\end{equation}
In particular, any spherically symmetric  vector  function $\boldsymbol{f}$ satisfies $\boldsymbol{f}(\boldsymbol{0})=\boldsymbol{0}$.
\end{Lemma}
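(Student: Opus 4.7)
The plan is to establish both directions of the equivalence, and then derive $\boldsymbol{f}(\boldsymbol{0})=\boldsymbol{0}$ as a by-product of the analysis.

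For the direct implication, I would simply compute: if $\boldsymbol{f}(\boldsymbol{y})=f(|\boldsymbol{y}|)\frac{\boldsymbol{y}}{|\boldsymbol{y}|}$, then for any $\mathcal{O}\in\mathrm{SO}(n)$, the orthogonality of $\mathcal{O}$ gives $|\mathcal{O}\boldsymbol{y}|=|\boldsymbol{y}|$, and the linearity of $\mathcal{O}$ gives
\begin{equation*}
\boldsymbol{f}(\mathcal{O}\boldsymbol{y})=f(|\mathcal{O}\boldsymbol{y}|)\frac{\mathcal{O}\boldsymbol{y}}{|\mathcal{O}\boldsymbol{y}|}=f(|\boldsymbol{y}|)\frac{\mathcal{O}\boldsymbol{y}}{|\boldsymbol{y}|}=\mathcal{O}\boldsymbol{f}(\boldsymbol{y}),
\end{equation*}
which is \eqref{dengjia}. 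At the origin, continuity of $\boldsymbol{f}$ together with the representation forces $\boldsymbol{f}(\boldsymbol{0})=\boldsymbol{0}$, since $f(|\boldsymbol{y}|)\frac{\boldsymbol{y}}{|\boldsymbol{y}|}\to \boldsymbol{0}$ as $\boldsymbol{y}\to\boldsymbol{0}$ (else $\boldsymbol{f}$ would be discontinuous at $\boldsymbol{0}$).

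For the converse, I would proceed in three steps. First, setting $\boldsymbol{y}=\boldsymbol{0}$ in \eqref{dengjia} yields $\mathcal{O}\boldsymbol{f}(\boldsymbol{0})=\boldsymbol{f}(\boldsymbol{0})$ for every $\mathcal{O}\in\mathrm{SO}(n)$; since the only vector fixed by the entire group $\mathrm{SO}(n)$ (for $n\ge 2$) is the zero vector, $\boldsymbol{f}(\boldsymbol{0})=\boldsymbol{0}$. Second, for $\boldsymbol{y}\ne\boldsymbol{0}$, I would exploit the stabilizer $\mathrm{Stab}(\boldsymbol{y})\subset\mathrm{SO}(n)$ of $\boldsymbol{y}$: restricting \eqref{dengjia} to $\mathcal{O}\in\mathrm{Stab}(\boldsymbol{y})$ gives $\mathcal{O}\boldsymbol{f}(\boldsymbol{y})=\boldsymbol{f}(\boldsymbol{y})$, so $\boldsymbol{f}(\boldsymbol{y})$ is fixed by every element of $\mathrm{Stab}(\boldsymbol{y})$. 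For $n=3$, $\mathrm{Stab}(\boldsymbol{y})\cong\mathrm{SO}(2)$ acts on the plane $\boldsymbol{y}^\perp$ without non-trivial fixed vectors, so the common fixed set is exactly $\mathbb{R}\boldsymbol{y}$; thus $\boldsymbol{f}(\boldsymbol{y})=g(\boldsymbol{y})\boldsymbol{y}$ for some scalar $g(\boldsymbol{y})$. Third, for any $\boldsymbol{y}_1,\boldsymbol{y}_2$ with $|\boldsymbol{y}_1|=|\boldsymbol{y}_2|=r$, the transitivity of $\mathrm{SO}(n)$ on the sphere of radius $r$ (for $n\ge 2$) provides $\mathcal{O}$ with $\mathcal{O}\boldsymbol{y}_1=\boldsymbol{y}_2$, and \eqref{dengjia} combined with $\boldsymbol{f}(\boldsymbol{y}_i)=g(\boldsymbol{y}_i)\boldsymbol{y}_i$ gives
\begin{equation*}
g(\boldsymbol{y}_2)\boldsymbol{y}_2=\boldsymbol{f}(\boldsymbol{y}_2)=\mathcal{O}\boldsymbol{f}(\boldsymbol{y}_1)=g(\boldsymbol{y}_1)\mathcal{O}\boldsymbol{y}_1=g(\boldsymbol{y}_1)\boldsymbol{y}_2,
\end{equation*}
so $g(\boldsymbol{y})$ depends only on $|\boldsymbol{y}|$. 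Defining $f(r):=r\,g(\boldsymbol{y})$ for any $\boldsymbol{y}$ with $|\boldsymbol{y}|=r$ yields the representation $\boldsymbol{f}(\boldsymbol{y})=f(|\boldsymbol{y}|)\frac{\boldsymbol{y}}{|\boldsymbol{y}|}$.

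The main technical point, and the place where I would need to be most careful, is the stabilizer step. In dimension $n=3$ it is transparent: the rotations that fix a non-zero axis form an $\mathrm{SO}(2)$ acting on the orthogonal plane, and the only vectors in $\mathbb{R}^3$ left invariant by all such rotations lie along the axis itself. Establishing this rigorously requires picking an orthonormal frame $\{\boldsymbol{y}/|\boldsymbol{y}|,\boldsymbol{e}_2,\boldsymbol{e}_3\}$, writing $\boldsymbol{f}(\boldsymbol{y})=\alpha\boldsymbol{y}+\beta\boldsymbol{e}_2+\delta\boldsymbol{e}_3$, and checking that $\mathcal{O}\boldsymbol{f}(\boldsymbol{y})=\boldsymbol{f}(\boldsymbol{y})$ for the rotation by $\pi/2$ in the $(\boldsymbol{e}_2,\boldsymbol{e}_3)$-plane forces $\beta=\delta=0$. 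The two-dimensional case requires slightly more care since $\mathrm{Stab}(\boldsymbol{y})$ is trivial in $\mathrm{SO}(2)$; there one instead parametrises every $\boldsymbol{y}$ on the circle of radius $r$ via a single rotation from a reference point and uses the invariance directly, which remains a routine verification given the continuity assumption on $\boldsymbol{f}$.
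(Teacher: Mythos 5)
Your direct implication, the $n=3$ converse via the stabilizer of $\boldsymbol{y}$, and the argument for $\boldsymbol{f}(\boldsymbol{0})=\boldsymbol{0}$ coincide in substance with the paper's proof: the paper realises your stabilizer step concretely by a single rotation by $\pi$ about the axis $\boldsymbol{y}_0$, and proves $\boldsymbol{f}(\boldsymbol{0})=\boldsymbol{0}$ by picking a rotation sending $\boldsymbol{f}(\boldsymbol{0})$ to $-\boldsymbol{f}(\boldsymbol{0})$, which is the same group-theoretic fact you invoke directly.

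The gap is the closing claim that $n=2$ reduces to ``a routine verification given the continuity assumption on $\boldsymbol{f}$.'' That step fails. Since $\mathrm{SO}(2)$ is abelian, the smooth field $\boldsymbol{f}(\boldsymbol{y})=J\boldsymbol{y}$, where $J$ is the counterclockwise rotation by $\pi/2$, satisfies $\mathcal{O}\boldsymbol{f}(\boldsymbol{y})=\mathcal{O}J\boldsymbol{y}=J\mathcal{O}\boldsymbol{y}=\boldsymbol{f}(\mathcal{O}\boldsymbol{y})$ for every $\mathcal{O}\in\mathrm{SO}(2)$, yet $J\boldsymbol{y}$ is everywhere orthogonal to $\boldsymbol{y}$ and hence never of the form $f(|\boldsymbol{y}|)\boldsymbol{y}/|\boldsymbol{y}|$ away from the origin. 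Parametrising the circle by rotations from a reference point only transports $\boldsymbol{f}$; it gives no reason for $\boldsymbol{f}$ to be radial at the reference point, precisely because $\mathrm{Stab}(\boldsymbol{y})$ is trivial in $\mathrm{SO}(2)$. Pinning $\boldsymbol{f}(\boldsymbol{y})$ to the axis $\mathbb{R}\boldsymbol{y}$ in two dimensions requires an element that fixes $\boldsymbol{y}$ but acts nontrivially on $\boldsymbol{y}^\perp$, namely a reflection, so you must pass from $\mathrm{SO}(2)$ to the full $O(2)$. Incidentally, the paper's own proof only treats $n=3$; as stated over $\mathrm{SO}(n)$ the lemma is in fact false in two dimensions by this same counterexample, though the downstream applications in Proposition~\ref{prop-bijin} and Lemmas~\ref{lemmad2} and~\ref{lemma-B3} still go through because the radial fields there are genuinely $O(n)$-equivariant and mollification by a spherically symmetric kernel preserves $O(n)$-equivariance.
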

\begin{proof}
Indeed, since $\boldsymbol{f}(\boldsymbol{y})=f(|\boldsymbol{y}|)\frac{\boldsymbol{y}}{|\boldsymbol{y}|}$ is spherically symmetric, it follows that
$\boldsymbol{f}(\mathcal{O}\boldsymbol{y})=\mathcal{O}\boldsymbol{f}(\boldsymbol{y})$ for all $\boldsymbol{y}\in B_R$ and  
$\mathcal{O}\in \mathrm{SO}(n)$. 

Conversely, if \eqref{dengjia} holds, we take the 3-D case as an example. Let $\boldsymbol{y}_0\in B_R$ be any fixed displacement vector and $\boldsymbol{e}_1=\frac{\boldsymbol{y}_0}{|\boldsymbol{y}_0|}$. Assume that $\mathcal{O}_1\in \mathrm{SO}(n)$ is a rotation by 180 degrees about an axis parallel to $\boldsymbol{y}_0$, that is, $\mathcal{O}_1\boldsymbol{y}_0=\boldsymbol{y}_0$. Then we can obtain from \eqref{dengjia} that
\begin{equation}\label{cls'}
\mathcal{O}_1\boldsymbol{f}(\boldsymbol{y}_0)=\boldsymbol{f}(\boldsymbol{y}_0).
\end{equation}
Next, let $\{\boldsymbol{e}_2,\boldsymbol{e}_3\}$ be two unit vectors such that $\{\boldsymbol{e}_1,\boldsymbol{e}_2,\boldsymbol{e}_3\}$ becomes an orthonormal basis in $\mathbb{R}^3$. 
Then there exist constants $\alpha_i=\alpha_i(\boldsymbol{y}_0)\in \mathbb{R}$ $(i=1,2,3)$, depending only on $\boldsymbol{y}_0$, such that 
\begin{equation*}
\boldsymbol{f}(\boldsymbol{y}_0)=\alpha_1\boldsymbol{e}_1+\alpha_2\boldsymbol{e}_2+\alpha_3\boldsymbol{e}_3.
\end{equation*}
This, together with \eqref{cls'}, gives 
\begin{equation*}
\alpha_1\boldsymbol{e}_1+\alpha_2\boldsymbol{e}_2+\alpha_3\boldsymbol{e}_3=\alpha_1\boldsymbol{e}_1-\alpha_2\boldsymbol{e}_2-\alpha_3\boldsymbol{e}_3\implies \alpha_2\boldsymbol{e}_2+\alpha_3\boldsymbol{e}_3=\boldsymbol{0},
\end{equation*}
which, along with the linear independence of $\{\boldsymbol{e}_2,\boldsymbol{e}_3\}$, leads to $\alpha_2=\alpha_3=0$. Hence, for any fixed $\boldsymbol{y}_0\in B_R$,
\begin{equation*}
\boldsymbol{f}(\boldsymbol{y}_0)=\alpha_1\boldsymbol{e}_1=\alpha_1(\boldsymbol{y}_0)\frac{\boldsymbol{y}_0}{|\boldsymbol{y}_0|}.
\end{equation*}
By \eqref{dengjia}, we see that $\alpha_1(\cO\boldsymbol{y}_0)=\alpha_1(\boldsymbol{y}_0)$ for all $\cO\in \mathrm{SO}(n)$, that is, $\alpha_1(\boldsymbol{y}_0)=\alpha_1(|\boldsymbol{y}_0|)$. 
Then defining  $f(r):=\alpha_1(r)$ implies that $\boldsymbol{f}$ takes the form: $\boldsymbol{f}(\boldsymbol{y})=f(|\boldsymbol{y}|)\frac{\boldsymbol{y}}{|\boldsymbol{y}|}$.

Finally, from \eqref{dengjia}, we can take $\boldsymbol{y}=\boldsymbol{0}$ to obtain
\begin{equation}\label{s0}
\boldsymbol{f}(\boldsymbol{0})=\mathcal{O}\boldsymbol{f}(\boldsymbol{0}) \qquad \text{for all $\mathcal{O}\in \mathrm{SO}(n)$}.
\end{equation}
Then, choosing $\mathcal{O}=\mathcal{O}_2$ by 180 degrees about an axis perpendicular to $\boldsymbol{f}(\boldsymbol{0})$, that is, $\mathcal{O}_2\boldsymbol{f}(\boldsymbol{0})=-\boldsymbol{f}(\boldsymbol{0})$, we obtain from \eqref{s0} that $\boldsymbol{f}(\boldsymbol{0})=-\boldsymbol{f}(\boldsymbol{0})$, so that $\boldsymbol{f}(\boldsymbol{0})=\boldsymbol{0}$. 
\end{proof}

Finally, for giving the time continuity of the velocity in our proof, the following two types of evolution triple embedding are required. 
\begin{Lemma}[\cite{evans}]\label{triple}
Let $T>0$ and $J\subset \RR^k$ $(k=1,2,3)$ be some open subset. Assume that $f\in L^2([0,T];H^1_0(J))$ and  $f_t\in L^2([0,T];H^{-1}(J))$. 
Then $f\in C([0,T];L^2(J))$, and the map{\rm :} $t\mapsto \|f(t)\|_{L^2(J)}^2$ is absolutely continuous 
with
\begin{equation*}
\frac{\mathrm{d}}{\mathrm{d}t} \|f(t)\|_{L^2(J)}^2=2\left<f_t, f\right>_{H^{-1}(J)\times H_0^1(J)} \qquad \text{for {\it a.e.} $t\in (0,T)$}.
\end{equation*}
Moreover, if additionally $f\in L^\infty([0,T];H_0^1(J))$, then $f\in C([0,T];L^4(J))$.
\end{Lemma}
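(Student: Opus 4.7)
The plan is to exploit the Gelfand evolution triple $H^1_0(J)\hookrightarrow L^2(J)\hookrightarrow H^{-1}(J)$ (continuous and dense embeddings) together with a standard time-mollification argument, and then derive the $L^4$ continuity from interpolation between the $L^2$ continuity and the $H^1_0$ boundedness. The result is classical and is Theorem 3 in Evans's Chapter 5.9, but let me outline how I would reproduce it here so as to verify that the hypotheses assumed in the paper (only $k\le 3$, the trivial boundary trace through $H^1_0$) are enough.

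First, I would extend $f$ from $[0,T]$ to a slightly larger interval by even reflection at the endpoints (so the new function still lies in $L^2([-1,T+1];H^1_0(J))$ with distributional time-derivative in $L^2([-1,T+1];H^{-1}(J))$), and mollify it in time by a standard smooth kernel $\eta^\varepsilon\in C_{\mathrm c}^\infty(\mathbb R)$ to obtain $f^\varepsilon=\eta^\varepsilon \ast f$. Each $f^\varepsilon$ is $C^\infty$ in time with values in $H^1_0(J)$, and $(f^\varepsilon)_t=\eta^\varepsilon\ast f_t$ (interpreted in $H^{-1}$). For these smooth approximants, the chain rule in the Hilbert space $L^2(J)$ gives the pointwise identity
\begin{equation*}
\frac{\mathrm d}{\mathrm dt}\|f^\varepsilon(t)\|_{L^2(J)}^2=2\langle (f^\varepsilon)_t(t),f^\varepsilon(t)\rangle_{H^{-1}(J)\times H_0^1(J)}\qquad\text{for all }t\in[0,T].
\end{equation*}
Integrating from $s$ to $t$ yields absolute continuity of $\|f^\varepsilon(\cdot)\|_{L^2(J)}^2$, and the standard commutator property of mollification (together with $f\in L^2([0,T];H^1_0)$ and $f_t\in L^2([0,T];H^{-1})$) shows that both $f^\varepsilon\to f$ in $L^2([0,T];H^1_0)$ and $(f^\varepsilon)_t\to f_t$ in $L^2([0,T];H^{-1})$. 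Passing $\varepsilon\to 0$ then transfers the integral identity to $f$ itself for a.e.\ $s,t\in[0,T]$, and a similar Cauchy-in-$C([0,T];L^2)$ estimate for the sequence $\{f^\varepsilon\}$, obtained by applying the integral identity to the difference $f^\varepsilon-f^{\varepsilon'}$, produces a continuous representative of $f$ with values in $L^2(J)$. Differentiating the resulting integral identity in $t$ gives the displayed formula at a.e.\ $t$.

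For the second assertion, I would invoke the Sobolev embedding (valid for $k=1,2,3$) $H^1_0(J)\hookrightarrow L^q(J)$ with some exponent $q>4$ (e.g.\ $q=6$ for $k=3$, any $q<\infty$ for $k=2$, and $q=\infty$ for $k=1$), and then interpolate via
\begin{equation*}
\|g\|_{L^4(J)}\le \|g\|_{L^2(J)}^{\theta}\|g\|_{L^q(J)}^{1-\theta}\le C\,\|g\|_{L^2(J)}^{\theta}\|g\|_{H^1_0(J)}^{1-\theta},\qquad \theta\in(0,1),
\end{equation*}
where $\theta$ is determined by $\frac{1}{4}=\frac{\theta}{2}+\frac{1-\theta}{q}$. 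Applying this with $g=f(t)-f(s)$ and using $\|f(t)-f(s)\|_{H^1_0(J)}\le 2\|f\|_{L^\infty([0,T];H_0^1(J))}<\infty$ together with the already established strong continuity $\|f(t)-f(s)\|_{L^2(J)}\to 0$ as $s\to t$, we obtain $f\in C([0,T];L^4(J))$.

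The only delicate point is the mollification step: one has to verify that $\eta^\varepsilon\ast f_t\to f_t$ in $L^2([0,T];H^{-1})$ and that $\langle (\eta^\varepsilon\ast f_t)(t),(\eta^\varepsilon\ast f)(t)\rangle$ converges to $\langle f_t(t),f(t)\rangle$ in $L^1([0,T])$, which uses the density of smooth functions in $L^2([0,T];H^{-1})$ and the fact that the pairing is continuous on bounded sets. Once this is in place, both conclusions drop out; the $L^4$ continuity is essentially free from the $L^2$ continuity together with the uniform $H^1_0$ bound, so no serious additional work is needed there.
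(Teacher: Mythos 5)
Your reconstruction is correct. The paper does not reprove this lemma but cites it to Evans; the first part is indeed Evans, Chapter~5.9, Theorem~3, and your outline (extend in time, mollify, apply the Hilbert-space chain rule to the smooth approximants, pass to the limit using density of $C^\infty$ in the Bochner spaces) is exactly that argument. The second part, the $L^4$-continuity, is not in Evans as stated, and you correctly supply it: the restriction $k\le 3$ is precisely what guarantees a Sobolev exponent $q>4$ in $H^1_0(J)\hookrightarrow L^q(J)$ (namely $q=\infty,\ q<\infty$ arbitrary, $q=6$ for $k=1,2,3$ respectively), so the interpolation $\|g\|_{L^4}\le \|g\|_{L^2}^{\theta}\|g\|_{L^q}^{1-\theta}$ with $\theta\in(0,1)$ converts strong $L^2$-continuity plus a uniform $H^1_0$-bound into $L^4$-continuity.

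One small point you glide over: when applying the interpolation to $g=f(t)-f(s)$ you write $\|f(t)-f(s)\|_{H^1_0}\le 2\|f\|_{L^\infty([0,T];H^1_0)}$, but an $L^\infty$-in-time bound a priori only controls $\|f(t)\|_{H^1_0}$ for \emph{a.e.}~$t$, whereas the inequality is invoked for \emph{every} pair $s,t$. This is harmless but should be said: since the continuous representative $f\in C([0,T];L^2)$ is the $L^2$-limit of values $f(t_n)$ along times $t_n$ in the full-measure set where $\|f(t_n)\|_{H^1_0}\le M$, one extracts a weakly convergent subsequence in $H^1_0$, identifies the weak limit with $f(t)$ via the $L^2$ limit, and concludes $\|f(t)\|_{H^1_0}\le M$ for all $t$ by weak lower semicontinuity of the norm. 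With that remark inserted, the argument is complete.
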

\begin{Lemma}[\cite{ZJW}]\label{Aubin}
Let $T>0$, $s\geq 0$, and $\cH\subset L^2_{r^m\rho_0^s}\subset \cH^*$, where $\cH$ is a Banach space. Assume that $f\in L^2([0,T];\cH)$ and  $r^m\rho_0^sf_t\in L^2([0,T];\cH^*)$. Then $f\in C([0,T];L^2_{r^m\rho_0^s})$, and the map{\rm :} $t\mapsto |f(t)|_{2,r^m\rho_0^s}^2$ is absolutely continuous with 
\begin{equation}\label{timedirivative}
\frac{\mathrm{d}}{\dt} |f(t)|_{2,r^m\rho_0^s}^2=2\left<r^m\rho_0^sf_t, f\right>_{\cH^*\times \cH}.
\end{equation}
\end{Lemma}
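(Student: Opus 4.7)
\textbf{Proof proposal for Lemma \ref{Aubin}.} The plan is to adapt the classical proof of the evolution-triple embedding (as in \cite{evans}, Chapter 5) to the weighted setting; the weight $r^m\rho_0^s(r)$ is time-independent, which is what makes the whole scheme go through. First I would extend $f$ to a function defined on all of $\mathbb{R}$ with values in $\cH$ by reflecting across $t=0$ and $t=T$ and then multiplying by a smooth cut-off in time, producing $\tilde f \in L^2(\mathbb{R};\cH)$ whose distributional time-derivative satisfies $r^m\rho_0^s \tilde f_t \in L^2(\mathbb{R};\cH^*)$ (the reflection formula ensures continuity of the extension at the endpoints in the distributional sense, and outside a bounded neighborhood of $[0,T]$ the cut-off makes $\tilde f$ vanish identically).

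Next I would mollify in the time variable: let $\omega_\varepsilon(\cdot)$ be a standard mollifier on $\mathbb{R}$ and set $f^\varepsilon := \tilde f * \omega_\varepsilon$, so $f^\varepsilon \in C^\infty(\mathbb{R};\cH)$. Because the weight $r^m\rho_0^s$ does not depend on $t$, the mollification commutes with multiplication by it, hence
\begin{equation*}
r^m\rho_0^s(f^\varepsilon)_t = (r^m\rho_0^s \tilde f_t)*\omega_\varepsilon \quad \text{in }L^2_{\mathrm{loc}}(\mathbb{R};\cH^*).
\end{equation*}
Standard properties of mollifiers then give $f^\varepsilon \to \tilde f$ in $L^2_{\mathrm{loc}}(\mathbb{R};\cH)$ and $r^m\rho_0^s (f^\varepsilon)_t \to r^m\rho_0^s \tilde f_t$ in $L^2_{\mathrm{loc}}(\mathbb{R};\cH^*)$ as $\varepsilon\to 0$. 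For each fixed $\varepsilon$, smoothness of $f^\varepsilon$ in $t$ lets me differentiate pointwise:
\begin{equation*}
\frac{\mathrm{d}}{\dt}|f^\varepsilon(t)|_{2,r^m\rho_0^s}^2 = 2\int_I r^m\rho_0^s (f^\varepsilon)_t f^\varepsilon\,\mathrm{d}r = 2\langle r^m\rho_0^s (f^\varepsilon)_t, f^\varepsilon\rangle_{\cH^*\times \cH},
\end{equation*}
where the identification in the last equality uses the embedding $\cH \subset L^2_{r^m\rho_0^s} \subset \cH^*$.

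Integrating from $s_1$ to $s_2$ for any $0\le s_1<s_2\le T$ and applying the H\"older inequality in $t$ gives
\begin{equation*}
\bigl||f^\varepsilon(s_2)|_{2,r^m\rho_0^s}^2 - |f^\varepsilon(s_1)|_{2,r^m\rho_0^s}^2\bigr| \le 2 \|r^m\rho_0^s(f^\varepsilon)_t\|_{L^2(s_1,s_2;\cH^*)}\|f^\varepsilon\|_{L^2(s_1,s_2;\cH)}.
\end{equation*}
Combined with the convergence $f^{\varepsilon_1}-f^{\varepsilon_2}\to 0$ in the relevant norms as $\varepsilon_1,\varepsilon_2\to 0$, this shows $\{f^\varepsilon\}$ is Cauchy in $C([0,T];L^2_{r^m\rho_0^s})$, producing a continuous representative of $f$ in that space. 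Absolute continuity of $t\mapsto |f(t)|_{2,r^m\rho_0^s}^2$ then follows by passing $\varepsilon\to 0$ in the integrated energy identity
\begin{equation*}
|f^\varepsilon(s_2)|_{2,r^m\rho_0^s}^2-|f^\varepsilon(s_1)|_{2,r^m\rho_0^s}^2 = 2\int_{s_1}^{s_2}\langle r^m\rho_0^s(f^\varepsilon)_\tau, f^\varepsilon\rangle_{\cH^*\times \cH}\,\mathrm{d}\tau,
\end{equation*}
and the pointwise formula \eqref{timedirivative} is obtained by differentiating the resulting limit identity in $s_2$ at every Lebesgue point of the integrand.

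The main obstacle I anticipate is purely bookkeeping rather than conceptual: verifying carefully that $(r^m\rho_0^s\tilde f_t)*\omega_\varepsilon$ indeed equals $r^m\rho_0^s (f^\varepsilon)_t$ as elements of $\cH^*$, which requires checking the pairing
\begin{equation*}
\langle (r^m\rho_0^s \tilde f_t)*\omega_\varepsilon,\, \varphi\rangle_{\cH^*\times \cH} = \langle r^m\rho_0^s (f^\varepsilon)_t,\, \varphi\rangle_{\cH^*\times \cH} \qquad \text{for each $\varphi\in\cH$},
\end{equation*}
via Fubini and the time-independence of $r^m\rho_0^s$. Once this commutation is established, the rest of the argument is a direct transcription of the unweighted proof, since the embedding $\cH \subset L^2_{r^m\rho_0^s} \subset \cH^*$ plays exactly the role of the classical Gelfand triple.
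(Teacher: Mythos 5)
Your proposal is essentially the same argument as the paper's: both adapt the classical Evans Chapter 5 evolution-triple proof by mollifying $f$ in time (after extension), exploit the time-independence of the weight $r^m\rho_0^s$ so that mollification commutes with multiplication by it, derive the energy identity for the smooth approximants, and pass to the limit via a Cauchy estimate in $C([0,T];L^2_{r^m\rho_0^s})$. The paper differs only in minor bookkeeping, applying the energy identity directly to the difference $f^\varepsilon-f^\sigma$ and invoking Theorem 8.20 of Leoni for convergence of the mollified data at $t=0$, whereas you phrase the initial-time control through the reflection-plus-cutoff extension; these are interchangeable technical devices.
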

\begin{proof}
This lemma can be obtained by basically following the proof of Theorem 3 on page 303 
in  \cite{evans}*{Chapter 5}, and we only sketch it here. Let $\omega_\varepsilon$ be the standard mollifiers and 
\begin{equation*}
f^\epsilon(t,r):=\int_{-\infty}^\infty f(t-\tau,r)\omega_\varepsilon(\tau)\,\mathrm{d}\tau.
\end{equation*}
Thus, after the extension and regularization, for any $\varepsilon,\sigma>0$, we have
\begin{equation*}
\frac{\mathrm{d}}{\dt} \|f^\varepsilon(t)- f^\sigma(t)\|_{L^2_{r^m\rho_0^s}}^2 
=2\big<r^m\rho_0^sf_t^\varepsilon-r^m\rho_0^sf_t^\sigma, f^\varepsilon- f^\sigma\big>_{\cH^*\times \cH}.
\end{equation*}
Integrating the above over $[0,T]$ implies 
\begin{equation*}
\begin{aligned}
\sup_{t\in[0,T]}\|f^\varepsilon(t)- f^\sigma(t)\|_{L^2_{r^m\rho_0^s}}^2&\leq \|f^\varepsilon(0)- f^\sigma(0)\|_{L^2_{r^m\rho_0^s}}^2\\
&\quad + \int_0^T \big(\|f^\varepsilon-f^\sigma\|^2_{\cH}+\big\|r^m\rho_0^sf_t^\varepsilon-r^m\rho_0^sf_t^\sigma\big\|^2_{\cH^*}\big)\,\dt.
\end{aligned}
\end{equation*}

Since $L^2_{r^m\rho_0^s}$, $\cH$, and $\cH^*$ are all Banach spaces due 
to Lemma \ref{W-space}, by Theorem 8.20 in \cite{leoni}*{Chapter 8}, for all $g_1(0)\in L^2_{r^m\rho_0^s}$, $g_2\in L^2([0,T];\cH)$, and $g_3\in L^2([0,T];\cH^*)$, we have
\begin{equation*}
\begin{gathered}
\lim_{\varepsilon\to 0} \|g_1^\varepsilon(0)-g_1(0)\|_{L^2_{r^m\rho_0^s}}+\lim_{\varepsilon\to 0}\int_0^T \big(\|g_2^\varepsilon-g_2\|_{\cH}^2+\|g_3^\varepsilon-g_3\|_{\cH^*}^2\big)\,\dt=0.
\end{gathered}
\end{equation*}
Hence, letting $(\varepsilon,\sigma)\to (0,0)$, together with the fact that $(r^m\rho_0^s f)*\omega_\varepsilon=r^m\rho_0^s f^\varepsilon$, yields
\begin{equation*}
\begin{aligned}
&\limsup_{(\varepsilon,\sigma)\to (0,0)}\sup_{t\in[0,T]}\|f^\varepsilon(t)- f^\sigma(t)\|_{L^2_{r^m\rho_0^s}}^2\\
&\leq \lim_{(\varepsilon,\sigma)\to (0,0)}\|f^\varepsilon(0)- f^\sigma(0)\|_{L^2_{r^m\rho_0^s}}^2\\
&\quad\,\, +\lim_{(\varepsilon,\sigma)\to (0,0)}\int_0^T \big(\|f^\varepsilon-f^\sigma\|^2_{\cH}+\big\|r^m\rho_0^sf_t^\varepsilon-r^m\rho_0^sf_t^\sigma\big\|^2_{\cH^*}\big)\,\dt=0,
\end{aligned}
\end{equation*}
which shows that $f^\varepsilon$ converges  to $f$  in $C([0,T];L^2_{r^m\rho_0^s})$.
Similarly, we have
\begin{equation*}
\|f^\varepsilon(t)\|_{L^2_{r^m\rho_0^s}}^2=\|f^\varepsilon(\tau)\|_{L^2_{r^m\rho_0^s}}^2+2\int_\tau^t\big<r^m\rho_0^sf_t^\varepsilon, f^\varepsilon\big>_{\cH^*\times \cH}\,\mathrm{d}t'
\qquad\,\,\text{for all $\tau,t\in [0,T]$}.
\end{equation*}
 Taking the limit as $\varepsilon\to 0$ implies
\begin{equation}\label{A10}
\|f(t)\|_{L^2_{r^m\rho_0^s}}^2=\|f(\tau)\|_{L^2_{r^m\rho_0^s}}^2+2\int_\tau^t\big<r^m\rho_0^sf_t, f\big>_{\cH^*\times \cH}\,\mathrm{d}t',
\end{equation}
which implies the absolute  continuity of $\|f(t)\|_{L^2_{r^m\rho_0^s}}^2$.  Applying $\partial_t$ to \eqref{A10} yields \eqref{timedirivative}.
\end{proof}

\section{Coordinate Transformations}\label{appb}

This appendix is devoted to showing the conversion of some Sobolev spaces between the M-D coordinates $\boldsymbol{y}$ and the spherical coordinate $r=|\boldsymbol{y}|$ for spherically symmetric functions. Let $n$ be the number of spatial dimension and $m=n-1$.

\subsection{Transformation between multidimensional coordinates and spherical ones}

Let $0\leq a<b$, $\cJ:=\{\boldsymbol{y}\in \mathbb{R}^n: \, a\leq |\boldsymbol{y}|< b\}$, and $r\in J:=[a,b)$ with $r=|\boldsymbol{y}|$. Consider a coordinate transformation $\boldsymbol{\xi}=\boldsymbol{\xi}(\boldsymbol{y})\in C^\infty(\bar\cJ)$ such that
\begin{equation*}
\boldsymbol{\xi}(\boldsymbol{y})=\xi(r)\frac{\boldsymbol{y}}{r},\qquad\boldsymbol{\xi}:\cJ\to \cG:=\boldsymbol{\xi}(\cJ),\qquad \boldsymbol{y}\mapsto \boldsymbol{x}=\boldsymbol{\xi}(\boldsymbol{y}). 
\end{equation*} 
Assume that $\nabla_{\boldsymbol{y}}\boldsymbol{\xi}$ is a non-singular matrix, and define
\begin{equation*}
\begin{gathered}
D_\xi f=\frac{f_r}{\xi_r},\qquad\cB=(\cB_{ij})_{1\leq i,j\leq n} \qquad \text{with $\cB_{ij}:=((\nabla_{\boldsymbol{y}}\boldsymbol{\xi})^{-1})_{ij}$},\\[-2pt]
\nabla_\cB f=((\nabla_\cB f)_1,\cdots\!,(\nabla_\cB f)_n)^\top \qquad \text{with $(\nabla_\cB f)_i=\sum_{k=1}^n\cB_{ki}\partial_{y_k}f$},\\[-3pt]
\boldsymbol{f}=(f_1,\cdots\!, f_n)^\top,\qquad\nabla_\cB \boldsymbol{f}=((\nabla_\cB \boldsymbol{f})_{ij})_{1\leq i,j\leq n} \qquad \text{with $(\nabla_\cB \boldsymbol{f})_{ij}=\sum_{k=1}^n \cB_{kj}\partial_{y_k}f_i$},    
\end{gathered}
\end{equation*}
where $f(\boldsymbol{y})=f(r)$ and $\boldsymbol{f}(\boldsymbol{y})=f(r)\frac{\boldsymbol{y}}{r}$ are sufficiently smooth functions. 

Then we have the following coordinate transformations.
\begin{Lemma}\label{lemma-initial}
Assume that $(g,\boldsymbol{g})(\boldsymbol{x})$ are spherically symmetric functions defined on $\cG$ and $(f,\boldsymbol{f})(\boldsymbol{y})$ satisfy
\begin{equation*}
f(\boldsymbol{y})=g(\boldsymbol{\xi}(\boldsymbol{y}))=g(\boldsymbol{x}),\qquad\boldsymbol{f}(\boldsymbol{y})=\boldsymbol{g}(\boldsymbol{\xi}(\boldsymbol{y}))=\boldsymbol{g}(\boldsymbol{x}).
\end{equation*}
Then, for any $q\in [1,\infty]$, the following statements hold{\rm :}
\begin{enumerate}
\item[{\rm(i)}] Transformations for $(g,f)${\rm:} for $j=0,1$ and $k=2,3$, 
\begin{equation*}
\begin{aligned}
\|\nabla^j g\|_{L^q(\cG)}&\sim\|(\det \nabla_{\boldsymbol{y}}\boldsymbol{\xi})^\frac{1}{q}\nabla_\cB^j f\|_{L^q(\cJ)}\sim 
\|(\xi^m\xi_r)^\frac{1}{q}D_\xi^j f\|_{L^q(J)},\\[2pt]
\|\nabla^kg\|_{L^q(\cG)}&\sim\|(\det \nabla_{\boldsymbol{y}}\boldsymbol{\xi})^\frac{1}{q}\nabla_\cB^k f\|_{L^q(\cJ)}\sim  \Big\|(\xi^m\xi_r)^\frac{1}{q}\Big(D_\xi^k f,D_\xi^{k-2}\big(\frac{D_\xi f}{\xi}\big)\Big)\Big\|_{L^q(J)};
\end{aligned}
\end{equation*}
\item[{\rm(ii)}] Transformations for $(\boldsymbol{g},\boldsymbol{f})${\rm:} for $j=1,2$ and $k=3,4$, 
\begin{equation*}
\begin{aligned}
\|\boldsymbol{g}\|_{L^q(\cG)}&\sim\|(\det \nabla_{\boldsymbol{y}}\boldsymbol{\xi})^\frac{1}{q}\boldsymbol{f}\|_{L^q(\cJ)}\sim \|(\xi^m\xi_r)^\frac{1}{q}f\|_{L^q(J)},\\[2pt]
\|\nabla^{j}\boldsymbol{g}\|_{L^q(\cG)}&\sim\|(\det \nabla_{\boldsymbol{y}}\boldsymbol{\xi})^\frac{1}{q}\nabla_\cB^{j} \boldsymbol{f}\|_{L^q(\cJ)}\sim \Big\|(\xi^m\xi_r)^\frac{1}{q}\Big(D_\xi^{j} f,D_\xi^{j-1}\big(\frac{f}{\xi}\big)\Big)\Big\|_{L^q(J)},\\
\|\nabla^{k}\boldsymbol{g}\|_{L^q(\cG)}&\sim\|(\det \nabla_{\boldsymbol{y}}\boldsymbol{\xi})^\frac{1}{q}\nabla_\cB^{k} \boldsymbol{f}\|_{L^q(\cJ)}\sim \Big\|(\xi^m\xi_r)^\frac{1}{q}\Big(D_\xi^{k} f,D_\xi^{k-1}\big(\frac{f}{\xi}\big),D_\xi^{k-3}\big(\frac{1}{\xi}D_\xi(\frac{f}{\xi})\big)\Big)\Big\|_{L^q(J)}.
\end{aligned}
\end{equation*}
\end{enumerate}
Here, $E\sim F$ denotes $C^{-1}E\leq F\leq CE$ for some constant $C\geq 1$ depending only on $n$, and we emphasize that, for any function space $X$ and functions $(\varphi ,h_1,\cdots\!,h_k)$,
\begin{equation*}
\|\varphi (h_1,\cdots\!,h_k)\|_{X}:=\sum_{i=1}^k\|\varphi h_i\|_X.
\end{equation*}
\end{Lemma}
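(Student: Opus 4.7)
\noindent\textbf{Proof proposal for Lemma~\ref{lemma-initial}.} The plan is to reduce everything to pointwise identities between $\nabla^k$-expressions in the Eulerian variable $\boldsymbol{x}$ and the spherical quantities built from $D_\xi$, and then transfer these identities under the radial change of variables $\boldsymbol{y}\mapsto \boldsymbol{\xi}(\boldsymbol{y})$. The starting observation is that $\boldsymbol{\xi}(\boldsymbol{y})=\xi(r)\frac{\boldsymbol{y}}{r}$ gives
\begin{equation*}
(\nabla_{\boldsymbol{y}}\boldsymbol{\xi})_{ij}=\Big(\xi_r-\frac{\xi}{r}\Big)\frac{y_iy_j}{r^2}+\frac{\xi}{r}\delta_{ij},
\end{equation*}
so $\nabla_{\boldsymbol{y}}\boldsymbol{\xi}=\xi_r P_{\parallel}+\frac{\xi}{r}P_{\perp}$ with $P_{\parallel}=\boldsymbol{y}\boldsymbol{y}^\top/r^2$ and $P_\perp=I-P_\parallel$. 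Consequently, $\det\nabla_{\boldsymbol{y}}\boldsymbol{\xi}=\xi_r(\xi/r)^m$ and $\mathcal{B}=\xi_r^{-1}P_{\parallel}+(\xi/r)^{-1}P_{\perp}$. Combining this with the coarea decomposition $\mathrm{d}\boldsymbol{y}\sim r^m\,\mathrm{d}r\,\mathrm{d}S$ immediately yields the first equivalence
\begin{equation*}
\|h\|_{L^q(\cG)}\sim \|(\det\nabla_{\boldsymbol{y}}\boldsymbol{\xi})^{1/q}\tilde h\|_{L^q(\cJ)}\sim \|(\xi^m\xi_r)^{1/q}\tilde h\|_{L^q(J)}
\end{equation*}
for any spherically symmetric scalar $h$ (with $\tilde h=h\circ\boldsymbol{\xi}$), and analogously for the vector case with $|\boldsymbol{g}|=|G(\xi)|=|f|$ since the unit vector $\boldsymbol{y}/r$ is unimodular. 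This handles the zeroth-order statements.

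Next I would establish the pointwise identities for derivatives by repeated application of the chain rule and the eigendecomposition above. For a radial scalar, one computes
\begin{equation*}
\nabla g\bigr|_{\boldsymbol{x}=\boldsymbol{\xi}(\boldsymbol{y})}=\nabla_{\cB}f=D_\xi f\,\frac{\boldsymbol{y}}{r},\qquad \nabla^2 g=D_\xi^2 f\,P_{\parallel}+\frac{D_\xi f}{\xi}P_{\perp},
\end{equation*}
so $|\nabla g|=|D_\xi f|$ and $|\nabla^2 g|^2=|D_\xi^2 f|^2+m|\frac{D_\xi f}{\xi}|^2$. Differentiating once more and using $\partial_{y_l}\bigl(y_i/r\bigr)=(\delta_{li}-y_ly_i/r^2)/r$ and $\mathcal{B}\cdot\boldsymbol{y}/r=\xi_r^{-1}\boldsymbol{y}/r$, every new term collapses to one of two tensor types: a purely radial piece with coefficient $D_\xi^3 f$, and a piece concentrated on rank-one tangent-subtractions with coefficient $D_\xi(\frac{D_\xi f}{\xi})$. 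Summing the squares recovers
\begin{equation*}
|\nabla^3 g|^2\sim |D_\xi^3 f|^2+|D_\xi\big(\tfrac{D_\xi f}{\xi}\big)|^2,
\end{equation*}
which is precisely the $k=3$ case in (i). For the vector case $\boldsymbol{g}(\boldsymbol{x})=f(\xi)\boldsymbol{x}/|\boldsymbol{x}|$, the same strategy applies after one extra differentiation of $\boldsymbol{x}/|\boldsymbol{x}|$, producing the three independent scales $D_\xi^k f$, $D_\xi^{k-1}(\frac{f}{\xi})$, and $\frac{1}{\xi}D_\xi(\frac{f}{\xi})$ (the last arising only at order $k\ge 3$, as the commutator between the two distinct radial/angular dilations finally cannot be absorbed into the previous two).

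Once the pointwise relations $|\nabla^k g|^2\sim\Phi_k(f)$ and $|\nabla^k\boldsymbol{g}|^2\sim \Psi_k(f)$ are established, integrating against $\det\nabla_{\boldsymbol{y}}\boldsymbol{\xi}=\xi_r(\xi/r)^m$ and then against $r^m\,\mathrm{d}r$ produces exactly the weights $\xi^m\xi_r$ in the spherical $L^q(J)$-norms, giving both equivalences in (i) and (ii) simultaneously.

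The main obstacle, as I see it, is organising the third-order (scalar) and fourth-order (vector) identities: a naive expansion of $\nabla^k$ creates many tensor terms indexed by which factor of $\boldsymbol{y}/r$ or $P_\perp$ was differentiated, and it is not \emph{a priori} clear that, after taking the squared Frobenius norm and using the orthogonality $P_\parallel P_\perp=0$, everything collapses to the specific finite basis $(D_\xi^k f,\,D_\xi^{k-1}(\frac{f}{\xi}),\,\frac{1}{\xi}D_\xi(\frac{f}{\xi}))$. The cleanest route, which I would adopt, is to decompose each tensor $\nabla^k g$ (or $\nabla^k\boldsymbol{g}$) into its $\mathrm{O}(n)$-invariant radial components by writing it as a sum of symmetric tensor products of $\boldsymbol{y}/r$ with the standard traceless rank-$j$ building blocks, and then to verify that only the displayed scalar coefficients arise by induction on $k$. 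The remaining equivalences in (ii) then follow by the same mechanism applied to $\boldsymbol{f}=f\,\boldsymbol{y}/r$, with the extra factor of $\boldsymbol{y}/r$ accounting for the $k\leftrightarrow k-1$ shift in the index of $f/\xi$ versus that of $D_\xi f/\xi$.
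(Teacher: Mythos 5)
Your proposal is correct in outline, but it takes a different organizational route than the paper, and the difference matters for how much work remains at the higher orders. The paper's proof splits into two steps: in Step 1 it establishes the pointwise tensorial identities (sums of squares of $f_r$, $f_{rr}$, $(\frac{f}{r})_r$, etc.) only for the trivial map $\xi(r)=r$, by brute-force computation of $(f_k)_{y_i\cdots y_p}$ up to fourth order; in Step 2 it observes that for a general $\xi$ the chain-rule identities $\nabla^k\boldsymbol{g}=\nabla_\cB^k\boldsymbol{f}$ and $\partial_x^k g=D_\xi^k f$ allow one to simply apply the Step 1 formulas to $\boldsymbol{g}$ in the $\boldsymbol{x}$-variable and then rename $\partial_x\mapsto D_\xi$, $x\mapsto\xi$. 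You instead propose to compute the pointwise identities directly for a general $\xi$, organized around the eigendecomposition $\nabla_{\boldsymbol{y}}\boldsymbol{\xi}=\xi_r P_\parallel+\frac{\xi}{r}P_\perp$ and an $\mathrm{O}(n)$-invariant tensor decomposition at higher orders. That is workable and conceptually appealing, and your low-order computations ($\nabla g=D_\xi f\,\boldsymbol{y}/r$, $\nabla^2 g=D_\xi^2 f\,P_\parallel+\frac{D_\xi f}{\xi}P_\perp$, $\det\nabla_{\boldsymbol{y}}\boldsymbol{\xi}=\xi_r(\xi/r)^m$) are correct; but you have only sketched the orders $k=3,4$, which is where nearly all the effort lives. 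The paper's transfer trick buys you something real here: by reducing to $\xi=r$, one never has to carry the $\cB$-matrix or its derivatives through the high-order expansions, and the brute-force enumeration stays manageable. If you keep your direct route, the $\mathrm{O}(n)$-invariant basis argument (radial symmetric powers of $\boldsymbol{y}/r$ tensored with traceless tangential pieces, induction on $k$) needs to be written out to confirm that the only scalar coefficients appearing are exactly $D_\xi^k f$, $D_\xi^{k-1}(\frac{f}{\xi})$, and $D_\xi^{k-3}\big(\frac{1}{\xi}D_\xi(\frac{f}{\xi})\big)$ with nonzero, $n$-dependent-only constants; as written that claim is asserted rather than established.
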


\begin{proof}
It suffices to prove the transformations for $(\boldsymbol{g},\boldsymbol{f})$ since $\nabla_{\boldsymbol{y}} f=f_r\frac{\boldsymbol{y}}{r}$ can be regarded as a vector  function $\boldsymbol{h}=h\frac{\boldsymbol{y}}{r}$ with $h=f_r$.

We divide the proof into two steps.

\smallskip
\textbf{1.} We first prove the case when $\xi(r)=r$. In this case, $\boldsymbol{\xi}(\boldsymbol{y})=\boldsymbol{y}$ and $\cB$ is the $n\times n$ identity matrix. It follows from direct calculations that
\begin{align*}
& \quad \ \,\begin{aligned}
(f_k)_{y_i}&=\frac{y_i y_k}{r^2}f_r+  \frac{\delta_{ik}r^2-y_i y_k}{r^3}f,\\
(f_k)_{y_iy_j}&=\frac{y_i y_j y_k}{r^3}f_{rr}+ \Big(\frac{\delta_{ij}y_k+\delta_{ik}y_j+\delta_{jk}y_i}{r}-\frac{3y_i y_j y_k}{r^3}\Big)\big(\frac{f}{r}\big)_r,
\end{aligned}\\
& \ \ \,\begin{aligned}
(f_k)_{y_iy_jy_\ell}
&=\frac{y_i y_j y_ky_\ell}{r^4}f_{rrr}\\
&\quad +\Big(\frac{\delta_{i\ell}y_jy_k+\delta_{j\ell}y_iy_k+\delta_{k\ell}y_iy_j}{r^2}\\
&\quad\quad \ \ +\frac{\delta_{ij}y_ky_\ell+\delta_{ik}y_jy_\ell+\delta_{jk}y_iy_\ell}{r^2}-\frac{6y_i y_j y_ky_\ell}{r^4}\Big)\big(\frac{f}{r}\big)_{rr}\\
&\quad + \Big(\delta_{ij}\delta_{k\ell}+\delta_{ik}\delta_{j\ell}+\delta_{jk}\delta_{i\ell}-\frac{\delta_{i\ell}y_jy_k+\delta_{j\ell}y_iy_k+\delta_{k\ell}y_iy_j}{r^2}\\
&\quad\quad \ \ -\frac{\delta_{ij}y_ky_\ell+\delta_{ik}y_jy_\ell+\delta_{jk}y_iy_\ell}{r^2}+\frac{3y_i y_j y_ky_\ell}{r^4}\Big)\Big(\frac{1}{r}\big(\frac{f}{r}\big)_r\Big),    
\end{aligned}\\
&\begin{aligned}
(f_k)_{y_iy_jy_\ell y_p}
&=\frac{y_i y_j y_ky_\ell y_p}{r^5}f_{rrrr}\\
&\quad +\Big(\frac{\delta_{ip} y_j y_ky_\ell+\delta_{jp} y_i y_ky_\ell+\delta_{kp} y_i y_jy_\ell+\delta_{\ell p} y_i y_jy_k}{r^3}\\
&\quad\quad \ \ + \frac{\delta_{i\ell}y_jy_ky_p+\delta_{j\ell}y_iy_ky_p+\delta_{k\ell}y_iy_jy_p}{r^3}\\
&\quad\quad \ \ +\frac{\delta_{ij}y_ky_\ell y_p+\delta_{ik}y_jy_\ell y_p+\delta_{jk}y_iy_\ell y_p}{r^3}-\frac{10y_i y_j y_ky_\ell y_p}{r^5}\Big)\big(\frac{f}{r}\big)_{rrr}
\end{aligned}\\
&\qquad\quad\quad \ \ \begin{aligned}
&\quad +\Big(\frac{\delta_{i\ell}\delta_{jp}y_k+\delta_{i\ell}\delta_{kp}y_j+\delta_{j\ell}\delta_{ip}y_k+\delta_{j\ell}\delta_{kp}y_i+\delta_{k\ell}\delta_{ip}y_j+\delta_{k\ell}\delta_{jp}y_i}{r}\\
&\quad\quad \ \ +\frac{\delta_{ij}\delta_{kp}y_\ell+\delta_{ij}\delta_{\ell p}y_k+\delta_{ik}\delta_{jp}y_\ell+\delta_{ik}\delta_{\ell p} y_j+\delta_{jk}\delta_{ip}y_\ell+\delta_{jk}\delta_{\ell p}y_i}{r}\\
&\quad\quad \ \ +\frac{\delta_{ij}\delta_{k\ell}y_p+\delta_{ik}\delta_{j\ell}y_p+\delta_{jk}\delta_{i\ell}y_p}{r}\\
&\quad\quad \ \ -\frac{3(\delta_{ip} y_j y_ky_\ell+\delta_{jp} y_i y_ky_\ell+\delta_{kp} y_i y_jy_\ell+\delta_{\ell p}y_i y_j y_k)}{r^3}\\
&\quad\quad \ \ -\frac{3(\delta_{i\ell}y_jy_ky_p+\delta_{j\ell}y_iy_ky_p+\delta_{k\ell}y_iy_jy_p)}{r^3}\\
&\quad\quad \ \ -\frac{3(\delta_{ij}y_ky_\ell y_p+\delta_{ik}y_jy_\ell y_p+\delta_{jk}y_iy_\ell y_p)}{r^3}+\frac{15y_i y_j y_ky_\ell y_p}{r^5}\Big)\Big(\frac{1}{r}\big(\frac{f}{r}\big)_{r}\Big)_r.    
\end{aligned}
\end{align*}
Then the above expressions yield
\begin{align*}
|\boldsymbol{f}|^2&=\sum_{i=k}^n |f_k|^2= |f|^2,\quad |\nabla_{\boldsymbol{y}} \boldsymbol{f}|^2=\sum_{i,k=1}^n |(f_k)_{y_i}|^2= |f_r|^2+ m\Big|\frac{f}{r}\Big|^2,\notag\\
|\nabla_{\boldsymbol{y}}^2 \boldsymbol{f}|^2&=\sum_{i,j,k=1}^n |(f_k)_{y_iy_j}|^2=|f_{rr}|^2+3m\Big|\big(\frac{f}{r}\big)_r\Big|^2,\notag\\
|\nabla_{\boldsymbol{y}}^3 \boldsymbol{f}|^2&=\sum_{i,j,k,\ell=1}^n \big|(f_k)_{y_iy_jy_\ell}\big|^2=|f_{rrr}|^2+6m\Big|\big(\frac{f}{r}\big)_{rr}\Big|^2+ (3m^2+6m)  \Big|\frac{1}{r}\big(\frac{f}{r}\big)_r\Big|^2, \\
|\nabla_{\boldsymbol{y}}^4 \boldsymbol{f}|^2&=\sum_{i,j,k,\ell,p=1}^n \big|(f_k)_{y_iy_jy_\ell y_p}\big|^2=|f_{rrrr}|^2\!+10m\Big|\big(\frac{f}{r}\big)_{rrr}\Big|^2\!+ (15m^2\!+\!30m) \Big|\Big(\frac{1}{r}\big(\frac{f}{r}\big)_r\Big)_r\Big|^2,\notag
\end{align*}
which implies 
\begin{equation}\label{BB}
\begin{gathered}
|\boldsymbol{f}|\sim |f|,\quad |\nabla_{\boldsymbol{y}} \boldsymbol{f}|\sim |f_r|+\Big|\frac{f}{r}\Big|,\quad |\nabla^2_{\boldsymbol{y}} \boldsymbol{f}|\sim |f_{rr}|+\Big|\big(\frac{f}{r}\big)_r\Big|,\\
|\nabla^3_{\boldsymbol{y}} \boldsymbol{f}|\sim |f_{rrr}|+\Big|\big(\frac{f}{r}\big)_{rr}\Big|+\Big|\frac{1}{r}\big(\frac{f}{r}\big)_{r}\Big|,\quad
|\nabla^4_{\boldsymbol{y}} \boldsymbol{f}|\sim |f_{rrrr}|+\Big|\big(\frac{f}{r}\big)_{rrr}\Big|+\Big|\Big(\frac{1}{r}\big(\frac{f}{r}\big)_{r}\Big)_r\Big|.
\end{gathered}
\end{equation}

Finally, thanks to the integral identity:  
\begin{equation*}
\int_\cJ f(\boldsymbol{y})\,\mathrm{d}\boldsymbol{y}= \omega_n\int_J f(r)r^m\,\mathrm{d}r,
\end{equation*}
where $\omega_n$ denotes the surface area of the $n$-sphere, we thus obtain the desired conclusions of this lemma when $\boldsymbol{\xi}(\boldsymbol{y})=\boldsymbol{y}$ from \eqref{BB}.

\smallskip
\textbf{2.} For general $\boldsymbol{\xi}(\boldsymbol{y})$, we can first repeat the calculations in Step 1 with the coordinate $\boldsymbol{x}=\boldsymbol{\xi}(\boldsymbol{y})$ and the function $\boldsymbol{g}(\boldsymbol{x})$. Specifically, if we let $x:=|\boldsymbol{x}|$, then
\begin{equation*}
x=\xi(r)\in G,\qquad G:=[\xi(a),\xi(b)),
\end{equation*}
and we can obtain from \eqref{BB} that
\begin{equation}\label{BB=g}
\begin{aligned}
\|\boldsymbol{g}\|_{L^q(\cG)}&\sim \|x^\frac{m}{q}g\|_{L^q(G)},\qquad
\|\nabla \boldsymbol{g}\|_{L^q(\cG)} \sim \Big\|x^\frac{m}{q}\big(g_x,\frac{g}{x}\big)\Big\|_{L^q(G)},\\
\|\nabla^2 \boldsymbol{g}\|_{L^q(\cG)}&\sim \Big\|x^\frac{m}{q}\Big(g_{xx},\big(\frac{g}{x}\big)_x\Big)\Big\|_{L^q(G)},\\
\|\nabla^3 \boldsymbol{g}\|_{L^q(\cG)}&\sim \Big\|x^\frac{m}{q}\Big(g_{xxx},\big(\frac{g}{x}\big)_{xx},\frac{1}{x}\big(\frac{g}{x}\big)_x\Big)\Big\|_{L^q(G)},\\
\|\nabla^4 \boldsymbol{g}\|_{L^q(\cG)}&\sim \Big\|x^\frac{m}{q}\Big(g_{xxxx},\big(\frac{g}{x}\big)_{xxx},\big(\frac{1}{x}(\frac{g}{x})_x\big)_x\Big)\Big\|_{L^q(G)}.
\end{aligned}
\end{equation}

Next, using the coordinate transformations $\boldsymbol{x}=\boldsymbol{\xi}(\boldsymbol{y})$ and $x=\xi(y)$, we have
\begin{equation}\label{bbg}
\nabla^k\boldsymbol{g}=\nabla_{\cB}^k\boldsymbol{f},\quad\,\,\partial_x^k g=D_\xi^k f,\qquad\,\, k=0,1,2,3,4.
\end{equation}
Therefore, \eqref{BB=g}--\eqref{bbg}, together with the following integral identities
\begin{equation*}
\int_\cG g(\boldsymbol{x})\,\mathrm{d}\boldsymbol{x}=\int_\cJ f(\boldsymbol{y})(\det \nabla_{\boldsymbol{y}}\boldsymbol{\xi}) \,\mathrm{d}\boldsymbol{y},\qquad \int_G g(x)\,\mathrm{d}x=  \int_J f(r) \xi_r\,\mathrm{d}r,
\end{equation*}
lead to the desired results of this lemma.
\end{proof}

\subsection{Transformation from the M-D Lagrangian coordinates to the spherically symmetric coordinates}\label{secB2}
Generally, it is desirable to consider the Lagrangian formulation, so that we can pullback  \eqref{eq:1.1-vfbp} on the moving domain $\Omega(t)$ to a problem on a fixed domain $\Omega$. To this end, denote by $\boldsymbol{x}=\boldsymbol{\eta}(t,\boldsymbol{y})$ the position of the fluid particle $\boldsymbol{x} \in \Omega(t)$ at time $t\geq 0$ so that
\begin{equation}\label{flow-map-md}
\boldsymbol{\eta}_t(t,\boldsymbol{y})=\boldsymbol{u}(t,\boldsymbol{\eta}(t,\boldsymbol{y})) \ \ \text{for $t>0$},\qquad \text{with $\boldsymbol{\eta}(0,\boldsymbol{y})=\boldsymbol{y}$},
\end{equation}
and $(t,\boldsymbol{y})$ are the M-D Lagrangian coordinates. Then, by introducing the Lagrangian density and velocity: 
\begin{equation}\label{L-dens-velo}
\varrho(t,\boldsymbol{y})=\rho(t,\boldsymbol{\eta}(t,\boldsymbol{y})),\qquad \boldsymbol{U}(t,\boldsymbol{y})=\boldsymbol{u}(t,\boldsymbol{\eta}(t,\boldsymbol{y})),
\end{equation}
we can rewrite \eqref{eq:1.1-vfbp} as 
\begin{equation}\label{eq:1.1-la-vfbp}
\begin{cases}
\displaystyle \varrho_t+ \varrho \dive_\cA \boldsymbol{U} =0&\text{in }(0,T]\times\Omega,\\[6pt]
\displaystyle \varrho \boldsymbol{U}_t + \nabla_\cA P= \mu\dive_{\cA}\big(\varrho (\nabla_\cA\boldsymbol{U}+(\nabla_\cA\boldsymbol{U})^\top)\big)&\text{in }(0,T]\times\Omega,\\[6pt]
\boldsymbol{\eta}_t=\boldsymbol{U}&\text{in }(0,T]\times\Omega,\\[6pt]
\varrho>0&\text{in }(0,T]\times\Omega,\\[6pt]
\varrho|_{\partial\Omega}=0&\text{on }(0,T],\\[6pt]
(\varrho,\boldsymbol{U},\boldsymbol{\eta})(0,\boldsymbol{y})=(\rho_0(\boldsymbol{y}),\boldsymbol{u}_0(\boldsymbol{y}),\boldsymbol{y}) &\text{for $\boldsymbol{y}\in\Omega$}.
\end{cases}
\end{equation}
Here $\boldsymbol{y}=(y_1,\cdots\!,y_n)^\top$, $\boldsymbol{\eta}=(\eta_1,\cdots\!,\eta_n)^\top$, $\boldsymbol{U}=(U_1,\cdots\!,U_n)^\top$, and
\begin{equation}\label{sym}
\begin{aligned}
&\cA=(\nabla_{\boldsymbol{y}}\boldsymbol{\eta})^{-1},\qquad \nabla_{\boldsymbol{y}}\boldsymbol{\eta} \quad \text{with $(\nabla_{\boldsymbol{y}}\boldsymbol{\eta})_{ij}=\frac{\partial \eta_i}{\partial y_j}$},\\[-2pt]
&\nabla_\cA P=\cA^\top (\nabla P) \quad \text{with $(\nabla_\cA P)_{i}=\sum_{k=1}^n\cA_{ki}\frac{\partial P}{\partial y_k}$},\\[-2pt]
&\nabla_\cA \boldsymbol{U}=(\nabla \boldsymbol{U})\cdot \cA \quad \text{with $(\nabla_\cA\boldsymbol{U})_{ij}=\sum_{k=1}^n\cA_{kj}\frac{\partial U_i}{\partial y_k}$},\qquad \dive_\cA \boldsymbol{U}=\sum_{i=1}^n(\nabla_\cA\boldsymbol{U})_{ii}.
\end{aligned}
\end{equation}

The  initial density $\rho_0$ under consideration satisfies the following condition:
\begin{equation}\label{distance}
\rho_0^\beta(\boldsymbol{y})\in H^3(\Omega), \quad\,\, \cK_1(1-|\boldsymbol{y}|)^\frac{1}{\beta}\leq \rho_0(\boldsymbol{y})\leq \cK_2(1-|\boldsymbol{y}|)^\frac{1}{\beta} \qquad\,\, \text{for all $\boldsymbol{y}\in \overline\Omega$},
\end{equation}
for some  constants $\cK_2>\cK_1>0$ and $\beta>0$.

Now, we show that problem \eqref{eq:1.1-la-vfbp} also implies \eqref{eq:VFBP-La} in \S\ref{section1}. 
\begin{Lemma}\label{lemmad2}
Under the spherical symmetry assumption of $(\rho,\boldsymbol{u})(t,\boldsymbol{x})$ in \eqref{ss-ass}, if $(\rho,\boldsymbol{u})(t,\boldsymbol{x})$ is the classical solution of {\rm\bf VFBP} \eqref{eq:1.1-vfbp}, then
\begin{equation}\label{from3}
(\varrho,\boldsymbol{U},\boldsymbol{\eta})(t,\boldsymbol{y})=(\varrho(t,r),U(t,r)\frac{\boldsymbol{y}}{r},\eta(t,r) \frac{\boldsymbol{y}}{r}),
\end{equation}
where $r=|\boldsymbol{y}|$, and $\eta(t,r)$ and $(\varrho,U)(t,r)$ are given by \eqref{flowmap-r-la} and \eqref{varrho-U}, respectively. Moreover, $(\varrho,U,\eta)(t,r)$ satisfies {\rm\bf IBVP} \eqref{eq:VFBP-La}.
\end{Lemma}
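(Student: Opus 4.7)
The statement is fundamentally a coordinate-transformation verification: we must show that spherical symmetry of $(\rho,\boldsymbol{u})$ propagates through the Lagrangian change of variables, and that the reduced radial equations are precisely \eqref{eq:VFBP-La}. My plan is to proceed in three stages, the bulk of the work being explicit but tidy tensor calculations.

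\emph{Stage 1 (symmetry of $\boldsymbol{\eta}$, $\boldsymbol{U}$, and $\varrho$).} I will first show $\boldsymbol{\eta}(t,\boldsymbol{y})=\eta(t,r)\frac{\boldsymbol{y}}{r}$, where $\eta$ solves the radial ODE \eqref{flowmap-r-la}. To do this, define $\boldsymbol{\eta}^*(t,\boldsymbol{y}):=\eta(t,|\boldsymbol{y}|)\frac{\boldsymbol{y}}{|\boldsymbol{y}|}$ and verify that, using the spherical form \eqref{ss-ass} of $\boldsymbol{u}$, the function $\boldsymbol{\eta}^*$ satisfies the same ODE system \eqref{flow-map-md} with the same initial data as $\boldsymbol{\eta}$; by the uniqueness of the flow map for the Lipschitz-in-space vector field $\boldsymbol{u}$, we conclude $\boldsymbol{\eta}=\boldsymbol{\eta}^*$. (Equivalently, one may invoke Lemma~\ref{duichen-dengjia} after checking $\mathcal{O}\boldsymbol{\eta}(t,\boldsymbol{y})=\boldsymbol{\eta}(t,\mathcal{O}\boldsymbol{y})$ for each $\mathcal{O}\in\mathrm{SO}(n)$, which again follows from uniqueness.) The representations $\boldsymbol{U}=U(t,r)\tfrac{\boldsymbol{y}}{r}$ and $\varrho=\varrho(t,r)$ with the claimed $(U,\varrho)$ then follow directly from \eqref{L-dens-velo} and the spherical form of $\boldsymbol{u}$ and $\rho$.

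\emph{Stage 2 (continuity equation and the formula \eqref{eq:eta}).} For the radial flow map $\boldsymbol{\eta}=\eta\,\boldsymbol{y}/r$ a direct differentiation gives
\begin{equation*}
(\nabla_{\boldsymbol{y}}\boldsymbol{\eta})_{ij}=\eta_r\frac{y_iy_j}{r^2}+\frac{\eta}{r}\Big(\delta_{ij}-\frac{y_iy_j}{r^2}\Big),
\end{equation*}
so $\nabla_{\boldsymbol{y}}\boldsymbol{\eta}$ has eigenvalue $\eta_r$ along the radial direction and $\eta/r$ (with multiplicity $m$) tangentially, yielding
\begin{equation*}
\det\nabla_{\boldsymbol{y}}\boldsymbol{\eta}=\eta_r\Big(\frac{\eta}{r}\Big)^{m},\qquad \cA_{ij}=\frac{1}{\eta_r}\frac{y_iy_j}{r^2}+\frac{r}{\eta}\Big(\delta_{ij}-\frac{y_iy_j}{r^2}\Big).
\end{equation*}
Plugging these into the $\dive_{\cA}\boldsymbol{U}$ definition and using the idempotence $(\delta_{ij}-\frac{y_iy_j}{r^2})^2=\delta_{ij}-\frac{y_iy_j}{r^2}$ and $\frac{y_iy_j}{r^2}(\delta_{jk}-\frac{y_jy_k}{r^2})=0$, I compute
\begin{equation*}
(\nabla_{\cA}\boldsymbol{U})_{ij}=\frac{U_r}{\eta_r}\frac{y_iy_j}{r^2}+\frac{U}{\eta}\Big(\delta_{ij}-\frac{y_iy_j}{r^2}\Big),\qquad \dive_{\cA}\boldsymbol{U}=\frac{U_r}{\eta_r}+\frac{mU}{\eta}.
\end{equation*}
Hence $\eqref{eq:1.1-la-vfbp}_1$ reduces to $\eqref{eq:VFBP-La}_1$; integrating in time against $\eqref{eq:1.1-la-vfbp}_6$ together with $\det\nabla_{\boldsymbol{y}}\boldsymbol{\eta}=\eta_r(\eta/r)^m$ yields the algebraic identity $\varrho\,\eta^m\eta_r=r^m\rho_0$, which is \eqref{eq:eta}.

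\emph{Stage 3 (momentum equation).} A key simplification is that $\nabla_{\cA}\boldsymbol{U}$ is already symmetric (both radial and tangential pieces are), so $\nabla_{\cA}\boldsymbol{U}+(\nabla_{\cA}\boldsymbol{U})^{\top}=2\nabla_{\cA}\boldsymbol{U}$. For the pressure, using the same contraction one finds $\nabla_{\cA}P=D_\eta P\cdot\boldsymbol{y}/r$, and an analogous straightforward computation gives $\boldsymbol{U}_t=U_t\boldsymbol{y}/r$. For the viscous contribution I will write
\begin{equation*}
\varrho(\nabla_{\cA}\boldsymbol{U})_{ij}=\alpha\frac{y_iy_j}{r^2}+\beta\delta_{ij},\qquad \alpha=\varrho\Big(\frac{U_r}{\eta_r}-\frac{U}{\eta}\Big),\quad \beta=\frac{\varrho U}{\eta},
\end{equation*}
and compute $\dive_{\cA}$ termwise by combining $\partial_{y_k}(y_iy_j/r^2)$ with $\cA_{kj}$; after cancellations the resulting vector is proportional to $\boldsymbol{y}/r$, giving a scalar identity. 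Multiplying by $\eta_r$ and projecting onto $\boldsymbol{y}/r$, the equation $\eqref{eq:1.1-la-vfbp}_2$ becomes exactly $\eqref{eq:VFBP-La}_2$. The condition $\eqref{eq:VFBP-La}_3$ is the identity $\eta_t=U$ inherited componentwise from $\eqref{eq:1.1-la-vfbp}_3$, while $\eqref{eq:VFBP-La}_{4,5,6}$ follow from $\eqref{eq:1.1-la-vfbp}_{4,5,6}$ together with the spherical form of the initial data.

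\emph{Main obstacle.} The routine but delicate step is the radial projection of the viscous divergence: one must carefully track the tangential‐projection algebra of the tensor $\varrho\nabla_{\cA}\boldsymbol{U}$ and verify that the cross terms involving $\partial_{y_k}(y_iy_j/r^2)$ collapse under contraction with $\cA_{kj}$ into exactly the combination $2\mu(\varrho(\frac{U_r}{\eta_r}+\frac{mU}{\eta}))_r-2\mu m\frac{\varrho_r U}{\eta}$ after multiplication by $\eta_r$. All other steps are formal but the pattern is guided by Lemma~\ref{lemma-initial}, which provides the appropriate coordinate dictionary between radial and M-D derivatives.
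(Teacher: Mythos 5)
Your proposal is correct and follows essentially the same route as the paper: establish spherical symmetry of the Lagrangian flow map via uniqueness of the ODE $\eqref{flow-map-md}$ (the paper proves rotation-equivariance and invokes Lemma~\ref{duichen-dengjia}; your Stage-1 guess-and-verify of $\boldsymbol{\eta}^*=\eta\,\boldsymbol{y}/r$ is an equivalent, marginally more direct version of the same uniqueness argument, and you also mention the rotation alternative), then compute $\nabla_{\boldsymbol{y}}\boldsymbol{\eta}$, $\cA$, $\nabla_\cA\boldsymbol{U}$, $\dive_\cA\boldsymbol{U}$, $\nabla_\cA P$ explicitly and read off the reduced radial system. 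Your eigenprojection form of $\nabla_{\boldsymbol{y}}\boldsymbol{\eta}$ and $\cA$ is the same matrix the paper writes in the $\delta_{ij}$-plus-$y_iy_j/r^2$ basis, and your observation that $\nabla_\cA\boldsymbol{U}$ is symmetric (so the strain tensor collapses to $2\nabla_\cA\boldsymbol{U}$) is exactly what the paper exploits implicitly; your Stage-3 sketch of the viscous divergence is no less detailed than the paper's, which also leaves that verification to the reader.
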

\begin{proof}
First, we show that, under \eqref{ss-ass}, $\boldsymbol{\eta}(t,\boldsymbol{y})$ is spherically symmetric, taking the form:
\begin{equation}\label{eta-qiuduichen}
\boldsymbol{\eta}(t,\boldsymbol{y})=\eta(t,r)\frac{\boldsymbol{y}}{r},
\end{equation}
where $r=|\boldsymbol{y}|$, and $\eta(t,r)=x=|\boldsymbol{x}|$ is defined in \eqref{flowmap-r-la}. By Lemma \ref{duichen-dengjia} in Appendix \ref{appendix A}, it suffices to prove 
\begin{equation}\label{ttt}
(\cO\boldsymbol{\eta})(t,\boldsymbol{y})= \boldsymbol{\eta}(t,\cO\boldsymbol{y})  \qquad\text{for any $\boldsymbol{y}\in \Omega$ and $\cO\in \mathrm{SO}(n)$}.
\end{equation}
Indeed, define 
\begin{equation}
\tilde{\boldsymbol{\eta}}(t,\boldsymbol{y})=(\cO^{-1} \boldsymbol{\eta})(t,\cO\boldsymbol{y}).
\end{equation}
Then it follows from \eqref{flow-map-md} and the spherical symmetry of $\boldsymbol{u}(t,\boldsymbol{x})$ \eqref{ss-ass} that 
\begin{equation*}
\tilde{\boldsymbol{\eta}}_t(t,\boldsymbol{y})=(\cO^{-1} \boldsymbol{u})(t, \boldsymbol{\eta} (t,\cO\boldsymbol{y}))=(\cO^{-1} \boldsymbol{u})(t, (\cO\tilde{\boldsymbol{\eta}}) (t,\boldsymbol{y}))=\boldsymbol{u} (t,  \tilde{\boldsymbol{\eta}} (t,\boldsymbol{y})).
\end{equation*}
Note that $\boldsymbol{u}(t,\boldsymbol{x})$ is a classical solution of problem \eqref{eq:1.1-vfbp}, \textit{i.e.}, $\boldsymbol{u}(t,\boldsymbol{x})\in C^1(\overline\Omega(t))$ for each $t>0$. Comparing the above with \eqref{flow-map-md}, since $\tilde{\boldsymbol{\eta}}(0,\boldsymbol{y})=\boldsymbol{\eta}(0,\boldsymbol{y})=\boldsymbol{y}$, we can derive from the uniqueness of ODEs \eqref{flow-map-md} that $\tilde{\boldsymbol{\eta}}\equiv \boldsymbol{\eta}$, which leads to \eqref{ttt}.

Next, from the radial coordinate $r=|\boldsymbol{y}|$ and the definition of $(\varrho, U)(t,r)$ in \eqref{varrho-U}, it follows that \eqref{L-dens-velo} becomes
\begin{equation} 
\begin{aligned}
\varrho(t,\boldsymbol{y})&=\rho(t,|\boldsymbol{\eta}(t,\boldsymbol{y})|)=\rho(t, \eta(t,r))=\varrho(t,r),\\
\boldsymbol{U}(t,\boldsymbol{y})&=u(t,|\boldsymbol{\eta}(t,\boldsymbol{y})|)\frac{\boldsymbol{\eta}(t,\boldsymbol{y})}{|\boldsymbol{\eta}(t,\boldsymbol{y})|}=u(t,\eta(t,r))\frac{\boldsymbol{\eta}(t,\boldsymbol{y})}{\eta(t,r)}=U(t,r)\frac{\boldsymbol{y}}{r}.
\end{aligned}
\end{equation}
This implies that $(\varrho,U)(t,r)$ in \eqref{varrho-U} are radial projections of $(\varrho,\boldsymbol{U})(t,\boldsymbol{y})$.

Finally, from the proof of Lemma \ref{lemma-initial}, it follows that
\begin{equation}\label{A!}
\begin{aligned}
(\nabla_{\boldsymbol{y}}\boldsymbol{\eta})_{ij}&=\frac{\eta}{r}\delta_{ij}+\big(\eta_r-\frac{\eta}{r}\big)\frac{y_i y_j}{r^2},\\
(\nabla_{\boldsymbol{y}}\boldsymbol{U})_{ij}&=\frac{U}{r}\delta_{ij}+\big(U_r-\frac{U}{r}\big)\frac{y_i y_j}{r^2},\qquad (\nabla_{\boldsymbol{y}}P)_i=\frac{y_i}{r}P_r,
\end{aligned}
\end{equation}
which, along with a direct calculation, yields
\begin{equation}\label{A!!}
\begin{aligned}
\cA_{ij}&=((\nabla_{\boldsymbol{y}}\boldsymbol{\eta})^{-1})_{ij}=  \frac{r}{\eta} \delta_{ij} 
+ \big(\frac{1}{\eta_r} - \frac{r}{\eta}\big) \frac{y_i y_j}{r^2},\\
(\nabla_\cA\boldsymbol{U})_{ij}&=\sum_{k=1}^n\cA_{kj}\frac{\partial U_i}{\partial y_k}=\frac{U}{\eta}\delta_{ij}+\big(\frac{U_r}{\eta_r}-\frac{U}{\eta}\big)\frac{y_iy_j}{r^2},\\
\dive_\cA \boldsymbol{U} &=\frac{U_r}{\eta_r}+\frac{mU}{\eta},\qquad (\nabla_\cA P)_{i}=\sum_{k=1}^n\cA_{ki}\frac{\partial P}{\partial y_k}=\frac{y_i}{r}\frac{P_r}{\eta_r}.
\end{aligned}
\end{equation}
Based on the above, we can derive the equations of $(\varrho,U,\eta)(t,r)$ in $\eqref{eq:VFBP-La}_1$--$\eqref{eq:VFBP-La}_3$.
\end{proof}

\subsection{Transformation between the Eulerian and Lagrangian coordinates} 
We give the transformation relations between $(\rho(t,\boldsymbol{x}),\boldsymbol{u}(t,\boldsymbol{x}),\partial\Omega(t))$ and $(\varrho(t,r),U(t,r))$. 

First, define the moving domain $\Omega(t)$ as
\begin{equation}
\Omega(t):=\big\{\boldsymbol{x}=\boldsymbol{\eta}(t,\boldsymbol{y}): \,\boldsymbol{y}\in \Omega\big\},
\end{equation}
and, for every $t\in [0,T]$ and $\boldsymbol{x}\in \Omega(t)$, define the inverse flow map $\boldsymbol{\eta}_*$ by
\begin{equation*}
\boldsymbol{y}=\boldsymbol{\eta}_*(t, \boldsymbol{x}):\quad \Omega(t)\to \Omega,\quad (t, \boldsymbol{x} )\mapsto (t,\boldsymbol{y}),
\end{equation*}
which satisfies
\begin{equation}\label{danwei}
\boldsymbol{\eta}(t,\boldsymbol{\eta}_*(t,\boldsymbol{x}))=\boldsymbol{x},\qquad \boldsymbol{\eta}_*(t,\boldsymbol{\eta}(t,\boldsymbol{y}))=\boldsymbol{y}.
\end{equation}

Then we have the following result:
\begin{Lemma}\label{lemma-B3}
Let $(U,\eta)$ be the classical solution of {\rm\bf IBVP} \eqref{eq:VFBP-La-eta} obtained in {\rm Theorem \ref{Theorem1.1}}. Then the inverse flow map $\boldsymbol{\eta}_*=\boldsymbol{\eta}_*(t, \boldsymbol{x})$ is well-defined on $\Omega(t)$ for each $t\in [0,T]$, and $\boldsymbol{\eta}_*$ is spherically symmetric, which takes the form{\rm:}
\begin{equation}\label{forms}
\boldsymbol{\eta}_*(t, \boldsymbol{x})=\eta_*(t, x)\frac{\boldsymbol{x}}{x},\qquad x=|\boldsymbol{x}|.
\end{equation}
Moreover, if we set 
\begin{equation}\label{equ1.28}
\rho(t,\boldsymbol{x}):=\varrho(t, \boldsymbol{\eta}_*(t,\boldsymbol{x})),\qquad \boldsymbol{u}(t,\boldsymbol{x}):=\boldsymbol{U}(t, \boldsymbol{\eta}_*(t,\boldsymbol{x})),
\end{equation}
where $(\varrho,\boldsymbol{U})(t,\boldsymbol{y})$ takes form \eqref{from3} in {\rm Lemma \ref{lemmad2}}, then $(\rho,\boldsymbol{u})(t,\boldsymbol{x})$ is spherically symmetric, taking the form \eqref{ss-ass}, and $(\rho(t,\boldsymbol{x}),\boldsymbol{u}(t,\boldsymbol{x}),\partial\Omega(t))$ is a solution of {\rm\bf VFBP} \eqref{eq:1.1-vfbp} described by {\rm(i)--(ii)} of {\rm Theorem \ref{theorem1.3}}.
\end{Lemma}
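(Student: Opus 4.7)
The plan is to prove the lemma in three stages: invertibility of the flow map with explicit form of its inverse, transfer of regularity from the Lagrangian to the Eulerian picture, and verification of the boundary/kinematic conditions, with a separate discussion of the case $\beta>1$ that produces the split in Theorem \ref{theorem1.3}.

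First I would exploit Theorem \ref{Theorem1.1}, which gives $(\eta_r,\tfrac{\eta}{r})(t,r)\in[C^{-1}(T),C(T)]$ on $[0,T]\times\bar I$, together with Lemma \ref{lemmad2}, which already provides the spherically symmetric form $\boldsymbol{\eta}(t,\boldsymbol{y})=\eta(t,r)\tfrac{\boldsymbol{y}}{r}$. The Jacobian $\det\nabla_{\boldsymbol{y}}\boldsymbol{\eta}=\eta_r(\tfrac{\eta}{r})^m$ is then uniformly positive on $\bar\Omega$, so $\eta(t,\cdot)\colon \bar I\to[0,\eta(t,1)]$ is strictly increasing and admits a $C^2$ inverse $\eta_*(t,\cdot)$ for $t>0$. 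A one-line computation from $\boldsymbol{x}=\eta(t,r)\tfrac{\boldsymbol{y}}{r}$ yields $|\boldsymbol{x}|=\eta(t,r)$ and $\boldsymbol{y}=r\tfrac{\boldsymbol{x}}{|\boldsymbol{x}|}=\eta_*(t,|\boldsymbol{x}|)\tfrac{\boldsymbol{x}}{|\boldsymbol{x}|}$, establishing \eqref{forms} and identifying $\Omega(t)$ as the ball of radius $\eta(t,1)$, so $\partial\Omega(t)$ is a $C^2$ sphere.

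Next, inserting \eqref{from3} and \eqref{forms} into \eqref{equ1.28} gives
\begin{equation*}
\rho(t,\boldsymbol{x})=\varrho(t,\eta_*(t,|\boldsymbol{x}|)),\qquad \boldsymbol{u}(t,\boldsymbol{x})=U(t,\eta_*(t,|\boldsymbol{x}|))\,\tfrac{\boldsymbol{x}}{|\boldsymbol{x}|},
\end{equation*}
which is the form \eqref{ss-ass}. The equations $\eqref{eq:1.1-vfbp}_1$--$\eqref{eq:1.1-vfbp}_2$ and the conditions $\eqref{eq:1.1-vfbp}_3$--$\eqref{eq:1.1-vfbp}_5$ are then obtained by reversing the derivation in \S\ref{secB2}: the identities \eqref{A!}--\eqref{A!!} are precisely reversible once the diffeomorphism $\boldsymbol{\eta}_*$ is in hand, so the pointwise validity of \eqref{eq:VFBP-La} on $(0,T]\times \bar I$ (granted by Theorem \ref{Theorem1.1}) translates into the pointwise validity of \eqref{eq:1.1-vfbp} on $\mathbb{E}(T)$. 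The boundary conditions $\rho|_{\partial\Omega(t)}=0$ and $\mathcal{V}(\partial\Omega(t))=\boldsymbol{u}\cdot\boldsymbol{n}$ follow from $\varrho|_{r=1}=0$ and $\eta_t(t,1)=U(t,1)$, together with the radial geometry of $\partial\Omega(t)$.

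The main obstacle will be matching the regularity demanded in Definition \ref{definition-M-euler} and splitting statements (i) and (ii) of Theorem \ref{theorem1.3}. For the velocity side, the regularity $(\boldsymbol{u},\nabla\boldsymbol{u},\nabla^2\boldsymbol{u},\boldsymbol{u}_t)\in C(\mathbb{E}(T))$ is obtained by composing the Lagrangian regularity of $U$ (Definition \ref{definition-lag}, translated to multi-$D$ via Lemma \ref{lemma-initial}) with $\boldsymbol{\eta}_*$, using the uniform lower bound on $\eta_r$ to control derivatives of $\eta_*$. For the density, the formula $\varrho=r^m\rho_0/(\eta^m\eta_r)$ shows that spatial derivatives of $\rho$ pull in derivatives of $\rho_0$: when $\beta\le1$, the condition $\rho_0^\beta\in H^3$ and $\rho_0^\beta\sim 1-r$ force $\nabla\rho_0\in C(\bar\Omega)$, giving $(\rho,\rho_t,\nabla\rho)\in C(\mathbb{E}(T))$ and hence (i); when $\beta>1$, however, $\rho_0=(\rho_0^\beta)^{1/\beta}$ is only $C^{1/\beta}$-Hölder at the boundary so $\nabla\rho$ may fail to be continuous up to $\partial\Omega(t)$, and only the material derivative $\rho_t+\boldsymbol{u}\cdot\nabla\rho=-\rho\,\mathrm{div}\,\boldsymbol{u}$, obtained directly from $\eqref{eq:1.1-vfbp}_1$, remains in $C(\mathbb{E}(T))$, which is exactly the content of (ii).
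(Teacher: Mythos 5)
Your outline is essentially the paper's proof: uniform bounds on $(\eta_r,\eta/r)$ and the inverse function theorem give the diffeomorphism, the Lagrangian relations of \S\ref{secB2} are reversed term by term, and the case split in Theorem \ref{theorem1.3} comes from the Hölder regularity of $\rho_0=(\rho_0^\beta)^{1/\beta}$ near the boundary. One genuine simplification on your part: for \eqref{forms}, the paper first proves $\mathcal{O}\boldsymbol{\eta}_*(t,\boldsymbol{x})=\boldsymbol{\eta}_*(t,\mathcal{O}\boldsymbol{x})$ and then invokes Lemma \ref{duichen-dengjia}, whereas you invert the scalar relation $|\boldsymbol{x}|=\eta(t,r)$ directly and read off $\boldsymbol{y}=\eta_*(t,x)\boldsymbol{x}/x$ from the fact that $\boldsymbol{\eta}$ is a positive radial dilation. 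Since the explicit form of $\boldsymbol{\eta}$ is already in hand from Lemma \ref{lemmad2}, your route is shorter and perfectly valid.

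One point you pass over too quickly: to conclude case (i) it is not enough that $\nabla\rho_0$ be continuous on $\bar\Omega$ when $\beta\le 1$; you also need $\nabla\rho$ to be continuous at the \emph{origin}, and a spherically symmetric vector field is continuous there only if it vanishes (this is the last assertion of Lemma \ref{duichen-dengjia}). The paper handles this by writing $\nabla\rho=\tfrac{1}{\beta}\varrho^{1-\beta}\nabla_\cA(\varrho^\beta)$ so that the object to be differentiated is $\varrho^\beta$ (not $\rho_0$ or $\varrho$ directly), then checking that $(\varrho^\beta)_r\in C((0,T];C(\bar I))$ with $(\varrho^\beta)_r|_{r=0}=0$, which rests on $\tfrac{(\rho_0^\beta)_r}{r}\in C(\bar I)$ (hence $(\rho_0^\beta)_r|_{r=0}=0$), $\eta_{rr}|_{r=0}=(\eta/r)_r|_{r=0}=0$, and the formula \eqref{BB**}. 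Your sketch, which works with $\nabla\rho_0$ directly, would need this same zero-at-the-origin observation to be complete, and the decomposition through $\varrho^\beta$ is what makes the verification clean. Similarly, "$\partial\Omega(t)$ is a $C^2$ sphere" should be read as a statement about $t\mapsto R(t)=\eta(t,1)$: one uses $R'(t)=U(t,1)\in C([0,T])$ and $R''(t)=U_t(t,1)\in C((0,T])$ from Definition \ref{definition-lag}.
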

\begin{proof}
We divide the proof into four steps.

\smallskip
\textbf{1.} First, by \eqref{A!}--\eqref{A!!},
\begin{equation*}
\det(\nabla_{\boldsymbol{y}}\boldsymbol{\eta})=\frac{\eta^m\eta_r}{r^m},\qquad  |\nabla_{\boldsymbol{y}}\boldsymbol{\eta}|^2=\eta_r^2+\frac{m\eta^2}{r^2},\qquad |\cA|^2=\frac{1}{\eta_r^2}+\frac{mr^2}{\eta^2},
\end{equation*}
it thus follows from \eqref{b1} that the flow map $\boldsymbol{\eta}(t,\cdot)\!:\Omega\to\Omega(t)$ is injective.  
Moreover, $(\eta,\eta_r,\frac{\eta}{r})\in C^1([0,T];C(\bar I))$ ensures that 
$\boldsymbol{\eta}\in C^1([0,T]\times \overline\Omega)$. By the inverse function theorem, for each fixed $t\in[0,T]$, $\boldsymbol{\eta}(t,\cdot)$ is a diffeomorphism from $\Omega$ onto its image $\Omega(t)$. Thus, the inverse map $\boldsymbol{\eta}_*:\Omega(t)\to\Omega$ is well‑defined and belongs to $C^1(\overline{\EE(T)})$, where $\EE(T)$ is defined by
\begin{equation*}
\mathbb{E}(T)=\big\{(t,\boldsymbol{x}) : \,  t\in (0,T], \ \boldsymbol{x}\in \overline\Omega(t)\big\}.
\end{equation*}
In particular, $\boldsymbol{\eta}_*$ satisfies the following relations:
\begin{equation}\label{A!!!}
\nabla\boldsymbol{\eta}_*(t,\boldsymbol{x})=\cA(t,\boldsymbol{\eta}_*(t,\boldsymbol{x})),\qquad (\boldsymbol{\eta}_*)_t(t,\boldsymbol{x})=-(\cA \boldsymbol{U})(t,\boldsymbol{\eta}_*(t,\boldsymbol{x})).
\end{equation}

\smallskip
\textbf{2.} We show the spherical symmetry of $\boldsymbol{\eta}_*$. For any $\boldsymbol{x}\in \Omega(t)$, there exists $\boldsymbol{y}\in \Omega$ such that $\boldsymbol{\eta}(t,\boldsymbol{y})=\boldsymbol{x}$. It then follows from \eqref{danwei} that, for any $\cO\in \mathrm{SO}(n)$, 
\begin{equation*}
\begin{aligned}
(\cO\boldsymbol{\eta}_*)(t,\boldsymbol{x})&=(\cO\boldsymbol{\eta}_*)(t,\boldsymbol{\eta}(t,\boldsymbol{y}))=\cO\boldsymbol{y},\\
\boldsymbol{\eta}_*(t,\cO\boldsymbol{x})&= \boldsymbol{\eta}_* (t,(\cO\boldsymbol{\eta})(t,\boldsymbol{y}))= \boldsymbol{\eta}_* (t, \boldsymbol{\eta} (t,\cO\boldsymbol{y}))=\cO\boldsymbol{y}.
\end{aligned}
\end{equation*}
Hence, for any $\boldsymbol{x}\in \Omega(t)$ and $\cO\in \mathrm{SO}(n)$, we have
\begin{equation*}
(\cO\boldsymbol{\eta}_*)(t,\boldsymbol{x})=\boldsymbol{\eta}_*(t,\cO\boldsymbol{x}),   
\end{equation*}
which, along with Lemma \ref{duichen-dengjia}, yields \eqref{forms}.

\smallskip
\textbf{3.} Now, set $(\rho,\boldsymbol{u})(t,\boldsymbol{x})$ as in \eqref{equ1.28}. Clearly, by \eqref{forms}, we can directly derive 
\begin{equation} 
\begin{aligned}
\varrho(t, \boldsymbol{\eta}_*(t,\boldsymbol{x}))&=\varrho(t, |\boldsymbol{\eta}_*(t,\boldsymbol{x})|)=\varrho(t,\eta _*(t,x)),\\
\boldsymbol{U}(t, \boldsymbol{\eta}_*(t,\boldsymbol{x}))&=U(t,|\boldsymbol{\eta}_*(t,\boldsymbol{x})|)\frac{\boldsymbol{\eta}_*(t,\boldsymbol{x})}{|\boldsymbol{\eta}_*(t,\boldsymbol{x})|}=U(t,\eta_*(t,x))\frac{\boldsymbol{x}}{x},
\end{aligned}
\end{equation}
and hence $(\rho,\boldsymbol{u})(t,\boldsymbol{x})$ satisfies \eqref{ss-ass}. 

Next, we show that $(\rho(t,\boldsymbol{x}),\boldsymbol{u} (t,\boldsymbol{x}),\partial\Omega(t))$ solves problem \eqref{eq:1.1-vfbp}. Indeed, from \eqref{A!}--\eqref{A!!} and \eqref{A!!!}, we have
\begin{equation}\label{QQ30}
\begin{aligned}
\rho_t(t,\boldsymbol{x})&=(\varrho_t- \boldsymbol{U} \cdot\nabla_\cA \varrho)(t, \boldsymbol{\eta}_*(t,\boldsymbol{x}))=\big(\varrho_t-U\frac{\varrho_r}{\eta_r}\big)(t,\eta_*(t,x)),\\
\dive(\rho \boldsymbol{u})(t,\boldsymbol{x})&=\dive_\cA(\varrho \boldsymbol{U})(t, \boldsymbol{\eta}_*(t,\boldsymbol{x}))=\big(\frac{(\varrho U)_r}{\eta_r}+\frac{m\varrho U}{\eta}\big)(t,\eta_*(t,x)),
\end{aligned}
\end{equation}
which, along with $\eqref{eq:VFBP-La}_1$, leads to
\begin{equation}\label{masss}
\rho_t+\dive(\rho \boldsymbol{u})=0\qquad \text{in }\Omega(t).
\end{equation}

Similarly, we can also obtain
\begin{equation*}
\begin{aligned}
\rho\boldsymbol{u}_t(t,\boldsymbol{x})&=(\varrho \boldsymbol{U}_t- \varrho\boldsymbol{U} \cdot\nabla_\cA\boldsymbol{U})(t, \boldsymbol{\eta}_*(t,\boldsymbol{x}))=\big(\varrho U_t- \frac{UU_r}{\eta_r}\big)(t,\eta_*(t,x)),\\
(\rho\boldsymbol{u}\cdot\nabla \boldsymbol{u})(t,\boldsymbol{x})&=(\varrho\boldsymbol{U} \cdot\nabla_\cA\boldsymbol{U})(t, \boldsymbol{\eta}_*(t,\boldsymbol{x}))=\big(\frac{UU_r}{\eta_r}\big)(t,\eta_*(t,x)),\\
\dive(\rho D(\boldsymbol{u}))(t,\boldsymbol{x})&=\dive_{\cA}\big(\varrho (\nabla_\cA\boldsymbol{U}+(\nabla_\cA\boldsymbol{U})^\top)\big) (t, \boldsymbol{\eta}_*(t,\boldsymbol{x}))\\
&=\Big(\frac{1}{\eta_r}\big(\varrho (\frac{U_r}{\eta_r}+ \frac{mU}{\eta})\big)_r - m\frac{\varrho_r U}{\eta\eta_r}\Big)(t,\eta_*(t,x)),\\
\nabla \rho^\gamma(t,\boldsymbol{x})&=(\nabla_\cA \varrho^\gamma)(t, \boldsymbol{\eta}_*(t,\boldsymbol{x}))=\big(\frac{(\varrho^\gamma)_r}{\eta_r}\big)(t,\eta_*(t,x)),
\end{aligned}
\end{equation*}
which, along with $\eqref{eq:VFBP-La}_2$ and \eqref{masss}, leads to
\begin{equation}
(\rho \boldsymbol{u})_t+\dive(\rho \boldsymbol{u}\otimes \boldsymbol{u})+A\nabla\rho^\gamma=2\mu \dive(\rho D(\boldsymbol{u}))\qquad \text{in }\Omega(t).
\end{equation}

Finally, since $\partial\Omega(t)=\{\boldsymbol{x}: \,\boldsymbol{x}=\boldsymbol{\eta}(t,\boldsymbol{y}),\,\boldsymbol{y}\in\partial\Omega\}$, for any $\boldsymbol{y}_0\in \partial\Omega$, there exists a unique point $\boldsymbol{x}_0\in \partial\Omega(t)$. Thus, we have
\begin{equation}
\mathcal{V}(\partial\Omega(t))|_{\boldsymbol{x}=\boldsymbol{x}_0}=\boldsymbol{\eta}_t(t,\cdot)\cdot\boldsymbol{N}|_{\boldsymbol{y}=\boldsymbol{y}_0}=\boldsymbol{U}(t,\cdot)\cdot\boldsymbol{N}|_{\boldsymbol{y}=\boldsymbol{y}_0}=(\boldsymbol{u}\cdot\boldsymbol{n})(t,\cdot)|_{\boldsymbol{x}=\boldsymbol{x}_0},
\end{equation}
where $(\boldsymbol{N},\boldsymbol{n})$ denote the exterior unit normal vector to $(\partial\Omega,\partial\Omega(t))$, respectively, that is,
\begin{equation}
\boldsymbol{N}=\boldsymbol{N}(\boldsymbol{y})=\frac{\boldsymbol{y}}{r},\qquad \boldsymbol{n}=\boldsymbol{N}(\boldsymbol{\eta}_*(t,\boldsymbol{x}))=\frac{\boldsymbol{\eta}_*(t,\boldsymbol{x})}{|\boldsymbol{\eta}_*(t,\boldsymbol{x})|}=\frac{\boldsymbol{x}}{x}.
\end{equation}

\smallskip
\textbf{4.} We show that $(\rho(t,\boldsymbol{x}),\boldsymbol{u}(t,\boldsymbol{x}),\partial\Omega(t))$ is a solution described by (ii) of Theorem \ref{theorem1.3}. First, we can derive from Lemma \ref{lemma-initial}, \eqref{eq:eta}, 
and the regularity of $(U,\eta)$ that
\begin{equation}\label{D111}
(\varrho, \varrho_t, \boldsymbol{U},\nabla_{\boldsymbol{y}}\boldsymbol{U},\nabla_{\boldsymbol{y}}^2 \boldsymbol{U},\boldsymbol{U}_t,\boldsymbol{\eta},\nabla_{\boldsymbol{y}}\boldsymbol{\eta},\nabla_{\boldsymbol{y}}^2\boldsymbol{\eta}) \in C((0,T];C(\overline\Omega^\sharp)),
\end{equation}
where $\Omega^\sharp:=\{\boldsymbol{y}: \,\frac{1}{2}\leq |\boldsymbol{y}|<1\}$.

Next, to derive the time continuity of $(\varrho,\boldsymbol{U},\boldsymbol{\eta})$ in $(0,T]\times\overline\Omega^\flat$ ($\Omega^\flat:=\Omega \backslash \Omega^\sharp$), we can follow the same argument as in Step 6.1 of \S \ref{subsection3.3} to derive
\begin{equation*}
\boldsymbol{U}\in C((0,T];H^3(\Omega^\flat)),\qquad (\nabla_{\boldsymbol{y}}^2\boldsymbol{U},\boldsymbol{U}_t)\in C((0,T];W^{1,4}(\Omega^\flat)).
\end{equation*}
Then, by the classical Sobolev embeddings: 
\begin{equation*}
H^3(\Omega^\flat)\into C^1(\overline\Omega^\flat),\qquad W^{1,4}(\Omega^\flat)\into C(\overline\Omega^\flat),
\end{equation*}
it follows from \eqref{eq:eta} and the regularity of $(U,\eta)$ that
\begin{equation}\label{D112}
(\varrho, \varrho_t, \boldsymbol{U},\nabla_{\boldsymbol{y}}\boldsymbol{U},\nabla_{\boldsymbol{y}}^2 \boldsymbol{U},\boldsymbol{U}_t,\boldsymbol{\eta},\nabla_{\boldsymbol{y}}\boldsymbol{\eta},\nabla_{\boldsymbol{y}}^2\boldsymbol{\eta}) \in C((0,T];C(\overline\Omega^\flat)).
\end{equation}
Therefore, \eqref{D111}--\eqref{D112} lead to
\begin{equation}\label{D113}
(\varrho, \varrho_t, \boldsymbol{U},\nabla_{\boldsymbol{y}}\boldsymbol{U},\nabla_{\boldsymbol{y}}^2 \boldsymbol{U},\boldsymbol{U}_t,\boldsymbol{\eta},\nabla_{\boldsymbol{y}}\boldsymbol{\eta},\nabla_{\boldsymbol{y}}^2\boldsymbol{\eta}) \in C((0,T];C(\overline\Omega)).
\end{equation}

Finally, \eqref{D113}, together with the identity  
\begin{equation*}
\frac{\partial\cA_{ij}}{\partial y_k}=-\sum_{p,\ell =1}^n\cA_{ip}\frac{\partial^2 \eta_p}{\partial y_k\partial y_\ell} \cA_{\ell j} \qquad\text{for $k=1,\cdots\!,n$},
\end{equation*}
ensures the following regularity of $(\boldsymbol{\eta}_*,\cA)$:
\begin{equation}\label{xinxin}
\boldsymbol{\eta}_*\in C(\EE(T)),\qquad(\cA,\nabla\cA) \in C((0,T];C(\overline\Omega)).
\end{equation}
Consequently, it follows from the following relations:
\begin{equation*}
\begin{aligned}
\dot\rho(t,\boldsymbol{x})&\!:=(\rho_t+\boldsymbol{u}\cdot\nabla\boldsymbol{u})(t,\boldsymbol{x})= \varrho_t (t, \boldsymbol{\eta}_*(t,\boldsymbol{x})),\\[6pt]
\boldsymbol{u}_t(t,\boldsymbol{x})&=(\boldsymbol{U}_t-\boldsymbol{U} \cdot\nabla_\cA\boldsymbol{U})(t, \boldsymbol{\eta}_*(t,\boldsymbol{x})),\\[6pt]
(\nabla \boldsymbol{u}) (t,\boldsymbol{x})&=(\nabla_\cA \boldsymbol{U})(t, \boldsymbol{\eta}_*(t,\boldsymbol{x})),\\
(\partial_{j}\partial_k u_i)(t,\boldsymbol{x})&=\sum_{p,\ell =1}^n\big(\cA_{pj}\frac{\partial\cA_{\ell k}}{\partial y_p}\frac{\partial U_i}{\partial y_\ell}+\cA_{pj}\frac{\partial^2 U_i}{\partial y_p\partial y_\ell}\cA_{\ell k}\big)(t, \boldsymbol{\eta}_*(t,\boldsymbol{x})),
\end{aligned}
\end{equation*}
and \eqref{D113}--\eqref{xinxin} that
\begin{equation}\label{BB30}
(\rho, \dot\rho, \boldsymbol{u},\nabla\boldsymbol{u},\nabla^2 \boldsymbol{u},\boldsymbol{u}_t) \in C(\EE(T)),\qquad \partial\Omega(t)\in C^2((0,T]).
\end{equation}

\smallskip
\textbf{5.} We show that $(\rho(t,\boldsymbol{x}),\boldsymbol{u}(t,\boldsymbol{x}),\partial\Omega(t))$ is a solution described by (i) of Theorem \ref{theorem1.3}. Indeed, if moreover $\beta\leq 1$, then
\begin{equation}\label{BB*}
\nabla\rho(t,\boldsymbol{x})=\frac{1}{\beta}(\varrho^{1-\beta}\nabla_{\cA}(\varrho^\beta))(t, \boldsymbol{\eta}_*(t,\boldsymbol{x}))=\frac{1}{\beta}\big(\varrho^{1-\beta}\frac{(\varrho^\beta)_r}{\eta_r}\big)(t, \eta_*(t,x))\frac{\boldsymbol{x}}{x}.
\end{equation}
Note that, since $(\eta_r,\frac{\eta}{r})\in C((0,T];C^1(\bar I))$ and
\begin{equation}\label{BB**}
(\varrho^\beta)_r=\frac{r^{m\beta}(\rho_0^\beta)_r}{\eta^{m\beta}\eta_r^\beta}+\rho_0^\beta\big(\frac{r^{m\beta}}{\eta^{m\beta}\eta_r^\beta}\big)_r,
\end{equation}
it is direct to show that $(\varrho^\beta)_r\in C((0,T];C(\bar I))$. Moreover, from \eqref{distance-la} and Lemma \ref{hardy-inequality}, it follows that   
\begin{equation*}
\Big\|\frac{(\rho_0^\beta)_r}{r}\Big\|_{1,1}\leq C\Big|r^\frac{m}{2}\big(\frac{(\rho_0^\beta)_r}{r},(\frac{(\rho_0^\beta)_r}{r})_r,(\frac{(\rho_0^\beta)_r}{r})_{rr}\big)\Big|_2\leq C,    
\end{equation*}
which, along with Lemma \ref{sobolev-embedding}, yields that $\frac{(\rho_0^\beta)_r}{r}\in C(\bar I)$. Hence, we see that $(\rho_0^\beta)_r|_{r=0}=0$, 
and we can obtain from 
\begin{equation*}
\eta_{rr}|_{r=0}=(\frac{\eta}{r})_r\Big|_{r=0}=0
\end{equation*}
and \eqref{BB**} that
\begin{equation*} 
(\varrho^\beta)_r\in C((0,T];C(\bar I)),\qquad (\varrho^\beta)_r|_{r=0}=0,
\end{equation*}
which, along with \eqref{BB*}, gives
\begin{equation}
|\nabla\rho|(t,\boldsymbol{0})=0,\qquad \ \nabla\rho(t,\boldsymbol{x})\in C(\EE(T)).
\end{equation}

Finally, it follows from the above, \eqref{QQ30}, and \eqref{BB30} that $\rho_t(t,\boldsymbol{x})\in C(\EE(T))$. Hence, $(\rho(t,\boldsymbol{x}),\boldsymbol{u}(t,\boldsymbol{x}),\partial\Omega(t))$ is a classical solution of \textbf{VFBP} \eqref{eq:1.1-vfbp} in $\overline{\EE(T)}$.
\end{proof}

\section{Remarks on the Energy Functionals and the Initial Condition}\label{AppB}
This appendix is devoted to giving some equivalent forms of the energy functionals, and the initial condition  \eqref{a2} in terms of $(\rho_0,u_0)$ themselves and their spatial derivatives. In what follows, we always let $(\rho_0,u_0)$ be the initial data of {\rm\bf IBVP} \eqref{eq:VFBP-La-eta},  $\rho_0$ satisfy \eqref{distance-la} for some $\beta\in (\frac{1}{3},\gamma-1]$, and $u_0$ satisfy \eqref{a2}.

\subsection{Some equivalent forms of the energy functionals}

Define 
\begin{equation}\label{E-1a}
\begin{aligned}
\mathring\cE (t,f)&=\mathring\cE_{\mathrm{in}}(t,f)+\mathring\cE_{\mathrm{ex}}(t,f),\\
\mathring\cE_{\mathrm{in}}(t,f)&:=\Big|\zeta r^\frac{m}{2}\big(f,f_r,\frac{f}{r},f_t,f_{tr},\frac{f_t}{r}\big)(t)\Big|_{2}^2+\Big|\zeta r^\frac{m}{2}\Big(f_{rr}, \big(\frac{f}{r}\big)_r,f_{rrr}, \big(\frac{f}{r}\big)_{rr},\frac{1}{r}\big(\frac{f}{r}\big)_r \Big)(t)\Big|_{2}^2,\\
\mathring\cE_{\mathrm{ex}}(t,f)&:=\big|\chi^\sharp\rho_0^\frac{1}{2}(f,f_r,f_t,f_{tr})(t)\big|_{2}^2+\big|\chi^\sharp\rho_0^{(\frac{3}{2}-\varepsilon_0)\beta}(f_{rr},f_{rrr})(t)\big|_{2}^2,
\end{aligned}
\end{equation}
and
\begin{equation}\label{D-1a}
\begin{aligned}
\mathring\cD(t,U)&=\mathring\cD_{\mathrm{in}}(t,U)+\mathring\cD_{\mathrm{ex}}(t,U),\\
\mathring\cD_{\mathrm{in}}(t,f)&:=\Big|\zeta r^\frac{m}{2}\Big(f_{tt}, f_{trr},\big(\frac{f_{t}}{r}\big)_r,f_{rrrr},\big(\frac{f}{r}\big)_{rrr},\big(\frac{1}{r}(\frac{f}{r})_r\big)_r \Big)(t)\Big|_{2}^2,\\
\mathring\cD_{\mathrm{ex}}(t,f)&:=\big|\chi^\sharp\rho_0^\frac{1}{2}f_{tt}(t)\big|_{2}^2+\big|\chi^\sharp\rho_0^{(\frac{3}{2}-\varepsilon_0)\beta}(f_{trr},f_{rrrr})(t)\big|_{2}^2.
\end{aligned}
\end{equation}

Clearly, we have
\begin{equation*}
\mathring\cX (0,f)=\cX(0,f) \qquad\text{for $\cX=\cE,\cE_{\mathrm{in}},\cE_{\mathrm{ex}},\cD,\cD_{\mathrm{in}},\cD_{\mathrm{ex}}$}.
\end{equation*}

\begin{Lemma}\label{lemma-gaowei}
Let $T>0$, and let $\eta$ be defined by $\eqref{eq:VFBP-La-eta}_2$ and $(\eta,U)$ satisfy
\begin{equation}\label{C333}
\begin{gathered}
(\eta_r,\frac{\eta}{r})\in [\delta_*,\delta^*] ,\qquad\sup_{t\in[0,T]}\cE (t,U)=\mathrm{E}_1<\infty,\\
\sup_{t\in[0,T]}(\cE (t,U)+t\cD (t,U))+\int_0^T \cD(s,U)\,\ds=\mathrm{E}_2<\infty 
\end{gathered}
\end{equation}
for some given positive constants $(\delta_*,\delta^*,\mathrm{E}_1,\mathrm{E}_2)$. Assume that $f=f(t,r)$ is a function defined on $[0,T]\times I$ such that
\begin{equation}
\sup_{t\in[0,T]}(\cE (t,f)+t\cD (t,f))+\int_0^T \cD(s,f)\,\ds<\infty.
\end{equation}
Then 
\begin{equation*}
\begin{aligned}
&\cE_{\mathrm{in}} (t,f)\sim \mathring\cE_{\mathrm{in}} (t,f),\qquad \cE_{\mathrm{ex}} (t,f)\sim \mathring\cE_{\mathrm{ex}} (t,f),\\
&\cE_{\mathrm{in}} (t,f)+t\cD_{\mathrm{in}} (t,f)+\int_0^t \cD_{\mathrm{in}}(s,f)\,\ds\sim \mathring\cE_{\mathrm{in}} (t,f)+t\mathring\cD_{\mathrm{in}} (t,f)+\int_0^t \mathring\cD_{\mathrm{in}}(s,f)\,\ds,\\
&\cE_{\mathrm{ex}} (t,f)+t\cD_{\mathrm{ex}} (t,f)+\int_0^t \cD_{\mathrm{ex}}(s,f)\,\ds\sim \mathring\cE_{\mathrm{ex}} (t,f)+t\mathring\cD_{\mathrm{ex}} (t,f)+\int_0^t \mathring\cD_{\mathrm{ex}}(s,f)\,\ds,
\end{aligned}
\end{equation*}
where $F_1\sim F_2$ denotes
\begin{equation*}
C^{-1}e^{-CT}F_1\leq F_2\leq Ce^{CT}F_1
\end{equation*}
for some positive finite constant $C$, which depends only on $(\delta_*,\delta^*,\mathrm{E}_1,\mathrm{E}_2,n,\rho_0,\beta,\varepsilon_0)$.
\end{Lemma}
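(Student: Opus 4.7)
The plan is to establish the equivalences by converting between $D_\eta$-derivatives (which involve $\eta$ in denominators) and ordinary $\partial_r$-derivatives (which involve $r$), exploiting the uniform two-sided bounds on $(\eta_r,\frac{\eta}{r})$ from \eqref{C333}. The fundamental identities are $D_\eta f=f_r/\eta_r$ and $f/\eta=(f/r)(r/\eta)$, which immediately yield the zeroth- and first-order equivalences up to bounded multiplicative factors depending only on $(\delta_*,\delta^*)$. For higher-order terms, repeated application of the chain rule produces the highest-derivative-of-$f$ term (essentially $\partial_r^k f$ divided by a power of $\eta_r$) plus remainder terms involving products of lower-order derivatives of $f$ with spatial derivatives of $\eta$; the main task is to bound these remainders using \eqref{C333}.

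First, I would establish auxiliary control on the spatial derivatives of $\eta$ needed in the chain-rule expansions. Since $\eta_t=U$ and $\eta(0,r)=r$, for $k\geq 2$,
\begin{equation*}
\partial_r^k\eta(t,r)=\int_0^t \partial_r^k U(s,r)\,\mathrm{d}s,\qquad \partial_r^{k-1}\big(\frac{\eta}{r}\big)(t,r)=\int_0^t \partial_r^{k-1}\big(\frac{U}{r}\big)(s,r)\,\mathrm{d}s.
\end{equation*}
Combining this with Lemmas \ref{sobolev-embedding}--\ref{hardy-inequality} and \ref{im-1}, together with $\cE(t,U)\leq \mathrm{E}_1$ and $\int_0^t\cD(s,U)\mathrm{d}s\leq \mathrm{E}_2$, yields the interior $L^\infty$-bounds $|\zeta(\eta_{rr},(\frac{\eta}{r})_r)|_\infty+|\zeta r^{m/2}(\eta_{rrr},(\frac{\eta}{r})_{rr},\frac{1}{r}(\frac{\eta}{r})_r)|_4\leq C(\mathrm{E}_1,\mathrm{E}_2,T)$, and analogously weighted exterior bounds $|\chi^\sharp \rho_0^\beta(\eta_{rr},\eta_{rrr})|_\infty+|\chi^\sharp\rho_0^{(\frac{3}{2}-\varepsilon_0)\beta}\eta_{rrrr}|_2\leq C(\mathrm{E}_1,\mathrm{E}_2,T)$, with time-integrated dissipation-type bounds for the highest derivatives.

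Second, I would expand $D_\eta^k f$ and $D_\eta^{k-1}(f/\eta)$ via the Fa\`a di Bruno formula. Schematically,
\begin{equation*}
D_\eta^k f=\frac{\partial_r^k f}{\eta_r^k}+\sum_{\text{lower}}\mathscr{R}_k(\eta_r,\eta_{rr},\ldots,\partial_r^{k-1} f),
\end{equation*}
and similarly for $D_\eta^{k-1}(f/\eta)$, with analogues of $\frac{1}{\eta}D_\eta(\frac{f}{\eta})$ compared to $\frac{1}{r}(\frac{f}{r})_r$. The leading term yields equivalence with the $\mathring\cE$- or $\mathring\cD$-counterpart up to bounded factors; the remainder $\mathscr{R}_k$ is estimated by H\"older and the auxiliary bounds from Step 1, producing inequalities of the form
\begin{equation*}
\mathring\cE_{\mathrm{in}}(t,f)\leq C(\mathrm{E}_1,\mathrm{E}_2,T)\big(\cE_{\mathrm{in}}(t,f)+1\big),
\end{equation*}
and the reverse, and likewise for the exterior pieces. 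For the time-integrated and time-weighted versions, the remainder terms of $D_\eta^4 f$ and $D_\eta^2 f_t$ in the dissipation involve products such as $\eta_{rrr}f_{rr}$ or $\eta_{rr}f_{rrr}$, controlled by the $L^\infty$-bounds of spatial derivatives of $\eta$ combined with the dissipation weights on derivatives of $f$; integrating in time and closing via Gr\"onwall produces the $e^{CT}$ factor in the equivalence constant.

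The main obstacle will be in the highest-order dissipative comparison: matching $D_\eta^4 f$ with $f_{rrrr}$ in the exterior $\chi^\sharp\rho_0^{(\frac{3}{2}-\varepsilon_0)\beta}$-weighted norm, and matching $D_\eta(\frac{1}{\eta}D_\eta(\frac{f}{\eta}))$ with $(\frac{1}{r}(\frac{f}{r})_r)_r$ under the interior $\zeta r^{m/2}$-weight. The remainder terms there contain cross-products like $\eta_{rrrr}\cdot f_r$ and $(\eta_{rr})^2 f_{rr}$ whose weighted $L^2$-norms must be absorbed into $\mathring\cD$ (rather than into $\mathring\cE$); this requires carefully balancing the weights, invoking Lemma \ref{hardy-inequality} with the admissible exponent $(\frac{3}{2}-\varepsilon_0)\beta>\frac{1}{2}$ from \eqref{varepsilon0}, and verifying that the time-integral of the bootstrap quantity remains finite. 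Once this highest-order closure is achieved, lower orders follow by straightforward interpolation and the results follow by symmetric arguments in both directions.
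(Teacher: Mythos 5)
Your overall strategy matches the paper's: exploit the two-sided bounds on $(\eta_r,\frac{\eta}{r})$ to equate lowest-order terms immediately, then control spatial derivatives of $\eta$ and expand $D_\eta^k f$ in terms of $\partial_r^j f$ via the chain rule (Fa\`a di Bruno), estimating the remainders by the auxiliary bounds on $\eta$-derivatives. The paper's Step~1 does exactly the auxiliary bounds on $\eta_{rr},\eta_{rrr},(\frac{\eta}{r})_r,\ldots$ that you describe, and its Step~2 carries out the chain-rule expansion of $f_{rr},f_{rrr},(\frac{f}{r})_r,\ldots$

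One point you elide that the paper handles explicitly: the identity $\partial_r^k\eta=\int_0^t\partial_r^k U\,\mathrm{d}s$ alone does not yield the claimed $L^\infty$-bounds, because $\cE(t,U)$ and $\cD(t,U)$ control $D_\eta$-derivatives of $U$, not $\partial_r$-derivatives — and converting one into the other reintroduces $\eta_{rr}$, creating an apparent circularity. The paper breaks this by writing the evolution equation
\begin{equation*}
\eta_{trr}=\eta_{rr}\,D_\eta U+\eta_r^2\,D_\eta^2 U,
\end{equation*}
i.e.\ a linear ODE for $\eta_{rr}$ with coefficient $D_\eta U\in L^\infty$ and source $\eta_r^2 D_\eta^2 U$ controlled by $\cD(t,U)^{1/2}$, and then applies Gr\"onwall to get $|\eta_{rr}|_\infty\le C\sqrt{t}\,e^{Ct}$; the same pattern handles $(\frac{\eta}{r})_r$, $\eta_{rrr}$, etc. You do invoke Gr\"onwall near the end, but as applied to the time-integrated $f$-estimates rather than here, where it is actually needed first. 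Once this is placed correctly, the rest of your plan — bounding the Fa\`a di Bruno remainders with H\"older, Lemma~\ref{hardy-inequality}, and Lemma~\ref{im-1}, and carefully tracking the $\rho_0^{(\frac32-\varepsilon_0)\beta}$- and $\zeta r^{m/2}$-weights in the fourth-order dissipative terms — is exactly what the paper does.
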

\begin{proof}
For simplicity, we only show that
\begin{equation}
\mathring\cE_{\mathrm{in}} (t,f)\leq Ce^{Ct}\cE_{\mathrm{in}} (t,f),\qquad \mathring\cE_{\mathrm{ex}} (t,f)\leq Ce^{Ct}\cE_{\mathrm{ex}} (t,f),
\end{equation}
and the rest of this lemma can be proved analogously. Moreover, note that 
\begin{equation}
f_r=\eta_rD_\eta f,\qquad f_{tr}=\eta_rD_\eta f_t,
\end{equation}
we directly obtain 
\begin{equation*}
\Big|\zeta r^\frac{m}{2}\big(f,f_r,\frac{f}{r},f_t,f_{tr},\frac{f_t}{r}\big)\Big|_{2}^2\leq C \cE_{\mathrm{in}} (t,f),\qquad
\big|\chi^\sharp\rho_0^\frac{1}{2}(f,f_r,f_t,f_{tr})\big|_{2}^2\leq C \cE_{\mathrm{ex}} (t,f),    
\end{equation*}
and thus we only need to focus on the estimates for the higher spatial derivatives of $f$. 

We divide the proof into two steps.

\smallskip
\textbf{1. Estimates for $\eta$.} First, according to $\eqref{eq:VFBP-La-eta}_2$, we have
\begin{equation}\label{bbb1}
\begin{aligned}
\eta_{trr}&=\eta_{rr} D_{\eta}U+\eta_r^2 D_{\eta}^2U,\qquad \eta_{trrr}=\eta_{rrr} D_{\eta}U+3\eta_r\eta_{rr} D_{\eta}^2U+\eta_r^3 D_{\eta}^3U,\\
\big(\frac{\eta}{r}\big)_{tr}&=\big(\frac{\eta}{r}\big)_r\frac{U}{\eta}+\eta_r\frac{\eta}{r}D_{\eta}\big(\frac{U}{\eta}\big),\qquad \frac{1}{r}\big(\frac{\eta}{r}\big)_{tr} =\frac{1}{r}\big(\frac{\eta}{r}\big)_r\frac{U}{\eta}+\eta_r\big(\frac{\eta}{r}\big)^2\frac{1}{\eta}    D_{\eta}\big(\frac{U}{\eta}\big),\\
\big(\frac{\eta}{r}\big)_{trr}&=\big(\frac{\eta}{r}\big)_{rr}\frac{U}{\eta}+2\eta_r\big(\frac{\eta}{r}\big)_rD_\eta\big(\frac{U}{\eta}\big)+\eta_{rr}\frac{\eta}{r}D_{\eta}\big(\frac{U}{\eta}\big)+\eta_r^2\frac{\eta}{r} D_{\eta}^2\big(\frac{U}{\eta}\big).
\end{aligned}
\end{equation}
On the other hand, it follows from \eqref{C333} and Lemmas \ref{sobolev-embedding}--\ref{hardy-inequality} that
\begin{align}
&\begin{aligned}\label{bbb2-0}
&\,\Big|\big(D_\eta U,\frac{U}{\eta}\big)\Big|_\infty \leq C\Big|\Big(D_\eta U,D_\eta^2 U,\frac{U}{\eta},D_\eta\big(\frac{U}{\eta}\big)\Big)\Big|_1\\
&\leq C\sum_{j=1}^3\Big(\Big|\chi r^\frac{m}{2}\Big(D_\eta^j U,D_\eta^{j-1}\big(\frac{U}{\eta}\big)\Big)\Big|_2+\big|\chi^\sharp \rho_0^{(\frac{3}{2}-\varepsilon_0)\beta}D_\eta^j U\big|_2\Big)\leq C,
\end{aligned}\\
&\begin{aligned}\label{bbb2}
&\,\Big|\big(D_\eta^2 U,D_\eta(\frac{U}{\eta})\big)\Big|_\infty \leq C\Big|\Big(D_\eta^2 U,D_\eta^3 U,D_\eta\big(\frac{U}{\eta}\big),D_\eta^2\big(\frac{U}{\eta}\big)\Big)\Big|_1\\
&\leq C\sum_{j=2}^4\Big(\Big|\chi r^\frac{m}{2}\Big(D_\eta^j U,D_\eta^{j-1}\big(\frac{U}{\eta}\big)\Big)\Big|_2+\big|\chi^\sharp \rho_0^{(\frac{3}{2}-\varepsilon_0)\beta}D_\eta^j U\big|_2\Big)\leq C\cD(t,U)^\frac{1}{2}.
\end{aligned}
\end{align}
As a consequence, \eqref{bbb1}, together with \eqref{C333}, \eqref{bbb2}, and the H\"older inequality, implies that
\begin{equation}\label{bbb3}
\Big|\Big(\eta_{rr},\big(\frac{\eta}{r}\big)_{r}\Big)\Big|_\infty\leq Ce^{Ct}\int_0^t \Big|\Big(D_\eta^2 U,D_{\eta}\big(\frac{U}{\eta}\big)\Big)\Big|_\infty\,\mathrm{d}s\leq C\sqrt{t}e^{Ct},
\end{equation}
and
\begin{equation*}
\begin{aligned}
\Big|\Big(\eta_{rrr},\big(\frac{\eta}{r}\big)_{rr},\frac{1}{r}\big(\frac{\eta}{r}\big)_{r}\Big)\Big|&\leq Ce^{Ct}\Big|\Big(\eta_{rr},\big(\frac{\eta}{r}\big)_{r}\Big)\Big|_\infty\int_0^t \Big|\Big(D_\eta^2 U,D_{\eta}\big(\frac{U}{\eta}\big)\Big)\Big|\,\mathrm{d}s \\
&\quad +Ce^{Ct} \int_0^t \Big|\Big(D_\eta^3 U,D_{\eta}^2\big(\frac{U}{\eta}\big),\frac{1}{\eta}D_{\eta}\big(\frac{U}{\eta}\big)\Big)\Big|\,\mathrm{d}s\\
&\leq Cte^{Ct}+Ce^{Ct} \int_0^t \Big|\Big(D_\eta^3 U,D_{\eta}^2\big(\frac{U}{\eta}\big),\frac{1}{\eta}D_{\eta}\big(\frac{U}{\eta}\big)\Big)\Big|\,\mathrm{d}s,
\end{aligned}
\end{equation*}
which, along with the Minkowski inequality, leads to
\begin{equation}\label{bbb4}
\begin{aligned}
\Big|\zeta r^\frac{m}{2}\Big(\eta_{rrr},\big(\frac{\eta}{r}\big)_{rr},\frac{1}{r}\big(\frac{\eta}{r}\big)_{r}\Big)\Big|_2&\leq Cte^{Ct}+Cte^{Ct}\mathrm{E}_1^\frac{1}{2}\leq Cte^{Ct},\\
\big|\chi^\sharp \rho_0^{(\frac{3}{2}-\varepsilon_0)\beta} \eta_{rrr}\big|_2&\leq Cte^{Ct}+Cte^{Ct}\mathrm{E}_1^\frac{1}{2}\leq Cte^{Ct}.
\end{aligned}
\end{equation}

\smallskip
\textbf{2. Estimates for $f$.} Now, a direct calculation gives 
\begin{equation*}
\begin{aligned}
f_{rr}&=\eta_{rr}D_\eta f+\eta_r^2 D_\eta^2 f,\qquad f_{rrr} =\eta_{rrr}D_\eta f+3\eta_r\eta_{rr}D_\eta^2 f+\eta_r^3 D_\eta^3 f,\\
\big(\frac{f}{r}\big)_r&=\eta_r\frac{\eta}{r} D_{\eta}\big(\frac{f}{\eta}\big) +\frac{f}{\eta}\big(\frac{\eta}{r}\big)_r,\qquad \frac{1}{r}\big(\frac{f}{r}\big)_r=\eta_r\big(\frac{\eta}{r}\big)^2 \frac{1}{\eta}D_{\eta}\big(\frac{f}{\eta}\big) +\frac{f}{\eta}\frac{1}{r}\big(\frac{\eta}{r}\big)_r,\\
\big(\frac{f}{r}\big)_{rr}&=\eta_{rr}\frac{\eta}{r} D_{\eta}\big(\frac{f}{\eta}\big)+2\eta_r\big(\frac{\eta}{r}\big)_r D_{\eta}\big(\frac{f}{\eta}\big)+\eta_r^2\frac{\eta}{r} D_{\eta}^2\big(\frac{f}{\eta}\big)+\frac{f}{\eta}\big(\frac{\eta}{r}\big)_{rr},
\end{aligned}
\end{equation*}
which, combined with \eqref{bbb3}, yields
\begin{equation}\label{bbb5}
\begin{aligned}
\Big|\Big(f_{rr},\big(\frac{f}{r}\big)_r\Big)\Big|&\leq Ce^{Ct}\Big|\Big(D_\eta f,\frac{f}{\eta},D_\eta^2 f,D_{\eta}\big(\frac{f}{\eta}\big)\Big)\Big|,\\
\Big|\Big(f_{rrr},\big(\frac{f}{r}\big)_{rr},\frac{1}{r}\big(\frac{f}{r}\big)_r\Big)\Big|&\leq  Ce^{Ct} \Big|\Big(D_\eta^2 f,D_{\eta}\big(\frac{f}{\eta}\big),D_\eta^3 f,D_{\eta}^2\big(\frac{f}{\eta}\big),\frac{1}{\eta}D_{\eta}\big(\frac{f}{\eta}\big)\Big)\Big|\\
&\quad +\Big|\Big(\eta_{rrr},\big(\frac{\eta}{r}\big)_{rr},\frac{1}{r}\big(\frac{\eta}{r}\big)_{r}\Big)\Big|\Big|\big(D_\eta f,\frac{f}{\eta}\big)\Big|.
\end{aligned}
\end{equation}

Finally, repeating the same calculation \eqref{bbb2-0}, we have
\begin{equation*}
\Big|\big(D_\eta f,\frac{f}{\eta}\big)\Big|_\infty\leq C\cE(t,f).
\end{equation*}
Hence, \eqref{bbb5}, together with the above and \eqref{bbb4}, implies
\begin{equation}
\begin{aligned}
\Big|\zeta r^\frac{m}{2}\Big(f_{rr},\big(\frac{f}{r}\big)_r,f_{rrr},\big(\frac{f}{r}\big)_{rr},\frac{1}{r}\big(\frac{f}{r}\big)_r\Big)\Big|_2^2\leq Ce^{Ct}\cE_{\mathrm{in}} (t,f),\\
\big|\chi^\sharp \rho_0^{(\frac{3}{2}-\varepsilon_0)\beta}(f_{rr},f_{rrr})\big|_2^2\leq Ce^{Ct}\cE_{\mathrm{ex}} (t,f).
\end{aligned}
\end{equation}

This completes the proof of Lemma \ref{lemma-gaowei}. 
\end{proof}

\subsection{Some equivalent forms of the initial condition}
First, according to the time evolution equation of $U$ in \eqref{eq:VFBP-La-eta},  all the desired initial values of time derivatives of $U$ in \eqref{a2} can be completely expressed by those of $(\rho_0,u_0)$ themselves and their spatial derivatives:
\begin{equation}\label{116}
\begin{aligned}
U_t(0,r)&=2\mu \frac{1}{\rho_0}(\rho_0 (u_0)_r)_r+2\mu m (\frac{u_0}{r})_r-\frac{A\gamma}{\gamma-1}(\rho_0^{\gamma-1})_r,\\
U_{tr}(0,r)&=2\mu \big(\frac{1}{\rho_0}(\rho_0 (u_0)_r)_r+ m (\frac{u_0}{r})_r\big)_r-\frac{A\gamma}{\gamma-1}(\rho_0^{\gamma-1})_{rr}. 
\end{aligned}
\end{equation}

First, we show that the initial condition \eqref{a2} implicitly contains 
the Neumann boundary condition of $u_0$.
\begin{Lemma}\label{mewman}
If $\cE_{\mathrm{ex}}(0,U)<\infty$, $u_0$ satisfies the Neumann boundary condition $(u_0)_r|_{r=1}=0$.
\end{Lemma}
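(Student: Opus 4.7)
The plan is to exploit the reformulation \eqref{reform} of $\eqref{eq:VFBP-La-eta}_1$, evaluated at $t=0$ where $\eta(0,r)=r$ (so $D_\eta=\partial_r$ and $\varrho(0,\cdot)=\rho_0$), to read off the Neumann condition directly from the regularity encoded in $\cE_{\mathrm{ex}}(0,U)<\infty$. Substituting and isolating the term involving $(u_0)_r$ gives the algebraic identity
\begin{equation*}
\frac{2\mu}{\beta}(\rho_0^\beta)_r (u_0)_r = \rho_0^{\beta}U_t(0,r)+\frac{A\gamma}{\beta}\rho_0^{\gamma-1}(\rho_0^\beta)_r - 2\mu \rho_0^\beta (u_0)_{rr}-2m\mu \rho_0^\beta \big(\tfrac{u_0}{r}\big)_r,
\end{equation*}
where $U_t(0,\cdot)$ is the explicit function of $(\rho_0,u_0)$ recorded in \eqref{116}. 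The strategy is to show that every term on the right-hand side extends continuously to $r=1$ with vanishing boundary value, so that $\lim_{r\to 1^-}(\rho_0^\beta)_r(u_0)_r=0$. Because \eqref{distance-la} together with $\rho_0^\beta\in H^3(\Omega)$ forces $(\rho_0^\beta)_r|_{r=1}\le -\cK_1^\beta<0$, this will pin down $(u_0)_r|_{r=1}=0$.

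The decay of the right-hand side will be extracted from Lemma \ref{hardy-inequality}. From $\cE_{\mathrm{ex}}(0,U)<\infty$ I dispose of the weighted integrability of $\rho_0^{1/2}(u_0,(u_0)_r,U_t(0,\cdot),U_{tr}(0,\cdot))$ and $\rho_0^{(3/2-\varepsilon_0)\beta}((u_0)_{rr},(u_0)_{rrr})$ on $(\tfrac{1}{2},1)$. Setting $d(r):=1-r$ and using $\rho_0^\beta\sim d$, I apply Lemma \ref{hardy-inequality}(ii) with $p=\infty$ and $\vartheta=1-\varepsilon_0$: the constraint $\varepsilon_0<\tfrac{1}{2}$ from \eqref{varepsilon0} makes $\vartheta>0$, and the weighted bounds $d^{3/2-\varepsilon_0}((u_0)_{rr},(u_0)_{rrr})\in L^2$ feed exactly the right-hand side of the lemma, giving $d^{1-\varepsilon_0}(u_0)_{rr}\in C[\tfrac{1}{2},1]$. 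A parallel application to $(u_0)_r$, driven by the pair $d^{1/(2\beta)}(u_0)_r\in L^2$ and $d^{3/2-\varepsilon_0}(u_0)_{rr}\in L^2$, gives $d^{1-\varepsilon_0}(u_0)_r\in C[\tfrac{1}{2},1]$; here $\varepsilon_0<\tfrac{3}{2}-\tfrac{1}{2\beta}$ from \eqref{varepsilon0} is what guarantees that the new weight exponent dominates $1/(2\beta)$. Multiplying by the positive residual $\rho_0^{\varepsilon_0\beta}\sim d^{\varepsilon_0}\to 0$ then upgrades these to $\rho_0^\beta(u_0)_{rr}$ and $\rho_0^\beta(u_0)_r$ being continuous on $[\tfrac{1}{2},1]$ and vanishing at $r=1$.

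The remaining right-hand side contributions are dispatched in the same spirit. The weighted $L^2$-control of $u_0$ and $(u_0)_r$ yields $\rho_0^\beta u_0\in C[\tfrac{1}{2},1]$ with $\rho_0^\beta u_0|_{r=1}=0$, so $\rho_0^\beta(\tfrac{u_0}{r})_r$ also extends continuously with zero boundary value since $1/r$ is smooth near $r=1$. Likewise, the bounds on $U_t(0,\cdot)$ and $U_{tr}(0,\cdot)$ combined with Lemma \ref{hardy-inequality}(ii)\,---\,using any small $\vartheta>0$ satisfying $\vartheta\ge \tfrac{1}{2\beta}-\tfrac{1}{2}$, which is permissible for every $\beta>\tfrac{1}{3}$\,---\,give $d^{\vartheta}U_t(0,\cdot)\in C[\tfrac{1}{2},1]$ for some $\vartheta\in(0,1)$, whence $\rho_0^\beta U_t(0,r)=d^{1-\vartheta}\cdot d^{\vartheta}U_t(0,r)$ (modulo a bounded factor) is continuous with vanishing boundary value. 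Finally, $\rho_0^{\gamma-1}(\rho_0^\beta)_r$ is continuous and vanishes at $r=1$ trivially because $\gamma>1$. Dividing the displayed identity by $(\rho_0^\beta)_r$, which is continuous and bounded away from zero near $r=1$, yields that $(u_0)_r$ admits a continuous extension to $r=1$ with $(u_0)_r|_{r=1}=0$, proving the claim.

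The main technical obstacle is precisely this passage from the weighted $L^2$ information contained in $\cE_{\mathrm{ex}}(0,U)$ to pointwise continuity at the degenerate boundary. Only the margin built into \eqref{varepsilon0}\,---\,namely $\varepsilon_0<\tfrac{3}{2}-\tfrac{1}{2\beta}$ and $\varepsilon_0<\tfrac{1}{2}$\,---\,leaves a strictly positive residual exponent $\varepsilon_0\beta$ available to convert the mere boundedness of $\rho_0^{(1-\varepsilon_0)\beta}(u_0)_r$ into the sought-after vanishing of $\rho_0^\beta(u_0)_r$ at the vacuum interface; without that slack the Hardy–Sobolev step would fall short of the Neumann condition.
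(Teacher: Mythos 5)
Your proof is correct and follows essentially the same route as the paper's: both rearrange the momentum equation at $t=0$ (your \eqref{reform} at $t=0$ is the same identity as $\eqref{116}_1\times\rho_0^\beta$), use the weighted $L^2$ bounds in $\cE_{\mathrm{ex}}(0,U)$ together with Lemma~\ref{hardy-inequality}(ii) to show each term on the right-hand side extends continuously to $r=1$ with vanishing boundary value, and then divide by $(\rho_0^\beta)_r$, which is continuous and nonzero at $r=1$. The only cosmetic difference is that you pick a separate (smaller) Hardy exponent $\vartheta$ for $U_t(0,\cdot)$, whereas the paper uses $\vartheta=1-\varepsilon_0$ uniformly; both choices are permissible under \eqref{varepsilon0}.
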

\begin{proof}
If $\cE_{\mathrm{ex}}(0,U)<\infty$, then $U_t(0,r)\in H^1_{\rho_0}(\frac{1}{2},1)$. Due to the facts that
\begin{equation*}
\rho_0^\beta\sim 1-r,\qquad \varepsilon_0<\frac{1}{2},\qquad \big(\frac{3}{2}-\varepsilon_0\big)\beta>\frac{1}{2},
\end{equation*}
and Lemma \ref{hardy-inequality}, we have
\begin{equation}\label{b15}
\rho_0^{(1-\varepsilon_0)\beta} U_t(0)\in C\big[\frac{1}{2},1\big]\implies \rho_0^\beta U_t(0)|_{r=1}=0. 
\end{equation}
Similarly, since 
\begin{equation*}
\rho_0^{(\frac{3}{2}-\varepsilon_0)\beta}\big((u_0)_{rr},(\frac{u_0}{r})_r,(u_0)_{rrr},(\frac{u_0}{r})_{rr}\big)\in L^2\big(\frac{1}{2},1\big),
\end{equation*}
we also deduce
\begin{equation}\label{b16}
\rho_0^{(1-\varepsilon_0)\beta} \big((u_0)_{rr},(\frac{u_0}{r})_r\big)\in C\big[\frac{1}{2},1\big]\implies \rho_0^\beta (u_0)_{rr}|_{r=1}=0,\quad \rho_0^\beta(\frac{u_0}{r})_r|_{r=1}=0. 
\end{equation}

Now, multiplying $\eqref{116}_1$ by $\rho_0^\beta$, we have 
\begin{equation*}
\frac{2\mu}{\beta}(\rho_0^\beta)_r(u_0)_r=\rho_0^\beta U_t(0,r)-2\mu \rho_0^\beta \big((u_0)_r+\frac{mu_0}{r}\big)_r+\frac{A\gamma}{\beta}\rho_0^{\gamma-1}(\rho_0^\beta)_r.
\end{equation*}
Then, taking the limit as $r\to 1$, we find that, by \eqref{b15}--\eqref{b16}, the right-hand side of the above vanishes on boundary $\partial I$, which, along with the fact that $(\rho_0^\beta)_r|_{r=1}\neq 0$, yields
\begin{equation*}
(\rho_0^\beta)_r(u_0)_r|_{r=1}=0\implies (u_0)_r|_{r=1}=0.
\end{equation*}
\end{proof}

\begin{Lemma}\label{reduce-ex}
$\cE_\mathrm{ex}(0,U)<\infty$ if and only if
\begin{equation}\label{con-b3}
\begin{aligned}
&(u_0)_r|_{r=1}=0,\qquad (u_0)_r\in L^2_{\rho_0^{1-2\beta}}\big(\frac{1}{2},1\big),\qquad u_0\in H^2_{\rho_0}\big(\frac{1}{2},1\big),\\
&(u_0)_{rrr}\in L^2_{\rho_0^{(3-2\varepsilon_0)\beta}}\big(\frac{1}{2},1\big),\qquad 2\mu \big(\frac{1}{\rho_0}(\rho_0 (u_0)_r)_r\big)_r-\frac{A\gamma}{\gamma-1}(\rho_0^{\gamma-1})_{rr} \in L^2_{\rho_0}\big(\frac{1}{2},1\big).  
\end{aligned}
\end{equation}
Moreover, if $\beta=\gamma-1$ or, additionally, $\beta<\frac{2\gamma-1}{5}$, then the above can be reduced to 
\begin{equation}\label{con-b4}
\begin{aligned}
&(u_0)_r|_{r=1}=0,\qquad (u_0)_r\in L^2_{\rho_0^{1-2\beta}}\big(\frac{1}{2},1\big),\qquad u_0\in H^2_{\rho_0}\big(\frac{1}{2},1\big),\\
&(u_0)_{rrr}\in L^2_{\rho_0^{(3-2\varepsilon_0)\beta}}\big(\frac{1}{2},1\big),\qquad  \big(\frac{1}{\rho_0}(\rho_0 (u_0)_r)_r\big)_r \in L^2_{\rho_0}\big(\frac{1}{2},1\big).  
\end{aligned}
\end{equation}
\end{Lemma}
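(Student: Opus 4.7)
The plan is to transfer each term appearing in $\cE_{\mathrm{ex}}(0,U)$ into a condition on the initial data, using the fact that $\eta(0,r)=r$ (so $D_{\eta}|_{t=0}=\partial_r$) together with the explicit identities \eqref{116} for $U_t(0,r)$ and $U_{tr}(0,r)$. First, I would note that the terms $\rho_0^{1/2}(u_0,(u_0)_r)$ from $\cE_{\mathrm{ex}}(0,U)$ are absorbed by the $H^2_{\rho_0}(\tfrac12,1)$ condition (itself needed later), while the $\rho_0^{(3/2-\varepsilon_0)\beta}(u_0)_{rr}$ contribution is in fact implied by $u_0\in H^2_{\rho_0}$ because $(3/2-\varepsilon_0)\beta>1/2$ by \eqref{varepsilon0}, so that $L^2_{\rho_0}\subset L^2_{\rho_0^{(3/2-\varepsilon_0)\beta}}$ on $(1/2,1)$. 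The term $\rho_0^{(3/2-\varepsilon_0)\beta}(u_0)_{rrr}$ contributes the fourth condition directly. The boundary requirement $(u_0)_r|_{r=1}=0$ is exactly the conclusion of Lemma \ref{mewman}.

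Next, I would unpack the requirement $\rho_0^{1/2}U_t(0)\in L^2(\tfrac12,1)$. Writing $\frac{1}{\rho_0}(\rho_0(u_0)_r)_r=(u_0)_{rr}+\frac{(\rho_0)_r}{\rho_0}(u_0)_r$ and using $(\rho_0)_r \sim -\beta^{-1}\rho_0^{1-\beta}$ from \eqref{distance-la}, the piece $\rho_0^{-1/2}(\rho_0)_r(u_0)_r\sim \rho_0^{1/2-\beta}(u_0)_r$ is in $L^2$ exactly when $(u_0)_r\in L^2_{\rho_0^{1-2\beta}}$, and the piece $\rho_0^{1/2}(u_0)_{rr}\in L^2$ is the $(u_0)_{rr}$ part of $u_0\in H^2_{\rho_0}$. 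The middle term $m(\frac{u_0}{r})_r$ has coefficients bounded on $[1/2,1]$ and is automatic. Finally, $\rho_0^{1/2}(\rho_0^{\gamma-1})_r\sim \rho_0^{\gamma-1/2-\beta}$ is always in $L^2$ since $\beta\le\gamma-1<2\gamma-1$. Repeating the analysis for $\rho_0^{1/2}U_{tr}(0)\in L^2$: the middle piece $\rho_0^{1/2}((\frac{u_0}{r})_r)_r$ is again controlled by $u_0\in H^2_{\rho_0}$, and the remainder is precisely the combined viscous--pressure quantity displayed in the fifth line of \eqref{con-b3}. This establishes the equivalence with \eqref{con-b3}.

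For the reduction to \eqref{con-b4}, the first four conditions match verbatim, so the equivalence reduces to showing that under either $\beta=\gamma-1$ or $\beta<\tfrac{2\gamma-1}{5}$ one has $(\rho_0^{\gamma-1})_{rr}\in L^2_{\rho_0}(\tfrac12,1)$; the splitting of the combined term into the separate viscous and pressure pieces then follows by subtraction. When $\beta=\gamma-1$ we have $\rho_0^{\gamma-1}=\rho_0^{\beta}\in H^3$ by \eqref{distance-la}, so $(\rho_0^{\gamma-1})_{rr}\in H^1(\tfrac12,1)\hookrightarrow L^\infty[\tfrac12,1]$ and the claim is immediate. In the second regime, I would write $\rho_0^{\gamma-1}=(\rho_0^\beta)^{(\gamma-1)/\beta}$ and apply the chain rule to obtain
\begin{equation*}
(\rho_0^{\gamma-1})_{rr}=\tfrac{\gamma-1}{\beta}\bigl(\tfrac{\gamma-1}{\beta}-1\bigr)\rho_0^{\gamma-1-2\beta}\bigl((\rho_0^\beta)_r\bigr)^2+\tfrac{\gamma-1}{\beta}\rho_0^{\gamma-1-\beta}(\rho_0^\beta)_{rr},
\end{equation*}
then bound each term: the first produces $\rho_0^{2\gamma-1-4\beta}\bigl((\rho_0^\beta)_r\bigr)^4$ against $dr$, whose integrability on $[1/2,1]$ has to be verified under the smallness $\beta<\tfrac{2\gamma-1}{5}$; the second, after pairing with $\rho_0$, gives $\rho_0^{2\gamma-1-2\beta}((\rho_0^\beta)_{rr})^2$, which is integrable since $\beta<2\gamma-1$. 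The Hardy-type control of Lemma \ref{hardy-inequality} is then invoked wherever tangential derivatives of $(\rho_0^\beta)_r$ or $(\rho_0^\beta)_{rr}$ need to be distributed against the weights, using $(\rho_0^\beta)_r\in L^\infty$ on $[1/2,1]$ (from $\rho_0^\beta\in H^3$ in one dimension).

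The principal technical hurdle is the second regime in the reduction: one must track exactly which power of $\rho_0$ each derivative of $\rho_0^{\gamma-1}$ (and of the composite pressure-corrected viscous term) contributes, and then verify that the threshold $\beta<\tfrac{2\gamma-1}{5}$ suffices to make all pressure-type integrals against $\rho_0$ on $(1/2,1)$ convergent. Equivalent reformulations coming from the constraint $(u_0)_r|_{r=1}=0$ together with \eqref{distance-la} and the \eqref{varepsilon0} bounds on $\varepsilon_0$ allow one to trade $\rho_0^{-a}$ singularities from $(\rho_0)_r/\rho_0$ and $((\rho_0)_r/\rho_0)_r$ against the better decay of $(u_0)_r$ and $(u_0)_{rr}$; the constants match precisely so that the only intrinsic obstruction is the divergence of the pure pressure contribution, which is controlled exactly by the stated threshold.
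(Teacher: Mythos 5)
Your strategy mirrors the paper's — transfer $\cE_{\mathrm{ex}}(0,U)<\infty$ to conditions on $(\rho_0,u_0)$ via the identities \eqref{116} for $U_t(0)$ and $U_{tr}(0)$, invoke Lemma \ref{mewman} for the boundary condition, and then compare weighted norms. The sufficiency direction and the reduction to \eqref{con-b4} are handled along the same lines as the paper, and your splitting $(\rho_0^{\gamma-1})_{rr}=\frac{\gamma-1}{\beta}\frac{\gamma-1-\beta}{\beta}\rho_0^{\gamma-1-2\beta}|(\rho_0^\beta)_r|^2+\frac{\gamma-1}{\beta}\rho_0^{\gamma-1-\beta}(\rho_0^\beta)_{rr}$ is exactly the paper's.

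The genuine gap is in the necessity direction. You write that, after unpacking $\rho_0^{1/2}U_t(0)\in L^2$ via the decomposition $\frac{1}{\rho_0}(\rho_0(u_0)_r)_r=(u_0)_{rr}+\frac{(\rho_0)_r}{\rho_0}(u_0)_r$, each piece is in $L^2$ ``exactly when'' the corresponding condition of \eqref{con-b3} holds. But knowing only that the sum $(u_0)_{rr}+\frac{(\rho_0)_r}{\rho_0}(u_0)_r\in L^2_{\rho_0}(\frac12,1)$ does not let you conclude that each term is individually in $L^2_{\rho_0}$: near $r=1$ the factor $\frac{(\rho_0)_r}{\rho_0}\sim\rho_0^{-\beta}$ blows up, and a priori the two singular terms could cancel. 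The hypothesis $\cE_{\mathrm{ex}}(0,U)<\infty$ only gives $\rho_0^{(3/2-\varepsilon_0)\beta}(u_0)_{rr}\in L^2$ with the heavier weight $(3/2-\varepsilon_0)\beta>1/2$, so it does not directly decouple the pieces either. The paper bridges this gap with a separate integration argument that you do not supply: multiply $\eqref{116}_1$ by $\rho_0$, integrate from $r$ to $1$, use $(u_0)_r|_{r=1}=0$ together with $\rho_0(1)=0$, and divide by $\rho_0^{1/2+\beta}$, which yields the pointwise bound $2\mu|\rho_0^{1/2-\beta}(u_0)_r|\le C+C\rho_0^{1/4-\beta/2}|\chi^\sharp\rho_0^{1/4}(u_0,(u_0)_r,U_t(0))|_2$; combined with Lemma \ref{hardy-inequality} to push the weights into the ranges covered by $\cE_{\mathrm{ex}}(0,U)$, this gives $\rho_0^{1/2-\beta}(u_0)_r\in L^2$ directly, and only then can one subtract to obtain $\rho_0^{1/2}(u_0)_{rr}\in L^2$ from $\rho_0^{-1/2}(\rho_0(u_0)_r)_r\in L^2$. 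Your closing paragraph gestures at ``trading $\rho_0^{-a}$ singularities against the better decay of $(u_0)_r$'' but does not articulate this step, which is the essential content of the necessity proof. Without it the argument establishes sufficiency of \eqref{con-b3} but not the claimed equivalence.
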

\begin{proof}
The proof of \eqref{con-b4} is straightforward, since 
\begin{equation*}
(\rho_0^{\gamma-1})_{rr}=\frac{\gamma-1}{\beta}\frac{\gamma-1-\beta}{\beta}\rho_0^{\gamma-1-2\beta}|(\rho_0^\beta)_r|^2+\frac{\gamma-1}{\beta}\rho_0^{\gamma-1-\beta}(\rho_0^\beta)_{rr},
\end{equation*}
and $(\rho_0^{\gamma-1})_{rr}$ belongs to $L^2_{\rho_0}(\frac{1}{2},1)$ whenever $\beta=\gamma-1$ or $\beta<\frac{2\gamma-1}{5}$.

Therefore, it suffices to prove \eqref{con-b3}. We divide the proof into two steps.

\smallskip
\textbf{1.} We first prove the necessity. Assume that $\cE_\mathrm{ex}(0,U)<\infty$. Then we have 
\begin{equation}\label{b18}
(u_0,U_t(0))\in H^1_{\rho_0}\big(\frac{1}{2},1\big), \qquad (u_0)_{rr}\in H^1_{\rho_0^{(3-2\varepsilon_0)\beta}}, 
\end{equation}
and $(u_0)_r|_{r=1}=0$ follows from Lemma \ref{mewman}. 

Next, we show that $\rho_0^{\frac{1}{2}-\beta}(u_0)_r\in L^2 (\frac{1}{2},1)$. Thanks to  \eqref{b18} and 
\begin{equation*}
(u_0)_r|_{r=1}=0,\qquad \rho_0^\beta\sim 1-r,\qquad \rho_0^\beta\in H^3(\frac{1}{2},1),\qquad \beta>\frac{1}{3},
\end{equation*}
multiplying $\eqref{116}_1$ by $\rho_0$ and integrating  over $[r,1]$ with $r\in [\frac{1}{2},1]$, we obtain from Lemma \ref{hardy-inequality} and the H\"older inequality that 
\begin{equation*}
\begin{aligned}
2\mu \big|\rho_0^{\frac{1}{2}-\beta}(u_0)_r\big|&= \Big|-A\rho_0^{\gamma-\frac{1}{2}-\beta}+\rho_0^{-\frac{1}{2}-\beta}\int_r^1 \rho_0\big(U_t(0,\tilde r)-2\mu m\frac{(u_0)_r}{\tilde{r}}+2\mu m\frac{u_0}{\tilde{r}^2}\big)\,\mathrm{d}\tilde{r}\Big|\\
&\leq C+ C\rho_0^{\frac{1}{4}-\frac{\beta}{2}}\big|\chi^\sharp\rho_0^\frac{1}{4}(u_0,(u_0)_r,U_t(0))\big|_2 \\
&\leq C+ C\rho_0^{\frac{1}{4}-\frac{\beta}{2}}\Big(\sum_{j=0}^3 \big|\chi^\sharp\rho_0^{\frac{1}{4}+2\beta}\partial_r^ju_0 \big|_2 +  \big|\chi^\sharp\rho_0^{\frac{1}{4}+\beta}(U_t,U_{tr})(0)\big|_2\Big)\\
&\leq C+ C\rho_0^{\frac{1}{4}-\frac{\beta}{2}}\cE_{\mathrm{ex}}(0,U)^\frac{1}{2}.
\end{aligned}
\end{equation*}
Since $\rho_0^{\frac{1}{4}-\frac{\beta}{2}}\in L^2(\frac{1}{2},1)$, we derive from the above that $\rho_0^{\frac{1}{2}-\beta}(u_0)_r\in L^2(\frac{1}{2},1)$. 

Consequently, based on this, we can obtain from $\eqref{116}_1$ that 
\begin{equation*} 
U_t(0)\in L^2_{\rho_0}\big(\frac{1}{2},1\big)\implies \rho_0^{-\frac{1}{2}}(\rho_0 (u_0)_r)_r\in L^2\big(\frac{1}{2},1\big)\implies \rho_0^\frac{1}{2}(u_0)_{rr}\in L^2\big(\frac{1}{2},1\big),
\end{equation*}
and thus obtain from $\eqref{116}_2$ that 
\begin{equation}\label{b19}
\begin{aligned}
U_{tr}(0)\in L^2_{\rho_0}\big(\frac{1}{2},1\big)&\implies 2\mu \big(\frac{1}{\rho_0}(\rho_0 (u_0)_r)_r+ m (\frac{u_0}{r})_r\big)_r-\frac{A\gamma}{\gamma-1}(\rho_0^{\gamma-1})_{rr}\in L^2_{\rho_0}\big(\frac{1}{2},1\big),\\
&\implies 2\mu \big(\frac{1}{\rho_0}(\rho_0 (u_0)_r)_r\big)_r -\frac{A\gamma}{\gamma-1}(\rho_0^{\gamma-1})_{rr}\in L^2_{\rho_0}\big(\frac{1}{2},1\big).
\end{aligned}
\end{equation}

This completes the proof of necessity.

\smallskip
\textbf{2.} We prove the sufficiency. Assume that \eqref{con-b3} holds. It suffices to show that
\begin{equation}
U_t(0)\in H^1_{\rho_0}\big(\frac{1}{2},1\big).
\end{equation}
$U_t(0)\in L^2_{\rho_0}\big(\frac{1}{2},1\big)$ is obvious under \eqref{con-b3}.
Moreover, we obtain from $\eqref{116}_2$ that
\begin{equation}
U_{tr}(0)\in L^2_{\rho_0}\big(\frac{1}{2},1\big)\iff (\frac{u_0}{r})_{rr}\in L^2_{\rho_0}\big(\frac{1}{2},1\big)\iff u_0\in H^2_{\rho_0}\big(\frac{1}{2},1\big).
\end{equation}

This completes the proof of sufficiency.
\end{proof}

\begin{Lemma}\label{reduce-in}
$\cE_\mathrm{in}(0,U)<\infty$ if and only if 
\begin{equation}
\zeta r^\frac{m}{2}\big(u_0,(u_0)_r,\frac{u_0}{r},(u_0)_{rr},(\frac{u_0}{r})_r,(u_0)_{rrr},(\frac{u_0}{r})_{rr},\frac{1}{r}(\frac{u_0}{r})_{r}\big)\in L^2.
\end{equation}
\end{Lemma}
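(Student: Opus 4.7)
The plan is to split the statement into the trivial direction and the substantive one. Since $\eta(0,r)=r$ and $\eta_r(0,r)=1$, the operator $D_\eta$ collapses to $\partial_r$ and $f/\eta$ to $f/r$ at $t=0$; under this identification, the purely spatial summands appearing in the definition \eqref{E-2} of $\cE_{\mathrm{in}}(0,U)$ are precisely the terms listed on the right-hand side of the lemma. Hence the necessity direction ($\Rightarrow$) is immediate: if $\cE_{\mathrm{in}}(0,U)<\infty$, then each of the spatial $L^2$ conditions on $u_0$ claimed in the lemma is satisfied.

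For the sufficiency direction, the remaining quantities that must be controlled at $t=0$ are $\zeta r^{m/2}\big(U_t(0),\,U_{tr}(0),\,U_t(0)/r\big)$ in $L^2(I)$. I will substitute the explicit formulas \eqref{116}, which express $U_t(0)$ and $U_{tr}(0)$ as linear combinations of $\partial_r^k u_0$ and $\partial_r^\ell(u_0/r)$ with $k\le 3$, $\ell\le 2$, multiplied by coefficients built from $\rho_0$ and its derivatives. The key observation is that on $\support\zeta\subset[0,5/8]$ the lower bound $\rho_0\ge\cK_1(1-r)^{1/\beta}$ from \eqref{distance-la} forces $\rho_0\ge c>0$; combined with $\rho_0^\beta\in H^3(I)$ and the one-dimensional Sobolev embedding $H^3\hookrightarrow C^2$, together with smoothness of $x\mapsto x^{1/\beta}$ on $(0,\infty)$, this upgrades $\rho_0$ to an $H^3([0,5/8])$ function with $L^\infty$ control of up to two derivatives. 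Consequently all coefficients involving $\rho_0$ that multiply the spatial derivatives of $u_0$ in $\eqref{116}_1$ and $\eqref{116}_2$ are bounded on $\support\zeta$, and the inhomogeneous terms $(\rho_0^{\gamma-1})_r,(\rho_0^{\gamma-1})_{rr}$ are bounded on a bounded set. Pointwise majorization then yields the $L^2$ bounds for $\zeta r^{m/2}U_t(0)$ and $\zeta r^{m/2}U_{tr}(0)$ directly from the assumed spatial bounds on $u_0$.

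The main obstacle will be the weighted term $\zeta r^{m/2}U_t(0)/r=\zeta r^{m/2-1}U_t(0)$, which carries an additional factor $r^{-1}$ near the origin and is not amenable to naive pointwise majorization (in two dimensions this is already a genuinely singular weight $\zeta r^{-1/2}U_t(0)$). To handle it I will pass to the M-D representative $\boldsymbol{u}_0(\boldsymbol{y})=u_0(r)\,\boldsymbol{y}/r$ and invoke Lemma \ref{lemma-initial}(ii): the spatial conditions listed on the right-hand side of the lemma are equivalent to $\zeta\boldsymbol{u}_0\in H^3$ of a neighborhood of the origin contained in $\{|\boldsymbol{y}|\le 5/8\}$. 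Since $\eqref{116}_1$ is the radial reduction of the M-D momentum equation at $t=0$, which involves only smooth combinations of $\boldsymbol{u}_0$, its first and second derivatives, and $\rho_0$, the smoothness of $\rho_0$ on this ball combined with $\zeta\boldsymbol{u}_0\in H^3$ yields $\zeta\boldsymbol{U}_t(0)\in H^1$ there. A second application of Lemma \ref{lemma-initial}(ii), in the reverse direction and applied to the vector field $\boldsymbol{U}_t(0)$, converts the M-D $H^1$ bound back into the weighted spherical bound $\zeta r^{m/2}\big(U_t(0),(U_t(0))_r,U_t(0)/r\big)\in L^2$, absorbing the troublesome $r^{-1}$ factor through the spherical-symmetric structure. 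This closes the sufficiency direction and completes the proof.
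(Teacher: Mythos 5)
The necessity direction is fine and matches the paper. The gap is in the sufficiency direction, specifically the ``pointwise majorization'' step for $\zeta r^{m/2}U_{tr}(0)$. You claim all $\rho_0$-coefficients in $\eqref{116}_2$ are bounded on $\support\zeta$, reasoning from ``$\rho_0^\beta\in H^3(I)$ and the one-dimensional Sobolev embedding $H^3\hookrightarrow C^2$''. But \eqref{distance-la} is not an unweighted one-dimensional $H^3$ condition: near the origin it carries the degenerate weight $r^{m/2}$, and by Lemma \ref{lemma-initial} it is equivalent to $\rho_0^\beta(\boldsymbol{y})\in H^3(\Omega)$ with $\Omega\subset\mathbb{R}^n$ ($n=2,3$), which embeds only into $C^1(\overline\Omega)$, not $C^2$. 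Hence $(\rho_0^\beta)_{rr}$ --- and therefore $\big(\tfrac{(\rho_0)_r}{\rho_0}\big)_r$ and $(\rho_0^{\gamma-1})_{rr}$ --- need not belong to $L^\infty$ on $\support\zeta$ and cannot be pulled out of the $L^2$ norm. Pointwise majorization works for $\zeta r^{m/2}U_t(0)$, which involves only $(\rho_0^\beta)_r\in C(\bar I)$, but it fails for $\zeta r^{m/2}U_{tr}(0)$.

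The paper resolves this by estimating the offending product the other way around: it keeps the weighted quantity $r^{m/2}(\rho_0^\beta)_{rr}$ in $L^2$ (which is precisely part of $\eqref{distance-la}_1$) and places the companion factor $(u_0)_r$ in $L^\infty$, the latter obtained from $\zeta\boldsymbol{u}_0\in H^3(\Omega)\hookrightarrow C^1(\overline\Omega)$ via Lemma \ref{lemma-initial}; this is the role of ``and Lemma \ref{lemma-initial}'' in the paper's proof. Your M-D fallback for $\zeta r^{m/2-1}U_t(0)$ is the right idea and, if carried out carefully, handles all three time-derivative terms at once, but the assertion ``smoothness of $\rho_0$ combined with $\zeta\boldsymbol{u}_0\in H^3$ yields $\zeta\boldsymbol{U}_t(0)\in H^1$'' hides the same issue: the term $\nabla\big(\nabla\log\rho_0\cdot D(\boldsymbol{u}_0)\big)$ must be treated as a product of $\nabla^2\log\rho_0\in L^2$ with $D(\boldsymbol{u}_0)\in H^2(\Omega)\hookrightarrow L^\infty(\Omega)$, not through pointwise boundedness of the $\rho_0$-coefficients.
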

\begin{proof}
In fact, we only need to show
\begin{equation}\label{B8}
\zeta r^\frac{m}{2}\big(U_t,U_{tr},\frac{U_t}{r}\big)(0)\in L^2.
\end{equation}
Indeed, it follows from \eqref{116} and Lemma \ref{lemma-initial} that 
\begin{equation*}
\begin{aligned}
|\zeta r^\frac{m}{2}U_t(0)|_2&\leq C \big(\big|\zeta r^\frac{m}{2}\big((u_0)_{rr}, (\frac{u_0}{r})_r\big)\big|_2+|r^\frac{m}{2} (\rho_0)_{r}|_2(|\zeta (u_0)_r|_\infty + 1)\big) \leq C \cE_{\mathrm{in}}(0,U),\\
|\zeta r^\frac{m}{2}U_{tr}(0)|_2&\leq C(a)\big(\big|\zeta r^\frac{m}{2}\big((u_0)_{rrr},(\frac{u_0}{r})_{rr}\big)\big|_2+ |(\rho_0^\beta)_{r}|_\infty|\zeta r^\frac{m}{2}(u_0)_{rr}|_2\big)\\
&\quad +C |r^\frac{m}{2}(\rho_0^\beta)_{rr}|_2(|\zeta (u_0)_{r}|_\infty+1) \\
&\quad + C|r^\frac{m}{2}(\rho_0^\beta)_{r}|_2|(\rho_0^\beta)_{r}|_\infty(|\zeta(u_0)_r|_\infty+1) \leq C\cE_\mathrm{in}(0,U), \\
|\zeta r^\frac{m-2}{2} U_t(0)|_2&\leq C \big(\big|\zeta r^\frac{m-2}{2}\big((u_0)_{rr},(\frac{u_0}{r})_{r}\big)\big|_2+|r^\frac{m-2}{2}(\rho_0^\beta)_r|_2(|\zeta (u_0)_{r}|_\infty+1)\big) \leq C\cE_\mathrm{in}(0,U). 
\end{aligned}
\end{equation*}

This completes the proof.
\end{proof}

Finally, based on Lemmas  \ref{reduce-ex}--\ref{reduce-in},  we give one remark  to show that the examples of the initial data given by \eqref{initial-ex1}--\eqref{initial-ex2} in Remark \ref{initialexample} satisfy the initial assumptions \eqref{distance-la} and  \eqref{a2} in Theorem \ref{Theorem1.1}.
\begin{Remark}\label{initalexample3}
First, due to {\rm Lemma \ref{lemma-initial}}, the initial assumption \eqref{distance-la} is nothing but \eqref{distance} in {\rm Appendix \ref{appb}}, that is,
\begin{equation*} 
\rho_0^\beta(\boldsymbol{y})\in H^3(\Omega), \quad\,\, \cK_1(1-|\boldsymbol{y}|)^\frac{1}{\beta}\leq \rho_0(\boldsymbol{y})\leq \cK_2(1-|\boldsymbol{y}|)^\frac{1}{\beta} \qquad\,\, \text{for all $\boldsymbol{y}\in \overline\Omega$},
\end{equation*}
for some  constants $\cK_2>\cK_1>0$ and $\beta\in (\frac{1}{3},\gamma-1]$. Hence, $\rho_0(\boldsymbol{y})$ given in \eqref{initial-ex1} takes the form{\rm :}
\begin{equation*}
\rho_0^\beta(\boldsymbol{y})=1-|\boldsymbol{y}|^{2k}\qquad {k\in \NN^*},
\end{equation*}
thus satisfies the condition above.

Next we check that $\boldsymbol{u}_0$ given in \eqref{initial-ex2} satisfies \eqref{a2}. Note that, in the interior domain $\Omega^\flat:=\{\boldsymbol{y}:\,|\boldsymbol{y}|<\frac{1}{2}\}$, from {\rm Lemmas \ref{lemma-initial}} and {\rm\ref{reduce-in}}, we only need to ensure that $\boldsymbol{u}_0\in H^3(\Omega^\flat)$, which is, of course, satisfied by $\boldsymbol{u}_0$ given in \eqref{initial-ex2}.

While in the exterior domain $\Omega^\sharp:=\Omega \backslash \Omega^\flat$, when $\beta\in (\frac{1}{3},\frac{2\gamma-1}{5})$ or $\beta=\gamma-1$, we can easily to show that $\boldsymbol{u}_0$ given in $\eqref{initial-ex2}_1$ satisfies \eqref{con-b4} in {\rm Lemma \ref{reduce-ex}}. Hence, it still remains to show that $\boldsymbol{u}_0$ given in $\eqref{initial-ex2}_2$ satisfies \eqref{con-b3}, or equivalently, show that
\begin{equation}\label{C28}
u_0(r)= - \frac{A}{2\mu}\int_r^1 \rho_0^{\gamma-1}\,\mathrm{d}\tilde{r}+\tilde{u}_0(r)\qquad \text{satisfies \eqref{con-b3} when $\beta \in \big[\frac{2\gamma-1}{5},\gamma-1\big)$.}
\end{equation}

Indeed, a direct calculation gives that
\begin{equation*}
\begin{aligned}
(u_0)_r&=\frac{A}{2\mu}\rho_0^{\gamma-1} +(\tilde{u}_0)_r,\\
(u_0)_{rr}&= \frac{A}{2\mu}\frac{\gamma-1}{\beta} \rho_0^{\gamma-1-\beta}(\rho_0^\beta)_r+(\tilde{u}_0)_{rr},\\
(u_0)_{rrr}&= \frac{A}{2\mu}\frac{\gamma-1}{\beta}\big(\frac{\gamma-1-\beta}{\beta}\rho_0^{\gamma-1-2\beta}|(\rho_0^\beta)_r|^2+ \rho_0^{\gamma-1-\beta}(\rho_0^\beta)_{rr}\big)+(\tilde{u}_0)_{rrr},\\
\big(\frac{1}{\rho_0}(\rho_0 (u_0)_r)_r\big)_r&=\frac{A\gamma}{2\mu(\gamma-1)}(\rho_0^{\gamma-1})_{rr}+\big(\frac{1}{\rho_0}(\rho_0 (\tilde u_0)_r)_r\big)_r.
\end{aligned}
\end{equation*}
Then, based on  the facts that 
\begin{equation*}
\begin{aligned}
&\tilde u_0\in C_\mathrm{c}^\infty\big[\frac{1}{2},1\big),\quad\rho_0^\beta\sim 1-r,\quad \big(\frac{3}{2}-\varepsilon_0\big)\beta+\gamma-1-2\beta>0,\quad \gamma-\frac{1}{2}-\beta>0,\\
&\big(\frac{1}{\rho_0}(\rho_0 (\tilde u_0)_r)_r\big)_r=\big(\frac{(\rho_0)_r}{\rho_0}\big)_r(\tilde u_0)_r+ \frac{(\rho_0)_r}{\rho_0}(\tilde u_0)_{rr}+  (\tilde u_0)_{rrr} \text{ is compactly supported in $\big[\frac{1}{2},1\big)$},
\end{aligned}
\end{equation*}
we obtain that \eqref{C28} holds.
\end{Remark}

\section{Cross-Derivatives Embedding}\label{subsection2.2}

The following embedding theorem is used to obtain the higher-order elliptic estimates.
\begin{Proposition}\label{prop2.1}
Assume that $\varphi=\varphi(r)$ is a function defined on $I$ and  satisfies
\begin{equation}\label{con2.9-pre}
\varphi\in C^1(\bar I)\cap C^2((0,1]),\qquad \frac{1}{K}(1-r)\leq \varphi \leq K(1-r) \ \ \text{for some $K>1$}.
\end{equation}
Let $(b,c)$ be two parameters such that
\begin{equation}\label{con2.9}
\frac{1}{2}<b\leq \frac{c+1}{2}, 
\end{equation}
and let $f=f(r)\in L^1_{\mathrm{loc}}$ satisfy both $f\in H^1_{\varphi^{2q}}(\frac{1}{2},1)$ for some $q\in [b, \frac{c+1}{2}]$ and 
\begin{equation}\label{con2.10}
\big|\zeta^\sharp(\varphi^{b}f_r+ c\varphi^{b-1}\varphi_rf)\big|_2+|\chi^\sharp\varphi^{b} f|_2<\infty.
\end{equation}
Then, for any $\varphi$ satisfying \eqref{con2.9-pre} and $(b,c)$ satisfying \eqref{con2.9},  there exists a constant $C>0$, which depends only on $(\varphi,b,c)$, such that, for all $f$ satisfying \eqref{con2.10},
\begin{equation}\label{con2.11}
|\zeta^\sharp\varphi^{b}f_r|_2\leq C\big(\big|\zeta^\sharp(\varphi^{b}f_r+c\varphi^{b-1}\varphi_rf)\big|_2+|\chi^\sharp\varphi^{b} f|_2\big).
\end{equation}
\end{Proposition}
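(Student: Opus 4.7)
I will prove the embedding by combining an integration-by-parts identity for a quadratic functional of $f$ with a localized weighted Hardy inequality, using the hypothesis $b \le (c+1)/2$ precisely to ensure a favorable sign in the integration by parts. Set
\begin{equation*}
A := |\zeta^\sharp \varphi^b f_r|_2^2, \quad B_* := \int_I (\zeta^\sharp)^2 \varphi^{2b-2}\varphi_r^2 f^2\,\mathrm{d}r, \quad M := \int_I (\zeta^\sharp)^2\varphi^{2b-1}\varphi_r f f_r\,\mathrm{d}r,
\end{equation*}
and let $K_0^2$ and $E$ denote the squares of the two terms on the right-hand side of \eqref{con2.11}. Expanding the square gives $K_0^2 = A + c^2 B_* + 2c M$.

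The first step is to rewrite $M = \tfrac{1}{2}\int (\zeta^\sharp)^2 \varphi^{2b-1}\varphi_r (f^2)_r\,\mathrm{d}r$ and integrate by parts. The boundary terms vanish because $\zeta^\sharp(0)=0$ and $\varphi(1)^{2b-1}=0$ (using $b>1/2$), producing
\begin{equation*}
2M \;=\; -(2b-1)B_* \;-\; 2R_1 \;-\; R_2,
\end{equation*}
where $R_1 := \int \zeta^\sharp(\zeta^\sharp)_r\varphi^{2b-1}\varphi_r f^2\,\mathrm{d}r$ and $R_2:= \int (\zeta^\sharp)^2\varphi^{2b-1}\varphi_{rr}f^2\,\mathrm{d}r$. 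Substituting back yields the identity
\begin{equation*}
A + c(c-2b+1)B_* \;=\; K_0^2 \;+\; 2c R_1 \;+\; c R_2.
\end{equation*}
The condition $b\le (c+1)/2$, i.e.\ $c\ge 2b-1$, together with $b>1/2$, gives $c(c-2b+1)\ge 0$ and hence the \emph{key} inequality $A \le K_0^2 + 2c|R_1| + c|R_2|$. The term $R_1$ is supported in $[1/2, 5/8]$, where $\varphi$ is bounded above and below; thus $|R_1|\le C E$.

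The main obstacle is controlling $|R_2|\le \|\varphi_{rr}\|_{L^\infty(1/2,1)}\!\int(\zeta^\sharp)^2\varphi^{2b-1}f^2\,\mathrm{d}r$ \emph{without} introducing a factor of $A$ that exceeds $1$ after absorption. For this I will localize: split the integral into $(1/2,1-\delta)$ and $(1-\delta,1)$ for a small $\delta>0$ to be chosen. On $(1/2,1-\delta)$, $\varphi^{2b-1}$ is bounded so this contribution is $\le C_\delta E$. On $(1-\delta,1)$, where $\zeta^\sharp=1$, I apply a Hardy-type integration by parts against the weight $d^{2b}$, $d=1-r$: since $2b>1$, the boundary vanishes at $r=1$, while the boundary at $r=1-\delta$ produces $\tfrac{\delta^{2b}}{2b}f(1-\delta)^2$. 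The remaining integral is bounded by $\tfrac{K^{2b}\delta}{b^2}A$ via Young's inequality and $d\le\delta$. The tricky boundary value is handled by averaging the identity $f(1-\delta)^2 = f(r_0)^2 + \int_{r_0}^{1-\delta}(f^2)_r$ over $r_0\in[5/8,3/4]$; using that $\varphi$ is bounded below on $[5/8,1-\delta]$ (so $f_r$ is controlled by $(K/\delta)^b A^{1/2}$) and $|\chi^\sharp\varphi^b f|_2^2$ dominates the $f^2$ part, one obtains $\delta^{2b}f(1-\delta)^2 \le \varepsilon(\delta)A + C_\delta E$ with $\varepsilon(\delta)=O(\delta^b)\to 0$ as $\delta\to 0^+$. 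Together these give $|R_2|\le \varepsilon'(\delta)A + C_\delta' E$ with $\varepsilon'(\delta)\to 0$.

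Choosing $\delta$ so small that $c\,\|\varphi_{rr}\|_\infty\varepsilon'(\delta)\le 1/2$ yields $A \le K_0^2 + CE + \tfrac{1}{2}A$, hence $A \le 2K_0^2 + 2CE$, which is exactly \eqref{con2.11} after taking square roots. The density argument (approximating $f$ by smooth functions using $f\in H^1_{\varphi^{2q}}$) justifies the formal integrations by parts; the assumption $q\in[b,(c+1)/2]$ guarantees that $\varphi^b f$ and $\varphi^b f_r$ are integrable enough for the boundary terms to be meaningful. The main obstacle, as indicated, will be the delicate $\delta$-dependent estimate of the boundary value $f(1-\delta)$, where I must balance the algebraic damping $\delta^{2b}$ against the singular weight $\delta^{-b}$ arising from the lower bound on $\varphi^{2b}$ on $[5/8, 1-\delta]$; this balance is possible precisely because $b>0$.
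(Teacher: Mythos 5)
Your integration-by-parts starting point and the crucial observation that $c \ge 2b-1$ makes the $B_*$ term non-positive match the paper exactly. Where you diverge is in controlling the $\varphi_{rr}$-weighted term $R_2$: the paper invokes its weighted one-dimensional Gagliardo--Nirenberg inequality (Lemma~\ref{GNinequality}) to bound $|\zeta^\sharp\varphi^{b-\frac{1}{2}}f|_2^2$ directly and then absorb the $|\zeta^\sharp\varphi^b f_r|_2^2$ contribution by Young, while you re-derive the needed estimate from scratch via a localized Hardy-type argument on $(1-\delta,1)$ plus a boundary-value control. That alternative is fine in spirit, but two points need attention.

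First, your claimed asymptotic $\varepsilon(\delta)=O(\delta^b)$ is incorrect. Tracing through your own argument: on $[5/8,1-\delta]$ the lower bound $\varphi\ge \delta/K$ gives
$\int|ff_r|\le (K/\delta)^{2b}A^{1/2}E^{1/2}$, so after multiplying by $\delta^{2b}$ the $\delta$'s cancel and you are left with $K^{2b}A^{1/2}E^{1/2}$; there is no surviving $\delta$-factor in front of $A^{1/2}E^{1/2}$. You can still close the argument — apply Young's inequality $A^{1/2}E^{1/2}\le \varepsilon_1 A + \tfrac{1}{4\varepsilon_1}E$ with $\varepsilon_1$ small but chosen \emph{independently} of $\delta$, and let $\delta$ handle only the separate $\tfrac{K^{4b-1}\delta}{b^2}A$ contribution — but the statement as written is false and the mechanism by which the $A$-coefficient becomes small is different from what you assert.

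Second, and more seriously, your last paragraph does not handle the case $q>b$, which the proposition explicitly allows. When $q>b$, the hypothesis only gives $\varphi^q f_r\in L^2$ (a weaker weight) and \emph{does not} give $\varphi^b f_r\in L^2$ a priori; that is exactly the content of the conclusion. In that regime the quantity $A=|\zeta^\sharp\varphi^b f_r|_2^2$ is not known to be finite, so the self-improvement $A\le K_0^2 + CE + \tfrac12 A$ cannot be rearranged, and your final claim that ``the assumption $q\in[b,(c+1)/2]$ guarantees that $\varphi^b f$ and $\varphi^b f_r$ are integrable enough for the boundary terms to be meaningful'' is precisely what fails. Moreover, approximating $f$ in $H^1_{\varphi^{2q}}$ by smooth functions does not control the $b$-weighted norms appearing on either side of~\eqref{con2.11} when $q>b$, so the density argument you gesture at does not carry the estimate over to the limit. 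The paper's Step~3 resolves this by introducing the regularized weight $\varphi_j=\varphi+1/j$, proving a $j$-uniform estimate with the weight $\varphi^{(c+1)/2}\varphi_j^{-\vartheta}$ (where $\vartheta=\tfrac{c+1}{2}-b$), and then extracting a weak $L^2$-limit identified via $L^1_{\mathrm{loc}}$-convergence; your proof has no analogous mechanism and thus contains a genuine gap.
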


\begin{proof}
We divide the proof into three steps.

\smallskip
\textbf{1. Case $f\in C^\infty([\frac{1}{2},1])$.}
It follows from integration by parts, \eqref{con2.9-pre}--\eqref{con2.10}, Lemma \ref{GNinequality}, and the Young inequality that
\begin{equation}\label{2004}
\!\!\!\!\begin{aligned}
|\zeta^\sharp\varphi^b f_r|_2^2&=\big|\zeta^\sharp(\varphi^b f_r+c\varphi^{b-1}\varphi_r f)\big|_2^2-c^2|\zeta^\sharp\varphi^{b-1}\varphi_rf|_2^2 - 2c\int_0^1 (\zeta^\sharp)^2\varphi^{2b-1}\varphi_r  f f_r\,\mathrm{d}r\\
&=\big|\zeta^\sharp(\varphi^b f_r+c\varphi^{b-1}\varphi_r f)\big|_2^2+ c\int_0^1 \big(2\zeta^\sharp(\zeta^\sharp)_r\varphi_{r}+(\zeta^\sharp)^2\varphi_{rr}\big) \varphi^{2b-1} f^2\,\mathrm{d}r\\
& \quad +\underline{ (2b-1-c)c\int_0^1 (\zeta^\sharp)^2\varphi^{2b-2} (\varphi_r)^2 f^2\,\mathrm{d}r}_{\,\leq 0}- \underline{c(\zeta^\sharp)^2\varphi^{2b-1}\varphi_r f^2\Big|_{r=0}^{r=1}}_{\, =0}\\
&\leq \big|\zeta^\sharp(\varphi^b f_r+c\varphi^{b-1}\varphi_r f)\big|_2^2+C\big(|(\zeta^\sharp)_r\varphi^{-1} \varphi_{r}|_\infty|\chi^\sharp\varphi^{b}f|_2^2+ |\chi^\sharp\varphi_{rr}|_\infty\big|\zeta^\sharp\varphi^{b-\frac{1}{2}}f\big|_2^2\big)\\
&\leq  \big|\zeta^\sharp(\varphi^b f_r+c\varphi^{b-1}\varphi_r f)\big|_2^2+C|\chi^\sharp\varphi^{b}f|_2^2+\frac{1}{2}|\zeta^\sharp\varphi^b f_r|_2^2,
\end{aligned}
\end{equation}
which yields \eqref{con2.11}.

\smallskip
\textbf{2. Case  $f\in H^1_{\varphi^{2b}}(\frac{1}{2},1)$.} In this case, we can repeat the calculation in \eqref{2004} to derive \eqref{con2.11}, except for justifying the following integral equality: 
\begin{equation}\label{fenbujifen}
\begin{aligned}
-2\int_0^1 (\zeta^\sharp)^2\varphi^{2b-1}\varphi_r  f f_r\,\mathrm{d}r
&=  \int_0^1 \big(2\zeta^\sharp(\zeta^\sharp)_r\varphi_{r}+(\zeta^\sharp)^2\varphi_{rr}\big) \varphi^{2b-1} f^2\,\mathrm{d}r\\
&\quad\,\, +(2b-1)\int_0^1 (\zeta^\sharp)^2\varphi^{2b-2}(\varphi_r)^2 f^2\,\mathrm{d}r.
\end{aligned}
\end{equation}
Indeed, thanks to Lemma \ref{W-space}, there exists a sequence $\{f^\varepsilon\}_{\varepsilon>0}\subset C^\infty([\frac{1}{2},1])$ such that
\begin{equation}\label{b6b}
|\chi^\sharp\varphi^b (f^\varepsilon- f)|_2+|\chi^\sharp\varphi^b(f^\varepsilon_r- f_r)|_2\to 0 \qquad \text{as $\varepsilon\to 0$},
\end{equation}
which, along with Lemma \ref{hardy-inequality}, yields
\begin{equation}\label{b7b}
|\chi^\sharp\varphi^{b-1}(f^\varepsilon-f)|_2+\big|\chi^\sharp\varphi^{b-\frac{1}{2}}(f^\varepsilon-f)\big|_\infty \to 0 \qquad \text{as $\varepsilon\to 0$}.
\end{equation}
Hence, according to \eqref{b6b}--\eqref{b7b}  and integration by parts for $f^\varepsilon$, we have
\begin{equation*}
\begin{aligned}
-2\int_0^1 (\zeta^\sharp)^2\varphi^{2b-1}\varphi_r f^\varepsilon f^\varepsilon_r\,\mathrm{d}r
&=  \int_0^1 \big(2\zeta^\sharp(\zeta^\sharp)_r\varphi_{r}+(\zeta^\sharp)^2\varphi_{rr}\big) \varphi^{2b-1} (f^\varepsilon)^2\,\mathrm{d}r\\
&\quad +(2b-1)\int_0^1 (\zeta^\sharp)^2\varphi^{2b-2}(\varphi_r)^2 (f^\varepsilon)^2\,\mathrm{d}r.
\end{aligned}
\end{equation*}
Letting $\varepsilon\to 0$ implies that \eqref{fenbujifen} holds for all $f\in H^1_{\varphi^{2b}}(\frac{1}{2},1)$. 

This completes the proof of \eqref{con2.11} when $q=b$.

\smallskip
\textbf{3. General case.}
It suffices to establish \eqref{con2.11} when \eqref{con2.10} holds and
\begin{equation*}
b<\frac{c+1}{2}, \qquad  q=\frac{c+1}{2},\qquad \ f\in H^1_{\varphi^{c+1}}\big(\frac{1}{2},1\big),
\end{equation*}
due to the fact that $H^1_{\varphi^{2q}}(\frac{1}{2},1)\subset H^1_{\varphi^{c+1}}(\frac{1}{2},1)$ if $q\leq \frac{c+1}{2}$. Note that, in this case, integration by parts in \eqref{fenbujifen} fails owing to $(\zeta^\sharp)^2\varphi^{2b-1}\varphi_r ff_r\notin L^1$. 

To overcome this difficulty, set
\begin{equation*}
\vartheta:= \frac{c+1}{2}-b,\qquad \varphi_j:=\varphi+\frac{1}{j} \quad \text{for $j\in \NN^*$}.
\end{equation*}
We first show a variant of \eqref{G-N1} in Lemma \ref{GNinequality}, that is, for all $f\in H^1_{\varphi^{c+1}}(\frac{1}{2},1)$,
\begin{equation}\label{2..8}
\big|\zeta^\sharp\varphi^\frac{c}{2}\varphi_j^{-\vartheta}f\big|_2^2\leq C\big(\big|\chi^\sharp\varphi^\frac{c+1}{2}\varphi_j^{-\vartheta}f\big|_2^2+\big|\zeta^\sharp\varphi^\frac{c+1}{2}\varphi_j^{-\vartheta}f\big|_2\big|\zeta^\sharp\varphi^\frac{c+1}{2}\varphi_j^{-\vartheta}f_r\big|_2\big).
\end{equation}

Based on Lemma \ref{W-space} and the proof in Lemma \ref{GNinequality}, it suffices to show that \eqref{2..8} holds for $f\in C^\infty(\frac{1}{2},1)$. Clearly, we can further let $(\varphi,\varphi_j)=(d,d_j)$ with $d=d(r):=1-r$ and $d_j:=d+\frac{1}{j}$, due to
\begin{equation*}
\frac{d}{K}\leq \varphi \leq K d,\qquad \frac{d_j}{K}\leq \varphi_j \leq K d_j.
\end{equation*} 
It follows from the above reductions, integration by parts, and $\mathrm{d}(d^{c+1})_r=(c+1)d^c\mathrm{d}r$ that
\begin{equation*}
\begin{aligned}
\int_0^1 (\zeta^\sharp)^2 d^c d_j^{-2\vartheta} f^2\,\mathrm{d}r&= \frac{2}{c+1} \Big( \int_0^1 \zeta^\sharp(\zeta^\sharp)_r d^{c+1} d_j^{-2\vartheta} f^2\,\mathrm{d}r+\int_0^1 (\zeta^\sharp)^2 d^{c+1} d_j^{-2\vartheta} f f_r\,\mathrm{d}r\Big)\\
&\quad +\frac{2\vartheta}{c+1} \int_0^1 (\zeta^\sharp)^2 d^{c+1} d_j^{-2\vartheta-1} f^2 \,\mathrm{d}r\\
&\leq  \frac{2}{c+1} \Big( \int_0^1 \zeta^\sharp(\zeta^\sharp)_r d^{c+1} d_j^{-2\vartheta} f^2\,\mathrm{d}r+\int_0^1 (\zeta^\sharp)^2 d^{c+1} d_j^{-2\vartheta} f f_r\,\mathrm{d}r\Big)\\
&\quad +\frac{2\vartheta}{c+1} \int_0^1 (\zeta^\sharp)^2 d^{c} d_j^{-2\vartheta} f^2 \,\mathrm{d}r,
\end{aligned}
\end{equation*}
which implies
\begin{equation}\label{2..9}
\begin{aligned}
\int_0^1 (\zeta^\sharp)^2 d^c d_j^{-2\vartheta} f^2\,\mathrm{d}r& \leq  \frac{1}{b} \Big( \int_0^1 \zeta^\sharp(\zeta^\sharp)_r d^{c+1} d_j^{-2\vartheta} f^2\,\mathrm{d}r+\int_0^1 (\zeta^\sharp)^2 d^{c+1} d_j^{-2\vartheta} f f_r\,\mathrm{d}r\Big)\\
&\leq C\big(\big|\chi^\sharp d^\frac{c+1}{2} d_j^{-\vartheta}f\big|_2^2+\big|\zeta^\sharp d^\frac{c+1}{2} d_j^{-\vartheta}f\big|_2\big|\zeta^\sharp d^\frac{c+1}{2} d_j^{-\vartheta}f_r\big|_2\big).
\end{aligned}
\end{equation}

This completes the proof of \eqref{2..8}.

\smallskip
Now, we continue to prove \eqref{con2.11}. It follows from \eqref{con2.10} and Lemma \ref{hardy-inequality} that
\begin{equation}\label{con2.15}
Q_j:=\zeta^\sharp\varphi_j^{-\vartheta}(\varphi^\frac{c+1}{2}f_r+c\varphi^\frac{c-1}{2}\varphi_{r}f)\in L^2 \qquad \text{for any $j\in \NN^*$}.
\end{equation}
Using a density argument similar to that in Step 2, we can show that the following integral equality still holds, {\it i.e.}, for all $f\in H^1_{\varphi^{c+1}}(\frac{1}{2},1)$ and $j\in \NN^*$,
\begin{equation*}
\begin{aligned}
-2\int_0^1 (\zeta^\sharp)^2\varphi^{c}\varphi_j^{-2\vartheta}\varphi_r f f_r\,\mathrm{d}r&=  \int_0^1 \big(2\zeta^\sharp(\zeta^\sharp)_r\varphi_{r}+(\zeta^\sharp)^2\varphi_{rr}\big) \varphi^{c}\varphi_j^{-2\vartheta} f^2\,\mathrm{d}r\\
&\quad +\int_0^1 (\zeta^\sharp)^2\big(c\varphi^{c-1}\varphi_j^{-2\vartheta} -2\vartheta \varphi^{c}\varphi_j^{-2\vartheta-1}\big)(\varphi_r)^2 f^2\,\mathrm{d}r.
\end{aligned}
\end{equation*}
Hence, based on the above equality and the calculation similar to \eqref{2004}, we deduce from \eqref{con2.15} and the Young inequality that
\begin{equation*}
\begin{aligned}
\big|\zeta^\sharp\varphi^\frac{c+1}{2}\varphi_j^{-\vartheta} f_r\big|_2^2&=|Q_j|_2^2-c^2\big|\zeta^\sharp\varphi^{\frac{c-1}{2}}\varphi_j^{-\vartheta}\varphi_rf\big|_2^2 - 2c\int_0^1 (\zeta^\sharp)^2\varphi^{c}\varphi_j^{-2\vartheta}\varphi_r  f f_r\,\mathrm{d}r\\
&=|Q_j|_2^2+ c\int_0^1 \big(2\zeta^\sharp(\zeta^\sharp)_r\varphi_{r}+(\zeta^\sharp)^2\varphi_{rr}\big) \varphi^{c}\varphi_j^{-2\vartheta} f^2\,\mathrm{d}r\notag\\
&\quad \underline{-2\vartheta c\int_0^1 (\zeta^\sharp)^2 \varphi^{c-1}\varphi_j^{-2\vartheta-1} (\varphi_r)^2 f^2\,\mathrm{d}r}_{\,\leq 0}\\
&\leq |Q_j|_2^2+C\big(|(\zeta^\sharp)_r\varphi^{-1} \varphi_{r}|_\infty\big|\chi^\sharp\varphi^{\frac{c+1}{2}}\varphi_j^{-\vartheta}f\big|_2^2+ |\chi^\sharp\varphi_{rr}|_\infty\big|\zeta^\sharp\varphi^{\frac{c}{2}}\varphi_j^{-\vartheta}f\big|_2^2\big),
\end{aligned} 
\end{equation*}
which, along with \eqref{con2.9-pre} and \eqref{2..8}, gives
\begin{equation*}
\begin{aligned}
\big|\zeta^\sharp\varphi^{\frac{c+1}{2}}\varphi_j^{-\vartheta}f\big|_2^2&\leq C\big(|Q_j|_2^2+\big|\chi^\sharp\varphi^{\frac{c+1}{2}}\varphi_j^{-\vartheta}f\big|_2^2\big)\\
&\leq C\big(\big|\zeta^\sharp(\varphi^{b}f_r+ c\varphi^{b-1}\varphi_rf)\big|_2+|\chi^\sharp\varphi^{b} f|_2\big).
\end{aligned}
\end{equation*}
Since $C$ is independent of $j$, we can extract a subsequence (still denoted by $j$)  such that
\begin{equation}\label{equ213}
\zeta^\sharp\varphi^{\frac{c+1}{2}}\varphi_j^{-\vartheta}f\to g\qquad \text{weakly in }L^2 \quad \text{as }j\to \infty,
\end{equation}
for some limit function $g\in L^2$, and 
\begin{equation*}
|g|_2\leq \liminf_{j\to \infty}\big|\zeta^\sharp\varphi^{\frac{c+1}{2}}\varphi_j^{-\vartheta}f\big|_2\leq C\big(\big|\zeta^\sharp(\varphi^{b}f_r+ c\varphi^{b-1}\varphi_rf)\big|_2+|\chi^\sharp\varphi^{b} f|_2\big). 
\end{equation*}

Note that $f_r\in L^1_{\mathrm{loc}}$, the Lebesgue dominated convergence theorem also gives
\begin{equation}\label{equ214}
\zeta^\sharp\varphi^{\frac{c+1}{2}}\varphi_j^{-\vartheta}f\to \zeta^\sharp\varphi^b f_r\qquad \text{in }L^1_{\mathrm{loc}} \ \ \text{as }j\to\infty.
\end{equation}
Hence, by \eqref{equ213}--\eqref{equ214} and the uniqueness of the limits, we have $g=\zeta^\sharp\varphi^b f_r$. 

This completes the proof of Lemma \ref{prop2.1}.
\end{proof}

\bigskip
\noindent{\bf Acknowledgments:}  The authors are grateful to Prof. Didier Bresch for helpful discussions on the effective velocity and BD entropy of the degenerate compressible Navier-Stokes equations. This research is partially supported by National Key R$\&$D Program of China (No. 2022YFA1007300). The research of Gui-Qiang G. Chen was also supported in part by the UK Engineering and Physical Sciences Research Council Award EP/L015811/1, EP/V008854, and EP/V051121/1. The research of Shengguo Zhu was also supported in part by the National Natural Science Foundation of China under the Grant  12471212, and the Royal Society (UK)-Newton International Fellowships NF170015.

\bigskip
\noindent{\bf Conflict of Interest:} The authors declare that they have no conflict of interest.

\bigskip
\noindent{\bf Data availability:} Data sharing is not applicable to this article as no datasets were generated or analyzed during the current study.

\bigskip

\end{document}